\documentclass[12pt]{amsart}

\usepackage{vmargin}
\usepackage{graphicx}
\usepackage{color}
\usepackage{transparent}
\input{amssym.def}
\input{amssym}
\usepackage{a4wide}
\usepackage{supertabular}
\usepackage{tikz-cd}
\usepackage{array}
\usepackage{multicol}
\usepackage{bbm}
\usepackage{bbold}
\usepackage{epstopdf}

\usepackage{import}
\setmarginsrb{2cm}{2cm}{2cm}{2.5cm}{75pt}{20pt}{20pt}{30mm}
\def\derpar#1#2{\frac{\partial#1}{\partial#2}}
\def\restr#1{\lfloor_{#1}}
\def\R{\mathbb R}
\def\N{\mathbb N}
\def\T{\mathbb T}

\def\Z{\mathbb Z}

\def\H{\mathcal H}

\def\F{\mathbb F}
\def\I{\mathbb I}

\def\S{\mathbb S}

\def\var{\varepsilon}

\def\pa{\partial}
\def\om{\omega}
\def\Om{\Omega}
\def\om{\omega}
\def\ov{\overline}
\def\cal{\mathcal}
\def\vphi{\varphi}
\def\hat{\widehat}

\def\tilde{\widetilde}

\def\Id{{\rm Id}\,}
\def\dps{\displaystyle}

\def\div{{\rm div}\,}
\def\vol{{\rm vol}\,}

\def\<{\langle}
\def\>{\rangle}

\def\II{{\rm I\!I}}

\def\Ric{{\rm Ric}}
\def\GTP{{\rm GTP}}
\def\LSI{{\rm LSI}}

\def\div{\operatorname{div}}
\def\TM{T\!M}
\def\TsM{T^*\!M}

\def\vol{{\rm vol}\,}

\def\TIJ{T_I^J}
\def\Tzeroun{T_0^1}

\def\Cnk#1#2{\small \left(\!\begin{array}{c} #1 
                \\ #2 \end{array} \!\right)} 

\def\med{\medskip}
\def\sm{\smallskip}
\def\bul{$\bullet$\ }

\def\begeq{\begin{equation}}
\def\endeq{\end{equation}}
\def\begar{\begin{eqnarray}}
\def\endar{\end{eqnarray}}
\def\begar*{\begin{eqnarray*}}
\def\endar*{\end{eqnarray*}}
\def\begal{\begin{align}}
\def\endal{\end{align}}
\def\begal*{\begin{align*}}
\def\endal*{\end{align*}}

\newtheorem{Thm}{Theorem}
\newtheorem{Lem}[Thm]{Lemma}

\newtheorem{Cor}[Thm]{Corollary}
\newtheorem{Prop}[Thm]{Proposition}

\numberwithin{equation}{section}
\numberwithin{Thm}{section}

\theoremstyle{definition}
\newtheorem{Def}[Thm]{Definition}
\newtheorem{Rk}[Thm]{Remark}
\newtheorem{Rks}[Thm]{Remarks}

\theoremstyle{remark}

\newtheorem{Ex}[Thm]{Example}

\newtheorem*{Thm*}{Theorem}
\newtheorem*{Lem*}{Lemma}
\newtheorem*{Conj*}{Conjecture}
\newtheorem*{Cor*}{Corollary}
\newtheorem*{Def*}{Definition}
\newtheorem*{Prop*}{Proposition}
\newtheorem*{Exo*}{Exercise}
\newtheorem*{Exs*}{Examples}
\newtheorem*{Ex*}{Example}
\newtheorem*{Rk*}{Remark}
\newtheorem*{Rks*}{Remarks}

\def\bibnotes{\medskip \noindent {\bf Bibliographical Notes }\sm \noindent}

\def\signcv{\vspace{1mm} \begin{center} {\sc C\'edric Villani\par\vspace{2mm}
Acad\'emie des Sciences\\
Universit\'e Claude Bernard Lyon I \\
Institut Camille Jordan\par
43 Bd du 11 Novembre 1918, 69100 Villeurbanne\par
FRANCE\par\vspace{3mm}
E-mail:} \tt{cv@cedricvillani.org} \end{center}}

\begin{document}

\title[Geometric kinetic Fokker--Planck equation]{On the kinetic Fokker--Planck equation in curved geometry}

\vspace*{-12mm}

\author{C. Villani}

\vspace*{-9mm}

\begin{abstract} These notes are devoted to the kinetic Fokker--Planck equation on the tangent bundle of a compact Riemannian manifold, with motivations coming from relativistic kinetic theory of gases (\`a la Debbasch) and geometric analysis (\`a la Bismut). They are developed in particular from collaboration with Fabrice Debbasch and Yann Ollivier. The presentation is mostly self-contained, intended for readers without much prior familiarity with geometric analysis. Once the kinetic equations are reviewed, I expand the basic toolbox from functional and spectral analysis. Then the main focus is on global hypoelliptic $C^\infty$ regularization and hypocoercive exponential equilibration, both of which are addressed both in $L^2$ and $L^1$ settings. Various types of interpolation inequalities, spectral gap and entropic inequalities are established and used. Several existing results in the fields are simplified and unified, along with new ones. Some directions of research are mentioned.
\end{abstract}

\maketitle

\vspace*{-15mm}

\tableofcontents

\vspace*{-10mm}

\bigskip
\pagebreak

These notes are a large expansion of the lecture which I gave at the research conference Festum Pi in Chania, Crete, in July 2025, on the kinetic Fokker--Planck equation on Riemannian manifolds. The initial plan for that study goes back to a 2008 project by Fabrice Debbasch, Yann Ollivier and myself; it was motivated both by physical models coming from general relativity, and the desire to further investigate the then-developing theory of hypocoercivity. The manuscript written at the time by the three of us had some shortcomings, and was left incomplete and sleeping for some fifteen years. Eventually it is on the occasion of the 2025 and 2026 Festum Pi Conferences that it was revived and improved into a coherent whole.

Thanks are due to the Mediterranean Agronomic Institute of Chania (MAICh), which once again provided the inspiring location and outstanding support for the festival. Part of this work was done as I was holding a joint chair between Universit\'e Claude Bernard Lyon 1 and Institut des hautes \'etudes scientifiques (IHES) in Bures-sur-Yvette, France. I received invaluable help from St\'ephane Mischler to catch up with recent literature on the kinetic Fokker--Planck equation, especially the bibliography of Section~3. Warm thanks are due to Fabrice Baudoin, Jean-Michel Bismut, Fran\c cois Bouchut and Gilles Lebeau for fruitful exchanges on this subject over the past twenty years. Further thanks to Sylvia Serfaty for pointing out Ref.~\cite{AAMN:KFP:24}, and to Giovanni Brigatti and Cl\'ement Mouhot for their related, beautiful contribution to the proceedings of Festum Pi 2024.

\sm

{\bf Keywords:} Kinetic theory, Fokker--Planck equation, analysis on Riemannian manifolds, hypoellipticity, hypocoercivity, functional inequalities, regularity, equilibration.

{\bf AMS Subject Classification:} 82C40, 94A17

\section{Motivation}

The kinetic theory of gases and plasmas, founded by James Clerk Maxwell and Ludwig Boltzmann, is the study of evolving statistics of particles in the phase space of positions and velocities. One of its most important defining characteristics is the subtle interplay between transport processes mixing position and velocity variables, and interaction or diffusion processes acting in velocity variables. The analytical difficulties generated by this combination of conflicting structures, all throughout kinetic theory, range from tricky to formidable. This is particularly true of the nonlinear Boltzmann equation.

Among models with those features, the simplest ones -- and possibly the most important ones, even though not the most famous --  are linear kinetic Fokker--Planck equations, combining classical transport (with or without external forcing) and diffusion--friction in velocity space. Besides their own interest, they serve as preparation for more elaborate models. As a matter of fact, it was the most basic linear kinetic Fokker--Planck equation which lay at the foundation of H\"ormander's 1967 celebrated theorem on hypoelliptic regularisation; and the same model served as an entry door to the equilibration for various inhomogeneous kinetic equations, by Desvillettes and myself in the 2000's. Further, the spatially homogeneous Fokker--Planck equation plays an important role as an auxiliary diffusion equation in the study of various functional inequalities for the Boltzmann equation theory. All of this suggests that Fokker--Planck equations will continue to provide important lessons, not just for their own sake, but also for the more general picture of kinetic theory.

My memoir {\em Hypocoercivity}, initiated in 2003 and published in 2009, was developing a rather systematic study of certain quantitative functional methods for equilibration of degenerate diffusive equations. The word Hypocoercivity itself was borrowed from Thierry Gallay. The main focus of my memoir was equilibration, but I was also considering global regularity estimates, both in $L^2$ and $L^1$ context. Since then the tools therein were adapted to various models, and improved, by a number of authors. 

However, the theory was hardly developed in curved geometry, partly because of specific new difficulties, but also because the research community familiar with both kinetic theory and analysis on manifolds is extremely small. This is exactly the task that we assigned ourselves, with Fabrice Debbasch and Yann Ollivier, around 2008, inspired by models of mathematical physics coming from cosmology (Debbasch's model). By coincidence, at about the same time Jean-Michel Bismut and Gilles Lebeau were working on a kinetic Fokker--Planck equation on 1-forms (Bismut's diffusion), arising in Riemannian context to enlighten some problems from topology. So there was also the motivation to better understand and simplify their estimates.

I consider it extremely likely that the adaptation of the hypoelliptic and hypocoercive diffusion theory to curved geometry will be fruitful and insightful, even for those whose ultimate motivation is only the ``flat'' context. Think indeed of the extraordinary wealth of information which was obtained when the tools of spectral gap theory, logarithmic Sobolev inequalities and other functional inequalities were adapted from Euclidean or Gaussian setting to curved geometries.

Some of the main results in the present memoir can be summarised as follows: {\em If $M$ is a smooth compact Riemannian manifold of dimension $n$, then there are $s_0=s_0(n)$ and $\nu = \nu(M)>0$ such that if an initial probability density $f_0$ (or probability measure) is given on the tangent bundle $\TM$ with $\int f_0(x,v) |v|^{s_0}\,dx\,dv < +\infty$, then in the course of time the corresponding solution to the kinetic Fokker--Planck equation will converge like $O(e^{-\nu t})$ to the Gaussian equilibrium, in the sense of entropy.} Simple as that conclusion seems, getting there will employ a considerable amount of analysis.

The plan of these notes is as follows. Sections \ref{seckFP} to \ref{secmeet} set the scene with the main equations and problems. Sections \ref{secHC} to \ref{secspectral} are devoted to preliminaries and reminders from Riemannian geometry. Sections \ref{secfunctional} to \ref{secnash} develop the functional setup and basic functional inequalities, including Sobolev $L^2$-interpolation, Gaussian tail Poincar\'e inequality and anisotropic Nash interpolation. Sections \ref{seclebeau} to \ref{secpos} establish regularisation, localisation and positivity for the geometric kinetic Fokker--Planck operator and equation, recovering in particular Lebeau's maximal hypoellipticity and proving the smoothing effect of the evolution problem. Sections \ref{secineq}--\ref{sechypoco} and \ref{secL1conv}--\ref{secfisher} establish exponential equilibration for the evolution problem, in $L^2$ and $L^1$ with increasing geometry, while Section \ref{secqualspectr} localises the $L^2$-spectrum of the generator. Eventually a few conclusions are summarised.

Throughout the text I will use the classical notation for differential calculus in Euclidean space, including $\nabla$, $\pa_i$, $\nabla^k$, $\Delta$, etc. I write $\N = \{1, 2, 3, \ldots\}$ and $\N_0 = \{0,1,2,\ldots\}$. Most of the notation for Riemannian geometry and tensor calculus is introduced in Section \ref{secmeet}.

\bibnotes

The founding papers of kinetic theory are the works by Maxwell \cite{maxw:67} and Boltzmann \cite{boltz:weitere:72}, and the historical reference source is Boltzmann's treatise \cite{boltz:book}. A relatively recent survey of the theory of the Boltzmann equation is my 2002 monograph \cite{vill:handbook:02}; a more recent, although partial, review can be found in my 2025 Festum Pi lecture notes \cite{vill:fisher-Festum:25}. 

The founding paper for the modern theory of hypoellipticity is H\"ormander \cite{horm:hypo:67}. The modern study of hypocoercive equations was initiated by Desvillettes--Villani \cite{DV:FP:01,DV:boltz:05} with nonlinear techniques, and Helffer--Nier \cite{helfnier:witten:05} with linear techniques. Several results were improved and unified in my memoir \cite{vill:hypoco}. Further references will be provided in Section \ref{sechypo}.

A classical source on the Fokker--Planck equation is Risken \cite{risken:book}. Such equations were considered in a geometric context in Debbasch \cite{debbasch:diffcurved:04} (for models of general relativity and cosmology, in Lorentzian geometry) and Bismut \cite{bismut:hypoelliptic:05} (as an interpolation between Laplace equations and geodesic equations, to solve problems coming from infinite-dimensional de Rham and Hodge cohomology theories). Further references will be given in Sections \ref{seckFP} and \ref{secmeet}.

A rich panorama of functional inequalities in curved geometries can be found in the book by Bakry, Gentil and Ledoux \cite{BGL:book}.

\section{The kinetic Fokker--Planck equation} \label{seckFP}

Arguably the simplest nontrivial model of kinetic theory consists in supplementing free flow ($\ddot{x}=0$ in $\R^n$, Galilean motion) with a linear diffusion in velocity space: $\dot{x}$ evolving according to a Brownian motion, which is the same as the considered particle undergoing a white noise forcing. Let $(B_t)_{t\geq 0}$ the standard Brownian motion, with variance $nt$ (covariance matrix $tI_n$): so the stochastic differential equation for the position $x$ is 
\begeq\label{EDS}
\frac{d^2x}{dt^2} = \sqrt{\sigma}\, \frac{dB}{dt},
\endeq
where $\sigma>0$ is proportional to the variance of the forcing.
The associated linear diffusion equation for probability densities on $\R^n_x\times\R^n_v$ is called {\bf Kolmogorov's kinetic equation}
\begeq\label{K}
\derpar{f}{t} + v\cdot\nabla_x f = \theta \, \Delta_v f, \qquad \theta = \frac{\sigma}2,
\endeq
and admits an explicit Gaussian fundamental solution: If the initial conditions is a Dirac mass located at $(x_0,v_0)$, then the solution at $(t,x,v)$ reads 
\begin{multline} \label{FSK}
F\bigl(t,x,v|x_0,v_0\bigr) = (3\pi^2\theta)^{-n}\,t^{-2n}\,\exp 
\left [ \dps - \frac{1}{\pi^2\theta}\,
\left( \frac{3\, |x-(x_0+tv_0)|^2}{t^3} \right. \right. \\
\left. \left. - \frac{3\, [x-(x_0+t v_0)]\cdot (v-v_0)}{t^2}
+ \frac{|v-v_0|^2}{t} \right) \right].
\end{multline}

The model can be made more complex by adding a force field $F=F(t,x)$ and a dissipative mechanism, typically a linear friction, which is both the simplest physical model and the only one which maintains the Gaussian nature of velocity fluctuations. Then the equation of motion is
\begeq\label{stolambda}
\frac{d^2x}{dt^2} = \sqrt{\sigma}\, \frac{dB}{dt} + F(t,x) - \lambda\,\frac{dx}{dt},
\endeq
where $\lambda>0$ is the friction coefficient, and the associated diffusion equation is
\begeq\label{prekFP}
\derpar{f}{t} + v\cdot\nabla_x f +F(t,x)\cdot\nabla_v f = \frac{\sigma}2 \, \Delta_v f + \lambda \nabla_v\cdot (fv).
\endeq
Typically, this describes the action of a heat bath of minute molecules which on the one hand transfer energy to the considered particle and on the other hand slow it down when it accelerates too much. The equilibrium velocity distribution is obtained by solving $(\sigma/2)\,\Delta_v f + \lambda \nabla_v (fv) = 0$, whence $f$ is proportional to a Maxwell--Boltzmann statistical equilibrium of the form $e^{-|v|^2/(2kT)}$, where $k$ is Boltzmann's constant, if $T= \sigma/(2k\lambda)$. Identifying $T$ with the temperature of the heat bath (which at equilibrium is transferred to the particle), we arrive at the {\bf kinetic Fokker--Planck equation}
\begeq\label{kFP}
\derpar{f}{t} + v\cdot\nabla_x f +F(t,x)\cdot\nabla_v f =\lambda \bigl( kT \Delta_v f + \nabla_v\cdot (fv) \bigr).
\endeq
Here as in all the sequel, $f$ is the probability density for the considered particle in phase space; so  $\int f\,dx\,dv=1$.

For clarity, let us make the connection with more popular models set in position space, rather than position and velocity space. Of course the special case $\lambda=0$ leads back to the equation of classical mechanics (the statistical version of Newton's equation)
\begeq\label{kgeod}
\derpar{f}{t} + v\cdot\nabla_x f +F(t,x)\cdot\nabla_v f = 0.
\endeq
But now consider the situation in which $\lambda$ is extremely large, so that the motion is nearly static: $\ddot{x} \simeq 0$, still $\lambda \dot{x}$ is not necessarily small. If both $\sigma$ and $F$ are of the same order as $\lambda$, then we may neglect the left-hand side (and only the left-hand side) in \eqref{stolambda} and arrive at the classical first-order stochastic differential equation
\begeq\label{sto1}
\dot{x} = \frac{\sqrt{\sigma}}{\lambda} \frac{dB}{dt} + \frac{F(t,x)}{\lambda}.
\endeq
Then the associated probability density $\rho=\rho(t,x)$ satisfies the familiar drift-diffusion equation
\begeq\label{dd}
\derpar{\rho}{t} = \frac{\sigma}{2\lambda^2}\, \Delta \rho - \frac1{\lambda}\, \nabla\cdot (\rho F).
\endeq
In short, the limit $\lambda\to 0$ yields a conservative Hamiltonian flow in phase space \eqref{kgeod} and the limit $\lambda\to\infty$ yields a drift-diffusion equation in position space \eqref{dd}; if $F=0$ these two extremal cases are just the usual geodesic flow and the usual heat flow, respectively.
\sm

Back to the kinetic model. Two special cases of \eqref{kFP} are of particular interest:
\sm

\bul When $F(t,x) = -\omega x$ (harmonic confinement), then the coefficients in \eqref{kFP} are all linear in $(x,v)$ and the fundamental solution is still an explicit Gaussian -- not simple, but explicit. For instance, if $\lambda> 2\omega>0$, then it can be written as
\begeq\label{FSK2}
\frac1{(2\pi)^{n} \sqrt{\det A(t)}}\, \exp\biggl( -\frac12 \Bigl\< A^{-1}(t) \bigl(x-\ov{x}(t),v-\ov{v}(t)\bigr), \bigl(x-\ov{x}(t),v-\ov{v}(t)\bigr)\Bigr\>\biggr),
\endeq
where the $2n\times 2n$ positive symmetric matrix $A(t)$ is of the form
\begeq\label{AFSK}
A = \frac{\lambda kT}{\lambda^2- 4 \omega^2} \
\left[ \begin{array}{cc} a_{xx} I_n & a_{xv} I_n \\[2mm]
a_{xv} I_n & a_{vv} I_n
\end{array}\right],
\endeq
and
\[ a_{xx}(t) = \frac{\alpha^+ + \alpha^-}{\alpha^+ \alpha^-} 
+ \frac{4}{\alpha^++\alpha^-} \bigl( e^{-(\alpha^++\alpha^-)t}-1\bigr)
- \frac1{\alpha^+} e^{-2\alpha^+t} - \frac1{\alpha^-}e^{-2\alpha^-t}\]
\[ a_{xv} (t) = \bigl( e^{-\alpha^+t} - e^{-\alpha^-t}\bigr)^2\]
\[ a_{vv}(t) = \alpha^++\alpha^- + \frac{4 \alpha^+\alpha^-}{\alpha^++\alpha^-}
\bigl( e^{-(\alpha^++\alpha^-)t}-1\bigr) - \alpha^+e^{-2\alpha^+t} - \alpha^-e^{-2\alpha^-t},\]
and 
\begeq\label{mu+-}
\alpha^{\pm} = \frac12 \bigl( \lambda \pm \sqrt{\lambda^2-4\omega^2}\bigr),
\endeq
and the values $\ov{x}(t)$, $\ov{v}(t)$ are obtained from $x_0$, $v_0$ via
\[ \ov{x}(t) = \left( \frac{\alpha^+ e^{-\alpha^-t} - \alpha^- e^{-\alpha^+t}}{\alpha^+-\alpha^-}\right)\, x_0 
+ \left(\frac{e^{-\alpha^-t}-e^{-\alpha^+t}}{\alpha^+-\alpha^-}\right)\, v_0,\]
\[ \ov{v}(t) = \omega^2 \left( \frac{e^{-\alpha^+t}-e^{-\alpha^-t}}{\alpha^+-\alpha^-}\right) x_0 
+ \left(\frac{\alpha^+ e^{-\alpha^+t} - \alpha^- e^{-\alpha^-t}}{\alpha^+-\alpha^-}\right)\, v_0.\]

\bul More generally, when $F(t,x) = -\nabla V(x)$ (potential confinement), with $V$ not necessarily quadratic, then
\begeq\label{kFPV}
\derpar{f}{t} + v\cdot\nabla_x f -\nabla V(x)\cdot\nabla_v f =\lambda \Bigl( kT \Delta_v f + \nabla_v\cdot (fv) \Bigr).
\endeq
Then there is in general no explicit fundamental solution but there is still an explicit equilibrium distribution $\mu$ whose density is proportional to the negative exponential of the potential energy:
\begeq\label{finfty}
\mu(dx\,dv) = f_\infty(x,v)\,dx\,dv,\qquad
f_\infty(x,v) =\frac{e^{- \frac{1}{kT} \bigl(V(x) + \frac{|v|^2}2\bigr)}}{Z},
\endeq
with $Z$ a normalising constant. Then the relative density of $f$ with respect to $f_\infty$ satisfies the neat linear equation
\begeq \label{kFPh}
h = \frac{f}{f_\infty} \Longrightarrow \qquad
\derpar{h}{t} + v\cdot\nabla_x h - \nabla V(x) \cdot\nabla_v h 
= \lambda\bigl ( kT \Delta_v h - v\cdot\nabla_v h\bigr).
\endeq
Note that the differential operator on the left hand side ($v\cdot\nabla_x - \nabla V(x) \cdot\nabla_v$) is skew-symmetric in $L^2(\mu)$, while the one on the right hand side ($\lambda( kT \Delta_v - v\cdot\nabla_v )$) is symmetric. Moreover, there is a large family of Lyapunov functionals: if $C$ is any convex function on $\R_+$, then
\begeq\label{Lyapidentity} 
\frac{d}{dt} \iint_{\R^n\times\R^n} C\left( \frac{f}{f_\infty}\right) f_\infty\, dx\,dv
= - kT \iint C''\left(\frac{f}{f_\infty}\right) \left| \nabla_v \left(\frac{f}{f_\infty}\right) \right|^2\, f_\infty\, dx\,dv 
\leq 0.
\endeq
\sm

When $F$ does not derive from a potential, there may be a stationary solution but it does not qualify as a thermodynamical equilibrium. Interesting as it is, I will not consider this situation here.

Of particular interest in \eqref{Lyapidentity} are the cases $C(z) = (z\log z - z+1)$ (or $z\log z$), corresponding to Boltzmann's $H$ functional, or negative of the entropy, and $C(z) = (z-1)^2$, corresponding to fluctuations around $f_\infty$. Those two cases correspond respectively to the nonlinear problem and the linearisation problem in the theory of the Boltzmann equation. Considering the particle density, or the fluctuations around equilibrium, leads to two distinct settings, hereafter referred to as $L^1$ and $L^2$:
\sm

\bul {\em The $L^1$ problem:} Given an initial datum $f_0=f_0(x,v) \geq 0$ such that $\int f_0 =1$ (and possibly extra regularity conditions), establish uniform regularity bounds on $f$, the solution of \eqref{kFP}, and show that it converges in large time to the equilibrium distribution $f_\infty$.
\sm

\bul {\em The $L^2$ problem:} Given an initial datum $h_0=h_0(x,v)$ such that $\int h_0^2\,d\mu <\infty$ (and possibly extra regularity conditions), establish uniform regularity bounds on $h$, the solution of \eqref{kFPh}, and show that it converges in large time to the constant function $\int h_0\,d\mu$.
\sm

Note that the $L^2$ problem does not involve any sign condition on $h_0$. Further note that $\int h_0^2\, d\mu = \int (f_0^2/f_\infty)\,dx\,dv$, so the $L^2$ condition demands both more integrability and faster decay at infinity.
\sm

Let us introduce notation for the kinetic diffusion-friction operators appearing in those problems:
\begeq\label{calLf}
{\cal L}_v f = -\lambda \Bigl (kT \Delta_v f + \nabla_v \cdot (fv)\Bigr) 
= -\lambda \Bigl( kT \Delta_v f + v\cdot\nabla_v f + nf \Bigr)
\endeq
\begeq\label{Lh}
L_vh = -\lambda \Bigl (kT \Delta_v h - v\cdot \nabla_v h\Bigr).
\endeq
(Careful, these are the opposite of the respective generators, so ${\cal L}_v$ and $L_v$ are typically positive operators.)

A further extension is the tensor-valued diffusion equation, when the function of interest is not a distribution of mass, but a tensor-valued distribution; that is, a function $\F=\F(t,x,v)$ valued in vectors, or matrices, or higher-order tensors. Of course the Fokker--Planck equation can be defined for tensors as well as functions, using covariant derivation. But there is another ``natural'' evolution equation: the one which is satisfied by the tensor of $N$-derivatives of $f$ in \eqref{kFP}, or of $h$ in \eqref{kFPh}. Those equations differ only in the right-hand side. Fix $N\in\N$. Any term in the $N$-tensor of $(x,v)$ derivatives of a function $g$ at order $N$ is obtained by differentiating $I$ times in the $x$ variable (say in directions $i_1,\ldots i_I$) and $J$ times in the $v$ variable (say in directions $j_1,\ldots,j_J$) with $I+J=N$. It will be convenient to write
\[ \Bigl(\nabla_{x,v}^N u\Bigr)_{i_1 \ldots i_I}^{j_1\ldots j_J} 
= \nabla_{i_1\ldots i_I}^{j_1\ldots j_J}  u = \derpar{{}^I}{x_{i_1}\ldots \pa x_{i_I}}
\derpar{{}^J}{v_{j_1}\ldots \pa v_{j_J}} u. \]
Using the commutation of $\Delta_v$ with $\nabla_x$ and $\nabla_v$, and the identities
\[ (\nabla_x,\nabla_v)( v\cdot\nabla_x g) = \Bigl( (v\cdot\nabla_x)(\nabla_xg), (v\cdot\nabla_x)(\nabla_v g) + \nabla_x g\Bigr) \]
\[ (\nabla_x,\nabla_v) (v\cdot\nabla_v g) = \Bigl( (v\cdot\nabla_v)(\nabla_xg), (v\cdot\nabla_v)(\nabla_v g) + \nabla_v g\Bigr) \]
it is easy to arrive at the following expressions: 
\begeq\label{vDNg}
 \nabla_{i_1\ldots i_I}^{j_1\ldots j_J} (v\cdot\nabla_x g)
= (v\cdot\nabla_x) \bigl(\nabla_{i_1\ldots i_I}^{j_1\ldots j_J} g\bigr )
+ \sum_{\ell=1}^J \nabla_{i_1\ldots i_I j_\ell}^{j_1\ldots j_{\ell-1} j_{\ell+1}\ldots j_J} g
\endeq
\begeq\label{DNcalLg}
 \nabla_{i_1\ldots i_I}^{j_1\ldots j_J} ({\cal L}_v f)
 = {\cal L}_v \nabla_{i_1\ldots i_I}^{j_1\ldots j_J} f + \lambda J \nabla_{i_1\ldots i_I}^{j_1\ldots j_J} f
 \endeq
 \begeq\label{DNLg}
 \nabla_{i_1\ldots i_I}^{j_1\ldots j_J} (L_vh)
 = L_v \nabla_{i_1\ldots i_I}^{j_1\ldots j_J} h - \lambda J \nabla_{i_1\ldots i_I}^{j_1\ldots j_J} h
 \endeq
 Of course in the expression the terms are invariant under permutation, from Schwarz's lemma.

From this follows a natural generalisation of the geodesic derivation $v\cdot\nabla_x$ to $(I,J)$-tensors, not necessarily $N$-differentials:
\begeq\label{Xiflat}
\bigl( \Xi X)_{i_1\ldots i_I}^{j_1\ldots j_J}
= (v\cdot\nabla_x) X_{i_1\ldots i_I}^{j_1\ldots j_J}
+ \sum_{\ell=1}^J X_{i_1\ldots i_I j_\ell}^{j_1\ldots j_{\ell-1} j_{\ell+1}\ldots j_J}.
\endeq
(Again, this is not the same as letting $v\cdot\nabla_x$ act covariantly on the tensor.)
And as well there are natural generalisations of the operators ${\cal L}_v$ and $L_v$ which may be applied to $(I,J)$-tensors, not necessarily $N$-differentials:
\begeq\label{calLF}
\bigl( ({\cal L}_v)_I^J \F \bigr)_{i_1\ldots i_I}^{j_1\ldots j_J}
= {\cal L}_v(\F_{i_1\ldots i_I}^{j_1\ldots j_J}) - \lambda J \F_{i_1\ldots i_I}^{j_1\ldots j_J},
\endeq
\begeq\label{LX}
\bigl( (L_v)_I^J X \bigr)_{i_1\ldots i_I}^{j_1\ldots j_J}
= L_v(X_{i_1\ldots i_I}^{j_1\ldots j_J}) + \lambda J X_{i_1\ldots i_I}^{j_1\ldots j_J}.
\endeq
In alternative notation, $(L_v)_I^J X = -\lambda (kT \Delta_v - v\cdot\nabla_v - J)X$, where $J$ is the number of ``upper indices'' of $X$. 

Now we may consider the associated evolution problem on $N$-forms. The $L^1$ problem is not so natural, since the nonnegativity condition does not seem to make any sense, but a natural $L^2$ problem for tensors can be formulated, with the equation
\begeq\label{kFPX}
\pa_t X + \Xi X + (L_v)_I^J X =0.
\endeq

\bul {\em The $L^2$ problem for tensors:} Given an integer $N\geq 1$ and an initial tensor field $X_0=(X_0(x,v))_{i_1\ldots i_N}^{j_1\ldots j_J}$ indexed by all families $(i_1,\ldots i_I), (j_1,\ldots, j_J)$ such that $I+J=N$, invariant under permutations of either upper or lower indices, and such that $\int |X_0|^2\,d\mu <\infty$ (and possibly extra regularity conditions), establish uniform regularity bounds on $X$, the solution of \eqref{kFPX}, and study its long-time behavior.

\begin{Rk} Even if, for tensors, the $L^1$ problem seems less natural, \eqref{calLF} can be useful to study situations in which the decay at infinity is not so fast, so that the problem is set, say, in $L^2$ with Lebesgue measure rather than $L^2$ with Gaussian measure.
\end{Rk}

As a final remark for this introduction, if one is not interested in some asymptotic regime (small or large friction, small or large temperature), one may choose units of space and time in such a way that
\begeq\label{lambdanorm} \lambda = 1,\qquad kT = 1, \endeq
thereby reducing to the case where all coefficients in \eqref{kFP} or \eqref{kFPh} are unity, at the expense of possibly rescaling the force term $F$.

\bibnotes

Classical sources on the kinetic Fokker--Planck equation are the treatises by Chandrasekhar \cite{chandr:43}, where astrophysics plays a key role; and by Risken \cite{risken:book}, with a particular emphasis on modelling and computations. The explicit solution for linear forcing is in Section~10.2 of the latter reference. The kinetic Fokker--Planck equation is also often called the Kramers or Klein--Kramers or Kramers--Chandrasekhar or Ornstein--Uhlenbeck equation. The name Fokker--Planck equation is also often given to the diffusion-drift model \eqref{dd}, which itself is also called the Langevin equation. The terminology Fokker--Planck equation is also used sometimes for the model of kinetic collisions in plasmas which was established by Landau \cite{landau:Coulomb:36}, or for more general models of collisions in plasmas \cite{delcbers:book:94}. It is also common to dub Fokker--Planck equation any linear equation describing the evolution of the statistics of an ensemble of particles undergoing deterministic and stochastic processes; but then this is also called the Kolmogorov forward equation (thus, much more general than the particular Kolmogorov kinetic equation \eqref{K}). So any attempt of consistency in the terminology is certainly doomed in this large field, which comprises certainly hundreds of thousands of publications. 

Still, the literature devoted to the kinetic model \eqref{kFP} is minuscule compared to the huge literature devoted to the drift-diffusion (or Langevin) equation \eqref{dd}. Models described by the latter equation are immensely popular, in all areas of science, from chemistry to machine learning, especially when the force $F$ comes from a potential $V$; but from the point of view of classical mechanics one should recall that they come from a high-friction approximation.

As for the mathematical analysis of those diffusion equations, both \eqref{kFP} and \eqref{dd}, the $L^2$ problem has been much more studied than the $L^1$ problem, in particular because $L^2$ is suitable for spectral analysis and energy estimates, also because it naturally arises as the linearisation of various nonlinear problems around an equilibrium. But Boltzmann's entropy and $H$ Theorem are most naturally constructed and stated in the $L^1$ setting, also leading to a physically meaningful $L \log L$ estimate. In some sense the $L\log L$ and $L^2$ settings correspond to extremal cases of a whole class of problems, for functionals lying ``between'' the entropic and the quadratic nonlinearities; see e.g. Arnold, Markowich, Toscani and Unterreiter \cite{AMTU:FP:01} for \eqref{dd}. The $L^1$ problem for the general equation \eqref{kFP}, in terms of regularity and equilibration, was considered in \cite{vill:hypoco}.

The equation for tensors is considered by Bismut and Lebeau in the geometric case \cite{bismut:hypoelliptic:05,bismutlebeau:hypo:book}, but it already makes sense (and is already nontrivial) in Euclidean geometry, which is why I chose to present it at this early stage.

\section{Hypoellipticity and hypocoercivity} \label{sechypo}

The kinetic Fokker--Planck equation \eqref{kFP} displays diffusion only in the velocity variable, while the position variable appears only through the first-order operator $v\cdot\nabla_x - \nabla V(x)\cdot \nabla_v$. The latter operator is skew-symmetric in $L^2(dx\,dv)$ and in $L^2(\mu)$, and therefore is in itself associated neither with regularisation nor with equilibration; it is actually the generator of the classical Hamiltonian system with force potential $V$, which is conservative in any (mathematical) sense of the word. However, the fundamental solution \eqref{FSK} is positive everywhere and smooth in all variables $x$ and $v$ for $t>0$, with regularisation time scales $t^{1/2}$ in $v$ and $t^{3/2}$ in $x$. Also, as $t\to\infty$ the fundamental solution \eqref{FSK2} displays exponential convergence, as a time-dependent function of $(x,v)$, to the global equilibrium, and the rate of convergence is the lowest eigenvalue $\alpha^- = (\lambda - \sqrt{\lambda^2-4\omega^2})/2$. (When $\omega> \lambda/2$, the rate is just $\lambda/2$.)

These phenomena are not exceptional: They are observed with many diffusion generators $L$, when the truly diffusive part and the conservative part satisfy certain geometric conditions. By far the most famous such criterion was stated and proved by Lars H\"ormander in 1967, and is nearly optimal. It is expressed in terms of {\em commutators} and {\em Lie brackets}. 

Recall the basics of that formalism. A derivation $X$ on $\R^N$ is a local differential operator on functions of $x$, taking smooth functions to smooth functions, and such that $X(fg) = (Xf) g + f(Xg)$. To any derivation is associated a unique smooth vector field $a=(a_i(x))_{1\leq i\leq N}$ such that $X f = a\cdot\nabla f = \sum_i a_i(x)\, \pa f/\pa x_i$. If any two derivations $X,Y$ are given with respective vector fields $a$ and $b$, let $[X,Y] = XY - YX$ be their bracket, or commutator, then $[X,Y]$ is also a derivation, associated with the vector field $c_i = \sum_j (a_j \,\pa_j b_i - b_j\, \pa_j a_i)$, or by abuse of notation $c=[a,b]$. Thus smooth vector fields equipped with that bracket form a Lie algebra.

Now comes H\"ormander's criterion. Let $L = -( \sum X_i^2 + X_0 + m)$, where the $X_i$'s ($0\leq i\leq k$) are derivation operators in $\R^N$ associated with smooth vector fields $a_i$ respectively, and $m$ is a smooth function. If the Lie algebra generated by the $X_i$'s (that is, the linear space generated by successive brackets obtained from the $X_i$'s) span all directions, everywhere, then the resolution of the operator is regularising: that is, if $Lf$ is smooth then $f$ is smooth; and a solution of $\pa_t f + L f =0$, starting from an initial condition $f_0$, is smooth for any $t>0$ even if $f_0$ is not.

H\"ormander's theory comes with explicit estimates in Sobolev spaces, allowing to relate the Sobolev regularity of $u$ in a ball to that of $Lu$ in a larger ball, with a gain of regularity; and that gain is typically fractional, depending in particular on the number of commutators. Alternatively, for the evolution problem, it yields regularisation time scales.

Let us see how this works on \eqref{kFP}. In this case we work in Euclidean space $\R^n_x\times\R^n_v$, and there are $n+1$ derivations to be considered: 
\[ X_0 = v\cdot\nabla_x - \nabla V\cdot\nabla_v - v\cdot\nabla_v , \qquad 
X_i = \derpar{}{v_i}\qquad (1\leq i\leq n).\]
Equivalently,
\[ a_0(x,v) = [v, -\nabla V(x)-v],\qquad a_i(x,v) = [0, e_i],\]
where $(e_i)_{1\leq i\leq n}$ is the standard orthonormal basis of $\R^n$. Then for each $i$,
\[ [X_i, X_0] = \derpar{}{x_i} - \derpar{}{v_i}, \]
or equivalently $(e_i,-e_i)$, providing all the missing $x$-directions, and for each position $(x,v)$ in phase space. So only one commutator suffices. 

By convention a weight~1 is attributed to each $X_i$ appearing as a square in the expression of $L$, and by ``homogeneity'' a weight 2 will be attributed to $X_0$; then the weight of a commutator is the sum of the weights of the involved derivations (and the weight of a sum of operators is the maximum of the involved weights), and the weight of an operator is the minimal weight of an iterated commutator generating that operator. So in the case of the kinetic Fokker--Planck the weight of each $\pa_{v_i}$ is 1, the weight of $v\cdot\nabla_x - \nabla V\cdot\nabla_v - v\cdot\nabla_v$ is~2, and the weight of each $\pa_{x_i}$ is $1+2=3$. Those weights 1 for $v$-derivatives and 3 for $x$-derivatives provide a quantitative ``explanation'' for the regularisation rates in $v$ and $x$ appearing in \eqref{FSK}. By regularisation rates it is meant for instance that, locally in short time,
\[ \|\nabla^k_v f\|_{L^2(B)} = O \left( \frac{\|f_0\|_{L^2(B')}}{t^{k/2}}\right), \qquad
\|\nabla^k_x f\|_{L^2(B)} = O \left( \frac{\|f_0\|_{L^2(B')}}{t^{3k/2}}\right), \]
where $B$ is a ball in $\R^{2n}$ and $B'$ is a strictly larger ball. So the $v$-regularity appears just as fast as for the heat equation in $\R^n_v$, while the $x$-regularity has a rate (measured in the exponent of the typical regularisation time) 3 times slower.

Here is a heuristics to understand this factor 3 in terms of spreading singularities. Without loss of generality, fix dimension $n=1$. Consider the diffusive spreading of a singularity in $v$ (along a line parallel to the $v$-space) or in $x$ (along a line parallel to the $x$-space). For the $v$-singularity, diffusion in the velocity variable will act immediately to spread it in small time $t$, by a typical distance of $\sqrt{t}$. so the typical gradient will be of order $1/\sqrt{t}$ if the function itself is bounded. For the $x$-singularity, diffusion does not help, as it leaves the singularity invariant. But the transport operator will {\em twist} the direction of the singularity, by an angle $O(t)$ in small time. After that, the $v$-singularity will spread it by a distance $O(\sqrt{t})$ in the $v$ direction, and from the combination of both, the spreading in the $x$ direction will be about $\sqrt{t}\times (\sin t) \simeq t^{3/2}$. 

\begin{figure}
\def\svgwidth{0.5\textwidth}
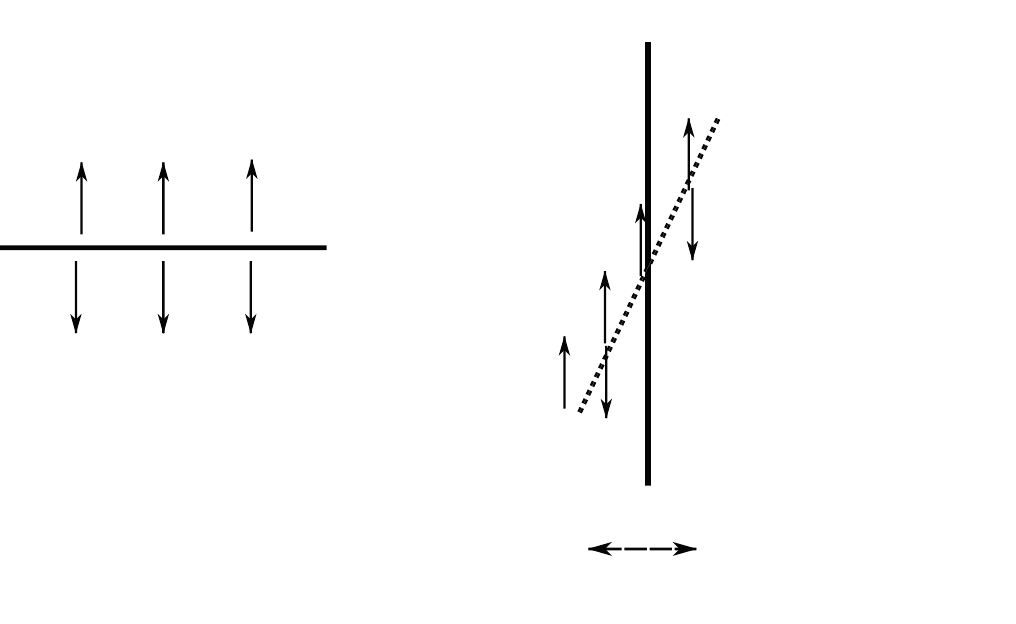

\caption{On the left, a singularity in velocity variable is spread by width $O(t^{1/2})$ through the action of the diffusion in velocity space. On the right, a singularity in position variable is spread by width $O(t^{3/2})$ by the combined action of transport twisting (or tilting) and vertical diffusion.}
\label{figcollision}

\end{figure}

After H\"ormander's pioneering work, there was an intense activity to better understand and refine his criterion, determine generally optimal exponents, classify operators in terms of the Lie algebra of derivations. Various tools of real and complex analysis were put to good use -- spectral theory, harmonic analysis, pseudodifferential calculus. Also a probabilistic version was developed. The main emphasis was on hypoelliptic diffusions of the form $\sum X_i^2$, rather than $X_0 + \sum X_i^2$.

In the 2000's, as the qualitative study of kinetic equations was gaining strength, the general formalism and techniques of hypoellipticity were carefully applied to the particular case of the kinetic Fokker--Planck equation \eqref{kFP} in Euclidean space, to get global estimates of regularisation and equilibration. Simultaneously, more elementary methods were devised to handle this specific equation and other models of interest, based on Sobolev norms including well-chosen lower-order terms. 

At the same time, the problem of equilibration for such equations was studied in its own right, leading to the concept of {\bf hypocoercivity}. By definition, a linear unbounded operator $L$ on a Hilbert space ${\cal H}$ is said to be hypocoercive on a subspace $\tilde{\cal H}$ (typically the orthogonal of the kernel of $L$) if for all $t\geq 0$, $\|e^{-tL}h_0\|\leq C e^{-\lambda t} \|h_0\|$, for all $h$ in $\tilde{\cal H}$. If $C=1$ {\em and} the norm is Hilbert, this is equivalent to coercivity; but if the norm is not Hilbert or if $C>1$, then this is a weaker concept. 

Hypoellipticity and hypocoercivity are often found together, but one can also have one without the other (and vice versa). There is no general criterion for hypocoercivity which would be as neat as H\"ormander's, but there is a general abstract frame as follows. Let $L =  \sum A_i^*A_i + B$ where $(A_i)_{1\leq i\leq k}$ and $B$ are arbitrary linear unbounded operators on a Hilbert space ${\cal H}$, and $B$ is antisymmetric; and let $C_{i,j}$ be successive commutators of $A_i$ with $B$ (that is, $C_{i,0} = A_i$, and $C_{i,j+1} = [C_{i,j},B]$). Then if 
\begeq\label{sumcoercive}
\sum_{i\leq k,\ j\leq N_c} C_{i,j}^*C_{i,j} \geq \kappa\, \Id \qquad \text{for some $\kappa>0$}
\endeq
and the commutators satisfy certain bounds (whose general form is omitted here), then $L$ is hypocoercive; and again, this can be quantified by differential equations in certain explicit Hilbert norms.

Here are two archetypal examples for \eqref{kFPh}, both assuming that $|\nabla^2V| = O (1+|\nabla V|)$ (that is, $V$ grows at most exponentially at infinity, in a sense):
\sm

\bul To prove instantaneous global regularisation from $L^2(\mu)$ to the Sobolev space $H^1(\mu)$, let
\begeq\label{Normh}
{\cal F}(t,h(t)) = \iint h^2\,d\mu + a t \iint |\nabla_vh|^2\,d\mu + 2bt^2 \iint \nabla_v h \cdot \nabla_x h \,d\mu + c t^3 \iint |\nabla_x h|^2\,d\mu
\endeq
with $a,c>0$ and $b\in\R$ such that $b^2<ac$ (so that for any $t$ the squared norm defined by ${\cal F}$ is comparable, from above and below, to the same expression with $b$ replaced by~0). Then for $a,b,c$ well chosen, 
\[ \frac{d}{dt} {\cal F}(t,h(t)) \leq 0,\]
and in particular
\begeq\label{xvappear} 
\|\nabla_v h\|_{L^2(\mu)} \leq \frac{C\,\|h_0\|_{L^2(\mu)}}{t^{1/2}}, \qquad
\|\nabla_x h\|_{L^2(\mu)} \leq \frac{C\,\|h_0\|_{L^2(\mu)}}{t^{3/2}}.
\endeq

\sm
\bul To prove equilibration in $L^2(\mu)$, consider
\begeq\label{equilibrh}
{\cal N}(h)^2 = \iint h^2\,d\mu + a \iint |\nabla_v^2h|^2\,d\mu + 2b \iint \nabla_v h \cdot \nabla_x h \,d\mu + c \iint |\nabla^xh|^2\,d\mu
\endeq
also with $a,c>0$ and $b\in\R$ such that $b^2<ac$ (so that ${\cal N}^2$ is comparable, from above and below, to the same expression with $b$ replaced by~0). Then for $a,b,c$ well chosen there is $\kappa>0$ such that
\begeq\label{exph} \iint h\,d\mu = 0 \Longrightarrow \qquad
\frac{d}{dt}  {\cal N}(h)^2 \leq - 2\kappa\, {\cal N}(h)^2.
\endeq

Of course, the combination of \eqref{xvappear} and \eqref{equilibrh} implies that $\|h(t) \| = O (\|h_0\|\,e^{-\kappa t})$. 
\sm

Reinforcing the assumptions on $V$, one can make the norms more demanding. There are also related techniques to handle the $L^1$ problem: For the regularisation, Lyapunov functionals cooked up with classical Sobolev spaces and interpolation arguments combining moments, $L^1$ and Sobolev norms; and for the equilibration, information-theoretical functionals combining Boltzmann and Fisher informations. I will come back to those issues later in the geometric context. 

For now, to conclude the presentation of the Euclidean setting, here are two typical global estimates, combining global regularisation and equilibration for the kinetic Fokker--Planck equation:
\sm

\bul If $|\nabla V|\to\infty$ at infinity and $|\nabla^j V| = O(|\nabla V|)$ for all $j\in\N$, then there is $\kappa>0$ such that for any $k\in\N$, there is $C_k>0$ such that solutions of \eqref{kFPh} satisfies
\begeq\label{eqhVkL2}
\|\nabla_v^{3k} h\|_{L^2(\mu)} + \|\nabla_x^k h\|_{L^2(\mu)}
\leq C_k\,\left(1 + \frac1{t^{3k}}\right)\, e^{-\kappa t}\, \|h_0\|_{L^2(\mu)}.
\endeq
\sm

\bul If $V$ is uniformly convex outside of a compact set, and $|\nabla^jV| = O(1)$ for all $j\geq 2$, and $E = \int f(x,v) [V(x)+|v|^2/2]\,dx\,dv <\infty$, then there is $\lambda>0$ such that for any $k\in\N$, there are $\alpha_k>0$, $\theta_k\in (0,1/2)$, $C_k>0$, only depending on $E$ and $k$, such that solutions of \eqref{kFPV} satisfy
\begeq\label{eqhVkL1}
\bigl\|\nabla_v^{3k} (f-f_\infty)\bigr \|_{L^2} +\bigl \|\nabla_x^k (f-f_\infty) \bigr\|_{L^2}
\leq C_k\,\left(1 + \frac1{t^{\alpha_k}}\right)\, e^{-\lambda t}\, \left(\iint f \log \frac{f}{f_\infty}\right)^{\theta_k}.
\endeq
Here the Lebesgue norms are with respect to the Lebesgue measure $dx\,dv$, so these are classical Sobolev spaces. Moreover, with a bit more work, one could replace the relative information $\int f\log (f/f_\infty)$ by the square of a weak distance between probability densities, such as an optimal transport (Wasserstein) distance.

\bibnotes

What I called here conservative/dissipative parts of the generator are sometimes called the reversible/irreversible parts, as in \cite[Chapter~10]{risken:book}.

For the Kolmogorov operator there are many ways to prove hypoellipticity; apart from the fundamental solution \eqref{K} this can be done by Fourier analysis as in Nier \cite[Section~1]{nier:pekin:06}; this is pedagogical but neither robust nor optimal.

The modern general theory of hypoellipticity was founded by H\"ormander \cite{horm:hypo:67}, with an amazing degree of maturity from the start. Soon after, J.~J. Kohn \cite{kohn:hypo:69}, Oleinik and Radkevi\v{c} \cite{oleinikradkevic:book} showed the robustness and flexibility of the then-young pseudodifferential calculus to establish hypoelliptic estimates in $L^2$. A series of papers by Folland, Stein and Rothschild, culminating in Rothschild--Stein \cite{rothschildstein:nilp:76}, adapted the machinery of harmonic analysis, singular operators, Calder\'on--Zygmund theory, to get optimal regularity estimates and to extend them from $L^2$ to $L^p$. Archetypal settings have been the Heisenberg group and more generally Carnot groups. Folland \cite{folland:75} reviews all that progress. H\"ormander himself addresses the topic in his famous treatise on linear differential operators \cite[Chapter 22]{hormander:LPDO:3}. This has turned into a huge area which I do not even attempt at describing. Modern surveys were written by Bramanti \cite{bramanti:invitation:book}, Bramanti and Brandolini \cite{bramantibrandolini:hormander:book}.

When nonlinear kinetic theory was developed in the nineties, in the wake of the works of DiPerna--Lions and others, robust hypoellipticity methods, adapted for problems with low regularity (sometimes aiming at gaining as tiny as just compactness), were obtained by combining velocity-averaging lemmas with energy estimates -- a method which was already in germ in H\"ormander's paper. See e.g. Lions~\cite{lions:landau:94} or Alexandre--Villani~\cite{AV:boltz:02} for applications to nonlinear collisional kinetic equations of hypoelliptic nature.

From the 2000's on, explicit estimates of global regularisation and equilibration for the kinetic Fokker--Planck equation, first in the works of Helffer--Nier \cite{helfnier:witten:05} and H\'erau--Nier \cite{heraunier:FP:04}, combining H\"ormander's method with tools from semiclassical limit and spectral or pseudospectral estimates. The work of Bismut and Lebeau \cite{bismutlebeau:hypo:book} was in the same vein.

The modern theory of hypocoercivity, based on Lyapunov functionals and norms, was born in the beginning of the 2000's. The word ``hypocoercivity'' itself was suggested to me in private discussion by Gallay in 2002. This was one of the main themes in my lecture at the 2006 International Conference of Mathematicians in Madrid \cite{vill:icm}, and it was also the main focus of my 2009 memoir \cite{vill:hypoco}, including estimates \eqref{eqhVkL2} and \eqref{eqhVkL1}, also showing how a number of examples from mathematical physics can be recast in this context.

One of my initial motivations was to understand in a more elementary way the methods of Helffer, Nier and H\'erau, and to improve their results. Also it was important, for further progress, to clearly distinguish the regularisation problem from the equilibration problem, and to distinguish those statements that were of abstract nature (that one may formulate in Hilbert spaces, for instance) from those which need real analysis. The use of mixed terms $\int \nabla_x h \cdot \nabla_v h$ drew inspiration from earlier works by Guo \cite{guo:landau:02} and Talay \cite{talay:stochHamilt:02}. The devising of a norm that is ``dynamically'' adapted to the evolution equation is also reminiscent of a classical proof of a classical result (recalled in the classical treatise by Arnold~\cite{arnold:EDO:mir}) in the theory of ordinary differential equations: {\em If a vector field has an equilibrium position for which all eigenvalues of the linearised equation have a negative real part, then the convergence near equilibrium is exponential.}

An important addition is provided by Dolbeault--Mouhot--Schmeiser \cite{DMS:hypo:15}, who develop a general construction of norms that are suitable for hypocoercive estimates, but do not involve any smoothness -- say, equivalent to the $L^2(\mu)$ norm. This is achieved typically by damping the derivatives with suitable regularising operator; it draws inspiration from H\'erau \cite{herau:linboltz:06}.

A notable functional reformulation of both global hypoellipticity and hypocoercivity, closer in spirit to H\"ormander's original paper, was achieved by Albritton--Armstrong--Mourrat--Novack \cite{AAMN:KFP:24} for the kinetic Fokker--Planck equation (in Euclidean geometry, possibly with boundaries); I shall come back to their work in Sections \ref{seclebeau} and \ref{sechypoco}.

Achleiter, Arnold, Mehrmann and Nigsch \cite{AAMN:hypo:25} study various reformulations of hypocoercivity in Hilbert spaces, obtaining necessary and sufficient conditions for bounded dissipative operators to be hypocoercive.

Here is a short list of further related work on hypocoercivity for various models of kinetic Fokker--Planck type:  Eckmann and Hairer \cite{eckmannhairer:uniqueness:01,eckmannhairer:hypo:03}, Rey-Bellet and Thomas \cite{reybelletthomas:anharmonic:00,reybelletthomas:exp:02} (for models arising in statistical mechanics, such as heat conduction), Mouhot and Neumann \cite{mouhotneumann:06}, various combinations of Bouin--Dolbeault--Lafl\`eche--Mouhot--Schmeiser \cite{BDL:frachypo:22,BDLS:hyposub:21,BDS:hypoweak:20,BM:hypofluid:22,DMS:hypo:15} (including fractional diffusions, weak confinement, fluid approximations, etc), various combinations of Cattiaux--Lebeau--Nasreddine--Puel \cite{CNP:heavycrit:19,LP:heavy:19,NP:heavy:15} (diffusion approximation in the case of heavy tails). I will also quote H\'erau \cite{herau:linboltz:06} for an early work on the linear Boltzmann model, Mischler--Qui\~{n}inao--Touboul \cite{MQT:NN:16} for a model of neural networks, Menegaki \cite{menegaki:anharmonic:20} for a new look at models of heat propagation... Nonlinear interaction models (Vlasov--Fokker--Planck kinetic equations) were considered by various authors, in particular Guillin--Liu--Wu--Zhang \cite{GLWZ:kFP:21}.

The introduction of powers of $t$ in the functional \eqref{Normh} is due to H\'erau \cite{herau:FP:07}; the original argument in \cite{vill:hypoco} was not using it and instead doing a bit more interpolation; both recipes are good to know.

The memoir \cite{vill:hypoco} also contained a general method for nonlinear equations with global smoothness bounds and a degenerate dissipativity, generalising and improving on earlier work with Desvillettes \cite{DV:FP:01,DV:boltz:05}, with applications to the nonlinear Boltzmann equation \cite[Part III]{vill:hypoco} and to the self-consistent Vlasov--Fokker--Planck equation with periodic interaction potential. It can only yield $O(t^{-\infty})$ decay rates, not exponential, and it does cover some situations where exponential decay is known to fail.

At least two other important classes of linear methods are worth mentioning and can often be mixed with the previously mentioned hypocoercivity point of view. The first one, unsurprisingly, is the spectral theory of linear operators. For the classical Fokker--Planck equation (without space variable), the spectrum is explicit and eigenfunctions are well-known; a linear treatment also applies to a number of variants, e.g. with fractional operators or weak confinement, as in works by Kavian, Mischler, Ndao, Tristani~\cite{KMN:FP:21,MT:FP:17,tristani:fractional:15}. When the spatial variable is there, the operator is of course no longer symmetric, but one can invoke the quantitative theory of semigroups for nonsymmetric generators, developed by Gualdani, Mischler and Mouhot for applications in kinetic theory (but not only) \cite{GMM:nonsym:17,MM:expslow:16}. A new twist in this line of ideas was introduced by Bernard--Fathi--L\'evitt--Stoltz \cite{BFLS:hypo:22}, working out hypocoercive estimates through the method of Schur complements, already known in spectral theory for other purposes.

The second one is the venerable theory of equilibration for recurring Markov processes, first explored by Wolfgang Doeblin and Theodore Harris, and then systematically developed in the nineties by Meyn and Tweedie \cite{meyntweedie:stability:92,meyntweedie:stability:93,meyntweedie:geometric:94,meyntweedie:MCbook}. Its two key ingredients are a strict positivity estimate and a confinement estimate. A pedagogical introduction is in Stroock \cite{stroock:markovbook}. Equations of degenerate Fokker--Planck type were considered in this style by Talay \cite{talay:stochHamilt:02}, then in a series of works by Hairer and Mattingly, see e.g.~\cite{HM:yetanother:11}; then precise results for the kinetic Fokker--Planck equations were obtained by Ca\~{n}izo, Cao, Evans and Yolda \cite{CCEY:harris:20,cao:kFP:21}; eventually optimised and simplified proofs were established by Ca\~{n}izo and Mischler \cite{canizomischler:harris:23}, who also provide a nice review and history of the field.

In relation with the latter mention, there is also a tradition to evaluate the distance to equilibrium through distances on probability measures, such as optimal transport (Wasserstein) distances, which are reviewed for instance in \cite[Chapter~6]{vill:oldnew} and well adapted to coupling techniques. In the context of hypocoercive estimates, there are several works making good use of them; see for instance Hairer--Mattingly \cite{HM:SGW:08} (for a stochastic model of fluid mechanics), Baudoin \cite{baudoin:wasshypo:16}, Bolley--Guillin--Malrieu \cite{BGM:VFP:10}, or Guillin--Le Bris--Monmarch\'e  \cite{GLBM:VFP:22} (the last two papers for the nonlinear Vlasov--Fokker--Planck equation under some assumptions on the coupling potential).

That list is just a sample. Bernard--Fathi--L\'evitt--Stoltz \cite{BFLS:hypo:22} provide a more complete attempt of references and description of methods. But as of 2026, a Google Scholar search on the keyword ``hypocoercivity'' lists more than 1000 contributions; so basically there is, by now, already no way to comprehensively review the literature on hypocoercivity, even though the word itself was introduced only two decades ago. Still, only a tiny proportion of those works incorporate Riemannian geometry; this will be the focus of the rest of this memoir.

\section{Kinetic theory meets Riemannian geometry} \label{secmeet}

The huge majority of studies on the kinetic Fokker--Planck equation are set in flat geometry: $\R^n$ or $\T^n = \R^n/\Z^n$. A number of them have geometric interpretation and intuition. There are however several motivations for going genuinely geometric, that is, to assume that the position space is a non-Euclidean geometry. Indeed:

(a) Some problems coming from mathematical physics involve both Riemannian (or Lorentzian) geometry and kinetic formalism; this includes in particular the study of matter in general relativity. Of particular interest in this respect is the relativistic kinetic Fokker--Planck equation introduced by Fabrice Debbasch, Kirone Mallick and Jean-Pierre Rivet.

(b) Geometric problems make constant use of geodesic flow and heat flow; when one wishes to combine both or interpolate between them, a kinetic Fokker--Planck equation naturally arises: recall indeed from \eqref{kgeod} and \eqref{dd} that both the geodesic and heat flows are two limit cases of the Fokker--Planck semigroup. In this vein, in a notable series of works from the 2000's, Jean-Michel Bismut showed how to address certain problems of topology by using a geometric version of the kinetic Fokker--Planck equation acting on 1-forms.

(c) Going geometric is also a way to test the robustness and optimality of tools coming from the ``flat'' theory.

(d) In the more demanding context of non-Euclidean geometries, one is naturally led to try and identify more elaborate estimates and points of view which may be useful also for kinetic equations set in Euclidean geometry.
\sm

The resulting equation will be called the {\bf geometric kinetic Fokker--Planck equation}; a precise formulation will be provided later. Before embarking on this study, I will mention two notable ``fixed points'' in the literature:
\sm

\bul H\"ormander's theory of local regularity is set in invariant terms, and clearly applies in a Riemannian as well as Euclidean geometry. It was by adapting these tools to global estimates that the first study of hypoellipticity and hypocoercivity was performed for the geometric kinetic Fokker--Planck equation, by Jean-Michel Bismut and Gilles Lebeau. This is for the $L^2$ problem.
\sm

\bul The Meyn--Tweedie theory of equilibration of Markov processes, which rests mainly on positivity and localisation, is very robust and goes over to the geometric context without notable difficulty. This is for the $L^1$ problem.
\sm

On the other hand, functional approaches to these issues do require some care in handling functionals inequalities, commutators, regularity estimates and so on. Commutators, spectral gap inequalities, classical Sobolev inequalities, logarithmic Sobolev inequalities, interpolation inequalities, Nash inequalities can all be put to good use in a Riemannian setting. This is precisely the task that Debbasch, Ollivier and myself assigned ourselves in 2008, letting our incomplete results circulate among experts. Since then only a handful of researchers have explored that path further: Fabrice Baudoin--Camille Tardif--J\"urgen Angst--Isma\"el Bailleul, Qiuyu Ren--ZhongKai Tao, Francis Nier--Xingfeng Sang--Francis White. Their results will be reviewed later, along the course of presentation of my own work, old and new, with Debbasch and Ollivier. Before that, in the rest of this section I will introduce the basic notation and concepts, and write the explicit form of the geometric kinetic Fokker--Planck equation.

In all the rest of these notes, the position space will be a smooth compact connected Riemannian manifold $M$ of dimension $n\geq 1$, equipped with a smooth Riemannian metric $g$. Compactness already allows for rich phenomena, and the unboundedndess of the tangent bundle will be a rich source of difficulties, even for a compact manifold. 

For any $x\in M$ I will write $T_xM$ for the tangent space at $x$, and $T_x^*M$ for its dual. For any two tangent vectors $u,v$ in $T_xM$, $\<u,v\>=\<u,v\>_x = g_x(u,v)$, $|v|^2=|v|_x^2 = g_x(v,v)$. Each $T_xM$ is a Euclidean space, and $g_x$ transforms 1-forms $p$ into vectors $v$ and vice versa, via
\[ \<p,w\>_x = \< v,w\>_x,\]
where brackets on the left-hand side are just evaluation, and on the right-hand side scalar product. Thus $g_x$ can be identified with a symmetric positive operator $T_xM\to T_x^*M$; its inverse will be denoted $g^*$. If $(g_{ij})$ are the components of $g$ then $(g^{ij})$ will stand for the components of $g^*$; so that the $n\times n$ matrix $(g^{ij})$ is the inverse of the matrix $(g_{ij})$, or $g^{ij} g_{jk} = \delta^i_k$, with implicit summation over the repeated index $j$. (Note: $\delta_k^\ell = \delta_{k\ell} = \delta^{k\ell}$ is always 1 if $k=\ell$ and 0 otherwise.)

The volume is $\vol(dx) = \sqrt{\det g_x}\, dx_1\ldots dx_n$, and this is also the $n$-dimensional Hausdorff measure on $M$. As for $T_xM$, it has a natural volume as a Euclidean space, $\vol_x(dv) = \sqrt{\det g_x}\, dv_1\ldots dv_n$.

Then $\TM = \cup_x (\{x\}\times T_xM)$ is the tangent bundle and $\TsM =\cup_x (\{x\}\times T^*_xM)$ the cotangent bundle; both are locally product. If $x^1,\ldots,x^n$ are local charts on $M$ (so that locally $x = \sum x^i e_i$), then locally $(\pa/\pa x^i)_{1\leq i\leq n}$ is a basis for each $T_xM$, and tangent vectors can be decomposed as $v=\sum v^i \pa/\pa x^i$. Then $(v^1,\ldots, v^n)$ are coordinates in $T_xM$, and tangent vectors are $\pa/\pa v^1,\ldots, \pa v^n$. I shall sometimes abbreviate $\pa/\pa x^i$ as $\pa_{x^i}$, or just $\pa_i$, and $\pa/\pa v^i$ as $\pa_{v^i}$. There is the dual basis $(dx^\ell)$ with
\[ \left\< dx^\ell, \derpar{}{x^k} \right\> = \delta_k^\ell.\]

Then one can define vector fields and their local coordinates, and higher order $(J,I)$-tensors ($I$-covariant, $J$-contravariant), which form a vector space $\TIJ$.
Following well-established convention, lower indices will be used for covariant components and upper indices for contravariant components; and indices of tensors will be raised and lowered through the action of the metric; and summation over repeated indices will be implicit. So for instance $Y^i = \sum_j g^{ij} Y_j = g^{ij} Y_ j$. If $X$ and $Y$ are two $(J,I)$-tensors, then their scalar product reads, in local coordinates,
\begeq\label{XY}
\<X,Y\> = \<X,Y\>_x = g^{i_1 i'_1} \ldots g^{i_I i'_I} g_{j_1j'_1}\ldots g_{j_Jj'_J} X_{i_1\ldots i_I}^{j_1\ldots j_J} Y_{i'_1\ldots i'_I}^{j'_1\ldots j'_J}.
\endeq
In particular, the squared operator norm of $X$ is given by
\begeq\label{XX}
|X|^2 =|X|_x^2 = g^{i_1 i'_1} \ldots g^{i_I i'_I} g_{j_1j'_1}\ldots g_{j_Jj'_J} X_{i_1\ldots i_I}^{j_1\ldots j_J} X_{i'_1\ldots i'_I}^{j'_1\ldots j'_J}.
\endeq
(Double bars will be reserved to functional norms.)

If $f$ is a function $M\to\R$, and a system of local coordinates $(x^1,\ldots,x^n)$ is given, then $\pa_i f = \pa f/\pa x^i$ is the partial derivative in direction $i$, and the differential is $df = \pa_i f dx^i$, while the gradient is $\nabla f = g^*(df) = g^{ij} \pa_j f \,\pa_i$. The divergence of a vector field is $\nabla\cdot X = \nabla_i X^i$, and equivalently $\nabla\cdot = -\nabla^*$ (adjoint in $L^2(\vol)$). It is also possible to take partial divergence of a tensor, along any component, say $(\nabla\cdot X)^{jk} = \nabla_i X^{ijk}$.

The covariant derivative of a vector field $X$ is given in coordinates by
\begeq\label{lXi}
\nabla_\ell X^i = (\nabla_\ell X)^i = \derpar{X^i}{x^\ell} + \Gamma^{i}_{j\ell} X^j
\endeq
(component $i$ of the covariant derivative of $X$ in the direction $\ell$), where $\Gamma_{ij}^k$ are the Christoffel symbols,
\begeq\label{christoffel}
\Gamma_{ij}^k (x) = \frac12 \Bigl( \pa_i g_{jm} + \pa_j g_{im} - \pa_m g_{ij}\Bigr) g^{km}.
\endeq
The Christoffel symbols satisfy several important identities, such as 
\begeq\label{idChrist}
\Gamma_{ij}^k = \Gamma_{ji}^k, \qquad \Gamma_{ij}^i = \frac12 \,\pa_j \log \det g.
\endeq

More generally, the covariant derivative of a $(J,I)$-tensor $X$ is determined by the formula
\begin{multline} \label{covX}
\nabla_\ell X_{i_1\ldots i_I}^{j_1\ldots j_J}
= \derpar{X_{i_1\ldots i_I}^{j_1\ldots j_J}}{x^\ell}
+ \Gamma_{s\ell}^{j_1} X_{i_1\ldots i_I}^{s j_2\ldots j_J} + \ldots + \Gamma_{s\ell}^{j_J} X_{i_1\ldots i_I}^{j_1 \ldots j_{J-1} s}\\
- \Gamma_{i_1 \ell}^{s} X_{s i_2\ldots i_I}^{j_1 \ldots j_J} - \ldots - \Gamma_{i_I\ell}^{s} X_{i_1\ldots i_{I-1} s}^{j_1 \ldots j_J}.
\end{multline}
When $w= w^k \pa_k$, $\nabla_w X =  w^k \nabla_kX$ is intrinsically defined, and in this way $\nabla X$ maps tangent vectors into $(J,I)$-tensors, thus it defines a $(J,I+1)$-tensor. An important particular case is $\nabla g = 0$ (invariance of the metric).

The geodesic flow is defined in $\TM$ by the differential equation $\nabla_{\dot{x}} \dot{x} = 0$ (the velocity remains constant in the direction of the motion), or equivalently, in coordinates,
\begeq\label{geodeq}
\ddot{x}^\ell + \Gamma_{ij}^\ell \dot{x}^i \dot{x}^j = 0 \qquad (1\leq \ell\leq n).
\endeq
(It is as if there were an extra force $-\Gamma \dot{x} \dot{x}$ acting on free particles; actually an effect of the choice of referential.)
I shall write $\xi$ for the associated vector field on $\TM$; this is the {\bf geodesic vector field, or geodesic derivation}; in coordinates
\begeq\label{xi}
\xi (x,v) = v^\ell \derpar{}{x^\ell} - \Gamma_{ij}^\ell v^i v^j \derpar{}{v^\ell}.
\endeq
The first part $v\cdot\nabla_x$ is the familiar Euclidean transport operator, while the second part involving the Christoffel symbols is an effect of non-Euclideanity. The flow determined by $\xi$ is also the Hamiltonian flow for the Hamiltonian function $H(x,v) = |v|^2/2$.

Being each $T_xM$ a Euclidean space, there is a natural gradient operator in $T_xM$, the {\bf vertical gradient}, and similarly a {\bf vertical Laplace operator} and a {\bf vertical divergence}:
\begeq\label{DeltaV}
\Delta_V X = g^{ij} \derpar{}{v^i} \derpar{X}{v^j}, \qquad \nabla_V\cdot X = \derpar{X^i}{v^i}.
\endeq
(Vertical here means that this is just in the fiber $T_xM$, $x$ being a parameter.) This formula works if $X$ is a function, or a tensor of any order.

There is a standard {\bf Gaussian distribution} with unit variance:
\begeq\label{mux}
\mu_x(dv) = \frac{e^{-\frac{|v|^2}{2}}}{(2\pi)^{n/2}}\,\vol_x(dv) = 
\frac{e^{-(g_{k\ell}v^k v^\ell)/2}}{(2\pi)^{n/2}}\, \sqrt{\det g}\, dv^1\ldots dv^n,
\endeq
note that $\mu_x[T_xM]=1$; 
and also a global Gaussian distribution on $\TM$:
\begeq\label{mu}
\mu(dx\,dv) = \frac{e^{-\frac{|v|^2}{2}}}{(2\pi)^{n/2}}\,\vol_x(dv)\, \frac{\vol(dx)}{\vol(M)}  = f_\infty(x,v)\, \vol(dx)\,\vol_x(dv),
\endeq
so that $\mu[\TM]=1$ as well.

Now at last we can write down the {\bf geometric kinetic Fokker--Planck equation}:
\begin{align}\label{eqf}
\derpar{f}{t} + \xi f & = \nabla_ V\cdot (\nabla_V f + fv) \\ \nonumber
& = \Delta_V f + v\cdot\nabla_V f + n f.
\end{align}
or for $h= f/f_\infty$: 
\begeq\label{eqh}
\derpar{h}{t} + \xi h =  \Delta_V h - v\cdot\nabla_V h.
\endeq
Note: the identity $\int (\Delta_V-v\nabla_V)h\, k\,d\mu = - \int \nabla_V h\cdot \nabla_V k\,d\mu$ shows that $-\Delta_V + v\cdot\nabla_V$ plays the same role in $L^2(\mu)$ as $-\Delta_V$ does in $L^2(dx\,dv)$, representing the Dirichlet form $\int | \nabla_V h|^2\,d\mu$.
This justifies the notation
\begeq\label{LDeltaV}
\Delta_V^\mu = \Delta_V - v\cdot\nabla_V.
\endeq

It remains to write down the problem for tensors. Let us do it only for the $L^2$ problem, that is, starting from \eqref{eqh}. As before, let us use lower indices for $x$-derivatives and upper indices for $v$-derivatives:
\begeq\label{Nablaij}
\nabla_{i_1\ldots i_I}^{j_1\ldots j_J} h =
g^{i_1i'_1}\ldots g^{i_I i'_I}\, \derpar{{}^I}{ x^{i'_1}\ldots \pa x^{i'_I}} 
\,\derpar{{}^J}{ v^{j_1}\ldots \pa v^{j_J}} \,h.
\endeq

The first step is to extend the geodesic operator from functions to tensors. The formula we would like to use is
\begeq\label{Xixi}
\Xi \nabla^N_{x,v} h = \nabla^N_{x,v} (\xi h).
\endeq
But contrary to the flat case, there is in general no closed formula of that type, because $\nabla^N_{x,v} (\xi h)$ might involve differentials of lower order. Still, at least for $N=1$, which is the first main case of interest, it does make sense, as shown by the formulas
\begeq\label{Xixifml}
\begin{cases}
\dps \derpar{}{x^r} (\xi h) = \xi \derpar{h}{x^r} + \bigl(\pa_r \Gamma_{ij}^\ell v^i v^j\bigr) \derpar{h}{v^\ell} \\[4mm]
\dps \derpar{}{v^s} (\xi h) = \xi \derpar{h}{v^s} + \derpar{h}{x^s} + \Gamma_{sj}^\ell v^j \derpar{h}{v^\ell}
+ \Gamma_{is}^\ell v^i \derpar{h}{v^\ell}
\end{cases}
\endeq
(here $\xi \derpar{h}{x^r}$, $\xi\derpar{h}{v^s}$ are interpreted in the naive sense, not in any covariant sense). A more intrinsic rewriting will be provided in the next section, through the notions of horizontal and vertical differentiations, see \eqref{Xiexpl} below. In any case there is a well-defined operator $\Xi$ on vector fields on $\TM$ such that
\begeq\label{Xixi1} \Xi \,\nabla_{x,v} h = \nabla_{x,v} (\xi h). \endeq
As for the right-hand side, formula \eqref{calLF} still works, in this section with the normalisation $\lambda=1$; and equation \eqref{kFPX} remains the same, recast here for vector fields:
\begeq\label{eqX}
\pa_t X + \Xi X  =(\Delta_V^\mu-P_V) X = (\Delta_V-v\cdot\nabla_V- P_V ) X,
\endeq
where the projector $P_V$ just keeps the vertical part: $P_V(\delta x, \delta v) = (0,\delta v)$.
(This is independent of the choice of charts.)
The operator $\Delta_V^\mu - P_V - \Xi$ is the hypoelliptic Laplacian on vector fields, sometimes called the {\bf Bismutian}.
I shall write ${\cal B}$ for its negative.

Of course the case of more general $N$-tensors is not over: it might be that for some special geometries and choice of index $(I,J)$ and initial condition $X_0$ the equation for $N$-tensors makes sense. But for the moment let us be content with the case $N=1$.

To summarise, there are three natural operators (negative of the respective generators):
\begeq
{\cal L} = \xi - \Delta_V - v\cdot\nabla_V - n  \qquad \text{ on $L^1(dx\,dv)$}
\endeq

\begeq
L = \xi - \Delta_V + v\cdot\nabla_V  \qquad  \text{on $L^2(\mu)$}
\endeq

\begeq
{\cal B} = \Xi - \Delta_V + v\cdot\nabla_V +P_V \qquad \text{on $L^2(\mu;\Tzeroun)$}.
\endeq

And here are the three basic settings in which to study the regularity and equilibration of the geometric Fokker--Planck equation:
\sm

\bul {\em $L^1$ problem:} Consider \eqref{eqf}, that is, $\pa_t f + {\cal L}f =0$, with a initial distribution $f_0\geq 0$ such that $\int f_0(x,v)\,dx\,dv = 1$.
\sm

\bul {\em $L^2$ problem:} Consider \eqref{eqh}, that is, $\pa_t h + L h =0$, with an initial function $h_0$ such that $\int h_0(x,v)^2\,d\mu(x,v) <\infty$.
\sm

\bul {\em $L^2$ problem for vector fields:} Consider \eqref{eqX}, that is, $\pa_t X + {\cal B}X =0$, with an initial vector field (or an initial $1$-form field) $X_0$ such that $\int |X_0(x,v)|^2\,d\mu(x,v) <\infty$.

\begin{Rks} 
\begin{itemize}
\item[(i)] Even if one considers, say, the $L^1$ problem for functions, one is naturally led to manipulate tensors, typically by considering higher-order derivatives of $f$.
\sm

\item[(ii)]
The geometric Fokker--Planck problem can be equivalently set on $\TM$, in terms of positions and velocities, or on $\TsM$, in terms of positions and impulsions; and the $L^2$ problem for vectors can be recast in terms of 1-forms. Those settings are strictly equivalent since the metric $g$ induces an isomorphism between $\TM$ and $\TsM$, via the formulas $p=gv$, $v=g^*p$. Working in $\TsM$ has some formal advantages: (a) the geodesic flow is set in the Hamiltonian formalism of conjugate variables; (b) $x$-derivatives come with lower indices and $v$-derivatives with upper indices, making the formalism somewhat simpler for the vector-valued (tensor-valued) problem. On the other hand, working in $\TM$ is arguably more intuitive, and going from one setting to the other is a dictionary. Both settings appear in the literature. All in all, I will stick to $\TM$ in these notes.
\sm

\item[(iii)] Recall the normalisation set at the level of \eqref{lambdanorm} in the whole space, up to a rescaling of the force; in the geometric context, this corresponds to rescaling the geometry of $M$ (multiplying the metric $g$ by a constant). In particular, un-normalising $\lambda$, there are two natural asymptotics: $\lambda\to\infty$ (geometry effects negligible compared to diffusion effects), a regime which formally reduces to the Brownian motion on the manifold $M$, by the same reasoning which led to \eqref{dd}; and $\lambda\to 0$ (diffusion negligible compared to the geometry), a regime which approaches the geodesic motion. 
\end{itemize}
\end{Rks}

\bibnotes

Kinetic models and relativity (special or general) were considered either by the kinetic community, or by the theoretical physics community, and with various points of view: see the reviews by Andr\'easson \cite{andreasson:einsteinvlasov:11}, Debbasch--Chevalier \cite{debbaschchevalier:relativistic:07}, Dunkel--H\"anggi \cite{dunkelhanggi:review:09}. Some of these models only introduce the Lorentzian invariance in an otherwise classical formalism, others are set in ``flat'' Minkowski space, others are genuinely Riemannian. 

The approach in Debbasch--Mallick--Rivet \cite{DMR:relativistic:97} is to adapt the classical diffusion equation, rather than the stochastic process, to the relativistic framework; this leads to the geometric kinetic Fokker--Planck equation (Ornstein--Uhlenbeck, in the terminology of that paper). Debbasch \cite{debbasch:diffcurved:04} further discusses the subject and notes that the kinetic point of view allows to resolve some fundamental difficulties in defining relativistic diffusion processes. Chevalier--Rivet \cite{CD:relat:08,CD:relatH:08}, Barbachoux--Debbasch--Rivet \cite{BDR:relatK:01,BDR:relat:01,DR:relat:98} push that study deeper and establish some basic qualitative properties, including an $H$-Theorem and limit regimes.

When only special relativity is considered and the geometry is just the flat Minkowski space, or Lorentzian geometry, the resulting kinetic Fokker--Planck equation is studied from the point of view of hypoellipticity and hypococercivity by Arnold--Toshpulatov \cite{arnoldtoshpulatov:25}.

The hypoelliptic Laplacian on vector fields was introduced by Bismut \cite{bismut:hypoelliptic:05} and studied by Bismut and Lebeau \cite{bismutlebeau:hypo:book}. A survey can be read in Bismut \cite{bismut:surveyhypo:08}. The terminology ``Bismutian'' was introduced by Lebeau \cite{lebeau:FP1:05}.

Nier \cite{nier:pekin:06} presents a course on pseudodifferential operators applied to hypoellipticity of Fokker--Planck operators on manifolds. He also considers the geometric Fokker--Planck equation on a manifold with boundary  \cite{nier:boundary:18} and extends the Bismut--Lebeau analysis to that setting.

Baudoin \cite{baudoin:BE+V} reworks hypocoercivity in $L^2$ and $L^1$ using the Bakry--\'Emery formalism based on the $\Gamma$ and $\Gamma_2$ calculus. In this way he obtains an alternative approach to hypocoercivity in $L^1$ (or rather in $L\log L$) for the Euclidean space with a confining potential.

Angst--Bailleul--Tardif \cite{ABT:brownian:15}, Baudoin--Tardif \cite{BT:foliations:18}, Ren--Tao \cite{RT:spectral:23} consider a variant in which the velocity is constrained to have unit norm. This is consistent since geodesic flow preserves the norm of velocity, provided that the vertical Euclidean drift-diffusion is replaced by a Laplace--Beltrami diffusion on the vertical unit sphere in each tangent space. Also in physics, kinetic models restricted to unit velocity are commonly used either as basic models or as simplifications or more complicated ones, for instance in the modelling of transport of photons or neutrons \cite{davison:book}. For that geometric kinetic Fokker--Planck equation with unit speed, the abovementioned authors establish rather sharp estimates of hypoellipticity and hypocoercivity in $L^2$. Nier--Sang--White \cite{NSW:25,NSW:grushin:25} consider the original geometric Fokker--Planck equation (there called geometric Kramers--Fokker--Planck), also in $L^2$.

For the choice of geometric setting: Debbasch--Mallick--Rivet, Bismut, Lebeau, Nier, Nier--Sang--White work in $\TsM$ \cite{bismut:hypoelliptic:05,bismutlebeau:hypo:book,debbasch:diffcurved:04,NSW:grushin:25}, while H\'erau--Nier (for the flat case), Angst--Bailleul--Tardif, Baudoin--Tardif, Ren--Tao work in $\TM$ \cite{ABT:brownian:15,BT:foliations:18,heraunier:FP:04,nier:boundary:18,NSW:25,NSW:grushin:25,RT:spectral:23}. My original preprint with Debbasch and Ollivier \cite{DOV:preprint} was set in $\TsM$; eventually I preferred to recast everything in $\TM$.

Standard references for analysis in manifolds are the books by Do Carmo \cite{docarmo:Riemann:book} and Gallot--Hulin--Lafontaine \cite{GHL:Riemann:book}. Berger \cite{berger:panoramic:book} provides a vast excursion across Riemannian geometry. I already recommended the treatise by Bakry--Gentil--Ledoux \cite{BGL:book} for functional inequalities with geometric content on manifolds.

\section{Horizontality and curvature} \label{secHC}

Analysis in $\TM$ will deal with functions and vector fields of $x$ and $v$, with constant need to invoke partial variations either in $x$ (horizontal) or in $v$ (vertical). The phase space has dimension $2n$, and horizontal variations should form an $n$-dimensional subspace, vertical variations likewise.

Vertical analysis in $\TM$ is conceptually easy: Each $(T_xM,g_x)$ is a Euclidean velocity space, so the notions of vertical differential, vertical gradient, vertical Laplace operator and so on, for functions as well as tensors, are the same as in $\R^n$, and $x$ appears as just a parameter. A tangent vector to the tangent space, say $(\delta x,\delta v)\in T_{(x,v)} (\TM)$ is vertical if and only if it is included in a fiber, that is $\delta x =0$ (verticality equation), and the vectors $\pa/\pa v^k$ will provide a basis for such vertical vectors. For the derivation one may use the symbols $\nabla_v$ and $\nabla_V$ interchangeably.

Horizontality is quite more subtle: The idea is that $\delta v=0$, but to appreciate the variation of $v$ as $x$ is moving, a connection has to be made between tangent spaces. The usual and most popular such notion is the Riemannian connection due to Tullio Levi-Civita, according to which the equation for horizontality of $(\delta x, \delta v)$ becomes
\begeq\label{eqvcst}
\delta v^k + \Gamma_{ij}^k v^i \delta x^j = 0.
\endeq
(When the Christoffel symbols vanish, this reduces to $\delta v =0$; and when $\delta x = v$, $\delta v = \dot{v}$ one recovers the equation of geodesics.) According to equation~\eqref{eqvcst}, a basis for horizontal variations is provided by
\begeq\label{basishoriz}
\left(\derpar{}{x^k}\right)_H = \derpar{}{x^k} - \Gamma_{ik}^\ell v^i \derpar{}{v^\ell}\qquad (1\leq k\leq n).
\endeq
Thus we have the notion of horizontal derivative in direction $k$:
\begeq\label{horizderk}
(\nabla_H)_k f = \derpar{f}{x^k} - \Gamma_{ik}^\ell v^i \derpar{f}{v^\ell}.
\endeq
Note that
\begeq\label{nablaHvm}
(\nabla_H)_kv^m = \Gamma_{k\ell}^m v^\ell.
\endeq

These formulas extend to the derivation of tensors (vertical and horizontal covariant derivatives):
\begeq\label{VHcov}
(\nabla_V)_k X = \derpar{X}{v^k}\qquad
(\nabla_H)_k X = \nabla_k X - \Gamma_{ik}^\ell v^i \derpar{X}{v^\ell}.
\endeq
(Recall, there is no need to invoke covariant derivatives for the vertical differentiation.) More generally, if a tangent vector $w= w^k\pa_k$ is given in $T_xM$,
\begeq\label{VHcovv}
(\nabla_V)_w X = w^k \derpar{X}{v^k}, \qquad
(\nabla_H)_w X = w^k \nabla_k X - \Gamma_{ik}^\ell v^i w^k \derpar{X}{v^\ell}.
\endeq

At this stage note that the geodesic derivation $\xi$ is nothing but
\begeq\label{xirevu}
\xi(x,v)  = v^k (\nabla_H)_k = (\nabla_H)_v = v\cdot\nabla_H.
\endeq
This formula remains unchanged when it is applied to tensors: this is the {\bf covariant geodesic differentiation}.

In the above, $\nabla_Vf$ and $\nabla_Hf$ are 1-forms, but they are in correspondence with vectors (vector fields) by the usual recipe
\[ (\nabla_V)^if = g^{ij}(\nabla_V)_j f\qquad (\nabla_H)^if = g^{ij} (\nabla_H)_jf.\]
There is consistency: In this way $\nabla_V f$ becomes a vertical vector and $\nabla_H f$ a horizontal vector. (One may also say that $\nabla_Hf$ is identified to a horizontal 1-form.) Moreover,
\[ (\nabla_V)_w X = g_{k\ell}\, w^k g^{\ell i} (\nabla_V)_i = w\cdot \nabla_V, \qquad 
(\nabla_H)_w X = g_{k\ell}\, w^k g^{\ell i} (\nabla_H)_i = w\cdot \nabla_H.\]

Similarly, if $X$ is a $(J,I)$-tensor, formulas \eqref{VHcov} or \eqref{VHcovv} define $(J,I+1)$-tensors $\nabla_VX$ and $\nabla_{H}X$, while $g^*\nabla_{V}X$ and $g^*\nabla_HX$ will be $(J+1,I)$-tensors. Also there are vertical and horizontal divergence operators, e.g. $\nabla_H\cdot X = (\nabla_H)_i X^i$, partial divergence operators, etc.

Associated to those operators are also natural notions of horizontal and vertical functions: A function $f=f(x,v)$ is said to be horizontal if $\nabla_V f =0$, and vertical if $\nabla_H f =0$. Here are the two basic examples:

\begin{Prop}[Basic invariant functions] \label{basicinv} Let $f$ be smooth on $\TM$, where $M$ is smooth and connected. Then\\
(i) If $f=f(x)$ then $\nabla_V f =0$, and conversely;

\noindent (ii) If $f=f(|v|)$; or more rigorously, if there is a function $\vphi$ such that $ f (x,v) = \vphi (g_x(v))$; then $\nabla_H f = 0$. In particular, $\xi f =0$.
\end{Prop}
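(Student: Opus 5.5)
The plan is to compute in local coordinates, using the definitions \eqref{VHcov}--\eqref{horizderk} of vertical and horizontal derivatives, together with two identities: the Christoffel formula \eqref{christoffel} and metric compatibility $\nabla g=0$.

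\emph{Part (i).} The direct implication is immediate. If $f$ depends only on $x$, then in any chart $\pa f/\pa v^k=0$, so $(\nabla_V)_k f=0$.

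For the converse, suppose $\pa f/\pa v^k=0$ for all $k$ on $\TM$. Each fiber $T_xM$ is a vector space, hence connected. So for each fixed $x$ the function $v\mapsto f(x,v)$ has zero differential on a connected set and is therefore constant. Define $\phi(x)=f(x,0)$, which is smooth as the restriction of $f$ to the zero section. Then $f(x,v)=\phi(x)$ for all $(x,v)$. Only connectedness of the fibers is used here; connectedness of $M$ plays no role.

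\emph{Part (ii).} Write $f(x,v)=\vphi(g_x(v,v))=\vphi(g_{ij}(x)v^iv^j)$. By the chain rule it suffices to show that $(\nabla_H)_k q=0$, where $q(x,v)=g_{ij}(x)v^iv^j$. From \eqref{horizderk},
\[
(\nabla_H)_k q=\pa_k g_{ij}\,v^iv^j-\Gamma_{ik}^\ell v^i\cdot 2g_{\ell j}v^j .
\]
Symmetrizing the second term in $(i,j)$ gives $\Gamma_{ik}^\ell g_{\ell j}+\Gamma_{jk}^\ell g_{\ell i}$. By metric compatibility,
\[
\nabla_k g_{ij}=\pa_k g_{ij}-\Gamma_{ik}^\ell g_{\ell j}-\Gamma_{jk}^\ell g_{i\ell}=0 .
\]
This can also be checked directly from \eqref{christoffel}: lowering an index gives $\Gamma_{ik,j}=\tfrac12(\pa_i g_{kj}+\pa_k g_{ij}-\pa_j g_{ik})$, and adding the two terms yields exactly $\pa_k g_{ij}$. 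Hence $(\nabla_H)_k q=\pa_k g_{ij}v^iv^j-\pa_k g_{ij}v^iv^j=0$.

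A more conceptual reading is that horizontal curves are parallel transports, which preserve $|v|$.

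Finally, by \eqref{xirevu}, $\xi f=v^k(\nabla_H)_k f=0$.

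\emph{Main obstacle.} There is no real difficulty. The only point requiring care is the index bookkeeping in the Christoffel identity, namely the symmetrization step combined with $\Gamma_{ij}^k=\Gamma_{ji}^k$ from \eqref{idChrist}.
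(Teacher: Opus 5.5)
Your proof is correct and follows the paper's route. Part (i) is read off from the definitions. Part (ii) uses the chain rule to reduce to $(\nabla_H)_k(g_{ij}v^iv^j)=0$, which is metric compatibility of the Levi-Civita connection, checked from \eqref{christoffel}. Your fiber-connectedness argument for the converse in (i), and your symmetrized check that the two lowered Christoffel symbols sum to $\pa_k g_{ij}$, spell out what the paper calls ``obvious'' or compresses somewhat loosely.
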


\begin{proof}[Proof of Proposition \ref{basicinv}] Property (i) is obvious from the definitions. As for (ii), it follows from $\nabla_H \vphi (G) = \vphi'(G)\nabla_H G$ and $\nabla_H g_x(v)=0$: Indeed,
\begin{align*}
(\nabla_H)_k (g_{ij} v^i v^j) 
& = \pa_k \bigl(g_{ij}v^iv^j\bigr) - \Gamma_{k\ell}^r v^\ell \derpar{}{v^r} \bigl(g_{ij}v^iv^j\bigr)\\
& = (\pa_k g_{ij}) v^i v^j - \Gamma_{k\ell}^i g_{ij} v^\ell v^j - \Gamma_{k\ell}^j g_{ij} v^i v^\ell\\
& = \bigl( \pa_k g_{ij} - \Gamma_{ki}^s g_{sj} - \Gamma_{kj}^s g_{is}\bigr) v^i v^j,
\end{align*}
which vanishes since
\begin{align*} \pa_k g_{ij} - \Gamma_{kj}^s g_{is} - \Gamma_{ik}^s g_{sj}
& = \pa_k g_{ij} - g^{sr} \bigl( \pa_k g_{jr} + \pa_j g_{kr} - \pa_r g_{jk} \bigr) g_{is}\\
& = \pa_k g_{ij} - \delta_i^r (\pa_k g_{jr} + \pa_j g_{kr} - \pa_r g_{jk}) \\
& = \pa_k g_{ij} - \pa_k g_{ij} = 0.
\end{align*}
(Actually the connection was devised for that.)
\end{proof}

There is a notable difference between (i) and (ii) in Proposition \ref{basicinv}: the former considers any function of $x$, while the latter only deals with {\em radial} functions of $v$. It turns out that this is in the order of things: There is no such thing as a function ``depending only on $v$'', since, say $T_xM$ and $T_yM$ may have different parameterizations and different reference bases. Even if one considers smooth variations of frame along paths, in general going along one path or another will not yield the same results, due to curvature. A horizontal function is a function which is constant along any geodesic path in phase space: In presence of curvature, in general only radial functions qualify; in some sense the metric is the only invariant. For the same reasons, there is no general converse of (ii): it depends on the geometry whether there are horizontal functions which are not radially symmetric.

\begin{Ex} Consider $M=\T^d = \R^d/\Z^d$. Then $\TM \simeq \T^d\times\R^d$, coordinates $v^1,\ldots v^n$ makes sense globally, and any function $f=f(v)$ is left invariant by the geodesic flow $\dot{v} = 0$. On the contrary, consider $M=\S^2$ (the 2-dimensional sphere, seen as a model of the Earth), and let $N$ be the ``North Pole''. If a vector $v$ is chosen in $T_NM$ then go along the geodesic starting from $N$ with velocity $v$, until you hit the equator; then go along the equator for an angle $\theta\in [0,\pi]$; then go back geodesically to $N$. The resulting vector will still have the same norm as $v$, but it makes angle $\theta$ with $v$, and $\theta$ is arbitrary. Thus if $f$ is a horizontal function on $\S^2$ its value at $N$ can depend on $v$ only through $|v|$. In other words: The only horizontal functions on $\S^2$ are radially symmetric functions of the velocity, in contrast with $\T^2$.
\end{Ex}
 
So far it was mainly about consistent definitions, but now come the key properties, derived from the Riemannian structure, which will enormously facilitate the analysis in $\TM$:

\begin{Prop}[horizontal/vertical commutation] \label{propHV}
With notation \eqref{VHcovv}--\eqref{xirevu},

(i) $[\nabla_V, \xi] = \nabla_H$;

(ii) $[\nabla_V,\nabla_H] = 0$;

(iii) $[\nabla_V, v\cdot\nabla_V] = \nabla_V$.
\end{Prop}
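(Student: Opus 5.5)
The plan is to verify all three identities by a direct computation in local coordinates $(x^1,\ldots,x^n,v^1,\ldots,v^n)$, using only the definitions \eqref{VHcov}, \eqref{VHcovv} and \eqref{xirevu}. First I have to fix the meaning of the brackets. When $\nabla_V$ is applied to $\xi f$ (or to $\nabla_H f$), the result is a $1$-form. So in $[\nabla_V,\xi]f = \nabla_V(\xi f) - \xi(\nabla_V f)$, the operator $\xi = v\cdot\nabla_H$ must act \emph{covariantly} on the $1$-form $\nabla_V f$. Similarly, in (ii) the horizontal derivative $\nabla_H$ acts covariantly on $\nabla_V f$, and the bracket is understood componentwise as $(\nabla_V)_k(\nabla_H)_i f - (\nabla_H)_i(\nabla_V)_k f$. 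I will argue for functions $f$; for a general $(J,I)$-tensor $X$ the argument is the same. The extra Christoffel terms in \eqref{covX} depend on $x$ only, hence commute with $\partial/\partial v^k$, and every step below goes through verbatim.

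I would prove (ii) first, since (i) follows from it.

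\textbf{Step (ii).} Differentiate \eqref{horizderk} in $v^k$:
\[ \partial_{v^k}(\nabla_H)_i f = \partial_i\partial_{v^k} f - \Gamma^\ell_{ki}\,\partial_{v^\ell} f - \Gamma^\ell_{mi} v^m\,\partial_{v^k}\partial_{v^\ell} f .\]
The middle term comes from differentiating the coefficient $v^m$.

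Now compute $(\nabla_H)_i$ of the $1$-form $Y_k = \partial_{v^k} f$ using \eqref{VHcov} and \eqref{covX}:
\[ (\nabla_H)_i Y_k = \partial_i Y_k - \Gamma^s_{ki} Y_s - \Gamma^\ell_{mi} v^m\,\partial_{v^\ell} Y_k .\]
The three terms match one by one, using the symmetry \eqref{idChrist} of the Christoffel symbols and Schwarz's lemma for $\partial_{v^k}\partial_{v^\ell}$. This gives (ii).

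Structurally, the term produced by differentiating the $v$-dependence of the horizontal lift \eqref{basishoriz} is exactly the Levi-Civita correction that makes $\nabla_V f$ a covariant object.

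\textbf{Step (i).} Write $\xi f = v^i(\nabla_H)_i f$. By the Leibniz rule,
\[ (\nabla_V)_k(\xi f) = \delta^i_k(\nabla_H)_i f + v^i(\nabla_V)_k(\nabla_H)_i f = (\nabla_H)_k f + v^i(\nabla_H)_i(\nabla_V)_k f ,\]
where the second equality uses (ii). The last term is precisely $\xi(\nabla_V f)$ in the covariant sense, so $[\nabla_V,\xi] = \nabla_H$.

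\textbf{Step (iii).} Here there are no Christoffel symbols at all, because vertical derivatives do not involve the connection:
\[ \partial_{v^k}\bigl(v^i\partial_{v^i} f\bigr) = \partial_{v^k} f + v^i\partial_{v^i}\partial_{v^k} f ,\]
and the last term is $(v\cdot\nabla_V)(\nabla_V f)$, with $v\cdot\nabla_V$ acting componentwise on the $1$-form.

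The computations themselves are routine. The step I expect to need the most care is the bookkeeping in (ii): consistently treating $\nabla_V f$ as a tensor on which $\nabla_H$ acts with its $-\Gamma^s_{ki}$ term, and checking that index orders and the lowered/raised conventions (for vectors versus $1$-forms via $g$) are consistent. Since $g$ is $v$-independent and $\nabla_H g = 0$ by the computation in Proposition \ref{basicinv}, raising indices commutes with all the operators involved, so the identities hold equally in vector form.
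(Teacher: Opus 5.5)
Your proof is correct and follows essentially the same route as the paper. Both are direct coordinate computations based on \eqref{horizderk}, \eqref{VHcov} and \eqref{covX}, and you deduce (i) from (ii) via $\xi = v^i(\nabla_H)_i$, exactly as the paper remarks is possible; the paper itself writes out (i) directly on a $(1,1)$-tensor. One small point: the symmetry of the Christoffel symbols is not actually needed in Step (ii), because the term $-\Gamma^\ell_{ki}\partial_{v^\ell}f$ already has the same index order as the term $-\Gamma^s_{ki}Y_s$ coming from \eqref{covX}.
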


\begin{Rks} 1. These identities hold true for $\nabla_V,\nabla_H,\xi, v\cdot\nabla_V$ acting covariantly on tensors, not just functions.

2. For the moment there is neither function space nor functional operator involved: these identities just express the result of a calculation performed on an arbitrary smooth function.

3. Either (i) or (ii) could have been taken as the basic identity motivating the definition of $\nabla_H$ (say, looking for $(\nabla_H)_j$ as a combination of $\pa_{x_j}$ and vertical derivations). 

4. Formula (iii) is just a Euclidean computation and its proof is immediate; but it is convenient to state it here among the basic commutation properties which hold true independently of the geometry.
\end{Rks}

\begin{proof}[Proof of Proposition \ref{propHV}]
As this is a warmup I will provide calculations for both (i) and (ii), but actually (i) follows from (ii) and \eqref{xirevu}.

Let us consider for instance the case of a $(1,1)$-tensor $X=(X_i^j)$, the general case being the same, just more cumbersome to write. Then
\[ (\nabla_V)_k X_i^j = \derpar{X_i^j}{v^k}\]
\begin{align*}
\xi X_i^j & = 
\left( v^\ell \nabla_\ell - \Gamma_{rs}^\ell v^r v^s\derpar{}{v^\ell}\right) X_i^j \\
& = v^\ell \derpar{X^i}{x^\ell}
-v^\ell \Gamma_{i\ell}^s X_s^j + v^\ell \Gamma_{s\ell}^j X_i^s
- \Gamma_{rs}^\ell v^r v^s \derpar{X_i^j}{v^\ell}.
\end{align*}
So
\begin{multline*} 
(\nabla_V)_k (\xi X_i^j) = \derpar{X_i^j}{x^k} + v^\ell \derpar{{}^2 X_i^j}{x^\ell\,\pa v^k}
-\Gamma_{ik}^s X_s^j - v^\ell \Gamma_{i\ell}^s \derpar{X_s^j}{v^k}\\
+ \Gamma_{sk}^j X_i^s + v^\ell \Gamma_{s\ell}^j \derpar{X_i^s}{v^k}
- 2 \Gamma_{ks}^\ell v^s \derpar{X_i^j}{v^\ell} - \Gamma_{rs}^\ell v^r v^s \derpar{{}^2 X_i^j}{v^\ell \,\pa v^k}.
\end{multline*}
Recalling that $(\nabla_V)_k X_i^j$ is a $(1,2)$-tensor,
\[
\xi (\nabla_V)_k X_i^j
= v^\ell \derpar{{}^2 X_i^j}{x^\ell\,\pa v^k} - v^\ell \Gamma_{i\ell}^s \derpar{X_s^j}{v^k} + v^\ell \Gamma_{s\ell}^j \derpar{X_i^s}{v^k} \\ 
- \Gamma_{rs}^\ell v^r v^s \derpar{{}^2 X_i^j}{v^k\,\pa v^\ell} - v^\ell \Gamma_{k\ell}^s \derpar{X_i^j}{v^s}.
\]
It follows that 
\[
[(\nabla_V)_k \xi] X_i^j = \derpar{X_i^j}{x^k} + \Gamma_{sk}^j X_i^s - \Gamma_{ik}^s X_s^j - \Gamma_{ks}^\ell v^s \derpar{X_i^j}{v^\ell} = (\nabla_H)_k X_i^j,
\]
which is Property (i). As for (ii), similar computations show that both $(\nabla_V)_q (\nabla_H)_r X_i^j$ and $(\nabla_H)_r (\nabla_V)_q X_i^j$ are equal to
\[ \derpar{{}^2 X_i^j}{x^r\,\pa v^q} + \Gamma_{sr}^j \derpar{X_i^s}{v^q}
- \Gamma_{ir}^s \derpar{X_s^j}{v^q} - \Gamma_{qr}^s \derpar{X_i^j}{v^s}
- \Gamma_{r\ell}^s v^\ell \derpar{{}^2 X_i^j}{v^s\,\pa v^q}.\]
\end{proof}

The next issue is about commutation of horizontal differentiations. Of course 
\[ \left[ (\nabla_V)_i, (\nabla_V)_j \right] = 0,\]
but in general
\[ \left[ (\nabla_H)_i, (\nabla_H)_j\right] \neq 0.\]
This is where curvature will appear. Let ${\cal R}$ be the {\bf Riemannian curvature tensor}: it is a $(1,3)$-tensor defined by
\[ {\cal R}(u,v) = \nabla_u\nabla_v - \nabla_v\nabla_u - \nabla_{[u,v]}.\]
So it is determined by its components $R_{kij}^\ell$ such that
\[ \bigl( \nabla_i \nabla_j - \nabla_j\nabla_i \bigr) \derpar{}{x^k} = R_{kij}^\ell \derpar{}{x^\ell}
= \left\< dx^\ell, {\cal R}(\pa_i,\pa_j) \pa_k \right\>. \]
By computation,
\begeq\label{RGamma}
R_{kij}^\ell = \bigl( \pa_i \Gamma_{jk}^\ell - \pa_j \Gamma_{ik}^\ell\bigr)
+ \bigl( \Gamma_{ir}^\ell \Gamma_{jk}^r - \Gamma_{jr}^\ell \Gamma_{ik}^r\bigr).
\endeq
Note: If $f$ is a function then the Hessian tensor of $f$, $\nabla^2 f$, has coordinates
\[ \nabla_i\nabla_j f = \pa^2_{ij} f - \Gamma_{ij}^s \pa_s f = \nabla_j\nabla_i f  \]
(since $\Gamma_{ij}^s= \Gamma_{ji}^s$); so it is only starting with third derivatives of a function that curvature effects will imply some noncommutativity.

If $X$ is an $(L,K)$-tensor then ${\cal R}X$ can be defined as a $(L,K+2)$-tensor by contracting on the indices $k$ and $\ell$:
\begin{align}\label{RX}
& ({\cal R}X)_{k_1\ldots k_Kij}^{\ell_1\ldots \ell_L}\\
\nonumber & \qquad = R_{sij}^{\ell_1} X_{k_1\ldots k_K}^{s \ell_2\ldots \ell_L} + \ldots + R_{sij}^{\ell_L} X_{k_1\ldots k_K}^{\ell_1\ldots \ell_{L-1}s} 
- R_{k_1 ij}^s X_{s k_2\ldots k_K}^{\ell_1\ldots \ell_L} - \ldots - R_{k_Kij}^s X_{k_1\ldots k_{K-1}s}^{\ell_1\ldots \ell_L} \\
\nonumber & \qquad = \bigl({\cal R} (\pa_i,\pa_j) X\bigr)_{k_1\ldots k_K}^{\ell_1\ldots \ell_L}.
\end{align}
(So $({\cal R}X)(\pa_i,\pa_j)$ is an $(L,K)$-tensor.)
For a function $f$ this reduces to ${\cal R}f=0$; for a vector field $X$ this gives $({\cal R}X)_{ij}^\ell = \<dx^\ell, {\cal R}(\pa_i,\pa_j)X\>$.

By lowering indices one can also define the fully covariant curvature tensor:
\begeq\label{Rsijr}
R_{sijr} = g_{\ell r} R_{sij}^\ell;
\endeq
in this setting $R$ is a $(0,4)$-tensor, satisfying nice algebraic symmetry identities:
\begeq\label{Ralg1}
R_{sijr} = R_{jrsi}
\endeq
\begeq\label{Ralg2}
R_{sjir} = -  R_{sijr} = - R_{rjis}
\endeq
\begeq \label{Ralg3}
R_{sij\ell} + R_{ijs\ell} + R_{jsi\ell} = 0.
\endeq
A connection with the elementary geometric notion of curvature is the following: if $u,v$ are two tangent vectors, then
\begeq\label{Rvuuv}
\< R(v,u)u,v\> = R_{kij\ell} u^k v^i u^j v^\ell = \sigma(P_{u,v}) \bigl( |u|^2 |v|^2 - \<u,v\>^2\bigr),
\endeq
where $P_{u,v}$ is the plane generated by $u,v$ in $T_xM$ and $\sigma(P)$ is the sectional curvature along the plane $P$ (Jacobian determinant of the Gauss normal map from $\exp(P)$ to the unit sphere).

Now comes the key property for commuting derivatives when working on $\TM$. Note that $v$ always denotes the velocity vector of the point $(x,v)\in\TM$ of interest (at which functions and tensors are evaluated).

\begin{Prop}[Commutators of horizontal derivatives] \label{propcomH}
With notation \eqref{xirevu}, \eqref{VHcovv} and \eqref{RGamma},

(i) $[\nabla_H,\nabla_H] = {\cal R} - \<{\cal R}v, \nabla_V\>$ 

\noindent in the sense that for any two vectors $u,w\in T_xM$ and any tensor $X$ on $\TM$,
\begeq\label{NNablaH} 
\bigl[ (\nabla_H)_u, (\nabla_H)_w\bigr] X = 
{\cal R}(u,w) X - \bigl\<{\cal R}(u,w) v, \nabla_V\> X
\endeq
when evaluated at $(x,v)\in\TM$; or more explicitly, for any indices $i,j$, $k_1,\ldots k_K$, $\ell_1,\ldots \ell_L$ in $\{1,\ldots, n\}$,
\begeq\label{NNablaHcoord}
\bigl[ (\nabla_H)_i, (\nabla_H)_j \bigr] X_{k_1\ldots k_K}^{\ell_1\ldots \ell_L}
= ({\cal R}X)_{k_1\ldots k_K ij}^{\ell_1\ldots\ell_L}
- R_{kij}^\ell v^k \derpar{X_{k_1\ldots k_K}^{\ell_1\ldots \ell_L}}{v^\ell}.
\endeq

(ii) $[\xi,\nabla_H] = {\cal R}(v,\cdot) - \<{\cal R}(v,\cdot)v, \nabla_V\>$

\noindent in the sense that for any vector $w\in T_xM$ and any tensor $X$ on $\TM$,
\begeq\label{xiNablaH} 
\bigl[ \xi, (\nabla_H)_w\bigr] X = 
{\cal R}(v,w) X - \bigl\<{\cal R}(v,w)v , \nabla_V\> X;
\endeq
or more explicitly, for any indices $j$, $k_1,\ldots k_K$, $\ell_1,\ldots \ell_L$ in $\{1,\ldots,n\}$,
\begeq\label{xiNablaHcoord}
\bigl[ \xi, (\nabla_H)_j \bigr] X_{k_1\ldots k_K}^{\ell_1\ldots \ell_L}
= ({\cal R}X)_{k_1\ldots k_K ij}^{\ell_1\ldots\ell_L} v^i
- R_{kij}^\ell v^iv^k \derpar{X_{k_1\ldots k_K}^{\ell_1\ldots \ell_L}}{v^\ell}.
\endeq
\end{Prop}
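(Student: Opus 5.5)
Statement (ii) follows from (i). Since $\xi = v^i(\nabla_H)_i$ by \eqref{xirevu}, we have $[\xi,(\nabla_H)_j] = v^i[(\nabla_H)_i,(\nabla_H)_j] - ((\nabla_H)_j v^i)(\nabla_H)_i$. The last term is $-\Gamma_{j\ell}^i v^\ell (\nabla_H)_i$ by \eqref{nablaHvm}. It should cancel against the corresponding term created when $(\nabla_H)_j$ acts covariantly on the extra lower index carried by $(\nabla_H)_iX$; equivalently, treating $v$ as a vertical vector field, $\nabla_H v=0$. So I will first fix the convention: $(\nabla_H)_j$ acting on the $(L,K+1)$-tensor $(\nabla_H)X$ includes the Christoffel term for the new index. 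With that convention, contracting \eqref{NNablaHcoord} with $v^i$ gives \eqref{xiNablaHcoord} directly. The invariant forms \eqref{NNablaH}, \eqref{xiNablaH} then follow by contraction with $u^i w^j$ and $w^j$.

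For (i), I will compute in coordinates. Write $(\nabla_H)_i = \nabla_i - \Gamma_{ri}^\ell v^r \pa_{v^\ell}$, where $\nabla_i$ is the covariant derivative in $x$ treating $v$ as a parameter. The commutator splits into three groups.

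First, $[\nabla_i,\nabla_j]X$ at frozen $v$. This gives the standard Ricci identity, namely $({\cal R}X)_{\ldots ij}$ as in \eqref{RX}, up to the Christoffel corrections from the extra index when the outer derivative is covariant. These corrections are symmetric in $i,j$ and cancel because $\Gamma$ is torsion-free.

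Second, the cross terms $-[\nabla_i, \Gamma_{rj}^\ell v^r\pa_{v^\ell}] + [\nabla_j,\Gamma_{ri}^\ell v^r \pa_{v^\ell}]$. Since $\pa_{v}$ commutes with $\pa_x$ and with the Christoffel terms acting on tensor indices, only the $x$-derivatives of $\Gamma$ survive. They produce $(-\pa_i\Gamma_{rj}^\ell + \pa_j \Gamma_{ri}^\ell)v^r \pa_{v^\ell}X$.

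Third, the purely vertical commutator $[\Gamma_{ri}^\ell v^r\pa_{v^\ell}, \Gamma_{sj}^m v^s \pa_{v^m}]$. Here $\pa_{v^\ell}$ hits $v^s$, giving $(\Gamma_{ri}^s\Gamma_{sj}^m - \Gamma_{rj}^s\Gamma_{si}^m)v^r\pa_{v^m}$.

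Adding the second and third groups and comparing with \eqref{RGamma} should give exactly $-R_{rij}^m v^r \pa_{v^m}X$, after using the symmetry $\Gamma_{ij}^k=\Gamma_{ji}^k$ to match index order.

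The main obstacle is bookkeeping. Two points need care. One is the sign and index-order conventions, both in \eqref{RGamma} and in the $v$-contraction, so that the vertical term comes out as $-R^\ell_{kij}v^k\pa_{v^\ell}$. The other is making sure the second-order terms $v\,\pa_x\pa_v$ and $vv\,\pa_v\pa_v$ cancel; they do, since they are symmetric. To keep indices light I will do the computation for a $(1,1)$-tensor as in the proof of Proposition \ref{propHV}, and note that every tensor index contributes independently, since the Christoffel terms act linearly and separately on each index.

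An alternative, to check signs, is to work at a point in normal coordinates, where $\Gamma=0$ but $\pa\Gamma\neq0$. There only the $\pa\Gamma$ terms survive, and they match \eqref{RGamma} immediately.
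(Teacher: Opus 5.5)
Your proposal follows the paper's proof: (ii) is reduced to (i) through $\xi=v^i(\nabla_H)_i$, and (i) is a direct coordinate computation on a $(1,1)$-tensor using $\Gamma^k_{ij}=\Gamma^k_{ji}$ and \eqref{RGamma}; your three groups are an organised version of the paper's expansion, and they correctly sum to $-R^m_{rij}v^r\partial_{v^m}X$. One sign needs fixing in your reduction of (ii): by \eqref{horizderk}, $(\nabla_H)_jv^i=-\Gamma^i_{j\ell}v^\ell$, so the sign printed in \eqref{nablaHvm} is off, and with the printed sign the two terms $-\Gamma^i_{j\ell}v^\ell(\nabla_H)_iX$ would add rather than cancel; with the correct sign, $-((\nabla_H)_jv^i)(\nabla_H)_iX$ exactly cancels the correction $-\Gamma^s_{ij}v^i(\nabla_H)_sX$ from making the outer derivative act covariantly on the index $i$, which agrees with your correct invariant remark $\nabla_Hv=0$.
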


\begin{Rk} Notation $\<{\cal R}(u,w)v , \nabla_V\> X$ in \eqref{NNablaH} means that the tensor contraction involving ${\cal R}$ only bears on the $\pa/\pa v^\ell$, not on the indices of $X$. Any ambiguity is removed in \eqref{NNablaHcoord}. A convenient synthetic way to rewrite \eqref{xiNablaH} is
\begeq\label{xiNablaHr}
\bigl[ \nabla_H, \xi ] = -\Sigma(v,v) \nabla_V - \rho v,
\endeq
where $\Sigma$ is a bilinear operator on $\TM$, valued in tensors $\TM\to\TM$, and $\rho$ is a linear map on $\TM$, also valued in tensors $\TM\to\TM$:
 \begeq\label{Sigmarho}
 \begin{cases}
 \<\Sigma(v,v) u,w\> =-\<{\cal R}(v,u)v,w)\> = \< {\cal R}(u,v)v, w\> \\[2mm]
 (\rho v) z = {\cal R}(v,z).
 \end{cases}\endeq
\end{Rk}
Note that the term in $\rho$ appears only when \eqref{xiNablaHr} is applied to vector fields or higher order tensors, not functions.

Before proving Proposition \ref{propcomH}, here is a notable consequence: By (ii), for any function of $(x,v)$ we have
\[ 
\begin{cases}
\nabla_H (\xi f) = \xi \nabla_H f + \<{\cal R}(v,\cdot) v,\nabla_V f\> \\
\nabla_V (\xi f) = \xi \nabla_V f + \nabla_H f
\end{cases} \]
or more explicitly, writing $e_i=\pa_i$ for the $i$th tangent vector in the coordinate basis of $T_xM$,
\[ 
\begin{cases}
(\nabla_H)_i (\xi f) = \xi \nabla_H f + \<{\cal R}(v,e_i) v,\nabla_V f\> \\
(\nabla_V)_i (\xi f) = \xi (\nabla_V)_i f + (\nabla_H)_i f.
\end{cases}\]
This provides explicit formulas for the {\bf geodesic flow generator} $\Xi$ on vector fields which are not necessarily gradients (or 1-forms which are not necessarily differentials): If $X$ is such a vector field, decompose $X$ into a horizontal part $X_H$ and a vertical part $X_H$, then from \eqref{Xixi1}
\begeq\label{Xiexpl}
\Xi(X_H,X_V) = \Bigl( \xi X_H + \<{\cal R}(v,\cdot) v,X_V\>, \ 
\xi X_V + X_H\Bigr).
\endeq
Notice that due to the term $ \<{\cal R}(v,\cdot) v,X_V\>$, $\nabla\Xi (X_H,X_V)$ may not be a closed expression of $\nabla (X_V,X_H)$, a priori preventing  to write the geodesic flow generator on higher-order tensors. One can also write the explicit formula for ${\cal B}$: if $X=(X_H,X_V)$ then
\begeq\label{BXexpl}
\begin{cases}
({\cal B}X)_H = \xi X_H - \Sigma(v,v) X_V - \Delta_V^\mu X_H \\[2mm]
({\cal B}X)_V = \xi X_V + X_H - \Delta_V^\mu X_V + X_V
\end{cases}
\endeq
(where as usual $\Delta_V^\mu = \Delta_V - v\cdot\nabla_V$).

\begin{proof}[Proof of Proposition \ref{propcomH}] First note that (ii) follows from (i) and \eqref{xirevu}. So it all boils down to establish \eqref{NNablaHcoord}. Let us compute for instance with $X$ a $(1,1)$-tensor. From \eqref{VHcov} and \eqref{lXi},
\[
(\nabla_H)_j X_k^\ell = \derpar{X_k^\ell}{x^j} + \Gamma_{js}^\ell X_k^s - \Gamma_{jk}^s X_s^\ell 
- \Gamma_{js}^r v^s \derpar{X_k^\ell}{v^r}.
\]
Tediously iterating this formula (now with a $(1,2)$-tensor),
\begin{multline*}
(\nabla_H)_i (\nabla_H)_j X_k^\ell
= \derpar{{}^2 X_k^\ell}{x^i\,\pa x^j}
- \Gamma_{js}^r v^s \derpar{{}^2 X_k^\ell}{x^i\, \pa v^r}
- \Gamma_{ia}^b v^a \derpar{{}^2 X_k^\ell}{v^b\, \pa x^j}
+ \Gamma_{ia}^b \Gamma_{js}^r v^a v^s \derpar{{}^2 X_k^\ell}{v^b\, \pa v^r}\\
+ \Gamma_{js}^\ell \derpar{X_k^s}{x^i} - \Gamma_{jk}^s \derpar{X_s^\ell}{x^i} 
+ \Gamma_{ia}^\ell \derpar{X_k^a}{x^j} - \Gamma_{ik}^a \derpar{X_a^\ell}{x^j}
- \Gamma_{ij}^a \derpar{X_k^\ell}{x^a} \\
- \pa_i \Gamma_{js}^r v^s \derpar{X_k^\ell}{v^r}\\
- \Gamma_{ia}^\ell \Gamma_{js}^r v^s \derpar{X_k^a}{v^r}
+ \Gamma_{ik}^a \Gamma_{js}^r v^s \derpar{X_a^\ell}{v^r}
+ \Gamma_{ij}^a \Gamma_{ak}^r v^s \derpar{X_k^\ell}{v^r}
- \Gamma_{ia}^b \Gamma_{js}^\ell v^a \derpar{X_k^s}{v^b}
+ \Gamma_{ia}^b \Gamma_{jk}^s v^a \derpar{X_s^\ell}{v^b}
+ \Gamma_{ia}^b \Gamma_{jb}^r v^a \derpar{X_k^\ell}{v^r}\\
+ \pa_i \Gamma_{js}^\ell X_k^s -\pa_i \Gamma_{jk}^s X_s^\ell 
+ \Gamma_{ia}^\ell \Gamma_{js}^a X_k^s
- \Gamma_{ia}^\ell \Gamma_{jk}^s X_s^a - \Gamma_{ik}^a \Gamma_{js}^\ell X_a^s
+ \Gamma_{ik}^a \Gamma_{ja}^s X_s^\ell
- \Gamma_{ij}^a \Gamma_{as}^\ell X_k^s +\Gamma_{ij}^a \Gamma_{ak}^s X_s^\ell.
\end{multline*}
Using the symmetry of Christoffel symbols $\Gamma_{ij}^\ell = \Gamma_{ji}^\ell$ and formulas \eqref{RGamma},
\[ \Bigl( (\nabla_H)_i (\nabla_H)_j - (\nabla_H)_j (\nabla_H)_i\Bigr) X_k^\ell
= R_{sij}^\ell X_k^s - R_{kij}^s X_s^\ell - R_{sij}^r v^s \derpar{X_k^\ell}{v^r},\]
as desired.
\end{proof}

Propositions \ref{propHV} and \ref{propcomH}, together with formula \eqref{Xiexpl}, make it possible to compute commutators of $\Xi$ with differential operators, involving contractions and differentials of ${\cal R}$. For instance
\begeq\label{nablaVjR}
(\nabla_V)_j \bigl( R_{kmi}^\ell v^k v^m \bigr) = (R_{kji}^\ell + R_{jki}^\ell) v^k =: (-\Sigma' v)_{ij}^\ell,
\endeq
where $\Sigma'=\Sigma'(x)$ is a $(1,3)$-tensor field (like $\rho$) which can be synthetically written
\begeq\label{rho}
\Sigma': v\longmapsto - {\cal R}(v,\cdot) - {\cal R} (\cdot,\cdot) v.
\endeq
(It is the same as $\nabla_V \Sigma$, which explains the choice of notation.)
Also,
\begin{align} \label{nablaHjR}
(\nabla_H)_j \bigl(R_{kmi}^\ell v^k v^m\bigr) 
&= \bigl( \pa_j R_{kmi}^\ell - \Gamma_{ji}^s  R_{kms}^\ell+ \Gamma_{js}^\ell R_{kmi}^s \bigr) v^k v^m \\
\nonumber & =: \nabla_j {\cal R}^\ell (v,e_i) v.
\end{align}
Then by computation
\begin{multline}
[\Xi, (\nabla_H)_j ] (X_H, X_V)
= \biggl( \Bigl( \bigl[ \xi,(\nabla_H)_j\bigr] (X_H)_i  - \bigl\< (\nabla_H)_j {\cal R}(v,e_i)v, X_V \bigr\> \Bigr)_{1\leq i\leq n}, \\ 
\Bigl( \bigl[ \xi,(\nabla_H)_j\bigr] (X_V)_i\Bigr)_{1\leq i\leq n} \biggr)
\end{multline}
and
\begin{multline}
[(\nabla_V)_j,\xi ] (X_H, X_V)
= \biggl( \Bigl( \bigl[ (\nabla_H)_j,\xi\bigr] (X_H)_i  + \bigl\< (- \Sigma' v)_{ij}, X_V \bigr\> \Bigr)_{1\leq i\leq n}, \ 
\Bigl( \bigl[ (\nabla_V)_j,\xi\bigr] (X_V)_i\Bigr)_{1\leq i\leq n} \biggr).
\end{multline}
In a more compact form,
\begeq\label{Xicom}
\begin{cases}
[\nabla_H, \Xi ] =  -\Sigma(v,v)\nabla_V - \rho v + \bigl( \< \nabla{\cal R}(v,\cdot) v, P_V\>, 0 \bigr)\\
[\nabla_V, \Xi] = \nabla_H - \bigl( \<\Sigma' v, P_V\>, 0\bigr),
\end{cases}
\endeq
where $P_V (X_H,X_V) = X_V$ (just keep the vertical component). Complicated as they may seem, these expressions display the same dominant terms as commutators of $\xi$.

\bibnotes

Curvature is the central quantitative notion of non-Euclideanity and is largely treated in any exposition of Riemannian geometry, like those of Berger, DoCarmo or Gallot--Hulin--Lafontaine \cite{berger:panoramic:book,docarmo:Riemann:book,GHL:Riemann:book}. The notion of horizontal-vertical decomposition is less popular, but can also be found in those treatises. 

The presentation in this chapter is similar to Debbasch--Ollivier--Villani \cite{DOV:preprint}. Obviously, performing (sometimes heavy) computations and identifying results is not a fully satisfactory scheme of proof, but at least it avoids the use of a more difficult or more abstract formalism, and prepares ground for the very down-to-Earth computations which will inevitably appear in subsequent analysis.

An alternative treatment of horizontality and verticality goes through the {\bf Sasaki metric}, which is a canonical Riemannian structure on $\TM$: In the above notation, it is determined by the scalar products
\[ \left\< \left(\derpar{}{x^i}\right)_H, \left(\derpar{}{x^j}\right)_H \right\> = g_{ij},\qquad
\left\< \derpar{}{v^i}, \derpar{}{v^j} \right\> = g_{ij},\qquad
\left\< \left(\derpar{}{x^i}\right)_H, \derpar{}{v^j} \right\> = 0. \]

\section{Lebesgue and Laplace analysis on $\TM$}

Dealing with functions on $\TM$ requires convenient functional spaces to study integrability, localisation (presently meaning control at large velocities) and regularity. To begin with, we need Lebesgue $L^p$ spaces to evaluate the size of functions, and Laplace operators to navigate through regularity. 

As for Lebesgue spaces, there are two natural choices of reference measure: volume measure, or Gaussian (equilibrium) measure, and both are important. In coordinates and with temperature normalised to 1, recalling that both $\vol(dx)$ and $\vol_x(dv)$ are proportional to $\sqrt{\det g}$,
\begeq\label{volvol}
\vol(dx\,dv) = \vol(dx)\,\vol_x(dv) = (\det g)\,dx^1\ldots\,dx^n\,dv^1\,\ldots dv^n,
\endeq
often just denoted $dx\,dv$ by convenience, and
\begin{align}\label{muvol}
\mu(dx\,dv) & = \frac{e^{-\frac{|v|^2}{2}}}{(2\pi)^{n/2}\,\vol(M)}\,\vol(dx\,dv) \\ 
\nonumber & = \vol(M)^{-1}\, (2\pi)^{-n/2}\, e^{-\frac12 g_{k\ell} v^k v^\ell}\, (\det g) \,dx^1\ldots\,dx^n\,dv^1\,\ldots dv^n.
\end{align}
Then define Lebesgue spaces with or without Gaussian weight, for $1\leq p<\infty$:
\begeq\label{defleb}
\|f\|_{L^p} = \|f\|_{L^p(dx\,dv)} = \left( \iint |f(x,v)|^p\,\vol(dx\,dv) \right)^{1/p}
\endeq
\begeq\label{deflebmu}
\|f\|_{L^p(\mu)} = \|f\|_{L^p(d\mu)} = \left( \iint |f(x,v)|^p\,\mu(dx\,dv) \right)^{1/p},
\endeq
together with the usual modification for $L^\infty$. Let us also add the possibility of a polynomial weight in $|v|$:
\begeq\label{Lpk}
\|f\|_{L^p_\kappa} = \bigl\| (1+|v|^2)^{\kappa/2} f\bigr\|_{L^p_\kappa}\qquad
\|f\|_{L^p_\kappa(\mu)} = \bigl\| (1+|v|^2)^{\kappa/2} f\bigr\|_{L^p_\kappa(\mu)}
\endeq

Those notions easily extend to tensor-valued functions, through
\[ \|X\|_{L^p_\kappa} = \| |X| \|_{L^p_\kappa},\]
where $|X(x,v)|$ is the (operator) norm of $X(x,v)$, recall \eqref{XX}.

In these notes $M$ is assumed to be compact, so there is no need to localise in $x$, but for an unbounded manifold $M$, coming with a confining potential, one could introduce similar weights in space, using powers of the distance function (to a fixed reference point).

Now it would be a mistake to work out regularity with the naive variables $x,v$ which are not intrinsic and not well-adapted to the kinetic Fokker--Planck equation. Recall from Section \ref{secHC} that the horizontal and vertical differentiations provide a natural classification of differentiations in $\TM$; so the convenient notion of partial regularity is about horizontal and vertical variations, rather than variations in $x$ and $v$. For this we need horizontal and vertical formulas of integration by parts, in the form of divergence theorems.

The {\bf vertical divergence} $\div_V$ and {\bf horizontal covariant divergence} $\div_H$ are defined as follows: If $X$ is a vector field $\TM\to\TM$ (meaning: for any $x,v$, $X(x,v) \in T_xM$), then
\begeq\label{divV}
\div_V X = \nabla_V\cdot X =(\nabla_V)_i\cdot X^i,
\endeq
\begeq\label{divH}
\div_H X = \nabla_H\cdot  X = (\nabla_H)_i\cdot X^i.
\endeq
Those formulas extend to higher dimensional tensors in an obvious way.
In the sequel, all functions are smooth and behaviour at infinity is calibrated from the context; for instance, if a function $\vphi$ is integrated against a polynomally increasing expression, it will be assumed that $\vphi = O(1/|v|^s)$ for all $s>0$, as $|v|\to\infty$.

\begin{Prop}[Vertical and horizontal divergence formulas] \label{propdiv}

With notation \eqref{divV}--\eqref{divH},

(i) For any $x$ and any vector field $X:T_xM\to T_xM$, vanishing at infinity,
\begeq\label{intdivV} \int_{T_xM} (\nabla_V\cdot X) \,\vol_x(dv) = 0.
\endeq
As a consequence for any vector field $X:\TM\to\TM$, vanishing at infinity, and any function $\chi:M\to\R$,
\begeq\label{iintdivV}
\iint_{\TM} (\nabla_V\cdot X)(x,v)\, \chi(x)\,\vol(dx\,dv) = 0.
\endeq

(ii) For any vector field $X:\TM\to\TM$, vanishing at infinity, and any function $\vphi:\R\to\R$,
\begeq\label{iintdivH}
\iint_{\TM} (\nabla_H\cdot X)(x,v)\, \vphi(|v|^2)\,\vol(dx\,dv) = 0.
\endeq

(iii) If $X$ is a $(J,I)$ tensor $\TM\to\TM$ and $Y$ an $(I+1,J)$-tensor $\TM\to\TM$, both vanishing at infinity, then for any two functions $\chi:M\to\R$ and $\vphi:\R\to\R$,
\begeq\label{ibpV}
\iint \bigl\<\nabla_V X, Y\bigr\> \chi(x)\,\vol(dx\,dv) 
= - \iint \bigl\<X,\nabla_V\cdot Y\bigr\>\,\chi(x)\,\vol(dx\,dv),
\endeq
\begeq\label{ibpVmu}
\iint \bigl\<\nabla_V X, Y\bigr\>\, \chi(x)\,\mu(dx\,dv) 
= - \iint \bigl\<X,(\nabla_V-v)\cdot Y\bigr\>\,\chi(x)\,\mu(dx\,dv),
\endeq
\begeq\label{ibpH}
\iint \bigl\<\nabla_HX, Y\bigr\> \,\vphi(|v|^2)\,\vol(dx\,dv) = - \iint \bigl\<X, \nabla_H\cdot Y\bigr\> \,\vphi(|v|^2)\,\vol(dx\,dv).
\endeq

(iv)  If $X$ is a $(J,I)$ tensor $\TM\to\TM$ and $\vphi:\R\to\R$, then
\begeq\label{intdh}
\nabla_x \left( \int_{T_xM} X(x,v) \vphi(|v|^2)\,dv \right) = \int_{T_xM} (\nabla_H X)(x,v)\,\vphi(|v|^2)\,dv.
\endeq
In particular, if $F=F(x,v)$ is a function $\TM\to \R$ and $\vphi:\R\to\R$, then
\begeq\label{fmldivx}
\nabla_x \cdot \left(\int_{T_xM} F(x,v)\, v\, \vphi(|v|^2)\,dv\right) = \int_{T_xM} (\xi F)\,\vphi(|v|^2)\,dv.
\endeq

(v) The adjoint of $\nabla_V$ in $L^2(\vol)$ is $-\nabla_V\cdot$; the adjoint of $\nabla_V$ in $L^2(\mu)$ is $-(\nabla_V-v)\cdot$; the adjoint of $\nabla_H$, either in $L^2(\vol)$ or in $L^2(\mu)$, is $-\nabla_H\cdot$. All those Lebesgue spaces are to be understood as valued in $\TIJ$, the space of $(J,I)$-tensors, for arbitrary $J,I$ in $\N_0$. In short,
\begeq\label{shortnot}
\nabla_V^* = -\nabla_V\cdot,\qquad \nabla_V^{*\mu} = - (\nabla_V - v)\cdot \qquad \nabla_H^* = -\nabla_H\cdot.
\endeq
\end{Prop}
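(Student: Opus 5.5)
The plan is to prove (i) in a single fiber, then obtain everything else from two pointwise identities: the Leibniz rule and the vanishing of $\nabla_H$ on the volume density.

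\emph{Step 1: part (i).} On a fixed fiber $T_xM$, $\vol_x(dv)=\sqrt{\det g_x}\,dv^1\cdots dv^n$ with a constant density. Also $\nabla_V\cdot X=\pa X^i/\pa v^i$ is the Euclidean divergence in the linear coordinates $v^i$. So \eqref{intdivV} is the usual divergence theorem on $\R^n$, applied to a field vanishing at infinity. Multiplying by $\chi(x)$ and integrating against $\vol(dx)$ gives \eqref{iintdivV}, since $\chi$ does not depend on $v$.

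\emph{Step 2: part (iv), then (ii).} I will prove \eqref{intdh} first, because (ii) follows from it. Work in coordinates and write $\int_{T_xM}X\vphi(|v|^2)\,dv$ with the measure $\sqrt{\det g}\,dv$. Differentiating in $x^k$ under the integral produces three terms:
\begin{itemize}
\item $\pa_k X$, which the Christoffel corrections complete into $\nabla_k X$ for the tensor indices;
\item $\pa_k$ falling on $\vphi(g_{ij}v^iv^j)$;
\item $\pa_k\sqrt{\det g}=\Gamma_{ik}^i\sqrt{\det g}$, by \eqref{idChrist}.
\end{itemize}
Next I compare with $\int(\nabla_H)_kX\,\vphi\,dv$. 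This contains the extra term $-\Gamma_{ik}^\ell v^i\pa_{v^\ell}X$. Integrating it by parts in $v$ gives $+\Gamma_{\ell k}^\ell X\vphi$ plus $\Gamma_{ik}^\ell v^iX\,\pa_{v^\ell}\vphi$. The first piece cancels the $\det g$ term. For the second, Proposition~\ref{basicinv}(ii) shows that $\nabla_H\vphi(|v|^2)=0$, that is, $\pa_k\vphi=\Gamma_{ik}^\ell v^i\pa_{v^\ell}\vphi$, so it cancels the derivative falling on $\vphi$.

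Now (ii) follows. Take $Y=\int X\vphi\,dv$, a vector field on $M$. By \eqref{intdh} its divergence is $\int(\nabla_H\cdot X)\vphi\,dv$ (contract the indices; $\nabla_H$ commutes with contraction because $\nabla g=0$). The Riemannian divergence theorem on the compact manifold $M$ gives $\int_M\nabla\cdot Y\,\vol(dx)=0$. Formula \eqref{fmldivx} is the case where the tensor is $Fv$: since $\nabla_H v=0$ by \eqref{nablaHvm} and the definition \eqref{VHcov}, we get $(\nabla_H)_i(Fv^i)=v^i(\nabla_H)_iF=\xi F$ by \eqref{xirevu}.

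\emph{Step 3: parts (iii) and (v).} Apply (i) and (ii) to the vector field obtained by contracting $X\otimes Y$, namely $Z^i=\langle X,Y^{i\cdot}\rangle$. The Leibniz rule gives $\nabla_V\cdot Z=\langle\nabla_VX,Y\rangle+\langle X,\nabla_V\cdot Y\rangle$. The same rule holds for $\nabla_H$, because $\nabla_H g=0$ and contractions commute with the connection. This yields \eqref{ibpV} and \eqref{ibpH}.

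For \eqref{ibpVmu}, put the Gaussian weight into the test function. In the vertical case, $\chi(x)e^{-|v|^2/2}$ is not a function of $x$ alone, so I instead absorb $e^{-|v|^2/2}$ into $Y$ and use $\nabla_Ve^{-|v|^2/2}=-v\,e^{-|v|^2/2}$; this produces the $-v$ term. In the horizontal case, $e^{-|v|^2/2}$ is radial, so it is already covered by (ii) with $\vphi(s)=e^{-s/2}$, and no extra term appears. Statement (v) just restates these identities with $\chi\equiv1$ and $\vphi\equiv1$ or Gaussian.

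\emph{Main obstacle.} The only delicate computation is Step~2: the bookkeeping showing that the $\Gamma_{ik}^\ell v^i\pa_{v^\ell}$ term, after vertical integration by parts, exactly cancels both the derivative of $\sqrt{\det g}$ (via $\Gamma_{ik}^i=\tfrac12\pa_k\log\det g$) and the derivative of $\vphi(|v|^2)$. I will check this first on functions, then add the tensor indices, which only contribute the standard covariant corrections.
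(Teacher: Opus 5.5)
Your argument is correct, but it runs the deductions in the opposite order from the paper.

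\textbf{The paper's route.} It proves (ii) first and globally. It localises with a partition of unity and writes $\nabla_H\cdot X=\pa_iX^i+\Gamma^i_{is}X^s-\Gamma^\ell_{ik}v^k\pa_{v^\ell}X^i$ in a chart. It then integrates by parts in both $x^i$ and $v^\ell$ against $(\det g)\,\vphi(|v|^2)$, and cancels the resulting terms using $\Gamma^i_{ij}=\frac12\pa_j\log\det g$ and $2\Gamma^\ell_{ik}g_{\ell r}v^kv^r=\pa_ig_{kr}v^kv^r$. Next, (iii) follows from the Leibniz rules. Finally, (iv) is obtained by duality: test against an arbitrary $Y(x)$ and apply \eqref{ibpH}.

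\textbf{Your route.} You prove \eqref{intdh} first, directly and fiber by fiber. You differentiate under $\int\cdot\,\vol_x(dv)$ and integrate by parts in $v$ only. The same two identities do the cancelling; the second is exactly $\nabla_H\vphi(|v|^2)=0$. Then (ii) follows from the classical divergence theorem on the closed manifold $M$, applied to $Y=\int X\vphi\,\vol_x(dv)$.

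\textbf{What your route gains.} \eqref{intdh} becomes a pointwise-in-$x$ identity that can be checked in one chart, with no partition of unity and no duality. The only global input is the standard divergence theorem on $M$. The paper's chart-by-chart integration by parts in $x$ (the $\sqrt{\det g}$ of the base volume against $\Gamma^i_{is}X^s$) is implicitly re-deriving that theorem.

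\textbf{What the paper's route gains.} It establishes (ii) self-containedly on $\TM$. Once (iii) is available, (iv) costs only a few lines.

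You also spell out the derivation of \eqref{ibpVmu}, by absorbing $e^{-|v|^2/2}$ into $Y$ and using $\nabla_Ve^{-|v|^2/2}=-v\,e^{-|v|^2/2}$. The paper leaves that step implicit.
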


\begin{Rks} All those formulas extend to vector fields which may not be vanishing at infinity, using density arguments; e.g. vector fields with polynomial growth if the reference measure is $\mu$. Further note that the last formula can be applied in particular to $\vphi(q) = e^{-q/2}$, thus recovering an integration by parts for the measure $\mu$. 
\end{Rks}

\begin{proof}[Proof of Proposition \ref{propdiv}]
For (i): \eqref{intdivV} is just the usual divergence theorem in Euclidean space $T_xM$, and \eqref{iintdivV} follows upon $x$-integration.

For (ii): First, here is the expression of $\nabla_H\cdot X$ in coordinates:
\[ \nabla_H\cdot X = \derpar{X^i}{x^i} + \Gamma_{is}^i X^s - \Gamma_{ik}^\ell v^k \derpar{X^i}{v^\ell}. \]
Let $(O_\theta)$ be an atlas of $M$, so there are only finitely many open subsets $O_\theta$, and let $(\eta_\theta)$ a partition of unity subordinate to that atlas, so each $\eta_\theta$ is supported in $O_\theta$, and $\sum \eta_\theta = 1$. Writing 
\[ \iint \nabla_H\cdot X \vphi = \iint \nabla_H\cdot (X \sum_\theta \eta_\theta)\vphi  = \sum_\theta \iint \nabla_H\cdot (X\eta_\theta)\vphi,\] 
we see that we just need to prove the vanishing of the integral when $X$ is supported in $T O_\theta$, and for that we may use a fixed set of coordinates $x^1,\ldots x^n, v^1,\ldots v^n$. Thus the quantity to evaluate is
\begeq\label{integal0}
\iint \left( \derpar{X^i}{x^i} + \Gamma_{is}^i X^s - \Gamma_{ik}^\ell v^k \derpar{X^i}{v^\ell}\right)\, (\det g)\, \vphi \left(\sum g_{k\ell} v^k\,v^\ell \right)\, dx^1\,\ldots dx^n\,dv^1\ldots dv^n.
\endeq
Integrate by parts in $x^i$ for the first term inside brackets, and in $v^\ell$ for the third term: Then \eqref{integal0} becomes

\begin{multline}
\iint \Bigl[ - X^i \left(\frac{\pa_i \det g}{\det g}\, \vphi(|v|^2) + \pa_i g_{k\ell} \,v^k v^\ell \vphi'(|v|^2) \right) \\
 +X^s\, \Gamma_{is}^i \vphi(|v|^2) + X^i \left(\Gamma_{ik}^k \vphi(|v|^2) + 2 \Gamma_{ik}^\ell v^k g_{\ell r} v^r \vphi'(|v|^2)\right) \,(\det g)\Bigr]\,dx^1\ldots dv^n.
\end{multline}

On the one hand, the terms proportional to $\vphi(|v|^2)$ cancel out in view of \eqref{idChrist}. On the other hand, applying $g^*$ to \eqref{christoffel} and differentiating with respect to $x^i$ yields
\[ 2 \Gamma_{ik}^\ell \, g_{\ell r} v^k v^r = \pa_i g_{kr} v^k v^r, \]
upon which the terms proportional to $\vphi'(|v|^2)$ cancel out also.
This proves (ii).

Now for (iii): Formula~\eqref{ibpV} is a standard integration by parts, fiberwise, in Euclidean $T_xM$, or can be deduced as a consequence of \eqref{iintdivV} since 
\begeq\label{nablaVprod}
\nabla_V\cdot \<X,Y\> = \< \nabla_V\cdot X, Y\> + \<X, \nabla_V\cdot Y\>
\endeq 
(an easy generalisation of the Euclidean formula $\nabla \cdot (fX) = \nabla f \cdot X + f\nabla\cdot X)$. Similarly, \eqref{ibpH} will follow from \eqref{iintdivH} and
\begeq\label{nablaHprod}
\nabla_H\cdot \<X,Y\> = \< \nabla_H \cdot X, Y\> + \<X, \nabla_H\cdot Y\>,
\endeq
but the latter is more tricky than \eqref{nablaVprod} because of the need to use covariant derivatives. Again this can be checked through coordinates. Let us consider for instance $I=J=1$ and compute the left hand side of \eqref{nablaHprod}:
\begin{align*} 
\nabla_H\cdot \<X,Y\> & = \nabla_i \Bigl( X_{i_1}^{j_1} Y^{i i_1}_{j_1} \Bigr) \\
& = \bigl( \pa_i X_{i_1}^{j_1} \bigr) Y^{i i_1}_{j_1}  + 
X_{i_1}^{j_1} \pa_i \bigl( Y^{i i_1}_{j_1} \bigr) + \Gamma_{si}^i X_{i_1}^{j_1} Y^{s i_1}_{j_1} \\
& \qquad\qquad\qquad\qquad  - \Gamma_{ir}^s v^r \derpar{X_{i_1}^{j_1}}{v^s} Y^{ii_1}_{j_1} 
- \Gamma_{ir}^s v^r X_{i_1}^{j_1}\derpar{Y^{ii_1}_{j_1}}{v^s}.
\end{align*}
The first term on the right hand side of \eqref{nablaHprod} is
\[
 \< \nabla_H X, \cdot Y\> = \bigl( \pa_i X_{i_1}^{j_1} \bigr) Y^{i i_1}_{j_1} 
+ \Gamma_{is}^{j_1} X^s_{i_1} Y^{i i_1}_{j_1} - \Gamma_{ii_1}^s X_s^{j_1} Y^{i i_1}_{j_1} 
- \Gamma_{ir}^s v^r \derpar{X_{i_1}^{j_1}}{v^s} Y^{ii_1}_{j_1},
\]
while the second one is
\[
 \< X, \nabla_H\cdot Y\> = X_{i_1}^{j_1} \bigl( \pa_i Y^{i i_1}_{j_1} \bigr) \\
 + \Gamma_{si}^i X_{i_1}^{j_1} Y^{s i_1}_{j_1}  + \Gamma_{is}^{i_1} X_{i_1}^{j_1} Y^{i s}_{j_1} - \Gamma_{ij_1}^{s} X^{j_1}_{i_1} Y^{i i_1}_{s}  \\
- \Gamma_{ir}^s v^r X_{i_1}^{j_1} \derpar{Y^{ii_1}_{j_1}}{v^s},
\]
and this yields \eqref{nablaHprod} since the terms $\Gamma_{is}^{j_1} X^s_{i_1} Y^{i i_1}_{j_1} - \Gamma_{ii_1}^s X_s^{j_1} Y^{i i_1}_{j_1}$ cancel out upon addition. (Obviously this is no accident, the formula of covariant derivative was built to preserve scalar products of vectors, or more generally tensors.)
\sm

To prove (iv), test the equation against an arbitrary $Y(x)$, then by (iii),
\begin{align*}
\int_M \Bigl \< Y(x), \nabla_x \int_{T_xM}  &  X(x,v)\, \vphi(|v|)\,dv \Bigr\> \,\vol(dx) \\
&  = - \int_M  \left \< \nabla_x\cdot Y(x), \int_{T_xM} X(x,v)\, \vphi(|v|)\,dv \right\> \,\vol(dx) \\
& = - \int_M  \left \< \nabla_H\cdot Y(x), \int_{T_xM} X(x,v)\, \vphi(|v|)\,dv \right\> \,\vol(dx)\\
& = - \iint_{\TM} \< \nabla_H Y, X\> \,\vphi(|v|)\, \vol(dx)\,dv\\
& = \iint_{\TM} \< Y,\nabla_H X\>\, \vphi(|v|)\,\vol(dx)\,dv \\
& = \int_M \left \< Y(x), \int_{T_xM} \nabla_H X(x,v) \,\vphi(|v|)\,dv \right\> \,\vol(dx),
\end{align*}
which proves the claim. Then \eqref{fmldivx} is obtained thanks to \eqref{xirevu}.

Finally (v) is just a rewriting of (iii), which concludes the proof of Proposition \ref{propdiv}.
\end{proof}

Here is an important consequence of Proposition \ref{propdiv}:

\begin{Cor}[Conservativity of the geodesic derivation]
If $F:\TM\to\R$ is a smooth function vanishing at infinity, then
\[ \iint_{\TM} (\xi F) F = 0,\]
where the measure is either $\mu$ or Liouville. More generally, for any smooth function $\Phi:\R\to\R$, such that $\Phi'(F)$ vanishes at infinity,
\[ \iint_{\TM} (\xi F) \Phi'(F) = 0.\]
\end{Cor}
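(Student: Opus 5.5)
The plan is to reduce everything to the horizontal divergence formula of Proposition~\ref{propdiv}(ii), using the representation \eqref{xirevu} of the geodesic derivation as $\xi = v\cdot\nabla_H$. The second statement contains the first: taking $\Phi(z)=z^2/2$ gives $\Phi'(F)=F$. So I will prove the general one. By the chain rule for the first-order derivation $\xi$ (it is a vector field on $\TM$), I will write
\[ (\xi F)\,\Phi'(F) = \xi\bigl(\Phi(F)\bigr) = v\cdot\nabla_H \Phi(F) = (\nabla_H)_i\bigl(v^i\,\Phi(F)\bigr) - \Phi(F)\,(\nabla_H)_i v^i . \]
The problem then becomes showing that the last term is harmless and that the divergence term integrates to zero.

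For the correction term, I will use \eqref{nablaHvm}, which states $(\nabla_H)_k v^m = \Gamma^m_{k\ell}v^\ell$. Read naively (as scalar functions), this gives $\sum_k(\nabla_H)_kv^k=\Gamma^k_{k\ell}v^\ell\neq0$. However, the divergence $\nabla_H\cdot$ in \eqref{divH} is the covariant one, acting on the vector field $Y(x,v)=\Phi(F)\,v$. Computing with \eqref{VHcov}, the covariant term $+\Gamma^i_{is}v^s$ and the vertical correction $-\Gamma^\ell_{ik}v^k\,\partial v^i/\partial v^\ell=-\Gamma^i_{ik}v^k$ cancel. The tautological field $v$ is therefore horizontally parallel, $\nabla_H v=0$. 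This is the same fact used in Proposition~\ref{basicinv}(ii), and it gives $\nabla_H\cdot(\Phi(F)v)=v\cdot\nabla_H\Phi(F)=\xi\Phi(F)$. Settling this bookkeeping between the naive and covariant readings is the only delicate step, and I will verify it directly in coordinates, as in the proof of Proposition~\ref{propdiv}(ii).

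Next I will integrate against the two measures. For the Liouville measure, apply \eqref{iintdivH} with $\vphi\equiv1$ to $X=\Phi(F)v$. For $\mu$, apply it with $\vphi(q)=e^{-q/2}$ (times the normalising constant), which is exactly the density of $\mu$ with respect to $\vol(dx\,dv)$ by \eqref{muvol}. In both cases $\iint \nabla_H\cdot(\Phi(F)v)\,\vphi(|v|^2)=0$, hence $\iint(\xi F)\Phi'(F)=0$. Equivalently, one can note that $\xi$ annihilates radial functions by Proposition~\ref{basicinv}(ii), so the weight $e^{-|v|^2/2}$ commutes with $\xi$.

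Finally, I will handle the decay hypotheses. Proposition~\ref{propdiv} requires $X$ to vanish at infinity, and we need $\Phi(F)v$ to do so. Since $\Phi$ is only determined up to a constant, I will normalise it by $\Phi(0)=0$ where possible. Otherwise, I will use the Remarks after Proposition~\ref{propdiv}, which allow extension by density, together with a cutoff $\chi_R(|v|^2)$. Such a cutoff is radial, so $\xi\chi_R=0$ and it passes through $\xi$ without producing error terms. Letting $R\to\infty$ then concludes, under the paper's standing convention that functions decay fast enough for the integrals to make sense.
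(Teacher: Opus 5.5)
Your proof is correct and is essentially the paper's argument. The paper states the corollary as an immediate consequence of Proposition~\ref{propdiv}: the horizontal divergence formula \eqref{iintdivH}, or its rewriting \eqref{fmldivx}, is applied to $\Phi(F)$ using $\xi=v\cdot\nabla_H$ and the chain rule $\xi\Phi(F)=\Phi'(F)\,\xi F$; your coordinate check that $\nabla_H\cdot(\Phi(F)\,v)=\xi\Phi(F)$ (i.e.\ $\nabla_H v=0$ covariantly) and your radial cutoff $\chi_R(|v|^2)$, which commutes with $\xi$, simply make explicit what the paper leaves implicit.
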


\begin{Rk} \label{rkXXnot0} From formula \eqref{Xiexpl}, $\int \<X,\Xi X\>_{L^2} $ might not be equal to~0, but it is still controlled:
\begeq\label{XXiX}
\begin{cases}
\dps \iint \left| \< X_H, (\Xi X)_H \> \right| \leq C \iint |X_H| \, |X_V| \, |v|^2\\[2mm]
\dps \iint \left| \<X_V, (\Xi X)_V\> \right | \leq C \iint |X_H|\, |X_V|\
\end{cases}
\endeq
(where the measure is either $\mu$ or Liouville). 
\end{Rk}

Next, proposition \ref{propdiv} makes it possible to introduce the vertical and horizontal Laplace operators:
\begeq\label{Laplaces} 
\Delta_V = -\nabla_V^*\nabla_V = \nabla_V\cdot \nabla_V
\qquad
\Delta_H = - \nabla_H^*\nabla_H = \nabla_H \cdot \nabla_H
\endeq
as well as the Gaussian vertical Laplace operator
\begeq\label{Laplacemu}
\Delta_V^\mu = -\nabla_V^{*\mu} \nabla_V = \Delta_V - v\cdot\nabla_V.
\endeq
The explicit expression of $\Delta_V$ and $\Delta_V^\mu$ is simple:
\begeq\label{coordDeltaV}
\Delta_V = g^{ij}\derpar{}{v^i} \derpar{}{v^j} \qquad
\Delta_V^\mu = g^{ij}\derpar{}{v^i} \derpar{}{v^j} -v^i\derpar{}{v^i}.
\endeq
This yields again the operator involved in the (geometric) kinetic Fokker--Planck equations \eqref{kFP} or \eqref{kFPh}; and this is the same for functions or tensors. The expression of $\Delta_H$ is more tricky:
\begeq\label{coordDeltaH}
\Delta_H = g^{ij} (\nabla_H)_i (\nabla_H)_j,
\endeq
and $(\nabla_H)_i (\nabla_H)_j$ was computed in the proof of Proposition \ref{propcomH}. Using symmetry $i\leftrightarrow j$ in \eqref{coordDeltaH} leads to minor simplifications. Evaluated on functions, the horizontal Laplace operator reads
\begin{multline} \label{DeltaHf}
\Delta_H f = g^{ij} \left[ \derpar{{}^2f}{x^i\, \pa x^j} - \Gamma_{ij}^\ell \derpar{f}{x^\ell} - 2 \Gamma_{jr}^\ell v^r \derpar{{}^2f}{x^i\, \pa v^\ell}  \right. \\
\left. + \bigl( - \pa_i \Gamma_{jk}^\ell v^k + \Gamma_{ir}^s \Gamma_{js}^\ell v^r + \Gamma_{ij}^a
\Gamma_{as}^\ell v^s \bigr) \derpar{f}{v^\ell} + \Gamma_{ir}^s \Gamma_{jk}^\ell v^r v^k \derpar{{}^2 f}{v^s\, \pa v^\ell} \right].
\end{multline}
(Of course this reduces to $\Delta_x f = g^{ij} (\derpar{{}^2f}{x^i\, \pa x^j} - \Gamma_{ij}^\ell \derpar{f}{x^\ell})$ if $f$ is horizontal.)
But applied to tensors $\Delta_H$ has a significantly more cumbersome expression, whose complexity grows with the number of indices involved; for instance to $(1,1)$-tensors:
\begin{multline}
\Delta_H X_k^\ell 
= g^{ij} \Bigl[ \derpar{{}^2 X_k^\ell}{x^i\, \pa x^j} - \Gamma_{ij}^s \derpar{X_k^\ell}{x^s} + 2 \Gamma_{js}^\ell \derpar{X_k^s}{x^i} - 2 \Gamma_{jk}^s \derpar{X_s^\ell}{x^i} - 2 \Gamma_{js}^r v^s \derpar{{}^2 X_k^\ell}{x^i\,\pa v^r}  \\
- \bigl( \pa_i \Gamma_{js}^r v^s - \Gamma_{ij}^a \Gamma_{ab}^r v^s\bigr) \derpar{X_k^\ell}{v^r} - 2 \Gamma_{ia}^\ell \Gamma_{js}^r v^s \derpar{X_k^a}{v^r} 
+ 2 \Gamma_{ik}^a \Gamma_{js}^r v^s \derpar{X_a^\ell}{v^r} + \Gamma_{ia}^b \Gamma_{jr}^s\, v^a v^s\derpar{{}^2X_k^\ell}{v^b\, \pa v^r}\\
+ \bigl( \pa_i \Gamma_{js}^\ell + \Gamma_{ia}^\ell \Gamma_{js}^a - \Gamma_{ij}^a \Gamma_{as}^\ell \bigr) _k^s - \bigl(\pa_i \Gamma_{jk}^s - \Gamma_{ik}^a \Gamma_{ja}^s - \Gamma_{ij}^a\Gamma_{ak}^s\bigr) X_s^\ell -\Gamma_{ia}^\ell \Gamma_{jk}^s X_s^a \Bigr] .
\end{multline}
(As usual $\Delta_H X_k^\ell$ means $(\Delta_H X)_k^\ell$, and not $\Delta_H (X_k^\ell)$ which is anyway meaningless.)

From \eqref{coordDeltaV} and \eqref{coordDeltaH} follows the expression of the {\bf total Laplacian}, which is the Laplace operator on $\TM$ endowed with the Sasaki metric:
\begeq\label{DDD} \Delta = \Delta_H+\Delta_V= g^{ij} (\nabla_H)_i (\nabla_H)_j + g^{ij}\derpar{}{v^i} \derpar{}{v^j}.
\endeq
Further, as a consequence of Proposition \ref{propHV} we have
\begeq\label{comDVDH}  [\Delta_H,\Delta_V] = 0, \qquad [\Delta_H,\Delta_V^\mu] =0,
\endeq
in the sense that $\Delta_H\Delta_V X = \Delta_V\Delta_H X$, whatever the smooth tensor $X$, and likewise when $\Delta_V$ is replaced by $\Delta_V^\mu$.

From the definition of these Laplace operators also follow the key Dirichlet identities
\begeq\label{DeltaVXX}
\begin{cases}
\dps \int \<\Delta_VX,X\> \vol(dx\,dv) = - \int |\nabla_V X|^ 2\,\vol(dx\,dv) \\[4mm]
\dps \int \<\Delta_V^\mu X,X\> \mu(dx\,dv) = - \int |\nabla_V X|^2\,\mu(dx\,dv)
\end{cases}
\endeq
\begeq\label{DeltaHXX}
\begin{cases} \dps \int \<\Delta_HX,X\> \vol(dx\,dv) = - \int |\nabla_H X|^ 2\,\vol(dx\,dv)\\[4mm]
\dps \int \<\Delta_H X,X\> \mu(dx\,dv) = - \int |\nabla_H X|^2\,\mu(dx\,dv).
\end{cases}
\endeq
In particular, $\Delta_V$ (resp. $\Delta_V^\mu$) is self-adjoint nonpositive in tensor-valued $L^2(\vol)$ (resp. $L^2(\mu)$), and $\Delta_H$ is self-adjoint nonpositive both in tensor-valued $L^2(\vol)$ and tensor-valued $L^2(\mu)$. Those statements should be understood in the sense of linear operators defined on a $L^2$-dense set of smooth functions, say $C^\infty$ functions in $x$ and $v$ with fast decay as $|v|\to\infty$.

These Laplace operators also inherit some basic invariances from those of the derivations:

\begin{Prop}[Basic invariance properties of the Laplace operators] \label{basicLapinv}

(i) If $f=f(x)$ then $\Delta_V f =0$; and more generally, for any function $u$, one has $\Delta_V(uf) = (\Delta_Vu) f$.

(ii) If $f=\vphi(|v|^2)$ is radially symmetric, then $\Delta_H f =0$; and more generally, for any function $u$, one has $\Delta_H(uf) = (\Delta_H u) f$;

(iii) If $f$ is smooth and rapidly decaying as $|v|\to\infty$, and $\vphi$ is any smooth function $\R\to\R$ with at most polynomial growth, then
\begeq\label{Deltaint}
\Delta_x \int_{T_xM} f(x,v)\, \vphi(|v|^2)\,\vol_x(dv) = \int_{T_xM} \Delta_H f(x,v)\,\vphi(|v|^2)\,\vol_x(dv),
\endeq
where $\Delta_x$ stands for the classical Laplace operator on $M$; and likewise
\begeq\label{Deltamuint}
\Delta_x \int_{T_xM} f(x,v)\, \vphi(|v|^2)\,\mu_x(dv) = \int_{T_xM} \Delta_H f(x,v)\,\vphi(|v|^2)\,\mu_x(dv).
\endeq
\end{Prop}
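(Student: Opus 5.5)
The plan is to reduce everything to first-order facts already established: Proposition~\ref{basicinv}, the Leibniz rules for $\nabla_V$ and $\nabla_H$, and the divergence identity \eqref{intdh}.

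For (i), $\nabla_V f=0$ when $f=f(x)$ by Proposition~\ref{basicinv}(i). Since $\nabla_V$ is a derivation, $\nabla_V(uf)=(\nabla_V u)f$. Applying the vertical divergence, which is again a derivation in the sense of \eqref{nablaVprod}, gives
\[ \nabla_V\cdot\bigl((\nabla_V u) f\bigr) = (\Delta_V u) f + \<\nabla_V u,\nabla_V f\> = (\Delta_V u) f. \]
One can also read this off directly from the coordinate formula \eqref{coordDeltaV}, since $f$ does not depend on $v$.

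For (ii), Proposition~\ref{basicinv}(ii) gives $\nabla_H f=0$ for $f=\vphi(|v|^2)$. The horizontal derivative is a derivation on products of a tensor with a function; this follows from \eqref{VHcov}, since the Christoffel terms act only on tensor indices. Hence $\nabla_H(uf)=(\nabla_H u)f$. Taking the horizontal divergence of this product with the same Leibniz rule \eqref{nablaHprod} gives
\[ \Delta_H(uf)=(\Delta_H u)f+\<\nabla_H u,\nabla_H f\>=(\Delta_H u)f. \]
Taking $u=1$ yields $\Delta_H f=0$.

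For (iii), write $\Delta_x=\nabla_x\cdot\nabla_x$ and apply \eqref{intdh} twice.
\begin{itemize}
\item First apply \eqref{intdh} with $X=f$ to get
\[ \nabla_x\int f\,\vphi(|v|^2)\,\vol_x(dv)=\int(\nabla_H f)\,\vphi(|v|^2)\,\vol_x(dv). \]
\item Then apply \eqref{intdh} again with $X=\nabla_H f$, a 1-form, and contract the resulting indices with $g^{ij}$. The contraction commutes with the $v$-integral because $g$ depends only on $x$ and is parallel, $\nabla g=0$. This gives
\[ \Delta_x\int f\vphi\,dv=\int g^{ij}(\nabla_H)_i(\nabla_H)_j f\,\vphi\,dv=\int\Delta_H f\,\vphi\,dv, \]
using \eqref{coordDeltaH}.
\end{itemize}

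For \eqref{Deltamuint}, note that $\mu_x(dv)$ is $\vol_x(dv)$ times $(2\pi)^{-n/2}e^{-|v|^2/2}$ (up to the constant $\vol(M)^{-1}$ if one uses $\mu$). So it suffices to replace $\vphi(q)$ by $\vphi(q)e^{-q/2}(2\pi)^{-n/2}$, which is still a function of $|v|^2$.

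The main point needing care is the justification of \eqref{intdh} when $X$ is a tensor rather than a function, and the commutation of the metric contraction with the integral. Here I would use the duality proof already given for Proposition~\ref{propdiv}(iv), which is valid for $(J,I)$-tensors. The decay of $f$ and the polynomial growth of $\vphi$ justify the integrations by parts.
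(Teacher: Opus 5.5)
Your proof is correct. Parts (i) and (ii) are what the paper means by ``follow at once from Proposition~\ref{basicinv}'', with the Leibniz-rule details filled in. Absorbing $(2\pi)^{-n/2}e^{-q/2}$ into $\vphi$ for \eqref{Deltamuint} is exactly the intended ``likewise''.

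For \eqref{Deltaint} your route differs from the paper's. The paper argues by duality in one second-order step:
\begin{itemize}
\item it tests against $h=h(x)$ and moves $\Delta_x$ onto $h$;
\item it replaces $\Delta_x h$ by $\Delta_H h$, which is legitimate since $h$ is horizontal;
\item it uses the self-adjointness of $\Delta_H$ on $\TM$;
\item it invokes part (ii) to write $\Delta_H(f\vphi)=(\Delta_H f)\vphi$.
\end{itemize}
You instead apply the tensorial first-order identity \eqref{intdh} twice and then contract with $g^{ij}$.

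Your route gives the stronger uncontracted identity $\nabla_x^2\int f\,\vphi(|v|^2)\,\vol_x(dv)=\int \nabla_H^2 f\,\vphi(|v|^2)\,\vol_x(dv)$. It also does not need part (ii) or the self-adjointness of $\Delta_H$ as separate inputs; the horizontality of the radial weight is hidden inside \eqref{intdh}. The paper's route stays scalar throughout, so it never has to interpret $v$-integrals of tensor fields or covariant derivatives of the resulting forms on $M$. Both ultimately rest on the same weighted horizontal divergence formula \eqref{ibpH}, since \eqref{intdh} is itself derived from it by duality.

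A minor point of wording: pulling $g^{ij}$ through the $v$-integral only requires that $g$ not depend on $v$. The parallelism $\nabla g=0$ is what you are really using when you write $\Delta_x=g^{ij}\nabla_i\nabla_j$ and $\Delta_H=g^{ij}(\nabla_H)_i(\nabla_H)_j$.
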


\begin{proof}[Proof of Proposition \ref{basicLapinv}]
(i) and (ii) follow at once from Proposition \ref{basicinv}. As for (iii), it is best seen by duality: If $h = h(x)$ is a smooth test-function, then by formal self-adjointness of $\Delta_x$ in $L^2(M)$, of $\Delta_H$ in $L^2(\TM)$ and Proposition \ref{basicinv} again,
\begin{align*}
\int \Bigl ( \int f(x,v)\, \vphi(|v|^2)\,  & \vol_x(dv)\Bigr) \Delta_x h(x)\, \vol(dx) \\
& = \iint f(x,v) \,\vphi(|v|^2) \Delta_x h(x)\,\vol(dx)\, \vol_x(dv)\\
& =  \iint f(x,v)\, \vphi(|v|^2) \Delta_H h(x)\,\vol(dx)\, \vol_x(dv)\\
& =  \iint \Delta_H \bigl( f(x,v) \,\vphi(|v|^2)\bigr) h(x)\,\vol(dx)\, \vol_x(dv)\\
& =  \iint (\Delta_H f)(x,v) \,\vphi(|v|^2)\, h(x)\,\vol(dx)\, \vol_x(dv)\\
& =  \int \left(\int (\Delta_H f)(x,v)\,\vol_x(dv)\right) \vphi(|v|^2)\, h(x)\,\vol(dx).
\end{align*}
\end{proof}

One cannot in general say more. A well-known property of the Euclidean Laplace operator in $\R^n$ is that it commutes with isometries: for any isometry $R$, $\Delta (f\circ R) = (\Delta f)\circ R$. Likewise, $\Delta_H$ and $\Delta_V$ will be invariant under any horizontal and vertical isometry, respectively; but in general there are just hardly any such isometries. For horizontal ones this was to be expected, since the horizontality equation is so constraining. As for vertical isometries, one could first think of something like $R(x,v) = (x,\sigma(v))$, where $\sigma$ is a linear isometry. But that is a mistake: one cannot speak of $\sigma$ independently of $x$ since it will depend on the choice of parameterization of $T_xM$ (except for very particular choices like $\sigma(v) = -v$). Then actually one should write $R(x,v) = (x,\sigma_x(v))$, and this induce variations also in $x$, not just in $v$, and the equations for $R$ in general have no solution.

\bibnotes

Divergence formulas are classical and a cornerstone of Riemannian analysis in general. The horizontal, vertical and total Laplace operators appear in various sources, e.g. \cite{GHL:Riemann:book}.

\section{Spectral radioscopy} \label{secspectral}

Ever since Fourier, the analysis of linear partial differential equations has gone hand in hand with spectral theory, be it in classical or quantum mathematical physics, chaotic dynamics, geometry, probability, and so on. To manipulate $\Delta_H$ and $\Delta_V$ let us submit them to spectral investigation, on the one hand to get a better understanding of their properties, on the other hand to complete the proof of  their crucial commutation property. On the latter issue, it seems that it was already established in \eqref{comDVDH}, but this was as an identity for operators acting on smooth functions. To apply functional analysis here, in particular the codiagonalisation theorem for commuting unbounded operators, a more demanding notion of commutation is needed, namely the commutation of spectral projections. In finite dimension the commutation of symmetric operators is equivalent to the commutation of their spectral projections, but in infinite dimension this is not always the case. So let us go spectral for a little while. I will start with elementary reminders, and at times go into lengthy subtleties just to check that eventually things work out well (Proposition \ref{propdhdxcom}). The reader who is ready to trust that, can skip this section, save for the spectral analysis of $\Delta_v^\mu$ (Subsection~\ref{subDeltaVmu}).

The simplest case $M=\T^n = \R^n/\Z^n$ already involves three iconic cases of spectral analysis: $\Delta_x$ on $\T^n$; $\Delta_v$ on $\R^n$; $\Delta_V^\mu$ on $(\R^n,\mu)$. Here are the basic facts and their partial generalisations.

\subsection{\underline{$\Delta$ on $\T^n$}}

The spectrum of $-\Delta$ on $L^2(\T^n)$ is made of just eigenvalues, and normalised eigenfunctions are
\[
\begin{cases}
c_0(x) = 1 \\ \\
\dps c_k(x) = \frac1{\sqrt{2}}\, \cos(2\pi k\cdot x), \quad s_k(x) = \frac1{\sqrt{2}} \sin(2\pi k\cdot x). 
\end{cases} \]
Then $-\Delta c_k = 4 \pi^2 |k|^2 c_k$, $-\Delta s_k = 4\pi^2 |k|^2 s_k$, with 
\[ |k|^2 = \sum_{i=1}^n k_i^2,\]
so the spectrum is
\[\sigma(-\Delta) = \bigl\{ 4\pi^2 |k|^2; \qquad k\in \Z^n \bigr\}. \]
Further, counting with muiltiplicity, the number $N(\alpha)$ of eigenvalues at most $\alpha^2$ is the number of vectors $k\in\Z^n$ with $|k|\leq \alpha/(2\pi)$, that is the number of points with integer coordinates in the ball $B(0,\alpha/(2\pi))$. In particular,
\[ N(\alpha) \simeq |B^n| \left( \frac{\alpha}{2\pi}\right)^n \qquad \text{ as $\alpha\to\infty$}. \]
Actually there is enormous multiplicity, so eigenfunctions may be much more complicated than just the functions $c_k, s_k$ and their qualitative behaviour is still an active topic of research, in relation with ``quantum chaos''.

\subsection{\underline{$\Delta$ on $M$}}

More generally, if $M$ is a compact $n$-dimensional manifold, then the spectrum of $-\Delta$ is discrete and the number of eigenvalues (with multiplicity) no less than $\alpha^2$ is 
\[ N(\alpha) \simeq  \vol(M)\, |B_n|\, \left(\frac{\alpha}{2\pi}\right)^n \qquad \text{as $\alpha\to\infty$},\]
where $\vol(M)$ is the volume of $M$. Moreover the eigenfunctions $\vphi_\alpha$ for 
\[ -\Delta\vphi_\alpha = \alpha^2 \vphi_\alpha \]
satisfy
\[ \|\vphi_\alpha\|_{L^\infty(M)} = O \left(\alpha^{\frac{n-1}{2}}\right).\]
These are {\bf Weyl's estimates}.

\subsection{\underline{$\Delta$ on $\R^n$}}

Here the variable will be denoted $v$. The spectrum of $-\Delta$ in $L^2(\R^n)$ is just continuous spectrum, 
\[ \sigma(-\Delta) = \R_+,\] 
and there are generalised eigenfunctions (only generalised because they do not belong in $L^2(\R^n)$)
\[ c_\eta(v) = \cos (2\pi\eta\cdot v),\qquad s_\eta(v) = \sin (2\pi\eta\cdot v)\]
with $\eta\in\R^n$ ($\eta\neq 0$ for $s$-functions). The corresponding eigenvalues are $\lambda = 4 \pi^2 |\eta|^2$, so
\[ |\eta| = \frac{\sqrt{\lambda}}{2\pi}. \]
Fourier's inversion formula, in the form
\begin{align*} 
f(v) & = \int_{\R^n} e^{2i\pi\eta\cdot v} \hat{f}(\eta)\,d\eta\\
& = \iint_{\R^n\times\R^n} e^{2 i \pi \eta\cdot (v-w)} f(w)\,dw\,d\eta
\end{align*}
leads to
\begin{align*}
f & = \int_{\R^n} (c_\eta\otimes c_\eta + s_\eta\otimes s_\eta)\,f\,d\eta \\
& = \iint_{\S^{n-1}\otimes\R_+} \bigl( c_{\alpha\sigma}\otimes c_{\alpha\sigma} + s_{\alpha\sigma} \otimes s_{\alpha\sigma} \bigr)\, f\, \alpha^{n-1}\,d\alpha\,d\sigma\\
& = \int_{\R_+} \frac{\lambda^{\frac{n-1}2}}{(2\pi)^{n-1}}\,\frac{d\lambda}{4\pi\sqrt{\lambda}}\,
\int_{\S^{n-1}} \bigl ( c_{\sqrt{\lambda}\sigma} \otimes c_{\sqrt{\lambda}\sigma} + s_{\sqrt{\lambda}\sigma} \otimes s_{\sqrt{\lambda}\sigma}\bigr)\,f\,d\sigma.
\end{align*}
So the spectral measure of $-\Delta$ is
\begeq\label{smDelta}
P(d\lambda) = \left(\frac{\lambda}{2\pi}\right)^{\frac{n}2}
\left( \int_{\S^{n-1}} \bigl ( c_{\sqrt{\lambda}\sigma} \otimes c_{\sqrt{\lambda}\sigma} + s_{\sqrt{\lambda}\sigma} \otimes s_{\sqrt{\lambda}\sigma}\bigr)\, d\sigma\right)\,\frac{d\lambda}{2\lambda}.
\endeq
While the integrand does not define a projection in $L^2(\R^n)$, any average $\int \vphi(\lambda)\,P(d\lambda)$, for, say, $\vphi\in L^\infty(\R_+)$, does.

\subsection{\underline{$\Delta^\mu$ on $(\R^n,\mu)$}} \label{subDeltaVmu}

Recall that
\[ \mu(dv) = \frac{e^{-\frac{|v|^2}{2}}}{(2\pi)^{n/2}}\,dv \]
(no $x$ variable for the moment) and
\[ \Delta^\mu h = \Delta_v h - v\cdot\nabla_v h.\]
The spectral problem is well-known and the spectrum is purely discrete:
\begeq\label{sigmaDeltamu}
 \sigma(-\Delta^\mu) = \N_0 = \{0, 1, 2, \ldots\}.
 \endeq
An eigenbasis is provided by normalised multidimensional Hermite polynomials: for $m\in\N_0$ this is
\begin{align} \label{Hermite}
\vphi_m(v) & = \frac{(-1)^m}{\sqrt{m!}} e^{\frac{|v|^2}{2}}\, \left(\frac{d}{dv}\right)^m \, e^{-\frac{|v|^2}{2}}\\
\nonumber & = \frac1{\sqrt{m!}}{\cal H}_m(v), \qquad m \in \N_0^n,
\end{align}
and ${\cal H}_m$ is associated with the eigenvalue $m$.
The first few (not normalised) Hermite polynomials are
\begin{align*} {\cal H}_0(X) & = 1 \\
{\cal H}_1(X) & = X\\
{\cal H}_2(X) & = X^2-1\\
{\cal H}_3(X) & = X^3-3X\\
{\cal H}_4(X) & = X^4 - 6 X^2 + 3\\
{\cal H}_5(X) & = X^5 - 10 X^3 + 15 X.
\end{align*}
The Hermite polynomials satisfy some neat algebraic identities, such as
\begeq\label{Herm0}
\int_\R {\cal H}_m\,d\mu = m!
\endeq
\begeq\label{Herm1}
{\cal H}'_m = m {\cal H}_{m-1}
\endeq
\begeq\label{Herm2}
X {\cal H}_m = {\cal H}_{m+1} + m {\cal H}_{m-1}
\endeq
and by iteration
\begeq\label{Herm3}
X^2 {\cal H}_m = {\cal H}_{m+2} + (2m+1) {\cal H}_m + m(m-1) {\cal H}_{m-2}.
\endeq

In dimension $n$, formula \eqref{Hermite} remains true, but now
\[ (-1)^m = (-1)^{m_1}\ldots (-1)^{m_n}\ ,\qquad m! = m_1!\ldots m_n!\ , \qquad \left(\frac{d}{dv}\right)^m = 
\left(\frac{d}{dv_1}\right)^{m_1}\ldots \left(\frac{d}{dv_n}\right)^{m_n},\]
and the eigenvalue is
\[ s = |m| = m_1+\ldots + m_n.\]
Moreover, if $s$ is given, the dimensionality $N$ of the eigenspace ${\cal E}_s$ is the number of ways to decompose $s$ into such a sum, that is
\[ N(s,n) = \Cnk{s+n-1}{n-1}.\]
Then the spectral projection $P_s$ on ${\cal E}_s$ can be written
\begeq\label{Ps}
P_s = \sum_{s\in \N_0} \left( \sum_{|m|=s} \vphi_m\otimes \vphi_m\right)\,\delta_s,
\endeq
where 
\begeq\label{phim} \vphi_m = \frac1{\sqrt{m!}} {\cal H}_m \qquad (m\in\N_0^n) \endeq
are the normalised Hermite polynomials.

\subsection{\underline{Spectral analysis on $\TM$: preliminaries and $\Delta_V$}}

Now turn to the study of $\Delta_H$ and $\Delta_V$ on $\TM$. The easiest situation is $M=\T^n$, then $\TM = \T_x^n\times \R^n_v$, $\Delta_H = \Delta_x$, $\Delta_V = \Delta_v$, $\Delta_V^\mu = \Delta_v^\mu$. So $\sigma(-\Delta_H) = \sigma(-\Delta_x)$ in $L^2(T\T^n)$, but now all multiplicities are infinite, since, for any eigenfunction $\vphi$ of $-\Delta_x$, the function $\vphi(x)\,a(v)$ is an eigenfunction of $-\Delta_H$, for any $a\in L^2(\R^n)$ (be it for the $dv$ or for the $d\mu$ measure). Likewise, $\sigma(-\Delta_V) = \R_+$ with purely essential spectrum, and $\sigma(-\Delta_V^\mu) = \N_0$, with infinite multiplicities (choosing now $a(v)$ to be an eigenfunction of $-\Delta_V^\mu$ and $\vphi$ arbitrary).

Nice and easy, but in general there is no way to globally define $\Delta_x$ on $\TM$, and that was one motivation for $\Delta_H$. And as for the vertical operators, even if, say, $\Delta_V^\mu$ is just a way to globally define $\Delta_v^\mu$, the particular eigenfunctions we would like to use will not be globally defined. True, for each $x$, $-\Delta_V^\mu$ acting on $L^2(T_xM, \mu_x)$ will be similar to $-\Delta_v^\mu$ over $L^2(\R^n,\mu)$, with distinguished eigenbasis $\vphi_{m,x}$ obtained by normalising Hermite polynomials. But this goes through an arbitrary correspondence between $T_xM$ and $\R^n$; and in general, for a given $m$ it will be impossible to make that correspondence in a global, consistent way as $x$ varies.

Let me be more specific with a most simple example: $M=\S^2$, $s=1$. The multiplicity is 2 and normalised eigenfunctions of $-\Delta_v^\mu$ are $v_1,v_2$. Defining them globally amounts to choosing a smooth orthonormal basis $(e_1(x),e_2(x))$ of $T_x\S^2$. But that is impossible: the hairy sphere theorem guarantees that there is no way to globally define a smooth unit tangent vector field $e_1$ on $\S^2$. 

To circumvent that obstruction, use local coordinates for $T_xM$ to define $\vphi_m$ locally. In the sphere example, away from the North and South poles one may define a unit vector field $e_1 = (e_1^1, e_1^2,0)\in \R^3$ with $|e_1|=1$, then define $e_2$ as the vector product of $e_1$ with the normal vector; repeat the procedure in the neighbourhood of the North and South poles. Thus there are open sets $(O_\theta)_{1\leq\theta\leq 3}$ covering $\S^2$ and orthonormal bases $(e_{\theta,1}(x),e_{\theta,2}(x))$. Let $(\chi_\theta(x))_{1\leq\theta\leq 3}$ be supported in $O_\theta$ and valued in $[0,1]$ in such a way that $\sum \chi_\theta=1$; one can isometrically embed $L^2(\TM,\mu)$ into $\H = \prod_{\theta} L^2({\rm T} O_\theta,\mu_\theta)$ through $I: h\longmapsto (h\restr{O_\theta})$ and $\mu_\theta = \chi_\theta\,\mu$. Similarly, the operator $\Delta_V$ is identified to the tensor product of all $\Delta_V$'s acting on the tangent bundle of each $O_\theta$.

Also on each $O_\theta$, use a chart $\Psi_\theta:U_\theta\to O_\theta$, and identify $L^2(\TM)$ with $\prod L^2(U_\theta\times \R^n,\mu_\theta)$ through 
\[ h \longmapsto \bigl( (h\chi_\theta) \circ \Psi_\theta \bigr)_{\theta}, \qquad \mu_\theta = (\Psi_\theta^{-1})_\#\mu.\]
Now for each $\theta$ one may define ${\cal H}_m$ and $\vphi_m$. If $h\in {\cal E}_1$, then $h\chi_\theta\in {\cal E}_1$ for each $\theta$, and for $x$ in $O_\theta$ one may uniquely decompose $h(x,\cdot )$ into a combination of $\vphi_m(x,\cdot)$, with coefficients depending smoothly on $h$ by uniqueness of the decomposition and smoothness of the $\vphi_m$. 

More generally, for any compact Riemannian manifold $M$ one may introduce a covering $(O_\theta)_{\theta\in \Theta}$, where $\Theta$ is some finite set, by open sets $O_\theta$, each of which has its orthonormal basis $(e_{\theta,i})_{1\leq i\leq i}$, written $(e_{\theta,i}^r)_{1\leq i\leq n, 1\leq r\leq n}$;
a subordinate smooth partition of unity $(\chi_\theta)_{\theta\in \Theta}$, and for any $m\in \N_0^m$ one may define the eigenfunction $\vphi_m$, not as a function on $\TM$, but as a vector valued function
\[ \Phi_m(x,v) = \frac1{\sqrt{\vol(M)}} \Bigl(\chi_\theta(x)\, \vphi_m (e_{\theta,1}(x)\cdot v, \ldots e_{\theta,n}(x)\cdot v) \Bigr)_{\theta\in\Theta}\]
which may be written synthetically
\[ \Phi_m(x,v) = \frac1{\sqrt{\vol(M)}} \bigl (\chi_\theta(x)\, \vphi_m(E_\theta(x)\cdot v)\bigr)_{\theta\in\Theta}.\]
For any $s$ and any $\theta$ there are $N(s,n)$ such functions; one can see this collection as a set of $N(s,n)$ functions valued in $\R^\Theta$ and they constitute a local orthonormal basis of the $s$-eigenspace of $-\Delta_V^\mu$ in $\H$, still denoted ${\cal E}_s$. Note that a priori $\Phi_m(x,v)$ is not in the image of $I$ (does not correspond to a definite function on $\TM$, but to a collection of functions, each of them defined on the tangent bundle of $O_\theta$). In this way any $h:\TM\to\R$ belonging to ${\cal E}_s$ may be written as
\[ h(x,v) = \sum_\theta \sum_{|m|=s} \chi_\theta(x) \,\beta_{\theta,m}(x)\, \vphi_{\theta,m}(x,v),\]
where the coefficients $\beta_{\theta,m}(x)$ are uniquely defined and as smooth as $h$.

Of course there is an arbitrariness here in the choice of the eigenbasis, which does not leave the coefficients $\beta_m$ invariant as it will change the eigenfunctions $\vphi_m$. For example, consider $s=2$, $n=2$; under the change of orthonormal basis
\[ (e_1,e_2)\longmapsto \left(\frac{e_1+e_2}{\sqrt{2}} , \frac{e_1-e_2}{\sqrt{2}}\right)\]
the function $v_1 v_2$ becomes $(1/2)[(v_1^2-1) + (v_2^2-1)]$. In other words $\vphi_{(1,1)}$ is transformed into $(\vphi_{(0,2)} + \vphi_{(2,0)})/\sqrt{2}$.

One may regain intrinsic nature by abandoning the idea of defining $\vphi_m$ for each $m$, but instead considering the projector $P_s$ on ${\cal E}_s$. In the example of $\S^2$, $P_1$ is $v_1\otimes v_1 + v_2\otimes v_2$, or more rigorously
\[ P_1 = (e_1(x)\cdot v) \otimes (e_1(x)\cdot v) + (e_2(x)\cdot v)\otimes (e_2(x)\cdot v).\]
This is the orthogonal projector on the vector space generated by $\{v_1,v_2\}$ or equivalently on the vector space of linear functions of $v$, which is also characterised as the 1-eigenspace of $-\Delta_V^\mu$. That projector is intrinsically defined even if $e_1,e_2$ are not. 

More generally, for any $M$, for any $s\in\N_0$, the $x$-dependent orthogonal projection on ${\cal E}_s$, namely
\[ P_{x,s} = \sum_{|m|=s} \frac1{m!} \bigl[ {\cal H}_m (e_1\cdot v,\ldots e_n\cdot v)\bigr]\otimes_v
\bigl[ {\cal H}_m (e_1\cdot v,\ldots,e_n\cdot v) \bigr]\]
varies smoothly in $x$ (its kernel is a smooth function of $x$ and $v$). Here I wrote $k(x,v)\otimes_v k(x,v)$ as a shorthand for
\[ f \longmapsto \left[ \int k(x,w)\,f(x,w)\,\mu_x(dw) \right]\, k(x,v).\]
In other words, writing $\vphi_m(x,v)$ requires an arbitrary choice of local orthonormal basis; but $P_{x,s}=\sum \vphi_m(x,v)\otimes \vphi_m(x,v)$ is intrinsically defined and $x$ appears just as a parameter. This gives a description of all eigenfunctions and spectral projectors of $-\Delta_V^\mu$ -- just the same as for $-\Delta^\mu$, only with the $x$ variable added:
\[ -\Delta_V^\mu = \sum_{s\in\N_0} s\, P_{x,s}.\]
There is no free lunch, and to express $P_{x,s}$ in ``concrete'' calculations one will be led to choose a chart, and an orthonormal basis of $T_xM$ inducing a basis for (symmetric) operators in $T_xM$.

Finally, a similar procedure can be done for $-\Delta_V$, which actually can be obtained from $-\Delta_V^\mu$ through a limiting procedure. Indeed, upon rescaling one may replace $e^{-|v|^2/2}$ by $\mu_\var(dv) = e^{-\var|v|^2/2}$, then $-\Delta_V^\mu$ by $-\Delta_V - \var v\cdot\nabla_V$, and in the limit $\var\to 0$ this converges to just $-\Delta_V$, and the associated spectral measure will also converge weakly.

\subsection{\underline{Spectral analysis of $\Delta_H$}}

Now comes the more tricky analysis of $\Delta_H$. Let us do it in $L^2(\mu)$, keeping in mind that it will be the same, up to rescaling, as in $L^2(\mu_\var)$, and that the analysis of $\Delta_H$ in $L^2(dx\,dv)$ will come as a limit case.

As a first observation, $\sigma(-\Delta_x)\subset \sigma(-\Delta_H)$: Indeed, whenever $h=h(x)$ is an eigenfunction of $-\Delta_x$ with eigenvalue $-\lambda$, then in view of Proposition \ref{basicLapinv} the same is true for $-\Delta_H$. 

As a partial converse, if $h=h(x,v)$ is an eigenfunction of $-\Delta_H$, then $f(x)=\int h(x,v)\,\vphi(|v|^2)\,dv$ also satisfies $-\Delta_x f = \lambda f$. But this does not prove that $f$ is an eigenfunction of $-\Delta_x$, since $\int h(x,v)\vphi(|v|^2)\,dv$ might be zero, whatever the choice of $\vphi$ (for instance if $h(x,-v)=-h(x,v)$). So a more precise study is needed.

Since $\Delta_H$ commutes as a differential operator with $\Delta_V$, and since the eigenfunctions of $\Delta_V$ are smooth, $\Delta_H$ leaves any ${\cal E}_s$ invariant, and $-P_s \Delta_H = -\Delta_H P_s$. Thus the spectral decomposition of $-\Delta_H$ is the sum of all spectral decompositions of the symmetric operators $-\Delta_H P_s = P_s (-\Delta_H) P_s$ (converging as a nonnegative sum).

To determine the spectrum of $-\Delta_H P_s$ one has to see whether the equation
\begeq\label{DeltaHlf} 
(-\Delta_H - \lambda) f = h
\endeq
is uniquely solvable or not in ${\cal E}_s$. We shall see that this leads to a discrete spectrum with finite dimensional multiplicity. (This does not exclude a priori that there is infinite multiplicity or continuous spectrum when summing over $s\in\N_0$.)

To solve \eqref{DeltaHlf} localise in charts with an atlas $(O_\theta)_{\theta\in \Theta}$ and a partition of unity $(\chi_\theta)_{\theta\in\Theta}$; choose an orthonormal basis $(e_{\theta,i}(x))_{1\leq i\leq n}$ in each $O_\theta$; then there are the induced eigenfunctions $\vphi_{\theta,m}(x,v)$. Any computation related to $\Delta_H$ or to the associated quadratic form $\int |\nabla_Hf|^2$ thus reduces in a chart, for given $\theta$, to a computation on a function 
\begeq\label{fHm} f(x,v) = \sum_m \alpha_m(x)\, \vphi_m(E(x)\cdot v).
\endeq
In the sequel, such a context is fixed, so $\vphi_m(x,v)$ stands for $\vphi_m(E(x)\cdot v)$. Any local computation such as $\Delta_H f$ can be made in such a chart. Any global quantity such as $\int |\nabla_H f|^2\,d\mu$ requires to first turn to intrinsic quantities, such as $|\nabla_H f|^2 = \sum_\theta (\chi_\theta(x) |\nabla_Hf|^2)$, where each term in the sum can be computed through local coordinates.

From the stability of ${\cal E}_s$ under $\nabla_H$ we know that there are coefficients $G_{qm}^\ell(x)$ and $D_m^\ell(x)$ ($1\leq q\leq n$; $\ell, m \in \N_0^n$; $|\ell|=|m|=s$), such that
\begeq\label{Gqml}
(\nabla_H)_q \vphi_m = \sum_\ell G_{qm}^\ell\,\vphi_\ell
\endeq
Then $(\nabla_H)_q f = \nabla_q \alpha_m\, \vphi_m + \alpha_m\, (\nabla_H)_q \vphi_m$ becomes
\begeq\label{nablaHqf} 
(\nabla_H)_q f = \bigl( \nabla_q \alpha_\ell \delta^\ell_m + \alpha_\ell G_{q\ell}^m\bigr) \,\vphi_m
\endeq
(implicit summation over $m$ and $\ell$).
Likewise, $-\Delta_H f = -\Delta\alpha_m\vphi_m - 2 \nabla\alpha_m\cdot\nabla_H\vphi_m - \alpha_m \Delta_H\vphi_m$ becomes
\begeq\label{DeltaHqf}
-\Delta_Hf = \Bigl( -\Delta \alpha_\ell \delta_\ell^m - 2 g^{pq} \pa_p \alpha_\ell\, G_{q\ell}^m - \alpha_\ell\, D_\ell^m\Bigr)\vphi_m.
\endeq

Let us see how to compute $G_{q\ell}^m$ and $D_\ell^m$. If $(e_i)$ is the basis, define coefficients $\om_{qi}^j(x)$ by the identity
\begeq\label{basemobile}
\nabla_q e_i = \om_{qi}^j e_j.
\endeq
From the identity $\nabla g(e_i,e_j) = 0$ it follows that
\[ \om_{qi}^j = - \om_{qj}^i.\]
Furthermore,
\begin{align*} \Delta e_i & = g^{pq} \nabla_q \nabla_p e_i\\
& = g^{pq} \Bigl( \nabla_p \om_{qi}^j + \om_{qi}^k\, \om_{pk}^j \Bigr) e_j.
\end{align*}
Using the formula for horizontal derivation,
\begeq\label{nhqeiv}
(\nabla_H)_q (e_i\cdot v) = (\nabla_q e_i)\cdot v.
\endeq
Combining this with chain rule and \eqref{Herm1},
\[ (\nabla_H)_q \vphi_m(x,v) = 
\frac1{\sqrt{m!}} \sum_i m_i {\cal H}_{m-1_i} (E\cdot v)\, \om_{qi}^j(x)\,(v\cdot e_j).\]
The contribution of $j=i$ vanishes since $\om_{qi}^i=0$. Using \eqref{Herm2} this leads to
\begeq\label{itremainsHi}
 (\nabla_H)_q \vphi_m(x,v) = \frac1{\sqrt{m!}} \sum_{i\neq j} {\cal H}_{m-1_i+1_j}\, \om_{qi}^j
 \endeq
($1\leq i,j,q\leq n$, $m\in\N_0^n$, $|m|=s$).
Here $m-1_i+1_j$ is the $n$-tuple $(m_k)_{1\leq k\leq n}$ of integers which is equal to $m_k$ for any $k\neq i,j$, to $m_i-1$ for $k=i$ (except if $m_i=0$, in which case it is~0), and to $m_j+1$ for $k=j$. Recalling the relation between $\vphi_m$ and ${\cal H}_m$, identity \eqref{Hermite}, the right hand side of \eqref{itremainsHi} is $\sqrt{m_i (m_j+1)} \,\vphi_{m-1_i+1_j}\,\om_{qi}^j$, whence
\begeq\label{Gql}
G_{qm}^\ell = \sqrt{m_i (m_j+1)}\, 1_{\ell = m-1_i+1_j}\, \om_{qi}^j.
\endeq
In particular, 
\[ G_{q\ell}^m = - G_{qm}^\ell.\]

As an example, consider the case $n=2$, then $m=(m_1,m_2)$ can be identified with just $m_1$ ranging from $0$ to $s$. The matrix $\om_{qi}^j$ for $1\leq i,j\leq 2$ is the $2\times 2$ matrix 
\[ \left ( \begin{array}{cc} 0 & \om_q \\ -\om_q & 0 \end{array} \right), \]
where $\om_q=\<\nabla_q e_1,e_2\>$. Then for fixed $q$ the coefficients of the $G_{qm}^\ell$ matrix are $\om_q \sqrt{m_1(s+1-m_1)}$ just above the diagonal, and $-\om_q  \sqrt{m_1(s+1-m_1)}$ just below the diagonal.

Repeating the construction,
\begin{align*}
\nabla_{pq} \vphi_m = \sum_{a\neq b, i\neq j} 
\sqrt{m_i (m_j+1)}\, \vphi_{m-1_i-1_a +1_j+1_b}\, \om_{pa}^b\, \om_{qi}^j \\
+ \sum_{ij} \sqrt{m_i(m_j+1)}\, \vphi_{m-1_i+1_j}\, \nabla_p\om_{qi}^j.
\end{align*}
This leads to an explicit expression of $D_m^\ell$ involving $g^{pq} \om_{pa}^b \om_{qi}^j $ and
\[ \Om_i^j = g^{pq} \nabla_p \om_{qi}^j = \<\Delta e_i, e_j\>.\]
Note that
\[ \frac12(D_\ell^m - D_m^\ell) = - g^{pq} G_{qm}^\ell.\]

Reasoning in terms of quadratic forms, with $f'= \sum \alpha'_m \vphi_m$, after rearrangement this yields
\begeq\label{qf} \nabla_H f\cdot \nabla_H f'= 
g^{pq} (\delta_{m\ell} \pa_p - G_{pm}^\ell) \alpha_\ell \, (\delta_{m\ell}\pa_p - G_{qm}^{\ell'}) \alpha'_{\ell'}
- g^{pq} G_{pm}^\ell G_{qm}^{\ell'} \, \alpha_\ell\, \alpha'_{\ell'}
\endeq

For instance, still for $n=2$ and for given $s$ identifying $(m_1,m_2)$ with $m_1$, we find that $g^{pq}G_{pm}^\ell G_{qm}^{\ell'}$
is proportional to 
\[ g^{pq}\om_p\om_q = g^{pq} \<\nabla_p e_1,e_2\> \<\nabla_q e_1,e_2\> \]
and to the matrix $A^{\ell \ell'}$ which is
\[\begin{cases}
\begin{array}{cc} (\ell_1+1)(s-\ell_1) + (\ell_1-1)(s+2-\ell_1) & \qquad \text{ if $\ell=\ell'$}\\
- (\ell_1-1) (s+2-\ell_1) & \qquad \text{if $\ell_1 = \ell'_1+2$}\\
- (\ell_1+1) (s-\ell_1) & \qquad \text{if $\ell_1 = \ell'_1-2$}\\
0 &\qquad \text{otherwise}. \end{array}
\end{cases} \]

To summarise: There is an explicitable recipe transforming the Dirichlet form $\<\nabla_H f,  \nabla_H f'\>$ on ${\cal E}_s$ into a compact perturbation of the Dirichlet form $\<\nabla\alpha, \nabla\alpha'\>= \sum_\ell \<\nabla \alpha_\ell,\nabla \alpha'_\ell\>$, with lower order terms added inside the derivation operator, and a 0-order term added to the quadratic form. Once that reduction is done, one can extract successive eigenvalues of $-\Delta_H$ on ${\cal E}_s$ by the usual procedure of minimisation of the quadratic form under orthogonality conditions. In this way one can find inductively a sequence of nonnegative eigenvalues with finite multiplicity, thus reducing the quest of the spectrum to a sequence of finite-dimensional problems. A most important consequence is the commutation of the spectral projections associated to $\Delta_H$ and $\Delta_V$, for each ${\cal E}_s$ and thus eventually on the whole Hilbert space. 

\subsection{\underline{Tensor-valued spectral analysis}}

All the analysis in this section (spectra, spectral projections, commutation) goes through for functions valued in $\TIJ$.
\med

To summarise:

\begin{Prop}\label{propdhdxcom}
The unbounded linear operators $\Delta_V$ and $\Delta_H$ commute, either on $L^2(\TM)$ or $L^2(\TM,\mu)$, acting either on fuctions or tensors, in the sense that the spectral projections of $\Delta_V$ and those of $\Delta_H$ commute.
\end{Prop}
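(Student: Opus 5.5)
The plan is to reduce the commutation of spectral projections to a block decomposition of $L^2(\TM,\mu)$ along the eigenspaces of $-\Delta_V^\mu$, on each of which $\Delta_H$ acts as a self-adjoint operator with discrete spectrum. Then recover the $L^2(\vol)$ statement and the statement for $\Delta_V$ itself by the limiting rescaling $\mu_\var$, $\var\to0$.

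First, use the fiberwise spectral decomposition $-\Delta_V^\mu=\sum_{s\in\N_0} s\,P_{x,s}$. Here $P_{x,s}$ has a kernel smooth in $(x,v)$, so the orthogonal projections $P_s$ on $L^2(\mu)$ are bounded, mutually orthogonal, sum to the identity, and are the spectral projections of $\Delta_V^\mu$. Next, show that $P_s$ commutes with $\Delta_H$ in the strong sense that $P_s$ maps the domain of the self-adjoint operator $\Delta_H$ (its Friedrichs extension from smooth rapidly decaying functions) into itself, with $\Delta_H P_s=P_s\Delta_H$ there.

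For this commutation I would use the identity $[\nabla_V,\nabla_H]=0$ from Proposition~\ref{propHV}, together with the fact that $\nabla_H$ preserves $|v|^2$ by Proposition~\ref{basicinv}. These give $\nabla_H P_s f=P_s\nabla_H f$ for smooth $f$, because the kernel of $P_{x,s}$ is intrinsic and annihilated by the horizontal derivative acting in both variables. Consequently $\int|\nabla_H P_sf|^2d\mu\le\int|\nabla_H f|^2d\mu$, so $P_s$ preserves the form domain of $\Delta_H$ and commutes with the quadratic form. By the standard criterion, a bounded self-adjoint operator commuting with a closed form commutes with the associated self-adjoint operator, and hence with all of its spectral projections. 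Since the $P_s$ generate all spectral projections of $\Delta_V^\mu$, the two operators commute in $L^2(\mu)$. The chart computations \eqref{Gqml}--\eqref{qf} supplement this: on each ${\cal E}_s$, $-\Delta_H$ is a lower-order perturbation of a Laplacian on a finite-rank vector bundle over compact $M$. It therefore has compact resolvent there, and the joint eigenfunctions give an explicit common diagonalisation.

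For $L^2(\vol)$ and for $\Delta_V$ rather than $\Delta_V^\mu$, I would conjugate by $e^{-\var|v|^2/4}$. This is horizontally invariant, so it commutes with $\Delta_H$, and it transforms $\Delta_V^{\mu_\var}$ into $\Delta_V$ plus a multiplication by a radial function. Commutation of resolvents then passes to the limit $\var\to0$ in the strong resolvent sense, and strong resolvent convergence preserves commutation of bounded functions of the operators. Alternatively one can apply the form argument directly with Fourier projections in each fiber, using that the fiberwise Fourier transform commutes with $\nabla_H$. Tensors are handled identically because the identities in Propositions~\ref{propHV} and \ref{propdiv} hold covariantly.

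The main obstacle is the rigorous step that $P_s$ preserves the form or operator domain of $\Delta_H$ and commutes with it there, rather than merely on smooth functions. This requires an essential-selfadjointness or density argument: smooth rapidly decaying functions must be a core for the form, and $P_s$ must map this core into the form domain. I would first try to establish this through the chart representation \eqref{fHm}, where $P_s$ acts by truncating Hermite coefficients.
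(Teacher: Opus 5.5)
Your argument is correct. Its skeleton matches the paper's: you split along the eigenspaces ${\cal E}_s$ of $-\Delta_V^\mu$, then reach the Liouville case by letting $\var\to0$. The decisive step, however, is different.

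The paper first notes that $\Delta_H$ preserves each ${\cal E}_s$. It then uses the local Hermite frames \eqref{Gqml}--\eqref{qf} to write the Dirichlet form of $\Delta_H$ on ${\cal E}_s$ as a perturbed Dirichlet form for the coefficient vector $(\alpha_m)$. From this it extracts a discrete spectrum with finite multiplicities by successive minimisation, and reads off the commutation from the resulting joint eigenbasis. The Liouville case is handled only by a brief appeal to weak convergence of spectral measures.

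You instead obtain $\nabla_H P_s=P_s\nabla_H$ from the naturality of the kernel of $P_{x,s}$: the kernel is invariant under the orthogonal group of $(T_xM,g_x)$, and parallel transport is an isometry carrying $\mu_x$ to $\mu_y$. You then conclude with the abstract criterion that a bounded self-adjoint operator which preserves a closed form domain and commutes with the form also commutes with the associated self-adjoint operator. This makes the block-wise spectral analysis unnecessary, and you rightly keep it only as a supplement. It also settles precisely the domain issue that motivates the section, which the paper's identity $P_s\Delta_H=\Delta_H P_s$ leaves implicit.

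Similarly, you conjugate by $e^{-\var|v|^2/4}$, which turns $-\Delta_V^{\mu_\var}$ into $-\Delta_V+\var^2|v|^2/4-n\var/2$ while commuting with $\Delta_H$, and then pass to the limit by strong resolvent convergence. This is a rigorous version of the paper's limiting procedure.

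What the paper's route buys in return is structural information on $\Delta_H$ itself. It gives compact resolvent and finite multiplicities on each ${\cal E}_s$, and an explicit local expression of its Dirichlet form.

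Finally, the step you flag as the main obstacle is harmless.
\begin{itemize}
\item If $\Delta_H$ is defined as the Friedrichs extension from smooth rapidly decaying functions, these functions are a form core by construction.
\item $P_sf$ is smooth and polynomial in $v$, so it lies in the form domain via radial cutoffs $\chi(|v|/R)$, which commute with $\nabla_H$.
\item The bound $\|\nabla_H P_sf\|\le\|\nabla_H f\|$ then extends to the whole form domain by closedness.
\end{itemize}
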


\bibnotes

The spectral study of Laplace operators on manifolds is addressed in many good sources such as Zelditch \cite{zelditch:laplacian:book} and Sogge \cite{sogge:laplacian:book}. The general theory of orthonormal polynomials is considered for instance in Nikiforov--Ouvarov \cite{niki:book}. Good information about Hermite polynomials can be found everywhere, including Wikipedia. It is more difficult to pinpoint an elementary source for the spectral study of $\Delta_H$, which is why I spent energy reproving some material which will be considered classical by experts. The description of $\Delta_H$ is still the subject of ongoing research, as pointed out to me by Jean-Michel Bismut.

The spectral theorem and spectral projections, and the notion of commuting unbounded operators, are treated for instance in the classic treatise by Reed--Simon \cite[Vol.~I, Chap.~7]{reedsimon:MMMP}. My thanks go to St\'ephane Attal for some explanations in this field.

\section{Functional analysis on $\TM$} \label{secfunctional}

For regularity of functions on $\TM$ there are two approaches. One is to localise via charts (if $(O_\theta)$ is a given atlas for $M$ then $O_\theta\times\R^n$ can be used as an atlas for $\TM$) and define functional spaces in each chart (through ``flat'' norms), then stitch those spaces together using a partition of unity adapted to the atlas. The other one is to work out regularity globally; of course, localisation will always be needed to compute in coordinates, but the definitions of the norms and thus their value will be independent of the choice of atlas. Both approaches are locally equivalent, but the treatment of large velocities may differ. The second approach would also adapt more naturally to unbounded manifolds, and is more suitable to handling the intrinsic operators appearing in partial differential equations; so it will be the preferred approach.

From the study of Section \ref{secspectral} one can construct operators by combining $\Delta_H$ and $\Delta_V$, and take advantage of their commutation to define functional calculus. Indeed, given any $I,J$, let ${\cal H} = L^2(\vol; \TIJ)$; then there is a joint spectral decomposition, in the form of a measure $\Pi (d\lambda_H\,d\lambda_V)$ on $\R^2$ (actually supported in $\R_+^2$) and valued in the space of orthogonal projections in $\TIJ$, such that
\begeq\label{spectralHV} 
-\Delta_H = \int_{\R^2} \lambda_H\, \Pi(d\lambda_H\,d\lambda_V), \qquad
- \Delta_V = \int_{\R^2} \lambda_V\,\Pi(d\lambda_H\,d\lambda_V)
\endeq
and 
\[ \Id = \int_{\R^2} \Pi(d\lambda_H\,d\lambda_V).\]
Then if $h$ is any nonnegative or bounded measurable function $\R^2\to \R$, define
\begeq\label{hDD}
h(-\Delta_H,-\Delta_V) = \int_{\R^2} h(\lambda_H,\lambda_V)\,\Pi(d\lambda_H\,d\lambda_V).
\endeq
In particular, working with fractional power laws defines partial regularity, either horizontal or vertical, and there is a scale of associated $L^2$-Sobolev spaces, and also weighted Sobolev spaces, with two regularity indices and one localisation index: for all $\alpha,\beta,\kappa$ in $\R$,
\begeq\label{fracSob}
\|f\|_{H^{\alpha,\beta}} = \Bigl\| (1-\Delta_H)^{\alpha/2} (1-\Delta_V)^{\beta/2} X\Bigr\|_{L^2(\vol)},
\endeq
and more generally
\begeq\label{fracwSob}
\|X\|_{H^{\alpha,\beta}_\kappa} = \Bigl\| (1-\Delta_H)^{\alpha/2} (1-\Delta_V)^{\beta/2} \bigl( X(1+|v|^2)^{\kappa/2} \bigr) \Bigr\|_{L^2(\vol)}.
\endeq
The exponents $\alpha,\beta,\kappa$ will be called respectively the {\bf horizontal regularity index}, {\bf vertical regularity index} and {\bf kinetic weight}. 
Further let
\begeq\label{Halphaxv}
H^\alpha_\kappa = H^{\alpha,0}_\kappa\cap H^{0,\alpha}_\kappa
\endeq
be the Sobolev space of $L^2$ functions with (horizontal or vertical) derivatives of order $\alpha$ in $L^2$.
This defines a decent regularity scale, because  $(1-\Delta_H)^{\alpha/2}$ commutes with both $(1-\Delta_V)^{\beta/2}$ and $(1+|v|^2)^{\kappa/2}$,
and $(1-\Delta_V)^{\beta/2}$ almost commutes with $(1+|v|^2)^{\kappa/2}$, in the sense that the commutator involves less regularity and faster decay (except possibly if both $\beta$ and $\kappa$ are negative, a situation which will not occur here). Here is an equivalent norm, ``in Littlewood--Paley style''. Consider a smooth partition of unity of the form
\begeq\label{smooth1}
1 = \chi_0(r) + \sum_{\ell\in\N} \chi \left(\frac{r}{2^\ell}\right) \qquad (r\geq 0),
\endeq
where $\chi_0$ is smooth and compactly supported, and $\chi$ is smooth and compactly supported away from the origin; and for $\ell\in\N$ let $\chi_\ell(r) = \chi(r/2^\ell)$; so $\chi_\ell$ is a way to focus on $r\simeq 2^\ell$. Then define
\begeq\label{fracwSob'}
\|f\|'_{H^{\alpha,\beta}_\kappa} = \left(\sum_{\ell\in\N_0} A^{2\ell\kappa} \|f \chi_\ell\|^2_{H^{\alpha,\beta}} \right)^{1/2}.
\endeq
Then this defines a norm equivalent to \eqref{fracwSob} (the constant $A>1$ may be chosen in a convenient way). Also ``homogeneous'' seminorms may be defined in the usual way:
\[ \|f\|_{\dot{H}^{\alpha,\beta}} = \Bigl\| (-\Delta_H)^{\alpha/2} (-\Delta_V)^{\beta/2} f \Bigr\|_{L^2}, \qquad \text{etc.} \]

Similarly consider $\Delta_H^\mu$ for $\Delta_H$ in $L^2(\mu)$ (formally similar to $\Delta_H$, but in a different Hilbert space), $\Delta_V^\mu$ also in $L^2(\mu)$, then there is a joint spectral decomposition
\begeq\label{spectralHVmu} 
-\Delta_H^\mu = \int_{\R^2} \lambda_H\, \Pi^\mu(d\lambda_H\,d\lambda_V), \qquad
- \Delta_V^\mu = \int_{\R^2} \lambda_V\,\Pi^\mu(d\lambda_H\,d\lambda_V)
\endeq
with 
\[ \Id_{L^2(\mu)} = \int_{\R^2} \Pi^\mu(d\lambda_H\,d\lambda_V),\]
and there is a functional calculus $h(-\Delta_H^\mu, -\Delta_V^\mu)$, etc.

\begeq\label{fracwSobmu}
\|f\|_{H^{\alpha,\beta}_\kappa(\mu)} = \Bigl\| (1-\Delta_H)^{\alpha/2} (1-\Delta^\mu_V)^{\beta/2} \bigl( X(1+|v|^2)^{\kappa/2} \bigr) \Bigr\|_{L^2(\mu)}.
\endeq

Homogeneous Sobolev spaces are defined in a similar way with $1-\Delta_H$ and $1-\Delta_V$ replaced by $-\Delta_H$ and $-\Delta_V$; for instance
\begeq\label{homsob}
\|f\|_{\dot{H}^{\alpha,\beta}_\kappa(\mu)} = \Bigl\| (-\Delta_H)^{\alpha/2} (-\Delta^\mu_V)^{\beta/2} \bigl( X(1+|v|^2)^{\kappa/2} \bigr) \Bigr\|_{L^2(\mu)}.
\endeq
Then 
\[ \|f\|_{H^{\alpha,\beta}_\kappa(\mu)} \simeq \|f\|_{\dot{H}^{\alpha,\beta}_\kappa(\mu)}  + \|f\|_{L^2_\kappa(\mu)}, \ \qquad etc. \]

Classical fractional $L^p$-Sobolev spaces are defined similarly: say
\begeq\label{fracwSobp}
\|f\|_{W^{(\alpha,\beta), p}_\kappa} = \Bigl\| (1-\Delta_H)^{\alpha/2} (1-\Delta_V)^{\beta/2} \bigl( X(1+|v|^2)^{\kappa/2} \bigr) \Bigr\|_{L^p(\vol)}.
\endeq
In this case the spectral analysis becomes much more tricky, but at least interpolation will work just the same as in Euclidean geometry;
and for integers $\alpha,\beta$ we are back to the usual Sobolev spaces. Also functional calculus can be defined by completion from a dense set of smooth functions. All in all, classical tools from ``flat'' functional analysis extend to this geometric setting.
\med

From calculus rules, commutator formulas, and elementary functional analysis it is not hard to provide functional estimates of the operators at play: $\xi, \nabla_V, \nabla_H$, the associated Laplace operators, and their commutators. The next proposition gathers some of those, many of which follow the intuition.

\begin{Prop}[Some regularity bounds] \label{propboundsxi}
With notation \eqref{VHcov} \eqref{xirevu}, \eqref{Xiexpl}, acting on functions or tensors,
\sm

(i) $\nabla_H$ loses one horizontal derivative, $\Delta_H$ loses two horizontal derivatives: For any $k\in\N_0^n$, $s\in\R$, $\alpha,\beta,\kappa\in\R$, there is $C>0$ such that for all functions $f$,
\[ \|\nabla_H^k (1-\Delta_H)^{s/2} f \|_{H^{\alpha,\beta}_\kappa} \leq C \|f\|_{H^{\alpha+|k|+s,\beta}_\kappa}.\]
The same estimate holds true if $f$ is replaced by a tensor of arbitrary order; or if spaces $H^\sigma_\kappa$ are replaced by $W^{\sigma,p}_\kappa$, $1<p<\infty$; or if the volume measure is replaced by $\mu$.
\sm

(ii) $\nabla_V$ loses one vertical derivative, $\Delta_V$ loses two vertical derivatives: For any $k\in\N_0^n$, $s\in\R$, $\alpha,\beta,\kappa\in\R$ ($\kappa\geq 0$ if $s<0$), there is $C>0$ such that for all functions $f$,
\[ \|\nabla_V^k (1-\Delta_V)^{s/2} f\|_{H^{\alpha,\beta}_\kappa} \leq C \|f\|_{H^{\alpha,\beta+|k|+s}_\kappa}.\]
The same estimate holds true if $f$ is replaced by a tensor of arbitrary order; or if spaces $H^\sigma_\kappa$ are replaced by $W^{\sigma,p}_\kappa$, $1<p<\infty$; or if the volume measure is replaced by $\mu$, provided that $\Delta_V$ is replaced by $\Delta_V^\mu$.
\sm

(iii) If the smooth function $\vphi:\TM\to\R$ is of order $r\in\R$, in the sense that $\nabla_H^k\nabla_V^\ell \vphi(x,v) = O(\<v\>^{r-|\ell|})$ for all $k,\ell$ in $N_0^n$, $\<v\> = \sqrt{1+|v|^2}$, then multiplication by $\vphi$ loses at most $r$ powers of $|v|$: There is $C>0$ such that for all functions $f$,
\[ \|\vphi\, f\|_{H^{\alpha,\beta}_\kappa} \leq C \|f\|_{H^{\alpha,\beta}_{\kappa+r}}.\]
The same estimate holds true if $f$ is replaced by a tensor of arbitrary order; or if spaces $H^\sigma_\kappa$ are replaced by $W^{\sigma,p}_\kappa$, $1<p<\infty$; or if the volume measure is replaced by $\mu$.
\sm

(iv) The geodesic generator loses one horizontal derivative and one power of $|v|$: For any $\alpha,\beta,\kappa\in \R$, there is $C>0$ such that for all functions $f$,
\begeq\label{xibound}
\|\xi f\|_{H^{\alpha,\beta}_\kappa} \leq C\,\| f\|_{H^{\alpha+1,\beta}_{\kappa+1}}.
\endeq
The same estimate holds true if $f$ is replaced by a tensor of arbitrary order; or if spaces $H^\sigma_\kappa$ are replaced by $W^{\sigma,p}_\kappa$, $1<p<\infty$; or if the volume measure is replaced by $\mu$.
\sm

(v) Commuting the geodesic operator with $s$ vertical derivatives loses $s-1$ vertical derivatives and 1 horizontal: For any $k\in\N_0^n$ ($|k|\geq 1$), $\alpha,\beta,\kappa\in\R$, $s\in\R$, there is $C>0$ such that for all functions $f$,
\begeq\label{xiVbound}
\bigl\|[\nabla_V^k,\xi] f\bigr\|_{H^{\alpha,\beta}_\kappa} \leq C\,\| f\|_{H^{\alpha+1,\beta+|k|-1}_{\kappa}};\qquad
\bigl\| \bigl[(1-\Delta_V)^{s/2},\xi\bigr] f\bigr\|_{H^{\alpha,\beta}_\kappa}
\leq C\,\| f\|_{H^{\alpha+1,\beta+s-1}_{\kappa}};
\endeq
or more generally
\begeq\label{moregeneralxiVbound}
\|[\nabla_V^k (1-\Delta_V)^{s/2},\xi] f\|_{H^{\alpha,\beta}_\kappa} \leq C\,\| f\|_{H^{\alpha+1,\beta+|k|+s-1}_{\kappa}}.
\endeq
The same estimate holds true if $f$ is replaced by a tensor of arbitrary order; or if spaces $H^\sigma_\kappa$ are replaced by $W^{\sigma,p}_\kappa$, $1<p<\infty$; or if the volume measure is replaced by $\mu$, provided that $\Delta_V$ is replaced by $\Delta_V^\mu$.
\sm

(vi) Commuting the geodesic operator with $s$ horizontal derivatives loses $s-1$ horizontal derivatives and 1 vertical, and 2 powers of $|v|$: For any $k\in\N_0^n$, $\alpha,\beta,\kappa\in\R$, $s\in\R$, there is $C>0$ such that for all functions $f$,
\begeq\label{xiHbound}
\bigl\|[\nabla_H^k,\xi] f\bigr\|_{H^{\alpha,\beta}_\kappa} \leq C\,\| f\|_{H^{\alpha+|k|-1,\beta+1}_{\kappa+2}};\qquad
\bigl\|[(1-\Delta_H)^{s/2},\xi] f\bigr\|_{H^{\alpha,\beta}_\kappa} \leq C\,\| f\|_{H^{\alpha+s-1,\beta+1}_{\kappa+2}};
\endeq
or more generally
\begeq\label{moregeneralxiHbound}
\bigl\| [\nabla_H^k (1-\Delta_H)^{s/2},\xi] f\bigr\|_{H^{\alpha,\beta}_\kappa} \leq C\,\| f\|_{H^{\alpha+|k|+s-1,\beta+1}_{\kappa+2}};
\endeq
The same estimate holds true if $f$ is replaced by a tensor of arbitrary order; or if spaces $H^\sigma_\kappa$ are replaced by $W^{\sigma,p}_\kappa$, $1<p<\infty$.
\sm

(vii) Same estimates hold true if $\xi$, the geodesic generator, is replaced by $\Xi$, the geodesic generator on vector fields, up to extra contributions losing 1 horizontal derivative less and one power of $|v|$ more: for arbitrary vector fields $X$,
\begeq\label{Xibound}
\|\Xi X\|_{H^{\alpha,\beta}_\kappa} \leq C\,\bigl( \|X\|_{H^{\alpha+1,\beta}_{\kappa+1}} + \|X\|_{H^{\alpha,\beta}_{\kappa+2}}\bigr);
\endeq
\begeq\label{moregeneralXiVbound}
\bigl\|[\nabla_V^k (1-\Delta_V)^{s/2},\Xi] X\bigr\|_{H^{\alpha,\beta}_\kappa} \leq C\,\bigl( \| X\|_{H^{\alpha+1,\beta+|k|+s-1}_{\kappa}}
+ \|X\|_{H^{\alpha,\beta+|k|+s-1}_{\kappa+1}}\bigr);
\endeq
\begeq\label{moregeneralXiHbound}
\bigl\| [\nabla_H^k (1-\Delta_H)^{s/2},\Xi] X\bigr\|_{H^{\alpha,\beta}_\kappa} \leq C\, \| X\|_{H^{\alpha+|k|+s-1,\beta+1}_{\kappa+2}}.
\endeq
The same estimate holds true if $X$ is replaced by a tensor of arbitrary order; or if spaces $H^{\alpha,\beta}_\kappa$ are replaced by $W^{(\alpha,\beta),p}_\kappa$, $1<p<\infty$; or if the volume measure is replaced by $\mu$, provided that $\Delta_V$ is replaced by $\Delta_V^\mu$.
\sm

(viii) The difference of $[\nabla_V,\Xi]$ and $\nabla_H$ loses at most one vertical moment: For any tensor $X$,
\begeq\label{Xidiffboundnabla}
\Bigl \| \bigr ( [\nabla_V,\Xi] - \nabla_H\bigr) X\Bigr \|_{H^{\alpha,\beta}_\kappa} \leq C\,\| X\|_{H^{\alpha,\beta}_{\kappa+1}}.
\endeq
The same estimate holds true if spaces $H^{\alpha,\beta}_\kappa$ are replaced by $W^{(\alpha,\beta),p}_\kappa$, $1<p<\infty$; or if the volume measure is replaced by $\mu$, provided that $\Delta_V$ is replaced by $\Delta_V^\mu$.
\end{Prop}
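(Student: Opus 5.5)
The plan is to reduce every item to three ingredients: the joint functional calculus of Proposition~\ref{propdhdxcom}, the commutation identities of Propositions~\ref{propHV} and~\ref{propcomH} together with \eqref{Xiexpl} and \eqref{Xicom}, and a localisation in $|v|$ via the Littlewood--Paley norm \eqref{fracwSob'}. I will first prove everything for integer indices $\alpha,\beta\in\N_0$ and $s$ even. Here the norms are equivalent to sums of $L^2$ norms of $\nabla_H^a\nabla_V^b\bigl(\<v\>^\kappa X\bigr)$ with $a\le\alpha$, $b\le\beta$. This equivalence comes from the Dirichlet identities \eqref{DeltaVXX}--\eqref{DeltaHXX}, the commutation $[\nabla_V,\nabla_H]=0$, and, for horizontal derivatives of order at least $2$, the commutator formula \eqref{NNablaH}. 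That formula only produces curvature terms of lower horizontal order, together with terms of the form $\<{\cal R}v,\nabla_V\>$, which cost one vertical derivative and one power of $v$ and are reabsorbed inductively. Negative and fractional indices will then follow by duality, using the adjoints \eqref{shortnot}, and by complex interpolation, exactly as in flat space. For $W^{\sigma,p}$ I will argue in charts with a partition of unity and invoke flat Calder\'on--Zygmund theory.

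With this reduction, items (i) and (ii) are immediate. For (i), $\nabla_H^k$ commutes with $\Delta_V$ and with radial weights (Propositions~\ref{basicinv} and~\ref{basicLapinv}), and $\nabla_H^k(1-\Delta_H)^{-|k|/2}$ is bounded by iterating \eqref{DeltaHXX}. For (ii), the only new point is the weight: $[\nabla_V,\<v\>^\kappa]=O(\<v\>^{\kappa-1})$, which is harmless, and this is the reason for the restriction $\kappa\ge0$ when $s<0$. Item (iii) follows by Leibniz: each derivative landing on $\vphi$ keeps the order at most $r$. Item (iv) follows from $\xi=v\cdot\nabla_H$ \eqref{xirevu} combined with (i) and (iii), since $v$ has order $1$.

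For (v), I use $[\nabla_V,\xi]=\nabla_H$ (Proposition~\ref{propHV}(i)) and iterate. The commutator $[\nabla_V^k,\xi]$ is a sum of terms $\nabla_V^{k-1}\nabla_H$, so it loses one horizontal derivative, $|k|-1$ vertical ones, and no weight. For fractional $s$, I write $(1-\Delta_V)^{s/2}$ through the resolvent formula
\[(1-\Delta_V)^{s/2}=c_s\int_0^\infty t^{s/2}\,(t+1-\Delta_V)^{-1}\,\frac{dt}{t}\]
for $s<0$, and reduce to this case for other $s$ by composing with integer powers. Then I use
\[[(t+1-\Delta_V)^{-1},\xi]=(t+1-\Delta_V)^{-1}\,[\xi,\Delta_V]\,(t+1-\Delta_V)^{-1},\]
with $[\Delta_V,\xi]=2\nabla_V\cdot\nabla_H$, and this gives the bound. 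In $L^2(\mu)$ one has in addition $[v\cdot\nabla_V,\xi]=\xi$, which is of the same type. For (vi), I apply Proposition~\ref{propcomH}(ii): $[\xi,\nabla_H]={\cal R}(v,\cdot)-\<{\cal R}(v,\cdot)v,\nabla_V\>$. This trades one horizontal derivative for one vertical derivative and two powers of $v$. Iterating and using the resolvent formula with $[\Delta_H,\xi]$ gives \eqref{xiHbound} and \eqref{moregeneralxiHbound}.

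Items (vii) and (viii) follow from \eqref{Xiexpl} and \eqref{Xicom}. Indeed, $\Xi=\xi+{\cal K}$, where ${\cal K}$ is a zero-order operator with coefficients of order $2$ in $v$; this gives the extra term $\|X\|_{H^{\alpha,\beta}_{\kappa+2}}$. Its vertical commutators are of order $1$, namely $\Sigma'v$, which gives the $\kappa+1$ term. Finally, $[\nabla_V,\Xi]-\nabla_H=-(\<\Sigma'v,P_V\>,0)$ is a multiplication by a symbol of order $1$, so (iii) yields \eqref{Xidiffboundnabla}.

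I expect the main obstacle to be the fractional and negative horizontal indices. There $(1-\Delta_H)^{s/2}$ is defined only spectrally, while $[\Delta_H,\xi]$ contains terms with $v$-growth and vertical derivatives that do not commute with the weight. I would first try the resolvent representation above, controlling $\<v\>^2\nabla_V$ against the resolvents uniformly in $t$, using that $\Delta_H$ commutes with $\Delta_V$ and with radial weights. If this fails, I would fall back on the chart-wise Littlewood--Paley definition \eqref{fracwSob'} and flat pseudodifferential calculus on each dyadic shell in $|v|$.
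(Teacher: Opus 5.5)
You follow the paper's plan for most of the proposition. Items (i)–(iii) come from the functional calculus and Leibniz (or Littlewood–Paley), and (iv) from $\xi=v\cdot\nabla_H$. The integer-order commutators come from $[\nabla_V,\xi]=\nabla_H$, Proposition~\ref{propcomH}(ii) and the telescoping identity $[A^k,B]=\sum_j A^j[A,B]A^{k-1-j}$. Items (vii)–(viii) come from writing $\Xi$ as $\xi$ plus a zero-order term with coefficients quadratic in $v$.

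You depart from the paper only for fractional powers. The paper goes from even $s$ to all $s\ge 0$ by complex interpolation, then to $s<0$ via $[(1-\Delta_H)^{-\tau/2},\xi]=-(1-\Delta_H)^{-\tau/2}[(1-\Delta_H)^{\tau/2},\xi](1-\Delta_H)^{-\tau/2}$. You use a resolvent representation on $(-2,0)$ and compose with integer powers elsewhere. Your route has a real advantage. Interpolating the family $s\mapsto[(1-\Delta_H)^{s/2},\xi]$ needs, on the line $\mathrm{Re}\,s=0$, a bound on $[(1-\Delta_H)^{iy/2},\xi]$, which is of the same nature as the statement itself; resolvents avoid this. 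In the Liouville case of (v) your route even gives an exact formula: $[\Delta_V,\xi]=2\nabla_V\cdot\nabla_H$ commutes with $\Delta_V$, so $[(1-\Delta_V)^{s/2},\xi]=-s\,(1-\Delta_V)^{s/2-1}\nabla_V\cdot\nabla_H$.

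Two steps, however, fail as written.

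\textbf{The norm equivalence.} You reduce to sums of $\|\nabla_H^a\nabla_V^b(\langle v\rangle^\kappa X)\|$. The nontrivial direction, $\|\nabla_H^a g\|\lesssim\|(1-\Delta_H)^{a/2}g\|$, is item (i) itself. Iterating \eqref{DeltaHXX} gives it only when a single $\nabla_H$ acts at horizontal level~$0$. Once $\nabla_H$ must be commuted past $\Delta_H$, the statement contains $\|\langle\mathcal R(e_i,e_j)v,\nabla_V\rangle g\|\lesssim\|g\|_{H^{2,0}}$. The reason is that, by \eqref{NNablaH}, $-\langle\mathcal R(e_i,e_j)v,\nabla_V\rangle g$ is exactly $(\nabla_H^2g)(e_i,e_j)-(\nabla_H^2g)(e_j,e_i)$.
\begin{itemize}
\item On a surface this reads $\|K\,\partial_\theta g\|\lesssim\|(1-\Delta_H)g\|$, with $K$ the Gauss curvature and $\partial_\theta$ the fiberwise angular derivative.
\item On the sector of angular frequency $m$, $-\Delta_H$ is a magnetic Laplacian with field $mK$. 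So you need $\|mKg\|\lesssim\|(1-\Delta_H)g\|$ uniformly in $m$.
\end{itemize}
This controls a weighted vertical derivative by a norm with no vertical regularity and no weight. The error terms you propose to ``reabsorb inductively'' raise $\beta$ and $\kappa$, which are fixed in the statement, so the induction has nowhere to land. What is missing is a genuine magnetic-type elliptic estimate for $\Delta_H$, uniform over fiber modes. The paper simply calls (i) immediate, so it does not supply this step either, but your whole reduction rests on it.

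\textbf{The resolvent argument at the endpoint.} The formula should read $A^{s/2}=\frac{\sin(\pi|s|/2)}{\pi}\int_0^\infty t^{s/2}(t+A)^{-1}\,dt$ for $-2<s<0$. With $dt/t$ you get a multiple of $A^{s/2-1}$, convergent only for $0<s<2$.

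More importantly, in (vi), with $A=1-\Delta_H$, the middle operator $B=[\Delta_H,\xi]$ does not commute with $A$, because $\Sigma_x(v,v)$ is neither radial nor $x$-independent. If you bound $(t+A)^{-1}B(t+A)^{-1}$ in operator norm inside the integral, the resolvents must supply $2-s$ horizontal derivatives at cost $(1+t)^{-1-s/2}$. Then $\int^\infty t^{s/2}(1+t)^{-1-s/2}\,dt$ diverges logarithmically. Uniform-in-$t$ control therefore only gives $\alpha+s-1+\varepsilon$.

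The fix is to use quadratic estimates rather than operator norms inside the integral:
\begin{itemize}
\item pair with a test function $g$;
\item bound $|\langle B(t+A)^{-1}f,(t+A)^{-1}g\rangle|$ by the $H^{\gamma+1}$ and $H^{-\gamma}$ norms, with $\gamma=\alpha+s/2-1$;
\item apply Cauchy--Schwarz in $t$;
\item use $\int_0^\infty t^{s/2}\|A^\delta(t+A)^{-1}u\|^2\,dt=c_s\|A^{\delta+s/4-1/2}u\|^2$. This identity is legitimate because $A$ commutes with the vertical multipliers and radial weights in the norms.
\end{itemize}

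For (v) in $L^2(\mu)$, the extra term $[v\cdot\nabla_V,\xi]=\xi$ is ``of the same type'' only after Proposition~\ref{propgradmom}. Alternatively, write $\xi=\nabla_V\cdot\nabla_H+(v-\nabla_V)\cdot\nabla_H$ and use $F(N)\nabla_V\cdot=\nabla_V\cdot F(N-1)$ and $F(N)(v-\nabla_V)\cdot=(v-\nabla_V)\cdot F(N+1)$, with $N=-\Delta_V^\mu$; this gives an exact formula.

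Finally, flat Calder\'on--Zygmund theory in charts does not cover $(1-\Delta_H)^{\alpha/2}$ on $L^p$. The operator $\Delta_H$ is degenerate on $\TM$ (it kills radial functions of $v$), and its horizontal fields do not commute.
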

\sm

\begin{Rk} Working with naive regularity spaces in $x,v$ rather than with horizontal and vertical variations, from \eqref{xi} one would have guessed that $\xi$ loses 1 derivative in $v$ and 2 powers of $|v|$; the whole structure of estimates would be different.
\end{Rk}

\begin{proof}[Sketch of proof of Proposition~\ref{propboundsxi}]
Estimates (i) and (ii) are immediate. Estimate (iii) is conveniently obtained from formula \eqref{fracwSob'}. (If the power of $-\Delta_V$ is negative then the decay will not be better than a certain power of $\<v\>$, which is the reason to restrict then to nonnegative $\kappa$.) Estimate (iv) follows from \eqref{xirevu}. Estimates (v) (vi), (vii) are similar, let us consider for instance (vi), and for notational simplicity consider only functions and assume $n=1$, so $k\in\N$. The case $k=1$ is deduced from Proposition \ref{propcomH}(ii) and the boundedness of ${\cal R}$, for all $\alpha,\beta,\kappa$ in $\R$. Then for higher $k\in\N$ this is due to
\[ [\nabla_H^k,\xi] = \sum_{j=0}^{k-1} \nabla_H^{j}[\nabla_H,\xi]\nabla_H^{k-1-j}. \]
From this follow the bounds on $[(1-\Delta_V),\xi]$; and then similarly on $[(1-\Delta_V)^k,\xi]$ for all $k\in\N$, so for all $s\in 2\N_0$; and then by complex interpolation for all powers $s\geq 0$. To prove it for negative $s$, consider $\tau = -s>0$ and write
\[ Z = (1-\Delta_H)^{-\tau/2} \xi X - \xi (1-\Delta_H)^{-\tau/2}X, \qquad
Y = (1-\Delta_H)^{-\tau/2}X,\]
so 
\[ (1-\Delta_H)^{\tau/2} Z = \xi (1-\Delta_H)^{\tau/2}Y - (1-\Delta_H)^{\tau/2}\xi Y,\]
thus from the case $s>0$
\[ \bigl\| (1-\Delta_H)^{\tau/2} Z \bigr \|_{H^{\alpha,\beta}_\kappa}
\leq C \bigl\| Y \|_{H^{\alpha+\tau-1,\beta}_{\kappa+2}} = C \bigl\| X \|_{H^{\alpha-1,\beta}_{\kappa+2}},\]
so 
\[  \bigl\| Z \bigr \|_{H^{\alpha+\tau,\beta}_\kappa} \leq C \bigl\| X \|_{H^{\alpha-1,\beta}_{\kappa+2}}.\]
Since $\alpha$ is arbitrary, upon replacing $\alpha$ by $\alpha-\tau$ this is
\[ \bigl\| Z \bigr \|_{H^{\alpha,\beta}_\kappa} \leq C \bigl\| X \|_{H^{\alpha-\tau-1,\beta}_{\kappa+2}},\]
which is the desired result. 
\end{proof}

The last ingredient in this section is a Sobolev embedding relating Hilbertian to pointwise estimates.

\begin{Prop}[crude Sobolev embedding in $\TM$] \label{propsobemb}
If $f\in H^{n+1}_{n+1}(\TM)$ then $f\in C(\TM)$; moreover, for any $k,\ell\in\N_0^n$,
\begeq\label{ineqsobemb} (1+|v|^\kappa) |\nabla_H^k\nabla_V^\ell f|  \leq C(k,\ell,\kappa,M) \, \|f\|_{H^{|k|+|\ell|+n+1}_{\kappa+n+1}}. \endeq
\end{Prop}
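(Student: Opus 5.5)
The plan is to reduce to the classical Sobolev embedding $H^{s}(\R^{2n})\subset C_b(\R^{2n})$ for $s>n$ (here $s=n+1$ suffices, since $\dim \TM = 2n$ and $n+1>2n/2=n$), applied after localisation in $x$ by charts and in $v$ by the dyadic partition \eqref{smooth1}. The key point is that the Sobolev norms built from $\nabla_H,\nabla_V$ are locally equivalent to flat Sobolev norms in $(x,v)$, up to factors polynomial in $|v|$; the kinetic weight $\kappa+n+1$ is designed to absorb exactly these factors.

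\emph{Step 1: reduction to $k=\ell=0$.} Applying the case $k=\ell=0$ to $F=\nabla_H^k\nabla_V^\ell f$, a tensor, reduces \eqref{ineqsobemb} to the bound $\|F\|_{H^{n+1}_{\kappa+n+1}}\le C\|f\|_{H^{|k|+|\ell|+n+1}_{\kappa+n+1}}$. This bound follows from Proposition~\ref{propboundsxi} (i)--(ii). I would take the liberty of using the fact that $\nabla_V$ does not lose moments and commutes with $\nabla_H$ (Proposition~\ref{propHV}(ii)). Commuting $\nabla_H$'s among themselves produces curvature terms (Proposition~\ref{propcomH}) of lower order, which cost at most one power of $|v|$ per commutator but trade away a derivative. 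Since the target has the same weight on both sides, I would handle this more carefully: note that $H^m_\kappa$ in the sense of \eqref{Halphaxv} controls mixed derivatives, via the commuting joint functional calculus of Proposition~\ref{propdhdxcom}, namely $(1-\Delta_H)^{a/2}(1-\Delta_V)^{b/2}$ with $a+b=m$ is bounded by $(1-\Delta_H)^{m/2}+(1-\Delta_V)^{m/2}$.

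\emph{Step 2: pointwise bound for a tensor $F$.} Fix a finite atlas $(O_\theta)$ with partition of unity $(\chi_\theta)$, and the dyadic cutoffs $\chi_j(|v|)$. On the support of $\chi_\theta(x)\chi_j(|v|)$, where $|v|\simeq 2^j$, I would express flat derivatives $\pa_x,\pa_v$ in terms of $\nabla_H,\nabla_V$ via \eqref{basishoriz}: $\pa_{x^k}=(\nabla_H)_k+\Gamma v\pa_v$ plus zero-order Christoffel terms on tensors. Hence each flat derivative of order $m\le n+1$ is a combination of $\nabla_H^a\nabla_V^b$ with $a+b\le m$ and coefficients $O(\<v\>^{m})$. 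The flat Sobolev embedding in $\R^{2n}$ then gives
\[
\sup|\chi_\theta\chi_j F| \le C\,\|\chi_\theta\chi_j F\|_{H^{n+1}(\R^{2n})} \le C\,2^{j(n+1)}\|\chi_j F\|_{H^{n+1}},
\]
where the right-hand side is measured in the intrinsic norms. Derivatives falling on $\chi_j$ are harmless, since $\nabla_H|v|=0$ by Proposition~\ref{basicinv} and $\nabla_V\chi_j=O(2^{-j})$. Multiplying by $(1+|v|^\kappa)\simeq 2^{j\kappa}$ and using the Littlewood--Paley norm \eqref{fracwSob'} with $A=2$ gives the bound by $\|F\|_{H^{n+1}_{\kappa+n+1}}$. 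The volume density $\det g$ is bounded above and below on compact charts. Continuity of $f$ follows by density of smooth functions, since the estimate is uniform.

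\emph{Main obstacle.} The delicate point is the bookkeeping of the powers of $|v|$ in Step~2, when converting flat $x$-derivatives to horizontal ones. Each $\pa_x$ costs a factor $|v|$, which is why the weight $n+1$ (and not $0$) appears. I expect the crude count above to be consistent with the stated exponent. If it is not, I would first try spending derivatives only in $x$ through the anisotropic embedding $H^{s_x,s_v}$ with $s_x,s_v>n/2$, which still stays within $n+1$ total.
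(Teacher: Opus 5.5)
Your proposal is correct and follows essentially the paper's route. You reduce to $k=\ell=0$, localise in charts, trade flat $(x,v)$-derivatives of order at most $n+1$ for horizontal and vertical ones at a cost of $\langle v\rangle^{n+1}$ (absorbed by the kinetic weight), and conclude with the flat Sobolev embedding in $\R^{2n}$. The only difference is cosmetic: you carry $\kappa$ through dyadic shells in $|v|$, whereas the paper first reduces to $\kappa=0$ by multiplying by $\langle v\rangle^{\kappa}$ (which satisfies $\nabla_H\langle v\rangle^{\kappa}=0$) and then works on all of $TO_\theta$ at once. Note also that in both versions your Step~1 reduction rests on Proposition~\ref{propboundsxi}(i)--(ii); your joint-functional-calculus remark does not by itself dispose of the curvature commutators you raised.
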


\begin{Rk} This inequality is clearly not optimal, but will suffice for future goals in these notes. I~trust that a more careful, intrinsic treatment can take the right-hand side into $\|f\|_{H^{\sigma}_{\kappa}}$, for any $\sigma>n$, just as the Sobolev embedding in $\R^{2n}$.
\end{Rk}

\begin{proof}[Sketch of proof of Proposition \ref{propsobemb}]
It suffices to prove the result for $\kappa=0$, $k,\ell =0$.
Introduce an atlas $(O_\theta)_{\theta\in\Theta}$ on $M$, and a subordiate smooth partition of unity $(\chi_\theta)_{\theta\in\Theta}$. Expressing $f=\sum f(x,v) \chi_\theta(x)$, it suffices to prove \eqref{ineqsobemb} when $f$ is supported in $T O_\theta$; there we can work with coordinates $x$ and $v$. Then from the expressions of $\nabla_H$, one can relate the intrinsic Sobolev norm (horizontal and vertical variations) with the nonintrinsic one (variations in $x$ and $v$): with transparent notation,
\[ \|f\|_{(H^{n+1})_{x,v}} \leq C\, \|f\|_{(H^{n+1}_{n+1})_{H,V}}.\]
On the other hand, by classical Sobolev embedding in $TO_\theta\subset\R^{2n}$,
\[ \|f\|_{C(TO_\theta)}\leq C \, \|f\|_{(H^{n+1})_{x,v}}.\]
(Any exponent strictly bigger than $n$ would do.)
This completes the proof.
\end{proof}

\bibnotes

This section introduced classical functional spaces which are used in many references and textbooks. Sobolev spaces and embeddings can be found everywhere, including Brezis \cite{brezis:AF}. Sobolev spaces on manifolds are in Hebey \cite{hebey:manifolds:96}, but defined and studied on $M$ rather than $\TM$ (of course one can always consider $\TM$ as the manifold, but then there is no specific treatment of velocities). Lunardi \cite{lunardi:interpolation} provides an introduction to interpolation theory. About the smooth partition of unity, classically used in frequency space for pseudo-differential operators and Littlewood--Paley theory, this can be found e.g. in Frazier--Jawerth--Weiss~\cite{FJW:book}.

For kinetic theory set in Euclidean context, weighted Sobolev spaces with differing regularity indices in $x$ and $v$, and powers of $|v|$, have become commonly used since the 2000's, see e.g. works by Guo, Mouhot and myself~\cite{guo:landau:02,MV:landau,vill:hypoco}. But even long before that, a number of works were working out fractional Sobolev regularity in $v$ and $x$ either in the context of H\"ormander's theory \cite{horm:hypo:67,rothschildstein:nilp:76} or in that of fractional regularity of kinetic velocity averages \`a la DiPerna--Lions--Meyer~\cite{DPLM:average:91}.

In the context of the geometric kinetic Fokker--Planck equation, the globally defined fractional Sobolev spaces with localisation, formula~\eqref{fracwSob}, were apparently introduced by Lebeau in his collaboration with Bismut~\cite{bismutlebeau:hypo:book,lebeau:FP1:05,lebeau:FP2:07}, together with functional calculus and also tools from pseudo-differential calculus.

In the Sobolev embedding of Proposition \ref{propsobemb} there is no specific treatment of velocities, so another, certainly sharper way to obtain it would consist in working in $\TM$ equipped with the Sasaki metric, check basic bounds on the Ricci curvature and volume of balls there, and apply Sobolev embedding theorems in the style of Varopoulos \cite{varopoulos:heatk:89}.

\section{$L^2$ interpolation inequalities} \label{secL2interp}

The scene is ready and I will now start to juggle between weighted $L^2$-Sobolev spaces. For a start, there are the classical interpolation inequalities, in which one may play simultaneously on all indices: For simplicity they will be stated here only for nonnegative kinetic weight $\kappa$.

\begin{Prop}[Sobolev interpolation] \label{propinterp}
If $\alpha_0,\alpha1,\beta_0,\beta_1\in\R$, $\kappa_0,\kappa_1\geq 0$ and $0\leq \theta\leq 1$ then there is $C>0$ such that for any function $f:\TM\to\R$,
\begeq\label{interpmoments} \|f\|_{H^{\alpha,\beta}_\kappa} \leq C \|f\|_{H^{\alpha_0,\beta_0}_{\kappa_0}}^{1-\theta}  \|f\|_{H^{\alpha_1,\beta_1}_{\kappa_1}}^\theta,
\endeq
where
\[ \alpha = (1-\theta)\alpha_0 + \theta \alpha_1,\qquad \beta = (1-\theta)\beta_0 + \theta \beta_1, \qquad \kappa = (1-\theta)\kappa_0 + \theta \kappa_1.\]
The same estimates hold true if functions are replaced by tensors of arbitrary order; or if the Liouville measure is replaced by the Gaussian measure.
\end{Prop}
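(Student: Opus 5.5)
The plan is to reduce the statement to a Hölder-type inequality in a joint spectral representation, and to handle the kinetic weight by the Littlewood--Paley localisation \eqref{fracwSob'}.

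\textbf{Step 1: no weight.} Take first $\kappa_0=\kappa_1=0$. By Proposition~\ref{propdhdxcom}, $\Delta_H$ and $\Delta_V$ have a joint spectral decomposition \eqref{spectralHV}. Writing $\nu_f(d\lambda_H\,d\lambda_V)=\<\Pi(d\lambda_H\,d\lambda_V)f,f\>$, which is a finite nonnegative measure, we get
\[ \|f\|^2_{H^{\alpha,\beta}} = \int (1+\lambda_H)^{\alpha}(1+\lambda_V)^{\beta}\,\nu_f(d\lambda_H\,d\lambda_V). \]
The integrand factors exactly as
\[ \bigl[(1+\lambda_H)^{\alpha_0}(1+\lambda_V)^{\beta_0}\bigr]^{1-\theta}\, \bigl[(1+\lambda_H)^{\alpha_1}(1+\lambda_V)^{\beta_1}\bigr]^{\theta}. \]
Hölder's inequality with exponents $1/(1-\theta)$ and $1/\theta$ then gives \eqref{interpmoments} with $C=1$. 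The same holds for tensors, using the spectral measure valued in projections on $\TIJ$, and in $L^2(\mu)$, using \eqref{spectralHVmu}.

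\textbf{Step 2: adding the weight.} With $\kappa_0,\kappa_1\ge 0$, use the equivalent norm \eqref{fracwSob'}. For each dyadic piece $f\chi_\ell$, Step~1 gives
\[ \|f\chi_\ell\|_{H^{\alpha,\beta}} \le \|f\chi_\ell\|_{H^{\alpha_0,\beta_0}}^{1-\theta}\, \|f\chi_\ell\|_{H^{\alpha_1,\beta_1}}^{\theta}. \]
Since $A^{\ell\kappa}=(A^{\ell\kappa_0})^{1-\theta}(A^{\ell\kappa_1})^{\theta}$, multiplying both sides by $A^{\ell\kappa}$, squaring, summing over $\ell$ and applying discrete Hölder yields
\[ \|f\|'_{H^{\alpha,\beta}_\kappa}\le \bigl(\|f\|'_{H^{\alpha_0,\beta_0}_{\kappa_0}}\bigr)^{1-\theta}\bigl(\|f\|'_{H^{\alpha_1,\beta_1}_{\kappa_1}}\bigr)^{\theta}. \]
The equivalence constants between \eqref{fracwSob} and \eqref{fracwSob'} then give the constant $C$.

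\textbf{Main obstacle.} The real work lies in justifying that \eqref{fracwSob'} is equivalent to \eqref{fracwSob} for all real $\alpha,\beta$ and $\kappa\ge0$. This requires the following facts:
\begin{itemize}
\item Multiplication by $\chi_\ell(|v|)$ commutes with $(1-\Delta_H)^{\alpha/2}$, since radial functions are horizontal by Proposition~\ref{basicLapinv}(ii).
\item The commutator of $\chi_\ell$ with $(1-\Delta_V)^{\beta/2}$ is of lower order, with derivatives of $\chi_\ell$ of size $2^{-\ell}$.
\item The pieces are almost orthogonal, because each point of $v$-space lies in the support of only boundedly many $\chi_\ell$.
\end{itemize}

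If this route proves delicate, I would fall back on a direct argument. Set $w=(1+|v|^2)^{1/2}$ and $g=w^{\kappa}f$, and write
\[ w^{\kappa}f=(w^{\kappa_0}f)^{1-\theta}(w^{\kappa_1}f)^{\theta} \]
in the sense of complex interpolation. Consider the analytic family $z\mapsto (1-\Delta_H)^{a(z)/2}(1-\Delta_V)^{b(z)/2}\,w^{k(z)}f$, with $a,b,k$ affine in $z$, and apply Hadamard's three-lines lemma. Bounding the imaginary powers $w^{it}$ on weighted Sobolev spaces with polynomial growth in $t$ then requires Proposition~\ref{propboundsxi}(iii). The restriction $\kappa\ge0$ is exactly what keeps these commutators harmless.
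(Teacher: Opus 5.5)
The paper gives no proof of Proposition~\ref{propinterp}. It introduces it as one of ``the classical interpolation inequalities'' and uses it as such, so there is no author's argument to compare yours with line by line. On its own merits your argument is correct.

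\textbf{Step 1.} This is exactly the mechanism of the paper's proof of Proposition~\ref{propmixinterp}: the joint spectral measure from \eqref{spectralHV} or \eqref{spectralHVmu}, followed by H\"older. It yields the unweighted inequality with $C=1$ for all real indices, for tensors and for $\mu$ alike. The case $\kappa_0=\kappa_1=\kappa$ also follows at once by applying Step 1 to $(1+|v|^2)^{\kappa/2}f$.

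\textbf{Step 2.} This is a correct reduction. The primed norm \eqref{fracwSob'} interpolates with constant $1$ by discrete H\"older. What remains is the equivalence of \eqref{fracwSob} and \eqref{fracwSob'}, which the paper itself asserts without proof. Your write-up is therefore at least as complete as the paper's, and more explicit about what is used.

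\textbf{Points to tighten.}

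First, radial multipliers and $(1-\Delta_V)^{\beta/2}$ both commute with $(1-\Delta_H)^{\alpha/2}$. Hence $\|(1-\Delta_H)^{\alpha/2}(1-\Delta_V)^{\beta/2}(\chi_\ell f)\|=\|(1-\Delta_V)^{\beta/2}(\chi_\ell F)\|$ with $F=(1-\Delta_H)^{\alpha/2}f$, and the same holds with the weight. So the horizontal index is a spectator. The equivalence is then a fiberwise Euclidean (resp.\ Gaussian) statement, uniform in $x$ by compactness of $M$.

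Second, your almost-orthogonality bullet does not hold as stated once $(1-\Delta_V)^{\beta/2}$ with $\beta\notin 2\N_0$ is applied, since that operator is nonlocal. Bounded overlap of the supports alone is then not enough. The clean route is a retraction argument. The maps $f\mapsto(\chi_\ell f)_\ell$ and $(g_\ell)_\ell\mapsto\sum_\ell\tilde\chi_\ell g_\ell$, with $\tilde\chi_\ell=1$ on the support of $\chi_\ell$, are bounded between $H^{0,\beta}_\kappa$ and the weighted space $\ell^2(H^{0,\beta})$. This holds for even integers $\beta\ge0$ by Leibniz and bounded overlap, then for real $\beta\ge0$ by interpolation, and for $\beta<0$ by duality.

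Third, in the Gaussian case $[\Delta_V^\mu,\chi_\ell]$ contains the zeroth-order term $-v\cdot\nabla_V\chi_\ell$. This term is $O(1)$, not $O(2^{-\ell})$. It is still bounded uniformly in $\ell$, so it is harmless.

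Fourth, for \eqref{fracwSob'} to be equivalent to \eqref{fracwSob} when $\kappa\neq0$, the constant $A$ must be the dyadic ratio of the partition: $A=2$ for $\chi_\ell(r)=\chi(r/2^\ell)$. This slip is inherited from the paper.

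Finally, your claim that $\kappa\ge0$ is what makes the fallback work is not accurate. Multiplication by $w^{it}$ is bounded on $H^{\alpha,\beta}$, with polynomial growth in $t$, for every real $\beta$, whatever the signs of the $\kappa_j$. One checks this by Leibniz for even $\beta\ge0$ (also with $\Delta_V^\mu$), then interpolation and duality. With a factor $e^{\varepsilon z^2}$ to absorb that growth, your three-lines argument is a valid alternative that bypasses the Littlewood--Paley equivalence altogether.
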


The next important fact, more subtle, is that the Gaussian measure allows to trade moments for vertical regularity. This property, true for the Gaussian measure in $\R^n$, readily adapts to functions on the tangent bundle.

\begin{Prop}[Trading moments for regularity with Gaussian weight] \label{propgradmom}
For any $\alpha,\beta,\kappa\in\R$ ($\kappa\geq 0$ if $\beta<0$) and any $s>0$, there is $C>0$ such that for all tensors $X$ on $\TM$,
\begeq\label{ineqregv}
\|X\|_{H^{\alpha,\beta}_{\kappa+s}(\mu)} \leq C \|X\|_{H^{\alpha,\beta+s}_\kappa (\mu)}.
\endeq
\end{Prop}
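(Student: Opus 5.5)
The plan is to reduce to a fibrewise Gaussian statement and then lift to the general indices by functional calculus and interpolation. The model fact is the Ornstein--Uhlenbeck identity in one fibre $T_xM$ with $\mu_x$:
\[ \int |v|^2 u^2\,d\mu_x \leq C\Bigl( \int |\nabla_V u|^2\,d\mu_x + \int u^2\,d\mu_x\Bigr). \]
It follows from integrating by parts with $\nabla_V^{*\mu} = -(\nabla_V - v)\cdot$ (Proposition \ref{propdiv}(iii)). Applied to the vector field $u^2 v$, this gives $\int |v|^2u^2\,d\mu_x = \int (n u^2 + 2u\, v\cdot\nabla_V u)\,d\mu_x$. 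Cauchy--Schwarz with absorption then yields the bound. Alternatively, $|v|^2 - n$ is a degree-two Hermite combination and \eqref{Herm3} shows multiplication by $v$ shifts $-\Delta_V^\mu$-eigenvalues by at most one. In operator form this reads $\<v\>^2 \leq C(1-\Delta_V^\mu)$ on each fibre, uniformly in $x$ since everything is isometric to the standard Gaussian on $\R^n$. Integrating in $x$ gives \eqref{ineqregv} for $\alpha=\beta=\kappa=0$, $s=1$. The same argument works verbatim for tensors, since $\nabla_V$ is just the Euclidean fibre derivative.

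Next I would get all integer $s$ and all $\kappa\geq 0$, with $\alpha=\beta=0$ first. Apply the base case to $\<v\>^{\kappa}X$ and to $\nabla_V^j(\<v\>^{\kappa}X)$. Commute $\nabla_V$ past $\<v\>^\kappa$, which produces terms of order $\kappa-1$ (Proposition \ref{propboundsxi}(iii)), and iterate by induction on $s$. This shows $\|\<v\>^{\kappa+s}X\|_{L^2(\mu)} \leq C\sum_{j\leq s}\|\<v\>^{\kappa}\nabla_V^jX\|_{L^2(\mu)}$. The right-hand side is $\lesssim \|X\|_{H^{0,s}_\kappa(\mu)}$, by the equivalence of the integer Sobolev norm with $(1-\Delta_V^\mu)^{s/2}$ up to commutators with weights, again via Proposition \ref{propboundsxi}(ii)--(iii). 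Non-integer $s$ and $\beta\geq 0$ would then come from complex interpolation between integer cases, using Proposition \ref{propinterp} for the scales in $(\beta,\kappa)$. I would also use that $(1-\Delta_V^\mu)^{\beta/2}$ almost commutes with $\<v\>^{\kappa}$, which lets me move the vertical regularity across the weight.

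The horizontal index $\alpha$ is handled by commutation. By Proposition \ref{propdhdxcom}, $(1-\Delta_H)^{\alpha/2}$ commutes with $(1-\Delta_V^\mu)^{\beta/2}$. It also commutes with radial weights $\<v\>^{\kappa}$, by Proposition \ref{basicLapinv}(ii). Hence I can set $Y=(1-\Delta_H)^{\alpha/2}X$ and apply the $\alpha=0$ inequality to $Y$. This reduces everything to $\alpha=0$ with no loss.

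The main obstacle is negative $\beta$ with $\kappa\geq 0$. There I would argue by duality, as in the proof of Proposition \ref{propboundsxi}(vi). The adjoint of multiplication by $\<v\>^s$ on $L^2(\mu)$ is itself. The $\beta\geq 0$ case applied to the dual exponents gives $\|\<v\>^sZ\|_{H^{0,-\beta-s}(\mu)}\lesssim \|Z\|_{H^{0,-\beta}(\mu)}$, i.e. boundedness of $(1-\Delta_V^\mu)^{\beta/2}\<v\>^s(1-\Delta_V^\mu)^{-(\beta+s)/2}$. The weight $\kappa\geq 0$ is then reinserted by the almost-commutation with commutator error of lower order, which the restriction $\kappa\geq0$ keeps controlled. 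If the duality bookkeeping becomes messy, a fallback is direct. Decompose into the Hermite eigenspaces ${\cal E}_m$ of $-\Delta_V^\mu$ fibrewise. Multiplication by $v$ maps ${\cal E}_m$ into ${\cal E}_{m-1}\oplus{\cal E}_{m+1}$ with norm $O(\sqrt{m+1})$, by \eqref{Herm2}. Hence $\<v\>^2$ acts as a tridiagonal-type operator bounded from the space weighted by $(1+m)^{\gamma+1}$ to the space weighted by $(1+m)^{\gamma}$, for every real $\gamma$. This proves the claim for even integer $s$ and all $\beta$; interpolation finishes.
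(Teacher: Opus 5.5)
Your proposal is correct and has the same architecture as the paper's proof. The base case is fibrewise Gaussian integration by parts, $\alpha$ is removed by commuting $(1-\Delta_H)^{\alpha/2}$ through, interpolation reduces to integer indices, and a Hermite decomposition handles negative $\beta$.

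Your fallback is a cleaner version of the paper's argument. The paper computes explicitly via \eqref{Herm3} that $(1-\Delta_V^\mu)^{-1}(v_1^2\,\cdot)$ is bounded on $L^2(\mu)$. The ladder structure \eqref{Herm2} instead shows at once that $\langle v\rangle^2$ maps weight $(1+m)^{\gamma+1}$ to weight $(1+m)^{\gamma}$ for every real $\gamma$. That gives all even $s$ and all $\beta$ in one stroke.

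Two points to tighten.

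First, your primary duality step does not cover all negative $\beta$. Write $A=(1-\Delta_V^\mu)^{1/2}$. The adjoint of $A^{\beta}\langle v\rangle^{s}A^{-\beta-s}$ is the same operator with $\beta$ replaced by $-\beta-s$. That dual exponent is nonnegative only when $\beta\le -s$; for $-s<\beta<0$ it again lies in $(-s,0)$. In that window you must either use the Hermite fallback, or interpolate in $\beta$ between $-s$ and $0$. The latter is a painless three-lines argument, because $A^{it}$ is unitary on $L^2(\mu)$.

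Second, the weight $\kappa$ plays no role at all. Set $Z=\langle v\rangle^{\kappa}(1-\Delta_H)^{\alpha/2}X$, using that $(1-\Delta_H)^{\alpha/2}$ commutes with $A$ and with radial weights. The inequality then reads exactly $\|A^{\beta}(\langle v\rangle^{s}Z)\|_{L^2(\mu)}\le C\|A^{\beta+s}Z\|_{L^2(\mu)}$. So you can drop the commutator bookkeeping with $\langle v\rangle^{\kappa}$, both in your second step and in ``reinserting'' $\kappa$ for negative $\beta$. No sign condition on $\kappa$ is needed.
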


The following immediate corollary states that Gaussian divergence loses one derivative, as one would have expected. Recall that
\begeq\label{divgauss}
\nabla_V^{\ast \mu} = - (\nabla_V-v)\cdot 
\endeq
is the adjoint of the vertical gradient in $L^2(\mu)$.

\begin{Cor}[Gaussian divergence loses one derivative] \label{divloss}
With the same notation as in Proposition \ref{propgradmom},
\begeq\label{ineqdivg}
\|\nabla_V^{\ast\mu} X\|_{H^{\alpha,\beta}_{\kappa}(\mu)} \leq C \|X\|_{H^{\alpha,\beta+1}_\kappa (\mu)}.
\endeq
\end{Cor}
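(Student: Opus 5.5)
The plan is to split $\nabla_V^{\ast\mu}X = -\nabla_V\cdot X + v\cdot X$, where $v\cdot X$ denotes contraction of the velocity with the index of $X$ carried by the divergence, and to bound the two pieces separately in $H^{\alpha,\beta}_\kappa(\mu)$.

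The first piece is the vertical divergence. It is a contraction of $\nabla_V X$, and contraction with the metric commutes with $\nabla_H$, $\Delta_H$ and $\Delta_V^\mu$, since $\nabla g=0$ and $g$ does not depend on $v$. So its norm is at most $\|\nabla_V X\|_{H^{\alpha,\beta}_\kappa(\mu)}$, which by Proposition~\ref{propboundsxi}(ii) (Gaussian version, $k$ of length one) is at most $C\|X\|_{H^{\alpha,\beta+1}_\kappa(\mu)}$.

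The second piece is multiplication by $v$, a smooth tensor of order $r=1$ in the sense of Proposition~\ref{propboundsxi}(iii). Indeed $\nabla_H v=0$ by \eqref{nablaHvm}, read covariantly, and $\nabla_V v$ is the identity while higher vertical derivatives vanish, so the hypotheses $O(\<v\>^{1-|\ell|})$ hold. Proposition~\ref{propboundsxi}(iii) then gives $\|v\cdot X\|_{H^{\alpha,\beta}_\kappa(\mu)} \leq C\|X\|_{H^{\alpha,\beta}_{\kappa+1}(\mu)}$. Next, Proposition~\ref{propgradmom} with $s=1$ trades this moment for one vertical derivative, giving $C\|X\|_{H^{\alpha,\beta+1}_\kappa(\mu)}$. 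Summing the two pieces yields \eqref{ineqdivg}.

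The restriction $\kappa\ge0$ when $\beta<0$ is inherited directly from Propositions~\ref{propboundsxi}(ii)--(iii) and \ref{propgradmom}. The only point needing care is that (iii) is applied to a tensor-valued multiplier rather than a scalar one. I would check this with the Littlewood--Paley form \eqref{fracwSob'}, where the multiplier has bounded horizontal derivatives and the same commutator structure as in the scalar case. I expect this step to be routine.
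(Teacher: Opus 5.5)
Your proposal is correct and is essentially the argument the paper has in mind. The paper gives no separate proof and simply calls this an immediate corollary of Proposition~\ref{propgradmom}: write $\nabla_V^{\ast\mu}X=-\nabla_V\cdot X+v\cdot X$, let the vertical divergence cost one vertical derivative, and trade the extra power of $|v|$ for one more vertical derivative — exactly your decomposition, with the tensor-valued multiplier check (using $\nabla_H v=0$) being the routine detail the paper leaves implicit.
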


\begin{Rk} Of course, from \eqref{ineqdivg} and the first part of Proposition \ref{propboundsxi}(ii) one recovers the already known
$\|\Delta_V^\mu X\|_{H^{\alpha,\beta}_\kappa(\mu)} \leq C \|X\|_{H^{\alpha,\beta+2}_\kappa(\mu)}$.
\end{Rk}

Another important corollary of Proposition \ref{propgradmom} together with Proposition \ref{propboundsxi}(iv) states that the geodesic flow loses at most one horizontal and one vertical derivatives:

\begin{Cor}[$L^2$ estimate of the geodesic flow] \label{L2geod}
With the same notation as in Proposition \ref{propgradmom},
\[ \|\xi X\|_{L^2(\mu)} \leq C \bigl( \|\nabla_V\nabla_H X\|_{L^2(\mu)} + \|\nabla_H X\|_{L^2(\mu)}\bigr) \leq  C  \|X\|_{H^{1,1}(\mu)}\]
and more generally
\begeq
\|\xi X\|_{H^{\alpha,\beta}_{\kappa}(\mu)} \leq C \|X\|_{H^{\alpha+1,\beta+1}_\kappa (\mu)}.
\endeq
\end{Cor}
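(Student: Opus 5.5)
The plan is to reduce to the two cited results, using the pointwise factorisation $\xi X = v\cdot\nabla_H X$ from \eqref{xirevu}. The factor $v$ costs one kinetic moment; Proposition~\ref{propgradmom} converts that moment into one vertical derivative, and $\nabla_H$ costs one horizontal derivative.

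For the first inequality I would work pointwise. Since $|\xi X|\leq |v|\,|\nabla_H X|$, it suffices to bound $\|\,|v|\,Y\|_{L^2(\mu)}$ for the tensor $Y=\nabla_H X$. Here I would use the elementary Gaussian identity obtained from \eqref{ibpVmu}: writing $v\,Y = -\nabla_V^{*\mu}Y + \nabla_V\cdot Y$ componentwise and integrating by parts gives
\[ \int |v|^2|Y|^2\,d\mu \leq C\Bigl(\int |\nabla_V Y|^2\,d\mu + \int |Y|^2\,d\mu\Bigr). \]
A more direct route is to compute $\int \langle v\cdot\nabla_V Y, Y\rangle\,d\mu$ in two ways via \eqref{ibpVmu}, then absorb with Cauchy--Schwarz. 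This is exactly the case $s=1$, $\alpha=\beta=\kappa=0$ of Proposition~\ref{propgradmom}, so I would simply cite it. It then remains to note $\nabla_V Y=\nabla_V\nabla_H X$, and by Proposition~\ref{propHV}(ii) this equals $\nabla_H\nabla_V X$. That gives the middle expression $\|\nabla_V\nabla_H X\|_{L^2(\mu)}+\|\nabla_H X\|_{L^2(\mu)}$.

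To reach $C\|X\|_{H^{1,1}(\mu)}$, I would compare these quantities with the spectral norm. By the Dirichlet identities \eqref{DeltaVXX}--\eqref{DeltaHXX} and the commutation of Proposition~\ref{propdhdxcom},
\[ \|\nabla_V\nabla_H X\|^2_{L^2(\mu)} = \langle (-\Delta_V^\mu)(-\Delta_H)X,X\rangle \leq \|X\|^2_{H^{1,1}(\mu)}, \]
and $\|\nabla_H X\|_{L^2(\mu)}\leq\|X\|_{H^{1,0}(\mu)}$. For tensors, the Dirichlet identity still holds verbatim, and so does the commutation of $\nabla_V$ with $\nabla_H$.

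For the general bound, I would write Proposition~\ref{propboundsxi}(iv) in the form
\[ \|\xi X\|_{H^{\alpha,\beta}_\kappa(\mu)}\leq C\|X\|_{H^{\alpha+1,\beta}_{\kappa+1}(\mu)}, \]
which is stated to hold with $\mu$ in place of the volume measure. Then I would apply Proposition~\ref{propgradmom} with $s=1$ and horizontal index $\alpha+1$, which gives $\|X\|_{H^{\alpha+1,\beta}_{\kappa+1}(\mu)}\leq C\|X\|_{H^{\alpha+1,\beta+1}_\kappa(\mu)}$. The restriction $\kappa\ge0$ when $\beta<0$ is inherited from both cited results.

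The only delicate point is to make sure that the form of Proposition~\ref{propboundsxi}(iv) with $\mu$ is legitimate in the Gaussian spaces. That is, the moment loss must be exactly one power of $\langle v\rangle$, with the vertical operator taken to be $\Delta_V^\mu$. This follows because $\xi$ commutes with $(1-\Delta_H)^{\alpha/2}$ up to lower-order terms, by \eqref{moregeneralxiHbound}. It also commutes with $(1-\Delta_V^\mu)^{\beta/2}$ up to terms that are better, by \eqref{moregeneralxiVbound}. The radial weight $(1+|v|^2)^{\kappa/2}$ is annihilated by $\nabla_H$, by Proposition~\ref{basicinv}. Combining these reduces the general case to the $L^2$ case above, with commutator remainders of lower order that are absorbed in the same way.
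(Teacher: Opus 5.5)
Your argument is the paper's: the corollary is deduced there by writing $\xi=v\cdot\nabla_H$, trading the factor $v$ for one vertical derivative with Proposition~\ref{propgradmom}, and, for the general bound, chaining the Gaussian version of Proposition~\ref{propboundsxi}(iv) with Proposition~\ref{propgradmom} at horizontal index $\alpha+1$. Your comparison with $\|X\|_{H^{1,1}(\mu)}$ via the Dirichlet identities and the joint spectral calculus is also fine.

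The thing to fix is your last paragraph, which you should drop rather than rely on. By \eqref{moregeneralxiHbound}, the remainder $[(1-\Delta_H)^{\alpha/2},\xi]$ costs one vertical derivative and two moments. After the Gaussian trade it is therefore measured in $H^{\alpha-1,\beta+3}_\kappa(\mu)$ (cf.\ Proposition~\ref{propxiboundnomom}(i)). That norm is \emph{not} dominated by the target $H^{\alpha+1,\beta+1}_\kappa(\mu)$: test on functions of $|v|$ alone lying in a high eigenspace of $-\Delta_V^\mu$, which $\Delta_H$ annihilates. So these remainders are not lower order and cannot be absorbed.

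Instead, just cite (iv) with $\mu$ as stated, as the paper does. For integer $\alpha$ you never need to commute $\xi$ with $\Delta_H$ anyway. The reason is that $v$ commutes with $\nabla_H$ (as noted in the proof of Proposition~\ref{propxiboundnomom}) and radial weights are horizontal (Proposition~\ref{basicinv}). Hence $\nabla_H^k\xi X$ is simply $v$ contracted into $\nabla_H^{k+1}X$.

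A minor sign slip: since $\nabla_V^{*\mu}=-(\nabla_V-v)\cdot$, the identity is $v\cdot Y=\nabla_V\cdot Y+\nabla_V^{*\mu}Y$, not $\nabla_V\cdot Y-\nabla_V^{*\mu}Y$. This is harmless, since you end up citing Proposition~\ref{propgradmom} directly.
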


\begin{proof}[Proof of Proposition \ref{ineqregv}]
It suffices to consider $\alpha=\beta=\kappa=0$ and $s=1$. 
Upon integration in $x$, the requested estimate will follow from the similar property for functions of just $v\in\R^n$. 
So the goal is
\begeq\label{firsttreat}
\int_{\R^n} |v|^2 h(v)^2\, e^{-\frac{|v|^2}{2}}\,dv \leq C \left( \int_{\R^n} |\nabla_v h(v)|^2\, e^{-\frac{|v|^2}{2}}\,dv + \int_{\R^n} h(v)^2\, e^{-\frac{|v|^2}{2}}\,dv\right).
\endeq
When \eqref{firsttreat} is established, it holds on each $T_xM$, and $x$-integration will yield
\[ \|h\|_{L^2_1(\mu)} \leq C \|h\|_{H^{0,1}(\mu)}, \]
as desired.

To prove \eqref{firsttreat}, use integration by parts:
\begin{align*}
\int h(v)^2 |v|^2 e^{-\frac{|v|^2}{2}}\,dv 
& = - \int h(v)^2 v\cdot \nabla_v \bigl( e^{-\frac{|v|^2}{2}}\bigr)\,dv \\
& = \int \nabla_v\cdot \bigl[ h(v)^2 v\bigr]\, e^{-\frac{|v|^2}{2}}\,dv \\
& = 2 \int h(v) \nabla_v h(v)\cdot v\, e^{-\frac{|v|^2}{2}}\,dv + n \int h(v)^2 e^{-\frac{|v|^2}{2}}\,dv\\
& \leq 2 \left( \int h(v)^2 |v|^2 \, e^{-\frac{|v|^2}{2}} \,dv\right)^{1/2} \left(\int|\nabla_v h(v)|^2\, e^{-\frac{|v|^2}{2}}\,dv\right)^{1/2} + n \int h(v)^2 e^{-\frac{|v|^2}{2}}\,dv,
\end{align*}
and the conclusion follows from Young's inequality $2ab\leq a^2/2 + 2b^2$.

The proof goes through with a tensor $X$ rather than a function $h$, either by direct adaptation, or reasoning by duality to reduce to scalar-valued functions.

Now consider the general case when $\alpha,\beta,\kappa$ are arbitrary. Replacing $X$ by $(1-\Delta_H)^{\alpha/2}X$ one may assume that $\alpha=0$. Integrating the inequality in $x$ reduces to prove it just in Euclidean Gaussian space. By interpolation one may assume that $s$ and $\alpha$ are even integers. By iterating the inequality, one may assume that $s=2$. Then what is needed is (with $a=\beta/2\in \N_0$)
\[ \Bigl\| (1-\Delta_V^\mu)^a \bigl( X(1+|v|^2)^{\kappa/2-1} (1+|v|^2) \bigr) \Bigr\|_{L^2(\mu)}
\leq C \Bigl\| (1-\Delta_V^\mu)^{a+1} \bigl( X(1+|v|^2)^{\kappa/2-1} \bigr) \Bigr\|_{L^2(\mu)}.\]
Since $\Delta^\mu_V(1+|v|^2) = n - |v|^2$, when $a\geq 0$, after systematically integrating by parts the left-hand side is controlled by a combination of $\int |\nabla_V^k X|^2 (1+|v|^2)^k\,d\mu$ for $k\leq a$; but then the same argument as above (iterated) yields the result. This works whatever the sign of $\kappa$.

If on the other hand $a<0$, assuming $\kappa>0$ one just needs consider the case in which $\kappa$ is an even integer; choosing $\kappa=0$, then it suffices to replicate the estimate for $\kappa=0$. So it all amounts to
\[ \int \Bigl| (1-\Delta_v^\mu)^{-1} \bigl[ X(1+|v|^2)\bigr] \Bigr|^2 \, e^{-\frac{|v|^2}{2}}\,dv
\leq C \int |X|^2\, e^{-\frac{|v|^2}{2}}\,dv. \]
It suffices to do it when $X$ is a real-valued function. Using $|v|^2= \sum v_i^2$ and symmetry, eventually it all boils down to
\begeq\label{allboilsdown}
\int_{\R^n} \Bigl| (1-\Delta_v^\mu)^{-1} \bigl[ f(v) v_1^2\bigr] \Bigr|^2 \, \mu(dv)
\leq C \int_{\R^n} f(v)^2\, \mu(dv).
\endeq

To prove \eqref{allboilsdown} one can use the spectral decomposition: Let $(\alpha_m)$ be the coefficients of $f$ in the basis of normalised Hermite polynomials, as in \eqref{Hermite}:
\[ f(v) = \sum_{m\in\N_0^n} \alpha_m \vphi_m(v). \]
Write $\ov{m} = (m_2,\ldots,m_n)$. It follows from \eqref{Hermite} and \eqref{Herm3} that
\[ v_1^2 \vphi_m(v) = \sqrt{(m_1+1)(m_1+2)}\, \vphi_{m_1+2,\ov{m}} + (2m+1) \vphi_m + \sqrt{m_1(m_1-1)}\, \vphi_{m_1-2,\ov{m}}.\]
Recall that the eigenvalue associated with $\vphi_m$ is $s=|m|$. So
\begin{multline*}
(1-\Delta_V^\mu)^{-1} (hv_1^2) \\
= \sum_{m\in\N_0^n} \alpha_m
\left( \frac{\sqrt{(m_1+1)(m_1+2)}}{1+|m|} \, \vphi_{m_1+2,\ov{m}}
+ \frac{(2m_1+1)}{1+|m|}\,\vphi_m + \frac{\sqrt{m_1(m_1-1)}1_{m_1\geq 2}}{1+|m|} \vphi_{m_1-2,\ov{m}}\right).
\end{multline*}
Changing summation indices, this is the same as
\begin{multline*} \sum_{m\in\N_0^n}
\left[ \alpha_{m_1-2,\ov{m}} \left( \sqrt{m_1 (m_1-1)}{|m|-1}\right) 1_{m_1\geq 2} 
+ \alpha_m \left(\frac{2m_1+1}{|m|+1} \right) \right. \\ \left. + \alpha_{m_1-2,\ov{m}} \left(\frac{\sqrt{(m_1+1)(m_1+2)}}{|m|+3}\right)\right]\,\vphi_m.
\end{multline*}
Since the basis $(\vphi_m)$ is orthonormal,
\begin{align*}
& \bigl\|(1-\Delta_V^\mu)^{-1} ( f v_1^2) \bigr\|_{L^2(\mu)}^2\\
& = \sum_{m\in \N_0^n} 
\left[ \alpha_{m_1-2,\ov{m}} \left( \frac{\sqrt{m_1 (m_1-1)}}{|m|-1}\right) 1_{m_1\geq 2} 
+ \alpha_m \left(\frac{2m_1+1}{|m|+1} \right) + \alpha_{m_1-2,\ov{m}} \left(\frac{\sqrt{(m_1+1)(m_1+2)}}{|m|+3}\right)\right]^2\\
& \leq 3 \sum_m \bigl( 2\times 1_{m_1\geq 2}\, \alpha_{m_1-2,\ov{m}}^2 + 4 \alpha_m^2 + \alpha_{m_1-2,\ov{m}}^2\bigr)\\
& \leq 21 \sum_m \alpha_m^2 = 21 \|f\|_{L^2(\mu)}^2,
\end{align*}
as needed. This concludes the argument also for negative regularity indices $\beta$.
\end{proof}

Combining Propositions \ref{propboundsxi} and \ref{propgradmom} further yields

\begin{Prop}[Weightless Gaussian estimates of commutators] \label{propxiboundnomom}
With the same notation as in Proposition \ref{propboundsxi}, for all $\alpha,\beta \in \R$, $k\in\N_0^n$, $s\in\R$, $\kappa\in\R$, 
\sm

(i) There is $C>0$ such that for all tensors $X$,
\begeq\label{xiboundnomom}
\bigl \||[\nabla_H^k (1-\Delta_H)^{s/2},\xi] X\bigr \|_{H^{\alpha,\beta}_\kappa(\mu)} \leq C\,\| X\|_{H^{\alpha+|k|+s-1,\beta+3}_\kappa(\mu)};
\endeq
and the same estimate holds if $\xi$ is replaced by $\Xi$, the geodesic generator on vector fields.
\sm

(ii) There is $C>0$ such that for all tensors $X$,
\begeq\label{Xiboundnabla}
\Bigl \|| \bigr|( [\nabla_V, \Xi] - \nabla_H\bigr) X\Bigr \|_{H^{\alpha,\beta}_\kappa(\mu)} \leq C\,\| X\|_{H^{\alpha,\beta+1}_\kappa(\mu)}.
\endeq
\end{Prop}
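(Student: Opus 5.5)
The plan is to combine the $\mu$-weighted commutator bounds of Proposition \ref{propboundsxi} with the moment-for-regularity trade of Proposition \ref{propgradmom}. In those commutator bounds every power of $|v|$ on the right-hand side becomes one extra vertical derivative.

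\emph{Part (i).} Start from \eqref{moregeneralxiHbound} in its Gaussian version. Proposition \ref{propboundsxi}(vi) asserts it for the volume measure, so first check that the same argument goes through verbatim in $L^2(\mu)$. The base case $k=1$ comes from Proposition \ref{propcomH}(ii): $[\nabla_H,\xi]$ is the bounded tensor ${\cal R}$ times $v\otimes v$, applied to $\nabla_V$, plus a zero-order term linear in $v$. Multiplication by these smooth coefficients of order $2$ is controlled by Proposition \ref{propboundsxi}(iii), which also holds for $\mu$. The induction via $[\nabla_H^k,\xi]=\sum_j\nabla_H^j[\nabla_H,\xi]\nabla_H^{k-1-j}$ and the interpolation/duality argument for $(1-\Delta_H)^{s/2}$ are unchanged. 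This yields
\[ \bigl\|[\nabla_H^k (1-\Delta_H)^{s/2},\xi]X\bigr\|_{H^{\alpha,\beta}_\kappa(\mu)}\le C\|X\|_{H^{\alpha+|k|+s-1,\beta+1}_{\kappa+2}(\mu)}. \]
Then apply Proposition \ref{propgradmom} with $s=2$, vertical index $\beta+1$ and weight $\kappa$. This converts $H^{\cdot,\beta+1}_{\kappa+2}(\mu)$ into $H^{\cdot,\beta+3}_\kappa(\mu)$, which is \eqref{xiboundnomom}. The restriction $\kappa\ge0$ when $\beta+1<0$ is a side condition to track. For negative $\beta$ one can either accept it or first reduce to $\kappa\geq 0$ by absorbing weights, since $(1-\Delta_H)$ commutes with $\<v\>$.

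For $\Xi$, use \eqref{moregeneralXiHbound}, which has the same right-hand side $H^{\alpha+|k|+s-1,\beta+1}_{\kappa+2}$, and conclude identically.

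\emph{Part (ii).} By \eqref{Xicom}, $[\nabla_V,\Xi]-\nabla_H$ is the zero-order operator $X\mapsto -(\<\Sigma'v,P_VX\>,0)$. Its coefficient is linear in $v$ and smooth in $x$, so it is a symbol of order $1$ in the sense of Proposition \ref{propboundsxi}(iii). This is \eqref{Xidiffboundnabla}, bounded by $\|X\|_{H^{\alpha,\beta}_{\kappa+1}(\mu)}$. One application of Proposition \ref{propgradmom} with $s=1$ gives $\|X\|_{H^{\alpha,\beta+1}_\kappa(\mu)}$.

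The main obstacle is ensuring that Proposition \ref{propboundsxi}(vi), stated there without the Gaussian clause, really holds in $L^2(\mu)$. The delicate point is the negative-$s$ and fractional interpolation step when $\Delta_V$ is replaced by $\Delta_V^\mu$ in the norm. Here one needs $(1-\Delta_H)$ to commute with $(1-\Delta_V^\mu)$ (Proposition \ref{propdhdxcom}) and with radial weights (Proposition \ref{basicLapinv}(ii)). The only noncommuting pieces are then multiplication operators of polynomial order, which Proposition \ref{propboundsxi}(iii) handles.
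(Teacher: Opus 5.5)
Your argument is correct. For $\beta\ge 0$ and for part (ii) it is exactly the paper's proof: Proposition~\ref{propboundsxi} followed by the moment--derivative trade of Proposition~\ref{propgradmom}.

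You diverge at negative vertical index in (i).

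\emph{The paper's route.} It reduces that case to the nonnegative one. It writes $f=(\Delta_V^\mu)^N h$ with $\beta+2N\ge 0$, and uses that $(\Delta_V^\mu)^N$ commutes with $\nabla_H$ and $\Delta_H$ but not with $\xi$. It pays with the double commutator $\bigl[(\Delta_V^\mu)^N,[\xi,(1-\Delta_H)^{s/2}\nabla_H^k]\bigr]$, controlled through $[\nabla_V,[\nabla_H,\xi]]=O(1)+O(|v|\nabla_V)$ and \eqref{ineqregv}.

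\emph{Your route.} You establish the Gaussian analogue of \eqref{moregeneralxiHbound} uniformly in $\beta$. This is legitimate. Its only non-horizontal ingredients are the multiplier bound of Proposition~\ref{propboundsxi}(iii) and the boundedness of $\nabla_V$ from $H^{\cdot,\beta+1}(\mu)$ to $H^{\cdot,\beta}(\mu)$. Both are valid at every vertical index in $L^2(\mu)$. So the genuinely $\beta$-sensitive step is this base case, not the horizontal interpolation/duality step you single out, which only involves operators commuting with everything vertical. After that, the only obstruction left at $\beta<0$ is the clause $\kappa\ge 0$ in Proposition~\ref{propgradmom}.

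\emph{Comparison.} Your route is more uniform and needs no double-commutator estimate. The paper's route only invokes single-commutator bounds at nonnegative vertical regularity, where they reduce to integer-derivative computations.

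\emph{Tightening the weight reduction.} ``Accepting'' the restriction would not prove the statement, which is asserted for all $\kappa\in\R$. Nor does $[1-\Delta_H,\langle v\rangle]=0$ alone suffice. What works is that the whole operator $[\nabla_H^k(1-\Delta_H)^{s/2},\xi]$ commutes with multiplication by any radial function of $v$:
\begin{itemize}
\item $\nabla_H$ and $\xi$ annihilate such functions (Proposition~\ref{basicinv}(ii));
\item $\Delta_H$ commutes with them (Proposition~\ref{basicLapinv}(ii));
\item $\Xi-\xi$ is a pure multiplication operator by \eqref{Xiexpl}.
\end{itemize}
Since $\langle v\rangle^\kappa$ sits inside the norms \eqref{fracwSobmu}, replacing $X$ by $\langle v\rangle^\kappa X$ reduces exactly to $\kappa=0$, where Proposition~\ref{propgradmom} holds at every vertical index.

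Your part (ii) needs the same reduction, and you do not mention it there. Proposition~\ref{propgradmom} with $s=1$ at vertical index $\beta<0$ carries the same clause. The fix is immediate there, because $[\nabla_V,\Xi]-\nabla_H$ is itself a multiplication operator.
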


\begin{proof}[Proof of Proposition \ref{propxiboundnomom}]
First consider $\xi$. If $\beta\geq 0$ this follows directly from Propositions \ref{propboundsxi} and \ref{propgradmom}. 
For $\beta<0$ let us commute the equation with $(\Delta^\mu_V)^N$: if $f = (\Delta^\mu_V)^N h$ then
\begeq\label{aftercomN}
(\Delta^\mu_V)^N \bigl[\xi,(1-\Delta_H)^s \nabla_H^k\bigr] h
= \bigl[\xi, (1-\Delta_H)^s \nabla^k\bigr] f +\bigl [ (\Delta_V^\mu)^N, [\xi, (1-\Delta_H)^s\nabla_H^k]\bigr]h. 
\endeq
Now from the commutation of $\Delta_V^\mu$ with $\Delta_H$ and $\nabla_H$,
\[  \bigl[ (\Delta_V^\mu)^N, [\xi, (1-\Delta_H)^s\nabla_H^k]\bigr] = \bigl[ (1-\Delta_H)^s\nabla_H^k, [(\Delta_V^\mu)^N, \xi]
\bigr].\]
But $v$ and $v\cdot\nabla_V$ both commute with $\nabla_H$. And according to Propositions \ref{propHV} and \ref{propcomH}, in $L^2(\mu)$
\[
[\nabla_V, [\nabla_H,\xi]] = O(1) + O (|v|\nabla_V),
\]
combining this with \eqref{ineqregv},
\[
[\nabla_V, [\nabla_H,\xi]] = O(1) + O (\nabla_V^2),
\]
More generally, by induction and reasoning first for integer powers, then by interpolation,
\[ \Bigl\| \bigl[ ( 1-\Delta_V^\mu)^N, [ (1-\Delta_H)^{s/2} \nabla_H^k, \xi] \bigr]  h\Bigr\|_{H^{\alpha,\beta}(\mu)}
\leq C \|h\|_{H^{\alpha+|k|+s-1,\beta+2N+1}}.\]
Thus the ``error'' coming from the last term in \eqref{aftercomN} is of order $\|f\|_{H^{\alpha+|k|+s-2,\beta+1}}$ and therefore controlled by the right-hand side in \eqref{propxiboundnomom}. This proves (i) for the operator $\xi$. With $\xi$ replaced by $\Xi$, the reasoning is similar, and extra terms appear to be of lower order.

Finally, for (ii) it suffices to use \eqref{Xicom} (or Proposition \ref{propboundsxi}(viii)) and Proposition \ref{propgradmom}.
\end{proof}

The third main result in this section is a mixed horizontal/vertical interpolation inequality which allows to reduce to either pure horizontal or pure vertical derivatives.

\begin{Prop}[Mixed horizontal-vertical $L^2$ interpolation inequality] \label{propmixinterp}
Let $\alpha,\beta,\kappa\geq 0$. Then for any $\alpha',\beta'\geq 0$ with $\frac{\alpha}{\alpha'} + \frac{\beta}{\beta'} \leq 1$,
there is a constant $C$ such that for any tensor $X$,
\begin{align}\label{interpaabb}
\|X\|_{H^{\alpha,\beta}_\kappa}
& \leq C \|X\|_{H^{\alpha',0}_\kappa}^{\frac{\alpha}{\alpha'}} \,\|X\|_{H^{0,\beta'}_\kappa}^{\frac{\beta}{\beta'}} \,\|X\|_{L^2_\kappa}^\theta\\
& \leq C \Bigl( \|X\|_{H^{\alpha',0}_\kappa} + \|X\|_{H^{0,\beta'}_\kappa} \Bigr)^{1-\theta} \|X\|_{L^2_\kappa}^\theta,
\end{align}
with
\begeq\label{thetaab}
\theta = 1- \left(\frac{\alpha}{\alpha'} + \frac{\beta}{\beta'}\right).
\endeq
A similar result holds if Sobolev spaces $H^{\alpha,\beta}_\kappa$ are replaced by homogeneous Sobolev spaces $\dot{H}^{\alpha,\beta}_\kappa$; or by Gaussian Sobolev spaces $H^{\alpha,\beta}_\kappa(\mu)$.
More generally, if $(\alpha,\beta)$ belongs in the triangle whose vertices are $(0,0)$, $(\alpha_0,\beta_0)$, $(\alpha_1,\beta_1)$ in $\R^2$,
then
\begin{align}\label{interpa0a1b0b1}
\|X\|_{H^{\alpha,\beta}_\kappa}
& \leq C \|X\|_{H^{\alpha_0,\beta_0}_\kappa}^{\theta_0} \,\|X\|_{H^{\alpha_1,\beta_1}_\kappa}^{\theta_1} \,\|X\|_{L^2_\kappa}^\theta\\
& \leq C \Bigl( \|X\|_{H^{\alpha_0,\beta_0}_\kappa} + \|X\|_{H^{\alpha_1,\beta_1}_\kappa} \Bigr)^{1-\theta} \|X\|_{L^2_\kappa}^\theta,
\end{align}
where $\theta_0$,$\theta_1$,$\theta$ are such that
\[ \alpha = \theta_0\alpha_0 + \theta_1\alpha_1, \qquad \beta =\theta_0\beta_0+\theta_1\beta_1,\qquad \theta = 1-(\theta_0+\theta_1).\]
\end{Prop}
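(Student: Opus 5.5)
The natural route is joint functional calculus, using the commutation of $\Delta_H$ and $\Delta_V$ from Proposition \ref{propdhdxcom}. Set $Y = X(1+|v|^2)^{\kappa/2}$; since the weight is applied before the operators in \eqref{fracwSob}, every norm in \eqref{interpaabb} is a norm of $Y$ alone. We have $\|X\|_{H^{\alpha,\beta}_\kappa} = \|(1-\Delta_H)^{\alpha/2}(1-\Delta_V)^{\beta/2}Y\|_{L^2}$, and the weight plays no further role. The spectral representation \eqref{spectralHV} then gives
\[ \|X\|_{H^{\alpha,\beta}_\kappa}^2 = \int_{\R_+^2} (1+\lambda_H)^{\alpha}(1+\lambda_V)^{\beta}\, \<\Pi(d\lambda_H\,d\lambda_V)Y,Y\>, \]
and the three norms on the right of \eqref{interpaabb} are the analogous integrals with weights $(1+\lambda_H)^{\alpha'}$, $(1+\lambda_V)^{\beta'}$ and $1$. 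Here $\<\Pi(\cdot)Y,Y\>$ is a finite nonnegative scalar measure $m_Y$.

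The key step is a three-term Hölder inequality with respect to $m_Y$. Set $p_1 = \alpha'/\alpha$, $p_2 = \beta'/\beta$, and $p_3 = 1/\theta$, with $\theta$ as in \eqref{thetaab}; then $1/p_1+1/p_2+1/p_3=1$. Write
\[ (1+\lambda_H)^\alpha(1+\lambda_V)^\beta = \bigl[(1+\lambda_H)^{\alpha'}\bigr]^{1/p_1}\bigl[(1+\lambda_V)^{\beta'}\bigr]^{1/p_2}\cdot 1^{1/p_3}. \]
Hölder then yields $\|X\|_{H^{\alpha,\beta}_\kappa}^2 \leq \|X\|_{H^{\alpha',0}_\kappa}^{2\alpha/\alpha'}\|X\|_{H^{0,\beta'}_\kappa}^{2\beta/\beta'}\|X\|_{L^2_\kappa}^{2\theta}$, which is the first inequality with $C=1$. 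The degenerate cases $\alpha=0$, $\beta=0$ or $\theta=0$ simply drop the corresponding factor. The second inequality follows from $a^{s}b^{t}\leq (a+b)^{s+t}$.

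The other versions go the same way.
\begin{itemize}
\item For the homogeneous spaces, replace $1+\lambda$ by $\lambda$.
\item For the Gaussian spaces, use the joint decomposition $\Pi^\mu$ of \eqref{spectralHVmu}, with $\Delta_V$ replaced by $\Delta_V^\mu$.
\item For tensors nothing changes, since the spectral calculus was stated for $\TIJ$-valued functions.
\end{itemize}

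For the triangle version \eqref{interpa0a1b0b1}, apply the same Hölder argument with exponents $1/\theta_0$, $1/\theta_1$, $1/\theta$ to the factorization
\[ (1+\lambda_H)^\alpha(1+\lambda_V)^\beta = \bigl[(1+\lambda_H)^{\alpha_0}(1+\lambda_V)^{\beta_0}\bigr]^{\theta_0}\bigl[(1+\lambda_H)^{\alpha_1}(1+\lambda_V)^{\beta_1}\bigr]^{\theta_1}\cdot 1^{\theta}. \]
This is valid since $\theta_0,\theta_1,\theta\geq0$ sum to $1$, which is exactly the condition that $(\alpha,\beta)$ lies in the triangle.

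The only genuinely delicate point, and thus the main obstacle, is the legitimacy of the joint spectral measure. This requires that the spectral projections of $\Delta_H$ and $\Delta_V$ (respectively $\Delta_V^\mu$) commute as unbounded self-adjoint operators, not merely on smooth functions. That is precisely Proposition \ref{propdhdxcom}, which I will invoke. I will also check that $Y$ lies in the relevant form domains: if the right-hand side is infinite there is nothing to prove, and otherwise the spectral integrals are finite by definition.
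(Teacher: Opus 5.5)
Your proof is correct and follows the paper's route. You absorb the weight into $Y$ to reduce to $\kappa=0$, invoke the joint spectral measure of $(-\Delta_H,-\Delta_V)$ (or $(-\Delta_H,-\Delta_V^\mu)$) justified by Proposition~\ref{propdhdxcom}, and apply H\"older to the spectral weights, handling the triangle case by the barycentric factorization; the only difference is that the paper uses a two-term H\"older with exponents $(\frac{1}{1-\theta},\frac{1}{\theta})$ followed by a pointwise Young inequality in $(\lambda_H,\lambda_V)$, landing on the sum form, whereas your three-term H\"older gives the product form with $C=1$ directly and the sum form as a trivial corollary.
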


\begin{Rk} In practise, to apply Proposition \ref{propmixinterp}, just balance the horizontal and vertical regularity indices to get the right exponents.
\end{Rk}

\begin{proof}[Proof of Proposition \ref{propmixinterp}]
It suffices to treat $\gamma=0$. The proof is based on the codiagonalisation formula \eqref{spectralHV} (or \eqref{spectralHVmu} for Gaussian weight). For a start let us consider \eqref{interpaabb}.
Applying successively H\"older's inequality with conjugate exponents
$(\frac1{1-\theta}, \frac1{\theta})$ and Young's inequality with
exponents $(\frac{(1-\theta)\alpha'}{\alpha}, \frac{(1-\theta)\beta'}{\beta})$,
\begin{align*}
& \iint \bigl |(1-\Delta_H)^{\alpha/2} (1-\Delta_V)^{\beta/2} X|^2  = \int (1+\lambda_H)^{\alpha/2}\,(1+\lambda_V)^{\beta/2}\, \bigl\< \Pi(d\lambda_H\,d\lambda_V) X, X\bigr\> \\
& \leq \left( \int (1+\lambda_H)^{\frac{\alpha}{2(1-\theta)}} (1+\lambda_V)^{\frac{\beta}{2(1-\theta)}}
\, \bigl\<\Pi(d\lambda_H\,d\lambda_V) X, X\bigr\>\right)^{1-\theta}
\left( \int \bigl\< \Pi(d\lambda_H\,d\lambda_V) X, X\bigr\>\right)^\theta\\
& \leq C(\alpha,\beta,\alpha',\beta')
\left(\int ((1+\lambda_H)^{\alpha'/2}\,
\bigl\<\Pi(d\lambda_H\,d\lambda_V) X, X\bigr\>\right)^{\frac{\alpha'}{\alpha}} 
\left(\int ((1+\lambda_V)^{\beta'/2}\,
\bigl\<\Pi(d\lambda_H\,d\lambda_V) X, X\bigr\>\right)^{\frac{\beta'}{\beta}}\, \\
& \qquad\qquad\qquad\qquad\qquad\qquad\qquad\qquad\qquad\qquad\qquad\qquad\qquad\qquad\qquad  \left(\int\bigl\<\Pi(d\lambda_H\,d\lambda_V) X, X\bigr\>\right)^{\theta}\\
& \leq C(\alpha,\beta,\alpha',\beta')
\left(\int ((1+\lambda_H)^{\alpha'/2} + (1+\lambda_V)^{\beta'/2})\,
\bigl\<\Pi(d\lambda_H\,d\lambda_V) X, X\bigr\>\right)^{1-\theta}
\left(\int\bigl\<\Pi(d\lambda_H\,d\lambda_V) X, X\bigr\>\right)^{\theta}\\
& = C(\alpha,\beta,\alpha',\beta')\, \Bigl( \bigl\|(1-\Delta_H)^{\alpha'/2} X\bigr\|_{L^2}^2 +
\bigl\|(1-\Delta_V)^{\beta'/2} X\bigr\|_{L^2}^2 \Bigr)^{1-\theta}\, \|X\|_{L^2}^{2\theta},
\end{align*}
and \eqref{interpaabb} follows. The proof of \eqref{interpa0a1b0b1} is quite similar, writing $(\alpha,\beta)$ as a barycenter of $(\alpha_0,\beta_0)$, $(\alpha_1,\beta_1)$ and $(0,0)$.
\end{proof}

\begin{Rk} Both the moment/derivative trade, Proposition \ref{propgradmom}, and the mixed interpolation inequality, Proposition \ref{propmixinterp}, are $L^2$ effects: they do not hold pointwise, so it is easy to construct counterexamples showing that they fail if $L^2$-Sobolev spaces are replaced by, say, $W^{\sigma,\infty}$ spaces. So one can guess that they do not hold in $W^{\sigma,p}$ either, at least with the same exponents. However it is still possible to interpolate between the $L^2$ estimate and the trivial estimate either in $L^1$ or in $L^\infty$, to find partial bound in Sobolev spaces $W^{\sigma,p}$ rather than $H^\sigma$. For instance
\begeq\label{Wptrade}
\|f\|_{W^{(\alpha,\beta),p}_{\kappa+s}(\mu)} \leq C \|f\|_{W^{(\alpha,\beta+s\theta),p}_{\kappa(1-\theta)} (\mu)}\qquad\qquad
 \theta = \frac2{\max(p,p')}.
\endeq
\end{Rk}

\bibnotes

Proposition \ref{propgradmom} is well-known in Euclidean space, at least for $\beta\geq 0$, and Proposition \ref{propmixinterp} is probably well-known too; both were applied in my memoir \cite{vill:hypoco} to study the global regularity of the kinetic Fokker--Planck equation in Euclidean space. The adaptation to geometric context is from Debbasch--Ollivier--Villani~\cite{DOV:preprint}.

\section{Gaussian tail Poincar\'e inequalities}

Proposition \ref{propgradmom} will recur several times in the present work, and deserves a name:

\begin{Def}[Gaussian tail Poincar\'e estimate] \label{defGTP}
A measure $\nu$ on $\TM$ is said to satisfy a Gaussian tail Poincar\'e estimate with constant $K>0$, $\GTP(K)$, if for all $C^1$ functions $\vphi:\TM\to\R$,
\begeq\label{GTPineq}
\iint_{\TM} |v|^2 \vphi(x,v)^2\, \nu(dx\,dv) \leq \frac1{K} \left[ \iint_{\TM} |\nabla_V\vphi(x,v)|^2\, \nu (dx\,dv) + \iint_{\TM} \vphi(x,v)^2\,\nu(dx\,dv)\right].
\endeq
\end{Def}

\begin{Rks} \begin{itemize}
\item[(i)] This inequality is only about the tail behaviour of $\nu$ as $|v|\to\infty$, it says nothing about the spectral properties of $\nu$. Choosing $\nu = F(|v|) e^{-|v|^2}\,dv\,dx$ with $F$ vanishing near $|v|=0$ shows that $\nu$ may satisfy \eqref{GTPineq} but no Poincar\'e (spectral gap) inequality.

\item[(ii)] Choosing $\vphi(v) = w(x) \max (e^{\var |v|^2/2}, A)$ with $0<\var<K/2$, writing down \eqref{GTPineq}, simplifying and letting $A\to\infty$, it is easy to conclude that $\int e^{\var |v|^2/2} \nu(dv|x)$ is finite, and uniformly bounded in $x$, where $\nu(dv|x)$ stands for probability measure obtained by conditioning $\nu$ with respect to $x$. In other words, \eqref{GTPineq} is possible only if $\nu$ decays at least like a Gaussian in the $v$ variable, uniformly in $x$.

\item[(iii)] Choose $\nu(dx\,dv) = F(|v|)\,dv$ and let $F$ be a combination of Gaussians: $F(v)=e^{-\alpha|v|^2/2}$ when $|v| \in I_m = [4m-1, 4m+1]$ and $F(v)=e^{-\beta|v|^2/2}$ when $|v|\in I'_m= [4m+1,4m+3]$, $m\in\N$, where $\alpha<\beta$. Let $\vphi$ be a mollified step function, equal to~1 on $I_m$ for some $m$, to~0 away from $I_m$, and whose entire variation is contained in $I'_{m-1}$ and $I'_{m}$. If $R=4m$, then as $R\to\infty$ the left hand side in \eqref{GTPineq} looks like $R^2 e^{-\alpha R^2/2}$ and the right hand side like $R^2 e^{-\beta R^2/2}$; so the inequality cannot be true as $R\to\infty$ (recall that $\alpha<\beta$). This shows that no bound bearing only on the value of the density of $\nu$ can be sufficient for the Gaussian tail Poincar\'e inequality. In effect, \eqref{GTPineq} appears to be a {\bf first-order condition}, like the usual Poincar\'e inequality.

\item[(iv)] Applying this inequality to $\vphi(x,v) = |X(x,v)|$, where $X$ is a tensor, and using $|\nabla_V|X|| \leq |\nabla_VX|$, shows that \eqref{GTPineq} applies the same for any tensor in place of $\vphi$, save possibly for the multiplication by a constant depending on $n$.
\end{itemize}
\end{Rks}

To summarise: If the usual Poincar\'e inequality requires at least log linear decay (say $\exp(-|v|)$ at infinity), the Gaussian tail Poincar\'e inequality requires something like a log quadratic decay at infinity (Remark (ii) above), and any criterion has to involve the density but also its first order variation (Remark (iii)). The following criterion, of uniform log concavity type, will identify a large class of measures satisfying the Gaussian tail Poincar\'e inequality.

\begin{Prop}[Gaussian tail Poincar\'e from uniform log concavity] \label{GTPulc} Let $\nu (dx\,dv) = f(x,v)\,dx\,dv$ with
\begeq\label{condlogconc}
\forall (x,v)\in\TM,\qquad v\cdot\nabla_V \log f(x,v) \leq -\alpha|v|^2 + A
\endeq
for some $\alpha,A>0$. Then $\nu$ satisfies $\GTP(K)$ for some $K=K(n,\alpha,A)>0$.
\end{Prop}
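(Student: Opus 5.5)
The plan is to mimic the integration by parts used in the proof of Proposition \ref{propgradmom}, now with the general density $f$ in place of the Gaussian. The computation is fiberwise, so I fix $x$ and work in the Euclidean space $T_xM$ with volume $\vol_x(dv)$. Afterwards I integrate in $x$, which is legitimate since the constants will not depend on $x$. The natural identity to start from is the vertical divergence of the vector field $\vphi^2 f\, v$, which by \eqref{intdivV} integrates to zero over $T_xM$ (for $\vphi$ with compact support in $v$, then by density). This gives
\[
0=\int_{T_xM} \nabla_V\cdot(\vphi^2 f v)\,\vol_x(dv) = \int \Bigl( 2\vphi\,\nabla_V\vphi\cdot v + n\vphi^2 + \vphi^2\, v\cdot\nabla_V\log f\Bigr) f\,\vol_x(dv),
\]
using $\nabla_V\cdot v = n$ and $v\cdot\nabla_V f = f\, v\cdot\nabla_V\log f$.

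Next I insert hypothesis \eqref{condlogconc}. It yields $-\int \vphi^2\, v\cdot\nabla_V\log f\, f \geq \alpha\int |v|^2\vphi^2 f - A\int\vphi^2 f$. Combining this with the identity above,
\[
\alpha \int |v|^2\vphi^2\,d\nu_x \leq (n+A)\int\vphi^2\,d\nu_x + 2\int |\vphi|\,|v|\,|\nabla_V\vphi|\,d\nu_x ,
\]
where $\nu_x = f(x,\cdot)\,\vol_x(dv)$. I bound the last term by Young's inequality, $2ab\leq \frac{\alpha}{2}a^2 + \frac{2}{\alpha}b^2$ with $a=|v||\vphi|$ and $b=|\nabla_V\vphi|$. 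Absorbing the $|v|^2\vphi^2$ part into the left side gives
\[
\frac{\alpha}2\int|v|^2\vphi^2\,d\nu_x\leq \frac2\alpha\int|\nabla_V\vphi|^2\,d\nu_x+(n+A)\int\vphi^2\,d\nu_x .
\]
Integrating in $x$ then gives \eqref{GTPineq} with $K^{-1}=\max\bigl(4/\alpha^2,\,2(n+A)/\alpha\bigr)$, which depends only on $(n,\alpha,A)$.

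The main obstacle is justifying the absorption and the vanishing of the boundary terms. The quantity $\int|v|^2\vphi^2\,d\nu$ must be known to be finite before it can be absorbed, and the divergence identity needs decay at infinity. I would handle both by first proving the inequality for $\vphi\chi_R$, with $\chi_R(v)=\chi(|v|/R)$ a radial cutoff. Since $\chi_R$ is a function of $|v|$ alone, its vertical gradient is bounded by $C/R$ and is harmless. Then I let $R\to\infty$: monotone convergence handles the left side, and dominated convergence handles the right side, assuming its right-hand side is finite (otherwise there is nothing to prove). A minor point is to make sure $f$ is $C^1$ in $v$ and positive where needed so that $\log f$ makes sense; this is implicit in \eqref{condlogconc}, and one can simply write $v\cdot\nabla_V f\leq(-\alpha|v|^2+A)f$ to avoid dividing by $f$.
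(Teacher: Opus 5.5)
Your proof is correct and is essentially the paper's argument. The paper writes $f=h\gamma$ with $\gamma$ the standard Gaussian and integrates $-\varphi^2 h\, v\cdot\nabla_V\gamma$ by parts; after cancelling the $|v|^2\varphi^2 f$ terms this is exactly your divergence identity for $\varphi^2 f v$, and the same Young step gives the same constant $K^{-1}=\max\bigl(4/\alpha^2,\,2(n+A)/\alpha\bigr)$, while your cutoff argument additionally justifies the finiteness of $\int |v|^2\varphi^2\,d\nu$ needed for the absorption, which the paper leaves implicit.
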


\begin{proof}[Proof of Proposition \ref{GTPulc}]
Let $\gamma(v) = (2\pi)^{-n/2}\, e^{-|v|^2/2}$ and $h=f/\gamma$. Then
\begin{align} \label{vnvlh}
v\cdot\nabla_v\log h & = v\cdot\nabla_V \log f - v \cdot\nabla_V \log \gamma \\ \nonumber
& = v\cdot\nabla_V \log f + |v|^2\\ \nonumber
& \leq (1-\alpha)|v|^2 + A.
\end{align}
Then, with the reference measure $dx\,dv$, using $\nabla\gamma = -v\gamma$,
\begin{align*}
\iint |v|^2 \vphi^2\, f & = \iint |v|^2 \vphi^2\, h\gamma \\
& = \iint \vphi^2 h v\cdot (v\gamma) \\
& = - \iint \vphi^2 hv\cdot\nabla_V\gamma \\
& = \iint (v\cdot\nabla_V\vphi^2) h\gamma + \iint \vphi^2 v\cdot\nabla_V h \gamma + n \iint \vphi^2 h\gamma \\
& = 2 \iint (\vphi v\cdot\nabla_V\vphi) h\gamma + \iint \vphi^2 v\cdot\nabla_V (\log h) h\gamma + n\iint \vphi^2 h\gamma\\
& \leq 2 \sqrt{ \iint |v|^2 \vphi^2 h\gamma} \sqrt{\iint |\nabla_V\vphi|^2 h\gamma} 
+ (1-\alpha) \iint \vphi^2 |v|^2 h\gamma + (A+n) \iint \vphi^2 h\gamma\\
& \leq \left(1-\frac{\alpha}2\right) \iint \vphi^2 |v|^2 h\gamma
+ \frac2{\alpha} \iint |\nabla_V\vphi|^2 h\gamma + (A+n) \iint \vphi^2 h\gamma,
\end{align*}
where Young's inequality was used. Thus, recalling $h\gamma=f$,
\[ \iint |v|^2 \vphi^2\, f \leq \frac2{\alpha} \left[ \frac2{\alpha} \iint |\nabla_V\vphi|^2 f + (A+n) \iint \vphi^2 f\right]. \]
\end{proof}

\bibnotes

I am not aware of earlier references for Inequality \ref{GTPineq}, but it has certainly been treated before. A much more studied problem is  the usual Poincar\'e inequality which will come in Proposition \ref{poincare}. For the latter, a rather general criterion is $|\nabla\log f|^2/2 - \Delta \log f \to +\infty$ at infinity, which allows for essentially any log superlinear decay \cite[Theorem A.1]{vill:hypoco}.

\section{Anisotropic Nash-type interpolation inequality} \label{secnash}

When the reference measure is Gaussian, Proposition \ref{propgradmom} allows to control moments by vertical derivatives.
But when the reference is the Liouville measure $dx\,dv$, things are not so easy.
However, one can still control weighted derivatives by interpolating with higher order regularity and higher-order $L^1$ moments.
This strategy is reminiscent of Nash's inequality, which controls $L^2$ norm through higher regularity and $L^1$ norm: say, in $\R^n$,
\begeq\label{orignash} \|f\|_{L^2(\R^n)} \leq C\, \|f\|_{L^1(\R^n)}^\theta\, \|\nabla f\|_{L^2(\R^n)}^{1-\theta}, \qquad \theta = \frac2{n+2}.\endeq
The challenge now is to add kinetic weights, and replace the single variable $x$ by the pair of variables $(x,v)$. 
The resulting estimate is ``anisotropic'' in the sense that the horizontal and vertical regularity indices do not necessarily coincide. (It could also be dubbed ``kinetic Nash interpolation inequality''.)

\begin{Thm}[Anisotropic Nash-type inequality on the tangent bundle] \label{propnashinterp}
Let $\alpha,\beta,\alpha',\beta'$ be nonnegative integers such that
\[ \frac{\alpha}{\alpha'} + \frac{\beta}{\beta'} <1, \qquad \alpha'<\beta',\qquad
\alpha+\beta < \beta',\]
and let
\begeq\label{thetabar} \ov{\theta} = \frac{1 - \left(\frac{\alpha}{\alpha'} + \frac{\beta}{\beta'}\right)}
{1 + \frac{n}{2} \left(\frac{1}{\alpha'} + \frac1{\beta'}\right)}.
\endeq
Then there is $\underline{\theta}<\ov{\theta}$ such that for any $\theta \in (\underline{\theta},\ov{\theta})$,
any nonnegative integer $\kappa$, there are $\sigma>0$ and $C>0$ such that for any function $f:\TM\to\R_+$ with $\iint f=1$,
\begeq\label{ineqnashinterp}
\|f\|_{H^{\alpha,\beta}_\kappa} \leq C\, \|f\|_{L^1_\sigma} \, \bigl(\|f\|_{H^{\alpha',0}} + \|f\|_{H^{0,\beta'}}\bigr)^{1-\theta}. 
\endeq
Here the moment index $\sigma$ only depends on the dimension and regularity involved, as well as $\theta$:
\[ \sigma = \sigma(n,\alpha,\alpha',\beta,\beta',\kappa,\theta,\underline{\theta},\ov{\theta}), \]
and the constant $C$ only depends on $M,\alpha,\alpha',\beta,\beta',\kappa,\theta,\sigma$.
\end{Thm}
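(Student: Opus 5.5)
The plan is to reduce everything to the two extremal quantities $\|f\|_{L^2_{\kappa'}}$ (a weighted $L^2$ norm, no derivatives) and $\|f\|_{H^{\alpha',0}}+\|f\|_{H^{0,\beta'}}$ (pure derivatives, no weight). Then I would close a Nash-type argument on a localized version of $f$.

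\emph{Step 1: trade the mixed norm for pure norms.} I first apply Proposition~\ref{propmixinterp} (in its weighted form) to get
\[ \|f\|_{H^{\alpha,\beta}_\kappa} \leq C\bigl(\|f\|_{H^{\alpha',0}_\kappa}+\|f\|_{H^{0,\beta'}_\kappa}\bigr)^{1-\theta_0}\|f\|_{L^2_\kappa}^{\theta_0}, \qquad \theta_0=1-\frac{\alpha}{\alpha'}-\frac{\beta}{\beta'}>0. \]
The weight $\kappa$ still sits on the high-derivative norms, so I remove it by Proposition~\ref{propinterp}. I interpolate $H^{\alpha',0}_\kappa$ between $H^{\alpha'',0}_0$ with $\alpha''$ slightly above $\alpha'$ and $L^2_{\kappa_1}$ with $\kappa_1$ large, and do the same in the vertical direction. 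Since $\alpha,\beta,\alpha',\beta'$ are integers, I would instead shrink $\alpha',\beta'$ slightly, i.e. apply Step~1 with $(\alpha'-\varepsilon,\beta'-\varepsilon)$ as the top indices. This costs an arbitrarily small loss in the exponent, which is why the statement only gives $\theta<\ov\theta$ and not $\theta=\ov\theta$. The conditions $\alpha'<\beta'$ and $\alpha+\beta<\beta'$ should guarantee enough room to perform this shrinking while keeping $\frac{\alpha}{\alpha'}+\frac{\beta}{\beta'}<1$. After this step I expect
\[ \|f\|_{H^{\alpha,\beta}_\kappa}\le C\,\Bigl(\|f\|_{H^{\alpha',0}}+\|f\|_{H^{0,\beta'}}\Bigr)^{1-\theta_1}\,\|f\|_{L^2_{\kappa_2}}^{\theta_1}, \]
with $\theta_1$ close to $\theta_0$ and $\kappa_2$ large.

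\emph{Step 2: anisotropic Nash for the weighted $L^2$ norm.} This is the heart of the proof. I need
\[ \|f\|_{L^2_{\kappa_2}}\le C\,\|f\|_{L^1_\sigma}^{a}\,\bigl(\|f\|_{H^{\alpha',0}}+\|f\|_{H^{0,\beta'}}\bigr)^{1-a}, \qquad a\approx \frac{1}{1+\frac n2(\frac1{\alpha'}+\frac1{\beta'})}. \]
To get it, I split the phase space with the dyadic partition \eqref{smooth1} in $|v|$, writing $f_\ell=f\chi_\ell$. On each piece the weight $\langle v\rangle^{\kappa_2}\simeq 2^{\ell\kappa_2}$ is pulled out, so it suffices to prove an unweighted anisotropic Nash inequality for $f_\ell$. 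I would prove that one with the joint spectral decomposition \eqref{spectralHV}, splitting at $\lambda_H\le R^{2/\alpha'}$, $\lambda_V\le R^{2/\beta'}$ versus the complement:
\begin{itemize}
\item the high part is bounded by $R^{-2}(\|f\|_{H^{\alpha',0}}^2+\|f\|_{H^{0,\beta'}}^2)$;
\item the low part is bounded by $\|P_{\le}\|_{L^1\to L^2}^2\|f\|_{L^1}^2$.
\end{itemize}
For the operator norm I need an on-diagonal kernel bound of order $R^{\frac n{\alpha'}+\frac n{\beta'}}$. Here $\frac n{\alpha'}$ comes from a Weyl/Sobolev-type local bound for $\Delta_H$ on $M$, and $\frac n{\beta'}$ comes from the Euclidean fibre. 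I would obtain this bound from Sobolev embedding in charts (Proposition~\ref{propsobemb}, done anisotropically via heat kernels $e^{t\Delta_H}e^{s\Delta_V}$, which commute by Proposition~\ref{propdhdxcom}). Optimizing in $R$ gives the exponent $a$. Commutators of $\chi_\ell$ with derivatives produce lower-order terms, handled by Proposition~\ref{propboundsxi}(iii). The sum over $\ell$ converges after spending extra moments $\|f\|_{L^1_\sigma}$, since $\|f_\ell\|_{L^1}\le 2^{-\ell\sigma}\|f\|_{L^1_\sigma}$. This $L^1$ localization is what forces $\sigma$ large.

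\emph{Step 3: combine.} Multiplying the exponents gives $\theta=\theta_1 a$, which approaches $\ov\theta$ from \eqref{thetabar} as $\varepsilon\to0$. Using $\|f\|_{L^1}=1\le\|f\|_{L^1_\sigma}$, I can raise the power of $\|f\|_{L^1_\sigma}$ to $1$, since extra powers of it are harmless. A smaller $\theta$ is obtained by the same device, which explains the interval $(\underline\theta,\ov\theta)$.

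The main obstacle is the kernel bound in Step~2 for the joint low-frequency projector of $(\Delta_H,\Delta_V)$. $\Delta_H$ is not elliptic on $\TM$ and mixes $v$-derivatives with growing coefficients, so uniform-in-$v$ local estimates are delicate. I would first try to get the $L^1\to L^\infty$ bound for the product semigroup $e^{t\Delta_H}e^{s\Delta_V}\lesssim t^{-n/2}s^{-n/2}$ on each dyadic shell. On a shell $|v|\sim2^\ell$ the cost is a polynomial factor $2^{C\ell}$, which is again absorbed by moments.
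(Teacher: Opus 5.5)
\paragraph{The key step of Step 2 is not established.} Your route is intrinsic and genuinely different from the paper's. Its load-bearing step is the bound
\[ \bigl\|\Pi[\lambda_H\le A,\lambda_V\le B]\bigr\|_{L^1\to L^2}=O\bigl(A^{n/4}B^{n/4}\bigr) \]
for the joint low-frequency projector. This is exactly \eqref{jointPin} of Remark (iv) after the statement, which the paper explicitly says it cannot rely on. You flag it as ``the main obstacle'' but do not prove it.

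The tool you name for it, Proposition~\ref{propsobemb}, cannot supply it. That embedding is isotropic and costs $n+1$ derivatives and $n+1$ powers of $|v|$, so it gives the wrong exponent, not $n/\alpha'+n/\beta'$.

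The paper sidesteps this as follows. It localises with charts and converts $\nabla_H$ into $\partial_x$ at the price of $|v|\,\nabla_v$ terms. It then applies the flat Lemmas~\ref{lem1nash}--\ref{lem2nash}, proved by Fourier analysis on $U\times\R^n$ with $|\nabla_\eta^\gamma\tilde f|\le C\iint f\,|v|^{|\gamma|}$; this is where $f\ge0$ is used. The hypotheses $\alpha'<\beta'$ and $\alpha+\beta<\beta'$ come from that conversion: they absorb the terms $\iint|v|^{\alpha'}|\nabla_V^{\alpha'}f|^2$ and $\iint|\nabla_v^{\alpha+\beta}f|^2(1+|v|^2)^{\beta+\gamma}$. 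They play no role in your $\varepsilon$-shrinking, which only needs the strict inequality $\alpha/\alpha'+\beta/\beta'<1$.

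\paragraph{How to close the gap.} The missing bound can be proved directly, and with no shell-dependent cost.
\begin{itemize}
\item The semigroup $e^{t\Delta_H}$ is Markovian, since its Dirichlet form is $\iint|\nabla_H f|^2$ and $\nabla_H$ is a derivation.
\item It commutes with multiplication by radial functions of $v$, and on functions of $x$ alone it acts as $e^{t\Delta_x}$ (Propositions~\ref{basicinv} and \ref{basicLapinv}).
\item Hence $|e^{t\Delta_H}F|\le e^{t\Delta_x}\Phi$ with $\Phi(x)=\sup_w|F(x,w)|$.
\item Take $F=e^{s\Delta_V}G$, the Euclidean heat flow in each fibre. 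Then $\Phi(x)\le(4\pi s)^{-n/2}\|G(x,\cdot)\|_{L^1(T_xM)}$.
\item Combined with the on-diagonal heat kernel bound on compact $M$, this gives $\|e^{t\Delta_H}e^{s\Delta_V}\|_{L^1\to L^\infty}\le C\,t^{-n/2}s^{-n/2}$ for $0<t\le1$.
\item By the commutation of Proposition~\ref{propdhdxcom}, $\|e^{t\Delta_H}e^{s\Delta_V}\|_{L^1\to L^2}^2\le\|e^{2t\Delta_H}e^{2s\Delta_V}\|_{L^1\to L^\infty}$.
\item Write $\Pi[\lambda_H\le A,\lambda_V\le B]$ as $e^{\Delta_H/A}e^{\Delta_V/B}$ composed with a spectral multiplier of $L^2$-norm at most $e^2$. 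This yields \eqref{jointPin} with $N=n$.
\end{itemize}
Radial cutoffs $\chi_\ell$ even commute with $\nabla_H$, so the horizontal part of your dyadic localisation costs nothing.

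\paragraph{What the completed route gives.} With this input your Steps 1--3 go through, and they give more than the theorem. For $\kappa=0$ you get the sharp bound \eqref{nashplat} on $\TM$. For $\kappa>0$ you get any $\theta<\ov\theta$ with real indices, without the restrictions $\alpha'<\beta'$ and $\alpha+\beta<\beta'$. The price is this semigroup and functional-calculus input, which the paper's elementary Fourier route avoids.
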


\begin{Rks} \label{rknashplat} 
\begin{itemize}
\item[(i)] The exponent $\ov{\theta}$ is natural in this problem and plausibly optimal, playing the role of $2/(n+2)$ in the classical Nash inequality. In the case $M=\R^n$, $\kappa=0$ one can prove actually
\begeq\label{nashplat}
\|f\|_{H^{\alpha,\beta}} \leq C \,\|f\|_{L^1}^{\ov{\theta}} \, \bigl(\|f\|_{H^{\alpha',0}} + \|f\|_{H^{0,\beta'}}\bigr)^{1-\ov{\theta}}. 
\endeq
Theorem \ref{propnashinterp} fails to achieve this neat bound, but shows that one can get arbitrarily close to it (in terms of $\theta$),
if one is ready to use arbitrarily large moments ($\sigma\to\infty$ as $\theta\to\ov{\theta}$).
\sm

\item[(ii)] Estimate \eqref{ineqnashinterp} should be improved into a homogeneous inequality, without the condition $\int f =1$. From the proof one can easily improve the $L^1$ factor $\|f\|_{L^1_\sigma}$ into $\|f\|_{L^1_\sigma}^\nu$ for some $\nu>0$ (possibly $\nu=\theta$), or replace the right hand side by a sum of homogeneous terms with various exponents $\theta$; but keeping track of those exponents is cumbersome and would not at this stage yield any notable improvement.
\sm 

\item[(iii)] Theorem \ref{propnashinterp} is stated and proven with integer indices $\alpha,\alpha',\beta,\beta'$. By interpolation this range can be widened. However, due to the constraints in the assumptions, it is not clear that the whole range of noninteger indices can be achieved. This is in contrast with the Euclidean inequality \eqref{nashplat} which can be proven for integer as well as noninteger indices. In the sequel this will not be a limitation, as I shall only need integer exponents for applications of Theorem \ref{propnashinterp}, for two different sets of parameters: (a) $\alpha=m-1$, $\alpha'=m$, $\beta=1$, $\beta'=3m$; (b) $\alpha=0$, $\alpha'=m$, $\beta=3m$, $\beta'=3m+1$ ($m\in\N$ in both cases). But for the sake of consistency, and future use, it is desirable to remove these restrictions.
\sm

\item[(iv)] A key to adapt the proof of \eqref{nashplat} and make Theorem \ref{propnashinterp} neater would be a spectral estimate of the form
\begeq\label{jointPin}
 \Bigl\| \Pi \bigl[\lambda_H \leq A, \, \lambda_V\leq B\bigr ] \Bigr\|_{L^1\to L^2} = O(A^{n/4} B^{N/4}),
 \endeq
where $\Pi$ is the spectral measure appearing in \eqref{spectralHV}. In the case of $\R^n$ this is true with $N=n$ exactly, and that is sufficient to arrive at \eqref{nashplat} with exponent $\ov{\theta}$. Considering that the analysis of $\Delta_V$ at any $x$ is the same as that of $\Delta$ on $\R^n$, and that Weyl's estimates on the spectral measure for $\Delta_x$ depend only on dimension and volume, it is not absurd to conjecture \eqref{jointPin}, also with $N=n$; this would be a kind of {\em tangent bundle variant of Weyl's estimates}. It may also be that there are geometric subtleties. Unable to rely on \eqref{jointPin} at this stage, my argument towards \eqref{ineqnashinterp} will instead be definitely extrinsic, using charts.
\sm

\item[(v)] For sure there are also more general inequalities with mixed terms in the right hand side of \eqref{ineqnashinterp}, similar to \eqref{interpa0a1b0b1}.
\end{itemize}
\end{Rks}

I will start the proof of Theorem \ref{propnashinterp} by a couple of lemmas set in ``flat'' geometry. In the sequel, $D_v= \sqrt{-\Delta_v}$, $D_x = \sqrt{-\Delta_x}$, and I will omit the volume measure $dx\,dv$.

\begin{Lem} \label{lem1nash}
Let $U$ be a bounded smooth open subset of $\R^n$ and $f=f(x,v)\geq 0$ be a nonnegative real-valued function on $U\times\R^n$, vanishing for $x$ near $\pa U$.
If $\alpha,\alpha',\beta,\beta'$ are nonnegative real numbers with $\alpha'>\alpha$, $\beta'>\beta$,
and $\kappa$ is a nonnegative integer then
\begeq\label{Dxlem1} 
\iint |D_x^\alpha f|^2\, (1+|v|^2)^\kappa
\leq C \left( \iint f(x,v)\, (1+|v|^\sigma)\right)^{2\delta} \left( 1+ \iint f^2 + \iint |D_x^{\alpha'} f|^2 + \iint |D_v^{\beta'}f|^2 \right)^{1-\delta},
\endeq
\begeq\label{Dplem1} 
\iint |D_v^\beta f|^2\, (1+|v|^2)^\kappa
\leq C \left( \iint f(x,v)\, (1+|v|^\sigma)\right)^{2\lambda}  \left(1+ \iint f^2 + \iint |D_x^{\alpha'} f|^2 + \iint |D_v^{\beta'}f|^2 \right)^{1-\lambda}
\endeq
for $\sigma$ large enough; and
\[ \delta = \left( 1- \frac{\alpha}{\alpha'}\right)\,
\left[ \frac{\min(\alpha',\beta')}{n+\min(\alpha',\beta')}\right],\qquad
\lambda = \left( 1- \frac{\beta}{\beta'}\right)\,
\left[ \frac{\min(\alpha',\beta')}{n+\min(\alpha',\beta')}\right].\]
In particular, $\delta,\lambda\longrightarrow 0$ as $\alpha\to\alpha'$, $\beta\to\beta'$.
\end{Lem}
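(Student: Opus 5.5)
Both inequalities will be proved by the same scheme: a Nash-type Fourier splitting in the flat variables $(x,v)\in\R^n\times\R^n$, with the kinetic weight absorbed at the very end by a second interpolation against moments. Extend $f$ by zero outside $U$, which is legitimate since $f$ vanishes near $\pa U$, so all Fourier computations take place on $\R^{2n}$. Write $m=\min(\alpha',\beta')$. I first treat $\kappa=0$, i.e. the unweighted quantity $\|D_x^\alpha f\|_{L^2}^2$.

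For the unweighted bound, split frequency space $(\zeta,\eta)$ (dual to $(x,v)$) into the box $B_R=\{|\zeta|\le R,\ |\eta|\le R\}$ and its complement. On the complement, either $|\zeta|>R$, where $|\zeta|^{2\alpha}\le R^{-2(\alpha'-\alpha)}|\zeta|^{2\alpha'}$, or $|\eta|>R$, where $|\eta|^{2\beta'}/R^{2\beta'}\ge 1$. The high-frequency part is therefore bounded by
\[ R^{2\alpha}\bigl(R^{-2\alpha'}\|D_x^{\alpha'}f\|^2+R^{-2\beta'}\|D_v^{\beta'}f\|^2\bigr)\lesssim R^{2\alpha-2m}Q, \]
where $Q=1+\iint f^2+\|D_x^{\alpha'}f\|^2+\|D_v^{\beta'}f\|^2$ and we take $R\ge1$. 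On $B_R$, use $|\hat f|\le\|f\|_{L^1}$ to bound the low-frequency part by $R^{2\alpha}R^{2n}\|f\|_{L^1}^2$. Balancing the two pieces gives
\[ \|D_x^\alpha f\|^2\lesssim \|f\|_{L^1}^{2\epsilon}Q^{1-\epsilon}\quad\text{with}\quad 1-\epsilon=\frac{n+\alpha}{n+m}. \]
The vertical estimate follows the same way with $\beta$ in place of $\alpha$. Here $\epsilon=(m-\alpha)/(n+m)$, which is at least $\delta$ once one checks that $(1-\alpha/\alpha')\,m\le m-\alpha$ when $m\le\alpha'$. Since $Q\ge1$ and $\|f\|_{L^1}\lesssim$ the moment norm, we can then lower the exponent from $\epsilon$ to the claimed $\delta$ or $\lambda$.

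To insert the weight $(1+|v|^2)^\kappa$, I plan two routes. The first is Hölder on the Fourier side, applied to $g=\langle v\rangle^\kappa f$. Its derivatives produce commutator terms $D_v^j$ acting on $f$ with lower weights; these are handled by induction on $\kappa$. The cost is that the $L^1$ norm becomes $\|f\|_{L^1_\kappa}$, while the high-frequency part requires bounds on $\|\langle v\rangle^\kappa D^{\alpha'}f\|$, which we do not have. The second route, which I prefer, interpolates the weighted quantity directly. By Cauchy--Schwarz, $\iint |D_x^\alpha f|^2\langle v\rangle^{2\kappa}\le \|D_x^\alpha f\|_{L^2_{2\kappa/t}}^{2t}\|D_x^\alpha f\|_{L^2}^{2(1-t)}$. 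The second factor is controlled by the unweighted step, applied at a slightly higher index $\alpha<\tilde\alpha<\alpha'$. The first factor, with its heavy weight, is controlled by interpolating between $L^1$ moments and the $\tilde\alpha$-regularity: this combines the pointwise bound $|\hat{(\langle v\rangle^{s}f)}|\le\|f\|_{L^1_s}$ with Plancherel and Proposition \ref{propinterp}-type interpolation in flat space. Choosing $t$ small makes this factor cost only a small power of $Q$ and a large moment $\sigma$. The exponent therefore degrades from $\epsilon$ to something still $\ge\delta$, because $(1-\alpha/\alpha')\,m/(n+m)$ leaves room below $(m-\alpha)/(n+m)$ when $\alpha>0$. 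When $\alpha=0$ the bound $\delta=m/(n+m)$ is attained exactly, and the slack must come from the moment side instead.

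The main obstacle is this weighted step: controlling the moment-weighted derivative without losing the whole exponent. Concretely, this means proving
\[ \iint|D_x^{\alpha}f|^2\langle v\rangle^{2\kappa}\le C\|f\|_{L^1_\sigma}^{2\delta}Q^{1-\delta}, \]
with weights fed only by $L^1$ moments. If the Cauchy--Schwarz scheme fails to give exactly $\delta$ in the $\alpha=0$ case, the fallback is a dyadic decomposition in $|v|$ using the partition \eqref{smooth1}. On each shell $|v|\sim2^\ell$, run the Nash splitting for $f\chi_\ell$, gaining $2^{-\ell\sigma}\|f\|_{L^1_\sigma}$ from the $L^1$ norm on that shell, so that for $\sigma$ large the factor $2^{2\ell\kappa}$ is summable. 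The commutators $[D_v^{\beta'},\chi_\ell]$ are of lower order, and uniformly bounded since $\chi_\ell$ has bounded derivatives for $\ell\ge0$. The $v$-estimate \eqref{Dplem1} is proved identically, with the roles of the two variables exchanged.
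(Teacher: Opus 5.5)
There is a genuine gap, and it is in the exponent bookkeeping of your unweighted step.

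With the isotropic box $\{|\zeta|\le R,\ |\eta|\le R\}$ you obtain $\epsilon=(m-\alpha)/(n+m)$. The comparison you assert goes the wrong way:
\[ \delta-\epsilon=\frac{\alpha\,(1-m/\alpha')}{n+m}\ \ge\ 0, \]
with equality only when $m=\alpha'$ or $\alpha=0$. So when $\beta'<\alpha'$ your bound is strictly weaker than \eqref{Dxlem1}. It also cannot be upgraded: a larger exponent on the moment factor gives a strictly stronger inequality in the relevant regime $Q\gg\|f\|_{L^1_\sigma}^2$.

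For \eqref{Dplem1} the failure is worse, and it occurs precisely when $\alpha'<\beta'$. That is the case needed in Theorem \ref{propnashinterp}, where the lemma controls $D_v^\nu f$ with $\nu$ close to $\beta'$. On the region $\{|\zeta|>R,\ |\eta|\le R\}$ you only get $R^{2\beta-2\alpha'}\|D_x^{\alpha'}f\|^2$. Your exponent $(\alpha'-\beta)/(n+\alpha')$ is therefore nonpositive as soon as $\beta\ge\alpha'$, whereas $\lambda=(1-\beta/\beta')\,\alpha'/(n+\alpha')>0$.

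The weighted step does not rescue this.
\begin{itemize}
\item Your preferred Cauchy--Schwarz route loses exponent, and there is no slack: for $m=\alpha'$ one has exactly $\epsilon=\delta$, contrary to your ``leaves room'' remark.
\item Bounding $\|D_x^\alpha f\|_{L^2_{\kappa/t}}$ by moments and $Q$ is just the original problem with a heavier weight.
\item The dyadic fallback would preserve the exponent, but only the wrong one.
\item ``Lowering'' an exponent because $Q\ge1$ would require $\|f\|_{L^1}^2\lesssim Q$, which is false in general; it needs an extra moment interpolation.
\end{itemize}

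The missing idea is to never let the two frequency variables interact in the derivative term. Instead, put all of the kinetic weight on the zeroth-order term, where $f\ge0$ turns it into an $L^1$ moment. Since $D_x$ commutes with functions of $v$, interpolating in $x$ for each fixed $v$ and then applying H\"older in $v$ gives
\[ \iint |D_x^\alpha f|^2(1+|v|^2)^\kappa\le\|D_x^{\alpha'}f\|_{L^2}^{2\alpha/\alpha'}\Bigl(\iint f^2(1+|v|^2)^{\kappa'}\Bigr)^{1-\alpha/\alpha'},\qquad\kappa'=\frac{\kappa}{1-\alpha/\alpha'}. \]
It then suffices to prove the weighted Nash bound $\iint f^2(1+|v|^2)^{\kappa'}\lesssim\|f\|_{L^1_\sigma}^{2m/(n+m)}Q^{n/(n+m)}$. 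At $\alpha=\kappa=0$ this is exactly what your isotropic splitting gives. The paper obtains it from H\"older,
\[ \iint f^2w\le\Bigl(\iint f\,w^{\lambda}\Bigr)^{1/\lambda}\Bigl(\iint f^{p}\Bigr)^{1-1/\lambda},\qquad \lambda=\frac{n+m}{2m},\quad p=\frac{2n}{n-m}, \]
combined with the Sobolev embedding of $H^m(\R^{2n})$. That norm is controlled by $Q$ since $m\le\alpha'$ and $m\le\beta'$.

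This produces exactly $\delta=(1-\alpha/\alpha')\,m/(n+m)$. The factor $1-\alpha/\alpha'$ comes from the one-variable interpolation, and $\min(\alpha',\beta')$ enters only through the isotropic embedding. The vertical estimate is identical: interpolate $D_v^\beta$ between $D_v^{\beta'}$ and the weighted $L^2$ norm, which only uses $\beta<\beta'$.

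If you want to keep Fourier splitting for the derivative term itself, you need an anisotropic box $\{|\zeta|\le R,\ |\eta|\le R^{\alpha'/\beta'}\}$, as in Lemma \ref{lem2nash}. It gives the exponent $(1-\alpha/\alpha')/(1+\frac n2(\frac1{\alpha'}+\frac1{\beta'}))\ge\delta$, and must be followed by a properly justified lowering of the exponent.
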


\begin{Lem} \label{lem2nash}
Let $U$ be a bounded smooth open subset of $\R^n$ and $f=f(x,v)\geq 0$ be a nonnegative real-valued function
on $U\times\R^n$, vanishing for $x$ near $\pa U$. If $\alpha,\alpha',\beta,\beta'$ are nonnegative real numbers
satisfying $\alpha/\alpha' + \beta/\beta' <1$
and $\kappa$ is a nonnegative integer then
\begeq\label{ineq2nash} \iint |D_x^\alpha\, D_v^\beta f|^2\, (1+|v|^2)^\kappa
\leq C \left( \iint f(x,v)\, (1+|v|^\sigma)\right) \left( 1+ \iint f^2 + \iint |D_x^{\alpha'}f|^2 + \iint |D_v^{\beta'}f|^2\right)^{1-\theta},
\endeq
where $\theta$ can be chosen arbitrarily close to, but less than $\ov{\theta}$ appearing in \eqref{thetabar}, and $\sigma>0$ is large enough.
\end{Lem}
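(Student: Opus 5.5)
The statement to prove is Lemma~\ref{lem2nash}, which is set in flat geometry on $U\times\R^n$. The plan is a frequency/velocity decomposition in the spirit of the classical proof of Nash's inequality \eqref{orignash}. We work with the Fourier transform $\hat f(k,\eta)$ in both variables; extend $f$ by zero outside $U$, which is legitimate since $f$ vanishes near $\pa U$.

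First, the weight $(1+|v|^2)^\kappa$ is absorbed. Write
\[ (1+|v|^2)^{\kappa/2} D_x^\alpha D_v^\beta f = D_x^\alpha D_v^\beta\bigl((1+|v|^2)^{\kappa/2} f\bigr) + \text{commutator terms}. \]
The commutator terms have fewer $v$-derivatives and polynomially bounded weights. Then split
\[ f = f\,\chi(|v|\le R) + f\,\chi(|v|>R) \]
with a smooth cutoff at velocity scale $R$. For the large-velocity part, apply the Euclidean $L^2$ interpolation (Proposition~\ref{propmixinterp} in the flat case) to bound it by $R^{-N}$ times higher moments. Those higher moments are in turn controlled by the $L^1_\sigma$ moment, using that $f\ge0$ and interpolating pointwise weights against $L^2$ regularity. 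This step is where the loss of $\sigma$ and the gap $\theta<\ov\theta$ enter. For the bounded-velocity part, the weight is harmless, and up to a factor $R^{2\kappa}$ we may treat $g=f\chi_R$, which is supported in the bounded set $U\times B_R$.

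The core estimate for $g$ is a Nash-type frequency cutoff in the anisotropic box
\[ \{|k|\le A,\ |\eta|\le B\}. \]
Inside the box, use $|\hat g|\le \|g\|_{L^1}$. This gives
\[ \int_{\text{box}} |k|^{2\alpha}|\eta|^{2\beta}|\hat g|^2 \le C A^{2\alpha+n}B^{2\beta+n}\|g\|_{L^1}^2. \]
Outside the box, split into $|k|>A$ and $|\eta|>B$, and use Young with exponents $\alpha'/\alpha$, $\beta'/\beta$, exactly as in the proof of Proposition~\ref{propmixinterp}. On $|k|>A$ one gets
\[ |k|^{2\alpha}|\eta|^{2\beta} \le \epsilon\bigl(|k|^{2\alpha'}+|\eta|^{2\beta'}\bigr) + C_\epsilon \]
with a gain $A^{-2\alpha'(1-\alpha/\alpha'-\beta/\beta')}$ after choosing the scalings $A = E^{1/\alpha'}$, $B=E^{1/\beta'}$ (a common ``energy level'' $E$), and similarly on $|\eta|>B$. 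Write $N:=\|g\|_{H^{\alpha',0}}^2+\|g\|_{H^{0,\beta'}}^2$. Adding the two regions gives
\[ \|D_x^\alpha D_v^\beta g\|^2 \lesssim E^{2(\alpha/\alpha'+\beta/\beta') + n(1/\alpha'+1/\beta')}\|g\|_{L^1}^2 + E^{-2(1-\alpha/\alpha'-\beta/\beta')}N. \]
Optimizing in $E$ yields the exponent structure of $\ov\theta$ in \eqref{thetabar}: the $L^1$ norm appears with power $2\ov\theta$ and $N$ with power $1-\ov\theta$. Since $\|f\|_{L^1}=1$ in the intended application, and in general $\|f\|_{L^1}\le \|f\|_{L^1_\sigma}$ together with the additive constant $1$ in the bracket, the power $2\ov\theta$ can be degraded to the linear factor appearing in \eqref{ineq2nash}.

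Finally, recombine the two velocity pieces. Choose $R$ as a small power of the bracket in \eqref{ineq2nash}. Then the factor $R^{2\kappa}$ from the bounded part, and the tail bound $R^{-N}\|f\|_{L^1_\sigma}$ from the large-velocity part, each cost only an arbitrarily small loss in the exponent. This gives any $\theta<\ov\theta$ with $\sigma=\sigma(\theta)\to\infty$ as $\theta\to\ov\theta$.

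The main obstacle is the tail/weight step. There one must convert $L^1$ moments plus $L^2$ regularity into control of $\|(1+|v|)^{\kappa} D_x^\alpha D_v^\beta f\|_{L^2}$ on $|v|>R$ with a decaying power of $R$. I would first try the bound
\[ \int_{|v|>R} |h|^2 \le \|h\|_\infty \int_{|v|>R}|h|, \]
with $\|h\|_\infty$ estimated via Sobolev embedding from $N$, and interpolating the derivatives down through Proposition~\ref{propinterp}. Lemma~\ref{lem1nash} is the model case of this step.
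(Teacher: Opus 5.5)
Your proposal is correct in outline but takes a different route from the paper.

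\textbf{The paper's route.} It keeps the kinetic weight on the Fourier side, as $(1-\Delta_\eta)^\kappa\tilde f$. The low-frequency box is bounded via $|\nabla_\eta^\gamma\tilde f|\le C\iint f\,|v|^{|\gamma|}$. The two high-frequency regions are bounded by the weighted norms $\iint(1+|v|^2)^\kappa|D_x^\tau f|^2$ and $\iint(1+|v|^{2\gamma})|D_v^\nu f|^2$, with $\tau<\alpha'$, $\nu<\beta'$, and these are controlled by Lemma~\ref{lem1nash}. The gap $\theta<\ov\theta$ comes from the limits $\tau\to\alpha'$, $\nu\to\beta'$. Only the pure cases $\beta=0$ and $\alpha=0$ are treated, and the mixed case is recovered by H\"older in frequency.

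\textbf{Your route.} You strip the weight off in physical space by a velocity cutoff, then run one anisotropic box $\{|k|\le E^{1/\alpha'},\ |\eta|\le E^{1/\beta'}\}$ directly for $D_x^\alpha D_v^\beta$. Off the box, the correct step is the homogeneous bound $|k|^{2\alpha}|\eta|^{2\beta}\le E^{-2(1-\alpha/\alpha'-\beta/\beta')}\bigl(|k|^{2\alpha'}+|\eta|^{2\beta'}\bigr)$. A Young inequality with an additive $C_\epsilon$ would leave a term $C_\epsilon\|g\|_{L^2}^2$ of the wrong homogeneity. With that correction your display is right. This is exactly the flat case of the mechanism \eqref{jointPin} in Remark~\ref{rknashplat}(iv), and it buys more than the paper's argument:
\begin{itemize}
\item for $\kappa=0$ it proves the sharp inequality \eqref{nashplat}, which the paper only asserts;
\item the mixed case needs no recombination;
\item the loss $\theta<\ov\theta$ and the growth of $\sigma$ are traced solely to the kinetic weight.
\end{itemize}

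\textbf{Two small caveats.} First, pulling out $R^{2\kappa}$ needs $D_v^\beta$ to be local, i.e.\ integer $\beta$ and $\nabla_v^\beta$, which is how the lemma is used in Theorem~\ref{propnashinterp}. Second, turning $\|f\|_{L^1}^{2\ov\theta}$ into a linear factor needs the normalisation $\int f=1$ of that theorem. Your \emph{in general} justification fails for small $\|f\|_{L^1}$ when $2\ov\theta<1$, though the inhomogeneous form of \eqref{ineq2nash} tacitly requires that normalisation anyway.

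\textbf{The step to repair is the tail.} The $L^\infty$ route fails in general, since $H^{\alpha',0}\cap H^{0,\beta'}\subset L^\infty(\R^{2n})$ holds only when $\frac n2\bigl(\frac1{\alpha'}+\frac1{\beta'}\bigr)<1$. This is false, for example, for $\alpha'=m$, $\beta'=3m$ with $m\le 2n/3$. Use instead the mechanism of Lemma~\ref{lem1nash}. Set $\tilde f=f(1-\chi_R)$, let $W$ be the bracket in \eqref{ineq2nash}, and take $\epsilon>0$ small.
\begin{itemize}
\item Proposition~\ref{propinterp} (flat version) gives $\|\tilde f\|_{H^{\alpha,\beta}_\kappa}\le C\|\tilde f\|_{H^{\alpha/(1-\epsilon),\beta/(1-\epsilon)}}^{1-\epsilon}\|\tilde f\|_{L^2_{\kappa/\epsilon}}^{\epsilon}$.
\item The first norm is $O(W^{1/2})$ by Proposition~\ref{propmixinterp}.
\item Since $\tilde f$ lives on $\{|v|\ge R\}$, $\|\tilde f\|_{L^2_{\kappa/\epsilon}}^2\le R^{-2m}\iint f^2(1+|v|^2)^{\kappa/\epsilon+m}$. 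The H\"older--Sobolev step of Lemma~\ref{lem1nash} bounds this by $CR^{-2m}\|f\|_{L^1_\sigma}^{a}W^{1-a/2}$ for some $a\in(0,1)$.
\end{itemize}
Now choose $R=W^{\varrho}$ with $\varrho$ small. The bounded part is then $O\bigl(\|f\|_{L^1}^{2\ov\theta}W^{1-\ov\theta+2\kappa\varrho}\bigr)$. Once $m$ (hence $\sigma$) is large enough, the tail is below $W^{1-\theta}$ with $\theta=\ov\theta-2\kappa\varrho$. This closes the argument.
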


\begin{proof}[Proof of Lemma \ref{lem1nash}]
I shall only establish \eqref{Dxlem1}; a similar reasoning will work for \eqref{Dplem1}.

First, by interpolation in weighted Sobolev spaces,
\begeq\label{step1lem1}
\iint |D_x^\alpha f|^2 (1+|v|^2)^{\kappa}
\leq C \left( \iint |D_x^{\alpha'} f|^2 \right)^{\frac{\alpha}{\alpha'}}
\left( \iint f^2\, (1+|v|^2)^{\kappa'} \right)^{1-\frac{\alpha}{\alpha'}},
\endeq
where $\kappa' = \kappa/(1-\alpha/\alpha')$. Next, by H\"older's inequality, for any $\lambda>1$,
\begeq\label{step2lem1}
\iint f^2\, (1+|v|^2)^{\kappa'}
\leq \left( \iint f\, (1+|v|^2)^{\kappa'\lambda}\right)^{\frac1{\lambda}}
\left(\iint f^{\left(2-\frac1{\lambda}\right)\frac{\lambda}{\lambda-1}}\right)^{1-\frac1{\lambda}}.
\endeq
Let $\gamma= \min(\alpha',\beta')$. If $\gamma<n$ then choose $\lambda$ such that $(2-\frac1{\lambda})(\frac{\lambda}{\lambda-1}) = \frac{4n}{2n-2\gamma}$,
i.e. $\lambda = (2n+2\gamma)/(4\gamma)$. Apply the fractional Sobolev inequality
in $U\times\R^n\subset \R^{2n}$, noting that (by interpolation) $\int |D_x^{\gamma}f|^2 + \int |D_v^{\gamma}f|^2 + \int f^2$ is equivalent to $\int |D_{x,v}^\gamma f|^2+\int f^2$:
\begin{align} \label{newfracSob}
\left(\iint f^{\frac{4n}{2n-2\gamma}}\right)^{\frac{2n-2\gamma}{4n}} 
& \leq C \left(\iint f^2 + \iint |D_x^{\gamma}f|^2 + \iint |D_v^{\gamma}f|^2 \right)^{\frac12}\\
& \nonumber \leq C' \left(\iint f^2 + \iint |D_x^{\alpha'}f|^2 + \iint |D_v^{\beta'}f|^2 \right)^{\frac12},
\end{align}
where the last inequality is also by interpolation, but this time only for powers of $D_x$ or powers of $D_v$.
If $\gamma\geq n$ the Sobolev embedding is better, and the same estimate will work with any $\lambda>1$.
Finally insert this in \eqref{step2lem1}, then in \eqref{step1lem1}; this gives the result with $\sigma = 2\kappa'/\lambda$.
\end{proof}

\begin{proof}[Proof of Lemma \ref{lem2nash}]
First consider the case $\beta=0$. 

Let $\tilde{f}(\xi,\eta)$ stand for the Fourier transform of $f$ in both variables $x$ and $v$.
(So $\xi$ is conjugate to $x$ and $\eta$ to $v$.) By a classical estimate, for any $\gamma\in\N_0^n$,
\begeq\label{dgF} |\nabla_v^\gamma \tilde{f}| \leq C \iint f\, |v|^{|\gamma|} \,dv\,dx.
\endeq
(Recall $f\geq 0$.)

By the Fourier inversion formula, the left-hand side of \eqref{ineq2nash} is bounded by
\[ C \left(\iint |\xi|^{2\alpha} \, |(1-\Delta_\eta)^{\kappa} \tilde{f}|^2\,d\xi\,d\eta
+ \iint |\xi|^{2\alpha}\,|\tilde{f}|^2\,d\xi\,d\eta\right).\]
Divide this integral into three parts: (a) $|\xi|\leq R$, $|\eta|\leq S$,
(b) $|\xi|\leq R$, $|\eta|> S$, (c) $|\xi|>R$. By \eqref{dgF} and Lemma \ref{lem1nash},
\begin{align}\label{bd1n}
\iint_{|\xi|\leq R,\ |\eta|\leq S}
|\xi|^{2\alpha}\, |(1-\Delta_\eta)^{\kappa} \tilde{f}|^2\,d\xi\,d\eta  &
\leq C\, \left( \iint f\, (1+|v|^2)^\kappa\,dv\,dx\right)^2 \iint_{|\xi|\leq R,\ |\eta|\leq S} |\xi|^{2\alpha} \,d\xi\,d\eta \\
\nonumber & \leq C\, R^{n+2\alpha} S^n \left( \iint f\, (1+|v|^2)^\kappa\,dv\,dx\right)^2;
\end{align}
\begin{align} \label{bd2n}
\iint_{|\xi|\leq R, \, |\eta|> S}
|\xi|^{2\alpha}\, |(1-\Delta_\eta)^{\kappa}  & \tilde{f}|^2\,d\xi\,d\eta
\leq C \, \frac{R^{2\alpha}}{S^{2\nu}}
\iint |\eta|^{2\nu}\, |(1-\Delta_\eta)^{\kappa}  \tilde{f}|^2\,d\xi\,d\eta \\
& \leq C \,\frac{R^{2\alpha}}{S^{2\nu}} \iint (1+|v|^{2\gamma})\, |D_v^\nu f|^2  \nonumber\\
& \leq C \,\frac{R^{2\alpha}}{S^{2\nu}} \left(\iint f (1+|v|^2)^\sigma\right)^{1-\delta} \left( \iint f^2 
+ \iint |D_x^{\alpha'} f|^2 + \iint |D_v^{\beta'} f|^2 \right)^{1-\delta}, \nonumber
\end{align}
\[ \delta = \left( 1-\frac{\nu}{\beta'}\right)\, 
\left(\frac{\min(\alpha',\beta')}{n + \min(\alpha',\beta')}\right);\]
and finally,
\begin{align} \label{bd3n}
\iint_{|\xi|> R} |\xi|^{2\alpha}\, |(1-\Delta_\eta)^{\kappa}  \tilde{f}|^2\,& d\xi\,d\eta \\ \nonumber
& \leq \frac{C}{R^{2(\tau-\alpha)}}
\iint |\xi|^{2\tau}\, |(1-\Delta_\eta)^{\kappa}  \tilde{f}|^2\,d\xi\,d\eta \nonumber \\
& \leq \frac{C}{R^{2(\tau-\alpha)}} \left(\iint (1+|v|^2)^{\kappa}\, |D_x^\tau f|^2 + \iint f^2 \right)\nonumber\\
& \leq \frac{C}{R^{2(\tau-\alpha)}}\left(\iint f (1+|v|^\sigma)\right)^{\lambda} \left( 1 + \iint f^2 + \iint |D_x^{\alpha'} f|^2 + \iint |D_v^{\beta'}f|^2\right)^{1-\lambda}, \nonumber
\end{align}
\[ \lambda = \left( 1 - \frac{\tau}{\alpha'}\right)\, 
\left(\frac{\min(\alpha',\beta')}{n+\min(\alpha',\beta')}\right).\]

Adding up \eqref{bd1n}, \eqref{bd2n}, \eqref{bd3n}, recalling that $\int f (1+|v|^\sigma) \leq C \int f(1+|v|^{\sigma'})$ for $\sigma'\geq \sigma$, leads to a bound like
\begeq\label{bd4n}
C \left(1+ \iint f\, (1+|v|^\sigma)\,dv\,dx \right) \left( R^{n+2\alpha} S^n + \frac{R^{2\alpha}}{S^{2\nu}}\, 
W^{\frac{n+\nu\alpha'/\beta'}{n+\alpha'}} 
+ \frac1{R^{2(\tau-\alpha)}}W^{\frac{n+\tau\beta'/\alpha'}{n+\beta'}} 
\right)
\endeq
where
\[ W:= 1 + \iint f^2 + \iint |D_x^{\alpha'} f|^2 + \iint |D_v^{\beta'}f|^2,\]
and $C$ is a  large constant.
An elementary estimate shows that for any positive real numbers $R,S,W$
and any positive exponents $a,b,d,e,f,h,i,j$,
\[ \inf_{R,S>0} \Bigl(
R^a\, S^b + \frac{R^d}{S^e}\, W^f + \frac1{R^h}\,W^i\Bigr)
\leq C\, W^{i- \frac{h(i-\frac{bf}{b+e})}{a+b(\frac{d-a}{b+e})+h}}.\]
Applying this to \eqref{bd4n} we find in the end
\[ \iint |\xi|^{2\alpha}\,|(1-\Delta_\eta)^{\kappa} \tilde{f}|^2 \leq
C \left(\iint f (1+|v|^\sigma) \right) \left( 1 + \iint f^2 + \iint |D_x^{\alpha'} f|^2 + \iint |D_x^{\beta'}f|^2 \right)^{1-\theta},\]
where $\sigma$ is large enough, $\nu,\tau$ are subject to the restrictions $0\leq\nu<\beta'$, $\alpha\leq\tau<\alpha'$, and
\[ \theta = \lambda + \frac{2(\tau-\alpha)\left( (1-\lambda)  - \frac{n(1-\delta)}{n+2\nu}\right)}
{n+2\alpha - \frac{n^2}{n+2\nu} + 2 (\tau-\alpha)}.\]
It follows that
\[ \iint |D_x^\alpha f|^2 \leq
C \left( 1 + \iint f^2 + \iint |D_x^{\alpha'} f|^2 + \iint |D_x^{\beta'}f|^2 \right)^{1-\theta},\]
As $\tau\to \alpha'$ and $\nu\to \beta'$, we have $\delta\to 0$, $\lambda\to 0$,
and 
\begeq\label{thetalimab}
\theta \to \frac{1-\frac{\alpha}{\alpha'}}{1+\frac{n}{2} \left(\frac1{\alpha'}+\frac1{\beta'}\right)}
=: \ov{\theta}(\alpha,\alpha',0,\beta').
\endeq

This concludes the case $\beta=0$. The case $\alpha=0$ is similar, up to nonessential
lower order terms, and yields the limiting exponent
\[ \ov{\theta}(0,\alpha',\beta,\beta') = 
\frac{1-\frac{\beta}{\beta'}}{1+\frac{n}{2} \left(\frac1{\alpha'}+\frac1{\beta'}\right)}.\]

Finally, the general case is deduced from the cases $\alpha=0$ and $\beta=0$,
as follows. First choose $r$ and $s$ such that $r^{-1}+s^{-1}=1$, $r^{-1} \geq \alpha/\alpha'$,
$s^{-1}\geq \beta/\beta'$; then use H\"older's inequality with the conjugate exponents $r$ and $s$:
\[ \iint |\xi|^{2\alpha}\,|\eta|^{2\beta} \, |(1-\Delta_\eta)^{\kappa} \tilde{f}|^2\,d\xi\,d\eta
\leq \left( \iint |\xi|^{2\alpha r}\, |(1-\Delta_\eta)^{\kappa}\tilde{f}|^2\,d\xi\,d\eta\right)^{\frac1{r}}
\left(\iint |\eta|^{2\beta s}\, |(1-\Delta_\eta)^{\kappa}\tilde{f}|^2\,d\xi\,d\eta\right)^{\frac1{s}}.\]
Applying the results for $\alpha=0$ and $\beta=0$ we deduce
\[ \iint |\xi|^{2\alpha}\,|\eta|^{2\beta} \, |(1-\Delta_\eta)^{\kappa} \tilde{f}|^2\,d\xi\,d\eta
\leq C \, \left( 1 + \iint |D_x^{\alpha'} f|^2 + \iint |D_v^{\beta'}f|^2\right)^{
1-\left(\frac{\theta_1}{r}+ \frac{\theta_2}{s}\right)},\]
where $C$ depends on high-order moments of $f$ and 
$\theta_1/r + \theta_2/s$ can be chosen arbitrarily close to
\[ \frac{\ov{\theta} (\alpha r,\alpha',0,\beta')}{r} + \frac{\ov{\theta}(0,\alpha',\beta s,\beta')}{s},\]
which by computation coincides with the exponent $\ov{\theta}$ appearing in \eqref{thetabar}.
\end{proof}

\begin{proof}[Proof of Theorem \ref{propnashinterp}]
Since $\int f=1$ it is equivalent to estimate by $\|f\|_{L^1_\sigma}$ or by $1+\|f\|_{L^1_\sigma}$, and the latter can be used without any restriction on $\int f$.

Introducing a smooth partition of unity $(\chi_i)_{1\leq i\leq N}$ to localize on $M$ via charts in open sets $U_i$; write $f= \sum f\chi_i$ and estimate each $f\chi_i$ separately;
\[ \|f\|_{H^{\alpha,\beta}_\kappa} \leq \sum_{1\leq i\leq N} \|f\chi_i\|_{H^{\alpha,\beta}_\kappa}.\]
If each $f\chi_i$ satisfies the desired inequality, then
\begin{align*}
\|f\|_{H^{\alpha,\beta}_\kappa} & \leq C \sum_{i=1}^N \|f\chi_i\|_{L^1_\sigma} \bigl( \|f\chi_i\|_{H^{\alpha',0}} + \|f\chi_i\|_{H^{0,\beta'}} \bigr)^{1-\theta} \\
& \leq C \|f\|_{L^1_\sigma} \, \sum_{i=1}^N \bigl( \|f\chi_i\|_{H^{\alpha',0}} + \|f\chi_i\|_{H^{0,\beta'}} \bigr)^{1-\theta}\\
& \leq C\, N\, \|f\|_{L^1_\sigma}\, \left[ \sum_{i=1}^N \bigl( \|f\chi_i\|_{H^{\alpha',0}} + \|f\chi_i\|_{H^{0,\beta'}} \bigr)^{1-\theta}\right]^{1-\theta}\\
& \leq C \bigl(N, (\chi_i)_{1\leq i\leq N}\bigr) \|f\|_{L^1_\sigma} \bigl( \|f_{H^{\alpha',0}} + \|f\|_{H^{0,\beta'}}\bigr).
\end{align*}
Thus it is sufficient to prove the inequality when $f$, read in a chart, is supported in $U\times\R^n$.

Working in $\R^n$, consider the derivations $\pa/\pa x^i$ and $\pa/\pa v_i$. The horizontal derivatives $Df/Dx^i = (\nabla_H)_i f$ and the usual partial derivatives $\pa f/\pa x^i$ can be bound in terms of one another as follows, using extra vertical derivatives:
\[ \left| \frac{Df}{Dx^i} \right| \leq
C\, \left( \left|\frac{\partial f}{\partial x^i}\right| + |v|\, |\nabla_v f|\right);
\qquad 
\left|\derpar{f}{x^i}\right|\leq C \left(
\left|\frac{Df}{Dx^i}\right| + |v|\,|\nabla_v f|\right);\]
here $|v|$ is the Euclidean norm of $v$ and $|\nabla_v f|$ is the Euclidean norm
of the vector $(\pa f/ \pa v_i)_{1\leq i\leq n}$. The ratio $|\nabla_vf|/|\nabla_Vf|$
is bounded from above and below, so in all estimates it does not matter whether we
work with the vertical derivatives or with the partial derivatives with respect to $v$.
Also similar formulas hold for higher-order derivatives.

Combining this with the Leibniz formula, one can estimate $\|f\|_{H^{\alpha,\beta}_\kappa}^2$: for instance the dominant part is
\begin{multline*} \iint |\nabla_H^\alpha \nabla_V^\beta f|^2\, (1+|v|^2)^\kappa \\
\leq C \sum_{\ov{\alpha}\leq \alpha,\, \ov{\beta}\leq\beta,\, \ov{\gamma}\leq\gamma}
\left( \iint |\nabla_x^{\ov{\alpha}} \nabla_v^{\ov{\beta}} f|^2\, 
(1+|v|^2)^{\ov{\gamma}}
+ \iint |\nabla_v^{\ov{\alpha} +\ov{\beta}} f|^2\, (1+|v|^2)^{\ov{\beta} + \ov{\gamma}}
\right).
\end{multline*}
Then all terms of the sum can be estimated separately.
For simplicity, let us consider only the term $\ov{\alpha}=\alpha$, $\ov{\beta}=\beta$, 
which leads to the worst estimate:
\begeq\label{worsen}
\iint |\nabla_x^{\alpha} \nabla_v^{\beta} f|^2\, (1+|v|^2)^{\gamma}
+ \iint |\nabla_v^{\alpha +\beta} f|^2\, (1+|v|^2)^{\beta + \gamma}.
\endeq
Thanks to Lemma \ref{lem2nash}, the first integral in \eqref{worsen} is bounded by 
\begin{multline} \label{worsen2}
C\left(\iint (1+|v|^\sigma) f \right)^{\theta} \left( 1+ \iint |\nabla_x^{\alpha'} f|^2 + \iint |\nabla_v^{\beta'}f|^2 + \iint f^2\right)^{1-\theta} \\
\leq C' \left(\iint (1+|v|^{\sigma'})^{\theta} f \right) \left( 1+ \iint |\nabla_H^{\alpha'}f|^2 + \iint |v|^{\alpha'}\,|\nabla_V^{\alpha'}f|^2
+ \iint |\nabla_V^{\beta'}f|^2 + \iint f^2\right)^{1-\theta},
\end{multline}
where $C$ and $C'$ are constants. By Lemma \ref{lem1nash}, since $\alpha'<\beta'$ we can bound \eqref{worsen2} by
the right-hand side of \eqref{ineqnashinterp}.

Finally, to control the second integral in \eqref{worsen}, apply again Lemma \ref{lem1nash} and use the inequality $\alpha+\beta<\beta'$. All in all, 
\[ \|f\|_{H^{\alpha,\beta}_\kappa}^2 \leq C \|f\|_{L^1_\sigma}\, \bigl( \|f\|_{H^{\alpha',0}}^2 +\|f\|_{H^{0,\beta'}}^2\bigr)\]
and the proof of Theorem \ref{propnashinterp} is complete.
\end{proof}

\bibnotes

John Nash used inequality \eqref{orignash} as a step towards his celebrated regularity result on solutions of parabolic equations \cite{nash:58}; it was provided to him by Elias Stein. It has been the object of many works in Riemannian manifolds \cite{BGL:book}. This inequality is a particular case of the more general family of {\bf Gagliardo--Nirenberg inequalities}, see e.g. Brezis--Mironescu \cite{brezismironescu:GN:18}.

I considered the variant with two variables ($x,v$) in \cite[Lemma A.25]{vill:hypoco}, there establishing \eqref{nashplat} for $\gamma=0$; the strategy was an expansion of Stein's Fourier-based original proof. The proof of Lemma \ref{lem2nash} is directly adapted from that source, and was used in \cite{DOV:preprint} to prove Theorem \ref{propnashinterp}.

\section{Lebeau's maximal hypoellipticity} \label{seclebeau}

In this section comes the first notable result of this program: I shall retrieve the maximal global hypoellipticity estimate associated with the stationary geometric kinetic Fokker--Planck operator, as well as its variant on vector fields (Bismutian). The result, first proven by Lebeau, is a gain of 2 vertical derivatives and 2/3 horizontal ones, in global Sobolev spaces with reference measure $\mu$. Convenient tools having already been established, this will be done without too much pain by adapting a method of Fran\c cois Bouchut.

\begin{Thm}[Maximal hypoellipticity for the geometric kinetic operator]\label{thmalabouchut} 
(i) If $M$ is a compact Riemannian manifold then there is a constant $C=C(M)$ such that for any $f:\TM\to\R$,
\begeq\label{maxhypo} \|f\|_{H^{2/3,0}(\mu)} + \|f\|_{H^{0,2}(\mu)} \leq C\, 
\bigl(\|Lf\|_{L^2(\mu)} + \|f\|_{L^2(\mu)}\bigr).
\endeq
Similarly for any integers $I,J$ there is $C=C(M,I,J)$ such that for any $X:\TM\to \TIJ$,
\[ \|X\|_{H^{2/3,0}(\mu)} + \|X\|_{H^{0,2}(\mu)} \leq C\, 
\bigl(\|LX\|_{L^2(\mu)} + \|X\|_{L^2(\mu)}\bigr);
\]

(ii) The same estimate holds true when ${\cal B}=\Xi-\Delta_V^\mu+ P_V$ is the Bismutian acting on vector fields on $\TM$, or tensors of vector fields:
\[ \|X\|_{H^{2/3,0}(\mu)} + \|X\|_{H^{0,2}(\mu)} \leq C\, 
\bigl(\|{\cal B} X\|_{L^2(\mu)} + \|X\|_{L^2(\mu)}\bigr).\]
\sm

(iii) For the Liouville measure, the following estimates holds: There are $\sigma=\sigma(n)>0$ and a constant $C=C(M)$ such that for any $f:\TM\to\R$,
\[ \|f\|_{H^{2/3,0}} + \|f\|_{H^{0,2}} \leq C\, \bigl(\|(\xi-\Delta_V)f\|_{L^2} + \|f\|_{L^2_\sigma}\bigr). \]
Likewise, if ${\cal L}=\xi - \Delta_V - v\cdot\nabla_V - n$ is the geometric Fokker--Planck acting on probability densities, then
\[ \|f\|_{H^{2/3,0}} + \|f\|_{H^{0,2}} \leq C\, \bigl(\|{\cal L}f\|_{L^2} + \|f\|_{L^2_\sigma}\bigr). \]
\sm

(iv) More generally, if $k\in\N$ and $\alpha,\beta\geq 0$ then the following estimates hold:
\[ \|f\|_{H^{\alpha + 2k/3,\beta}(\mu)} + \|f\|_{H^{\alpha,\beta+2k}(\mu)} \leq C\, \bigl(\|L^kf\|_{H^{\alpha,\beta}(\mu)} + \|f\|_{H^{\alpha,\beta}(\mu)}\bigr);\]
\[ \|X\|_{H^{\alpha+2k/3,\beta}(\mu)} + \|X\|_{H^{\alpha,\beta+2k}(\mu)} \leq C\, 
\bigl(\|{\cal B}^k X\|_{H^{\alpha,\beta}(\mu)} + \|X\|_{H^{\alpha,\beta}(\mu)}\bigr);\]
\[ \|f\|_{H^{\alpha+2k/3,\beta}} + \|f\|_{H^{\alpha,\beta+2k}} \leq C\, \bigl(\|{\cal L}^kf\|_{H^{\alpha,\beta}} + \|f\|_{H^{\alpha,\beta}_\sigma}\bigr). \]
\end{Thm}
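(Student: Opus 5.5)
We adapt Bouchut's method for the flat kinetic Fokker--Planck operator: an energy estimate, then an averaging-type argument along the geodesic flow. Write $Lf = \xi f - \Delta_V^\mu f = g$. The proof has four steps.

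\textbf{Step 1 (vertical regularity at order one).} Take the $L^2(\mu)$ scalar product of the equation with $f$. The geodesic part $\xi$ is conservative (Corollary on conservativity), and $\Delta_V^\mu$ gives the Dirichlet form \eqref{DeltaVXX}. Hence
\[
\|\nabla_V f\|^2_{L^2(\mu)} \le \|g\|\,\|f\|,
\]
which controls $H^{0,1}(\mu)$.

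\textbf{Step 2 (transfer to horizontal directions).} This is Bouchut's key step. Write $\xi f = g + \Delta_V^\mu f$, so that $\xi f \in H^{0,-1}(\mu)$ while $f\in H^{0,1}(\mu)$. We then aim for a horizontal gain of $1/3$: the interpolation exponents are weight $3$ for horizontal and $1$ for vertical, as in Section \ref{sechypo}.

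The Euclidean argument relies on Fourier analysis in $x$, which is unavailable here. We replace it by the joint functional calculus \eqref{spectralHVmu} and commutators. Concretely, we would estimate $\|(1-\Delta_H)^{1/6} f\|^2$ through the identity $[\nabla_V,\xi]=\nabla_H$ (Proposition \ref{propHV}). This leads to the quantity
\[
\bigl\<\nabla_H f,\ (1-\Delta_H)^{-2/3}\nabla_H f\bigr\> = \bigl\<[\nabla_V,\xi] f,\ (1-\Delta_H)^{-2/3}\nabla_H f\bigr\>.
\]
We expand the commutator and move $\xi$ across using skew-symmetry. The commutator terms are handled by Proposition \ref{propxiboundnomom}; they lose only lower-order horizontal regularity and vertical weights, and the weights are absorbed by Proposition \ref{propgradmom}. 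The remaining terms are bounded by $\|\nabla_V f\|\,\|\xi f\|_{H^{-1/3,-1}}$ and similar expressions. Closing the estimate with the interpolation Proposition \ref{propmixinterp} should give
\[
\|f\|_{H^{1/3,0}(\mu)}\lesssim \|g\|+\|f\|.
\]

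\textbf{Step 3 (bootstrap to maximal regularity).} We upgrade to $2/3$ horizontal and $2$ vertical derivatives. Commute the equation with $(1-\Delta_H)^{1/6}$; by Proposition \ref{propboundsxi}(vi) together with Proposition \ref{propxiboundnomom}, the commutator is of lower order. Applying Steps 1--2 to $(1-\Delta_H)^{1/6}f$ then yields $H^{2/3,0}$. For the vertical part, we use
\[
\Delta_V^\mu f = \xi f - g,
\]
and bound $\xi f$ in $L^2$ by a duality/energy estimate. We test against $-\Delta_V^\mu f$, writing
\[
\<\xi f,\Delta_V^\mu f\> = -\<[\nabla_V,\xi] f,\nabla_V f\> = -\<\nabla_H f,\nabla_V f\>,
\]
and control the right-hand side by $\|f\|_{H^{2/3,0}}\,\|f\|_{H^{-2/3+1,1}}$ via mixed interpolation, using $2/3+1/3=1$ in weighted form. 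Absorbing this gives $H^{0,2}$.

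\textbf{Step 4 (the remaining cases).} For tensors, including the Bismutian ${\cal B}$, the extra terms in \eqref{Xiexpl} and \eqref{Xicom} are zero-order in derivatives with polynomial $v$-weights. Proposition \ref{propgradmom} turns these weights into vertical derivatives that are already controlled with room to spare; Remark \ref{rkXXnot0} covers the failure of exact skew-symmetry. For (iii), with Liouville measure, there is no moment/derivative trade, so we keep the weights and pay $\|f\|_{L^2_\sigma}$. For (iv), we iterate, commuting with $(1-\Delta_H)^{\alpha/2}(1-\Delta_V^\mu)^{\beta/2}$ and inducting on $k$.

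The main obstacle is Step 2. Without a Fourier transform in $x$, making Bouchut's averaging argument rigorous with fractional powers of $\Delta_H$ requires careful control of the commutators $[(1-\Delta_H)^{s},\xi]$. These carry curvature terms with weight $|v|^2$ and a vertical derivative, and they must be absorbed using Gaussian weights without losing the exact $1/3$ exponent.
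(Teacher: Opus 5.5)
There is a genuine gap in Steps 2--3: the horizontal gain is never actually obtained.

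\textbf{Step 2 does not close.} You want $H^{1/3,0}(\mu)$ from $f\in H^{0,1}(\mu)$ and $\xi f\in H^{0,-1}(\mu)$ alone, by testing $\nabla_H f=[\nabla_V,\xi]f$ against $K=(1-\Delta_H)^{-2/3}\nabla_H f$. After moving $\xi$ across, the main terms are $\<\xi f,\nabla_V^{*\mu}K\>$ and $\<\xi f,\nabla_H^*(1-\Delta_H)^{-2/3}\nabla_V f\>$.

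Pairing $\xi f$ in $H^{-1/3,-1}$ forces the other factor into $H^{1/3,1}$, i.e.\ $f\in H^{0,2}$. So the correct bound is $\|f\|_{H^{0,2}}\,\|\xi f\|_{H^{-1/3,-1}}$, not $\|\nabla_V f\|\,\|\xi f\|_{H^{-1/3,-1}}$. The latter is incompatible with the kinetic weights $1,3,2$ of $\nabla_V,\nabla_H,\xi$: an inequality of that homogeneity could give at most $H^{1/6,0}$.

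Concretely, insert $\xi f=Lf+\Delta_V^\mu f$. The diffusion part produces $\<(1-\Delta_H)^{-2/3}\nabla_H\nabla_V f,\nabla_V^2 f\>$, of size $\|f\|^2_{H^{-1/6,3/2}}$, which $H^{0,1}\cap H^{1/3,0}$ does not control. The flat implication ``$f\in H^{0,1}$, $Tf\in H^{0,-1}$ $\Rightarrow$ $f\in H^{1/3,0}$'' is an averaging-lemma statement, proved with Fourier analysis in $x$. The commutator manipulation you sketch does not deliver it at this level of vertical regularity.

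\textbf{Step 3 inherits the problem.} Applying Step 2 to $F=(1-\Delta_H)^{1/6}f$ needs $LF$, hence $(1-\Delta_H)^{1/6}Lf$, in $L^2$ (or at least in $H^{0,-1}$). The hypothesis $Lf\in L^2(\mu)$ does not give this. Your vertical estimate is also conditional on $H^{2/3,0}$, so nothing in the chain actually closes.

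\textbf{The missing idea.} Aim directly at the maximal exponents, and measure $\xi f$ in $L^2$ via $\|\xi f\|\le\|Lf\|+\|\Delta_V^\mu f\|$, treating $\|\Delta_V^\mu f\|$ as one of the unknowns. The paper closes a coupled system of three inequalities:

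\begin{itemize}
\item[(a)] Testing the equation commuted with $D_H^{1/3}$ against $D_H^{1/3}f$ gives $\|\nabla_V D_H^{1/3}f\|^2\le C\bigl(\|D_H^{2/3}f\|\,\|Lf\|+\|f\|^2_{H^{0,3/2}}\bigr)$.
\item[(b)] Commuting with $\nabla_V$ gives $\|\nabla_V^2f\|^2\le C\bigl(\|Lf\|^2+\|f\|^2_{H^{0,3/2}}\bigr)+\var\|D_H^{2/3}f\|^2$. Here the cross term $\<\nabla_H f,\nabla_V f\>$ you met in Step 3 is bounded by $\|\nabla_V D_H^{1/3}f\|\,\|D_H^{2/3}f\|$.
\item[(c)] Testing $\nabla_H f=[\nabla_V,\xi]f$ against $D_H^{-2/3}\nabla_H f$ gives $\|D_H^{2/3}f\|^2\le C\bigl(\|\nabla_V D_H^{1/3}f\|\,\|\xi f\|+\|f\|^2_{H^{0,2}}\bigr)$.
\end{itemize}

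Substituting, then using Young's inequality and interpolating $\|f\|_{H^{0,3/2}}$ between $L^2$ and $H^{0,2}$, closes the system. The $H^{-1/6,3/2}$-type errors are harmless there precisely because $H^{0,2}$ is controlled at the same time.

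\textbf{Step 4 also needs correcting.} The term $\Sigma(v,v)X_V$ in $({\cal B}X)_H$ is not lower order. It has weight $2$, like $\xi$ and $\Delta_V^\mu$. After trading $|v|^2$ for vertical derivatives it is comparable to the dissipation, namely $\|X_V\|_{H^{0,1}}\|X_H\|_{H^{0,1}}$. The paper absorbs it through the triangular structure: it estimates $X_H$ and $X_V$ separately and puts a small weight $\delta$ on the horizontal component.
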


\begin{Rks}  \label{rklebeau}
\begin{itemize}
\item[(i)] The short version of that theorem is that inverting the Fokker--Planck operator gains 2 vertical derivatives and 2/3 horizontal derivatives, whatever the geometry. Since $\xi = L+\Delta_V^\mu$ and $\Delta_V^\mu$ is controlled in $L^2$ by $L$, another way to see it is 
\[ \|\xi f \|_{L^2} + \|\Delta_V^\mu f\|_{L^2} \leq C \bigl ( \|(\xi-\Delta_V^\mu) f\|_{L^2} + \|f\|_{L^2}\bigr).\]
In other words, controlling $f$ along $\xi-\Delta_V^\mu$ is the same as controlling $f$ along $\xi$ and $\Delta_V^\mu$ separately, as if $\xi$ and $\Delta_V^\mu$ would be ``orthogonal''.
\sm

\item[(ii)] Even when one considers the generator $\xi$ acting on functions, it is perfectly possible to consider estimates for tensor fields, since $\xi$ acts covariantly on tensors. In the same way, the Bismutian acts on vector fields $(X_H,X_V) = ((X_H)_1,\ldots,(X_H)_n, (X_V)_1,\ldots, (X_V)_n)$ on $\TM$, but also acts covariantly on tensors of vector fields $((X_H)_{j i_1\ldots i_k}, (X_V)_{j i_1\ldots i_k})$. In the sequel I shall provide the proofs only when $f$ is a function and $X=(X_H,X_V)$ a vector field, but the proof goes on for tensors with no change at all.
\sm

\item[(iii)] Part (iii) of the Theorem expresses the fact that inverting the kinetic Fokker--Planck operator gains maximal regularity if one is ready to lose moments (worse kinetic weight). When the measure is Gaussian, one can absorb those moments in the regularity gain, but when the measure is Liouville, there is no reason for that to occur. Then a further localisation analysis is needed to use such estimates; this will be addressed in the next section, for the evolution equation. 
\sm

\item[(iv)] While there is full recovery of the vertical derivatives, there is a discrepancy between the number of horizontal derivatives which are controlled by, or which do control, the operator $L$. For instance, in $L^2(\mu)$, using also \eqref{propgradmom},
\begin{align*}
K \bigl( \|h\|_{H^{2/3},0} + \|h\|_{H^{0,2}}\bigr) & \leq \|Lh\|_{L^2} + \|h\|_{L^2} \\
& \leq C \bigl( \|h\|_{H^{1,0}_1} + \|h\|_{H^{0,2}}\bigr)\\
& \leq C \bigl( \|h\|_{H^{1,1}} + \|h\|_{H^{0,2}}\bigr).
\end{align*}
This loss by a factor $1/3$ in the horizontal estimates is unavoidable, recall the discussion in Section \ref{sechypo}. Another way to express that discrepancy is in terms of powers of the total laplacian $\Delta_H+\Delta_V$:
\begeq\label{discreptotal}
K \Bigl( \|f\|^2 + \bigl\|(-\Delta_H -\Delta_V)^{1/3} h\bigr\|^2 \Bigr) \leq \|f\|^2 + \|L h\|^2 \leq C \Bigl( \|f\|^2 + \bigl\|(-\Delta_H -\Delta_V) h\bigr\|^2 \Bigr),
\endeq
or equivalently
\begeq\label{discreptotalbis}
K \bigl( \Id -\Delta_H-\Delta_V\bigr)^{1/3} \leq \sqrt{ \Id + L^*L} \leq C \bigl( \Id - \Delta_H-\Delta_V\bigr),
\endeq
with no way to improve the exponent in either the upper or the lower bound.
\sm

\item[(v)] With respect to the flat case, the most importance difference, at least for the scalar case, lies in the commutator identity $[\nabla_H,\xi] = -\Sigma(v,v)\nabla_V$, \eqref{xiNablaHr}. To see beforehand that this does not threaten the maximal regularity, here is the heuristics. Count weight~1 for $\nabla_V$, hence 2 for its ``square'' $\Delta_V^\mu$, hence 2 also for $\xi$ which is the other dominant term in the expression of $L$, hence $1+2=3$ for $\nabla_H = [\nabla_V, \xi]$; and count (at most)~1 for multiplication by~$v$ in view of Proposition \ref{propgradmom}; then the weight of $\Sigma(v,v)\nabla_V$ is only~3, which is strictly less than the sum (5) of the operators involved in the commutator $[\xi,\nabla_H]$.
\sm

\item[(vi)] The maximal regularity makes it possible to perturb the operator: If $\|(L_\var-L)f\|$ is small $H^{0,2}+ H^{2/3,0}\to L^2$, then there is still the same estimate on the equation $L_\var f=h$ as on the equation $Lf=h$. This allows for quasilinear perturbations of the vertical Laplace operator, and zero-order perturbations of the geodesic operator. I did not try to go further down that road, which for elliptic estimates has been extensively explored.
\sm

\item[(vii)] As a consequence of Remark (iv) above, there is a ``skewed'' interpolation inequality of the form
\begeq\label{skewedinterp}
\|Lh\|_{L^2(\mu)} \leq C \bigl( \|L^2 h\|^{3/4}_{L^2(\mu)} \|h\|^{1/4}_{L^2(\mu)} + \|h\|_{L^2(\mu)}\bigr).
\endeq
(Compare with the usual interpolation inequality $\| Ah\| \leq \|A^2h\|^{1/2} \|h\|^{1/2}$ for symmetric operators.) Indeed,
\begin{align*}
\|Lh\| & \leq \bigl\| (\Id -\Delta_H -\Delta_V)^{-1} Lh\bigr\|^{1/4} \bigl\| (\Id-\Delta_H-\Delta_V)^{1/3} Lh\bigr\|^{3/4} \\
& \leq C \|h\|^{1/4} \bigl( \| (\Id + L) Lh\|\bigr)^{3/4}\\
& \leq C \bigl( \|h\|^{1/4} \|L^2h\|^{3/4} + \|h\|^{1/4} \|Lh\|^{3/4}\bigr)\\
& \leq C \bigl( \|h\|^{1/4} \|L^2h\|^{3/4} + \|h\| \bigr) + \var \|Lh\|,
\end{align*}
with $\var>0$ as small as desired, and the conclusion follows.
\sm

\item[(viii)] Combining the estimate \eqref{maxhypo} (maximal regularisation) and Proposition \ref{propinterp} (Sobolev interpolation),
\begin{align*} \|f\|_{H^{\frac13,1}(\mu)}
& \leq  C ( \|f\|_{H^{\frac23,0}(\mu)} + \|f\|_{H^{0,2}(\mu)})\\
& \leq C (\|Lf\|_{L^2(\mu)} + \|f\|_{L^2(\mu)}) \\
& \leq C (\|\Delta_V^\mu f\|_{L^2(\mu)} + \|\xi f\|_{L^2(\mu)} + \|f\|_{L^2(\mu)})\\
& \leq C (\|f\|_{H^{0,2}(\mu)} + \|\xi f\|_{L^2(\mu)}).
\end{align*}
One may conjecture that the same holds true by ``shifting the vertical regularity'', so that
\begeq\label{conjpoinc}
\|f\|_{H^{\alpha,0}(\mu)} \leq C \bigl( \|f\|_{H^{0,1}(\mu)} + \|\xi f\|_{H^{0,-1}(\mu)}\bigr),
\endeq
with $\alpha=1/3$, or maybe any $\alpha<1/3$ to be on the safer side.
If that is true, it will be an analogue, in this context, of the functional inequality introduced by Albritton--Armstrong--Mourrat--Novack under the name {\bf H\"ormander's inequality}, providing an alternative approach to global hypoelliptic estimates.
\sm

\item[(ix)] In addition to estimates \eqref{maxhypo} and \eqref{skewedinterp}, two further expressions of the global hypoellipticity property will be provided respectively in Sections \ref{secreg} (regularisation rates for the evolution equation) and \ref{secqualspectr} (localisation of large eigenvalues in the complex plane).
\end{itemize}
\end{Rks}

Part (iv) of the Theorem comes easily by iterating the estimate for $k=1$ and using the commutator identities between $L$ and the vertical and derivation operators; so let us focus on the proof of (i)--(ii)--(iii).

\begin{proof}[Proof of Theorem \ref{thmalabouchut}(i)]
Recall that
\[L =  \xi - \Delta_V^\mu = \xi - \Delta_V + v\cdot\nabla_V. \]
Let $D_V^\mu=\sqrt{-\Delta_V^\mu}$, $D_H = \sqrt{-\Delta_H}$, both unbounded operators in $L^2(\mu)$.
To alleviate notation, I will omit the measure $\mu$ and write just $L^2$ for $L^2(\mu)$, etc.
\sm

The proof is in three steps:
\sm

{\bf Step 1:} Commute the equation with $D_H^{1/3}$. Since $D_H$ and $\Delta_V^\mu$ commute, 
\[
(\xi-\Delta^\mu_V) D_H^{1/3} f = D_H^{1/3}(Lf) + [\xi,D_H^{1/3}]f.
\]
Then integrate against $D_H^{1/3}f$: Since $\<h, L h\>_{L^2} = \|\nabla_Vh\|_{L^2}^2$,
\begin{align} \label{sincehLh}
\|\nabla_V D_H^{1/3} f\|_{L^2}^2
& = \<D_H^{1/3} (L f), D_H^{1/3}f\>_{L^2} + \bigl\< [\xi, D_H^{1/3}]f, D_H^{1/3}f\bigr\>_{L^2}\\
& = \<D_H^{2/3} f, Lf\>_{L^2} + (\I), \nonumber
\end{align}
where
\begeq\label{I}
(\I) = \Bigl\< [\xi, D_H^{1/3}]f, D_H^{1/3}f\Bigr\>_{L^2}.
\endeq
By Proposition \ref{propxiboundnomom}(i),
\begin{align*} (\I)
& \leq \bigl\|(D_V^\mu)^{3/2} D_H^{-1/2} D_H^{1/3}f\bigr\|_{L^2}\,
\bigl\|(D_V^\mu)^{-3/2} D_H^{1/2} [\xi,D_H^{1/3}] f\bigr\|_{L^2}\\
& \leq \|f\|_{H^{-1/6,3/2}}\, \|[\xi,D_H^{1/3}]f\|_{H^{1/2,-3/2}}\\
& \leq C\, \|f\|_{H^{-1/6,3/2}}^2.
\end{align*}
Dropping the $1/6$ horizontal gain, just retain
\begeq\label{DVDH13}
\|\nabla_V D_H^{1/3} f\|_{L^2}^2 \leq 
C\, \Bigl( \|D_H^{2/3}f\|_{L^2}\,\|Lf\|_{L^2} + \|f\|^2_{H^{0,3/2}}\Bigr).
\endeq
\sm

{\bf Step 2:} Commute the equation with $\nabla_V$, using (i) and (iii) in Proposition \ref{propHV}:
\begeq\label{comeqnablaV} 
(\xi-\Delta^\mu_V) \nabla_V f = \nabla_V (Lf) + [\xi,\nabla_V]f + [v\cdot\nabla_V,\nabla_V]f
= \nabla_V(Lf) - \nabla_Hf - \nabla_Vf;
\endeq
then integrate against $\nabla_V f$:
\begin{align*}
\|\nabla_V^2 f\|^2_{L^2} & = \< \nabla_V(Lf), \nabla_V f\>_{L^2} - \<\nabla_Hf, \nabla_Vf\>_{L^2} - \|\nabla_V f\|^2_{L^2}\\
& = - \<Lf, \Delta^\mu_V f\>_{L^2} - \bigl\< \nabla_V D_H^{-2/3} \nabla_H f, D_H^{2/3}f\bigr\>_{L^2} - \|\nabla_V f\|^2_{L^2} \\
& \leq \|Lf\|_{L^2}\, \|\Delta^\mu_Vf\|_{L^2} + \|\nabla_V D_H^{1/3} f\|_{L^2}\, \|D_H^{2/3} f\|_{L^2} - \|\nabla_V f\|^2_{L^2}.
\end{align*}
Combining this with \eqref{DVDH13} and dropping the last (negative) term,
\begeq\label{afterdrop} 
\|\nabla_V^2 f\|_{L^2}^2 
\leq C\, \Bigl( \|Lf\|_{L^2}\, \|\nabla_V^2 f\|_{L^2} + \|D_H^{2/3}f\|_{L^2}^{3/2}\,\|Lf\|_{L^2}^{1/2}
+ \|D_H^{2/3}f\|_{L^2} \, \|f\|_{H^{0,3/2}}\Bigr),
\endeq
whence
\begeq\label{DVDH14}
\|\nabla_V^2 f\|_{L^2}^2 
\leq C\, \Bigl( \|Lf\|_{L^2}^2 + \|f\|^2_{H^{0,3/2}} \Bigr)
+ \var\, \|D_H^{2/3}f\|^2_{L^2}
\endeq
where $\var>0$ is arbitrarily small.
\sm

{\bf Step 3:} Again by Proposition~\ref{propHV}(i),
\begin{align}
\|D_H^{2/3} f\|^2_{L^2} & = \|D_H^{-1/3} \nabla_H f\|^2_{L^2} \nonumber \\
& = \<D_H^{-2/3} \nabla_H f, \nabla_Hf\>_{L^2} \nonumber \\
& = \bigl\<D_H^{-2/3} \nabla_Hf, \nabla_V \xi f\>_{L^2} - \<D_H^{-2/3} \nabla_H f, \xi\nabla_V f\bigr\>_{L^2} \nonumber \\
& = - \bigl\< \nabla_V D_H^{-2/3} \nabla_Hf, \xi f\bigr\>_{L^2} + \bigl\< \xi D_H^{-2/3} \nabla_H f, \nabla_V f\bigr\>_{L^2}\nonumber \\
& = - \bigl\< \nabla_V D_H^{-2/3} \nabla_H f, \xi f\bigr\>_{L^2}
+ \bigl\< D_H^{-2/3} \nabla_H \xi f, \nabla_V f\bigr\> + \bigl\< [\xi, D_H^{-2/3} \nabla_H]f, \nabla_V f\bigr\>_{L^2}\nonumber \\
& \leq 2\, \bigl\|D_V^\mu D_H^{1/3} f\bigr\|_{L^2}\, \|\xi f\|_{L^2} + (\II), \label{DVDH15}
\end{align} 
where
\begeq \label{II}
(\II) = \bigl\< [\xi, D_H^{-2/3} \nabla_H]f, \nabla_V f\bigr\>_{L^2}
\endeq

Then from Proposition \ref{propxiboundnomom}(i) again,
\begin{align*}
(\II) & \leq \bigl\| (D_V^\mu)^{-1} D_H^{1/3} [\xi,D_H^{-2/3}\nabla_H] f\bigr\|_{L^2}\,
\|D_H^{-1/3} D_V^\mu \nabla_V f\|_{L^2}\\
& \leq C\, \|f\|_{H^{-1/3,2}}\, \|D_H^{-1/3} (D_V^\mu)^2f\|^2_{L^2} \\
& \leq C \|f\|_{H^{-1/3,2}}^2.
\end{align*}

Dropping the $1/3$ horizontal derivative gain, only retain
\[ (\II) \leq C \|f\|_{H^{0.2}}^2.\]
Putting this back in \eqref{DVDH15}, using \eqref{DVDH13} and the obvious inequality
$\|\xi f\|\leq \|Lf\| + \|\nabla^2 f\|$,
\[ \|D_H^{2/3} f\|^2_{L^2} \leq C \Bigl( \|D_H^{2/3}f\|^{1/2}_{L^2}\, \|Lf\|^{1/2}_{L^2}
+ \|f\|_{H^{0,2/3}} \Bigr) \Bigl( \|Lf\|_{L^2} + \|\nabla_V^2f\|_{L^2} \Bigr) + C\, (\|\nabla_V^2 f\|^2_{L^2} + \|f\|^2_{L^2});
\]
hence
\begeq\label{DVDH16}
\|D_H^{2/3} f\|^2_{L^2} \leq C \,\Bigl( \|Lf\|^2_{L^2} + \|f\|_{H^{0,2/3}}^2 + \|\nabla_V^2 f\|_{L^2}^2\Bigr).
\endeq
Plugging this back in \eqref{DVDH14},
\begeq\label{DVDH17}
\|\nabla_V^2 f\|^2_{L^2} \leq C \, \bigl( \|Lf\|^2_{L^2} + \|f\|_{H^{0,2/3}}^2 \bigr).
\endeq
By interpolation (and Young),
\[ \|f\|_{H^{0,2/3}} \leq \var \|\nabla^2_V f\|_{L^2} + C \|f\|_{L^2},\]
where $\var>0$ is arbitrarily small and $C$ is a constant. So \eqref{DVDH17} becomes
\begeq\label{DVDH17'}
\|\nabla_V^2 f\|^2_{L^2} \leq C \, \bigl( \|Lf\|^2_{L^2} + \|f\|^2_{L^2} \bigr).
\endeq
Plugging this again in \eqref{DVDH16},
\[ \|D_H^{2/3} f\|^2_{L^2} \leq C \,\Bigl( \|Lf\|^2_{L^2} + \|f\|_{L^2}^2 \Bigr). \]
All in all,
\begeq\label{allinall} \|D_H^{2/3} f\|^2_{L^2} + \|\nabla_V^2 f\|^2_{L^2} \leq C \Bigl( \|Lf\|^2_{L^2} +\|f\|_{L^2}^2\Bigr),
\endeq
which concludes the proof.
\end{proof}

Before turning to the proof of part (ii), here are some comments. From the modification of $\Delta_V^\mu$ into $\Delta_V^\mu- P_V$ comes an extra term $-X_V$ (order~0) which does not change much. The replacement of $\xi$ by $\Xi$ is more significative. Commutator estimates with fractional horizontal derivatives are preserved and can be handled similarly for $\Xi$ as for $\xi$. But the estimate $\<X,\Xi X\>$ brings a horizontal contribution of the same order as the main term; to take care of this, it will be useful to separate estimates of the horizontal and vertical parts, using \eqref{XXiX} to leverage the better estimate for the vertical part.

\begin{proof}[Proof of Theorem \ref{thmalabouchut}(ii)]
Now the operator is
\[ {\cal B} = \Xi - \Delta_V^\mu + P_V = \Xi - \Delta_V + v\cdot\nabla_V + P_V \]
and $X=(X_H,X_V)$ is a vector field on $\TM$, with horizontal and vertical components. Apart from Propositions \ref{propHV} and \ref{propxiboundnomom}, I shall also use the following estimates deduced from \eqref{Xiexpl}, \eqref{Xicom} \eqref{XXiX} and \eqref{propgradmom}:
\begeq\label{+a}
\begin{cases} 
\<(\Xi X)_H, X_H\>_{L^2} = O \bigl( \|X_H\|_{H^{0,1}} \|X_V\|_{H^{0,1}}\bigr) \\[2mm]
\<(\Xi X)_V, X_V\>_{L^2} = O \bigl( \|X_H\|_{L^2} \|X_V\|_{L^2}\bigr)
\end{cases}
\endeq
\begeq\label{+b}
\Bigl\| \bigl( \bigl( [\nabla_V, \Xi] - \nabla_H \bigr) \Bigr\|_{L^2} = O ( \|X_V\|_{H^{0,1}})
\endeq
\begeq\label{+c}
\bigl\| (\Xi-\xi) X \bigr\|_{L^2} = O \bigl( \|X_V\|_{H^{0,2}} + \|X_H\|_{L^2}\bigr).
\endeq

Now come the same three steps as in the proof of part (i).

{\bf Step~1:} Commute with $D_H^{1/3}$:
\[
(\Xi-\Delta_V^\mu + P_V) D_H^{1/3}X = D_H^{1/3} ({\cal B}X) + [\Xi, D_H^{1/3}] X.\]
Take horizontal component and integrate against $D_H^{1/3}X_H$:
\begin{multline}\label{bisdvh}
\|\nabla_V D_H^{1/3}X_H\|_{L^2}^2
= \bigl\< D_H^{1/3}({\cal B} X)_H, D_H^{1/3}X_H\bigr\>_{L^2}
+ \bigl\< ([\Xi,D_H^{1/3}]X)_H, D_H^{1/3}X_H\bigr\>_{L^2} \\
 - \bigl\< (\Xi D_H^{1/3}X)_H, D_H^{1/3}X_H\bigr\>_{L^2}.
\end{multline}
Doing similarly with the vertical component,
\begin{multline}\label{bisdvv}
\|\nabla_V D_H^{1/3}X_V\|_{L^2}^2
= \bigl\< D_H^{1/3}({\cal B}X)_V, D_H^{1/3}X_V\bigr\>_{L^2}
+ \bigl\< ([\Xi,D_H^{1/3}]X)_V, D_H^{1/3}X_V\bigr\>_{L^2} \\
- \|D_H^{1/3}X_V\|_{L^2}^2 - \bigl\< (\Xi D_H^{1/3}X)_V, D_H^{1/3}X_V\bigr\>_{L^2}.
\end{multline}
Applying Proposition \ref{propxiboundnomom}, \eqref{+a} and \eqref{bisdvh},
\begin{multline} \label{bisdvh2}
\bigl\|\nabla_V D_H^{1/3} X_H \bigr\|_{L^2}^2 \leq 
C \Bigl( \|D_H^{2/3}X_H\|_{L^2} \|LX\|_{L^2} + \|X\|_{H^{0,3/2}}^2 \\
+ \|\nabla_V D_H^{1/3} X_V\|_{L^2}\| \nabla_VD_H^{1/3} X_H\|_{L^2} 
+ \|\nabla_V D_H^{1/3}X_V\|_{L^2} \|D_H^{1/3}X_H\|_{L^2} +  \|D_H^{1/3} X_V\|_{L^2} \|\nabla_V D_H^{1/3} X_H\|_{L^2} \Bigr).
\end{multline}
In the same way, \eqref{bisdvv} leads to
\begeq \label{bisdvv2}
\bigl\|\nabla_V D_H^{1/3} X_V \bigr\|_{L^2}^2 \leq 
C \Bigl( \|D_H^{2/3}X_V\|_{L^2} \|{\cal B} X\|_{L^2} + \|X\|_{H^{0,3/2}}^2 
+ \| D_H^{1/3} X_H\|_{L^2} \|D_H^{1/3} X_V\|_{L^2} \Bigr).
\endeq
(Note that the most serious error term $\|\nabla_V D_H^{1/3} X_V\|_{L^2}\| \nabla_VD_H^{1/3} X_H\|_{L^2}$, is of the same order as the quantity we are estimating, but appears only in the horizontal component.) Combining \eqref{bisdvh2} and \eqref{bisdvv2}, for $\delta>0$ small enough there is $C>0$ such that
\begeq \label{DVH13}
\delta \bigl\| \nabla_V D_H^{1/3} X_H\bigr\|_{L^2}^2 + \bigl\|\nabla_V D_H^{1/3} X_V \bigr\|_{L^2}^2 
\leq C \bigl( \|{\cal B}X\|_{L^2}^2 + \|X\|_{H^{0,3/2}}^2 + \|X\|_{H^{1/3,0}}^2\bigr).
\endeq
\sm

{\bf Step 2:} Commute the equation with $\nabla_V$:
\begin{align*}
{\cal B}\nabla_V X & = (\Xi-\Delta_V^\mu  + P_V)\nabla_V X \\
& = \nabla_V({\cal B}X) - [\nabla_V,\xi]X - [\nabla_V, v\cdot\nabla_V] X - \bigl( [\nabla_V,\Xi] - \nabla_H\bigr) X \\
& = \nabla_V ({\cal B}X) - \nabla_H X - \nabla_V X - \bigl( [\nabla_V,\Xi] -\nabla_V\bigr)X.
\end{align*}
Integrating against $\nabla_V X$,  separating horizontal and vertical components,
\begin{multline}\label{bisd2vh}
\|\nabla_V^2 X_H\|_{L^2}^2 
 = \bigl\< \nabla_V ({\cal B}X)_H, \nabla_V X_H \bigr\>_{L^2}
 - \bigl\< \nabla_H X_H, \nabla_V X_H\bigr\>_{L^2} \\
 - \Bigl\<\bigl( \bigl( [\nabla_V,\Xi] -\nabla_H\bigr)_H, \nabla_VX_H\Bigr\>_{L^2}
 - \bigl\< (\Xi\nabla_V X)_H, \nabla_V X_H\bigr\>_{L^2}
 \end{multline}
and
\begin{multline}\label{bisd2vv}
\|\nabla_V^2 X_V\|_{L^2}^2 
 = \bigl\< \nabla_V ({\cal B}X)_V, \nabla_V X_V \bigr\>_{L^2}
 - \bigl\< \nabla_H X_V, \nabla_V X_V\bigr\>_{L^2} 
 - \bigl\< (\Xi\nabla_V X)_V, \nabla_V X_V\bigr\>_{L^2}.
 \end{multline}
(There is no vertical component of $[\nabla_V,\Xi]-\nabla_H$.)
So by Proposition \ref{propxiboundnomom} and \eqref{+a}--\eqref{+b}, inequality \eqref{bisd2vh} leads to
\begin{multline}\label{bisdvvh2}
\|\nabla_V^2 X_H\|_{L^2}^2 
\leq C \Bigl( \|{\cal B}X\|_{L^2} \|\nabla_V^2 X_H\|_{L^2}
+ \|\nabla_V D_H^{1/3} X_H\|_{L^2} \|D_H^{1/3}X_H\|_{L^2}\\
+ \bigl( \|\nabla_VX_V\|_{L^2} + \|X_V\|_{L^2}\bigr) \bigl( \|\nabla_V X_H\|_{L^2} + \|X_H\|_{L^2}\bigr) \\
+ \bigl( \|\nabla_V^2 X_H\|_{L^2} + \|\nabla_V X_H\|_{L^2} \bigr)
\bigl( \|\nabla_V^2 X_V\|_{L^2} + \|\nabla_V X_V\|_{L^2} \bigr) \Bigr),
\end{multline}
while \eqref{bisd2vv} implies
\begin{multline}\label{bisdvvv2}
\|\nabla_V^2 X_V\|_{L^2}^2 
\leq C \Bigl( \|{\cal B}X\|_{L^2} \|\nabla_V^2 X_V\|_{L^2}
+ \|\nabla_V D_H^{1/3} X_V\|_{L^2} \|D_H^{1/3}X_V\|_{L^2}\\
+  \|\nabla_V X_H\|_{L^2} \|\nabla_VX_V\|_{L^2} \Bigr).
\end{multline}
Combining \eqref{bisdvvh2} and \eqref{bisdvvv2}, for $\delta>0$ small enough there is $C>0$ such that
\begin{multline}\label{DVV2}
\delta \|\nabla_V^2 X_H\|_{L^2}^2 + \|\nabla_V^2 X_V\|_{L^2}^2 
\leq C \Bigl( \|{\cal B}X \|_{L^2} \|\nabla_V^2 X\|_{L^2}
+ \|\nabla_V D_H^{1/3} X\|_{L^2} \|D_H^{2/3}X\|_{L^2} \\
+ \|\nabla_V X\|_{L^2}^2 + \|X\|_{L^2}^2 \Bigr).
\end{multline}
This leads, by interpolation and \eqref{DVH13}, to
\[ \|\nabla_V^2 X\|_{L^2}^2
\leq C \Bigl( \|{\cal B}X\|_{L^2} \bigl( \|\nabla_V^2 X\|_{L^2}+ \|D_H^{2/3}X\|_{L^2}\bigr)
+ \|X\|_{H^{1/3,0}} \|X\|_{H^{2/3,0}} + \|X\|_{H^{0,3/2}}^2.\]
(Keep in mind, to appreciate which are the dominant and secondary terms: each vertical derivative is worth~1, each horizontal derivative is worth~3.)
From there,
\begeq\label{DVV2X}
\|\nabla_V^2 X\|_{L^2}^2
\leq C_\var \bigl( \|{\cal B} X\|_{L^2}^2 + \|X\|_{L^2}^2\bigr) + \var \|X\|_{H^{2/3,0}}^2, 
\endeq
where $\var$ is as small as desired (and $C_\var$ depends on $\var$).
\sm

{\bf Step~3:} Estimate the horizontal regularity of $X$ using the commutator; for that, and just as in Step~3, arrive at
\begeq\label{HXfromcom}
\|D_H^{2/3} X\|_{L^2}^2 \leq C 
\Bigl( \|\nabla_V D_H^{1/3} X\|_{L^2} \|\xi X\|_{L^2} + \|X\|_{H^{0,2}}^2\Bigr)
\endeq
(there is no ${\cal B}$ at this level, so this is the usual geodesic generator $\xi$ as if acting on functions). So with \eqref{DVH13},
\begin{multline} \label{HXfromcom2}
\|D_H^{2/3}X\|_{L^2}^2 \\
\leq C \biggl[ \Bigl( \|D_H^{2/3}X\|_{L^2}^{1/2} \|{\cal B}X\|_{L^2}^{1/2} + \|X\|_{H^{0,3/2}} + \|D_H^{1/3}X\|_{L^2} \Bigr) 
\Bigl( \|LX\|_{L^2} + \| (\xi-\Xi)X\|_{L^2} + \|\Delta_V^\mu X\|_{L^2} + \|X\|_{L^2} \Bigr)\\
+ \|X\|_{H^{0,2}}^2 \biggr].
\end{multline}
Using \eqref{+c},
\begin{multline} \label{HXfromcom3}
\|D_H^{2/3}X\|_{L^2}^2 
\leq C \biggl[ \Bigl( \|D_H^{2/3}X\|_{L^2}^{1/2} \|{\cal B}X\|_{L^2}^{1/2} + \|X\|_{H^{0,3/2}} + \|D_H^{1/3}X\|_{L^2} \Bigr) \\
\bigl( \|{\cal B}X\|_{L^2} + \| X\|_{H^{0,2}}\bigr) + \|X\|_{H^{0,2}}^2 \biggr].
\end{multline}
So
\begeq\label{bisDH23}
\|D_H^{2/3} X\|_{L^2}^2 \leq C \bigl ( \|{\cal B}X\|_{L^2}^2 + \|X\|_{H^{0,2}}^2\bigr).
\endeq
(This estimate is parent to \eqref{DVV2X}, and both summarise relations between the three natural operators of weight~2: $\Delta_V^\mu$, $D_H^{2/3}$ and ${\cal B}$.) Now, combining \eqref{bisDH23} with \eqref{DVV2X}, and taking advantage of the smallness of $\var$ in \eqref{DVV2X}, eventually
\[ \|\nabla_V^2 X\|_{L^2}^2 + \|D_H^{2/3}X\|_{L^2}^2
\leq C\bigl( \|{\cal B}X\|_{L^2}^2 + \|X\|_{L^2}^2\bigr), \]
which concludes the proof.
\end{proof}

\begin{proof}[Proof of Theorem \ref{thmalabouchut}(iii)]
The proof follows exactly the same pattern, now with $D_V = \sqrt{-\Delta_V}$, and integration against the Liouville measure; but instead of using Proposition \ref{propxiboundnomom}, keep using moments to estimate extra terms coming from horizontal commutators. Then by interpolation reduce to $L^2$ moments, using Proposition \ref{propinterp}. This comes at the price of slightly deteriorating the suboptimal exponents appearing in the error terms: Given $\alpha,\beta,\kappa$, choose $\theta>0$ arbitrarily small, then $\alpha_1 = \alpha/(1-\theta)$, $\beta_1=\beta/(1-\theta)$, $\sigma=\kappa/\theta$: then \eqref{interpmoments} becomes
\[ \| f\|_{H^{\alpha,\beta}_\kappa} \leq C \, \|f\|_{H^{\alpha_1,\beta_1}}^{1-\theta} \|f\|_{L^2_\sigma}^\theta.\]
 (So the loss of regularity is arbitrarily small if moments are high enough.) About the impact of the term $v\cdot\nabla_V$, note that it is of order~0 in $L^2$-type estimates, because
\[ \iint (v\cdot\nabla_V f) f = \frac12 \iint v\cdot \nabla_V (f^2) = - \frac{n}2 \iint f^2.\]
Using this it is not difficult to adapt the proof of Part (i). 
\end{proof}

\bibnotes

For local estimates, maximal hypoellipticity is a classical topic ever since H\"ormander's work; but global estimates are much more rare. Lebeau \cite{lebeau:FP2:07} obtained the first global such result for Gaussian measure, through a subtle (global) microlocal analysis. A simpler method was developed by Nier--Sang--White \cite{NSW:25}, still using pseudo-differential operators.

The more elementary scheme of Bouchut \cite{bouchut:hypoell:02} was developed to prove maximal hypoellipticity for the Kolmogorov generator (and thus the simplest kinetic Fokker--Planck operator) in $L^2(\R^n)$. I adapted his method to the Riemannian context for the purpose of this course, with some extra terms coming from the commutators of $\xi$ or $\Xi$ with horizontal or vertical (fractional) differentiations. This provides, to my knowledge, the most elementary method so far to Lebeau's result. Part (iii) of Theorem \ref{thmalabouchut} is new as far as I~know.

Albritton--Armstrong--Mourrat--Novack~\cite{AAMN:KFP:24} prove the following inequality, in flat geometry and under appropriate boundary conditions, which I skip here:
\[ \|f\|_{H^{\alpha,0}} \leq C \bigl( \|\nabla_V h\|_{L^2} + \|\xi h\|_{H^{0,-1}}\bigr) \qquad \forall \alpha< \frac13. \]
It is the same as the plausible \eqref{conjpoinc}, except that the exponent $1/3$ is not included, and there is no need of $\|f\|_{L^2}$ term in the right hand side; actually these authors use the combination $\|\nabla_V h\|_{L^2} + \|\xi h\|_{H^{0,-1}}$ to establish both hypoellipticity and hypocoercivity.

In the context of elliptic equations, the use of maximal estimates to develop general theories via perturbation is addressed for instance in the classical treatise by Gilbarg--Trudinger \cite{GT:elliptic:book}.

\section{Localisation in velocity variable} \label{seclocal}

So far the analysis was centred on stationary operators; now it is time to start the study of time-evolution estimates.

As is classical in kinetic theory, the first concern is about the kinetic localisation of solutions: 
showing that they ``decay well'' at large velocities. For the $L^1$ problem, there is the conservation of mass, 
and one can establish weighted $L^1$ estimates: polynomial moments, exponential moments, or square exponential moments (obviously the best than one can hope for, since the stationary measure is Gaussian).
For the $L^2$ (scalar) problem, there is decay of $L^2$ norm, and one can establish weighted $L^2$ estimates: again, with weights that are polynomial or exponential or square exponential. For vector fields, at this stage, estimates will be more partial and just assert the domination of a weighted $L^2$ norm by a plain $L^2$ norm, ``as if large velocities did not matter''. 

The next statement covers the scalar case. Recall that $\<r\> = \sqrt{1+r^2}$.

\begin{Thm}[localisation for functions in $L^1$ and $L^2$] \label{thmloc}
Let $\vphi:\R_+\to\R_+$ be a smooth positive function such that $\vphi'(0)=0$, and
\begeq\label{condphiloc}
\liminf_{r\to\infty} \left(\frac{ r\vphi'(r) - (n-1)\vphi'(r) - \vphi''(r)}{\vphi(r)}\right) > 0.
\endeq
Such is the case in particular if $\vphi(r) = r^{2k}$ ($k\in\N$), or $\vphi(r) = e^{a\<r\>}$ ($a>0$), or $\vphi(r) = e^{\beta r^2/2}$ ($0<\beta<1$). 
Then

(i) if $f=f(t,x,v)$ solves $\pa_t f + {\cal L}f =0$ and $\int f(0,x,v)\,dx\,dv=1$, then $\int f(t,x,v)\,dx\,dv =1$ for all $t> 0$; moreover there is $A>0$ such that for all $t>0$
\begeq\label{boundL1}
\iint_{\TM} f(t,x,v)\,\vphi(|v|)\,dx\,dv \leq \max \left( \iint_{\TM} f(0,x,v)\,\vphi(|v|)\,dx\,dv , A \right).
\endeq

(ii) if $h=h(t,x,v)$ solves $\pa_t h + L h =0$ then $\int h(t,x,v)^2\,d\mu(x,v) \leq \int h(0,x,v)^2\,d\mu(x,v)$ for all $t>0$; moreover there is $A>0$ such that for all $t>0$
\begeq\label{boundL2}
\iint_{\TM} h(t,x,v)^2\,\vphi(|v|)\,d\mu(x,v) \leq \max \left( \iint_{\TM} h(0,x,v)^2\,\vphi(|v|)\,d\mu(x,v) , A \iint_{\TM} h(0,x,v)^2\,d\mu(x,v) \right).
\endeq
\end{Thm}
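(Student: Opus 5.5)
The plan is a direct Lyapunov computation on $\int f\,\vphi(|v|)$ (resp.\ $\int h^2\vphi(|v|)\,d\mu$). The key structural fact is Proposition~\ref{basicinv}(ii): $\vphi(|v|)$ is a radial function of velocity, so $\nabla_H\vphi(|v|)=0$ and in particular $\xi\,\vphi(|v|)=0$. Combined with the divergence formulas of Proposition~\ref{propdiv}, this makes the transport term disappear entirely. Only the vertical Ornstein--Uhlenbeck part contributes, and it acts fiberwise exactly as in Euclidean $\R^n$.

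For (i), conservation of mass follows from integrating $\pa_t f = -\xi f + \nabla_V\cdot(\nabla_V f + fv)$. By \eqref{xirevu}, $\xi f = \nabla_H\cdot(fv)$, since $\nabla_H v=0$ by \eqref{nablaHvm} and the covariant form. Then \eqref{iintdivH} with constant weight kills this term, and \eqref{iintdivV} kills the vertical divergence.

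Next, multiply by $\Phi(v):=\vphi(|v|)$ and integrate. By \eqref{iintdivH} with weight $\Phi$, together with $\xi\Phi=0$, we get $\iint (\xi f)\Phi = -\iint f\,\xi\Phi = 0$. Integrating by parts twice vertically gives
\[ \frac{d}{dt}\iint f\Phi = \iint f\,\bigl(\Delta_V\Phi - v\cdot\nabla_V\Phi\bigr). \]
With $r=|v|$, the Euclidean radial formulas give $\Delta_V\Phi=\vphi''+\frac{n-1}{r}\vphi'$ and $v\cdot\nabla_V\Phi=r\vphi'$. Hypothesis~\eqref{condphiloc} is meant to read, up to the harmless form of the lower-order radial term (which I will treat as $o(r\vphi')$ for the given examples), $r\vphi' - \frac{n-1}{r}\vphi' - \vphi'' \geq 2c\,\vphi$ for $r\geq R_0$. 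On $\{r\le R_0\}$ the quantity $|\Delta_V\Phi - v\cdot\nabla_V\Phi|$ is bounded, thanks to smoothness and $\vphi'(0)=0$, which handles the singularity of $\frac{n-1}{r}\vphi'$ at $0$. Since $\vphi>0$ is bounded there as well, I expect
\[ \Delta_V\Phi - v\cdot\nabla_V\Phi \leq -c\,\Phi + B. \]
Using $\int f=1$, this yields $\frac{d}{dt}\iint f\Phi \leq -c\iint f\Phi + B$. The set $\{\iint f\Phi\le A\}$ with $A=B/c$ is therefore invariant, and above it the functional decreases, which is \eqref{boundL1}. Checking the three examples is a routine one-variable computation: $r^{2k}$ gives $2k r^{2k}(1+o(1))$; $e^{a\<r\>}$ gives $a r^2/\<r\>\,\vphi\sim a r\vphi$; and $e^{\beta r^2/2}$ gives $\beta(1-\beta)r^2\vphi(1+o(1))$, which needs $\beta<1$.

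For (ii), the same scheme applies in $L^2(\mu)$. First, $\frac{d}{dt}\int h^2\,d\mu = -2\int|\nabla_V h|^2\,d\mu\le0$ by the Corollary on conservativity of $\xi$ and \eqref{DeltaVXX}. Then
\[ \frac{d}{dt}\int h^2\Phi\,d\mu = -2\int(\xi h)h\Phi\,d\mu + 2\int (\Delta_V^\mu h)h\Phi\,d\mu. \]
The first term vanishes because $\iint\xi(h^2)\Phi\,d\mu=0$, by \eqref{iintdivH} with the radial weight $\Phi e^{-|v|^2/2}$. For the second term, integration by parts in $L^2(\mu)$ gives
\[ -2\int|\nabla_V h|^2\Phi\,d\mu + \int h^2\,\Delta_V^\mu\Phi\,d\mu \leq \int h^2\,(\Delta_V\Phi - v\cdot\nabla_V\Phi)\,d\mu, \]
which is the same radial expression as before. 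Hence $\frac{d}{dt}\int h^2\Phi\,d\mu \le -c\int h^2\Phi\,d\mu + B\int h_0^2\,d\mu$, and the ODE comparison gives \eqref{boundL2} with $A=B/c$. This requires $\int h^2\Phi\,d\mu<\infty$, which holds for $\vphi=e^{\beta r^2/2}$ only because $\beta<1$.

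The main obstacle is not algebraic but one of justification. The integrations by parts require decay at infinity, so I will first run the argument with truncated weights $\vphi_R$: these satisfy the same inequality uniformly in $R$ up to a bounded error, or alternatively one can work with a priori smooth, rapidly decaying solutions and pass to the limit by monotone convergence. The remaining care point is verifying that the constant $B$ absorbs the region near $r=0$, which is exactly where $\vphi'(0)=0$ is used.
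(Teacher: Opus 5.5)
Your proposal is correct and follows the paper's proof essentially step for step. The ingredients are the same: $\xi\vphi(|v|)=0$ from Proposition~\ref{basicinv}, vertical integration by parts reducing everything to the drift bound $\Delta_V^\mu\vphi\le -K\vphi+C$, and an ODE comparison. For (ii) the paper goes through the $\Gamma$ formula $L(h^2/2)-hLh=-|\nabla_V h|^2$, which is the same computation, and it too argues only at the level of a priori estimates.

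Your reinterpretation of \eqref{condphiloc} with the correct radial term $\frac{n-1}{r}\vphi'$ holds for every admissible $\vphi$, not just the three examples. Under the literal hypothesis, positivity of $\vphi$ forces $\vphi'\ge 0$ for large $r$, so $\frac{n-1}{r}\vphi'\le (n-1)\vphi'$ there, and the literal condition therefore implies yours.
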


\begin{Rks} \label{rkL1loc}
\begin{itemize}
\item[(i)] The $L^1$ estimate works just the same when the initial condition is a finite measure, rather than a distribution function.

\item[(ii)] The $L^1$ condition reads $L \vphi \geq K \vphi - C$ for some constants $K,C>0$; this is the same as the classical Meyn--Tweedie localisation condition.
\end{itemize}
\end{Rks}

\begin{proof}[Proof of Theorem \ref{thmloc}]
The proof will be in the form of a priori estimates.
First, from the assumption, writing $\vphi(v)$ for $\vphi(|v|)$ and $r$ for $|v|$,
\begeq\label{DphiKC} \Delta_V^\mu\vphi = \Delta_V \vphi - v\cdot\nabla_V \vphi = \vphi''(r) + (n-1) \vphi'(r) - r\vphi'(r) \leq -K \vphi(r) + C, \endeq
for some constants $K,C>0$.

Then start with (i). From $\pa_t f + {\cal L}f=0$ follows
\begin{align*}
\frac{d}{dt} \iint f \vphi 
& = - \iint \bigl( \xi f - \Delta_V f - v\cdot\nabla_V f - n f\bigr)\vphi\\
& = \iint f \bigl( \xi\vphi + \Delta_V \vphi - v\cdot\nabla_V\vphi\bigr).
\end{align*}
From \eqref{basicinv}, $\xi\vphi =0$, so $(d/dt) \int f\vphi = \int f(\Delta_V^\mu\vphi)$. Of course this implies mass conservation. Also, inserting \eqref{DphiKC} and $\int f =1$,
\begeq
\frac{d}{dt} \iint f\vphi \leq -K \iint f\vphi + C
\endeq
so $(d/dt) \int f\vphi \leq 0$ if $\int f\vphi \geq C/K$. This proves (i).
\sm

Now for (ii). From $\pa_t h + Lh = 0$ follows
\[
\pa_t \left(\frac{h^2}{2}\right) + L \left(\frac{h^2}{2}\right) - \Gamma(h,h) = 0\]
where $\Gamma(h,h) = -(1/2) (L h^2- 2 h Lh) = |\nabla_V h|^2$. (This identity, expressing the commutator between the diffusion operator $L$ and the quadratic nonlinearity, is the {\bf Gamma formula for $L$}.) Upon integration against $\vphi$,
\[\frac{d}{dt} \iint h^2 \vphi\,d\mu = - \iint h^2 (L\vphi)\,d\mu- \iint |\nabla_V h|^2\,\vphi\,d\mu.\]
When $\vphi =1$ this of course implies that $\iint h^2\,d\mu$ is nonincreasing.
If on the other hand $\vphi$ satisfies the assumption of the theorem, 
\begin{align*} \frac{d}{dt} \iint h^2 \vphi\,d\mu
& \leq - \iint |\nabla_V h|^2\,\vphi\,d\mu -K \iint h^2\vphi + C \iint h^2 \\
& \leq - \iint |\nabla_V h|^2\,\vphi\,d\mu -K \iint h^2\vphi + C \iint h_0^2
\end{align*}
and (ii) follows from the same argument as (i).
\end{proof}

Now consider the vector case. Recall that ${\cal B} = \Xi - \Delta_V + v\cdot\nabla_V + P_V$.

\begin{Thm}[localisation for the vector diffusion] \label{thmlocalvect}
Let $X=X(t,x,v)$ solve $\pa_tX + {\cal B}X = 0$ with $X(0,\cdot) = X_0$; and let $\beta\in (0,1)$. Then there is $C>0$ such that for all $t>0$, 
\begeq\label{XexpCt} \iint_{\TM} |X(t,x,v)|^2\,d\mu(x,v) \leq e^{Ct} \iint_{\TM} |X_0|^2\,d\mu \endeq
and
\begin{multline}\label{XXvec}
\iint_{\TM} |X(t,x,v,)|^2 e^{\beta |v|^2}\,d\mu(x,v) \\ \leq \max \left( \iint_{\TM} |X_0 |^2\,e^{\beta|v|^2}\, d\mu, C \iint_{\TM} |X_0|^2\, d\mu, C \iint_{\TM} |X(t,x,v)|^2\,d\mu \right).
\end{multline}
\end{Thm}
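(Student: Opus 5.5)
Both estimates will come from weighted energy identities in $L^2(\mu)$, following the proof of Theorem~\ref{thmloc}(ii). The only new feature is that $\Xi$ is not skew-symmetric, but by Remark~\ref{rkXXnot0} its symmetric part is controlled. Write $X=(X_H,X_V)$, and recall from \eqref{BXexpl} that ${\cal B}$ differs from the scalar operator $L$, acting componentwise, only by the terms $-\Sigma(v,v)X_V$ in the horizontal equation and $X_H+X_V$ in the vertical one.

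\textbf{Plain $L^2$ bound \eqref{XexpCt}.} Pair the equation with $X$ in $L^2(\mu)$. Since $\xi$ acts covariantly and preserves the metric, $\int\<\xi X,X\>\,d\mu=0$, exactly as in the conservativity corollary. Also $-\int\<\Delta_V^\mu X,X\>\,d\mu=\|\nabla_VX\|^2$ by \eqref{DeltaVXX}, and $\<P_VX,X\>=|X_V|^2\ge0$. This gives
\[ \frac12\frac{d}{dt}\|X\|_{L^2(\mu)}^2+\|\nabla_VX\|^2+\|X_V\|^2=\int\<\Sigma(v,v)X_V,X_H\>\,d\mu-\int\<X_H,X_V\>\,d\mu. \]
The last term is bounded by $\|X\|^2$. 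For the curvature term, $|\Sigma(v,v)|\le C|v|^2$, and the factor $|v|^2$ is split as $|v|\cdot|v|$ between the two components. Proposition~\ref{propgradmom} ($\GTP$ for $\mu$) gives $\||v|X_V\|^2\le C(\|\nabla_VX_V\|^2+\|X_V\|^2)$, and likewise for $X_H$. Hence the curvature term is at most
\[ C\,(\|\nabla_VX_V\|+\|X_V\|)(\|\nabla_VX_H\|+\|X_H\|). \]
This is the main obstacle: it has the same order as the dissipation $\|\nabla_VX\|^2$, so Young's inequality does not directly absorb it. I would first try an anisotropic weighting, with energy $\|X_V\|^2+\delta\|X_H\|^2$ and $\delta$ small. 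Then the cross term carries a factor $\delta$ and Young gives $\delta(\epsilon\|\nabla_VX_H\|^2+\epsilon^{-1}\|\nabla_VX_V\|^2)$. The piece $\delta\epsilon\|\nabla_VX_H\|^2$ is absorbed by the horizontal dissipation $\delta\|\nabla_VX_H\|^2$ once $\epsilon$ is small, and $\delta\epsilon^{-1}\|\nabla_VX_V\|^2$ by the vertical dissipation once $\delta\ll\epsilon$. The same balancing was used in \eqref{DVH13}. Lower-order remainders are bounded by $C(\|X_V\|^2+\delta\|X_H\|^2)$, and Gronwall yields \eqref{XexpCt}, with constant $\max(1,\delta^{-1})$ absorbed into $e^{Ct}$.

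\textbf{Weighted bound \eqref{XXvec}.} Take $\vphi=e^{\beta|v|^2}$, or more precisely $e^{\beta' r^2/2}$ with $\beta'=2\beta<2$, and follow the proof of Theorem~\ref{thmloc}(ii). By Proposition~\ref{basicinv}(ii), $\xi\vphi=0$, so the transport term again drops out, and the Gamma formula gives
\[ \frac{d}{dt}\int|X|^2\vphi\,d\mu+\int|\nabla_VX|^2\vphi\,d\mu\le-\int|X|^2L\vphi\,d\mu+(\text{curvature and order-$0$ terms}). \]
For the Gaussian weight one computes directly $-\Delta_V^\mu\vphi\ge K|v|^2\vphi-C\vphi$ with $K>0$; the linear bound of \eqref{condphiloc} is not enough here. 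This term produces the damping $-K\int|v|^2|X|^2\vphi+C\int|X|^2\vphi$. The curvature term is at most $C\int|v|^2|X_V||X_H|\vphi$, again split $|v|\cdot|v|$, with a $\GTP$-type inequality now for the measure $\vphi\mu$. Since $\vphi\mu$ has density proportional to $e^{-(1-\beta')|v|^2/2}$, it satisfies \eqref{condlogconc}, so Proposition~\ref{GTPulc} applies. The same $\delta$-weighting of the horizontal component absorbs this term into the dissipation and the $|v|^2$-damping. Large $|v|$ absorbs $C\int|X|^2\vphi$ except on a ball, where it is at most $C'\int|X|^2d\mu$. One obtains $\frac{d}{dt}E_\vphi\le -K'E_\vphi+C\|X(t)\|^2_{L^2(\mu)}$, where $E_\vphi$ is the $\delta$-weighted version of $\int|X|^2\vphi\,d\mu$, and the same maximum-principle argument as in Theorem~\ref{thmloc} gives \eqref{XXvec}. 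The plain-$L^2$ term appears at time $t$ because the $L^2$ norm is no longer monotone. The equivalence constants between the $\delta$-weighted and unweighted quantities are absorbed into $C$, possibly at the cost of enlarging the first term of the max by a constant.
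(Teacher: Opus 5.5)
Your argument follows the paper's up to the last step. In the plain estimate, the $\delta$-weighted energy $\|X_V\|^2+\delta\|X_H\|^2$ together with the Gaussian tail Poincar\'e bound for $\mu$ absorbs $\int\<\Sigma(v,v)X_V,X_H\>\,d\mu$. In the weighted estimate, the quadratic damping $-\Delta_V^\mu\vphi\ge K|v|^2\vphi-C\vphi$ and the same $\delta$-weighting give $\frac{d}{dt}E_\vphi\le -K'E_\vphi+C\,N(t)$, with $N(t)=\|X(t)\|^2_{L^2(\mu)}$. The paper absorbs the weighted cross term $\delta C\int|v|^2|X_V||X_H|\vphi\,d\mu$ pointwise into the $|v|^2$-damping by Young's inequality, so it needs no GTP inequality for $\vphi\mu$; your detour through Proposition~\ref{GTPulc} also works.

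\textbf{The gap is in the conclusion you draw from this inequality.} The barrier argument of Theorem~\ref{thmloc} only shows that $E_\vphi$ decreases while it lies above the \emph{current} level $\frac{C}{K'}N(t)$. It therefore gives only
\[
E_\vphi(t)\le \max\Bigl(E_\vphi(0),\ \frac{C}{K'}\sup_{0\le s\le t}N(s)\Bigr).
\]
If $N$ was large at an earlier time and has since dropped, nothing forces $E_\vphi$ down to $\frac{C}{K'}N(t)$. In Theorem~\ref{thmloc} this was harmless because $\|h\|_{L^2(\mu)}$ is nonincreasing.

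Here, replacing $\sup_{s\le t}N(s)$ by $\max(N(0),N(t))$ would require $N(s)\le C\,N(t)$ for $s\le t$, that is, a lower bound $\frac{d}{dt}\log N\ge -C$. That bound is false: $\frac{dN}{dt}$ contains the dissipation $-2\|\nabla_VX\|^2$, which is not controlled by $\|X\|^2$. For example, on a flat torus with $X_H=0$ and $X_V$ a degree-$k$ Hermite polynomial in $v$, one gets $\frac{d}{dt}\log N=-2(k+1)$.

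Combined with your first part, what your argument actually proves is $\int|X(t)|^2\vphi\,d\mu\le C\max\bigl(\int|X_0|^2\vphi\,d\mu,\ e^{Ct}\int|X_0|^2\,d\mu\bigr)$, not \eqref{XXvec}. The paper's own closing does not fill this hole. It uses Jensen's inequality to get $\frac{d{\cal E}}{dt}\le -K{\cal E}\log({\cal E}/{\cal N})+C{\cal N}$, then a differential inequality for $\log({\cal E}/{\cal N})$. That step invokes $|\frac{d}{dt}\log{\cal N}|\le C$, but \eqref{dNdt} supplies only the upper half — exactly the missing lower bound.

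Two smaller points:
\begin{itemize}
\item \textbf{Prefactor.} The factor $\delta^{-1}$ from changing the energy cannot be absorbed into $e^{Ct}$ as $t\to0$. What you (and the paper) prove is $\|X(t)\|^2_{L^2(\mu)}\le C\,e^{Ct}\|X_0\|^2_{L^2(\mu)}$. A prefactor $1$ would force ${\cal B}+C$ to be accretive on $L^2(\mu)$, and the curvature cross term prevents this in general when the curvature is large compared to the diffusion.
\item \textbf{Range of $\beta'$.} Your weighted argument needs $\beta'<1$, not $\beta'<2$. Since $-\Delta_V^\mu e^{\beta'|v|^2/2}=(\beta'(1-\beta')|v|^2-n\beta')e^{\beta'|v|^2/2}$, for $\beta'\ge1$ neither the quadratic damping nor the log-concavity of $\vphi\mu$ holds. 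So you cover the weight $e^{\beta'|v|^2/2}$ with $\beta'\in(0,1)$, which is also what the paper's bound \eqref{Lvphir2} with $K<\beta(1-\beta)$ actually treats.
\end{itemize}
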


\begin{Rk} The short version of \eqref{XXvec} is that the ratio of $\int |X|^2 e^{\beta |v|^2}\,d\mu$ to $\int |X|^2\,d\mu$ remains bounded with time, provided that it is initially finite. Of course, this combined with \eqref{XexpCt} implies that $\int |X|^2 e^{\beta |v|^2}\,d\mu$ grows at most exponentially in time, but as far as localisation is concerned, estimate \eqref{XXvec} gives more information.
\end{Rk}

\begin{proof}[Proof of Theorem \ref{thmlocalvect}]
Let $\vphi = e^{\beta |v|^2}$, viewed as a function of $v$ or $|v|=r$. Then
\[ L\vphi = \bigl( -\beta^2 r^2 - \beta - (n-1)\beta r + \beta r^2\bigr ) \vphi.\]
So for any $K\in (0,\beta (1-\beta))$, there is $C>0$ such that
\begeq\label{Lvphir2} L\vphi \geq (K r^2 - C)\vphi. \endeq

Repeat the $L^2$ computation of Theorem \ref{thmloc} (ii), separating the horizontal and vertical components, and integrating againt $\vphi$. This yields:
\begin{multline}\label{L2phi1}
\frac{d}{2dt} \iint |X_H|^2\,\vphi\,d\mu 
= - \iint |\nabla_V X_H|^2\,\vphi\,d\mu - \iint |X_H|^2 (L\vphi)\,d\mu -  \iint |X_H|^2\,\vphi\,d\mu\\
- \iint \bigl\< (\Xi-\xi) X_H,X_H\bigr\>\,\vphi\,d\mu 
\end{multline}
\begin{multline}\label{L2phi2}
\frac{d}{2dt} \iint |X_V|^2\,\vphi\,d\mu 
= - \iint |\nabla_V X_V|^2\,\vphi\,d\mu - \iint |X_V|^2 (L\vphi)\,d\mu -  \iint |X_V|^2\,\vphi\,d\mu\\
- \iint \bigl\< (\Xi-\xi) X_V,X_V\bigr\>\,\vphi\,d\mu 
\end{multline}
Recall from \eqref{Xiexpl} that
\[ \Bigl| \bigl\< (\Xi-\xi) X_H,X_H\bigr\> \Bigr| \leq C |v|^2 |X_V|\,|X_H|,\]
\[ \Bigl| \bigl\< (\Xi-\xi) X_V,X_V\bigr\> \Bigr| \leq  |X_V|\,|X_H|,\]
for some constant $C>0$. Insert those bounds as well as \eqref{Lvphir2} into \eqref{L2phi1}--\eqref{L2phi2}, to get
\begin{multline}\label{L2phi3}
\frac{d}{2dt} \iint |X_H|^2\,\vphi\,d\mu 
\leq - \iint |\nabla_V X_H|^2\,\vphi\,d\mu - K \iint |X_H|^2 |v|^2 \,\vphi \,d\mu + C \iint |X_V|\, |X_H|\, |v|^2\,\vphi\,d\mu\\
+ C \iint |X|^2\,\vphi\,d\mu
\end{multline}
\begeq\label{L2phi4}
\frac{d}{2dt} \iint |X_V|^2\,\vphi\,d\mu 
\leq - \iint |\nabla_V X_V|^2\,\vphi\,d\mu - K \iint |X_V|^2 |v|^2\,\vphi\,d\mu + C  \iint |X|^2\,\vphi\,d\mu.
\endeq

Replacing for a moment $\vphi$ by~1 (or $\beta$ by~0), this yields
\begeq\label{L211}
\frac{d}{2dt} \iint |X_H|^2\,d\mu 
\leq - \iint |\nabla_V X_H|^2\, d\mu  + C \iint |X_V|\, |X_H|\,d\mu
+ C \iint |X|^2\,d\mu
\endeq
\begeq\label{L212}
\frac{d}{2dt} \iint |X_V|^2\,d\mu
\leq - \iint |\nabla_V X_V|^2\,d\mu + C  \iint |X|^2\,d\mu.
\endeq
Using \eqref{ineqregv}, \eqref{L211} implies
\begin{align*}
\frac{d}{2dt} \iint |X_H|^2\,d\mu 
& \leq -\frac12 \iint |\nabla_V X_H|^2\, d\mu  + C \iint |X_H|\, |v|^2\,d\mu
+ C \iint |X|^2\,\vphi\,d\mu\\
& \leq -\frac12 \iint |\nabla_V X_H|^2\, d\mu  + C \iint |\nabla_V X_H|\, \,d\mu
+ C \iint |X|^2\,\vphi\,d\mu;
\end{align*}
combining this with \eqref{L212}, for $\delta>0$ small enough,
\begeq\label{L215}
\frac{d}{dt} \iint (\delta |X_H|^2 + |X_V|^2)\,d\mu \leq C \iint |X|^2\,d\mu.
\endeq
So the quantity
\begeq\label{calN}
{\cal N}(t) = \iint (\delta |X_H|^2 + |X_V|^2)\,d\mu 
\endeq
satisfies 
\begeq\label{dNdt}
\frac{d{\cal N}}{dt} \leq C{\cal N}
\endeq
for some constant $C>0$, and the bound \eqref{XexpCt} follows.

Now back to the estimate with $\vphi$, in the same way, for $\delta>0$ we have, from \eqref{L2phi3}--\eqref{L2phi4},
\begin{multline*}
\frac{d}{2dt} 
 \iint (\delta |X_H|^2 + |X_V|^2)\,\vphi\,d\mu 
\leq - \iint (\delta |\nabla_V X_H|^2 + |\nabla_V X_V|^2)\,d\mu \\
- \iint \bigl( \delta K |X_H|^2 - \delta R |X_V|\,|X_V| + |X_V|^2\bigr)|v|^2\,d\mu 
+C  \iint ( |X_H|^2 + |X_V|^2)\,\vphi\,d\mu.
\end{multline*}
Choosing $\delta>0$ small enough, letting
\begeq\label{calE}
{\cal E} = \iint (\delta |X_H|^2 + |X_V|^2)\,\vphi\,d\mu 
\endeq
we arrive at
\begeq\label{eqdiffcalE}
\frac{d{\cal E}}{dt} \leq -K \iint (\delta |X_H|^2 + |X_V|^2)\, |v|^2\,\vphi\,d\mu + C \iint (\delta |X_H|^2 + |X_V|^2) \,\vphi\,d\mu.
\endeq
if $K,C>0$ are well-chosen constants.
Next, for any $\var>0$,
\[ \vphi(v) \leq \var \vphi(v) |v|^2 + \vphi\left(\frac1{\sqrt{\var}}\right), \]
so upon changing $K,C$ \eqref{eqdiffcalE} becomes
\begeq
\frac{d{\cal E}}{dt} \leq -K \iint (\delta |X_H|^2 + |X_V|^2)\, |v|^2\,\vphi\,d\mu + C \iint (\delta |X_H|^2 + |X_V|^2) \,d\mu.
\endeq

By Jensen's inequality, applied with the probability measure $(\delta |X_H|^2 + |X_V|^2)\,\mu/Z$ ($Z$ the normalising constant) and the convex function $s\log s$,
\begin{multline*} \beta \iint (\delta |X_H|^2 + |X_V|^2)\, |v|^2\,e^{\beta |v|^2}\,d\mu \\ \geq 
\left( \iint (\delta |X_H|^2 + |X_V|^2)\, e^{\beta |v|^2}\,d\mu \right) \log \left(
\frac{\dps \iint (\delta |X_H|^2 + |X_V|^2)\, e^{\beta |v|^2}\,d\mu}{\dps \iint (\delta |X_H|^2 + |X_V|^2)\,d\mu}\right).
\end{multline*}
Thus recalling \eqref{calN},
\begeq\label{systEN}
\frac{d{\cal E}}{dt} \leq -K\, {\cal E} \log \left(\frac{\cal E}{\cal N}\right) + C {\cal N}.
\endeq
In particular,
\begin{align*}
 \frac{d}{dt} (\log {\cal E}) &\leq -K \log \left(\frac{{\cal E}}{{\cal N}}\right) + C\, \frac{{\cal N}}{{\cal E}} \\
& \leq -K \log \left(\frac{{\cal E}}{{\cal N}}\right) + C.
\end{align*}
Combining this with \eqref{dNdt}, in the form $|(d/dt)\log{\cal N}| \leq C$, 
\begeq\label{dlogE}
 \frac{d}{dt} \log \left(\frac{{\cal E}}{{\cal N}}\right) \leq -K \log \left(\frac{{\cal E}}{{\cal N}}\right) + C.
 \endeq
In particular, ${\cal E}/{\cal N}$ becomes decreasing if it ever goes above $\exp (C/K)$, which proves that it is bounded, uniformly in time and concludes the proof.
\end{proof}

The next observation is specific to the $L^1$ problem and has notable consequences: it shows that the square exponential moment property is propagated forward and backward according to a certain differential equation. I shall use the notation
\begeq\label{Mbeta}
M_\beta(f) = \iint_{\TM} f(x,v)\,e^{\beta \frac{|v|^2}2}\,dx\,dv.
\endeq

\begin{Prop}[Two-sided propagation of square exponential moments] \label{2sideexp}
Let $M$ be compact and let $f(t,v,x)$ be a time-dependent probability density of $\TM$ satisfying \eqref{eqf}, and let $f_0=f(0,\cdot)$. Let further $\beta_0\in (0,1)$ such that $M_{\beta_0}(f_0)<+\infty$. Let further $\beta(t)$ be the unique solution to the differential equation
\begeq\label{dotbeta} \dot{\beta}(t) = -2 \beta(t) (\beta(t)-1), \qquad \beta(0) = \beta_0. \endeq
Explicitly,
\begeq\label{betaexpl}
\frac1{\beta(t)} = 1 + \left(\frac1{\beta_0}-1\right)\, e^{-2t}.
\endeq
Then 
\[ e^{n\beta_0t} \leq \frac{M_{\beta(t)}(f(t,\cdot))}{M_{\beta_0}(f_0)} \leq e^{n t}. \]
In particular, 

(i) If $t\geq 0$ is given, then $f(t,\cdot)$ has a square exponential moment if and only if $f(0,\cdot)$ does;

(ii) If $M_{\beta_0}(f_0) <+\infty$ for some $\alpha>0$, then for any $\alpha<1$ one can find $t_0\geq 0$, only depending on $\beta_0$, such that $M_{\alpha}(f(t,\cdot))<+\infty$ for all $t\geq t_0$.
\end{Prop}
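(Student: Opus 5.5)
The plan is to differentiate the time-dependent square exponential moment directly and observe that the Riccati equation \eqref{dotbeta} is chosen exactly so that the unbounded part of the dissipation cancels. Set $\vphi_t(x,v)=e^{\beta(t)|v|^2/2}$, a radial function of $v$, so that $\xi\vphi_t=0$ by Proposition~\ref{basicinv}(ii). As in the proof of Theorem~\ref{thmloc}(i), integration by parts (Proposition~\ref{propdiv}) gives
\[ \frac{d}{dt}\iint f\,\vphi_t = \iint f\,\bigl(\pa_t\vphi_t + \xi\vphi_t + \Delta_V^\mu\vphi_t\bigr). \]
The key computation is $\Delta_V^\mu\vphi_t$. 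With $r=|v|$ we have $\vphi'=\beta r\vphi$ and $\vphi''=(\beta+\beta^2r^2)\vphi$, so
\[ \Delta_V\vphi_t=(n\beta+\beta^2r^2)\vphi_t, \qquad v\cdot\nabla_V\vphi_t=\beta r^2\vphi_t, \]
and therefore $\Delta_V^\mu\vphi_t=\bigl(n\beta+(\beta^2-\beta)r^2\bigr)\vphi_t$. On the other hand $\pa_t\vphi_t=\tfrac12\dot\beta\, r^2\vphi_t=-\beta(\beta-1)r^2\vphi_t$ by \eqref{dotbeta}. The $r^2$ terms cancel exactly, leaving
\[ \frac{d}{dt}M_{\beta(t)}(f(t))=n\,\beta(t)\,M_{\beta(t)}(f(t)), \qquad\text{hence}\qquad M_{\beta(t)}(f(t))=M_{\beta_0}(f_0)\exp\Bigl(n\int_0^t\beta\Bigr). \]
Solving the logistic equation gives \eqref{betaexpl}. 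For $\beta_0\in(0,1)$ this shows that $\beta(t)$ increases from $\beta_0$ towards $1$, so $\beta_0\le\beta(s)\le1$ for all $s\ge0$, and the two-sided bound $e^{n\beta_0t}\le M_{\beta(t)}(f(t))/M_{\beta_0}(f_0)\le e^{nt}$ follows.

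The main obstacle is rigor rather than algebra: the identity is an a priori computation involving a weight that grows like a square exponential, and I would justify it by truncation. First replace $\vphi_t$ by $\vphi_t\,\chi(|v|/R)$, or by a bounded increasing radial approximation of it. Since any radial weight remains horizontal, $\xi$ still contributes nothing, and the cutoff produces error terms supported where $|v|\sim R$. These errors have a favorable sign, or vanish as $R\to\infty$ by monotone convergence, which yields the upper bound. The lower bound is then obtained by running the same argument backwards in time on the finite quantity, or by Gronwall applied to the exact identity once finiteness is known.

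For (i), fix $t$. By \eqref{betaexpl}, the map $\beta_0\mapsto\beta(t)$ is an increasing bijection of $(0,1)$ onto itself. If $f_0$ has a square exponential moment, the forward bound shows that $f(t)$ has one. Conversely, if $M_{\beta_1}(f(t))<\infty$ for some $\beta_1\in(0,1)$, choose $\beta_0$ with $\beta(t)=\beta_1$; the lower bound then gives $M_{\beta_0}(f_0)\le e^{-n\beta_0t}M_{\beta_1}(f(t))<\infty$. Here I would use the truncated identity in its integrated form to avoid assuming finiteness beforehand. For (ii), since $\beta(t)\to1$, for any $\alpha<1$ there is $t_0$ depending only on $\beta_0$ and $\alpha$ with $\beta(t)\ge\alpha$ for $t\ge t_0$. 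Then $M_\alpha(f(t))\le M_{\beta(t)}(f(t))\le e^{nt}M_{\beta_0}(f_0)<\infty$.
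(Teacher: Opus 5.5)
Your proof is correct and is essentially the paper's: the same identity $\Delta_V^\mu e^{\beta|v|^2/2}=\bigl(n\beta+(\beta^2-\beta)|v|^2\bigr)e^{\beta|v|^2/2}$, the same exact cancellation of the $|v|^2$ terms forced by \eqref{dotbeta}, and the same integration of $\frac{d}{dt}\log M_{\beta(t)}(f(t,\cdot))=n\beta(t)\in[n\beta_0,n]$; the paper does all of this formally, while your truncation sketch works for the forward bounds provided the cutoff is a bounded concave function of $e^{\beta|v|^2/2}$. The one place your sketch is thinner than it sounds, and which the paper also leaves unaddressed, is the converse in (i): a concave bounded truncation of the exact solution $e^{n\int_s^t\beta}e^{\beta(s)|v|^2/2}$ makes $\iint f(s)\,(\cdot)$ nonincreasing in $s$, which is the wrong inequality, so to get the lower bound without a priori finiteness you should instead pair $f$ with the bounded radial solution of $\partial_s u+\Delta_V^\mu u=0$ (radial, so $\xi u=0$) with terminal value $\min\bigl(e^{\beta(t)|v|^2/2},R\bigr)$, which by Mehler's formula increases to $e^{n\int_s^t\beta}e^{\beta(s)|v|^2/2}$ as $R\to\infty$.
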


\begin{Cor}[Separation of the $L^1$ and $L^2$ regimes] \label{corL1L2}
Let $f_0\in L^1(dx\,dv)$ be such that $\int f(0,x,v) e^{\beta_0|v|^2/2}\,dx\,dv = +\infty$ for some $\beta_0<1$, and let $h(t,\cdot)$ be the density of $f(t,\cdot)\,dx\,dv$ with respect to $\mu$; then $h$ never belongs in $L^2(\mu)$.
\end{Cor}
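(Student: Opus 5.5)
The plan is to combine the two-sided propagation of square exponential moments in Proposition~\ref{2sideexp} with an elementary comparison between membership of $h$ in $L^2(\mu)$ and finiteness of square exponential moments of $f$. The argument is by contradiction: assume $h(t_1,\cdot)\in L^2(\mu)$ for some $t_1\geq 0$, and derive that $f$ has a finite square exponential moment of an order that the hypothesis $M_{\beta_0}(f_0)=+\infty$ forbids at time $t_1$.

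\emph{Step 1 (from $L^2(\mu)$ to moments).} Since $f=h f_\infty$ and $f_\infty\propto e^{-|v|^2/2}$, we have
\[ \|h\|_{L^2(\mu)}^2 = \iint \frac{f^2}{f_\infty} \propto \iint f^2\, e^{\frac{|v|^2}{2}}\,dx\,dv. \]
Writing $f e^{\gamma|v|^2/2}=\bigl(f e^{|v|^2/4}\bigr)\, e^{(\gamma-\frac12)\frac{|v|^2}{2}}$ and applying Cauchy--Schwarz gives $M_\gamma(f(t_1))<\infty$ for every $\gamma<1/2$. Hölder interpolation with the conserved mass $\iint f=1$ should yield the same conclusion, so the useful output of this step is
\[ h(t_1)\in L^2(\mu)\ \Longrightarrow\ M_\gamma(f(t_1))<\infty \quad\text{for all } \gamma<\tfrac12 . \]

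\emph{Step 2 (transport of the infinite moment).} Apply the lower bound $e^{n\beta_0 t}M_{\beta_0}(f_0)\leq M_{\beta(t)}(f(t))$ of Proposition~\ref{2sideexp}. If $M_{\beta_0}(f_0)=+\infty$, this gives $M_{\beta(t)}(f(t))=+\infty$ for every $t\geq 0$, where $\beta(t)$ is given by \eqref{betaexpl}. This is item (i) of that proposition. Since $\beta_0<1$, the function $\beta(t)$ increases from $\beta_0$ to $1$. At $t=t_1$, infiniteness of $M_{\beta(t_1)}$ is to be confronted with the finiteness obtained in Step~1. When $\beta(t_1)<1/2$ the two are immediately incompatible, since $M_\gamma$ is nondecreasing in $\gamma$.

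\emph{Step 3 (main obstacle: closing the range $\beta\in[1/2,1)$).} Step~1 only controls orders below $1/2$, while Step~2 produces an infinite moment at an order $\beta(t_1)\geq\beta_0$ that may lie in $[1/2,1)$. Gaussian examples $f\propto e^{-\gamma|v|^2/2}$ with $\tfrac12<\gamma<\beta_0$ show that Cauchy--Schwarz cannot be improved at a single time. So the gap must be closed using the dynamics rather than a static inequality. I would first try to exploit the forward propagation of $L^2(\mu)$ guaranteed by Theorem~\ref{thmloc}(ii). If $h(t_1)\in L^2(\mu)$, then $M_\gamma(f(t_1))<\infty$ for $\gamma$ close to $1/2$. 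Restarting Proposition~\ref{2sideexp} at time $t_1$ with initial order $\gamma$ gives finite moments at orders $\gamma(s)$ along $1/\gamma(s)=1+(1/\gamma-1)e^{-2(s-t_1)}$. Meanwhile the trajectory issued from $(0,\beta_0)$ gives infinite moments at orders $\beta(s)$. The contradiction follows as soon as $\gamma(s)\geq\beta(s)$ for some $s$, that is as soon as $(1/\gamma-1)e^{2t_1}\leq 1/\beta_0-1$.

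I expect this comparison of the two Riccati trajectories to be the crux. It closes directly when $t_1$ is small or $\beta_0$ is small. In the remaining regime I would look for an improved Step~1 that uses $L^2(\mu)$-smoothing, for instance a weighted Gamma-type estimate in the spirit of Theorem~\ref{thmloc}(ii) applied to $\vphi=e^{\beta|v|^2/2}$, in order to raise the admissible order $\gamma$ above $1/2$ after a short positive time.
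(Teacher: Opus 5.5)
Your Steps~1--2 are the paper's own argument: Cauchy--Schwarz against $e^{(\beta-\frac12)|v|^2/2}$, plus the lower bound of Proposition~\ref{2sideexp}. You correctly see that they only close when $\beta(t_1)<1/2$. The paper misses this: it asserts that $\iint e^{(2\beta_t-1)|v|^2/2}\,dx\,dv<\infty$ ``since $\beta_t<1$'', whereas finiteness requires $\beta_t<1/2$.

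The genuine gap is your Step~3, and it cannot be filled, because for $\beta(t_1)\geq 1/2$ the statement is false.

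\emph{The Riccati comparison adds nothing.} By \eqref{betaexpl}, the condition $(1/\gamma-1)e^{2t_1}\leq 1/\beta_0-1$ is exactly $\gamma\geq\beta(t_1)$. Since \eqref{dotbeta} is an autonomous scalar flow, its trajectories never cross, so restarting at $t_1$ gives nothing beyond Step~2.

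\emph{No improved Step~1 can exist.} Since $\xi$ annihilates radial functions (Proposition~\ref{basicinv}), for any $\gamma_0\in(0,1)$ the function
\[ f(t,x,v)=\frac{1}{\vol(M)}\Bigl(\frac{\gamma(t)}{2\pi}\Bigr)^{n/2}e^{-\gamma(t)\frac{|v|^2}{2}},\qquad \frac{1}{\gamma(t)}=1+\Bigl(\frac{1}{\gamma_0}-1\Bigr)e^{-2t}, \]
is an exact solution of \eqref{eqf} on any compact $M$; note that $\gamma$ follows the same flow \eqref{dotbeta}. For this solution, $M_\beta(f(t))<\infty$ if and only if $\beta<\gamma(t)$, and $h(t)=\gamma(t)^{n/2}e^{(1-\gamma(t))|v|^2/2}\in L^2(\mu)$ if and only if $\gamma(t)>1/2$. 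Take $\gamma_0\in(1/2,1)$ and $\beta_0\in[\gamma_0,1)$. Then $M_{\beta_0}(f_0)=+\infty$, yet $h(t)\in L^2(\mu)$ for every $t\geq 0$, already at $t=0$. This is simply your single-time Gaussian, transported by the equation.

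What your Steps~1--2 actually prove, and what the corollary should say, is the following. If $\beta_0<1/2$ and $M_{\beta_0}(f_0)=+\infty$, then $h(t)\notin L^2(\mu)$ for $0\leq t<\frac12\log(1/\beta_0-1)$, that is, as long as $\beta(t)<1/2$. The same Gaussian family with $\gamma_0=\beta_0$ shows this window is sharp: $h(t)\in L^2(\mu)$ for every $t>\frac12\log(1/\beta_0-1)$. So rather than searching for a dynamical mechanism in Step~3, you should restrict the claim to this time window.
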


\begin{Rk} Note that if $\beta$ solves \eqref{dotbeta}, then $\beta$ increases from $\beta_0$ to~1, never reaches~1, and the time it takes to reach any given $\alpha\in (0,1)$ diverges to $\infty$ as $\beta_0\to 0$.
\end{Rk}

\begin{proof}[Proof of Proposition \ref{2sideexp}]
Let $\beta=\beta(t)$ be a smooth positive function of $t$. Then as in the proof of Theorem \ref{thmloc},
\begin{multline}
\frac{d}{dt} \iint e^{\beta(t)\frac{|v|^2}2}\, f(t,x,v)\,dx\,dv \\
= \iint f \bigl( \Delta_V^\mu e^{\beta(t) \frac{|v|^2}2} + \dot{\beta}(t) \frac{|v|^2}2 e^{\beta\frac{|v|^2}2}\bigr)\,dx\,dv \\
= \iint \left(n\beta + \bigl[ 2\beta(\beta-1) + \dot{\beta}\bigr] \frac{|v|^2}{2} \right)\, e^{\beta\frac{|v|^2}2}\, f(t,x,v)\,dx\,dv.
\end{multline}
So if $\beta$ satisfies \eqref{dotbeta}, this simplifies to
\begeq\label{ddtMbeta}
\frac{d}{dt} \iint e^{\beta(t)\frac{|v|^2}2} f(t,x,v)\,dx\,dv = n\beta(t)  \iint e^{\beta(t)\frac{|v|^2}2} f(t,x,v)\,dx\,dv,
\endeq
or just
\[ \frac{d}{dt} \log\left( \iint e^{\beta(t)\frac{|v|^2}2} f(t,x,v)\,dx\,dv\right) =  n\beta (t) \in [n\beta_0, n].\]
The rest of the Proposition follows easily. (An explicit formula for $M_\beta$ can also be computed.)
\end{proof}

\begin{proof}[Proof of Corollary \ref{corL1L2}]
Let $f_0$ satisfy the assumption of the Corollary, and let $t>0$, so $\int f(t,x,v) e^{\beta_t |v|^2/2}\,dx\,dv = +\infty$. But by Cauchy--Schwarz, 
\[ \iint f(t,x,v)\, e^{\beta_t|v|^2/2}\,dx\,dv\leq \sqrt{ \iint f(t,x,v)^2\, e^{\frac{|v|^2}2}\,dx\,dv} \sqrt{\iint e^{(2\beta_t-1)|v|^2/2}\,dx\,dv},\]
and the final integral on the right-hand side is finite since $\beta_t<1$. So if the integral on the left hand side is infinite, the first integral on the right hand side has to be infinite too.
\end{proof}

\bibnotes

Moment estimates for evolution equations, both in $L^1$ and $L^2$, are classical in kinetic theory; see for instance \cite{DV:landau:1,TV:slow:00} for spatially homogeneous variants of the Fokker--Planck equation. The more subtle estimate for the Bismutian evolution equation (Theorem \ref{thmlocalvect}) was devised for the present notes.

Meyn and Tweedie developed and improved the methods for convergence of positive recurrent Markov processes in discrete or continuous phase space, discrete or continuous time~\cite{meyntweedie:stability:92,meyntweedie:stability:93,meyntweedie:geometric:94,meyntweedie:MCbook}. A review is provided by Ca\~{n}izo and Mischler \cite{canizomischler:harris:23}, including the history of the field and optimised proofs and results.

\section{Global regularisation} \label{secreg}

Now the focus will be on regularity for the evolution equation, globally in phase space and in time, also for nonsmooth initial data. Localisation is needed and various assumptions can be made in this respect. Without attempting at exhaustivity, I shall consider three such settings: for the equation in $L^2(\mu)$, the initial datum $h_0$ will be plainly assumed to belong in $L^2$; for the equation in $L^1$, the initial distribution $f_0$ will have all of its moments finite; for the equation on vector fields, the initial vector field $X_0$ will be assumed to have a finite square-exponential moment (which is true for instance if $X_0$ has at most polynomial or exponential growth in $v$).

\begin{Thm}[global regularisation for geometric kinetic Fokker--Planck] \label{thmregul}
Let $M$ be a smooth compact manifold.
\sm

(i) Let $h=h(t,x,v)$ solve $\pa_t h + Lh =0$ starting from $h_0\in L^2(\mu)$. Then $h$ is of class $C^\infty$ for positive times, and for any $\alpha>0$ there is $C>0$, depending only on $M$ and $\alpha$, such that for any $t>0$,
\begeq\label{Halpha}
\|h(t,\cdot)\|_{H^{0,\alpha}(\mu)} + \|h(t,\cdot)\|_{H^{\alpha/3},0(\mu)} \leq C\, \max \left(\frac1{t^{\alpha/2}},1\right) \, \|h_0\|_{L^2(\mu)}. 
\endeq
More generally, for any $\alpha,\beta>0$,
\begeq\label{Halphabeta}
\|h(t,\cdot)\|_{H^{\alpha,\beta}(\mu)} \leq C\, \max \left(\frac1{t^{(3\alpha+\beta)/2}},1\right) \, \|h_0\|_{L^2(\mu)}. 
\endeq
Moreover, if $h_0\in L^2_\kappa$ for all $\kappa>0$, then for all $\alpha,\beta,\kappa>0$,
\begeq\label{Halphabetakappa} \| h(t,\cdot)\|_{H^{\alpha,\beta}_\kappa} = O (1+t^{-r}) \qquad \forall r> (3\alpha+\beta)/2.\endeq
\sm

(ii) Let $X=X(t,x,v)$ solve $\pa_t X + {\cal B} X =0$, starting from $X_0 \in L^2(\mu^{1+\theta})$ for some $\theta>0$. Then $X$ is of class $C^\infty$ for positive times and for any $\alpha>0$ there is $C=C(M,\alpha,\theta)>0$, such that for any $t>0$,
\begeq\label{XHalpha}
\|X(t,\cdot)\|_{H^{0,\alpha}(\mu)} + \|X(t,\cdot)\|_{H^{\alpha/3,0}(\mu)} \leq Ce^{Ct}\, \max \left(\frac1{t^{\alpha/2}},1\right) \, \|X_0\|_{L^2(\mu^{1+\theta})}. 
\endeq
Likewise, for any $\alpha,\beta>0$ and $\kappa>0$, for any $r>(3\alpha+\beta)/2$, there is $C=C(\alpha,\beta,\kappa,\theta,r)$ such that
\begeq\label{XHalphakappa}
\|X(t,\cdot)\|_{H^{\alpha,\beta}_\kappa(\mu)} \leq \frac{Ce^{Ct}}{t^r}  \, \|X_0\|_{L^2(\mu^{1+\theta})}. 
\endeq
\sm

(iii) Let $f=f(t,x,v)$ solve $\pa_t f + {\cal L} f =0$, starting from a nonnegative distribution $f_0 \in L^1_\infty(dx\,dv)$. (That is, $f_0$ has finite moments of any order.) Then $f$ is of class $C^\infty$ for positive times, and for any $\alpha>0$ and $\gamma> \alpha+5n/2$ there are $\sigma=\sigma(\alpha,n,\gamma)>0$ and $C=C(M,\alpha,\gamma) >0$, such that for any $t>0$,
\begeq\label{fHalpha}
\|f(t,\cdot)\|_{H^{0,\alpha}(\mu)} + \|h(t,\cdot)\|_{H^{\alpha/3,0}(\mu)} \leq C\, \max \left(\frac1{t^{\gamma/2}},1\right) \, \|f_0\|_{L^1_\sigma}. 
\endeq
Moreover, if $f_0\in L^1_\infty$ then $f(t,\cdot)$ belongs in Schwartz class for all $t>0$, and for any $k,\ell,s\in\N$ there are $C,\sigma,r>0$ such that for all $t>0$ and $(x,v)\in \TM$
\begeq\label{schwartz} |\nabla^k_H \nabla^\ell_V f  (x,v) | \leq C (1+t^{-r}) \|f\|_{L^1_\sigma}.\endeq
\sm

(iv) With the same notation as in (iii), if moreover
\[ \iint f_0(x,v)\, e^{\beta_0\frac{|v|^2}{2}} \,dx\,dv < +\infty \]
for some $\beta_0 \in (0,1)$,
then $f(t,\cdot)$ satisfies a pointwise Gaussian upper bound for all $t>0$, and for any $\beta <\beta_0$, for any $k,\ell \in\N$ there are $C,r>0$ such that for all $t>0$ and $(x,v)\in \TM$
\begeq\label{fuppergaussian}
|\nabla^k_H \nabla^\ell_V f | \leq C (1+t^{-r}) \left( \iint f_0(x,v) e^{\beta \frac{|v|^2}{2}} \,dx\,dv \right) \, e^{-\beta\frac{|v|^2}{2}}.
\endeq
\end{Thm}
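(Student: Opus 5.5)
The plan for part (i) is a time-weighted Lyapunov functional in the style of Hérau, built on the intrinsic operators $\nabla_H,\nabla_V$. For the base level $\alpha=1$ (one vertical derivative, $1/3$ horizontal) I would set
\[ {\cal F}(t) = \|h\|_{L^2(\mu)}^2 + a t\,\|\nabla_V h\|^2 + 2b t^2 \<\nabla_V h,\nabla_H h\> + c t^3 \|\nabla_H h\|^2, \]
with $b^2<ac$ and $a\gg b\gg c$. Differentiating along $\pa_t h+Lh=0$ uses the commutators of Proposition \ref{propHV}: $[\nabla_V,\xi]=\nabla_H$ and $[\nabla_V,v\cdot\nabla_V]=\nabla_V$. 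It also uses Proposition \ref{propcomH}(ii), $[\nabla_H,\xi]=-\Sigma(v,v)\nabla_V$.

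The mixed term then produces the good quantity $-bt^2\|\nabla_H h\|^2$. The only new geometric error is $\<\Sigma(v,v)\nabla_V h,\cdot\>$. I would absorb it with the Gaussian moment/derivative trade, Proposition \ref{propgradmom}, converting $|v|^2|\nabla_V h|$ into $\|\nabla_V h\|_{H^{0,2}}$. Following Remark \ref{rklebeau}(v), this term has weight $3<5$, so it is subcritical. To keep the functional closed, I would add the second-order term $t^2\|\nabla_V^2 h\|^2$ and check that for small $t$ its time weight dominates.

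This gives $\frac{d}{dt}{\cal F}\le 0$ on $(0,1]$. For $t\ge1$ I would restart with fixed coefficients and use $L^2$ contractivity from Theorem \ref{thmloc}(ii). Higher orders follow either by iteration or by applying the same scheme to $\nabla_H^k\nabla_V^\ell h$ with weights $t^{(3k+\ell)}$, whose commutators are controlled by Proposition \ref{propxiboundnomom}. Alternatively, one can use the maximal estimate of Theorem \ref{thmalabouchut}(iv) together with $\|L^k h(t)\|\le C t^{-k}\|h_0\|$. That last bound holds because $e^{-tL}$ is a contraction whose generator is sectorial modulo the skew part; if sectoriality is not available I would fall back on the functional method. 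Fractional indices come from interpolation (Propositions \ref{propinterp} and \ref{propmixinterp}). The weighted bound \eqref{Halphabetakappa} comes from interpolating with the propagated moments of Theorem \ref{thmloc}(ii), at an arbitrarily small loss in $r$. $C^\infty$ smoothness then follows from Proposition \ref{propsobemb}.

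For part (ii) the scheme is the same, but the zero-order terms of $\Xi$ in \eqref{Xiexpl} carry $|v|^2$. I would therefore run the functional with separate weights on $X_H$ and $X_V$ (coefficient $\delta$ on $X_H$, as in Theorem \ref{thmlocalvect}). I would absorb the $|v|^2$ losses by the square-exponential moment, which Theorem \ref{thmlocalvect} propagates with $e^{Ct}$ growth; the hypothesis $X_0\in L^2(\mu^{1+\theta})$ is exactly such a moment. This explains the factor $e^{Ct}$, and the weighted norms again come from interpolation.

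For part (iii) the functional is set in $L^2(dx\,dv)$ with polynomial weights, so the commutator errors cost moments rather than derivatives. These would be bounded by interpolation against $L^1_\sigma$ moments, propagated by Theorem \ref{thmloc}(i), through the anisotropic Nash inequality of Theorem \ref{propnashinterp} with parameter sets (a) and (b) of Remark \ref{rknashplat}(iii). The first step is obtaining $L^2$ at all from $L^1$. I would do this by an ODE for $\|f\|_{L^2}^2$: its dissipation $\|\nabla_V f\|^2$ is Nash-interpolated against $\|f\|_{L^1_\sigma}$, giving a differential inequality $y'\le -c\,y^{1+\epsilon}+C$ that yields a power-law bound in $t$. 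The exponent threshold $\gamma>\alpha+5n/2$ reflects the exponent $\bar\theta$ and the loss $\theta<\bar\theta$.

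The Schwartz bounds \eqref{schwartz} then follow from Proposition \ref{propsobemb}. Part (iv) follows from the same argument applied to $f\,e^{\beta|v|^2/2}$ (equivalently, weighted norms with exponential weight), using Proposition \ref{2sideexp} to control $M_\beta$ for $\beta<\beta_0$ uniformly in time.

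The main obstacle is part (iii). There the Gaussian trade of Proposition \ref{propgradmom} is unavailable, so every $|v|^2$ from $\Sigma(v,v)\nabla_V$ and from the Liouville drift must be paid with moments, and the nonlinear interpolation must still close a time-weighted differential inequality. Tracking exponents so that the moment index $\sigma$ stays finite and $r$ comes out as claimed is where I expect the real work.
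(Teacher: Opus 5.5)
Your part (i) is the paper's H\'erau functional. The extra $t^2\|\nabla_V^2h\|^2$ is not needed: the dissipation of $at\|\nabla_Vh\|^2$ already supplies $at\|\nabla_V^2h\|^2$, which absorbs the $\Sigma(v,v)$ errors after Proposition~\ref{propgradmom}. Part (iv) is the paper's conjugation by $e^{\beta|v|^2/2}$, but it rests on (iii), and (iii) has a genuine gap.

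\textbf{The $L^1\to L^2$ step fails.} Along \eqref{eqf}, $\frac{d}{dt}\|f\|_{L^2}^2=-2\|\nabla_Vf\|_{L^2}^2+n\|f\|_{L^2}^2$. No inequality $\|f\|_{L^2}^{2+2\epsilon}\le C(\|\nabla_Vf\|_{L^2}^2+1)$ can hold with $C$ depending only on $\|f\|_{L^1_\sigma}$. Take $f=\phi(x)g(|v|)$ with $\phi$ a probability density on $M$ concentrating at a point. Then $\|f\|_{L^1_\sigma}$ stays fixed, while $\|f\|_{L^2}^2$ and $\|\nabla_Vf\|_{L^2}^2$ are both proportional to $\|\phi\|_{L^2(M)}^2\to\infty$. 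Vertical dissipation does not see concentration in $x$. Theorem~\ref{propnashinterp} needs horizontal derivatives on its right-hand side, and these only appear through $[\nabla_V,\xi]=\nabla_H$, i.e.\ through a mixed term. In addition, a time-weighted functional cannot be started at $t=0$ from $L^1$ data, and the Nash-controlled errors are sublinear powers, so no linear Gr\"onwall-type closing exists. That is exactly the step you leave open.

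\textbf{The missing ingredient is Lemma~\ref{lemEDO}.} Work in $L^2(dx\,dv)$ and set
$\mathcal E=\|f\|^2+\sum_\ell a_\ell\|\nabla_H^\ell\nabla_V^{3(m-\ell)}f\|^2$, $\mathcal M=\langle\nabla_H^{m-1}\nabla_Vf,\nabla_H^mf\rangle$, $\mathcal X=\|\nabla_H^mf\|^2$, $\mathcal Y=\|\nabla_V^{3m}f\|^2$, $\mathcal Z=\|\nabla_V^{3m+1}f\|^2$. The paper proves $\frac{d}{dt}\mathcal E\le-K\mathcal Z+C\mathcal E$ and $\frac{d}{dt}\mathcal M\le-K\mathcal X+C(\mathcal Y+\mathcal Z)$. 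From Theorem~\ref{propnashinterp} it gets $\mathcal E$ comparable to $\mathcal X+\mathcal Y$, $\mathcal Y\le C(\mathcal X+\mathcal Z)^{1-\theta}$ and $|\mathcal M|\le C\mathcal E^{1-\delta}$, with constants depending on $\|f\|_{L^1_\sigma}$ (propagated by Theorem~\ref{thmloc}(i)). The lemma bounds the time $\mathcal E$ spends in each band $[E/2,E]$ by $O(E^{-\kappa})$, so $\mathcal E(t)\le Ct^{-1/\kappa}$ whatever $\mathcal E(0)$ is. This yields $L^2$ from $L^1$ and the higher regularity at once, with no time weights. The value $1/\kappa$ close to $3m+2n+n/(2m)$ is the origin of $\gamma>\alpha+5n/2$.

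\textbf{Smaller corrections.} First, the alternative route in (i) is false. The bound $\|Lh(t)\|\le Ct^{-1}\|h_0\|$ fails already on $\mathbb T^n$. For $h_0=e^{ip\cdot x}$, $p\in 2\pi\mathbb Z^n$, the solution is $e^{ip\cdot x}\exp\bigl(-|p|^2\int_0^t(1-e^{-s})^2ds-i(1-e^{-t})p\cdot v\bigr)$. Hence $\|Lh(t)\|_{L^2(\mu)}\ge e^{-4/3}|p|$ for $t\le|p|^{-2/3}$, and $t\|Lh(t)\|\ge e^{-4/3}|p|^{1/3}$ at $t=|p|^{-2/3}$. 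So the semigroup is not analytic, and $\|Le^{-tL}\|$ is at least of order $t^{-3/2}$.

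Second, in (ii) the $|v|^2$ in $\Sigma(v,v)X_V$ cannot be paid with the moment of Theorem~\ref{thmlocalvect}. That theorem controls $\iint|X|^2e^{\beta|v|^2}d\mu$, not the weighted derivative norms that arise at every level. The paper instead trades $|v|$ for $\nabla_V$ via Proposition~\ref{propgradmom}, at the cost of one extra vertical derivative on $X_V$. This is compensated by giving the $X_V$ components much larger coefficients than the $X_H$ ones, and the argument again concludes with Lemma~\ref{lemEDO}. The factor $e^{Ct}$ comes from the lack of $L^2$ contractivity ($\langle\Xi X,X\rangle\neq0$, cf.~\eqref{XexpCt}). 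The square-exponential moment only serves to reach $\kappa>0$ by interpolation.

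Third, in (iv) the uniform-in-time bound on $\iint fe^{\beta|v|^2/2}$ comes from Theorem~\ref{thmloc}(i). Proposition~\ref{2sideexp} does not give it, since it allows growth like $e^{nt}$.
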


\begin{Rks}  \label{fromtregtomaxreg}
\begin{itemize}
\item[(i)] Let $S=I+ D_V^2 + D_H^{2/3}$, $L= -\Delta_V^\mu + \xi$;
Theorem \ref{thmregul} (i) implies
\begeq\label{SetL}
\|S\,e^{-tL}\|_{L^2(\mu)\to L^2(\mu)} = O(t^{-1})\qquad 0<t\leq 1.
\endeq
{\em If $L$ was symmetric}, this would amount to 
\begeq\label{te-tL} t e^{-tL}\leq C\,S^{-1}  \endeq
for some constant $C>0$. {\em If moreover $S$ and $L$ would commute},
one could codiagonalise $L$ and $S^{-1}$, and optimise on eigenvalues in
\eqref{te-tL} to get $L^{-1} \leq C\,S^{-1}$; in this way one would recover estimate \eqref{maxhypo} of Theorem \ref{thmalabouchut}.
Conversely, under the same assumptions of symmetry and commutativity, it is easy to see that
\eqref{maxhypo} would imply \eqref{SetL}. To summarize: If we had $L^*=L$ and $[L,S]=0$,
then the two bounds \eqref{SetL} and \eqref{maxhypo} would follow from one another,
without any further information on the structure of $L$ and $S$.
In the present situation both properties (the symmetry and the commutativity)
are ``very false'', so it is not clear whether there is a direct way
from \eqref{SetL} to \eqref{maxhypo}, or conversely.
\sm

\item[(ii)] Still a slightly diminished version of \eqref{maxhypo} can be obtained in this way.
Theorem~\ref{thmregul}(i) shows that $e^{-tL}$ maps $L^2(\mu)$ into $H^{0,\alpha}(\mu)\cap H^{\alpha/3,0}(\mu)$ like $O(t^{-\alpha/2})$, and for any $\alpha<2$ that estimate is integrable as a function of $t\in (0,1)$. 
Since $\int_0^1 e^{-t(1+L)}\,dt$ is a parametrix for $(I+L)^{-1}$ (that is, both coincide up to a $C^\infty$-smoothing operator), it follows
\begeq\label{sousopthypo}
\|h\|_{H^{0,\alpha}(\mu)}  + \|h\|_{H^{\alpha/3,0}(\mu)} \leq C \bigl(\|h\|_{L^2(\mu)} + \|Lh\|_{L^2(\mu)}\bigr), \qquad \forall \alpha<2.
\endeq
(One can also get the more precise variant in which the default of optimality is just a power of a logarithm of derivative.)
I don't know if a refined analysis allows to catch the optimal exponent $\alpha=2$, providing an alternative road to \eqref{maxhypo}. But this line and reasoning, added to Remark (i) above, demonstrate the close connection between the exponents in Theorem \ref{maxhypo} and those in Theorem \ref{thmregul}, in the $L^2$ setting. A similar reasoning applies for the $L^1$ equation with the Liouville reference measure, except that now constants may depend on high-order moments. 
\sm

\item[(iii)] Theorem~\ref{thmregul}(i) is only stated in positive regularity, but it may be that a precise tracking of estimates allows to recover regularisation for initial data in negative regularity, by decomposing such an initial datum into a weighted sum of smooth functions (\`a la Fourier, or \`a la Littlewood--Paley), applying regularisation for each term. One of the motivations to handle negative regularity would be to connect with Albritton--Armstrong--Mourrat--Novack's ``H\"ormander inequality'', via the same reasoning as in Remark (ii) above.
\sm

\item[(iv)] In Statement (ii) of Theorem \ref{thmregul}, the presumably optimal $\gamma$ should be $\alpha+2n$, rather than $\alpha+5n/2$. I~shall precisely establish $3m+2n+m/(2n)$ for $\alpha=3m$.
\end{itemize}
\end{Rks}

Now turn to the proof of Theorem \ref{thmregul}. Mixed estimates like \eqref{Halphabeta} follow from pure ones like \eqref{Halpha} upon use of Proposition~\ref{propmixinterp} with $\alpha/3+\beta = \beta'= \alpha'/3$. So it suffices to establish pure vertical and pure horizontal estimates. By interpolation, it suffices to treat integer values of $\alpha,\beta$, that is, to focus on classical derivatives of arbitrary order. 

The arguments presented below are somewhat involved but rather systematic, and they avoid any use of microlocal analysis, pseudodifferential estimates or even fractional derivatives. For pedagogical reasons, I shall first present first-order estimates in the $L^2$ setting, then higher order estimates in $L^2$, then turn to the vector-valued case; finally I shall consider high-order $L^1$ estimates.

\begin{proof}[Proof of Theorem \ref{thmregul}(i), first order estimates]
The equation is $\pa_t h + Lh=0$ and the goal is the a priori estimate
\begeq\label{apriorifirst}
\min(t^{1/2},1) \|\nabla_V h\|_{L^2(\mu)} + \min(t^{3/2},1) \|\nabla_H h\|_{L^2(\mu)} \leq C \, \|h_0\|_{L^2(\mu)}.
\endeq
So let
\begeq\label{Fth} {\cal F}(t,h) = \iint h^2 + at \iint |\nabla_V h|^2 + 2bt^2 \iint \<\nabla_V h, \nabla_H h\> + c t^3 \iint |\nabla_H h|^2,
\endeq
where the measure $\mu$ is implicit, and $a,b,c>0$, $b<\sqrt{ac}$, so that ${\cal F}(t,h)$ is bounded from above and below by multiples of $\int h^2 + t \int |\nabla_V h|^2 + t^3 \int |\nabla_H h|^2$. If it can be shown that, for appropriate $a,b,c>0$,
\begeq\label{ddtFth}
\frac{d}{dt} {\cal F}(t,h) \leq 0 \qquad 0\leq t\leq 1
\endeq
then for $0\leq t\leq 1$ this leads to $t\|\nabla_V h\|^2_{L^2(\mu)} + t^3 \|\nabla_H h\|^2_{L^2(\mu)} \leq C \, \|h_0\|^2_{L^2(\mu)}$, and for $t>1$, changing the origin of times, $\|\nabla_V h(t)\|^2_{L^2(\mu)} + \|\nabla_H h(t)\|^2_{L^2(\mu)} \leq C \|h(t-1)\|_{L^2(\mu)}^2\leq C\|h_0\|_{L^2(\mu)}^2$. So it all boils down to \eqref{ddtFth}. Here are the relevant equations:
\begeq\label{3eq}
\begin{cases}
\dps \pa_t h + \xi h = \Delta_V^\mu h\\
\dps \pa_t \nabla_V h + \xi \nabla_V h + \nabla_H h = \Delta_V^\mu \nabla_V h - \nabla_V h\\
\dps \pa_t\nabla_H h + \xi\nabla_H h - [\xi, \nabla_H]h = \Delta_V^\mu \nabla_H h.
\end{cases} \endeq

To alleviate notation the reference measure $\mu$ will be implicit as well as the $L^2$ norm. First consider the diagonal elements:
\begeq\label{diag1}
\frac{d}{2\,dt} \iint h^2 = - \iint |\nabla_Vh|^2;
\endeq
\begin{align}\label{diag2}
\frac{d}{2\,dt} \iint |\nabla_Vh|^2 & = - \iint |\nabla_V^2h|^2 - \iint |\nabla_V h|^2 - \iint \<\nabla_H h, \nabla_V h\>\\ \nonumber
& \leq - \|\nabla_V^2h\|^2  + \|\nabla_V h\|\, \|\nabla_H h\|
\end{align}
\begeq \label{diag3}
\frac{d}{2\,dt} \iint |\nabla_H h|^2
= - \iint |\nabla_V \nabla_H h|^2 + \iint \< [\xi,\nabla_H] h, \nabla_H h \>;
\endeq
from equation \eqref{xiNablaHr} and Proposition \ref{propgradmom},
\begin{align*}
\iint \< [\xi,\nabla_H] h, \nabla_H h \> & \leq C \|\nabla_V h\|_{L^2_1} \, \|\nabla_H. h\|_{L^2_1}\\
& \leq C \|\nabla_V h\|_{H^{0,1}}\, \|\nabla_H h\|_{H^{0,1}}.
\end{align*}
Inserting this in \eqref{diag3},
\begeq \label{diag3'}
\frac{d}{2\,dt} \iint |\nabla_H h|^2 \leq - \iint |\nabla_V \nabla_H h|^2 + C \bigl( \|\nabla_V h\| + \|\nabla_V^2 h\|\bigr) \bigl( \|\nabla_H h\|. + \|\nabla_V\nabla_H h\|\bigr).
\endeq
Using \eqref{diag1}, \eqref{diag2}, \eqref{diag3'} and Young's inequality repeatedly, under conditions $c\ll a\ll 1$,
\begin{multline} \label{bddtdiagFth}
\frac{d}{2\,dt} 
\left( \iint h^2 + a t \iint |\nabla_Vh|^2 + c t^3 \iint |\nabla_H h|^2 \right) \\
\leq -K \left( \iint |\nabla_V h|^2 + at \iint |\nabla_V^2h|^2 + ct^3 \iint |\nabla_V \nabla_H h|^2 \right)\\
+ C \max (c,a^2) \, t^2 \iint |\nabla_H h|^2,
\end{multline}
where $C$ is a positive constant only depending on $n$ and an upper bound on the norm of the curvature tensor of $M$.

Next from \eqref{3eq} again, taking the product of the second equation with $\nabla_Hh$ and the third equation with $\nabla_Vh$,
\begin{multline*} \pa_t \<\nabla_Vh, \nabla_Hh\> + \xi \<\nabla_V h, \nabla_H h\> + |\nabla_H h|^2 =
\<\Delta_V^\mu \nabla_Vh, \nabla_H h\> - \<\nabla_Vh,\nabla_Hh\>\\ 
+ \<[\xi,\nabla_H]h, \nabla_V h\> + \<\Delta_V^\mu\nabla_H h, \nabla_Vh\>,
\end{multline*}
so
\begin{multline}\label{ddtmix}
\frac{d}{dt} \iint \<\nabla_Vh,\nabla_Hh\>
= - \iint |\nabla_Vh|^2 - \iint \< \nabla_V^2 h\, \nabla_V \nabla_H h\> - \iint \<\nabla_Vh, \nabla_H h\> \\
+ \iint \<[\xi,\nabla_H] h,\nabla_V h\> - \iint \<\nabla_V \nabla_H h, \nabla_V^2h\>.
\end{multline}
Thus
\begin{multline*}
\frac{d}{dt} \left( bt^2 \iint \<\nabla_Vh,\nabla_Hh\> \right)
\leq 2 bt \|\nabla_V h\| \, \|\nabla_H h\|- bt^2 \|\nabla_H h\|^2 + bt^2 \Bigl( \|\nabla_V^2h\|\, \|\nabla_V\nabla_H h\| \\
+ \|\nabla_V h\|\,\|\nabla_Hh\| + (\|\nabla_V h\|^2 + \|\nabla_V^2 h\|^2 ) + \|\nabla_V \nabla_H h\| \, \|\nabla_V^2 h\| \Bigr).
\end{multline*}
From Young again,
\begin{multline*}
\frac{d}{dt} \left( bt^2 \iint \<\nabla_Vh,\nabla_Hh\>  \right)
\leq - K\, bt^2 \|\nabla_H h\|^2 + C_\var \, \left[ b \|\nabla_V h\|^2 + \max \left(\frac{b^2}{c}, b\right) t \| \nabla_V^2 h\|^2\right] \\
+ \var\, c t^3 \|\nabla_V\nabla_H \|^2,
\end{multline*}
where $\var>0$ is arbitrarily small. Assuming $b\ll a\ll 1$ and $b^2/c\ll a$,
\begeq\label{extradiag}
\frac{d}{dt} \left( bt^2 \iint \<\nabla_Vh,\nabla_Hh\>  \right)
\leq -K \, bt^2 \|\nabla_H h\|^2 + \var \left( \|\nabla_V h\|^2 + a t \|\nabla_V^2 h\|^2 + c t^3  \|\nabla_V\nabla_H h \|^2\right).
\endeq

Combining \eqref{bddtdiagFth} and \eqref{extradiag}, assuming that $\max(a^2,c)\ll b$,  results in
\[ \frac{d}{dt} {\cal F}(t,h(t)) \leq - K \left( \| \nabla_V h\|^2 + a t \| \nabla^2_Vh\|^2 + b t^2 \|\nabla_H h\|^2 + c t^3 \| \nabla_V\nabla_H h\|^2\right),\qquad 0\leq t\leq 1,\]
which concludes the argument. It has been assumed that
\begeq\label{assabc}
c\ll b\ll a \ll 1, \qquad b^2\ll a c, \qquad a^2 \ll b
\endeq
(with constants dictated only by the geometry of $M$). The feasibility of \eqref{assabc} is ensured by Lemma \ref{ll} below, and this concludes the proof of \eqref{Halphabeta}.

As for \eqref{Halphabetakappa}, it follows by interpolation with a uniform bound in $L^2_s$ for large $s$, which is guaranteed by Theorem \ref{thmloc}(ii).
\end{proof}

\begin{Lem}\label{ll} 
Let $\delta>0$ and $u_0>0$ be given. Then for any $N\in\N$ there are positive numbers $u_1, u_2, \ldots, u_N$ such that
\[ \begin{cases} \forall k\in \{0,\ldots, N-1\}, \qquad
u_{k+1} \leq \delta\, u_k; \\
\forall k\in \{1,\ldots, N-1\}, \qquad u_k^2 \leq \delta\, u_{k-1}\, u_{k+1}.
\end{cases}\]
\end{Lem}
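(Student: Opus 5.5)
The plan is to look for the $u_k$ in geometric–quadratic form, that is, $\log u_k$ a convex quadratic function of $k$. Set $u_k = u_0\,\eta^{k}\,\lambda^{k^2}$ with $\eta,\lambda\in(0,1]$ to be chosen. Then the second condition follows from an exact identity:
\[
\frac{u_{k-1}\,u_{k+1}}{u_k^2} \;=\; \lambda^{(k-1)^2+(k+1)^2-2k^2} \;=\; \lambda^{2},
\]
so $u_k^2 = \lambda^{-2}\,u_{k-1}u_{k+1}$. It therefore suffices to pick $\lambda$ with $\lambda^{-2}\leq\delta$, i.e. $\lambda\geq\delta^{-1/2}$.

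If $\delta\geq 1$, the choice $\lambda=1$ handles this condition. The other regime, $\delta<1$, is the only relevant one here (one wants $u_k^2\ll u_{k-1}u_{k+1}$). In that case the ansatz must be reversed: we need $\log u_k$ strictly convex in $k$, which forces $\lambda>1$. Take $\lambda=\delta^{-1/2}>1$, so that $u_{k-1}u_{k+1}/u_k^2=\delta^{-1}$ and the second condition holds with equality.

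For the first condition, compute
\[
\frac{u_{k+1}}{u_k} \;=\; \eta\,\lambda^{2k+1}\;\leq\;\eta\,\lambda^{2N-1}\qquad (0\leq k\leq N-1),
\]
because $\lambda>1$. Choosing $\eta=\delta\,\lambda^{-(2N-1)}=\delta^{N+1/2}$ makes every ratio at most $\delta$. All the $u_k$ are then positive, and $u_0$ is the prescribed value.

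There is no serious obstacle. The one point to be careful about is which way the convexity goes. The second condition demands strong log-convexity, while the first demands a uniform decrease. These are compatible only on a finite range $k\leq N$, and that is why $\eta$ has to depend on $N$. In the application, where $N=3$ and $(u_0,u_1,u_2,u_3)=(1,a,b,c)$, this dependence is harmless. One can also check the extra requirement $a^2\ll b$ from \eqref{assabc}: it follows from taking $\delta$ smaller, since $a^2/b=u_1^2/(u_0u_2)\leq\delta$ is just the case $k=1$ of the second condition.
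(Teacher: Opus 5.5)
Your construction is correct and is essentially the paper's argument. For $\delta<1$ it reads $u_k=u_0\,\var^{m_k}$ with $\var=\delta^{1/2}$ and the explicit strictly increasing, strictly concave exponents $m_k=(2N+1)k-k^2$ (for $0\leq k\leq N$), whereas the paper chooses such $m_k$ inductively and then takes $\var$ small enough. Two cosmetic fixes are needed: drop the initial restriction $\lambda\in(0,1]$, which contradicts your later choice $\lambda>1$, and settle $\delta\geq 1$ explicitly, either with $\lambda=1$, $\eta=\delta$, or by noting that both conditions are monotone in $\delta$.
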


\begin{proof} Without loss of generality, assume $u_0=1$.
Set $m_0=0, m_1=1$; by induction, pick up positive
numbers $m_k$ such that
\[ m_{k+1} \in (m_k, 2 m_k - m_{k-1}).\]
The resulting sequence will be increasing and satisfy $m_k > (m_{k-1}+ m_{k+1})/2$.
Next set $u_k = \var^{m_k}$; for $\var$ small enough, the desired inequalities will be satisfied.
\end{proof}

\begin{proof}[Proof of Theorem \ref{thmregul}(i), general estimates]
To handle derivatives of order $m\geq 2$, a natural generalisation of the functional \eqref{Fth} would be
\begin{multline*}
{\cal F}_m(t,h) = \sum_{k+\ell \leq m} a_{k,\ell}\, t^{k+3\ell} \iint |\nabla_V^k \nabla_H^\ell h|^2\,d\mu\\
+ 2 \,\sum_{k+\ell\leq m, \  \ell\geq 1}
a_{k+\frac12,\, \ell-\frac12}\, t^{k+3\ell -1}
\iint \bigl\< \nabla_V^k \nabla_H^\ell h, \nabla_V^{k+1} \nabla_H^{\ell-1} h\bigr\>\,d\mu.
\end{multline*}
But the following reduced version will also work:
\begin{multline}\label{F'th}
{\cal F}^\ast_m(t,h) = \iint h^2\,d\mu  + a  t^m \iint |\nabla_V^m h|^2\,d\mu + 2 bt^{3m-1} \iint \< \nabla_V^m h, \nabla_H \nabla_V^{m-1}h \>\,d\mu \\
+ c t^{3m} \iint |\nabla_H^m h|^2\,d\mu.
\end{multline}
Note that, by Young and Proposition \ref{propmixinterp},
\begin{align*} \left| bt^{3m-1} \iint \< \nabla_V^m h, \nabla_H \nabla_V^{m-1}h \>\,d\mu \right| 
& \leq bt^{3m-1} \| \nabla_V^m h\| \, \|\nabla_V^{m-1}h\| \\
& \leq \frac{c}{4} t^{3m} \|\nabla_H^mh\|^2 + \frac{b^2}{c}\, t^{3m-2} \|\nabla_V \nabla_H^{m-1}h\|^2 \\
& \leq \frac{c}{4} t^{3m} \|\nabla_H^mh\|^2 + \frac{b^2}{c}\, t^{3m-2} \|\nabla_V^mh\|^{\frac{2}{m}} \|\nabla_H^mh\|^{\frac{2(m-1)}{m}} \\
& \leq \frac{c}{2} t^{3m} \|\nabla_H^mh\|^2 + C \left( \frac{b^{2m}}{c^{2m-1}}\right) t^m \|\nabla_V^mh\|^2,
\end{align*}
where $C$ only depends on $m$. So if $a,b,c>0$ are chosen in such a way that 
\begeq\label{abcm}
b^{2m} \ll a\, c^{2m-1}
\endeq
(in the sense that the ratio $b^{2m}/(a c^{2m-1})$ is bounded above by a small number depending only on $m$) then ${\cal F}^\ast_m(t,h)$ is bounded from above and below by constant multiples of 
\[ \|h\|^2 + t^m \|\nabla_V^mh\|^2 + t^{3m} \|\nabla_H^mh\|^2. \]
Thus it suffices to prove
\[ \frac{d}{dt} {\cal F}^\ast_m (t,h(t)) \leq 0 \qquad (0< t\leq 1)\]
(or $(d/dt) {\cal F}^\ast_m(t,h(t)) \leq C {\cal F}^\ast_m$).

To establish regularity gain, negative terms are needed in the expression of the time-derivative. The $\Delta_V^\mu$ term acting on the symmetric terms ($\|h\|^2$, $\|\nabla_V^mh\|^2$, $\|\nabla_H^mh\|^2$) will provide negative terms like $-\|\nabla_Vh\|^2$ and $-\|\nabla_V^{m+1}h\|^2$ (the second is obviously stronger, but the first one comes with a better constant, so both will be kept), as well as $-\|\nabla_V \nabla_H^m h\|^2$. But the horizontal gain will come from $\xi$ acting on the mixed term $\< \nabla_V\nabla_H^{m-1}h,\nabla_H^m h\>$; the gain will not be in the form of an improved regularity but of a better time-exponent.

Let us proceed with the computation. First commute the equation with $\nabla_V^m$:
\begeq\label{patnablaVm} \pa_t\nabla_V^m h + \xi \nabla_V^m h - \Delta_V^\mu \nabla_V^m h 
= - [\nabla_V^m,\xi]h + [\nabla_V^m, v\cdot\nabla_V]h.
\endeq
The identity
\[ [A^m,B] = \sum_{k=0}^{m-1} A^k [A,B] A^{m-1-k} \]
yields
\begeq\label{nablaVmxi}
[\nabla_V^m,\xi] = \sum_{k=0}^{m-1} \nabla_V^k \nabla_H \nabla_V^{m-k-1}
\endeq
(all terms in the sum are equal except for the indices on which the horizontal derivative bears) and
\begeq\label{nablaVmnabla}
[\nabla_V^m,v\cdot\nabla_V] = m \nabla_V^m.
\endeq
Inserting \eqref{nablaVmxi} and \eqref{nablaVmnabla} in \eqref{patnablaVm}, integrating against $\nabla_V^m$, yields
\begeq\label{ddtnablavmh}
\frac{d}{2\,dt} \iint |\nabla_V^mh|^2 \leq
- \iint |\nabla_V^{m+1}h|^2 + m \iint \<\nabla_H\nabla_V^{m-1}h, \nabla_V^m h\> + m \iint |\nabla_V^m h|^2.
\endeq
The second term on the right-hand side can be treated through integration by parts, using Corollary \ref{divloss}, Sobolev estimates and Young,
\[ \left| \iint \<\nabla_H\nabla_V^{m-1}h, \nabla_V^m h\>\right|  \leq C \| \nabla_H \nabla_V^{m-2}h\|\, \|\nabla_V^{m+1}h\| \]
and from Proposition \ref{propmixinterp} 
\[ \|\nabla_H \nabla_V^{m-2} h\| \leq C \, \|\nabla_H^{m}h\|^{\frac1{m}}  \| \nabla_V h\|^{\frac2{m}-\frac1{m^2}} \|\nabla_V^{m+1}h\|^{1-\frac3{m} + \frac1{m^2}},\]
so
\begin{align} \label{prepa1}
\left| \iint \<\nabla_H\nabla_V^{m-1}h, \nabla_V^m h\>\right| 
& \leq C  \, \|\nabla_H^{m}h\|^{\frac1{m}}  \| \nabla_V h\|^{\frac2{m}-\frac1{m^2}} \|\nabla_V^{m+1}h\|^{2-\frac3{m}-\frac1{m^2}}\\
\nonumber
& \leq \var \|\nabla_V^{m+1}h\|^2 + C_\var \Bigl( \|\nabla_H^{m}h\|^{\frac1{m}} \|\nabla_V h\|^{\frac2{m}-\frac1{m^2}}  \Bigr)^{\frac{2m}{3-\frac1{m}}},
\end{align}
where $\var>0$ is arbitrarily small. As for the final term in \eqref{ddtnablavmh},
\begin{align} \label{prepa2}
\| \nabla_V^mh\| & \leq C \|\nabla_V^{m+1}h\|^{\frac{m-1}{m}} \|\nabla_Vh\|^{\frac{1}{m}} \\ \nonumber
&\leq \var \|\nabla_V^{m+1}h\|^2 + C_\var \|\nabla_Vh\|^2.
\end{align}

Likewise,
\begeq\label{nablaHm}
\pa_t\nabla_H^m h + \xi \nabla_H^m h - \Delta_V^\mu \nabla_H^m h = - [\nabla_H^m, \xi] h
\endeq
and
\[ [\nabla_H^m,\xi] = \sum_{k=0}^{m-1} \nabla_H^k [\xi, \nabla_H] \nabla_H^{m-k-1}.\]
From Proposition \ref{propxiboundnomom}, $[\xi,\nabla_H]$ loses at most 3 vertical derivatives and no horizontal derivative. More precisely (Proposition \ref{propcomH}), it takes the form of a differential operator of degree~1 in $v$, with coefficients that are polynomial of degree~2 in $v$. Distributing the velocity variables and applying Cauchy--Schwarz, then Proposition \ref{propgradmom} to trade moments for regularity,
\begin{align*} \left| \iint \< [\nabla_H^m,\xi] h, \nabla_H^m h \> \right|
&\leq C \|\nabla_V\nabla_H^{m-1}h\|_{L^2_1} \|\nabla_H^m h\|_{L^2_1} \\
& \leq C \bigl( \|\nabla_V^2 \nabla_H^{m-1} h\| + \|\nabla_V \nabla_H^{m-1}h\| \bigr)\, \bigl(
 \|\nabla_V \nabla_H^m h\| + \|\nabla_H^m h\|\bigr).
\end{align*}
Thus, upon multiplying \eqref{nablaHm} by $\nabla_H^mh$,
\begin{align} \label{prepa3}
\frac{d}{2\,dt} \iint |\nabla_H^m h|^2 & \leq - \|\nabla_V \nabla_H^m h\|^2 + C \bigl( \|\nabla_V^2 \nabla_H^{m-1} h\| + \|\nabla_V \nabla_H^{m-1}h\| \bigr)\, \bigl(
 \|\nabla_V \nabla_H^m h\| + \|\nabla_H^m h\|\bigr) \\
\nonumber
& \leq -K  \|\nabla_V \nabla_H^m h\|^2 + C \|\nabla_V^2 \nabla_H^{m-1}h\|^2 + C \bigl( \|\nabla_V\nabla_H^{m-1} h\|^2 + \|\nabla_H^mh\|^2\bigr)\\ \nonumber
& \leq -K \|\nabla_V \nabla_H^m h\|^2 + C \|\nabla_V \nabla_H^m h\|^{2(1-\frac1{m})} \|\nabla_V^{m+1} h\|^{\frac2{m}} + C \|\nabla_H^m h\|^2\\ \nonumber
& \leq -K \|\nabla_V \nabla_H^m h\|^2 + C \|\nabla_V^{m+1} h\|^2 + C \|\nabla_H^mh\|^2.
\end{align}

Also, still from Proposition \ref{propmixinterp},
\begeq\label{prepa4}
\|\nabla_V^m h\| \leq C \|\nabla_V^{m+1}h\|^{\frac{m-1}{m+1}} \|\nabla_Vh\|^{\frac1{m+1}}
\endeq

Combining \eqref{prepa1}, \eqref{prepa2}, \eqref{prepa3} and \eqref{prepa4}, for $0\leq t\leq 1$,
\begin{multline} \label{estddtF'}
\frac{d}{2\,dt} \left( \iint h^2 + a t^m \iint |\nabla_V^{m}h|^2 + c t^{3m} \iint |\nabla_H^{m}h\|^2 \right)\\ \leq
- K \Bigl ( \|\nabla_Vh\|^2 + a t^m \|\nabla_V^{m+1}h\|^2 + c t^{3m} \|\nabla_V \nabla_H^m h\|^2 \Bigr)\\
+ C \left( a t^{m-1} \|\nabla_V^{m+1}h\|^{\frac{2(m-1)}{m+1}} \|\nabla_V h\|^{\frac2{m+1}}  + c t^{3m-1} \|\nabla_H^mh\|^2
+ a t^m \|\nabla_Vh\|^2 \right.\\
\left. + a t^m \Bigl( \|\nabla_H^{m}h\|^{\frac1{m}} \|\nabla_V h\|^{\frac2{m}-\frac1{m^2}}  \Bigr)^{\frac{2m}{3-\frac1{m}}}
+ c t^{3m} \|\nabla_V^{m+1}h\|^2 \right).
\end{multline}

Use again Young's inequality to control the terms appearing on the right hand side:
\[ a t^{m-1} \|\nabla_V^{m+1}h\|^{\frac{2(m-1)}{m+1}} \|\nabla_V h\|^{\frac2{m+1}} 
 \leq \var \|\nabla_V h\|^2 + C_\var a^{\frac{1}{m-1}} (at^m) \|\nabla_V^{m+1}h\|^2
\]
(so the last term is arbitrarily small in front of $at^m \|\nabla_V^{m+1}h\|^2$ if $a\ll 1$),
\[  a t^m 
\Bigl( \|\nabla_H^{m}h\|^{\frac1{m}} \|\nabla_V h\|^{\frac2{m}-\frac1{m^2}}  \Bigr)^{\frac{2m}{3-\frac1{m}}}
\leq \var \|\nabla_V h\|^2 + C a^{3-\frac1{m}} t^{3m-1} \|\nabla_H^mh\|^2.\]
All in all, if $\var>0$ is properly chosen and $c<a\ll 1$, then \eqref{estddtF'} leads to
\begin{multline}\label{ddtFm}
\frac{d}{2\,dt} \left( \iint h^2 + a t^m \iint |\nabla_V^{m}h|^2 + c t^{3m} \iint |\nabla_H^{m}h\|^2  \right)\\ \leq
- K \Bigl ( \|\nabla_Vh\|^2 + a t^m \|\nabla_V^{m+1} h\|^2 + c t^{3m} \|\nabla_V \nabla_H^m h\|^2 \Bigr) \\ 
+ C \max\bigl(c,a^{3-\frac1{m}}\bigr) t^{3m-1} \|\nabla_H^{m}h\|^2.
\end{multline}
(of which \eqref{bddtdiagFth} is a particular case).
\sm

Next consider the mixed term with coefficient $b$.
Start again from \eqref{nablaHm}, replace $m$ by $m-1$ and apply $\nabla_V$:
\begin{multline*}
\pa_t \nabla_V\nabla_H^{m-1} h + \xi \nabla_H^{m-1} h + [\nabla_V,\xi] \nabla_H^{m-1} h 
-\Delta_V^\mu \nabla_V\nabla_H^{m-1}h + [\nabla_V, v\cdot\nabla_V] \nabla_H^{m-1}h \\
= -\nabla_V [\nabla_H^{m-1},\xi]h,
\end{multline*}
which, since $[\nabla_V,\xi]=\nabla_H$ and $[\nabla_V,v\cdot\nabla_V]=\nabla_V$, is the same as
\begin{multline}\label{1329'}
\pa_t\nabla_V\nabla_H^{m-1}h + \xi\nabla_V\nabla_H^{m-1}h - \Delta_V^\mu \nabla_V\nabla_H^{m-1}h + \nabla_H^m h 
= -\nabla_V \nabla_H^{m-1}h - \nabla_V [\nabla_H^{m-1},\xi]h.
\end{multline}
Multiply (in tensor sense and by contraction with $g$) \eqref{nablaHm} by $\nabla_V\nabla_H^{m-1}h$ and \eqref{1329'} by $\nabla_H^mh$, add up and use the derivation rule and $\Gamma$ formula for $\Delta_V^\mu$: it follows
\begin{multline}
\pa_t \<\nabla_V\nabla_H^{m-1}h, \nabla_H^m h\> + \xi \< \nabla_V\nabla_H^{m-1}h,\nabla_H^mh\>
- \Delta_V^\mu \< \nabla_V\nabla_H^{m-1}h, \nabla_H^mh\> + 2 \< \nabla_V\nabla_H^{m-1}h, \nabla_H^mh\>
+ |\nabla_H^mh|^2 \\
= - \<\nabla_V\nabla_H^{m-1}h,\nabla_H^mh\> - \<\nabla_H^mh,\nabla_V[\nabla_H^{m-1},\xi]h\> - \<\nabla_H^{m-1}\nabla_Vh, [\nabla_H^m,\xi] h\>
\end{multline}
(this is a pointwise identity, the scalar product stands for the metric). Now integrate against $\mu$, to find
\begin{multline} \label{integrmix}
\frac{d}{dt} \bigl\< \nabla_V\nabla_H^{m-1} h,\nabla_H^mh\bigr\>_{L^2} = 
- \|\nabla_H^mh\|_{L^2}^2 - 3 \<\nabla_V \nabla_H^{m-1}h,\nabla_H^mh\>_{L^2}\\
- \<\nabla_H^mh, \nabla_V [\nabla_H^{m-1},\xi] h\>_{L^2} - \<\nabla_H^{m-1}\nabla_V h, [\nabla_H^m,\xi]h\>_{L^2}.
\end{multline}
(From now on, I drop the $L^2$ subscripts to alleviate notation.) Thus there are positive constants $K,C>0$ only depending on $n$ (and $M$ later on), such that
\begin{multline}\label{integrmix'}
\frac{d}{dt} \bigl\< \nabla_V\nabla_H^{m-1} h,\nabla_H^mh\bigr\>_{L^2} \leq
- K \|\nabla_H^mh\|_{L^2}^2 \\
+ C \Bigl( \|\nabla_V\nabla_H^{m-1} h\|^2 + \bigl(\|\nabla_V \nabla_H^mh\| + \|\nabla_H^mh\| \bigr) \|[\nabla_H^{m-1},\xi] h\| \Bigr).
\end{multline}
Applying repeatedly H\"older, Young and $L^2$-interpolation,
\begeq\label{nvnhm-1} \|\nabla_V \nabla_H^{m-1}h\| \leq C\,\|\nabla_H^mh\|^{1-\frac1{m}}  \|\nabla_V^{m+1}h\|^{\frac1{m}(1-\frac1{m})} \|\nabla_Vh\|^{\frac1{m^2}}
\endeq
(exponents here are crucial, and doing the interpolation with $\|h\|$ rather than $\|\nabla_Vh\|$ in the final term would not suffice);
in particular
\[ \|\nabla_V\nabla_H^{m-1}h\|^2 \leq \var \|\nabla_H^mh\|^2 + C_\var \bigl( \|\nabla_V^{m+1}h\|^2 + \|\nabla_Vh\|^2\bigr)\]
(but it will also be useful to get back to \eqref{nvnhm-1} at some point);
\[ \| [\nabla_H^{m-1},\xi]h\| \leq C \bigl( \|\nabla_V^3 \nabla_H^{m-2} h\| + \|\nabla_V \nabla_H^{m-2}h\|\bigr)\]
(using the horizontal derivation commutator formulae and Proposition \ref{propgradmom});
and the latter term is bounded similarly as $\|\nabla_V\nabla_H^{m-2}h\|^2$ above. The other term involving $[\nabla_H^m,\xi]$ is handled in a similar way. At this stage,
\[ \frac{d}{dt} \<\nabla_V\nabla_H^{m-1}h,\nabla_H^mh\>
\leq -K \|\nabla_H^mh\|^2 + C \bigl( \|\nabla_V^{m+1}h\|^2 + \|\nabla_Vh\|^2\bigr)
+ C\, \|\nabla_V^3 \nabla_H^{m-2}h\|\,\|\nabla_V\nabla_H^mh\|.\]
It is for the last term, with the derivatives of order $m+1$, that one should be most careful: by interpolation again,
\[ \|\nabla_V^3 \nabla_H^{m-2}h\| \leq C\, \|\nabla_V\nabla_H^mh\|^{1-\frac2{m}} \|\nabla_V^{m+1}h\|^{\frac2{m}}.\]
The temporary conclusion is
\begeq\label{tempmixed}
\frac{d}{dt}
\<\nabla_V\nabla_H^{m-1}h,\nabla_H^mh\> 
\leq -K \|\nabla_H^mh\|^2 + C \Bigl( \|\nabla_V^{m+1}h\|^2 + \|\nabla_Vh\|^2 + \|\nabla_V^{m+1}h\|^{\frac2{m}} \|\nabla_V\nabla_H^mh\|^{2-\frac2{m}}\Bigr).
\endeq

Thus, using again \eqref{nvnhm-1},
\begin{multline*}
\frac{d}{dt} 
\left( bt^{3m-1} \< \nabla_V\nabla_H^{m-1}h,\nabla_H^mh\> \right) \\
\leq -K bt^{3m-1} \|\nabla_H^mh\|^2
+ C \Bigl( bt^{3m-2} \|\nabla_H^mh\|^{2-\frac1{m}} \|\nabla_V^{m+1}h\|^{\frac1{m}-\frac1{m^2}} \|\nabla_Vh\|^{\frac1{m^2}} \\
+ bt^{3m-1} \|\nabla_V^{m+1}h\|^2 + bt^{3m-1} \|\nabla_V h\|^2  
 + bt^{3m-1} \|\nabla_V^{m+1}h\|^{\frac2{m}} \|\nabla_V\nabla_H^mh\|^{2-\frac2{m}} \Bigr).
\end{multline*}
The goal is to control the positive terms in the right hand side using a small fraction of $\|\nabla_Vh\|^2$, $at^m\|\nabla_V^{m+1}h\|^2$, $bt^{3m-1}\|\nabla_H^mh\|^2$ and $ct^{3m}\|\nabla_V\nabla_H^mh\|^2$. This is achieved again by repeated use of Young:
\begin{multline*} bt^{3m-2}\|\nabla_H^mh\|^{2-\frac1{m}} \|\nabla_V^{m+1}h\|^{\frac1{m}-\frac1{m^2}} \|\nabla_Vh\|^{\frac1{m^2}}\\
\leq \var bt^{3m-1} \|\nabla_H^mh\|^2 + \var at^m \|\nabla_V^{m+1}h\|^2 + C_\var a \left(\frac{b}{a}\right)^m \|\nabla_Vh\|^2,
\end{multline*}
(and the last coefficient is small if $b<a\ll 1$); and
\begin{multline}\label{fincestep}
bt^{3m-1} \|\nabla_V^{m+1}h\|^{\frac2{m}} \|\nabla_V\nabla_H^mh\|^{2-\frac2{m}}\\
\leq \var at^m \|\nabla_V^{m+1}h\|^2 + C_\var \left(\frac{b^m}{a}\right)^{\frac1{m-1}} 
t^{(3m-2)\frac{m}{m-1}} \|\nabla_V\nabla_H^mh\|^2.
\end{multline}
Note that the last coefficient in \eqref{fincestep} is small as soon as
\[ b^m\ll ac^{m-1}, \]
a condition which is weaker than \eqref{abcm} if $c<a$; and that the power of $t$ satisfies
\[ (3m-2) \left(\frac{m}{m-1}\right) > 3m.\]
To summarise this step: If $b^m\ll ac^{m-1}$ then
\begeq\label{endstepmix}
\frac{d}{dt} \Bigl( bt^{3m-1} \<\nabla_V\nabla_H^{m-1}h, \nabla_H^mh\>_{L^2} \Bigr)
\leq -K bt^{3m-1} \|\nabla_H^mh\|^2
+ \var \Bigl( \|\nabla_Vh\|^2 + at^m \|\nabla_V^{m+1}h\|^2 + ct^{3m} \|\nabla_V\nabla_H^m h\|^2\Bigr),
\endeq
where $\var$ is as small as desired, and $K,C$ are positive constants.

Choosing $\var>0$ small enough, \eqref{endstepmix} combines with \eqref{ddtFm} to conclude, at last, that
\begeq\label{ddtFtfinal} \frac{d}{dt} {\cal F}^\ast_m (t,h(t)) \leq -K \Bigl( \|\nabla_Vh^2\|^2 + at^m \|\nabla_V^{m+1}h\|^2 + bt^{3m-1} \|\nabla_H^mh\|^2 + ct^{3m} \|\nabla_V\nabla_H^mh\|^2\Bigr) \leq 0
\endeq
provided that
\[ a^{3m-1} \ll b, \qquad b^{2m} \ll ac^{2m-1}, \qquad c\ll b\ll a.\]
To see the compatibility of those equations, put $a= \var^{\alpha}$, $b=\var^{\beta}$, $c=\var^{\gamma}$, with $0<\var<1$, then for $\var$ small enough this is satisfied if
\[ \alpha<\beta<\gamma,\qquad \beta < \left(3-\frac1{m}\right)\alpha,\qquad (2m-1)\gamma+\alpha< 2m\beta,\]
for this first fix $\alpha>0$, then $\beta\in (\alpha, (3-1/m)\alpha)$, then $\gamma\in (\beta,(2m)/(2m-1)\beta - \alpha/(2m-1))$, which is nontrivial since $\beta>\alpha$. Thus the proof of \eqref{ddtFtfinal} is complete, and the whole argument as well.
\end{proof}

\begin{proof}[Proof of Theorem \ref{thmregul}(ii)]
As for (i), it suffices to consider the case when $\alpha$ is an integer $m\in\N$; and \eqref{XHalphakappa} will follow from the case $\kappa=0$ by interpolation with the bound of Theorem \ref{thmloc}. So let us focus on \eqref{XHalpha} with $\alpha=m\in\N$.

From the equations \eqref{BXexpl}, in the form
\begeq\label{eqxhv}
\begin{cases} \dps \pa_t X_H + \xi X_H - \Sigma(v,v) X_V = \Delta_V^\mu X_H \\[2mm]
\dps \pa_t X_V + \xi X_V + X_H = \Delta_V^\mu X_V - X_V,
\end{cases}
\endeq
follows
\begeq\label{ddtl2X}
\begin{cases}\dps \frac{d}{dt} \|X_H\|_{L^2}^2 \leq -K \|\nabla_V X_H\|^2_{L^2} + C \bigl( \|X\|_{L^2}^2 + \|\nabla_V X_V\|_{L^2}^2\bigr) \\[3mm]
\dps \frac{d}{dt} \|X_V\|_{L^2}^2 \leq -K \|\nabla_V X_V\|^2_{L^2} + C \| X\|_{L^2}^2.
\end{cases}
\endeq
In particular, for $A>0$ large enough, $(d/dt) (A\|X_V\|^2 + \|X_H\|^2)\leq C \|X\|^2$, which implies that $\|X\|^2$ grows at most exponentially fast.

Then from \eqref{eqxhv},
\begin{multline}\label{eqderBH}
\pa_t \nabla_H^\ell \nabla_V^k X_H + \xi \nabla_H^\ell \nabla_V^k X_H + [\nabla_H^\ell, \xi] \nabla_V^k X_H + k \nabla_H^{\ell+1}\nabla_V^{k-1} X_H \\
- \nabla_H^\ell \nabla_V^k \bigl( \Sigma(v,v)X_V\bigr) = \Delta_V^\mu \nabla_H^\ell  \nabla_V^k X_H - k \nabla_H^\ell \nabla_V^k X_H
\end{multline}
and
\begin{multline} \label{eqderBV}
\pa_t\nabla_H^\ell \nabla_V^k X_V + \xi \nabla_H^\ell \nabla_V^k X_V + [\nabla_H^\ell, \xi] \nabla_V^k X_V + k \nabla_H^{\ell+1}\nabla_V^{k-1} X_V\\
+ \nabla_H^\ell \nabla_V^k X_H = \Delta_V^\mu \nabla_H^\ell \nabla_V^k X_V - (k+1) \nabla_H^\ell \nabla_V^k X_V.
\end{multline}

Choosing $k=0,\ell=m$ and multiplying the first equation by $\nabla_H^mX_H$,
\begin{multline*}
\pa_t \frac{|\nabla_H^m X_H|^2}{2} + \xi \frac{|\nabla_H^m X_H|^2}{2} + \bigl\< [\nabla_H^m,\xi] X_H, \nabla_H^m X_H \bigr\> 
- \bigl\< \nabla_H^m (\Sigma(v,v)X_V), \nabla_H^m X_H \bigr\> \\
= \Delta_V^\mu \frac{|\nabla_H^m X_H|^2}{2} - |\nabla_V \nabla_H^m X_H |^2.
\end{multline*}
Upon using Proposition \ref{propgradmom} and Cauchy--Schwarz,
\begin{multline*}
\frac{d}{2dt} \| \nabla_H^m X_H\|^2 \leq - K \|\nabla_V \nabla_H^m X_H\|^2 \\
+ C \bigl( \|\nabla_V^2 \nabla_H^{m-1} X_H\|^2 + \|\nabla_H^{m-1} X_H\|^2 + \|\nabla_H^m\nabla_V X_V\|^2 + \|\nabla_H^m X_V\|^2 \bigr).
\end{multline*}
By Proposition \ref{propinterp},
\[ \|\nabla_V^2 \nabla_H^{m-1} X_H\|^2 \leq C (\|X\|^2 + \|\nabla_H^m X\|^2 + \|\nabla_V^{3m}X\|^2.\]
All in all,
\begeq\label{eqBsyst1}
\frac{d}{dt} \|\nabla_H^m X_H\|^2 \leq -K \|\nabla_V \nabla_H^m X_H\|^2 
+ C \|\nabla_H^m \nabla_V X_V\|^2 + C \bigl( \|X\|^2 + \|\nabla_H^m X\|^2 + \|\nabla_V^{3m}X\|^2\bigr).
\endeq
(Note the $ \|\nabla_H^m \nabla_V X_V\|$ which comes from $\Sigma(v,v)$.)

A similar reasoning from the second equation in \eqref{eqderBV} yields
\begeq\label{eqBsyst2}
\frac{d}{dt} \|\nabla_H^m X_V\|^2 \leq -K \|\nabla_V \nabla_H^{m+1} X_V\|^2 + C  \bigl( \|X\|^2 + \|\nabla_H^m X\|^2 + \|\nabla_V^{3m}X\|^2\bigr).
\endeq

The case $k=3m, \ell=0$ similarly yields
\begin{multline}\label{eqBsyst3}
\frac{d}{dt} \|\nabla_V^{3m} X_H\|^2 \leq - K \| \nabla_V^{3m+1} X_H\|^2 + C \|\nabla_V^{3m+1} X_H\|^2
+ C \|\nabla_H \nabla_V^{3m-2} X_H\|^2 \\
+ C \bigl( \|X\|^2 + \|\nabla_H^m X\|^2 + \|\nabla_V^{3m}X\|^2\bigr)
\end{multline}
and
\begeq\label{eqBsyst4}
\frac{d}{dt} \|\nabla_V^{3m} X_V\|^2 \leq - K \| \nabla_V^{3m+1} X_V\|^2 + C \|\nabla_H \nabla_V^{3m-2} X_V\|^2 
+ C \bigl( \|X\|^2 + \|\nabla_H^m X\|^2 + \|\nabla_V^{3m}X\|^2\bigr).
\endeq

More generally, for $1\leq \ell\leq m-1$ and $k=3(m-\ell)$,
\begin{multline} \label{eqBsyst5}
\frac{d}{dt} \|\nabla_H^\ell\nabla_V^{3(m-\ell)} X_H\|^2 \leq - K \| \nabla_H^\ell \nabla_V^{3(m-\ell)+1} X_H\|^2 + C \|\nabla_H^\ell \nabla_V^{3(m-\ell)+1} X_V\|^2 \\
+ C \|\nabla^{\ell+1}_H \nabla_V^{3(m-\ell)+1} X_H\|^2 
+ C \bigl( \|X\|^2 + \|\nabla_H^m X\|^2 + \|\nabla_V^{3m}X\|^2\bigr)
\end{multline}
and
\begin{multline} \label{eqBsyst6}
\frac{d}{dt} \|\nabla_H^\ell\nabla_V^{3(m-\ell)} X_V\|^2 \leq - K \| \nabla_H^\ell \nabla_V^{3(m-\ell)+1} X_V\|^2 
+ C \|\nabla^{\ell+1}_H \nabla_V^{3(m-\ell)+1} X_V|^2 \\ + C \bigl( \|X\|^2 + \|\nabla_H^m X\|^2 + \|\nabla_V^{3m}X\|^2\bigr).
\end{multline}

From \eqref{eqBsyst1}--\eqref{eqBsyst6}, recalling also \eqref{ddtl2X}, one can choose constants $a_0, a'_0,\ldots, a_m,a'_m$ in such a way that $a_{\ell+1}\ll a'_\ell \ll a_\ell$ and
\begeq\label{Enew}
{\cal E} = A \|X_V\|^2 + \|X_H\|^2 + \sum_{\ell=0}^m \Bigl( a_\ell \|\nabla_H^\ell \nabla_V^{3(m-\ell)} X_V\|^2 + a'_\ell  \|\nabla_H^\ell \nabla_V^{3(m-\ell)} X_H\|^2
\endeq
satisfies
\begeq\label{newsyst1} \frac{d{\cal E}}{dt}\leq -K \|\nabla_V^{3m+1} X\|^2 + C {\cal E}.
\endeq
(keeping only the pure vertical derivative negative term).
\sm

Next turn to the mixed derivative terms: Starting again from \eqref{eqderBH} and \eqref{eqderBV},
\begin{multline} \label{ddtmixedB}
\frac{d}{dt} \<\nabla_H^{m-1}\nabla_V X_V,\nabla_H^m X_V\> 
\leq - K \|\nabla_H^m X_V\|^2 \\
+ C \Bigl( \|\nabla_H^{m-1}\nabla_V^3 X_V\|\, \|\nabla_V\nabla_H^m X_V\| + \|\nabla_H^m\nabla_V X_H\|\, \|\nabla_H^m X_H\| + {\rm l.o.t.} 
\end{multline}
(lower order terms), and likewise for $(d/dt) \<\nabla_H^{m-1}\nabla_V X_H, \nabla_H^m X_H\>$.
Eventually
\begeq\label{ddtmixedtotal}
\frac{d}{dt} \<\nabla_H^{m-1}\nabla_V X,\nabla_H^m X\> 
\leq - K \|\nabla_H^m X\|^2 + C \Bigl( \|X\|^2 + \|\nabla_V^{3m+1}X\|^2\Bigr).
\endeq

Then let
\begeq\label{defBXYM} {\cal X} = \|\nabla_V^{3m}X\|^2;\qquad {\cal Y} = \|X\|^2 + \|\nabla_V^{3m}X\|^2;\qquad
{\cal M} = \<\nabla_H^{m-1}\nabla_V X,\nabla_H^m X\>.\endeq
So \eqref{syst1} and \eqref{syst5} from Lemma \ref{lemEDO} are satisfied if $\|X_0\| = O(1)$ and
\[ {\cal Z} = \|\nabla_V^{3m+1}X\|^2.\]
On the other hand, by Proposition \ref{propinterp},
\begeq\label{Mleq}
|{\cal M}| \leq C ({\cal X} + {\cal Y})^{1-\frac1{3m}}
\endeq
and 
\begeq\label{Yleq}
{\cal Y} \leq C {\cal Z}^{1-\frac1{3m+1}}.
\endeq
So Lemma \ref{lemEDO} yields
\[ {\cal E}(t) = O( t^{-3m}) \]
(here $\delta = \theta/(1-\theta)= 1/3m$).
This concludes the argument towards \eqref{XHalpha}, and the remaining part of (ii) is straightforward.
\end{proof}

\begin{proof}[Proof of Theorem \ref{thmregul}(iii)]
As in (ii) I shall use a differential inequality relating the time derivatives of symmetric functionals like $\int |\nabla_H^\ell\nabla_V^k f|^2\,dv\,dx$ and a mixed functional like $\int \nabla_V\nabla_H^{m-1}f\cdot \nabla_H^mf\,dx\,dv$.

For a start, let us establish the equations on higher-order derivatives of $f$. Starting from
\[ \pa_t f + \xi f = \Delta_V f + v\cdot\nabla_V f + n f,\]
apply $\nabla_V^k$ for $k\in\N_0$, to get
\[ \pa_t \nabla_V^k f + \xi \nabla_V^k f + [\nabla_V^k,\xi] f
= \Delta_V \nabla_V^k f + (v\cdot\nabla_V) \nabla_V^kf.\]
Upon using Proposition \ref{propHV}, this is the same as
\[ \pa_t \nabla_V^k f + \xi \nabla_V^k f + k \nabla_H \nabla_V^{k-1}f
= \Delta_V \nabla_V^k f + (v\cdot\nabla_V) \nabla_V^kf + (k+n)\nabla_V^k f.\]
(Here the notation is just a bit sloppy since there is some permutation in the order of indices in the various terms $\nabla_H\nabla_V^{k-1}$ but this has no consequence.) Then apply $\nabla_H^\ell$ for $\ell\in\N_0$, to get
\begin{multline}\label{eqdhdvf}
\pa_t \nabla_H^\ell \nabla_V^k f + \xi \nabla_H^\ell \nabla_V^k f + [\nabla_H^\ell,\xi] \nabla_V^k f 
+ k \nabla_H^{\ell+1}\nabla_V^{k+1}f \\
= \Delta_V \nabla_H^\ell \nabla_V^k f + (v\cdot\nabla_V) \nabla_H^\ell \nabla_V^k f + (m+n) \nabla_H^\ell \nabla_V^k f.
\end{multline}
\sm 

{\bf First step: The symmetric part.} 
Multiply \eqref{eqdhdvf} by $\nabla_H^\ell \nabla_V^m f$ and apply the derivation formula (and $\Gamma$ formula for $\Delta_V$):
\begin{multline} \label{symparthv}
\pa_t \frac{|\nabla_H^\ell\nabla_V^k f|^2}{2} + \xi \frac{|\nabla_H^\ell \nabla_V^m f|^2}{2}
+ \bigl\< [\nabla_H^\ell,\xi]\nabla_V^k f, \nabla_H^\ell \nabla_V^k f\bigr\> 
+ k \bigl\< \nabla_H^{\ell+1} \nabla_V^{k-1} f, \nabla_H^\ell \nabla_V^k f\bigr\> \\
= \Delta_V \frac{|\nabla_H^\ell \nabla_V^kf|^2}{2} - |\nabla_H^\ell \nabla_V^{k+1}f|^2
+ (v\cdot\nabla_V) \frac{|\nabla_H^\ell\nabla_V^kf|^2}{2} + (m+n) |\nabla_H^\ell \nabla_V^kf|^2
\end{multline}
So far these are local inequalities and the goal is to integrate against $dx\,dv$.
The third, fourth and seventh terms in \eqref{symparthv} deserve some comment.

First, according to Proposition \ref{propcomH},
\begin{align*}
 & \bigl\<  [\nabla_H^\ell,\xi]\nabla_V^k f, \nabla_H^\ell \nabla_V^k f\bigr\>_{L^2}  \\
 & = \sum_{0\leq j\leq \ell-1} \bigl\< \nabla_H^j \Sigma_x(v,v) \nabla_V \nabla_H^{\ell-1-j} \nabla_V^k, \nabla_H^\ell \nabla_V^k f \bigr\> 
 - \sum_{0\leq j\leq \ell-1} \bigl\< \nabla_H^j \rho_x(v) \nabla_H^{\ell-1-j} \nabla_V^k, \nabla_H^\ell \nabla_V^k f \bigr\> \\
 & = - \sum_{0\leq j\leq \ell-1} \bigl\< \nabla_H^j \Sigma_x(v,v) \nabla_H^{\ell-1-j} \nabla_V^k, \nabla_V\nabla_H^\ell \nabla_V^k f \bigr\> 
 - \sum_{0\leq j\leq \ell-1} \bigl\< \nabla_H^j (\rho_x+A_x)(v) \nabla_H^{\ell-1-j} \nabla_V^k, \nabla_H^\ell \nabla_V^k f \bigr\>,
 \end{align*}
 where $\Sigma_x$ is a tensor-valued quadratic operator in $v$, and $\rho_x$ is a tensor-valued linear operator in $v$, both with $x$-dependent coefficients, and $A_x = \nabla_v\cdot (\Sigma_x(v,v))$. All those are smooth, with coefficients bounded by a constant multiple of curvature bounds. Then we can estimate those by Cauchy--Schwarz, using the weighted $L^2$ Sobolev spaces, getting
 \begin{align}\label{3rdterm}
 \left| \bigl\<  [\nabla_H^\ell,\xi]\nabla_V^k f, \nabla_H^\ell \nabla_V^k f\bigr\>_{L^2} \right| 
&  \leq C \Bigl( \|\nabla_H^\ell \nabla_V^kf\|_{L^2} \, \|\nabla_H^{\ell-1}\nabla_V^k f\|_{L^2_2}
+ \|\nabla_H^\ell \nabla_V^{k+1} f\|_{L^2} \, \|\nabla_H^{\ell-1} \nabla_V^k f\|_{L^2_1}\Bigr) \\ \nonumber
& \leq \var \bigl( \|\nabla_H^\ell \nabla_V^{k+1}f\|^2_{L^2} + \|\nabla_H^\ell \nabla_V^k f\|^2_{L^2}\bigr)
+ C \|\nabla_H^{\ell-1} \nabla_V^kf\|^2_{L^2_2}.
\end{align}

Next for the fourth term in \eqref{symparthv}: by integration by parts and Cauchy--Schwarz,
\begin{align} \label{5thterm}
\left| \bigl\< \nabla_H^{\ell+1}\nabla_V^{k-1}f , \nabla_H^\ell \nabla_V^k f\bigr\> \right|
& = \left| \bigl\< \nabla_H^{\ell+1}\nabla_V^{k-2} f, \nabla_H^\ell \nabla_V^{k+1} f\bigr\> \right| \\ \nonumber
& \leq \var \|\nabla_H^\ell \nabla_V^{k+1} f\|^2 + C \|\nabla_H^{\ell+1}\nabla_V^{k-2} f\|^2.
\end{align}

As for the seventh term in \eqref{symparthv}: using $\nabla_V\cdot v = n$,
\begeq\label{7thterm} \iint (v\cdot\nabla_V) |\nabla_H^\ell\nabla_V^k f|^2 \,dx\,dv
= - n \iint |\nabla_H^\ell \nabla_V^k f|^2\,dx\,dv. 
\endeq

Inserting estimates \eqref{3rdterm}, \eqref{5thterm} and \eqref{7thterm} in \eqref{symparthv} yields, upon integration,
\begin{multline}\label{symparthvv}
\frac{d}{2dt} \|\nabla_H^\ell\nabla_V^kf\|_{L^2}^2 
\leq - K \|\nabla_H^\ell \nabla_V^{k+1} f\|_{L^2}^2 \\
+ C \Bigl( \|\nabla_H^\ell \nabla_V^kf\|_{L^2}^2 + \|\nabla_H^{\ell+1}\nabla_V^{k-2}f\|_{L^2}^2 
+ \|\nabla_H^{\ell-1}\nabla_V^k f\|_{L^2}^2\Bigr).
\end{multline}
(The term in $k-2$ is only if $k\geq 2$, the term in $\ell-1$ only if $\ell\geq 1$.)
Let us fix $m\in\N$ and combine all estimates \eqref{symparthvv} for $0\leq \ell\leq m$ and $k=3(m-\ell)$; this provides an estimate on the time-derivative of
\begeq\label{Em}
{\cal E}(t) = \|f\|^2 + \sum_{0\leq \ell\leq m, \ k+3\ell=3m} a_\ell\, \|\nabla_H^\ell \nabla_V^k f\|_{L^2}^2.
\endeq
If the $a_\ell$ are chosen recursively in such a way that $Ka_\ell \geq 2 C a_{\ell+1}$ (where $K$ and $C$ are respectively minimum and maximum for those constants $K$ and $C$ appearing in \eqref{symparthvv} for those choices of indices), all terms $\|\nabla_H^{\ell+1}\nabla_V^{k-2}f\|_{L^2}^2$ in the right hand side will be telescopically erased (remember that for $\ell=m$, i.e. $k=0$, this term is not present). The conclusion is
\[
\frac{d{\cal E}}{dt} \leq -K \sum_\ell \|\nabla_H^\ell \nabla_V^{3(m-\ell)+1} f\|^2 + C {\cal E} + C \sum_\ell \|\nabla_H^{\ell-1} \nabla_V^{3(m-\ell)} f\|_{L^2}^2.\]

At this stage let us apply the anisotropic Nash inequality, Theorem \ref{propnashinterp}: For $\sigma$ large enough,
\[ \|\nabla_H^{\ell-1}\nabla_V^{3(m-\ell)}f\|_{L^2_2} 
\leq C\, \|f\|_{L^1_\sigma} 
\bigl( \|\nabla_H^mf\| + \|\nabla_V^{3m}f\| \bigr)^{1-\theta},\]
for $\theta$ arbitrarily close to
\[ \ov{\theta} = \frac{1/m}{1+ 2n/(3m)} = \frac{3}{3m+2}. \]
(At this stage the estimate of $\theta$ will not be useful.)
For the moment, I will only retain
\begeq\label{conclsymparthv}
\frac{d{\cal E}}{dt} \leq -K \|\nabla_V^{3m+1} f\|^2 + C\, {\cal E},
\endeq
where $C$ depends on $f$ only through a global bound on $\|f\|_{L^1_\sigma}$.
\sm

{\bf Second step: The mixed part.} Start again from
\begeq\label{againhm}
\pa_t\nabla_H^m f + \xi \nabla_H^m f + [\nabla_H^m,\xi] f = \Delta_V \nabla_H^m f + (v\cdot\nabla_V) \nabla_H^m f + n \nabla_H^m f
\endeq
and
\begin{multline}\label{againhdm}
\pa_t \nabla_H^{m-1}\nabla_V f + \xi \nabla_H^{m-1} \nabla_V f + \nabla_H^m f+ [\nabla_H^{m-1},\xi] \nabla_V f \\
= \Delta_V \nabla_H^{m-1} \nabla_V f + (v\cdot\nabla_V)\nabla_H^{m-1} \nabla_V f + (n+1) \nabla_H^{m-1} \nabla_V f.
\end{multline}
Multiply \eqref{againhm} by $\nabla_H^{m-1}\nabla_V f$ and \eqref{againhdm} by $\nabla_H^mf$, add and use the derivation rule:
\begin{multline*}
\pa_t \< \nabla_H^mf, \nabla_H^{m-1}f\> + \xi \<\nabla_H^mf,\nabla_H^{m-1}\nabla_V f\>
+ \<\nabla_H^{m-1}\nabla_Vf, [\nabla_H^m,\xi] f\> + |\nabla_H^mf|^2 + \<[\nabla_H^{m-1},\nabla_H^m f\>\\
= \Delta_V \<\nabla_H^{m-1}\nabla_V f, \nabla_H^m f\>
- 2 \< \nabla_H^{m-1}\nabla_V^2 f, \nabla_H^m\nabla_V f\> + (v\cdot\nabla_V) \<\nabla)H^mf,\nabla_H^{m-1}\nabla_Vf\>\\
+ (2n+1) \<\nabla_H^mf, \nabla_H^{m-1}\nabla_Vf\>.
\end{multline*}
Integrating yields
\begin{multline*}
\frac{d}{dt} \< \nabla_H^mf,\nabla_H^{m-1}\nabla_Vf\>_{L^2} + \|\nabla_H^m f\|_{L^2}^2 \\
= - \<[\nabla_H^{m-1},\xi]\nabla_V f,\nabla_H^mf\>_{L^2} 
- \<\nabla_H^{m-1}\nabla_V^2 f,\nabla_H^m\nabla_Vf\>_{L^2}
+ (n+1) \<\nabla_H^mf, \nabla_H^{m-1}\nabla_Vf\>.
\end{multline*}
Both terms $\<[\nabla_H^{m-1},\xi]\nabla_V f,\nabla_H^mf\>$ and $\<\nabla_H^{m-1}\nabla_V^2 f,\nabla_H^m\nabla_Vf\>$ involve degree~2 polynomial multiplication by $v$, $2m-2$ horizontal and 2 vertical derivations. Since
\[ \frac{2m-2}{m} + \frac2{3m+1} < 2,\]
by using Cauchy-Schwarz and Theorem \ref{propnashinterp}, they are bounded by
\[ C \|f\|_{L^1_\sigma} \|\nabla_H^m f\|^{2(m-1)/m} \|\nabla_V^{3m+1} f\|^{2/(3m+1)}
\leq \var \|\nabla_H^mf\|^2 + C_\var \|f\|_{L^1_\sigma}^\alpha \|\nabla_V^{3m+1}\|^2\]
for some $\alpha>0$.
Likewise,
\begin{align*} \left| \<\nabla_H^{m-1}\nabla_V^2 f, \nabla_H^m \nabla_V f\>_{L^2} \right| 
& \leq \|\nabla_H^m f\|\, \|\nabla_H^{m-1}\nabla_V^3 f\|
\\ &  \leq C\|f\|_{L^1_\sigma} \|\nabla_H^m f\|^{2-\frac1{m}} \|\nabla_V^{3m+1} f\|^{\frac{3}{3m+1}}\\
& \leq \var \|\nabla_H^m f\|^2 + C \|f\|_{L^1_\sigma}^\alpha \|\nabla_V^{3m+1} f\|^2
\end{align*}
\sm

{\bf Step~3: Differential system.}
Let
\[ {\cal X}(t) = \|\nabla_H^m f\|_{L^2}^2, \qquad {\cal Y}(t) = \|\nabla_V^{3m} f\|_{L^2}^2, \qquad {\cal Z}(t) = \|\nabla_V^{3m+1}f\|_{L^2}^2.\]
From the expression of ${\cal E}$ in \eqref{Em} and Theorem \ref{propnashinterp},
\begeq\label{EXY} K({\cal X}+{\cal Y}) \leq {\cal E} \leq C ({\cal X}+{\cal Y}), \endeq
where $K,C>0$ are constants depending only on the geometry, and in the case of $C$ proportional to $\|f\|_{L^1_\sigma}$ for some $\sigma>0$. Steps~1 and~2 can be summarised as
\begeq\label{ddtEZ} 
\begin{cases} \dps \frac{d}{dt} {\cal E} \leq - K {\cal Z} + C {\cal E}, \\[3mm]
\dps \frac{d}{dt} {\cal M} \leq -K {\cal X} + C ({\cal Y}+{\cal Z}).
\end{cases}
\endeq
Next use again Theorem \ref{propnashinterp} to get
\[\|\nabla_V^{3m} f\|_{L^2}^2 \leq C \|f\|_{L^1_\sigma} \bigl(\|\nabla_H^mf\|^2 + \|\nabla_V^{3m+1} f\|^2\bigr)^{1-\theta}, \]
or
\begeq\label{Ydelta}
{\cal Y} \leq C ({\cal X}+{\cal Z})^{1-\theta}
\endeq
for $\theta$ arbitrarily close to
\[ \ov{\theta} = \frac{1-\frac{3m}{3m+1}}{1+\frac{n}{2} \left( \frac1{m}+\frac1{3m+1}\right)} = \frac1{3m+2n+1+\frac{n}{2m}}. \]
Also
\begin{align*} \bigl| \<\nabla_H^{m-1} \nabla_V f, \nabla_H^m f\> \bigr| & \leq \|\nabla_H^m\|\,\|\nabla_H^{m-1} \nabla_V f\|\\
& \leq C \|f\|_{L^1_\sigma} \|\nabla_H^mf\| \bigl( \|\nabla_H^mf\| + \|\nabla_V^{3m} f\| \bigr)^{1-2\delta}\\
& \leq C \bigl( \|\nabla_H^mf\| + \|\nabla_V^{3m} f\| \bigr)^{1-\delta}
\end{align*}
or
\begeq\label{Mtheta}
|{\cal M}| \leq C {\cal E}^{1-\delta}
\endeq
for $\delta$ arbitrarily close to
\[ \ov{\delta} = \frac{\frac1{3m}}{1+\frac{2n}{3m}} = \frac1{2n+3m}. \]
By combining \eqref{EXY}, \eqref{ddtEZ}, \eqref{Ydelta}, \eqref{Mtheta} and Lemma \ref{lemEDO} below,
\[ {\cal E}(t) \leq \frac{C}{t^{-1/\kappa}}, \]
for $\kappa = \min (\delta, \theta/(1-\theta))$, which is arbitrarily close to $(3m+2n+n/(2m))^{-1}$. Note that for large $m$ this approaches the presumably optimal rate $t^{-(3m+2n)}$; one could certainly self-improve that estimate by interpolation if one had access to the full range of $\theta$ in Theorem \ref{propnashinterp}.

Here the constant is polynomial in $\|f\|_{L^1_\sigma}$, but this can always be reduced to just one moment, since by convexity and $\int f=1$,
\[ \|f\|_{L^1_\sigma}^r \leq C \|f\|_{L^1_{\sigma r}}\qquad r\geq 1.\]
This concludes the proof of \eqref{fHalpha}. From there \eqref{schwartz} follows easily by interpolation and Sobolev embedding (Proposition \ref{propsobemb}.
\end{proof}

\begin{proof}[Proof of Theorem \ref{thmregul} (iv)]
The proof is similar to that of part (iii), working on 
\[ F(t,x,v) = f(t,x,v)\, e^{\beta\frac{|v|^2}{2}},\]
where $\beta$ is fixed, $0<\beta<\beta_0<1$.
Indeed, it was already shown in Theorem \ref{thmloc} that $F$ is uniformly bounded in $L^1(dx\,dv)$, and further, that for all $\gamma<\beta_0-\beta$, $\int F e^{\gamma|v|^2/2}$ remains bounded; in particular $F$ has bounded moments of all order, uniformly in $t\geq 0$. Moreover, $F$ satisfies
\begeq\label{eqF}
\pa_t F + \xi F = \Delta_V F + (1-2\beta)v\cdot\nabla_V F + (1-\beta) \bigl( n - \beta |v|^2 \bigr) F.
\endeq
This and the moment bounds allow to repeat the whole scheme of proof for (ii), and show that $F$ is smooth and rapidly decaying (Schwartz class) on $\TM$, uniformly in $t>0$. But successive derivatives of $f= F e^{-\beta |v|^2/2}$ take the form of $F$, multiplied by a polynomial in $v$, multiplied by $e^{-\beta|v|^2/2}$; so it is a bounded multiple of $e^{-\beta|v|^2/2}$.
\end{proof}

The following lemma was used in the proof of Theorem \ref{thmregul} (ii)--(iii) (and implicitly (iv)).

\begin{Lem} \label{lemEDO}
Let ${\cal E}$, ${\cal X}$, ${\cal Y}$, ${\cal Z}$ and ${\cal M}$ be continuous functions of $t\in [0,1]$,
with ${\cal E}, {\cal X},{\cal Y},{\cal Z}\geq 0$, such that
\begeq\label{syst1}
K ({\cal X}+{\cal Y}) \leq {\cal E} \leq C ({\cal X}+{\cal Y}),
\endeq
\begeq\label{syst2}
|{\cal M}| \leq C {\cal E}^{1-\delta},
\endeq
\begeq\label{syst3}
\frac{d{\cal E}}{dt} \leq - K {\cal Z} + C {\cal E},
\endeq
\begeq\label{syst4}
{\cal Y} \leq C ({\cal X}+{\cal Z})^{1-\theta},
\endeq
\begeq\label{syst5}
\frac{d{\cal M}}{dt} \leq -K{\cal X} + C ({\cal Y}+{\cal Z}),
\endeq
where $C,K$ are positive constants, and $\delta,\theta$ are
real numbers lying in $(0,1)$. Then
\[ {\cal E}(t) \leq \frac{\ov{C}}{t^{1/\kappa}},\qquad
\kappa= \min \left(\delta, \, \frac{\theta}{1-\theta}\right),\]
where $\ov{C}$ is an explicit constant which only depends on
$C,K,\theta,\delta$.
\end{Lem}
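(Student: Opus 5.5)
The plan is to build a single Lyapunov-type functional in the spirit of Hérau's time-weighted norms. Write $\Phi(t) = \lambda t^{p}{\cal E} + \mu t^{q}{\cal M}$ with exponents $q<p$ and small constants $\mu\ll\lambda$. The aim is to show that $\Phi(t)$ is bounded on $[0,1]$ by a constant depending only on the data, and that $\Phi$ controls $t^p{\cal E}$ from below. This yields ${\cal E}(t)\le \ov{C}t^{-p}$, and $p$ has to be taken equal to $1/\kappa$.

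The first step is positivity of $\Phi$. By \eqref{syst2} and Young's inequality,
\[ \mu t^q|{\cal M}|\le \mu C t^q{\cal E}^{1-\delta}\le \tfrac{\lambda}{2}t^p{\cal E} + C_\lambda\,\mu^{1/\delta}\,t^{(q-p(1-\delta))/\delta}. \]
Taking $q = p(1-\delta)$ makes the remainder a bounded constant. Hence $\Phi\ge \tfrac{\lambda}{2}t^p{\cal E} - C'$.

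The second step is to differentiate $\Phi$. Using \eqref{syst3} and \eqref{syst5},
\[ \Phi'\le \lambda p t^{p-1}{\cal E} - \lambda K t^p{\cal Z} + \lambda C t^p{\cal E} + \mu q t^{q-1}{\cal M} - \mu K t^q{\cal X} + \mu C t^q({\cal Y}+{\cal Z}). \]
The negative terms are $-\lambda K t^p{\cal Z}$ and $-\mu K t^q{\cal X}$. Since $q<p$ and $t\le 1$, choosing $\mu C\ll\lambda K$ absorbs the term $\mu C t^q{\cal Z}$? Not quite: $t^q\ge t^p$, so instead I plan to use the time powers the other way. I will therefore redo the bookkeeping so that the ${\cal Z}$-dissipation carries the larger power, replacing $\lambda t^p{\cal E}$ by $\lambda t^{p}{\cal E}$ with ${\cal Z}$ weighted by $t^{p}$, and handle ${\cal Y}$ through \eqref{syst4}.

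The third step handles ${\cal Y}$ using \eqref{syst4}:
\[ t^s{\cal Y}\le C t^s({\cal X}+{\cal Z})^{1-\theta}\le \varepsilon\,(t^q{\cal X} + t^p{\cal Z}) + C_\varepsilon t^{a} \]
for a suitable exponent $a$. Young's inequality gives $a = (s - (1-\theta)\max(p,q)\cdot\ldots)/\theta$, and this requires $p \le (1-\theta)^{-1}\theta^{-1}\cdots$, which is where the constraint $1/p\le \theta/(1-\theta)$ should come from. The term $t^{p-1}{\cal E}$ is bounded via \eqref{syst1} by $t^{p-1}({\cal X}+{\cal Y})$. The ${\cal X}$ part is absorbed by $\mu t^q{\cal X}$ provided $p-1\ge q$, that is $p\delta\ge 1$, i.e. $p\ge 1/\delta$; this is where the constraint $1/p\le\delta$ comes from. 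The ${\cal Y}$ part is again absorbed through \eqref{syst4}. The term $t^{q-1}{\cal M}$ is controlled by $t^{q-1}{\cal E}^{1-\delta}$, which Young's inequality reduces to $t^{p-1}{\cal E}$ plus a constant.

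Altogether this should give $\Phi'\le C''$ on $(0,1]$. Since $\Phi(0)=0$ when $p,q>0$ (provided ${\cal E}$ is finite at positive times, or by an approximation argument), I get $t^p{\cal E}\le C$.

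The main obstacle is the exponent bookkeeping in step three. I have to check that the $t$-powers of all Young remainders are nonnegative, and that the dissipation weights ($t^q$ for ${\cal X}$, $t^p$ for ${\cal Z}$) dominate simultaneously. This is exactly where both conditions $p\ge 1/\delta$ and $p\ge(1-\theta)/\theta$ must appear, so I would take $p$ slightly larger if needed.
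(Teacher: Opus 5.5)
The step you flag with ``Not quite'' is a genuine gap, and the repair you announce (replacing $\lambda t^p\mathcal{E}$ by $\lambda t^p\mathcal{E}$) changes nothing. Your functional needs two absorptions at the same time.

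First, the term $+\mu C t^q\mathcal{Z}$ coming from \eqref{syst5} must be beaten by $-\lambda K t^p\mathcal{Z}$, which needs $\mu C t^q\le \lambda K t^p$.

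Second, your bookkeeping gives $\lambda p t^{p-1}\mathcal{E}\le \lambda Cp\, t^{p-1}(\mathcal{X}+\mathcal{Y})$. Its $\mathcal{X}$-part can only be beaten by $-\mu K t^q\mathcal{X}$, since nothing else in the hypotheses controls $\mathcal{X}$. That needs $\mu K t^q\ge \lambda Cp\, t^{p-1}$.

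At $t=1$ these read $\mu\le \lambda K/C$ and $\mu\ge\lambda Cp/K$. They are incompatible: \eqref{syst1} forces $K\le C$ (unless $\mathcal{E}\equiv 0$), and you need $p\ge 1/\delta>1$. So neither the order of $p,q$ nor the ratio $\mu/\lambda$ can fix it. Separately, with $q=p(1-\delta)$ the Young remainder of $\mu q t^{q-1}\mathcal{M}$ against $t^{p-1}\mathcal{E}$ is of order $t^{-1}$, not a constant. You would need $q>p(1-\delta)$.

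This is not an artifact of your ansatz: the lemma is false as stated. Take $C=K=1$ and, for arbitrary $E>0$, set $\mathcal{E}\equiv\mathcal{X}\equiv\mathcal{Z}\equiv E$ and $\mathcal{Y}\equiv\mathcal{M}\equiv 0$ on $[0,1]$. Then \eqref{syst1}, \eqref{syst2} and \eqref{syst4} hold trivially, and \eqref{syst3} and \eqref{syst5} both read $0\le -E+E$. Yet $\mathcal{E}(1)=E$ is arbitrarily large. In this example your $\Phi=\lambda t^p E$ has $\Phi'=\lambda p t^{p-1}E$, so no data-independent bound on $\Phi'$ exists.

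The culprit is the linear $\mathcal{Z}$-term in \eqref{syst5}. Hypothesis \eqref{syst3} only controls $\int\mathcal{Z}$ at the scale of $\mathcal{E}$, which is exactly enough to cancel the gain $-K\mathcal{X}$. The paper's own level-set argument breaks at the same place: the contribution $C\int_I\mathcal{Z}\le CE/2$ produces its third case $|I|\le 3C$, which gives no decay in $E$.

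A true statement needs extra structure, e.g.\ $C(\mathcal{Y}+\mathcal{Z}^{1-\theta})$ instead of $C(\mathcal{Y}+\mathcal{Z})$ in \eqref{syst5}. Then $\int_I\mathcal{Z}^{1-\theta}\le |I|^{\theta}(E/2)^{1-\theta}$ repairs the paper's argument. Your time-weighted functional also closes under that hypothesis, provided you:
\begin{itemize}
\item take $\mu\gg\lambda$, so that $-\mu Kt^q\mathcal{X}$ wins;
\item absorb $\mu C t^q\mathcal{Z}^{1-\theta}$ and the $\mathcal{Y}$-terms into $-\lambda K t^p\mathcal{Z}$ and $-\mu K t^q\mathcal{X}$ by Young's inequality;
\item take $q=p-1$.
\end{itemize}
This gives $\mathcal{E}\le\overline{C}\,t^{-p}$ for every $p>1/\kappa$.
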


A proof, from an earlier memoir of mine, is reproduced here for convenience.

\begin{proof}[Proof of Lemma~\ref{lemEDO}]
Let $\tilde{\cal E}(t)=e^{-Ct}{\cal E}(t)$; then
$\tilde{\cal E}$ satisfies estimates similar to ${\cal E}$,
except that equation~\eqref{syst3} becomes $d\tilde{\cal E}/dt\leq -K {\cal Z}$.
In the sequel I~shall keep the notation ${\cal E}$ for $\tilde{\cal E}$,
so this just amounts to replacing~\eqref{syst3} by
\begeq\label{syst3'}
\frac{d{\cal E}}{dt} \leq - K {\cal Z}.
\endeq
In particular, ${\cal E}$ is nonincreasing.

Now let $E>0$, and let $I\subset [0,1]$ be the time-interval where 
$(E/2) \leq {\cal E}(t)\leq E$. The goal is to show that
the length $|I|$ of $I$ is bounded like $O(E^{-\kappa})$
for some $\kappa>0$. If that is the case, then the conclusion
follows. Indeed, let $E_0>0$ be given, and let $T$ be the first
time $t$ such that ${\cal E}(t)\leq E_0$, then
\[ T \leq C' \sum_{n\geq 1} E_0^{-n\kappa} \leq C'' E_0^{-\kappa};\]
so $E_0\leq T^{-1/\kappa}$. (Here as in the sequel,
$C$, $C'$, $C''$ stand for various constants that only depend on
the constants $C$ and $K$ appearing in the statement of the lemma.)

If $E\leq 1$ then the conclusion obviously holds true.
So we might assume that $E\geq 1$.

It follows by integration of~\eqref{syst3'} over $I$ that
\begeq\label{intIZE}
\int_I {\cal Z}(t)\,dt \leq E -\frac{E}2 = \frac{E}2.
\endeq

By integrating~\eqref{syst4}, we find
\begin{align*}
\int_I {\cal Y}(t)\,dt & \leq C \int_I \bigl[ {\cal X}(t) + {\cal Z}(t)\bigr]^{1-\theta}\,dt\\
& \leq C' \left( \int_I {\cal X}(t)^{1-\theta}\,dt + 
   \int_I {\cal Z}(t)^{1-\theta}\,dt\right)\\
& \leq C' \left( |I|\: \Bigl[\sup_I {\cal X}(t)^{1-\theta}\Bigr]
   + \left(\int_I {\cal Z}(t)\,dt \right)^{1-\theta} |I|^\theta\right).
\end{align*}
To estimate the first term inside parentheses, note that 
${\cal X}\leq C {\cal E}\leq CE$; to bound the second term, use~\eqref{intIZE}.
The result is
\begeq\label{intIY} \int_I {\cal Y}(t)\,dt \leq C \Bigl( |I| E^{1-\theta} + 
E^{1-\theta} |I|^\theta\Bigr) \leq C' |I|^\theta E^{1-\theta},
\endeq
where the last inequality follows from $|I|\leq |I|^\theta$.
(Note indeed that $|I|\leq 1$ and $\theta<1$.)

Next, integrate inequality~\eqref{syst5} over $I=[t_1,t_2]$, to get
\begin{align}
K \int_I {\cal X}(t)\,dt & \leq |{\cal M}(t_1)| + |{\cal M}(t_2)|
+ C \int_I [{\cal Y}(t)+{\cal Z}(t)]\,dt \noindent \\
& \leq 2 \sup_{t\in I} |{\cal M}(t)| + C\left(\int_I {\cal Y}(t)\,dt +
\int_I {\cal Z}(t)\,dt\right).
\label{intIX} 
\end{align}

Also, since ${\cal E}\geq E/2$ on $I$, we have
\begeq\label{TE2}
\frac{|I|\,E}{2} \leq \int_I {\cal E}(t)\,dt
\leq C \left( \int_I {\cal X}(t)\,dt + \int_I {\cal Y}(t)\,dt\right),
\endeq
where the last inequality follows from~\eqref{syst1}.

The combination of~\eqref{intIX} and~\eqref{TE2} implies
\[ \frac{|I|\,E}2 \leq C \Bigl ( \sup_{t\in I} |{\cal M}(t)| + 
\int_I {\cal Y}(t)\,dt + \int_I {\cal Z}(t)\,dt\Bigr).\]
To estimate the first term inside the brackets, use~\eqref{syst4};
to estimate the second one, use~\eqref{intIY}; to estimate
the third one, use~\eqref{intIZE}. The result is
\begeq\label{TE22} 
|I|\,E \leq C (E^{1-\delta} + |I|^\theta E^{1-\theta} + E).
\endeq

Now we can conclude, separating three cases according to which one of
the three terms in the right-hand side of~\eqref{TE22} is largest:

- If it is $E^{1-\delta}$, then
$|I|\,E \leq 3 C E^{1-\delta}$, so $|I|\leq 3 C E^{-\delta}$;

- If it is $|I|^\theta E^{1-\theta}$, then
$|I|\,E \leq 3 C |I|^\theta E^{1-\theta}$, so
$|I| \leq (3C)^{\frac1{1-\theta}} E^{-\frac{\theta}{1-\theta}}$;

- If it is $E$, then $|I|\leq 3C$.
\sm

In any case, there is an estimate like $|I|\leq \ov{C} E^{-\kappa}$,
where $\kappa$ is as in the statement of the lemma.
So the proof is complete.
\end{proof}

\bibnotes

This chapter is largely taken from \cite{DOV:preprint}. The overall strategy is taken from \cite{vill:hypoco} where global regularisation in Euclidean space was established for $L^2$ and measure initial data. In particular, Remark \ref{fromtregtomaxreg} is similar to \cite[Remark A.10]{vill:hypoco} in the flat case; Lemmas \ref{ll} and \ref{lemEDO} are also taken from \cite{vill:hypoco}.

The proof by H\'erau is from \cite{herau:FP:07}, with a slight twist as in \cite[Theorem A.12]{vill:hypoco}.

\section{Strict positivity} \label{secpos}

Besides regularity, positivity is the most important property of diffusion equations acting on densities. 
Since regularity and localisation have already been established, local strict positivity will follow easily.

\begin{Prop}[Local strict positivity] \label{proplocalpos}
Let $f=f(t,x,v)$ solve $\pa_t f + {\cal L}f =0$ with initial datum $f_0$ such that 
\begeq\label{f0posit} \iint_{\TM} f_0 =1,\qquad E_0= \frac12 \iint_{\TM} f_0(x,v)\,|v|^2\,dx\,dv <+\infty. \endeq
Then for any $R>0$ there is $K>0$ such that for all $(x,v)\in\TM$,
\[ |v|\leq R \Longrightarrow\qquad f(t,x,v) \geq K/t^r,\]
for some universal $r>0$ depending only on the dimension.
\end{Prop}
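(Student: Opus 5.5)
We plan to combine three ingredients already established: mass conservation and the moment bound of Theorem \ref{thmloc}(i), which localise most of the mass in a bounded velocity region; the global regularisation of Theorem \ref{thmregul}(iii), which gives quantitative pointwise bounds on $f$ and its derivatives for positive times; and a Harnack-type (or control/support) argument for the hypoelliptic operator ${\cal L}$, which spreads positivity from a region of positive mass to any prescribed compact set $\{|v|\leq R\}$. Concretely, with $\vphi(r)=r^2$ in Theorem \ref{thmloc}(i), $\iint f(t)|v|^2\leq \max(2E_0,A)$, so by Chebyshev $\iint_{|v|\leq R_0} f(t)\geq 1/2$ for some $R_0$ depending only on $E_0$, uniformly in $t$.

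First step: lower mass bound turns into a pointwise lower bound somewhere. We use Theorem \ref{thmregul}(iii) (with moments controlled, or after truncating the very first short time interval using only the second moment and interpolation) to bound $\|\nabla_H f\|_\infty+\|\nabla_V f\|_\infty\leq C(1+t^{-r_1})$ on $|v|\leq R_0+1$. Since $f\geq0$ has mass at least $1/2$ on the compact set $\{|v|\leq R_0\}$ of finite volume, there is a point $(x_1,v_1)$ with $f(t/2,x_1,v_1)\geq c$, and the Lipschitz bound gives $f(t/2,\cdot)\geq c/2$ on a ball of radius $\rho\sim t^{r_1}$ around it. This already produces a polynomial-in-$t$ lower bound on a small ball.

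Second step: propagate this positivity to all of $\{|v|\leq R\}$ during the time interval $[t/2,t]$. Here we use the Stroock--Varadhan support theorem / controllability of the associated SDE $dx=v\,dt$, $dv=-\Gamma(v,v)dt - v\,dt+\sqrt2\,dB$ in horizontal--vertical form: for any target $(x,v)$ with $|v|\leq R$, there is a smooth control path from the small ball to $(x,v)$ in time $t/2$ (choose a geodesic-like curve $x(s)$ joining $x_1$ to $x$ with prescribed endpoint velocities, compactness of $M$ bounding its cost). Then a Gaussian lower bound for the fundamental solution along this path, as in the flat Kolmogorov kernel \eqref{FSK} locally in charts, gives $f(t,x,v)\geq \int F(t/2;x,v|y,w)f(t/2,y,w)\,dy\,dw\geq K\,\rho^{2n}\,t^{-r_2}$. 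The cleanest rigorous route is a local Harnack inequality for kinetic Fokker--Planck operators (Golse--Imbert--Mouhot--Vasseur type) applied in charts, iterated along a finite chain of overlapping kinetic cylinders covering the path.

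The main obstacle is this second step: making the Harnack chain quantitative and uniform, with constants depending only on $R$, $E_0$ and the geometry, and keeping track of the powers of $t$ for small $t$ (cylinders shrink with anisotropic scaling $t^{1/2}$ in $v$, $t^{3/2}$ in $x$, so the number of links grows polynomially but the geometric loss per link must be controlled; for small $t$ one uses the time-rescaled transport, for $t\geq1$ constants are uniform). We would first try to avoid the full Harnack machinery by using only the positivity of the local Kolmogorov-type kernel in charts plus the comparison principle, treating the curvature terms $\Gamma(v,v)\cdot\nabla_v$ as a bounded drift on $|v|\leq R+1$.
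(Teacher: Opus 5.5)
Your route is the paper's: Chebyshev from the uniform second moment keeps mass $\geq 1/2$ in a bounded velocity region; regularity turns that mass into positivity on a small ball; and local hypoelliptic estimates in charts, iterated along a finite chain, spread positivity over $\{|v|\le R\}$. Naming a kinetic Harnack inequality for the last step makes concrete what the paper calls ``classical estimates''.

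\textbf{Moments.} The regularity step, as you set it up, does not apply. Theorem \ref{thmregul}(iii) needs $\|f_0\|_{L^1_\sigma}$ for some large $\sigma$, but only a second moment is assumed. Your fallback (discard a short initial time interval and interpolate) fails because the flow creates no moments. Indeed $\xi$ annihilates radial functions, so the law of $|v|$ evolves autonomously as the modulus of an Ornstein--Uhlenbeck process $v_t=e^{-t}v_0+\sqrt{1-e^{-2t}}\,G$. An infinite moment of $f_0$ therefore stays infinite forever (compare Proposition \ref{2sideexp}). Use instead the paper's reduction: by linearity and positivity of the semigroup, $f\geq\tilde f$, where $\tilde f$ starts from $f_0\,1_{\{|v|\le 2\sqrt{E_0}\}}$. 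This datum has mass at least $1/2$ and all moments finite, so Theorems \ref{thmloc}(i) and \ref{thmregul}(iii) apply to $\tilde f$ with constants depending only on $E_0$. Alternatively, use purely local hypoelliptic regularity, which only needs the $L^1$ mass.

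\textbf{Time dependence.} The spreading step cannot yield a power of $t$. To carry positivity from your small ball to a target whose position is at distance $d=O(1)$ within time $t/2$, the cheapest control costs of order $d^2/t^3$. So the kernel lower bound along the path is of order $t^{-2n}e^{-c/t^3}$, not $t^{-r_2}$. Equivalently, your Harnack chain has a number of links growing like a negative power of $t$, each costing a fixed factor, which gives $e^{-ct^{-a}}$. This ``geometric loss per link'' cannot be controlled, because it is sharp. Already in flat geometry the Kolmogorov kernel \eqref{FSK} shows that a solution starting from data localised near $(x_0,v_0)$ is $O(t^{-2n}e^{-c/t^3})$ at points $(x,v)$ with $|v|\le R$ and $d(x,x_0)$ of order one. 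Hence no bound $f\geq Kt^{\pm r}$ can hold uniformly on $\{|v|\le R\}$ as $t\to0$, and $K/t^r$, which blows up, is even further off.

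What your argument, like the paper's, actually proves is the following. For every $t_1>0$ there is $K=K(t_1,R,E_0,M)>0$ with $f\geq K$ on $\{t\geq t_1,\ |v|\le R\}$. Uniformity in large time comes from running the argument on $[t-1,t]$, where the moment and regularity bounds for $\tilde f$ are time-independent. The constant $K(t_1)$ degenerates like $e^{-Ct_1^{-a}}$ as $t_1\to0$ (optimally $a=3$). The paper's closing remark that the estimates are ``not worse than negative powers of $t$'' overlooks the same loss. You should state and prove the result in this form rather than try to extract a power law.
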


\begin{proof}[Proof of Proposition \ref{proplocalpos}]
By Tchebyshev's inequality, a portion at least $1/2$ of the mass of $f_0$ lies within $|v|\leq 2\sqrt{E_0}=:V$. Since the equation is order-preserving, it suffices to consider the case in which $f_0$ is compactly supported in $\{|v|\leq V\}$. Then the solution is uniformly smooth, by Theorem \ref{thmregul}. By Theorem \ref{thmloc}, $\int f |v|^2\,dx\,dv$ remains uniformly bounded, so there is always a lower bound on the mass of $f$ in $|v|\leq V$. By smoothness, there are $K>0$, $r>0$, $\tau>0$, such that for all $t_0>0$ there is some ball $B$ with center $(x_0,v_0)$ and radius $r$, with $x_0\in M$, $|v_0|\leq V$, such that $f\geq K$ on $B$ for all $t\in [t_0,t_0+\tau)$.

If $r$ is small enough, then the partial differential equation $\pa_t f + {\cal L}f=0$ can be rewritten in $B((x_0,v_0),2r)$ as a hypoelliptic equation in a region of $\R^n\times\R^n$, and from classical estimates the mass of $f$ on any ball of radius $r$, whose center lies within the boundary of $B$, is uniformly bounded below for $t_0<t<t_0+\tau$. (This is the spreading of the mass.) A finite number of steps allows to cover the whole $|v|\leq V$ in $\TM$. All of this is done with estimates that are terrible, but not worse than negative powers of $t$.
\end{proof}

\begin{Rk} \label{rkMTconv} Remark \ref{rkL1loc} and Proposition \ref{proplocalpos} together make it possible to apply the Meyn--Tweedie theory and get exponential rates of equilibration. This provides a convergence rate that depends on the constants in Proposition \ref{thmloc}(i) (rather good) and in Proposition \ref{proplocalpos} (very bad).
\end{Rk}

Now comes a first global positivity result.

\begin{Thm}[Uniform Gaussian lower bound] \label{thmlowerbound}
Let $f=f(t,x,v)$ solve $\pa_t f + {\cal L}f =0$ with initial datum $f_0$ satisfying \eqref{f0posit}. Then there are a positive function $\alpha=\alpha(t)>0$ and $K>0$ such that for all $t>0$ and $(x,v)\in \TM$,
\[ f(t,x,v) \geq K e^{-\alpha(t) \frac{|v|^2}{2}}, \]
where 
\[ \alpha(t) = O(1+t^{-r}),\qquad \alpha(t) \xrightarrow[t\to\infty]{} 1. \]
\end{Thm}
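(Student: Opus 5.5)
The plan is a comparison (maximum principle) argument on the exterior region $\{|v|\geq R\}$. It combines three ingredients: radial Gaussian barriers, the local positivity of Proposition~\ref{proplocalpos} on $\{|v|\leq R\}$, and the order-preserving character of the equation. The key observation is Proposition~\ref{basicinv}(ii): any function of the form
\[ g(t,x,v)=e^{a(t)-\alpha(t)\frac{|v|^2}{2}} \]
satisfies $\xi g=0$. The geodesic transport therefore plays no role for such barriers, and everything reduces to a one-dimensional computation in $r=|v|$, in the spirit of the proof of Proposition~\ref{2sideexp}.

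\textbf{Step 1 (barrier ODE).} Using $\Delta_V g=(\alpha^2|v|^2-n\alpha)g$ and $v\cdot\nabla_V g=-\alpha|v|^2 g$, the condition $\pa_t g+{\cal L}g\leq 0$ reads
\[ \dot a-\dot\alpha\,\frac{|v|^2}{2}\;\leq\;(\alpha^2-\alpha)|v|^2+n(1-\alpha). \]
I will impose $\dot\alpha=-2\alpha(\alpha-1)$, which is the same Riccati equation as \eqref{dotbeta}, and $\dot a=n(1-\alpha)$. Starting from $\alpha(t_0)=+\infty$ gives
\[ \alpha(t)=\frac{1}{1-e^{-2(t-t_0)}}, \]
so $\alpha$ decreases to $1$, with $\alpha(t)\sim 1/(2(t-t_0))$ as $t\downarrow t_0$ (this gives the $O(1+t^{-r})$ behaviour). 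Moreover $\int^\infty(\alpha-1)\,dt<\infty$, so $a(t)$ stays bounded below for large $t$. Concretely, $a(t)=a_0-n\int_{t_0+1}^t(\alpha-1)$ for $t\geq t_0+1$, and near $t_0$ I will take $a$ equal to a negative constant, which is enough since $\alpha>1$ there. Multiplying $g$ by a small constant $\eta$ keeps it a subsolution.

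\textbf{Step 2 (boundary lower bound).} For $t\geq t_0$ I need $f(t,x,v)\geq c>0$ on $\{|v|=R\}$ with $c$ independent of $t$. Proposition~\ref{proplocalpos} gives $f\geq K/t^r$ for $|v|\leq R$; this is good for small $t$ but degenerates as $t\to\infty$. To get uniformity, I will restart the argument at times $t_0+k$: the moment bound \eqref{boundL1} of Theorem~\ref{thmloc}(i) gives $\frac12\int f(t_0+k)|v|^2\leq\max(E_0,A)$ uniformly in $k$. Applying Proposition~\ref{proplocalpos} on each window $[t_0+k,t_0+k+1]$, away from the window's initial time, then yields a uniform constant $c$.

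\textbf{Step 3 (comparison).} On the cylinder $[t_0,T]\times\{|v|\geq R\}$ the function $w=f-\eta g$ satisfies $\pa_t w+{\cal L}w\geq 0$. At the initial time $w\geq 0$ because $g(t_0)=0$ for $|v|>0$. On the boundary $\{|v|=R\}$, $w\geq c-\eta e^{a}\geq 0$ once $\eta$ is small. Both $f$ and $g$ are bounded and $g$ decays, so a weak maximum principle for the degenerate parabolic operator on an unbounded domain applies. I would prove it via an energy argument on $w_-$ with a localisation in $|v|$, justified by the Gaussian decay of $g$, the nonnegativity of $f$, and the fact that ${\cal L}$ is order-preserving (or via the probabilistic representation). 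This gives $f\geq\eta g$ on $|v|\geq R$. On $|v|\leq R$ the bound already holds by Step 2, after shrinking $K$.

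\textbf{Concatenation.} Fixing $t_0$ small, for instance $t_0=t/2$ at small times, converts the blow-up of $\alpha$ at $t_0$ into $\alpha(t)=O(t^{-1})$. For large times, the convergence $\alpha(t)\to 1$ follows directly from the explicit formula.

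I expect the main obstacle to be Step~2, namely time-uniform positivity on $\{|v|\leq R\}$, since Proposition~\ref{proplocalpos} as stated only controls a power of $t$. The restart via Theorem~\ref{thmloc} should handle it. A secondary technical point is the rigorous justification of the maximum principle on the unbounded velocity domain.
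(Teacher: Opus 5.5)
Your strategy is the paper's: a radial Gaussian barrier killed by $\xi$ (Proposition~\ref{basicinv}(ii)), an exponent $\alpha(t)$ solving a Riccati equation from $\alpha(t_0)=+\infty$, local positivity on $\{|v|\le R\}$, and comparison on $\{|v|\ge R\}$. Your restart via \eqref{boundL1} also makes explicit the time-uniform positivity that the paper simply asserts.

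\textbf{The gap: Step~1 fails on $[t_0,t_0+1]$.} With $\dot\alpha=-2\alpha(\alpha-1)$ the $|v|^2$ terms cancel exactly, and the subsolution condition reduces to $\dot a\le -n(\alpha-1)$. The right-hand side is strictly negative precisely \emph{because} $\alpha>1$. A constant $a$ therefore gives $\pa_t g+{\cal L}g=n(\alpha-1)\,g>0$. Then $w=f-\eta g$ satisfies $\pa_t w+{\cal L}w=-\eta n(\alpha-1)g<0$, which is the wrong sign for the comparison in Step~3. Nor can you integrate $\dot a=n(1-\alpha)$ forward from a finite $a(t_0)$: since $\alpha-1\sim 1/(2(t-t_0))$ is not integrable at $t_0$, you would get $g\equiv 0$.

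Either of two repairs works.
\begin{itemize}
\item[(a)] Keep $\dot a=n(1-\alpha)$ all the way down to $t_0$, anchored at $t_0+1$. With $s=1-e^{-2(t-t_0)}$ this gives $e^{a}=C s^{-n/2}$, so $g=C\,s^{-n/2}e^{-|v|^2/(2s)}$ is the Ornstein--Uhlenbeck kernel issued from $v=0$, an exact solution. The blow-up $a\to+\infty$ at $t_0$ is harmless: on $\{|v|=R\}$, bound $g$ by $C\sup_{0<s\le 1}s^{-n/2}e^{-R^2/(2s)}<\infty$ rather than by $e^{a}$, and note $g(t_0)=0$ on $\{|v|\ge R\}$.
\item[(b)] As in the paper, use the restriction $|v|\ge R$ with $R^2>n$ to absorb the zeroth-order term. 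Keep $a$ constant and take $\dot\alpha=-2(\alpha-1)(\alpha-n/R^2)$. Then $\pa_t g+{\cal L}g=n(\alpha-1)(1-|v|^2/R^2)\,g\le 0$ there, with still $\alpha\sim 1/(2(t-t_0))$ near $t_0$ and $\alpha\downarrow 1$.
\end{itemize}

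\textbf{A further point on $K$.} With $t_0=t/2$, your $\eta$, and hence $K$, inherits the polynomial degeneration of the local positivity constant as $t\to 0$. This cannot be avoided. At $v=0$, a time-uniform $K$ would force $f(t,x,0)\ge K$ as $t\to 0$, which is false when $f_0$ vanishes near $v=0$. The paper's $m=m(t_0)$ has the same feature. So you should state explicitly that $K$ is uniform only for $t$ bounded away from $0$.
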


\begin{proof}[Proof of Theorem \ref{thmlowerbound}]
Fix $V_0>0$. From the local positivity, there are $r_0,m=m(t_0)>0$ such that $f(t,x,v)\geq m$ as soon as $t\geq t_0$ and $|v|\leq V_0$, and $m$ depends polynomially on $t_0$ for $t_0\leq 1$. 
Let then
\[ \vphi(t,x,v) = m\, e^{-\alpha(t) |\frac{|v|^2}{2}}, \qquad t\geq t_0 \]
 where $\alpha$ will be chosen later on, $\alpha(t_0) = +\infty$. The goal is to show, by maximum principle, that $f\geq\vphi$ for $|v|>V_0$. By construction, $\vphi\leq f$ for $|v|=V_0$ and $t=t_0$.
By computation,
\[
\pa_t\vphi + \xi\vphi - \Delta_V\vphi - v\cdot\nabla_V \vphi - n \vphi \\
= \Bigl[ (-\dot{\alpha} + \alpha^2 - \alpha) |v|^2 - n \Bigr] \vphi. 
\]
So if 
\begeq \label{edoalpha}
\dot{\alpha} \geq \alpha^2 - \alpha - \frac{n}{V_0^2}
\endeq
then $\pa_t\vphi + {\cal L}\vphi\leq 0$ for all $|v|>V_0$, and the global lower bound will follow.

Then for \eqref{edoalpha} to be satisfied, it suffices to let $\alpha(t) = A + B/(t-t_0)$, with $A,B>0$ well chosen.
\end{proof}

\bibnotes

Regularity and positivity go hand in hand all throughout the classical theory of elliptic and parabolic equations for densities. For instance, Gaussian upper and lower bounds on the fundamental solution are a key step in Nash's theory of regularity for nonsmooth equations in divergence form \cite{nash:58}; they are also called Aronson's estimates \cite{aronson:fundamental:67} and related to Moser's version of Harnack inequality \cite{moser:harnack:64}. See Fabes--Stroock \cite{fabesstroock:86} and Bass \cite[chapter~7]{bass:diffusionbook} for a survey of these inequalities and their relation.

In the context of hypoelliptic equations, I am not aware of such a neat set of relations, but there is also an abundance of works relating regularisation and appearance of strict positivity, especially in the probabilistic context, see for instance Stroock \cite{stroock:path:book}.

The strict positivity regularisation in $\R^n$ which I used in the proof of Proposition \ref{proplocalpos} is from \cite[Corollary A.26]{vill:hypoco}, based on maximum principle. In the context of kinetic theory, Desvillettes and I~used such a method to prove the strict positivity for the spatially homogeneous Landau equation \cite{DV:landau:1}. For the spatially homogeneous Boltzmann equation a large number of works have been devoted to the quantitative strict positivity of the solution, ever since its founding by Carleman \cite{carleman}, in particular by Pulvirenti--Wennberg \cite{pulviwenn:CMPlower:97} and Imbert--Mouhot--Silvestre \cite{IMS:boltzmann:20}. It would be nice to have more intrinsic and more sharp estimates for the geometric kinetic Fokker--Planck equation.

Remark \ref{rkMTconv} is obtained by combining Proposition \ref{thmloc} (i), Remark \ref{rkL1loc} and Proposition \ref{proplocalpos} with the Meyn--Tweedie convergence result from Ca\~{n}izo--Mischler \cite[Theorem 5.2]{canizomischler:harris:23}.

\section{Coercivity inequalities in $\TM$} \label{secineq}

This section is about functional inequalities by which a dissipation functional, expressed in terms of integrals of differential quantities, control the distance to equilibrium. This topic is extremely classical when there is just one (position) variable, and the goal is now to extend this to the tangent bundle phase space. Three situations will be considered:
\sm

\bul In the real-valued $L^2$ setting, gradients control the distance to constant functions: This is the {\bf Poincar\'e inequality}.
\sm

\bul In the vector-valued $L^2$ setting, gradients control the distance to {\bf parallel vector fields}, which are the very special vector fields $Y$ on $M$ satisfying $\nabla Y =0$ (the covariant derivative along any vector field vanishes). Such vector fields exist only under special conditions, namely if $M=\T^1\times M'$ (in the sense of warped products). Of course if $M=\T^n$ there are $n$ independent parallel vector fields. I am not aware of any specific name for the associated inequality and will denote it by just the {\bf Poincar\'e inequality for vector fields}.
\sm

\bul In the $L^1$ (density) setting, the equilibrium is the thermodynamical equilibrium, and the natural estimate of equilibration come from information theory: the Boltzmann--Shannon information $\int f\log f$ and the Fisher information $\int |\nabla f|^2/f$, or their relative counterparts where $f$ is replaced by the density with respect to the equilibrium probability measure. The domination of the Boltzmann information by the Fisher information is called the {\bf logarithmic Sobolev inequality}.

\begin{Def}[Poincar\'e inequality] \label{defpoinc}
The manifold $M$ satisfies a Poincar\'e inequality ${\rm P}(K)$ if, for all functions $u:M\to\R$ such that $\int_M u =0$, one has
\[ \|u \|^2_{L^2(M)} \leq \frac1{K} \|\nabla u\|_{L^2(M)}^2,\]
where the measure is the volume measure on $M$.
\end{Def}

\begin{Def}[Poincar\'e inequality for vector fields] \label{defpoincvf}
The manifold $M$ satisfies a Poincar\'e inequality for vector fields, ${\rm P}_{\cal V}X(K)$, if, for all vector fields $X:M\to\TM$ which are orthogonal to parallel vector fields, one has
\[ \|X \|^2_{L^2(M)} \leq \frac1{K} \|\nabla X\|_{L^2(M)}^2,\]
where the measure is the volume measure on $M$.
\end{Def}

\begin{Def}[log Sobolev inequality] \label{defLSI}
The manifold $M$ satisfies a logarithmic Sobolev inequality $\LSI(K)$ if, for all functions $f:M\to\R_+$ such that $\int_M f\,d\nu =1$, one has
\[ \int_M f \log f\,d\nu \leq \frac1{2K} \int_M \frac{|\nabla f|^2}{f}\,d\nu,\] 
where $\nu$ is the normalised volume measure on $M$.
\end{Def}

\begin{Rks}
\begin{itemize} 
\item[(i)] The normalisation in Definition \ref{defLSI} is to ensure that $\LSI(K)$ implies ${\rm P}(K)$ in the limit $f  = 1+\var u$.

\item[(ii)] A compact manifold $M$ always satisfies the three inequalities above for some positive constants.
\end{itemize}
\end{Rks}

Now come the generalisations of these three tools to the phase space $\TM$. I will denote by $\<h\> = \int h\,d\mu$ the average of the real-valued function $h$ against $\mu$, and by ${\cal P}$ the closed vector space of parallel horizontal vector fields, that is, those vector fields on $\TM$ which take the particular form $Y(x,v) = (Y(x),0)$ with $Y$ parallel.

\begin{Prop}[Poincar\'e inequality in the tangent bundle] \label{poincare}
There is a constant $K_P>0$, only depending on $M$, such that for any $h\in H^1(\TM;\R)$,
\begeq\label{ineqpoinc}
\iint_{\TM} \left| h - \< h\> \right|^2\, d\mu \leq
\frac1{K_P} \iint_{\TM} \Bigl( |\nabla_H h|^2 + |\nabla_V h|^2\Bigr)\,d\mu.
\endeq
\end{Prop}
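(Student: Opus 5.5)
The plan is to split $h$ into its velocity average and the vertical fluctuation, and to control each piece separately by a known Poincaré inequality. Set
\[ \rho(x) = \int_{T_xM} h(x,v)\,\mu_x(dv),\qquad \int_M \rho\,\frac{\vol(dx)}{\vol(M)} = \<h\>. \]
Since $\mu = \mu_x(dv)\,\vol(dx)/\vol(M)$, the decomposition $h-\<h\> = (h-\rho) + (\rho - \<h\>)$ is orthogonal in $L^2(\mu)$: the second term depends only on $x$, and the first has zero $\mu_x$-average for every $x$. Hence
\[ \iint |h-\<h\>|^2\,d\mu = \iint |h-\rho|^2\,d\mu + \frac1{\vol(M)}\int_M |\rho-\<h\>|^2\,\vol(dx), \]
and it suffices to bound each term by the right-hand side of \eqref{ineqpoinc}.

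\textbf{Vertical term.} For fixed $x$, the spectral analysis of Subsection~\ref{subDeltaVmu} gives $\sigma(-\Delta_V^\mu)=\N_0$, so the spectral gap of the Gaussian measure $\mu_x$ on the Euclidean space $T_xM$ equals $1$. Therefore
\[ \int_{T_xM} |h-\rho|^2\,d\mu_x \leq \int_{T_xM} |\nabla_V h|^2\,d\mu_x . \]
Integrating in $x$ bounds the first term by $\|\nabla_V h\|^2_{L^2(\mu)}$.

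\textbf{Horizontal term.} Apply Proposition~\ref{propdiv}(iv), formula~\eqref{intdh}, with $\vphi(q)=(2\pi)^{-n/2}e^{-q/2}$. This gives
\[ \nabla_x \rho = \int_{T_xM} \nabla_H h\,\mu_x(dv). \]
(A density argument is needed for $h\in H^1$.) By Cauchy--Schwarz, $|\nabla_x\rho|^2 \le \int |\nabla_H h|^2\,d\mu_x$. The Poincaré inequality ${\rm P}(K)$ on the compact manifold $M$ (Definition~\ref{defpoinc}) applies to $\rho-\<h\>$, which has zero mean, and yields
\[ \frac1{\vol(M)}\int_M |\rho-\<h\>|^2 \leq \frac1K \|\nabla_H h\|^2_{L^2(\mu)}. \]

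Adding the two bounds gives \eqref{ineqpoinc} with $K_P = \min(1,K)$. The only delicate point is the commutation of $\nabla_x$ with $v$-integration, which does not hold for naive $x$-derivatives of non-radial functions. It is exactly what \eqref{intdh} provides, since the Gaussian weight is radial, so horizontality of $\mu_x$ (Proposition~\ref{basicinv}(ii)) carries the argument. No curvature term appears.
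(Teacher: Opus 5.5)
Your proof is correct and follows essentially the same route as the paper. The variance splits into the fiberwise variance plus the variance of the fiber averages $\<h\>_x$. The first piece is controlled by the Gaussian Poincar\'e inequality in each $T_xM$; the second by the Poincar\'e inequality on $M$, after commuting $\nabla_x$ with the $\mu_x$-average via \eqref{intdh} and applying Jensen (Cauchy--Schwarz). The only difference is bookkeeping: you track the constants explicitly (fiber constant exactly $1$ from $\sigma(-\Delta_V^\mu)=\N_0$, hence $K_P=\min(1,K)$), whereas the paper normalises $\vol(M)=1$ and works with generic constants.
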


\begin{Prop}[Poincar\'e for vector fields in the tangent bundle] \label{poincarevect}
There is a constant $K_{\cal V}>0$, only depending on $M$, such that for any $X\in H^1(\TM;\Tzeroun)$,
\begeq\label{ineqpoincX} X\bot {\cal P} \Longrightarrow \qquad \iint_{\TM} |X|^2\,d\mu \leq \frac1{K_{\cal V}} \iint_{\TM} \Bigl( |X_V|^2 + |\nabla_H X|^2 + |\nabla_V X|^2 \Bigr)\,d\mu.
\endeq
\end{Prop}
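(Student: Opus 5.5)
The statement to prove is Proposition~\ref{poincarevect}. The plan is to reduce to the Poincar\'e inequality for vector fields on $M$, ${\rm P}_{\cal V}X(K)$, combined with the Gaussian Poincar\'e inequality in each fiber, by splitting $X$ according to its $v$-average.

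\textbf{Vertical part.} Write $X=(X_H,X_V)$. Since $|X|^2=|X_H|^2+|X_V|^2$ and $|X_V|^2$ already appears on the right-hand side of \eqref{ineqpoincX}, it suffices to bound $\iint |X_H|^2\,d\mu$.

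\textbf{Splitting the horizontal part.} Define the fiber average
\[ Y(x)=\int_{T_xM} X_H(x,v)\,\mu_x(dv), \]
which is a vector field on $M$ (it is intrinsic because $X_H(x,v)\in T_xM$). Then $X_H=Y+(X_H-Y)$, and the two pieces are orthogonal in $L^2(\mu)$. For the fluctuation, apply the standard Gaussian Poincar\'e inequality (spectral gap $1$, by \eqref{sigmaDeltamu}) in each Euclidean fiber $(T_xM,\mu_x)$, componentwise in an orthonormal frame of $T_xM$, and integrate in $x$. This gives
\[ \iint |X_H-Y|^2\,d\mu\le\iint|\nabla_VX_H|^2\,d\mu. \]

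\textbf{Controlling the average $Y$.} First compute $\nabla Y$ by \eqref{intdh} in Proposition~\ref{propdiv}(iv), applied with $\vphi(q)=(2\pi)^{-n/2}e^{-q/2}$ (which is exactly how $\mu_x$ is expressed against $\vol_x(dv)$):
\[ \nabla_x Y=\int (\nabla_H X)_H\,d\mu_x. \]
Jensen's inequality then yields $\|\nabla Y\|^2_{L^2(M)}\le \vol(M)\iint|\nabla_HX|^2\,d\mu$, where the factor $\vol(M)$ comes from the normalisation of $\mu$ in \eqref{mu}.

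Next, use the orthogonality hypothesis. For a parallel field $P$ on $M$, the field $(P,0)$ lies in ${\cal P}$. Since $X\bot{\cal P}$,
\[ 0=\iint\<X_H,P\>\,d\mu=\vol(M)^{-1}\int_M\<Y,P\>\,\vol(dx), \]
so $Y$ is orthogonal to parallel fields in $L^2(M)$. Then ${\rm P}_{\cal V}X(K)$ gives
\[ \|Y\|^2_{L^2(M)}\le K^{-1}\|\nabla Y\|^2_{L^2(M)}. \]
Since $\iint|Y|^2\,d\mu=\vol(M)^{-1}\|Y\|^2_{L^2(M)}$, the volume factors cancel and we obtain $\iint|Y|^2\,d\mu\le K^{-1}\iint|\nabla_HX|^2\,d\mu$.

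\textbf{Conclusion.} Summing the three bounds gives \eqref{ineqpoincX} with $K_{\cal V}=\min(1,K)$.

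\textbf{Expected obstacle.} The main point to check is the commutation formula $\nabla Y=\int\nabla_H X_H\,d\mu_x$ for vector-valued (rather than scalar) $X$. This is exactly where the covariant definition of $\nabla_H$ on tensors is needed; the formula follows from the duality argument of Proposition~\ref{propdiv}(iv), which relies on \eqref{ibpH}. A second point is that the horizontal component of $\nabla_H X$ equals $\nabla_H(X_H)$, because $\nabla_H$ preserves the horizontal/vertical splitting (the splitting is defined by $g$, which is parallel). With these in hand, the rest is bookkeeping of normalisation constants. Note also that the proof only needs the horizontal gradient of $X_H$ and the vertical gradient of $X_H$, together with $|X_V|^2$, so the bound is stronger than stated. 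Everything is carried out for smooth $X$ with fast decay and then extended to $H^1$ by density.
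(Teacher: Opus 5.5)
Your proof is correct, but it is not the route the paper takes for this proposition. The paper argues by contradiction. It minimises $\|X_V\|^2+\|\nabla X\|^2$ over $X\bot{\cal P}$ with $\|X\|_{L^2(\mu)}=1$. It uses the fiberwise Gaussian Poincar\'e inequality in weighted form ($X-\langle X\rangle_x$ controlled in $L^2_1$ by $\nabla_V X$), together with the bound $\|\nabla\langle X\rangle_x\|\leq\|\nabla_H X\|$, to extract a minimising sequence converging strongly in $L^2(\mu)$. It then observes that a vanishing infimum would produce a limit $Y$ with $\|Y\|=1$, $Y\bot{\cal P}$, $Y_V=0$ and $\nabla Y=0$, i.e. a nonzero element of ${\cal P}$ orthogonal to ${\cal P}$.

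You instead run the direct tensorisation argument that the paper uses for the scalar inequality (Proposition~\ref{poincare}). You apply the fiberwise Gaussian Poincar\'e inequality (spectral gap $1$) to $X_H-Y$, and Proposition~\ref{propdiv}(iv) plus Jensen to $\nabla Y$. You then apply ${\rm P}_{\cal V}X(K)$ on $M$ to the average $Y$, which is orthogonal to parallel fields exactly because $X\bot{\cal P}$. Your bookkeeping of the $\vol(M)$ factors is right. Your second ``obstacle'' is automatic in the paper's conventions, where $\nabla_H$ acts componentwise on $(X_H,X_V)$.

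What your route buys:
\begin{itemize}
\item an explicit constant $K_{\cal V}=\min(1,K)$;
\item no compactness argument on the noncompact phase space $\TM$, where escape of mass to large velocities must be ruled out (that is what the weighted $L^2_1$ bound is for in the paper);
\item the sharper statement that only $X_V$, $\nabla_V X_H$ and $\nabla_H X_H$ are needed on the right-hand side.
\end{itemize}
What it costs: it takes ${\rm P}_{\cal V}X(K)$ on $M$ as an input, which the paper only asserts in a remark. This input is standard: on a compact manifold the rough Laplacian $\nabla^*\nabla$ on vector fields has discrete spectrum, and its kernel consists of the parallel fields. It is itself a compactness statement, though, so the non-quantitative content is moved from $\TM$ down to $M$ rather than removed.
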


\begin{Prop}[Log Sobolev inequality in the tangent bundle] \label{logsob}
There is a constant $K_L>0$, only depending on $M$, such that for any probability density $h=h(x,v)$ on $(\TM,\mu)$,
\begeq\label{ineqls}
\iint_{\TM} h\log h\,d\mu \leq \frac1{2 K_L} \iint_{\TM} \left( \frac{|\nabla_H h|^2 + |\nabla_V h|^2}{h}\right)\,d\mu.
\endeq
\end{Prop}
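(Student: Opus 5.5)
The statement immediately above is Proposition~\ref{logsob}, the log Sobolev inequality on $(\TM,\mu)$. I will prove it by tensorisation: reduce to the known log Sobolev inequality for the Gaussian in each fiber and $\LSI$ on $M$. Since $\mu(dx\,dv) = \nu(dx)\,\mu_x(dv)$ with $\nu$ the normalised volume, I write $\rho(x) = \int_{T_xM} h(x,v)\,\mu_x(dv)$ for the $x$-marginal density. The entropy then splits as
\[
\iint h\log h\,d\mu = \int_M \Bigl(\int_{T_xM} h\log\frac{h}{\rho}\,d\mu_x\Bigr)\nu(dx) + \int_M \rho\log\rho\,d\nu .
\]
For the first (fiberwise) term I apply the Gross logarithmic Sobolev inequality with constant~1 for the standard Gaussian on the Euclidean space $(T_xM,g_x)$. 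Each fiber is isometric to $\R^n$ with $\mu_x$ standard Gaussian, so the constant is uniform in $x$. This term is therefore bounded by $\frac12\iint |\nabla_V h|^2/h\,d\mu$.

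For the second term I use $\LSI(K)$ on $M$, which holds for compact $M$ by the remark after Definition~\ref{defLSI}. This gives the bound $\frac1{2K}\int_M |\nabla_x\rho|^2/\rho\,d\nu$. The key identity is then Proposition~\ref{propdiv}(iv), formula \eqref{intdh}, applied with $\vphi(q) = (2\pi)^{-n/2}e^{-q/2}$. It yields
\[
\nabla_x\rho(x) = \int_{T_xM}(\nabla_H h)(x,v)\,\mu_x(dv),
\]
because the Gaussian weight is horizontal (Proposition~\ref{basicinv}(ii)). Note that $\vol_x(dv)$ and $dv$ differ only by the factor $\sqrt{\det g}$, which is absorbed consistently, as in the proof of (iv).

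By Cauchy--Schwarz,
\[
|\nabla_x\rho|^2 = \Bigl|\int \frac{\nabla_H h}{\sqrt h}\,\sqrt h\,d\mu_x\Bigr|^2 \leq \rho\int \frac{|\nabla_H h|^2}{h}\,d\mu_x .
\]
Hence $\int_M |\nabla_x\rho|^2/\rho\,d\nu \leq \iint |\nabla_H h|^2/h\,d\mu$. This is the standard convexity of the Fisher information under marginalisation, and it works here because $\nabla_H$ commutes with integration against radial weights.

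Summing the two pieces gives \eqref{ineqls} with $K_L = \min(1,K)$. The only delicate point is justifying the interchange formula \eqref{intdh} and the division by $h$. To handle this, I will first prove the inequality for smooth $h$ bounded below by a positive constant and decaying suitably at large $|v|$, and then pass to general probability densities by the usual approximation $h_\var = (h+\var)/(1+\var)$ together with lower semicontinuity of the entropy. I expect no genuine obstacle beyond this bookkeeping.
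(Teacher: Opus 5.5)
Your proof is correct and follows essentially the same route as the paper: split the entropy into the fiberwise relative entropy plus the entropy of the marginal $\rho$, apply the Gaussian (Stam--Gross) inequality in each fiber and $\LSI$ on $M$, identify $\nabla_x\rho$ with the fiber average of $\nabla_H h$ via \eqref{intdh}, and conclude with Jensen/Cauchy--Schwarz for the marginal Fisher information. Your explicit bookkeeping of constants, giving $K_L=\min(1,K)$, and your approximation remark are small refinements of the paper's version, which uses generic constants $C$ and omits the approximation step.
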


\begin{Rk} The functional appearing in the right hand side of \eqref{ineqls} is the {\bf Fisher information} of $h\mu$ with respect to $\mu$, evaluated with the Sasaki metric, that is, giving equal importance to the horizontal and vertical gradients:
\begeq\label{FI}
I_\mu(h\mu) =  \iint_{\TM} \left( \frac{|\nabla_H h|^2 + |\nabla_V h|^2}{h}\right)\,d\mu.
\endeq
It plays an important role in the theories of logarithmic Sobolev inequalities, concentration inequalities, large deviation principles, optimal transport theory, and has gained in influence in kinetic theory over the last years.
\end{Rk}

\begin{proof}[Proof of Proposition \ref{poincare}]
The proof is based on a classical two-step argument. Without loss of generality assume
that the volume of $M$ is normalized to~1, and as in \eqref{mu}, one has $\mu(dx\,dv) = \mu_x(dv)\,\vol(dx)$.

Each measure $\mu_x$ is a Gaussian measure in the fiber $T_xM\simeq \R^n$, with covariance
matrix $g_x$ bounded from above and below. So $\mu_x(dv)$ satisfies a Poincar\'e inequality, uniformly in $x$, with respect to the vertical gradient:
\begeq\label{p1}
\int_{T_xM} \left | h - \int h\,d\mu_x \right|^2\,d\mu_x
\leq C \int_{T_xM} |\nabla_V h|^2\,d\mu_x,
\endeq

Moreover, since $M$ is compact, the volume measure also satisfies a Poincar\'e inequality :
for any function $u$ on $M$ 
\begeq\label{p2}
\int_M \left| u - \int u\,d\vol \right|^2\,d\vol \leq
C \int |\nabla u|^2\,d\vol;
\endeq

Let $\<h\>_x = \int h\,d\mu_x$; then
\begin{align} 
& \int \left| h - \int h\,d\mu \right|^2\,d\mu 
= \int h^2\,d\mu - \left(\int h\,d\mu\right)^2 \nonumber \\
& = \left[\int h^2\,d\mu - \int \left(\int h\,d\mu_x\right)^2\,\vol(dx)\right]
+ \left[\int \left(\int h\,d\mu_x\right)^2\,\vol(dx)
- \left(\int h\,d\mu\right)^2 \right] \nonumber\\
& = \int_M \left[ \int_{Tx M} h^2\,\mu_x(dv) - \left(\int h\,d\mu_x\right)^2\right]\,\vol(dx)
+ \left[ \int_M \<h\>_x^2\,\vol(dx) - \left(\int \<h\>_x\,\vol(dx)\right)^2\right] \nonumber\\
& \leq C \int_M \int_{T_xM} |\nabla_V h|^2\,\mu_x(dv)\,\vol(dx)
+ C \int_M \bigl| \nabla_x\<h\> \bigr|^2\,d\vol, \label{ppp}
\end{align}
thanks to \eqref{p1} and \eqref{p2}. The first term in the right-hand side
of \eqref{ppp} is nothing but $\int |\nabla_V h|^2\,d\mu$. As for the second term,
by~\eqref{intdh} it is also
\begin{align*}
\int_M \left| \int_{T_xM} \nabla_H h\,d\mu_x \right|^2\,\vol(dx)
& \leq \int_M \int_{T_xM} |\nabla_Hh|^2\,d\mu_x\,\vol(dx) = \int |\nabla_Hh|^2\,d\mu.
\end{align*}
This establishes \eqref{ineqpoinc}.
\end{proof}

\begin{proof}[Proof of Proposition \ref{poincarevect}]
In each fiber $T_xM$, the Poincar\'e inequality for Gaussian measure, applied componentwise to $X(x,\cdot)$, yields
\[ \|X - \<X\>_x \|_{L^2_1(\mu_x)} \leq C\, \|\nabla_V X\|_{L^2(\mu_x)}.\]
(Here this is a weighted Poincar\'e inequality as in Proposition \ref{propgradmom}.) Upon $x$-integration,
\begeq\label{X-avX} \|X - \<X\>_x \|_{L^2_1} \leq C\, \|\nabla_V X\|_{L^2(\mu)}.
\endeq
Here $\<X\>_x$ is a vector field on $M$, valued in $\TM$, and using Proposition \ref{propdiv}(iv), 
\begeq\label{XavXcpt}
\|\<X\>_x\|_{L^2} \leq \|X\|_{L^2},\qquad \|\nabla \<X\>\|\leq \|\nabla_H X\|.
\endeq

Now consider the minimising problem
\begeq\label{minpvect}
\inf \Bigl\{ \|X_V\|_{L^2}^2 + \|\nabla X\|_{L^2}^2; \ \|X\|_{L^2}=1, \ X\bot {\cal P} = 0 \Bigr\}. 
\endeq
If $(X^k)_{k\in\N}$ be a minimising sequence for \eqref{minpvect}, then from \eqref{X-avX} and \eqref{XavXcpt}, $\<X\>_x$ and $X-\<X\>_x$ are bounded in $L^2_1\cap H^1$, hence compact in $L^2(\mu)$; so without loss of generality $X^k$ converges strongly in $L^2(\mu)$ to some vector field $Y=Y(x,v)$, with $\|Y\|_{L^2} = 1$ and $Y\bot{\cal P}$. If the minimum in \eqref{minpvect} is~0, then by lower semi-continuity, $Y_V=0$, $\nabla Y = 0$. This implies that $Y$ is a parallel horizontal vector field, a contradiction. Hence the minimum is positive and \eqref{ineqpoincX} follows.
\end{proof}

\begin{proof}[Proof of Proposition \ref{logsob}]
First start with the Stam--Gross logarithmic Sobolev inequality, in each fiber: For any probability density $h$ on $(\T_xM,\mu_x)$,
\begeq\label{ls1}
\int_{T_xM} h \log h\,d\mu_x 
\leq C \int_{T_xM} \frac{|\nabla_V h|^2}{h}\,d\mu_x,
\endeq
where $C$ is a uniform constant.

On the other hand, there is $C>0$ such that for any probability density $\rho$ on $(M,\vol)$,
\begeq\label{ls2}
\int_M \rho\log\rho\,d\vol \leq 
C \int \frac{|\nabla\rho|^2}{\rho}\,d\vol.
\endeq

Then
\begin{multline} \label{decflogf}
\int h\log h\,d\mu =
\int_M \left[ \left(\int_{T_xM} h\log h\,d\mu_x\right)
- \left(\int_{T_xM} h\,d\mu_x\right) \log \left(\int_{T_xM} h\,d\mu_x\right)
\right]\,\vol(dx) \\
+ \int_M \<h\>_x\log\<h\>_x\,\vol(dx).
\end{multline}
Then apply \eqref{ls1} and (since $\int\<h\>_x\,\vol(dx)=1$) \eqref{ls2}
to bound \eqref{decflogf} by
\[ C \int_M \left(\int_{T_xM} \frac{|\nabla_Vh|^2}{f}\,d\mu_x \right)\,\vol(dx)
+ C \int_M \frac{|\nabla_x\<h\>_x|^2}{\<h\>_x}\,\vol(dx).\]
The first integral is just $\int \frac{|\nabla_V h|^2}{h}\,d\mu$. As for the second one,
use~\eqref{intdh} again to rewrite it as
\[ \int_M \frac{|\nabla_x\<h\>_x|^2}{\<h\>_x}\,\vol(dx)
= \int_M \frac{\bigl| \<\nabla_H h\>_x \bigr|^2}{\<h\>_x}\,\vol(dx)
\leq \int_M \int_{T_xM} \frac{|\nabla_Hh|^2}{h}\,\mu_x(dv)\,\vol(dx),\]
where the last inequality follows from Jensen's inequality, applied to the convex
function $C(a,b)=|a|^2/b$ and the probability measure $\mu_x$.
This concludes the proof of Proposition \ref{logsob}.
\end{proof}

In the sequel of this section I will discuss the possibility to keep the coercivity inequalities while regularising or taming the horizontal gradient. This will be most useful later on.

\begin{Prop}[Coercivity in the tangent bundle with regularised horizontal gradient] \label{propcoerreg}
If $M$ is a compact manifold and $r\geq 0$, then there are $\tilde{K}_P>0$, $\tilde{K}_{\cal V}>0$ such that 

(i) For all functions $h\in L^2(\TM)$,
\[ \|\nabla_V h\|_{L^2(\mu)}^2 + \bigl\| (\Id + (-\Delta_V)^r - \Delta_H)^{-1/2} \nabla_H h \bigr\|_{L^2(\mu)}^2 \geq \tilde{K}_P \|h-\<h\>\|^2_{L^2(\mu)},\]

(ii) For all vector fields $X \in L^2(\TM)$,
\[ X\bot {\cal P} \Longrightarrow\qquad  \|\nabla_V X\|_{L^2(\mu)}^2 + \bigl\| (\Id + (-\Delta_V)^r - \Delta_H)^{-1/2} \nabla_H X \bigr\|_{L^2(\mu)}^2 \geq \tilde{K}_{\cal V} \|X\|^2_{L^2(\mu)}.\]
\end{Prop}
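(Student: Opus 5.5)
The plan is to repeat the two-step decomposition used for Propositions \ref{poincare} and \ref{poincarevect}, namely fiberwise Gaussian Poincar\'e in $v$ plus Poincar\'e on $M$ for the averages $\<h\>_x$. The regularised horizontal gradient only ever has to act on functions of $x$ alone, and on those it is essentially harmless.

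Write $A = \Id + (-\Delta_V)^r - \Delta_H$, in the $\mu$ version (with $\Delta_V^\mu$). This is a nonnegative self-adjoint operator. By Proposition \ref{propdhdxcom} its square-root inverse is well defined through the joint spectral calculus \eqref{spectralHVmu}, and it commutes with $\Delta_H$ and $\Delta_V^\mu$.

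For (i), I decompose $h-\<h\> = (h - \<h\>_x) + (\<h\>_x - \<h\>)$. The first piece is controlled by $C\|\nabla_V h\|^2_{L^2(\mu)}$ via \eqref{p1}. For the second piece, set $u(x) = \<h\>_x$. By \eqref{p2} it suffices to bound $\|\nabla u\|_{H^{-1}(M)}$ from below in terms of $\|u - \int u\|_{L^2(M)}$. Indeed, by the spectral theorem on $M$, $\|(\Id-\Delta_x)^{-1/2}\nabla u\|^2 = \sum \frac{\lambda}{1+\lambda}|\hat u_\lambda|^2 \ge \frac{\lambda_1}{1+\lambda_1}\|u-\int u\|^2$. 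This is a weak-norm Poincar\'e inequality on $M$ and needs no compactness argument.

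The key step is to compare $\|A^{-1/2}\nabla_H h\|$ with $\|(\Id-\Delta_x)^{-1/2}\nabla u\|$, and I would do it by duality. For a vector field $Y(x)$ on $M$, Proposition \ref{basicinv} and Proposition \ref{basicLapinv} give $\Delta_V^\mu Y = 0$ and $\Delta_H Y = \Delta_x Y$. Hence $A$ acts on such fields as $\Id - \Delta_x$ (the vector Laplacian), so $\|A^{1/2}Y\|_{L^2(\mu)} = \|Y\|_{H^1(M)}$. Then, by \eqref{intdh}, $\<\nabla_H h, Y\>_{L^2(\mu)} = \<\nabla u, Y\>_{L^2(M)}$, which yields
\[
\|(\Id-\Delta_x)^{-1/2}\nabla u\|_{L^2(M)} = \sup_{Y} \frac{\<\nabla_H h, Y\>_{L^2(\mu)}}{\|A^{1/2}Y\|_{L^2(\mu)}} \le \|A^{-1/2}\nabla_H h\|_{L^2(\mu)}.
\]
This step is where care is needed. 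One must check that $x$-only fields lie in the form domain and that the tensor-valued functional calculus restricts consistently to them; I would verify the latter using the invariance of the $s=0$ eigenspace ${\cal E}_0$ under $\Delta_H$ from Section \ref{secspectral}. Summing the two pieces gives (i).

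For (ii), I would first try the same scheme using \eqref{X-avX} and \eqref{XavXcpt}. The fiberwise part $X-\<X\>_x$ is bounded by $\|\nabla_V X\|$. The average $\<X\>_x$ is a vector field on $M$ whose vertical component is controlled by $\|\nabla_V X\|$ as well, since the Gaussian average of the Hermite-expansion coefficients of the vertical part is handled by the weighted Poincar\'e inequality. Its horizontal component $Y_0$ is orthogonal to parallel fields, because the $v$-average preserves orthogonality to ${\cal P}$. The duality argument above then controls $\|(\Id-\Delta_x)^{-1/2}\nabla Y_0\|$ by $\|A^{-1/2}\nabla_H X\|$. Finally, a weak-norm Poincar\'e inequality for vector fields on $M$ is needed, namely $\|(\Id-\Delta)^{-1/2}\nabla Y\|\ge c\|Y\|$ for $Y\perp$ parallel fields. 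I would obtain it spectrally from Definition \ref{defpoincvf}: the rough Laplacian on $M$ has discrete spectrum with kernel the parallel fields and spectral gap $K$. The main obstacle is again justifying the duality identity for tensors, together with the orthogonality bookkeeping.
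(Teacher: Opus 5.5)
\textbf{Gap in (ii).} Your argument for (ii) fails at the claim that the vertical component of $\<X\>_x$ is controlled by $\|\nabla_V X\|$. For $X=(0,Y(x))$, with $Y$ any vector field on $M$, one has $\nabla_V X=0$ while $\<X_V\>_x=Y$. The Gaussian Poincar\'e inequality \eqref{X-avX} only controls $X-\<X\>_x$. Fields independent of $v$ lie in the kernel of $\nabla_V$, so no Hermite bookkeeping can bound their average.

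In fact (ii) cannot be proved as written. Take $Y$ parallel, say $M=\T^n$ and $Y=e_1$. Then $X=(0,Y)$ is orthogonal to ${\cal P}$, since ${\cal P}$ only contains horizontal fields $(Y(x),0)$. Moreover $\nabla_V X=0$ and $\nabla_H X=0$, so the left-hand side of (ii) vanishes while $\|X\|_{L^2(\mu)}>0$.

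The paper's ``(ii) is similar'' hides the same defect. On the $v$-independent part (the tensor-valued ${\cal E}_0$), the operator reduces to $(\Id-\Delta_x)^{-1}(-\Delta_x)$ acting on pairs $(Y_H,Y_V)$, and its kernel contains $(0,Y)$ with $Y$ parallel.

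The missing ingredient is the term $\|X_V\|_{L^2(\mu)}^2$ that appears in \eqref{ineqpoincX}. This is exactly what the $P_V$ damping in ${\cal B}_\lambda$ supplies in Theorem \ref{thmhypocoL2vect}. Alternatively, one must require orthogonality to all $v$-independent fields $(Y_1,Y_2)$ with $Y_1,Y_2$ parallel. With either correction your scheme goes through:
\begin{itemize}
\item with the extra term, $\|X_V\|^2$ absorbs the vertical part outright;
\item with the stronger orthogonality, you treat $\<X_V\>_x$ exactly as you treat $\<X_H\>_x$: test $\nabla_H X$ against $(0,T(x))$, use \eqref{intdh}, and apply the weak vector Poincar\'e inequality on $M$.
\end{itemize}
If $M$ has no nonzero parallel field, the second route proves (ii) as written.

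\textbf{Part (i).} Part (i) is correct and is essentially the paper's proof. Your split into $h-\<h\>_x$ and $\<h\>_x-\<h\>$ is the paper's splitting into ${\cal E}_0$ and its orthogonal complement for $-\Delta_V^\mu$. Both arguments use the vertical gap on the complement and the spectral gap of $-\Delta_x$ on ${\cal E}_0$.

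Your duality against $x$-only test fields is a cleaner way to carry out the ${\cal E}_0$ reduction. It uses only $\<AY,Y\>=\|Y\|^2+\|\nabla Y\|^2$ for such $Y$, and never needs $A^{-1}$ to commute with $\nabla_H$. So the functional-calculus consistency you worry about is not actually needed.

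One correction, which the paper's proof shares. Your exact formula for $\|(\Id-\Delta_x)^{-1/2}\nabla u\|^2$ holds for the Hodge Laplacian on $1$-forms. It does not hold for the rough Laplacian $\nabla^*\nabla$, which is what $\Delta_H$ induces on $x$-only fields. The two differ by $\Ric$, so on compact $M$ you get the lower bound only up to a constant. One way to see it: test against $\nabla w$ with $w=(-\Delta_x)^{-1}(u-\int u)$, and bound $\|\nabla^2 w\|$ using Bochner's formula.

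Similarly, the weak vector Poincar\'e inequality you invoke in (ii) needs elliptic regularity for $\nabla^*\nabla$ on $M$, not just its spectral gap.
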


\begin{Rk} An analogous procedure can be realised in terms of Boltzmann and Fisher informations; this more subtle construction is deferred to another work. \end{Rk}

\begin{proof}[Proof of Proposition \ref{propcoerreg}]
Let us prove for instance (i). It amounts to establish a coercivity bound on $-\Delta_V + (\Id + (-\Delta_V)^r - \Delta_H)^{-1} (-\Delta_H)$. All operators involved commute, and the eigenspaces ${\cal E}_s$ for $-\Delta_V$ are invariant and orthogonal. So it suffices to prove the coercivity separately on each ${\cal E}_s$. For $s\geq 1$ this is clear since the operator is bounded below by $-\Delta_V$, itself bounded below by $s\, \Id$. Then ${\cal E}_0/\R$ is made of functions depending only on $x$, and there the operator reduces to $(\Id-\Delta_x)^{-1}(-\Delta_x)$, whose eigenvalues are $\lambda_i/(1+\lambda_i)\geq \lambda_i/2$, if $(\lambda_i)_{i\geq 1}$ are the nonzero eigenvalues of $-\Delta_x$. The conclusion follows by the spectral gap property of $-\Delta_x$.
\end{proof}

As mentioned before, another variant consists in taming the horizontal gradient through a kinetic multiplier. 

\begin{Prop}[Coercivity in the tangent bundle with tamed horizontal gradient] \label{propcoertame}
Let $a=a(|v|)$ such that $\<a\> = \int a(|v|)\,\mu(dv) \neq 0$ and $\|a\|_{L^2}^2 = \int a(|v|)^2\,\mu(dv)<\infty$. Then 

(i) There is $\tilde{K}_P(a)>0$ such that for all $h\in L^2(\mu)$,
\begeq\label{qtroncH}
\|\nabla_V h\|_{L^2(\mu)}^2 + \bigl\| a \nabla_H h \bigr\|_{L^2(\mu)}^2 \geq \tilde{K}_P(a) \|h-\<h\>\|^2_{L^2(\mu)};
\endeq

(ii) There is $\tilde{K}_{\cal V}(a)>0$ such that for all vector fields $X\in L^2(\mu)$,
\begeq\label{qtroncHX}
X\bot {\cal P} \Longrightarrow \qquad \|\nabla_V X\|_{L^2(\mu)}^2 + \bigl\| a \nabla_H X \bigr\|_{L^2(\mu)}^2 \geq \tilde{K}_{\cal V}(a) \|X\|^2_{L^2(\mu)};
\endeq

(iii) If in addition $a$ is nonnegative and bounded, and $h=h(x,v)$ is a probability density on $(\TM,\mu)$ such that 
\[ \rho(x) = \int_{T_xM} h(x,v)\,\mu_x(dv) \]
is bounded above and below by positive constants, then there is $\tilde{K}_L(a,h)>0$, only depending on $a$ and on the abovementioned bounds, such that
\begeq\label{qtroncL}
\iint_{\TM} \frac{|\nabla_V h|^2}{h}\,d\mu + \iint_{\TM} \frac{|\nabla_H h|^2}{h}\,a(|v|)\,d\mu \geq
\tilde{K}_L(a,h)\, \iint_{\TM} h \log h\,d\mu.
\endeq
\end{Prop}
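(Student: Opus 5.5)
The plan is to mimic the two-step (fiberwise / base) decomposition used in the proofs of Propositions \ref{poincare}, \ref{poincarevect} and \ref{logsob}, replacing the horizontal Fisher/Dirichlet term by its tamed version. The key observation is that the projection onto the base only involves averages of the form $\int_{T_xM} \nabla_H h\,\vphi(|v|^2)\,d\mu_x$, and by Proposition \ref{propdiv}(iv) these commute with radial weights. So instead of averaging $h$ against $\mu_x$, I will average against the weight $a$. For (i), set $\<h\>_x=\int h\,d\mu_x$ and write
\[ h-\<h\> = \bigl(h-\<h\>_x\bigr) + \bigl(\<h\>_x - \<h\>\bigr). \]
The first piece is controlled by $\|\nabla_V h\|^2$ through the fiberwise Gaussian Poincaré inequality \eqref{p1}. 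For the second piece I use \eqref{p2} on $M$, which reduces the task to bounding $\|\nabla_x\<h\>_x\|_{L^2(M)}$.

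To bound that gradient, I write $\<a\>\<h\>_x = \int a h\,d\mu_x - \int a\,(h-\<h\>_x)\,d\mu_x$. Taking $\nabla_x$ and applying \eqref{intdh} (or \eqref{intdh} with the radial weight $\vphi=a\,e^{-|v|^2/2}$, up to normalisation) gives
\[ \<a\>\,\nabla_x\<h\>_x = \int a\,\nabla_H h\,d\mu_x - \nabla_x\!\int a\,(h-\<h\>_x)\,d\mu_x . \]
The first term is bounded in $L^2(M)$ by $\|a\nabla_H h\|_{L^2(\mu)}$ by Cauchy--Schwarz. The second term is the delicate one, and I would not estimate it pointwise. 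Instead I would test the identity against $\nabla u$ for $u=\<h\>_x-\<h\>$ and integrate by parts on $M$. This converts it into $\int\!\!\int a\,(h-\<h\>_x)\,(-\Delta_x u)$, which has the wrong number of derivatives.

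The cleaner route, which I will adopt, is a compactness/contradiction argument as in the proof of Proposition \ref{poincarevect}. Suppose $h_k$ satisfies $\<h_k\>=0$, $\|h_k\|=1$, and the left side of \eqref{qtroncH} tends to $0$. Then $h_k-\<h_k\>_x\to0$ in $L^2_1(\mu)$ by \eqref{X-avX}. Next, $\int a\,\nabla_H h_k\,d\mu_x\to0$ in $L^2(M)$ by Cauchy--Schwarz and $\|a\|_{L^2}<\infty$. Using \eqref{intdh}, this equals $\nabla_x\!\int a h_k\,d\mu_x$, and $\int a h_k\,d\mu_x = \<a\>\<h_k\>_x + o(1)$ in $L^2(M)$.

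From this, $u_k=\<h_k\>_x$ satisfies $\nabla u_k = \<a\>^{-1}\nabla w_k$ with $w_k=\int a h_k\,d\mu_x$, so $\|\nabla w_k\|\to 0$ and $w_k-\<a\>u_k\to0$. By the Poincaré inequality on $M$, $w_k$ converges to constants, and hence so does $u_k$. Since $\int u_k=0$, we get $u_k\to0$, contradicting $\|h_k\|=1$. The main obstacle is exactly this step: the weak control $a\nabla_H h$ only gives $\nabla w_k\to 0$ for the weighted average, not for $\<h\>_x$. The argument above handles it via $L^2$ closeness of $w_k$ and $\<a\>u_k$ together with Poincaré on $M$, so no gradient of the error is needed.

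For (ii), I would run the same contradiction argument with vector fields. Now $\nabla\<X\>_x\to 0$ forces the limit $\<X\>_x$ to be a parallel field, and it must be horizontal because $\|\nabla_V X\|\to0$ combined with the weighted bound \eqref{X-avX} kills the mean of the vertical component through the identity $\int \nabla_V X\,d\mu_x=\int vX\,d\mu_x$. Together with $X\bot{\cal P}$ this gives a contradiction. The $\Xi$-type zero-order terms do not appear here.

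For (iii), I would follow the decomposition \eqref{decflogf}. The fiberwise part is handled by \eqref{ls1}. The base part $\int\rho\log\rho$ is bounded, since $\rho$ is pinched between constants, by $C\|\rho-1\|_{L^2}^2$, and then by $C\|\nabla\rho\|^2$ via \eqref{p2}. Writing $\rho=\<h\>_x$, I control $\nabla\rho$ as in (i) by $\nabla\int a h\,d\mu_x=\int a\nabla_H h\,d\mu_x$. Jensen's inequality with $|a|^2/b$ and boundedness of $a$ give the bound $\int a|\nabla_H h|^2/h$. The fiber error $\int a(h-\rho)\,d\mu_x$ is bounded by Csiszár--Kullback and fiberwise LSI by the vertical Fisher information, using boundedness of $a$ and the bounds on $\rho$. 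If the derivative of that error cannot be closed directly, I would fall back on the same compactness argument, which is why the constant is allowed to depend on $h$ through these bounds.
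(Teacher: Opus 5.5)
The step that fails is in (ii). The Gaussian integration by parts $\int\nabla_V X\,d\mu_x=\int v\otimes X\,d\mu_x$ controls the first moment of $X$ in each fiber, not its mean. So $\|\nabla_V X\|_{L^2(\mu)}\to 0$ gives no information on $\<X_V\>_x$.

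This cannot be repaired, because (ii) as stated is false whenever $M$ carries a nonzero parallel vector field $Y$. Take the purely vertical field $X(x,v)=(0,Y(x))$. It has $\nabla_V X=0$ and $\nabla_H X=(0,\nabla Y)=0$. It is orthogonal to ${\cal P}$, which contains only horizontal fields, and yet $\|X\|_{L^2(\mu)}>0$. On $\T^n$ one can take $X=(0,e_1)$. The paper only says that (ii) is ``similar'' to (i), and misses this as well.

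What your method does prove is one of two corrected versions. Either add $\|X_V\|_{L^2(\mu)}^2$ to the left-hand side, as in Proposition \ref{poincarevect} (in the vector-field hypocoercivity this term is supplied by the $P_V$ part of ${\cal B}_\lambda$). Or require $X$ to be orthogonal to vertical parallel fields as well. In either case, run the argument of (i) on $X_H$:
\begin{itemize}
\item set $w_H=\<aX_H\>_x$, so that $\nabla w_H=\<a\nabla_H X_H\>_x$ by \eqref{intdh};
\item replace \eqref{p2} by the Poincar\'e inequality for vector fields on $M$ (Definition \ref{defpoincvf});
\item use $X\bot{\cal P}$, which says exactly that $\<X_H\>_x$ has zero projection on parallel fields. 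Then the parallel projection of $w_H$ equals that of $w_H-\<a\>\<X_H\>_x$, which is small.
\end{itemize}

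Parts (i) and (iii) are correct and are in substance the paper's proof.

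In (i), your contradiction argument uses no compactness at all. Three ingredients give the inequality directly:
\begin{itemize}
\item the fiberwise Gaussian Poincar\'e inequality for $h-\<h\>_x$;
\item inequality \eqref{p2} applied to $w=\<ah\>_x$, with $|\nabla w|^2\le\<|a\nabla_H h|^2\>_x$ by Jensen against $\mu_x$ (the hypothesis $\|a\|_{L^2}<\infty$ is not what is needed there);
\item the comparison $\|w-\<a\>\<h\>_x\|_{L^2(M)}\le\|a\|_{L^2}\|h-\<h\>_x\|_{L^2(\mu)}$.
\end{itemize}
This is the same decomposition as the paper's lower bound $A\ge\sum_s sP_s+\theta\lambda_1\,aP_0a$. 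Your intermediate identity $\nabla u_k=\<a\>^{-1}\nabla w_k$ is false, but you never use it.

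In (iii), do not go through $\|\nabla\rho\|$, since controlling it would require differentiating the fiber error. Instead, apply \eqref{p2} to $w$, and bound $|\nabla w|^2\le\|a\|_\infty\sup\rho\,\<a|\nabla_H h|^2/h\>_x$ by joint convexity of $(X,Y)\mapsto|X|^2/Y$. Then compare $w$ with $\<a\>\rho$ in $L^2(M)$ via Pinsker and the fiberwise log-Sobolev inequality. This closes directly, exactly as in the paper, and no compactness fallback is needed.
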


\begin{proof}[Proof of Proposition \ref{propcoertame} (i)--(ii)]
Consider for instance the proof of (i), since (ii) is similar. Since $a$ commutes with $\Delta_H$, one can write $\|a\nabla_H h\|^2 = \<a(-\Delta_H)a h, h\>$. Let
\[ A = -\Delta_V + a (-\Delta_H)a;\]
the goal is to show that if $h$ satisfies $\<h,1\>_{L^2(\mu)} = 0$ then $\<Ah,h\> \geq K \|h\|^2$ for some $K>0$.

Whenever $h$ and $k$ are two functions on $\TM$, write
\[ \<h,k\>_x = \int_{T_xM} h(x,v) k(x,v)\,\mu_x(dv), \qquad \<h\>_x = \int_{T_xM} h(x,v)\,\mu_x(dv) = \<h,1\>_x. \] 
For each $s\in\N_0$, let $\lambda_{1,s}\geq 0$ be the minimum eigenvalue of $\Delta_H$ restricted to ${\cal E}_s$, the $s$-eigenspace for $-\Delta_V$. Then
\[ -\Delta_H \geq \sum_{s\in\N_0} \lambda_{1,s} P_s \geq \lambda_{1,0} P_0,\]
so for any $\theta\in [0,1]$,
\begeq\label{Ageq} A \geq \sum_{s\in\N} s P_s + (\theta \lambda_1) a P_0 a, \endeq
where $\lambda_1=\lambda_{1,0}$ is the spectral gap for $-\Delta_x$ on $M$. Note that $P_0h = \<h\>_x$.

Then let $h$ be such that its mean on $\TM$ is 0, or equivalently $\int_M \<h\>_x\,\vol(dx) = 0$. So
\begin{align*} \<aP_0 ah,h\>_{L^2(\mu)} & = \int_M \<ah\>_x^2\,\vol(dx)\\
& = \int_M \<a,h\>_x^2\,\vol(dx) \\
& \geq \int_M \Bigl[ \frac12 \<a\>^2 \<h\>_x^2 - 2 \<a, h-\<h\>_x\>^2_x \Bigr]\,\vol(dx) \\
& \geq \frac12 \<a\>^2 \left(\int_M \<h\>_x^2\,\vol(dx) \right) - 2 \|a\|_{L^2}^2 \left(\int_M \|h-\<h\>_x\|_{L^2(\mu_x)}^2\,\vol(dx)\right)\\
& \geq \frac12\<a\>^2 \int_M \<h\>_x^2\,\vol(dx) - 2 \|a\|_{L^2}^2 \iint_{\TM} (h-\<h\>_x)^2\,\mu(dx\,dv).
\end{align*}
Inserting this back in \eqref{Ageq},
\begin{align*}  \<Ah,h\>_{L^2(\mu)} & \geq \sum_{s\in\N} \|P_s  h\|^2+ (\theta \lambda_1) \<a P_0 a h, h\>_{L^2(\mu)} \\
& \geq \|h-\<h\>_x\|_{L^2(\mu)}^2 + \frac{\theta \<a\>^2}{2} \int_M \<h\>_x^2\,\vol(dx) - 2\theta \|a\|_{L^2}^2 \|h-\<h\>_x\|_{L^2(\mu)}^2\\
& \geq \max\left( 1- 2\theta \|a\|_{L^2}^2, \frac{\theta \<a\>^2}{2}\right) \Bigl( \|h-\<h\>_x\|_{L^2(\mu)}^2 + \|\<h\>_x\|_{L^2(\mu)}^2\Bigr) \\
& = \max\left( 1- 2\theta \|a\|_{L^2}^2, \frac{\theta \<a\>^2}{2}\right) \|h\|_{L^2(\mu)}^2,
\end{align*}
and the proof is complete by choosing $\theta = (4 \|a\|_{L^2}^2)^{-1}$.
\end{proof}

The information theoretical version will turn out to be a bit more intricate:

\begin{proof}[Proof of Proposition \ref{propcoertame}(iii)]
Recall that $\<h\>_x = \int h\,d\mu_x$, $\<ah\>_x = \int ah\,d\mu_x$, and let $\<\<ah\>\>= \vol (M)^{-1}\int ah\,d\mu$. Note that $\<\<h\>\> = 1$. 

Start with the logarithmic Sobolev inequality in $(T_xM,\mu_x)$:
\begin{align*}
\int_{T_xM} \frac{|\nabla_V h|^2}{h}\,d\mu_x
& = \<h\>_x \int_{T_xM} \frac{|\nabla_V (h/\<h\>_x)|^2}{h/\<h\>_x}\, d\mu_x\\
& \geq 2 \<h\>_x \int_{T_xM} \frac{h}{\<h\>_x} \log \frac{h}{\<h\>_x}\,d\mu_x \\
& = 2 \int_{T_xM} h \log \frac{h}{\<h\>_x}\,d\mu_x,
\end{align*}
so upon integrating against $\vol(dx)$,
\begeq\label{step1qL}
\iint_{\TM} \frac{|\nabla_Vh|^2}{h}\,d\mu \geq
2 \iint_{\TM} h \log \frac{h}{\<h\>_x}\,d\mu.
\endeq

Next, using $\nabla_H a=0$ and the convexity of $(X,Y)\longmapsto |X|^2/Y$, 
\begin{multline*}
\int_{T_xM} \frac{|\nabla_H h|^2}{h}\, a(|v|)\,\mu_x(dv)  = \int_{T_xM} \frac{|\nabla_H (ah)|^2}{ah}\,d\mu_x
\geq \frac{\left| \dps \int \nabla_H(ah)\, d\mu_x \right|^2}{\dps \int (ah)\,d\mu_x} = \frac{|\nabla_x \<ah\>_x|^2}{\<ah\>_x}.
\end{multline*}
Integrating over $x$ and using the Poincar\'e inequality ${\rm P}(K)$,
\begin{align} \label{step2qL}
\iint_{\TM} \frac{|\nabla_H h|^2}{h} a(|v|)\,d\mu 
& \geq \int_M \frac{|\nabla_x \<ah\>_x|^2}{\<ah\>_x}\,\vol(dx) \\ \nonumber
& \geq \frac1{\sup\<ah\>_x} \int_M |\nabla_x \<ah\>_x |^2\,\vol(dx)\\ \nonumber
& \geq \frac{K}{\|a\|_{L^\infty} \sup \<h\>_x} \int_M \bigl( \<ah\>_x - \<\<ah\>\>\bigr)^2\,\vol(dx).
\end{align}
Combining this with \eqref{step1qL},
\begin{multline} \label{step3qL}
\iint_{\TM} \frac{|\nabla_Vh|^2}{h}\,d\mu + \iint_{\TM} \frac{|\nabla_Hh|^2}{h}\,d\mu  \\
\geq \min \left(2,  \frac{K}{\|a\|_{L^\infty} \sup \<h\>_x}\right) \left( \iint_{\TM} h \log \frac{h}{\<h\>_x}\,d\mu + 
\iint_M \bigl( \<ah\>_x - \<\<ah\>\>\bigr)^2\,\vol(dx) \right).
\end{multline}

For any $x\in M$,
\begin{align*}
\<ah\>_x - \<\<ah\>\> 
& = \int_{T_xM} a(|v|) (h-\<h\>_x)\,\mu_x(dv) \\
& + \<a\> \bigl( \<h\>_x -1\bigr)\\
& -  \vol(M)^{-1} \iint_{\TM} a \bigl( h-\<h\>_x\bigr)\,\mu(dx\,dv).
\end{align*}
Thus
\begin{multline} \label{ah-ah}
\bigl( \<ah\>_x - \<\<ah\>\>\bigr)^2 
\geq \frac{\<a\>^2}{2} \bigl(\<h\>_x-1\bigr)^2 \\
- 4 \left( \int_{T_xM} a(|v|) (h-\<h\>_x)\,\mu_x(dv)\right)^2
- 4 \left( \vol(M)^{-1} \iint_{\TM} a(|v|) (h-\<h\>_x)\,\mu(dx\,dv) \right)^2.
\end{multline}
Let us estimate $\int a(h-\<h\>)\,d\mu_x$:
\begin{align} \label{ah-ah1}
\left| \int_{T_xM} a(|v|) (h-\<h\>_x)\,\mu_x(dv) \right|
& \leq \|a\|_{L^\infty}\, \bigl\|h-\<h\>_x\bigr\|_{L^1(\mu_x)} \\ \nonumber
& = \|a\|_{L^\infty} \<h\>_x \left\| \frac{h}{\<h\>_x} - 1 \right\|_{L^1(\mu_x)}\\ \nonumber
& \leq \sqrt{2} \|a\|_{L^\infty} \<h\>_x \left( \int_{T_xM} \frac{h}{\<h\>_x} \log \frac{h}{\<h\>_x}\,d\mu_x\right)^{1/2}\\ \nonumber
& \leq \sqrt{2} \|a\|_{L^\infty} \sqrt{\<h\>_x} \left( \int_{T_xM} h \log \frac{h}{\<h\>_x}\,d\mu_x\right)^{1/2},
\end{align}
where Pinsker's inequality was used to control total variation by Boltzmann's information.
Likewise,
\begin{align} \label{ah-ah2}
\vol(M)^{-1} & \left|  \iint_{\TM} a(||v|) (h-\<h\>_x)\,\mu(dx\,dv) \right| \\ \nonumber
& \leq \sqrt{2} \|a\|_{L^\infty} \sqrt{\<h\>_x} \int_M\left( \int_{T_xM} \frac{h}{\<h\>_x} \log \frac{h}{\<h\>_x}\,d\mu_x\right)^{1/2}\,\frac{\vol(dx)}{\vol(M)}\\ \nonumber
& \leq \sqrt{2} \|a\|_{L^\infty} \sqrt{\<h\>_x} \left(\iint_{\TM} \frac{h}{\<h\>_x} \log \frac{h}{\<h\>_x}\,d\mu\right)^{1/2}.
\end{align}

Inserting \eqref{ah-ah1} and \eqref{ah-ah2} in \eqref{ah-ah},
\begin{multline} \label{ah-ah3}
\int_M \bigl( \<ah\>_x -\<\<ah\>\> \bigr)^2\,\vol(dx) 
 \geq \frac{\<a\>^2}2 \int_M \bigl(\<h\>_x - 1\bigr)^2\,\vol(dx)\\
- 16 \|a\|_{L^\infty}^2 \|\<h\>_x\|_{L^\infty} \iint_{\TM} h \log \frac{h}{\<h\>_x}\,d\mu.
\end{multline}
By Taylor expansion,
\[ \<h\>_x \log \<h\>_x - \<h\>_x + 1 \leq \frac1{2 \inf \<h\>_x} (\<h\>_x-1)^2,\]
so
\begin{align*} \int_M \<h\>_x\log \<h\>_x\,\vol(dx)  & = \int_M \bigl(\<h\>_x \log \<h\>_x - \<h\>_x + 1\bigr)\,\vol(dx) \\
& \leq \frac1{2 \inf \<h\>_x} \int_M (\<h\>_x-1)^2\,\vol(dx).
\end{align*}
Plugging back in \eqref{ah-ah3} and then in \eqref{step3qL}, for any $\theta\in [0,1]$,
\begin{multline*}
\iint_{\TM} \frac{|\nabla_Vh|^2}{h}\,d\mu + \iint_{\TM} \frac{|\nabla_Hh|^2}{h}\,a\,d\mu 
\\ \geq 
\min \left(2, \frac{K}{\|a\|_{L^\infty} \sup \<h\>_x}\right)
\min \Bigl( 1- 16 \theta \|a\|_{L^\infty}^2 (\sup_x \<h\>_x), \ \theta \<a\>^2 (\inf_x \<h\>_x) \Bigr) \\
\left( \iint_{\TM} h \log \frac{h}{\<h\>_x}\,\mu(dx\, dv) + \iint_{\TM} \<h\>_x \log \<h\>_x\,\mu(dx\,dv) \right).
\end{multline*}
The last term in brackets is no else than the full information, $\int h \log h\,d\mu$. Choosing $\theta$ appropriately finally leads to the conclusion
\begin{multline}
\iint_{\TM} \frac{|\nabla_Vh|^2}{h}\,d\mu + \iint_{\TM} \frac{|\nabla_Hh|^2}{h}\,a\,d\mu \\ \geq
\min \left(2, \frac{K}{\|a\|_{L^\infty} \dps\sup_x\, \<h\>_x}\right)
\left( \frac{\<a\>^2\, \dps\inf_x\, \<h\>_x}{ 32 \|a\|_{L^\infty}^2\, \dps \sup_x\, \<h\>_x} \right) \iint_{\TM} h\log h\,d\mu.
\end{multline}
\end{proof}

\bibnotes

The logarithmic Sobolev inequality for Gaussian measure has a long history going back at least to Stam \cite{stam:59}, and made popular by Gross \cite{gross:log:75}; see e.g.~\cite{BGL:book} for much more on the history, context and developments. As for the Poincar\'e inequality for the Gaussian measure, it is even much older, and follows anyway from Proposition \ref{propgradmom}. 

Rothaus \cite{rothaus:massgap:81} proved that any compact Riemannian manifold satisfies a log Sobolev inequality, a result which is stronger than the Poincar\'e inequality. The tensorisation argument is classical to establish those inequalities in product spaces, both for Poincar\'e and log Sobolev inequalities (again, see e.g.~\cite{BGL:book}). As seen above it does not present any difficulty to adapt to the tangent bundle.

I am not aware of any definitive reference for the vector-valued Poincar\'e inequality, but the question is so natural that it has necessarily been considered beforee. When the manifold has positive Ricci curvature there is a well-known more precise quantitative upper bound, as a consequence of Bochner's formula for vector fields:
\[ \<X,\Delta X\> - \Delta \frac{|X|^2}{2} = \|\nabla X\|^2 + \Ric(X,X)\]
(often stated for 1-forms; see \cite{GHL:Riemann:book}, or \cite[Chapter~14]{vill:oldnew}), which implies
\[ \int |\nabla X|^2 \geq K_{\cal V} \int |X|^2\]
if $K_{\cal V}$ is the infimum of the eigenvalues of the Ricci tensor.

The terminology ``Poincar\'e inequality for vector fields'' should not be mistaken with another homonymous problem, namely controlling the deviation of a scalar function from its mean, by the derivation along a collection of vector fields ($\|u-\<u\>\|\leq C \sum \|X_i u\|$), as in Jerison \cite{jerison:poincarehormander:86} or Albritton--Armstrong--Mourrat--Novack~\cite{AAMN:KFP:24}. In particular, the latter authors prove, in flat geometry and under various assumptions, an inequality of the form
\[ \|f -\<f\> \|_{L^2} \leq C \bigl( \|\nabla_V h\|_{L^2} + \|\xi h\|_{H^{0,-1}}\bigr). \]
This strategy was vastly expanded through a series of papers by Mouhot and his co-authors Anceschi, Dietert, Guerand, Loher, Niebel, Rebucci and Zacher into a kinetic version of De Giorgi's method for controlling oscillations, in low regularity; see Mouhot \cite[Section~4]{mouhot:Festum:24} for a review.

Fisher information was introduced by Ronald Fisher \cite{fisher:25} in 1925 for his theory of efficient statistics; my course \cite{vill:fisher-Festum:25} reviews its use in kinetic theory and points references to other fields of mathematics and physics.

The Pinsker (or Csisz\'ar--Kullback--Pinsker) inequality relates entropy and $L^1$ norm (or total variation): If $\nu$ is any reference measure and $f, \tilde{f}$ are any two probability densities, then 
\begeq\label{CKP} \int f \log \left(\frac{f}{\tilde{f}}\right)\,d\nu \geq \frac12 \bigl\|f-\tilde{f}\bigr\|_{L^1(\nu)}^2.\endeq
It is proven in many references, such as \cite{csi:inf:67,pinsker:book:64,vill:int}.

The conditions for existence of parallel vector fields were established by Welsh \cite{welsh:manifoldparallel:86}.

Proposition \ref{propcoerreg} is natural to ask for in view of the strategy of Dolbeault--Mouhot--Schmeiser \cite{DMS:hypo:15} and will be used in Section \ref{sechypoco}.

Proposition \ref{propcoertame}(i) in the particular case $M=\T^n$ follows from \cite[Theorem~A.3]{vill:hypoco}. The proof here goes along the same general lines as that reference. Proposition \ref{propcoertame}(iii) is new as far as I know and will serve in Section \ref{secfisher}.


\section{Hypocoercivity in $L^2$ and Sobolev norms} \label{sechypoco}

It is now time to consider the convergence to equilibrium for the geometric kinetic Fokker--Planck equation. I shall distinguish three main methods for equilibration: from more specialised to more general,
\sm

\bul Harris-type estimates, developed by Meyn--Tweedie for continuous time diffusions, rest on positivity and localisation; they are usually limited to linear models;
\sm

\bul Hilbertian techniques, generalising spectral gap estimates, also cover linearised equations;
\sm

\bul Information estimates, \`a la Boltzmann, are usually required to handle statistical equations far from equilibrium.
\sm

\begin{Rk} \label{rkmoments} For the spatially homogeneous Fokker--Planck equation in $\R^n_v$, there are other available options. One is to diagonalise the operator and evolution in a basis of Hermite polynomials. Another one is to establish closed differential equations on the moments $\int P(v) f(t,v)\,dv$, for polynomials $P$, showing each of them converges to the equilibrium value. Both approaches disappear when $x$ varies in a general manifold $M$. Even moments do not satisfy close equations then, among other things because $\int P(v) f(t,x,v)\,dv\,dx$ does not even make sense if $P$ is nonradial.
\end{Rk}

In this section only $L^2$ problem will be considered, either for functions or vectors. Hypocoercivity in $L^2$ will first be obtained by combining the short-time regularisation with a hypocoercivity in Sobolev norm. Recall that $\<h\> = \int h\,d\mu$.

\begin{Thm}[Hypocoercivity in $L^2$] \label{thmhypocoL2}
Let $L=\xi-\Delta_V^\mu=\xi - \Delta_V + v\cdot\nabla_V$ acting on $L^2(\mu)$.
Then there are constants $A=A(M)$, $\alpha=\alpha(M)>0$ such that for any $h_0\in L^2(\mu)$,
\begeq\label{cvgceL2}
\bigl\| e^{-tL} h_0 - \<h_0\> \bigr\|_{L^2(\mu)}
\leq A\, e^{-\alpha t}\, \|h_0\|_{L^2(\mu)}.\qquad
\endeq
\end{Thm}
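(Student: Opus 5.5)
The plan is the one announced just before the statement: combine short-time regularisation (Theorem \ref{thmregul}(i)) with an exponential decay estimate in an $H^1$-type hypocoercive norm. Since $L$ preserves $\<h\>$ (because $\int Lh\,d\mu=0$) and constants are in the kernel, I may assume $\<h_0\>=0$. Then $\<h(t)\>=0$ for all $t$, and $\|h(t)\|_{L^2(\mu)}\le\|h_0\|_{L^2(\mu)}$ by Theorem \ref{thmloc}(ii).

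\textbf{Step 1: reduction to $H^1$.} By Theorem \ref{thmregul}(i) (estimate \eqref{apriorifirst}), at time $t=1$,
\[ \|\nabla_Vh(1)\|^2+\|\nabla_Hh(1)\|^2\le C\|h_0\|^2 .\]
So it suffices to show exponential decay for $t\ge1$ of the functional
\[ {\cal N}(h)^2=\|h\|^2+a\|\nabla_Vh\|^2+2b\<\nabla_Vh,\nabla_Hh\>+c\|\nabla_Hh\|^2, \]
which is the time-independent analogue of \eqref{Fth}, with $c\ll b\ll a\ll1$ and $b^2\ll ac$ (feasible by Lemma \ref{ll}). This functional is equivalent to $\|h\|^2+\|\nabla_Vh\|^2+\|\nabla_Hh\|^2$.

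\textbf{Step 2: time derivative of ${\cal N}^2$.} I redo the computations \eqref{diag1}, \eqref{diag2}, \eqref{diag3'} and \eqref{ddtmix} without the powers of $t$. The commutator $[\xi,\nabla_H]$ is handled via \eqref{xiNablaHr} and Proposition \ref{propgradmom}: it costs $C(\|\nabla_Vh\|+\|\nabla_V^2h\|)(\|\nabla_Hh\|+\|\nabla_V\nabla_Hh\|)$. The mixed term produces the good term $-b\|\nabla_Hh\|^2$ coming from $[\nabla_V,\xi]=\nabla_H$. With Young's inequality and the above constraints on $a,b,c$, I aim for
\[ \frac{d}{dt}{\cal N}^2\le-K\bigl(\|\nabla_Vh\|^2+a\|\nabla_V^2h\|^2+b\|\nabla_Hh\|^2+c\|\nabla_V\nabla_Hh\|^2\bigr). \]
Unlike the short-time case, the terms $a\|\nabla_Vh\|\,\|\nabla_Hh\|$ from \eqref{diag2} must now be absorbed by $\|\nabla_Vh\|^2$ and $b\|\nabla_Hh\|^2$, which requires $a^2\ll b$. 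This is compatible with the other constraints.

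\textbf{Step 3: closing with coercivity.} The Poincar\'e inequality of Proposition \ref{poincare} gives
\[ \|\nabla_Vh\|^2+\|\nabla_Hh\|^2\ge K_P\|h\|^2 \]
for mean-zero $h$. Combined with Step 2, the right-hand side dominates $\kappa{\cal N}^2$, so $\frac{d}{dt}{\cal N}^2\le-2\kappa{\cal N}^2$. Hence ${\cal N}(h(t))\le e^{-\kappa(t-1)}{\cal N}(h(1))\le Ce^{-\kappa t}\|h_0\|$. For $t\le1$ the bound is trivial, since $\|h(t)\|\le\|h_0\|$.

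\textbf{Main obstacle.} The delicate point is Step 2, specifically the curvature commutator. Its coefficient $\Sigma(v,v)$ is quadratic in $v$, and turning these moments into vertical derivatives via Proposition \ref{propgradmom} brings in $\|\nabla_V^2h\|$ and $\|\nabla_V\nabla_Hh\|$. These must be paid for by the dissipation terms $a\|\nabla_V^2h\|^2$ and $c\|\nabla_V\nabla_Hh\|^2$. The product $\|\nabla_V^2h\|\,\|\nabla_V\nabla_Hh\|$ appearing with coefficient $c$ (from \eqref{diag3'}) and with coefficient $b$ (from \eqref{ddtmix}) forces $b^2\ll ac$. The constants are fixed in the order $a$, then $b$, then $c$, exactly as in the proof of Theorem \ref{thmregul}(i).
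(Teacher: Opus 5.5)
Your proposal is correct and follows essentially the same route as the paper. You assume $\langle h_0\rangle=0$ and reduce to $t\ge 1$ via the $L^2$ contraction and the regularisation bound at time $1$. You then prove decay of $\|h\|^2+a\|\nabla_Vh\|^2+2b\langle\nabla_Vh,\nabla_Hh\rangle+c\|\nabla_Hh\|^2$ under $c\ll b\ll a\ll 1$, $a^2\ll b$, $b^2\ll ac$, and close with the tangent-bundle Poincar\'e inequality of Proposition~\ref{poincare}.

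One small inaccuracy: the constraint $a^2\ll b$ is not new compared with the short-time case, since it already appears there in \eqref{assabc} for the same reason.
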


For vector fields the convergence will only be proven when the diffusion is strong enough (which can also be recast, via rescaling, as a smallness assumption on the curvature). Recall that $R$ is the curvature tensor and that ${\cal P}$ stands for the space of horizontal parallel vector fields.

\begin{Thm}[Hypocoercivity for vector fields in $L^2$] \label{thmhypocoL2vect}
For $\lambda>0$, let ${\cal B}_\lambda=\Xi-\lambda (\Delta_V^\mu- P_V)$, acting on $L^2(\mu;\Tzeroun)$.
Then there are constants $A=A(M), \Lambda>0$, $\alpha=\alpha(M)>0$ such that if $\lambda \geq \Lambda (1+\|R\|_{C^1})^4$ then for any $X_0\bot{\cal P}$,
\begeq\label{cvgceL2vect}
\bigl\| e^{-t\cal B} X_0  \bigr\|_{L^2(\mu)}
\leq A\, e^{-\alpha t}\, \|X_0\|_{L^2(\mu)}.\qquad
\endeq
\end{Thm}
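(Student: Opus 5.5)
Theorem \ref{thmhypocoL2} is stated right after Theorem \ref{thmhypocoL2vect}, and the final statement to prove is Theorem \ref{thmhypocoL2vect}. The plan is to combine short-time regularisation with a hypocoercivity estimate in an $H^1$-type twisted norm, then use the Poincar\'e inequality for vector fields. Two preliminary facts are needed. First, ${\cal B}_\lambda$ preserves ${\cal P}^\perp$. Indeed, for $Y=(Y(x),0)$ parallel one has $\Xi Y = (\xi Y,0)=0$ by \eqref{Xiexpl}, since $\nabla Y=0$. Also $(\Delta_V^\mu - P_V)Y=0$. A short computation with the divergence formulas of Proposition \ref{propdiv} should then show $\<{\cal B}_\lambda X, Y\>_{L^2(\mu)}=0$ for $Y\in{\cal P}$, because the adjoint of $\Xi$ also annihilates ${\cal P}$. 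Second, by Theorem \ref{thmregul}(ii) the solution at time $1$ lies in $H^{1,1}(\mu)$ with norm $\le C\|X_0\|_{L^2(\mu)}$, once $X_0$ is localised. If $X_0$ is only in $L^2(\mu)$ rather than $L^2(\mu^{1+\theta})$, I would first run Theorem \ref{thmlocalvect} and its $\vphi=1$ estimate \eqref{XexpCt} for a unit time. So it suffices to prove exponential decay of a functional equivalent to $\|X\|_{H^{1}(\mu)}^2$.

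The functional is
\[ {\cal F}(X)=\|X\|^2 + a\|\nabla_V X\|^2 + 2b\<\nabla_V X,\nabla_H X\> + c\|\nabla_H X\|^2, \]
with norms in $L^2(\mu)$ and $c\ll b\ll a\ll1$, $b^2<ac$. I would weight the horizontal and vertical components differently, as in \eqref{calN} and \eqref{DVH13}. The order-zero part is computed with \eqref{eqxhv}, where the diffusion term now carries the factor $\lambda$. It gives
\[ \frac{d}{dt}\bigl(\delta\|X_H\|^2+\|X_V\|^2\bigr) \le -\lambda\bigl(\delta\|\nabla_V X_H\|^2+\|\nabla_V X\|^2+\|X_V\|^2\bigr)+C\|R\|\,\|X_V\|_{H^{0,1}}\|X_H\|_{H^{0,1}}, \]
where the last term comes from $\Sigma(v,v)X_V$ via Proposition \ref{propgradmom}. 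The derivative terms follow the proof of Theorem \ref{thmregul}(i) for $m=1$, using the commutators \eqref{Xicom}. The $\nabla_V$ equation produces $-\nabla_H X$ plus the error $\<\Sigma' v,P_V\>$, whose size is $\|R\|\,|v|\,|X_V|$. The $\nabla_H$ equation produces $\Sigma(v,v)\nabla_V$, $\rho v$ and $\nabla R$ terms, all bounded by $\|R\|_{C^1}$ times at most two powers of $|v|$. The mixed term then yields $-b\|\nabla_H X\|^2$.

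Collecting terms, the time derivative of ${\cal F}$ is bounded by
\[ -K\bigl(\lambda\|\nabla_V X\|^2+\lambda\|X_V\|^2+\lambda a\|\nabla_V^2X\|^2+b\|\nabla_H X\|^2+\lambda c\|\nabla_V\nabla_H X\|^2\bigr) \]
plus curvature errors. These errors are quadratic and controlled by $\|R\|_{C^1}$ times products such as $\|X_H\|_{H^{0,1}}\|X_V\|_{H^{0,1}}$. At that point Proposition \ref{poincarevect} would close the argument: the dissipation $\|X_V\|^2+\|\nabla_V X\|^2+\|\nabla_H X\|^2$ controls $\|X\|^2$ on ${\cal P}^\perp$, hence ${\cal F}$ up to constants.

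The main obstacle is the error term $\<\Sigma(v,v)X_V,X_H\>$, and its derivative analogues, in the horizontal equation. It couples $X_H$, which is dissipated only through $\nabla_V X_H$, with $X_V$, and the coupling is of the same order as the dissipation. Unlike the scalar case, I see no commutator that cancels it. So I would rely on the size of $\lambda$. Each error is a product of an $X_H$ factor, absorbed by $\delta\lambda\|\nabla_VX_H\|^2$ or $b\|\nabla_HX\|^2$, and an $X_V$ factor, absorbed by $\lambda\|X_V\|^2_{H^{0,1}}$ with a full coefficient. Young's inequality then requires roughly $\|R\|_{C^1}^2\lesssim \delta\lambda^2$, and the mixed and $\nabla_H$ levels add further powers through the constraints $b\ll a$ and $b^2\ll ac$. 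Tracking the parameters as $a\sim\lambda^{-1}$, $b\sim\lambda^{-2}$, $c\sim\lambda^{-3}$, with $\delta$ chosen relative to $\|R\|$, I expect the scalings to close exactly when $\lambda\ge\Lambda(1+\|R\|_{C^1})^4$. Checking this parameter bookkeeping is the delicate step.
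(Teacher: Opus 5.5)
\textbf{This has a genuine gap, and the statement itself fails in general.} Your outline follows the paper's own route: invariance of ${\cal P}^\perp$, regularisation to $H^1$, a twisted $H^1$ functional with separate horizontal and vertical weights, $\lambda$ large, then Proposition~\ref{poincarevect}. The step you defer, absorbing the curvature coupling by taking $\lambda$ large, cannot be carried out. The paper's sketch simply asserts it and has the same gap.

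\textbf{Why large $\lambda$ does not help.} The obstruction lives in the $v$-average $Z(x)=\int X_H\,d\mu_x$. Your $X_H$-factors contain it through $\|X_H\|_{L^2}$, and only $b\|\nabla_HX\|^2$ plus Poincar\'e controls it. Your order-zero estimate also drops the curvature-free coupling $-\<X_H,X_V\>$, which comes from the term $+X_H$ in the $X_V$-equation of \eqref{eqxhv}. That term forces $X_V\approx-\lambda^{-1}Z$ on slow modes, while $X_H\approx Z-\lambda^{-1}\xi Z$. Insert these into $\frac{d}{2dt}\|X_H\|^2=\<\Sigma(v,v)X_V,X_H\>-\lambda\|\nabla_VX_H\|^2$, and use that the $\mu_x$-average of $\Sigma(v,v)$ is the Ricci tensor. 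Both terms are then of order $\lambda^{-1}$, and they add up to $-\lambda^{-1}\int_M\bigl(|\nabla Z|^2+\Ric(Z,Z)\bigr)$. So the curvature term is of exactly the same order as the dissipation of the slowest component, whatever $\lambda$ is. Formally $\pa_tZ=-\lambda^{-1}(\nabla^*\nabla+\Ric)Z$, which is the Hodge Laplacian on $1$-forms. Crude Young absorption can therefore close only under a $\lambda$-independent smallness of $\|R\|$ relative to $K_{\cal V}$, never under $\lambda\geq\Lambda(1+\|R\|_{C^1})^4$.

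\textbf{Counterexample.} Let $Z$ be a closed $1$-form on $M$, and write $L_\lambda=\xi-\lambda\Delta_V^\mu$.
\begin{itemize}
\item Since $dZ=0$, one has $\nabla_H\<v,Z\>=\xi Z$, so ${\cal B}_\lambda(Z,0)=\nabla_{x,v}\<v,Z\>$ by \eqref{Xiexpl}.
\item For every $g$, ${\cal B}_\lambda\nabla_{x,v}g=\nabla_{x,v}(L_\lambda g)$. This follows from \eqref{Xixi1} together with $\nabla_V\Delta_V^\mu=(\Delta_V^\mu-1)\nabla_V$ and $\nabla_H\Delta_V^\mu=\Delta_V^\mu\nabla_H$.
\item The function $\<v,Z\>$ has zero $\mu$-mean. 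The analogue of Theorem~\ref{thmhypocoL2} for friction $\lambda$, plus hypoellipticity, gives a smooth $g\in L^2(\mu)$ with $L_\lambda g=-\<v,Z\>$.
\item Then $X=(Z,0)+\nabla_{x,v}g$ lies in $\ker{\cal B}_\lambda$. It is nonzero unless $Z$ is exact: if $X=0$ then $\nabla_Vg=0$ and $Z=-\nabla g$.
\end{itemize}
On a closed hyperbolic surface, $b_1(M)\geq4$ while ${\cal P}=\{0\}$. So $e^{-t{\cal B}_\lambda}X=X$ contradicts \eqref{cvgceL2vect} for every $\lambda$.

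Separately, $\alpha$ cannot be independent of $\lambda$. Gradients of slow scalar eigenfunctions of $L_\lambda$ lie in ${\cal P}^\perp$ and decay at rate $O(\lambda^{-1})$.

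At best one can hope for a version with an extra hypothesis, such as all harmonic $1$-forms being parallel (as when $\Ric\geq0$). Such a version would need an argument using the sign of $\nabla^*\nabla+\Ric$ rather than crude absorption. Alternatively, replace ${\cal P}$ by $\ker{\cal B}_\lambda^*$ and accept a $\lambda$-dependent rate.

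\textbf{A minor slip.} By \eqref{Xiexpl}, $\Xi(Y,0)=(0,Y)\neq0$, so ${\cal P}\not\subset\ker{\cal B}_\lambda$; the kernel contains $(Y,-Y/\lambda)$ instead. What you actually need, and what holds, is ${\cal B}_\lambda^*(Y,0)=0$, using $\Sigma(v,v)Y=0$ for parallel $Y$.
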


\begin{Rk} When $\lambda\to\infty$ the underlying flow is governed in some sense by the heat equation, recall the discussion leading to \eqref{dd}, so it is not expected that $\alpha$ becomes large as $\lambda\to \infty$. Instead, the limiting value should be determined from the spectrum of the heat equation on vector fields.
\end{Rk}

\begin{proof}[Proof of Theorem \ref{thmhypocoL2}]
Let $h(t)=e^{-tL}h$.
Without loss of generality $\<h_0\> =0$, and then $\<h(t)\>=0$ for all $t\geq 0$. By Theorem \ref{thmloc}(ii), $\|h(t)\|_{L^2}\leq \|h_0\|$, so it is sufficient to prove the theorem when $t\geq 1$. By Theorem \ref{thmregul}(i), $\| h(1)\|_{H^1} \leq C \|h_0\|_{L^2}$, so we may assume that the initial datum lies in $H^1$.

Let 
\begeq \label{2parentheses}
((h,h)) = \iint h^2\,d\mu + a \iint |\nabla_V h|^2\,d\mu + 2b \iint \<\nabla_V h,\nabla_H h\>\,d\mu + c \iint |\nabla_H h|^2\,d\mu,
\endeq
where $a,b,c>0$, to be chosen later, satisfy $b^2< ac$; then there is $C>0$ such that
\[ C^{-1} \|h\|^2_{H^1} \leq ((h,h)) \leq C \|h\|^2_{H^1}. \]

By the same computation as in the proof of Theorem \ref{thmregul}(i), more precisely \eqref{diag1}, \eqref{diag2}, \eqref{diag3'},
\begin{align*}
\frac{d}{dt} & \left( \iint h^2 + a \iint |\nabla_Vh|^2 + \iint |\nabla_H h|^2 \right)\\
& \leq -K \Bigl( \|\nabla_Vh\|^2 + a \|\nabla_V^2 h\|^2 + c \|\nabla_V\nabla_H h\|^2 \Bigr)\\
& \qquad \qquad + C \Bigl( a \|\nabla_Vh\|\, \|\nabla_Hh\| + c (\|\nabla_V h\| + \|\nabla_V^2 h\| ) (\|\nabla_Hh\| + \|\nabla_V\nabla_H h\|)\Bigr) \\
& \leq -K \bigl( \|\nabla_V h\|^2 + a \|\nabla_V^2 h\|^2 + c\| \nabla_V\nabla_H h\|^2 \bigr) + C\, \max \left(a^2, \frac{c^2}{a}\right) \|\nabla_H h\|^2.
\end{align*}

On the other hand, as in \eqref{ddtmix},
\begin{align*} \frac{d}{dt} \iint \< \nabla_V h , \nabla_H h\> 
& \leq - \|\nabla_H h\|^2 + C \Bigl( \|\nabla_V^2h\| \|\nabla_V\nabla_H h\| + \|\nabla_V h\| \|\nabla_H h\| + \|\nabla_V h\|^2\Bigr )\\
& \leq -K \|\nabla_Hh\|^2 + C \bigl(b \|\nabla_Vh\|^2 + b \|\nabla_V^2 h\| \|\nabla_V\nabla_H h\| \bigr).
\end{align*}

All in all,
\begeq\label{ddthh}
\frac{d}{dt} ((h,h)) \leq -K \Bigl( a \|\nabla_Vh\|^2 + b \|\nabla_H h\|^2 + c \|\nabla_V\nabla_Hh\|^2\Bigr)
\endeq
provided that
\[ a^2 \ll b\qquad \frac{c^2}{a} \ll b \qquad b \ll \sqrt{ac}. \]
Those conditions boil down to just two: $a^2\ll b$ and $b^2\ll ac$, which is a particular case of Lemma \ref{ll}.

Combining \eqref{ddthh} with the Poincar\'e inequality in tangent space, Proposition \ref{poincare}, there is $\alpha>0$ such that
\begeq\label{ddthh2}
\frac{d}{dt} ((h,h)) \leq -2\alpha\, ((h,h)),
\endeq
which implies the exponential convergence of $((h,h))$ to~0 and the claim since $((h,h))\geq \|h\|^2$.
\end{proof}

\begin{Rk} A key commutation property used was
\begin{align*} \left. \frac{d}{dt}\right|_{-\xi} \<\nabla_V h, \nabla_H h\> & = 
- \iint_{\TM} |\nabla_H h|^2 \,d\mu + \iint_{\TM} \<\Sigma(v,v) \nabla_V h, \nabla_V h\>\\\
& = - \iint_{\TM} |\nabla_H h|^2\,d\mu + \iint_{\TM} \sigma_x(v,\nabla_Vh) |v\wedge \nabla_V h|^2\, d\mu,
\end{align*}
where $\sigma_x(a,b)$ is the sectional curvature in the plane generated by $a$ and $b$, and $|a\wedge b|^2 = |a|^2 |b|^2 - \<a,b\>^2$. So the commutator is more favorable, for the equilibration, when the sectional curvature is negative. This is reminiscent of Anosov theory, where negative sectional curvature is associated with the mixing of the geodesic flow. In contrast, for the heat equation, estimates are all the better as curvature is positive, and such equilibration estimates are usually expressed in terms of Ricci rather than sectional curvature.
\end{Rk}

\begin{proof}[Proof of Theorem \ref{thmhypocoL2vect}]
Now the equation is
\begeq\label{Blambda}
\begin{cases}
\dps \pa_t X_H + \xi X_H - \Sigma(v,v) X_V = \lambda \Delta_V^\mu X_H \\[2mm]
\dps \pa_t X_V + \xi X_V + X_H = \lambda \bigl( \Delta_V^\mu X_V - X_V\bigr).
\end{cases}
\endeq

Let us assume $X_0\bot {\cal P}$; then $X(t)\bot {\cal P}$ for all $t\geq 0$, since ${\cal P}$ is left invariant by the semigroup and its adjoint. 
By Theorem \ref{thmloc}(iii), $\|X(t)\|_{L^2}\leq C \|X_0\|$, so it suffices to prove the theorem when $t\geq 1$. By Theorem \ref{thmregul}(i), $\| e^{-L}X_0\|_{H^1} \leq C \|X_0\|_{L^2}$, so one may assume that the initial datum lies in $H^1$.

It follows from \eqref{Blambda} that
\begin{align}\label{ddtL2vect}
\frac{d}{2\,dt} & (\|X_H\|^2 + \|X_V\|^2)  \\ \nonumber
&  \leq -\lambda \bigl( \|X_V\|^2 + \|\nabla_V X_H\|^2 + \|\nabla_V X_V\|^2\bigr)\\ \nonumber
& \qquad\qquad\qquad
+ C (1+ \|R\|_{C^1}) \bigl (\|\nabla_V X_V\| + \|\nabla_V^2 X_V\| ) (\|X_H\| + \|\nabla_V X_H\|) \\
\nonumber
& \leq -\lambda \bigl( \|X_V\|^2 + \|\nabla_V X_H\|^2 + \|\nabla_V X_V\|^2\bigr)
+ \var \|X_H\|^2 + C (1+\|R\|_{C^1})^2 \|\nabla_V X_V\|^2,
\end{align}
where
\[ \lambda'= \lambda - \frac{C}{\var^2} (1+\|R\|_C^1)^2.\]

Now consider the first order terms:
\begeq \label{firstordervectlambda}
\begin{cases}
\dps \pa_t \nabla_H X_H + \xi \nabla_H X_H + [\nabla_H,\xi] X_H - \nabla_H (\Sigma(v,v)X_V) = \lambda \Delta_V^\mu \nabla_H X_H \\[2mm]
\dps \pa_t \nabla_V X_V + \xi \nabla_V X_H + \nabla_H X_H - \nabla_V (\Sigma(v,v)X_V) = \lambda \Delta_V^\mu \nabla_V X_H - \lambda \nabla_V X_H \\[2mm]
\dps \pa_t \nabla_H X_V + \xi \nabla_H X_V + [\nabla_H,\xi] X_V + \nabla_H X_H = \lambda \Delta_V^\mu \nabla_H X_V - \lambda \nabla_H X_V \\[2mm]
\dps \pa_t \nabla_V X_V + \xi \nabla_V X_V + \nabla_H X_V + \nabla_V X_H = \lambda \Delta_V^\mu \nabla_V X_V - 2\lambda \nabla_V X_V.
\end{cases}
\endeq
From there and the same computations as before, it follows that if $a,a',b,b',c,c'$ are well chosen (with $a\gg a'\gg b,b'\gg c\gg c'>0$ and  $b^2\ll ac$, $(b')^2\ll a'c'$, as in the proof of Theorem \ref{thmregul}(ii)),
\begin{align}\label{ddtsymvect}
\frac{d}{dt} & \Bigl( a \frac{\|\nabla_V X_V\|^2}{2} + a'\frac{\|\nabla_VX_H\|^2}2 + b \<\nabla_V X_V,\nabla_HX_V\> + b'\<\nabla_V X_H, \nabla_H X_H \>  \\ \nonumber
& \qquad\qquad\qquad\qquad\qquad\qquad\qquad\qquad\qquad\qquad\qquad + c\frac{\|\nabla_HX_V\|^2}2 + c'\frac{\|\nabla_HX_H\|^2}2 \Bigr) \\ \nonumber
& \leq - \lambda' \Bigl( \|\nabla_VX_V\|^2 + \|\nabla_H X_V\|^2 + \|\nabla_V^2 X_V\|^2 + \|\nabla_V^2 X_H\|^2 + \|\nabla_V\nabla_H X_V\|^2 + \|\nabla_V\nabla_H X_H\|^2 \Bigr) \\ \nonumber
& \qquad - K (\|\nabla_HX_H\|^2 + \|\nabla_H X_V\|^2) + C \|\nabla_VX_H\|^2,
\end{align}
where now $\lambda' = \lambda - C (1+\|R\|_{C^1}^4)$.
Adding \eqref{ddtL2vect} and \eqref{ddtsymvect} yields, for $\lambda$ large enough,
\begeq
\frac{d}{dt} ((X,X)) \leq - 2\alpha ((X,X)),
\endeq
where
\begin{multline} \label{((XX))}
((X,X)) = \|X\|^2 + a \frac{\|\nabla_V X_V\|^2}{2} + a'\frac{\|\nabla_VX_H\|^2}2 + b \<\nabla_V X_V,\nabla_HX_V\> + b'\<\nabla_V X_H, \nabla_H X_H \> \\
+ c\frac{\|\nabla_HX_V\|^2}2 + c'\frac{\|\nabla_HX_H\|^2}2.
\end{multline}
From the choice of coefficients, this is equivalent to the squared $H^1$ norm of $X$, and the result follows.
\end{proof}

I will conclude this section with the adaptation of the Dolbeault--Mouhot--Schmeiser strategy, providing an alternative proof of Theorems \ref{thmhypocoL2} and \ref{thmhypocoL2vect}. I will consider for instance Theorem \ref{thmhypocoL2}, the strategy is similar for Theorem \ref{thmhypocoL2vect}.

\begin{proof}[Alternative proof of Theorem \ref{thmhypocoL2}]
For any $h$ with zero average, define
\[ {\cal F}(h) = \|h\|_{L^2(\mu)}^2 + \delta \Bigl\< \nabla_V h, (\Id-\Delta_V-\Delta_H)^{-1} \nabla_H h\Bigr\>_{L^2(\mu)}. \]
In the sequel, the measure $\mu$ will be implicit. 
First note that 
\begin{align*}
\Bigl| \Bigl\< \nabla_V h, (\Id-\Delta_V^\mu - \Delta_H)^{-1}  \nabla_H h \Bigr\>_{L^2} \Bigr| 
& \leq \Bigl\| (\Id-\Delta_V^\mu -\Delta_H)^{-1/2} \nabla_V h\Bigr\|_{L^2} \\
& \leq \|h\|_{L^2}^2,
\end{align*}
so that
\begeq\label{1-deltaL2}
(1-\delta) \|h\|_{L^2}^2 \leq {\cal F}(h) \leq (1+\delta) \|h\|_{L^2}^2.
\endeq

Applying $\pa_t h = -\xi h + \Delta_V^\mu h$, 
\begin{align} \label{dFdt}
\frac{d{\cal F}(h(t))}{dt} 
= & - 2 \|\nabla_Vh\|_{L^2}^2 \\ \nonumber
& + \delta \Bigl\< \nabla_V \Delta_V^\mu h, (\Id-\Delta_V^\mu-\Delta_H)^{-1} \nabla_H h \Bigr\>_{L^2}
+ \delta \Bigl\< \nabla_Vh, (\Id-\Delta_V^\mu-\Delta_H)^{-1} \nabla_H \Delta_V^\mu h \Bigr\>_{L^2}\\ \nonumber
& - \delta \Bigl\< \nabla_V\xi h, (\Id-\Delta_V^\mu -\Delta_H)^{-1} \nabla_H h\Bigr\>_{L^2} 
- \delta \Bigl\< \nabla_V h, (\Id-\Delta_V^\mu -\Delta_H)^{-1} \nabla_V \xi h\Bigr\>_{L^2}.
\end{align}
Using the commutation of $\Delta_V^\mu$ with $\Delta_H$ and the commutator properties of the geodesic field $\xi$ with $\nabla_H$ and $\nabla_V$ (Propositions \ref{propHV} and \ref{propcomH}),
\begin{align} \label{dFdt2}
\frac{d{\cal F}(h(t))}{dt} 
= & - 2 \|\nabla_Vh\|_{L^2}^2 \\ \nonumber
& + 2\delta \Bigl\< \nabla_V h, (\Id-\Delta_V^\mu -\Delta_H)^{-1}\Delta_V^\mu \nabla_H h\Bigr\> \\ \nonumber
& - \delta \Bigl( \bigl\< \xi\nabla_V h, (\Id-\Delta_V^\mu -\Delta_H)^{-1} \nabla_H h\bigr\> 
+ \bigl\< \nabla_Vh, \xi (\Id-\Delta_V^\mu-\Delta_H)^{-1} \nabla_H h\bigr\> \Bigr)\\ \nonumber
& - \delta \bigl\<\nabla_H h, (\Id-\Delta_V^\mu-\Delta_H)^{-1} \nabla_H h\bigr\> \\ \nonumber
& -\delta \Bigl( \bigl\< \nabla_Vh, (\Id-\Delta_V^\mu-\Delta_H)^ {-1} \Sigma(v,v) \nabla_V h\bigr\>
+ \bigl\< \nabla_V h, \bigl[ (\Id-\Delta_V^\mu-\Delta_H)^{-1}, \xi\bigr] \nabla_H h\bigr\> \Bigr).
\end{align}

To bound the second line in \eqref{dFdt2}, note that
\begin{align*}
\Bigl|  \Bigl\< \nabla_V h, (\Id-\Delta_V^\mu -\Delta_H)^{-1}\Delta_V^\mu \nabla_H h\Bigr\> \Bigr| 
& \leq \|\nabla_V h\|\, \bigl\| (\Id-\Delta_V^\mu-\Delta_H)^{-1/2} \Delta_V^\mu h\bigr\| \\
& \leq \|\nabla_V h\|^2.
\end{align*}

Next, the third line in \eqref{dFdt2} vanishes by the derivation property for $\xi$.

The fourth line is equal to
\[ - \delta \Bigl\| (\Id-\Delta_V^\mu-\Delta_H)^{-1/2} \nabla_H h\Bigr\|^2.\]

As for the fifth line in \eqref{dFdt2}, the first part is bounded by
\begin{align*}  \|\nabla_V h\|\,  \bigl\| (\Id-\Delta_V^\mu-\Delta_H)^{-1} & (\Sigma(v,v) \nabla_V h) \bigr\| \\
\leq & \|\nabla_V h\|\, \|\Sigma(v,v)\nabla_V h\|_{H^{0,-2}} \\
\leq & C \|\nabla_V h\|^2,
\end{align*}
in view of Proposition \ref{propgradmom}. To evaluate the second part, first note that
\begeq\label{Id-Delta-Delta}
[(\Id-\Delta_V^\mu -\Delta_H)^{-1}, \xi ] = (\Id-\Delta_V^\mu -\Delta_H)^{-1} [ \Id-\Delta_V^\mu -\Delta_H, \xi]\, (\Id-\Delta_V^\mu -\Delta_H)^{-1}.
\endeq
Then by Propositions \ref{propxiboundnomom}(i) and \ref{propboundsxi}(v),
\[ \Bigl\| [ \Id-\Delta_V^\mu -\Delta_H, \xi] h \Bigr\|_{H^{\alpha,\beta}} \leq C \| h\|_{H^{\alpha+1,\beta+3}}, \]
combining this with $\|(\Id-\Delta_V^\mu -\Delta_H)^{-1} h\|_{H^{\alpha,\beta}} \leq \|h\|_{H^{\alpha-\theta,\beta-(2-\theta)}}$ yields
\[ \Bigl \| (\Id-\Delta_V^\mu -\Delta_H)^{-1} [ \Id-\Delta_V^\mu -\Delta_H, \xi]\, (\Id-\Delta_V^\mu -\Delta_H)^{-1} \Bigr\|_{L^2} 
\leq C \|h\|_{L^2}. \]
All in all, the fifth line in \eqref{dFdt2} is bounded by $C\|\nabla_Vh\|\, \|h\|$.

In conclusion,
\begin{align*}
\frac{d{\cal F}}{dt} & \leq - 2(1-C\delta) \|\nabla_V h\|^2 + C \delta\, \|\nabla_V h\|\, \|h\| - \delta \Bigl\| (\Id-\Delta_V^\mu-\Delta_H)^{-1/2} \nabla_H h\Bigr\|^2\\
& \leq -2 \left(1-C\delta - \frac12\right) \|\nabla_V h\|^2 + C \delta^2 \|h\|^2 - \delta \Bigl\| (\Id-\Delta_V^\mu-\Delta_H)^{-1/2} \nabla_H h\Bigr\|^2.
\end{align*}
If $\delta<(4\max (C,1))^{-1}$, then
\[ \frac{d{\cal F}}{dt} \leq - \delta \Bigl( \|\nabla_V h\|^2 + \bigl\| (\Id-\Delta_V^\mu-\Delta_H)^{-1/2} \nabla_H h\bigr\|^2 \Bigr) + C \delta^2 \|h\|^2.\]
Applying Proposition \ref{propcoerreg},
\[ \frac{d{\cal F}}{dt} \leq - \delta \tilde{K}_P \|h\|^2 + C \delta^2 \|h\|^2.\]
Choosing $\delta$ small enough yields
\[ \frac{d{\cal F}}{dt} \leq - \frac{\delta \tilde{K}_P}2 \|h\|^2.\]
Combining this with \eqref{1-deltaL2}, there is $\kappa>0$ such that
\[ \frac{d{\cal F}}{dt} \leq - 2\kappa {\cal F},\]
hence ${\cal F}$ converges exponentially fast to~0, and by \eqref{1-deltaL2} again, $h$ converges also exponentially fast to~0 in $L^2$. Thus the proof is complete.
\end{proof}

\bibnotes

An overview of the equilibration problem for nonlinear statistical equations can be found in \cite{MV:companion,vill:handbook:02,vill:hypoco}. As for Remark \ref{rkmoments} mentioning the moment method for spatially homogeneous equation, it also applies to the spatially homogeneous Boltzmann equation \cite{iktru:56}, to which the Fokker--Planck equation is closely connected \cite{bob:theory:88}.

The core method in this section is taken from \cite[Part I]{vill:hypoco}, which was expanded in \cite{DOV:preprint}.

The behaviour of the bottom of the spectrum of ${\cal B}_\lambda$ when $\lambda\to\infty$ is considered by various authors \cite{bismut:hypoelliptic:05,bismutlebeau:hypo:book,NSW:25}. It would be good to see how far the methods in these notes can be pushed in this direction.

To establish hypocoercivity, Albritton--Armstrong--Mourrat--Novack~\cite{AAMN:KFP:24} prove (in flat geometry and under some assumptions) a Poincar\'e inequality of the form
\[ \|f -\<f\> \|_{L^2} \leq C \bigl( \|\nabla_V h\|_{L^2} + \|\xi h\|_{H^{0,-1}}\bigr), \]
suggesting that the quantity on the right hand side is an intrinsic and natural norm to work with. This method involving negative regularity is in contrast with the one that I presented here, only based on ``natural'' derivatives. They also ask whether there would be a plain coercivity inequality in that norm.

There is a large literature on mixing properties of the geodesic flow in negative sectional curvature, starting with Anosov \cite{anosov:geodesic:67}.

The Dolbeault--Mouhot--Schmeiser method is exposed in \cite{DMS:hypo:15}; the adaptation to the geometric kinetic Fokker--Planck equation was devised for these notes.

\section{Qualitative spectral description} \label{secqualspectr}

Throughout the notes I relied very little on spectral theory, and in particular did not use any spectral information to prove convergence to equilibrium. However, both for the intuition and for connection with other parts of the literature, it is enlightening to evoke the structure of the spectrum. I will do this only in $L^2(\mu)$, for $L$ (on functions) and ${\cal B}$ (vector fields), or more precisely ${\cal B}_\lambda$ as in \eqref{thmhypocoL2vect}. I will denote by $z$ the potential eigenvalue of the operator.

For a start, Section \ref{sechypo} provides information about the bottom of the spectrum, near the line $\Re z =0$ in complex plane:

\bul For functions, there is an isolated eigenvalue at~0 for $L$, corresponding to constant functions, and there is a spectral gap: all the rest of the spectrum lies in $\{\Re z \geq K\}$ for some $K>0$;

\bul For vector fields, there may be an eigenvalue at~0 for ${\cal B}_\lambda$, corresponding to the vector field of parallel horizontal vector fields if they exist. If~$\lambda$ is large enough then that eigenvalue is isolated, and there is a spectral gap: all the rest of the spectrum lies in $\{\Re z \geq K \}$ for some $K>0$.
\sm

On the other hand, Section \ref{seclebeau} on the maximal hypoellipticity (or its counterpart in terms of semigroup, Section \ref{secreg}) provides information about the top of the spectrum, meaning large eigenvalues. In general the spectrum will not be contained in any angular sector of the form $\{|\Re z| \geq K |\Im z|\}$ (thus $L$ will be called a {\bf nonsectorial operator}), but still the real part will grow to infinity as a fractional power of the imaginary part, at least:

\begin{Thm}[Localisation of the spectrum] \label{thmlocalspectrum}
There are constants $A,K>0$ such that for any $z$ in $\sigma(L)$, or in $\sigma({\cal B})$,
\begeq\label{localspectrum}
|z| \geq A \Longrightarrow \qquad |\Re z| \geq K |\Im z|^{1/2}.
\endeq
\end{Thm}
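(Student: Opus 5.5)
Take an eigenvalue $z=\mu_1+i\tau$ with eigenfunction $f$, normalised by $\|f\|_{L^2(\mu)}=1$, so that $Lf=zf$. Throughout, $f$ is complex valued and the earlier real estimates are used in their Hermitian form. The plan is to play two facts against each other. The energy identity makes the dissipation small when $\Re z$ is small. Maximal hypoellipticity (Theorem~\ref{thmalabouchut}) forces $f$ to be regular when $|z|$ is large. The bound on the real part by a fractional power of the imaginary part then comes from interpolating the skew part $\xi$ between these two pieces of information.

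\emph{Step 1 (dissipation).} Take the real part of $\<Lf,f\>=z$. Since $\xi$ is skew-adjoint in $L^2(\mu)$ (the conservativity corollary) and $-\Delta_V^\mu$ has Dirichlet form $\|\nabla_V f\|^2$, this gives $\|\nabla_V f\|^2=\Re z$. For ${\cal B}$, Remark~\ref{rkXXnot0} and \eqref{+a} add an error of order $\|X\|_{H^{0,1}}^2$. That error is absorbed after rescaling as in the proof of Theorem~\ref{thmhypocoL2vect}, or by working with the weighted quantity $\delta\|X_H\|^2+\|X_V\|^2$ as in \eqref{DVH13}. Either way one gets $\|\nabla_V X\|^2\le C(|\Re z|+1)$.

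\emph{Step 2 (regularity).} By \eqref{maxhypo}, $\|f\|_{H^{0,2}}+\|f\|_{H^{2/3,0}}\le C(1+|z|)$. The same holds for ${\cal B}$ by part (ii) of Theorem~\ref{thmalabouchut}.

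\emph{Step 3 (controlling $\Im z$).} Take the imaginary part: $\Im z=\Im\<\xi f,f\>$. Writing $\xi=v\cdot\nabla_H$, this quantity is not directly small. So I pass the horizontal derivative across using the mixed-term trick of Section~\ref{sechypoco}, and use that $\xi$ annihilates constants along fibers. I decompose $f=P_0f+(f-P_0f)$, where $P_0f=\<f\>_x$ is the fiber average. Then $\<\xi P_0 f,P_0 f\>=0$, because $\int v\,\mu_x(dv)=0$. Every remaining term contains at least one factor $f-P_0f$, and its norm is at most $\|\nabla_V f\|\le |\Re z|^{1/2}$ by the Gaussian Poincar\'e inequality \eqref{p1}. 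Using Corollary~\ref{L2geod} in a duality form with the split $H^{-1/3,\cdot}\times H^{1/3,\cdot}$, I aim for
\[ |\Im z|\le C\,\|f-P_0f\|_{H^{0,1}}\;\|f\|_{H^{1,0}} \quad\text{or rather}\quad |\Im z|\le C\,|\Re z|^{1/2}\,\|f\|_{H^{2/3,1}}^{\,\cdot}\,, \]
and then interpolate $\|f\|_{H^{\alpha,\beta}}$ between $L^2$, $\nabla_V f$ and the maximal bound of Step 2 using Proposition~\ref{propmixinterp}. The target is $|\Im z|\le C(|\Re z|+1)^{1/2}(1+|z|)^{\gamma}$. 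From it, any $\gamma<1/2$ yields $|\Re z|\ge K|\Im z|^{2(1-2\gamma)}$ for large $|z|$. A more careful balancing should give the exponent $1/2$ claimed in \eqref{localspectrum}.

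The main obstacle is Step~3. One needs a genuinely sharp commutator and interpolation bookkeeping, because the horizontal derivative in $\xi$ costs a weight $3$. The clean way to get it is to use the full Step~2 bound only through $\|f\|_{H^{2/3,0}}\lesssim|z|$, and to exploit $\Re z$ twice: once through $\|\nabla_V f\|$, and once through $\|\nabla_V^2 f\|\lesssim (|\Re z|\,|z|)^{1/2}$, which comes from interpolating Step~1 with Step~2. If the powers do not close to exactly $1/2$, I would fall back on the resolvent form. The idea there is to show $\|(L-z)^{-1}\|$ is bounded on the region $|\Re z|\le K|\Im z|^{1/2}$, $|z|\ge A$, by applying the a~priori estimates of Steps~1--3 to $(L-z)f=g$ with an error term. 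This has the added advantage of also excluding non-eigenvalue spectrum, which the eigenfunction argument alone does not handle.
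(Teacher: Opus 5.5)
There is a genuine gap, and it sits where you suspected: Step~3 cannot close with Steps~1--2 as its only inputs. Theorem~\ref{thmalabouchut} controls the regularity of $f$ only through $\|Lf\|=|z|$. In the regime of interest, $|z|$ is comparable to $|\Im z|$, which is the very quantity to be bounded. The only information sensitive to $\Re z$ is $\|\nabla_V f\|^2=\Re z$.

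Carried out correctly, your decomposition gives
\[ |\Im z|\le C\,\|\nabla_V f\|\,\|\nabla_H f\|=C(\Re z)^{1/2}\|\nabla_H f\|. \]
This uses $\langle \rho,\xi\rho\rangle=0$ for $\rho=P_0f$, skew-adjointness of $\xi$, Proposition~\ref{propgradmom} and the fibrewise Gaussian Poincar\'e inequality. But Theorem~\ref{thmalabouchut}(iv) with $k=2$ only gives $\|\nabla_H f\|\le C(1+|z|)^{3/2}$. Hence $|\Im z|\le C(\Re z)^{1/2}|z|^{3/2}$, which yields at best $\Re z\ge c\,|\Im z|^{-1}$.

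The $H^{-1/3}\times H^{1/3}$ split lands on the same order, because the best available bound is $\|D_H^{1/3}\nabla_V f\|^2\le C\,\Re z\,|z|$ plus lower-order terms. The claimed bound $\|\nabla_V^2 f\|\le C(\Re z\,|z|)^{1/2}$ does not follow from Steps~1--2 either: interpolating $H^{0,1}$ and $H^{0,3}$ gives only $(\Re z)^{1/4}|z|^{3/4}$.

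Your exponent bookkeeping is also off. The bound $|\Im z|\le C|\Re z|^{1/2}|z|^{\gamma}$ gives $|\Re z|\ge K|\Im z|^{2(1-\gamma)}$, so \eqref{localspectrum} corresponds to $\gamma=3/4$, not $\gamma<1/2$. The resolvent fallback restates the problem rather than solving it. Non-eigenvalue spectrum is handled in the paper beforehand, by compactness of $(I+L)^{-1}$, which follows from Theorem~\ref{thmalabouchut} and Proposition~\ref{propgradmom}.

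What is missing is a hypocoercive estimate that transfers the smallness of the dissipation to horizontal derivatives: $\|\nabla_H f\|^2\le C(\Re z)^3$ for $\Re z\ge 1$. One way to get it is the twisted form
\[ \Phi(h)=\|h\|^2+a\|\nabla_V h\|^2+2b\,\Re\langle\nabla_V h,\nabla_H h\rangle+c\|\nabla_H h\|^2, \]
with weights scaled to the eigenvalue: $a=\varepsilon_1(\Re z)^{-1}$, $b=\varepsilon_2(\Re z)^{-2}$, $c=\varepsilon_3(\Re z)^{-3}$. This is the stationary analogue of \eqref{Fth} at $t\sim 1/\Re z$. The usual relations $\varepsilon_1^2\ll\varepsilon_2\ll\varepsilon_1$, $\varepsilon_2^2\ll\varepsilon_1\varepsilon_3$, $\varepsilon_3\ll\varepsilon_2$ are compatible with this scaling.

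Write $\Phi(\cdot,\cdot)$ for the associated Hermitian form. The commutator computations of Sections~\ref{secreg}--\ref{sechypoco} give $\Re\Phi(Lf,f)\ge Kb\|\nabla_H f\|^2$. The eigenvalue equation gives $\Re\Phi(Lf,f)=\Re z\,\Phi(f)\le C\,\Re z+2c\,\Re z\,\|\nabla_H f\|^2$. Since $c\,\Re z\ll b$, the bound $\|\nabla_H f\|^2\le C(\Re z)^3$ follows. Your corrected Step~3 then gives $|\Im z|\le C(\Re z)^2$, which is \eqref{localspectrum}. For $\Re z<1$, fixed weights give $|\Im z|\le C$.

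This is the mechanism of the paper's proof. The paper runs it on the real part $F$ through \eqref{eigeneq}, pairing with $LF$, $\nabla_V LF$, $\nabla_H LF$ and the mixed term, with weights satisfying $a\alpha,\ b\alpha^2,\ c\alpha^3\sim 1$. This leads to \eqref{localspectr6} and then $\beta^2\le C\alpha^4$. The same missing ingredient is needed for ${\cal B}$.
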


This fractional growth applies similarly for $L$ and ${\cal B}_\lambda$, whatever $\lambda>0$. The constant $K$ itself may depend on $\lambda$, but can be chosen uniformly as $\lambda\to\infty$. Such estimates were first obtained by Lebeau and will be retrieved here with more elementary methods.

\begin{figure}
\def\svgwidth{0.5\textwidth}
\begingroup%
  \makeatletter%
  \providecommand\color[2][]{%
    \errmessage{(Inkscape) Color is used for the text in Inkscape, but the package 'color.sty' is not loaded}%
    \renewcommand\color[2][]{}%
  }%
  \providecommand\transparent[1]{%
    \errmessage{(Inkscape) Transparency is used (non-zero) for the text in Inkscape, but the package 'transparent.sty' is not loaded}%
    \renewcommand\transparent[1]{}%
  }%
  \providecommand\rotatebox[2]{#2}%
  \newcommand*\fsize{\dimexpr\f@size pt\relax}%
  \newcommand*\lineheight[1]{\fontsize{\fsize}{#1\fsize}\selectfont}%
  \ifx\svgwidth\undefined%
    \setlength{\unitlength}{595.27559055bp}%
    \ifx\svgscale\undefined%
      \relax%
    \else%
      \setlength{\unitlength}{\unitlength * \real{\svgscale}}%
    \fi%
  \else%
    \setlength{\unitlength}{\svgwidth}%
  \fi%
  \global\let\svgwidth\undefined%
  \global\let\svgscale\undefined%
  \makeatother%
  \begin{picture}(1,1.41428571)%
    \lineheight{1}%
    \setlength\tabcolsep{0pt}%
    \put(0,0){\includegraphics[width=\unitlength,page=1]{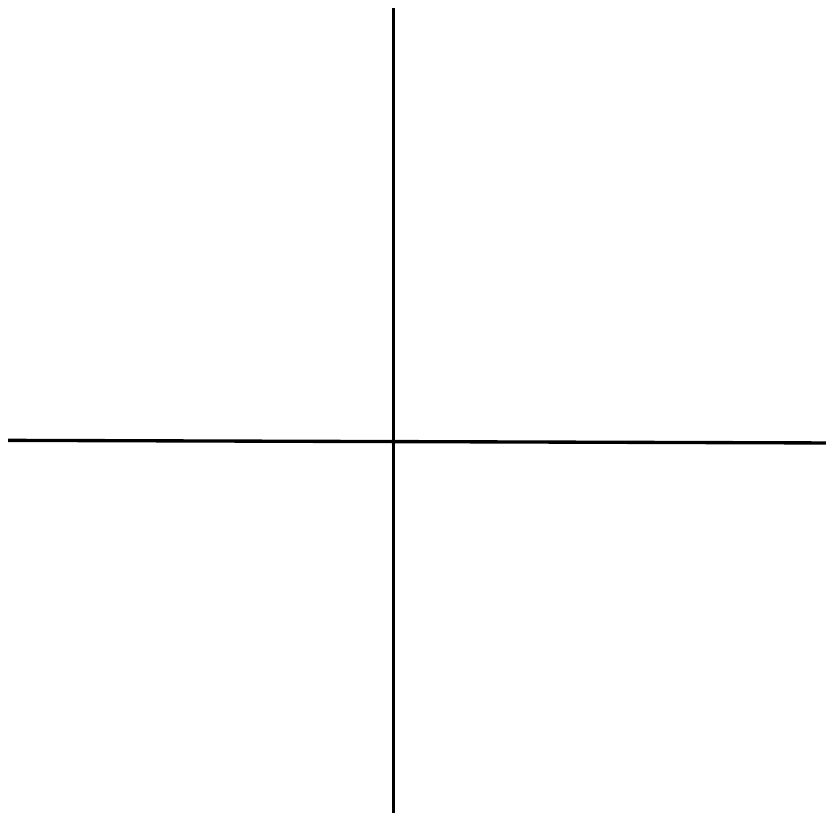}}%
    \put(0.78989766,0.89570232){\color[rgb]{0,0,0}\makebox(0,0)[lt]{\lineheight{1.75375009}\smash{\begin{tabular}[t]{l}$\Re$\end{tabular}}}}%
    \put(0.38010977,1.30298285){\color[rgb]{0,0,0}\makebox(0,0)[lt]{\lineheight{1.75374711}\smash{\begin{tabular}[t]{l}$\Im$\end{tabular}}}}%
    \put(0,0){\includegraphics[width=\unitlength,page=2]{spectrum.pdf}}%
  \end{picture}%
\endgroup%

\vspace*{-45mm}

\caption{Localisation of the spectrum of $L$ on $(\ker L)^\bot$, or the spectrum of ${\cal B}_\lambda$ on $(\ker {\cal B}_\lambda)^\bot$ for large $\lambda$, loosely sketched: the spectrum cannot approach the imaginary axis (vertical dashed line) and is bounded within an ``antiparabola'', $|\Im z| = O (|\Re z|^2).$}
\label{figspectrum}

\end{figure}

Before proving estimate \eqref{localspectrum}, let us first rewrite the eigenvalue equation. Turn to complex values, write $h=F+iG$ for the function, with $F,G$ real-valued, and $z=\alpha+i\beta$ for the eigenvalue, with $\alpha,\beta\in\R$. The eigenvalue equation
\[ L(F+iG) = (\alpha+i\beta) (F+iG)\]
becomes
\[ 
\begin{cases} L F =\alpha F -\beta G \\
LG = \alpha G + \beta F,
\end{cases}
\]
thus
\begin{align*}
L^2 F & = \alpha LF - \beta LG \\
& = \alpha LF - \beta (\alpha G +\beta F) \\
& = \alpha LF -\alpha (\alpha F - LF)- \beta^2 F,
\end{align*}
or
\begeq\label{eigeneq}
L^2 F - 2\alpha LF + (\alpha^2 +\beta^2) F = 0.
\endeq
Since $L+L^*\geq 0$, necessarily $\alpha\geq 0$; also without loss of generality we may assume $\beta\geq 0$; the goal is to get a lower bound on $\alpha$ as $\beta\to\infty$.

What to expect from \eqref{eigeneq}? Obviously,
\[ \bigl\| L^2 F + (\alpha^2 +\beta^2) F \bigr\|^2 = 4 \alpha^2 \|LF\|^2\]
so
\begeq\label{soL2F}
 \|L^2F\|^2 + 2 (\alpha^2 +\beta^2) \<L^2F,F\> + (\alpha^2 +\beta^2)^2 \|F\|^2 = 4 \alpha^2 \|LF\|^2.
 \endeq

For heuristic purposes, let us cheat by pretending that the cross-product with $\<L^2 F, F\>$ in the left hand side is negligible; so \eqref{soL2F} implies
\begeq\label{soL2F'}
 \|L^2F\|^2 + (\alpha^2 +\beta^2)^2 \|F\|^2 \leq C \alpha^2 \|LF\|^2.
 \endeq
Combining this with the skewed interpolation inequality \eqref{skewedinterp}, with $\theta=1/4$,
\begin{align*}
\|L^2F\|^2 + (\alpha^2 +\beta^2)^2 \|F\|^2 
& \leq C \alpha^2 \bigl( \|L^2F\|^{2(1-\theta)} \|F\|^{2\theta} + \|F\|^2 \bigr)\\
& \leq C \var \|L^2F\|^2 + C (\alpha^2 + \alpha^{\frac2{\theta}}) \|F\|^2.
\end{align*}
When $\var$ is small enough and $\alpha\geq 1$, this implies
\[ \alpha\geq K \beta^{2\theta} = K \beta^{1/2}\]
for some $K>0$, as announced.
That heuristic argument shows that the nonsectorial nature of the operator is closely related to the exponent $\theta$ in the skewed interpolation inequality \eqref{skewedinterp}, itself related to the nonsymmetry of $L$.

Turning the heuristics into a rigorous calculation is not straightforward, because $\<L^2F,F\> = \|\Delta_V F\|^2 - \|\xi F\|^2$ is not really small. Taking product of \eqref{eigeneq} with $F,LF,L^2F$, etc. leads to a tangled collection of estimates. However, as I shall now show, there is an elementary way using once again the distorted $H^1$ norm which was useful in Sections \ref{secreg} and \ref{sechypoco}.

\begin{proof}[Proof of Theorem \ref{thmlocalspectrum}]
I shall only consider the estimate for $L$, since the reasoning for ${\cal B}_\lambda$ is similar, using the same kind of estimates as in the proof of Theorem \ref{thmregul}(ii). (Of course, ${\cal B}_\lambda$ may have eigenvalues with negative real part if $\lambda$ is not large enough, but it was shown in Section \ref{sechypoco} that the real part is bounded below, so upon replacing ${\cal B}_\lambda$ by ${\cal B}_\lambda + C \Id$ for $C$ large enough, the estimates will be similar and the real parts positive.)

Beforehand, consider the equation
\begeq\label{eqsurj} (L-\lambda) f = h,\endeq
or equivalently
\[ (I+L) f = (\lambda+1)f + h.\]
As $(I+L)^{-1}$ is regularising (Theorem \ref{thmalabouchut}), the operator $f\longmapsto (I+L)^{-1} ((\lambda+1)f + h)$ is compact, and the existence of a fixed point will follow from Schauder's theorem. This is to say that \eqref{eqsurj} always has solutions in $L^2(\mu)$, so $L-\lambda\,\Id$ is always surjective, and the spectrum of $L$ is made of eigenvalues.
\sm

So let us start again from \eqref{eigeneq}. The steps will be the same as in the proofs of regularity or hypocoercivity.
\sm

1. Take scalar product of \eqref{eigeneq} with $LF$:
\[ \<L^2 F, LF\>_{L^2(\mu)} + (\alpha^2 + \beta^2) \<F,LF\>_{L^2(\mu)} = 2\alpha \|LF\|_{L^2(\mu)}^2,\]
which is the same as
\begeq\label{localspectr1}
\|\nabla_V LF\|_{L^2(\mu)} + (\alpha^2+\beta^2) \|\nabla_V F\|_{L^2(\mu)}^2 = 2\alpha \|LF\|_{L^2(\mu)}^2.
\endeq
(From now on the measure $\mu$ and the $L^2$ subscript will be implicit.)
\sm

2. Apply $\nabla_V$ to \eqref{eigeneq} and take scalar product with $\nabla_VLF = L\nabla_VF + \nabla_H F$ (using that decomposition only where useful): this yields successively
\[ \nabla_V L^2F + (\alpha^2 + \beta^2) \nabla_V F = 2\alpha \nabla_V LF,\]
\[ L\nabla_V LF + \nabla_H LF + (\alpha^2+\beta^2) \nabla_V F = 2\alpha \nabla_V LF,\]
\begin{multline*}
\<L\nabla_V LF, \nabla_V LF\> + \<\nabla_H LF, \nabla_V LF\> + (\alpha^2+\beta^2) \<\nabla_VF, L\nabla_V F\>\\
+ (\alpha^2+\beta^2) \<\nabla_VF,\nabla_HF\>  = 2\alpha \|\nabla_V LF\|^2
\end{multline*}
and thus
\begin{multline} \label{localspectr2}
\|\nabla_V^2 LF \|^2 + (\alpha^2+\beta^2) \|\nabla_V^2 F\|^2
+ (\alpha^2+\beta^2) \<\nabla_VF,\nabla_HF\> \\
\leq 2\alpha \|\nabla_V LF\|^2 + C\, \|\nabla_HLF\|\, \|\nabla_V LF\|.
\end{multline}

3. Apply $\nabla_H$ to \eqref{eigeneq} and take scalar product with $\nabla_H LF = L \nabla_H F - \Sigma(v,v) \nabla_VF$:
\[ \nabla_H L^2F + (\alpha^2 +\beta^2) \nabla_HF = 2\alpha \nabla_H LF,\]
\[ L \nabla_H LF - \Sigma(v,v) \nabla_V LF + (\alpha^2+\beta^2) \nabla_HF = 2\alpha \nabla_H LF,\]
\begin{multline*}
\<L\nabla_H LF, \nabla_H LF\> - \<\Sigma(v,v) \nabla_V LF, \nabla_H LF\> + (\alpha^2+\beta^2) \bigl( \<\nabla_HF, L\nabla_HF\> - \<\nabla_HF, \Sigma(v,v)\nabla_VF\>\bigr)\\
= 2\alpha \|\nabla_H LF\|^2,
\end{multline*}
thus, using Proposition \ref{propgradmom},
\begin{multline}\label{localspectr3}
\|\nabla_V\nabla_H LF\|^2 + (\alpha^2+\beta^2) \|\nabla_V\nabla_H F\|^2 \leq 2\alpha \|\nabla_HLF\|^2\\
+ C \bigl( \|\nabla_V^2 LF\| + \|\nabla_VLF\|\bigr) \bigl( \|\nabla_V\nabla_H LF\| + \|\nabla_H LF\|\bigr)\\
+ C (\alpha^2+\beta^2) \bigl( \|\nabla_V\nabla_H F\| + \|\nabla_HF\| \bigr) \bigl( \|\nabla_V^2F\| + \|\nabla_VF\|\bigr).
\end{multline}

4. Apply $\nabla_V$ to \eqref{eigeneq} and multiply by $\nabla_H LF$, then apply $\nabla_H$ to \eqref{eigeneq} and multiply by $\nabla_V LF$, sum both:
\begin{multline*}
\<\nabla_V L^2 F, \nabla_H LF\> + \<\nabla_H L^2F,\nabla_V LF\> 
+ (\alpha^2+\beta^2) \bigl[ \<\nabla_V LF,\nabla_H LF\> + \<\nabla_HF, \nabla_V LF\> \bigr] \\
= 4\alpha \<\nabla_V LF, \nabla_H LF\>.
\end{multline*}
Playing commutators,
\begin{multline*}
\<L\nabla_V LF,\nabla_H LF\> + \<L\nabla_H LF, \nabla_V LF\> + \|\nabla_H LF\|^2
- \<\Sigma(v,v) \nabla_VLF,\nabla_VLF\> \\
+ (\alpha^2+\beta^2) \bigl( \<\nabla_VF,L\nabla_HF\> + \<\nabla_HF, L\nabla_VF\> \bigr) 
- (\alpha^2+\beta^2) \bigl( \<\nabla_VF, \Sigma(v,v)\nabla_VF\> + \|\nabla_HF\|^2\bigr)\\
= 4\alpha \<\nabla_VLF,\nabla_HLF\>.
\end{multline*}
Applying the $\Gamma$ formula for $L$,
\begin{multline*}
2 \<\nabla_V^2 LF, \nabla_V\nabla_H LF\> + \|\nabla_HF\|^2 - \<\Sigma(v,v) \nabla_V LF,\nabla_V LF\> \\
+ (\alpha^2+\beta^2) \bigl( \<\nabla_V^2 F,\nabla_V\nabla_H F\> - \<\nabla_VF, \Sigma(v,v)\nabla_VF\> + \|\nabla_HF\|^2\bigr)\\
= 4 \alpha \<\nabla_VLF,\nabla_HLF\>.
\end{multline*}
So
\begin{multline} \label{localspectr4}
\|\nabla_HLF\|^2 + (\alpha^2+\beta^2) \|\nabla_HF\|^2 \leq \\
C \Bigl[ \|\nabla_V^2 LF\|\, \|\nabla_V\nabla_H LF\| + \bigl( \|\nabla_V^2 LF\| + \|\nabla_V LF\| \bigr)^2 \Bigr]
+ C \alpha \|\nabla_V LF\|\,\|\nabla_H LF\| \\
+ C (\alpha^2+\beta^2) \Bigl[ \|\nabla_V^2F\| \|\nabla_V\nabla_H F\| + \bigl( \|\nabla_V^2F\|+\|\nabla_VF\|\bigr)^2 \Bigr].
\end{multline}
\sm

5. Combining \eqref{localspectr1}, \eqref{localspectr2}, \eqref{localspectr3}, \eqref{localspectr4}, with respective positive weights $1,a,b,c$, it results
\begin{align} \label{localspectr5}
\|\nabla_V LF\|^2 & + a \|\nabla_V^2 LF\|^2 + b \|\nabla_H LF\|^2 + c \|\nabla_V\nabla_H LF\|^2 \\ \nonumber
& + (\alpha^2+\beta^2) \Bigl( \|\nabla_V F\|^2 + a \|\nabla_V^2 F\|^2 + b \|\nabla_HF\|^2 + c \|\nabla_V\nabla_HF\|^2 \\ \nonumber
& \leq C \alpha \Bigl( \|LF\|^2 + a \|\nabla_VLF\|^2 + b \|\nabla_V LF\| \|\nabla_H LF\| + c \|\nabla_H LF\|^2 \Bigr)\\ \nonumber
& \quad + C \Bigl[ a \|\nabla_H LF\| \|\nabla_V LF\| 
+ b \Bigl( \|\nabla_V^2 LF \| \|\nabla_V\nabla_H LF\| + \bigl( \|\nabla_V^2 LF\| + \|\nabla_V LF\| \bigr)^2 \Bigr) \\ \nonumber
& \qquad + c \bigl( \|\nabla_V^2 LF\| + \|\nabla_V LF\| \bigr) \bigl( \|\nabla_V\nabla_H LF\| + \|\nabla_V LF\|\bigr)^2 \Bigr) \\ \nonumber
& \quad + C (\alpha^2+\beta^2) \Bigl[ b \Bigl( \|\nabla_V^2 F\| \|\nabla_V\nabla_H F\| + \bigl( \|\nabla_V^2 F\| + \|\nabla_VF\|\bigr)^2 \Bigr)\\ \nonumber
& \qquad + c (\|\nabla_V\nabla_HF\| + \|\nabla_HF\| ) (\|\nabla_V^2 F\| + \|\nabla_V F\|) \Bigr].
\end{align}

Let us choose $a,b,c$ in such a way that all terms in $\|\nabla_VF\|$, $\|\nabla_V^2F\|$, $\|\nabla_HF\|$, $\|\nabla_V\nabla_HF\|$, $\|\nabla_VLF\|$, $\|\nabla_V^2LF\|$, $\|\nabla_HLF\|$, $\|\nabla_V\nabla_HLF\|$ in the right hand side are controlled by half of those occurring in the left hand side. This is true as soon as
\begin{multline*}
a\alpha \ll 1, \qquad. b\alpha \ll \sqrt{b}, \qquad c\alpha \ll b ,  \qquad a \ll \sqrt{b}, \\
b \ll \min (\sqrt{ac},a,1), \qquad c \ll \sqrt{\min(a,1)\, \min(b,c)} ,
\end{multline*}
conditions which all boil down to
\begeq \label{condspectrabc}
a\ll \sqrt{b}, \qquad b \ll \sqrt{ac}, \qquad c\alpha^3 \ll b\alpha^2\ll a\alpha \ll 1.
\endeq
Those can be realised with $\min (a\alpha, b\alpha^2, c\alpha^3)\geq K>0$ as $\alpha\to \infty$. Then \eqref{localspectr5} reduces to
\begin{multline} \label{localspectr6}
\|\nabla_VLF\|^2 + \alpha^{-1} \|\nabla_V^2 LF\|^2 + \alpha^{-2} \|\nabla_HLF\|^2 + \alpha^{-3} \|\nabla_V\nabla_H LF\|^2\\
+ (\alpha^2+\beta^2)  \bigl( \|\nabla_V F\|^2  + \alpha^{-1} \|\nabla_V^2 F\|^2 +\alpha^{-2} \|\nabla_HF\|^2 + \alpha^{-3} \|\nabla_V\nabla_HF\|^2 \bigr) \leq C \alpha \|LF\|^2.
\end{multline}
On the other hand, recalling Corollaries \ref{divloss} and \ref{L2geod},
\begeq\label{LFleq}
\|LF\|^2 \leq C \bigl( \|\nabla_V^2F\|^2 + \|\nabla_HF\|^2 + \|\nabla_V\nabla_HF\|^2\bigr).
\endeq
Combining \eqref{localspectr6} and \eqref{LFleq},
\[ \frac{\beta^2}{\alpha^3} \bigl( \|\nabla_V^2 F\|^2 + \|\nabla_HF\|^2 + \|\nabla_V\nabla_HF\|^2 \bigr)
\leq C \alpha \bigl( \|\nabla_V^2F\|^2 + \|\nabla_HF\|^2 + \|\nabla_V\nabla_HF\|^2\bigr).\]
If $\|\nabla_V^2 F\|^2 + \|\nabla_HF\|^2 + \|\nabla_V\nabla_HF\|^2=0$ then $F$ is a constant, which is impossible for $\alpha>0$. So $\beta^2 \leq C \alpha^4$, which concludes the proof of \eqref{localspectrum}.
\end{proof}

\begin{Rk} A more synthetic presentation would be to estimate $L$ in terms of the unbounded operator 
\[ A:h \longmapsto (h, \nabla_Vh,\nabla_Hh),\] 
valued in $L^2(\mu; \R\times T(TM))$, and the scalar product 
\[ \<(x,Y,Z), (x',Y',Z')\> = xx'+ a\<Y,Y'\> + b (\<Y,Z'\>+\<Z,Y'\>) + c \<Z,Z'\>.\]
For the moment the pedestrian computation above suffices.
\end{Rk}

\bibnotes

Sectorial operators are popular in spectral analysis and numerical analysis of linear operators \cite{haase:sectorial:06,yagi:sectorial:10}, for the properties of the analytic semigroup they generate and the quality of their approximation.

Estimates \eqref{localspectrum} were first obtained by Lebeau \cite{lebeau:FP1:05,lebeau:FP2:07} using pseudo-differential formalism.

Bismut \cite{bismut:hypoLie:08} has computed the spectrum of ${\cal B}_\lambda$ when $M$ is the sphere. It behaves nicely as $\lambda\to\infty$, but nothing remarkable seems to emerge for $\lambda\to 0$ (private communication by Bismut).

\section{$L^1$ convergence through $L^2$ approximation} \label{secL1conv}

This section addresses the study of the long-time convergence for the $L^1$ problem, eq.~\eqref{eqf}, starting from an initial datum $f_0\in L^1(dx\,dv)$. Regularisation will drive the solution in a regime where $f$ is automatically $L^2(dx\,dv)$, and even uniformly smooth and rapidly decaying, but not in the $L^2(\mu)$ regime of Section \ref{sechypoco}. Even under the strong (still not unreasonable) assumption
\[ \iint f_0(x,v)\, e^{\alpha |v|^2/2}\,dv\,dx < +\infty \qquad \text{for some $\alpha>0$},\]
the best that one can hope is an estimate of the form
\[ f(t,x,v) = O (e^{-\beta|v|^2/2});\]
but in general $f$ will never enter the $L^2(\mu)$ regime, recall Corollary \ref{corL1L2}. However, combining the improvement of square exponential bounds (Proposition \ref{2sideexp}) and the strong regularisation bounds from Section \ref{secreg} (Theorem \ref{thmregul}(iv)), will make it possible to approximate the $L^1$ problem by the $L^2$ problem, well enough to get the exponential convergence. This is the content of the next theorem.

\begin{Thm}[Exponential equilibration in $L^1$, first result] \label{L1exp}
Let $f_0$ be a probability distribution on $\TM$ satisfy
\[ M_{\beta_0}(f_0) = \iint_{\TM} e^{\beta_0 \frac{|v|^2}2} f_0(x,v)\,dx\,dv < +\infty \]
for some $\beta_0>0$; and let $f=f(t,x,v)$ be the solution of \eqref{eqf} with initial datum $f_0$. Further let $f_\infty(x,v)$ be the normalised Gaussian distribution function. Then there are $\nu>0$, depending only on $M$, and $C>0$, depending only on $M$ and $\beta_0$, such that for all $t>0$,
\begeq\label{f-f8} \|f(t,\cdot) - f_\infty\|_{L^1(M)} \leq C\, M_{\beta_0}(f_0)\, e^{-\nu t},\endeq
Convergence is also exponential in all weighted Sobolev spaces $H^s_\kappa$; likewise, for all integers $k,\ell,r$, and any $\nu'<\nu$, there is $C">0$, only depending on $M$, $\beta_0$, $\nu'-\nu$, such that
\begeq\label{ffSob} (1+|v|^r)\, \bigl |\nabla_V^k \nabla_H^\ell (f - f_\infty) \bigr| \leq C\, M_{\beta_0}(f_0)\, e^{-\nu' t}. \endeq
\end{Thm}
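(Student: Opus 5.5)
The approach is to bring the solution, after a finite waiting time, into the regime where Theorem \ref{thmhypocoL2} applies, but only after a change of reference measure. Corollary \ref{corL1L2} rules out working in $L^2(\mu)$ itself. Instead I would work with the Gaussian $\mu_\gamma(dx\,dv)\propto e^{-\gamma|v|^2/2}\,dx\,dv$ for some $\gamma\in(0,1)$, or simply decompose $f$ so that the $L^2(\mu)$ theory applies to a good piece.

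First step: propagate and improve the square-exponential moment. By Proposition \ref{2sideexp}, after a time $t_0=t_0(\beta_0)$ we have $M_{\alpha}(f(t_0))\le e^{nt_0}M_{\beta_0}(f_0)$ for any fixed $\alpha<1$, say $\alpha=3/4$. Theorem \ref{thmregul}(iv) then gives the pointwise bound $f(t,x,v)\le C\,M_{\beta_0}(f_0)\,e^{-\beta|v|^2/2}$ for $t\ge t_0+1$, with $\beta$ any number below $3/4$, together with the same bound on derivatives.

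Second step: split $f(t_1)=g+r$ at time $t_1=t_0+1$. Take $g=f(t_1)\,1_{|v|\le R}$ after a smooth cutoff, and $r$ the tail. The piece $g$ satisfies $g/f_\infty\in L^2(\mu)$ with
\[
\|g/f_\infty\|_{L^2(\mu)}\le C\,M_{\beta_0}(f_0)\,e^{(1-\beta)R^2/4}.
\]
The tail satisfies $\|r\|_{L^1}\le C\,M_{\beta_0}(f_0)\,e^{-\beta R^2/4}$. By linearity and $L^1$-contraction of the semigroup (a consequence of the Lyapunov structure \eqref{Lyapidentity}), the evolved tail stays $L^1$-small forever. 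Meanwhile $e^{-tL}$ applied to $g/f_\infty$ converges in $L^2(\mu)$, hence in $L^1(dx\,dv)$ by Cauchy--Schwarz, to its mean at rate $A e^{-\alpha t}$. Choosing $R^2\sim \kappa t$ balances the two errors, but this only gives a rate depending on the ratio $(1-\beta)/\beta$, and an $\epsilon$-loss in the exponent. Here I would rather iterate the argument.

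A cleaner route, which I would try first, uses the fact that the rate in Theorem \ref{thmhypocoL2} does not depend on the initial datum. Define $\delta(t)=\|f(t)-f_\infty\|_{L^1}$, which is nonincreasing. Apply the splitting on each unit time interval $[k,k+1]$ to $f(k)-f_\infty$ rather than to $f$. The subtle point is that $f(k)-f_\infty$ also enjoys uniform Gaussian bounds, by Theorem \ref{thmregul}(iv) and the uniform-in-time control of $M_\alpha$. Interpolating its $L^1$ size with these Gaussian bounds should give
\[
\|(f(k)-f_\infty)/f_\infty\|_{L^2(\mu_{\gamma})}\lesssim \delta(k)^{\theta}.
\]
Proving this and then contracting is the main obstacle: one needs a hypocoercive contraction in $L^2(\mu_\gamma)$, or a contraction factor $<1$ over a fixed time $T$ in $L^1$. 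I would obtain the latter from Theorem \ref{thmhypocoL2} applied to the truncated piece: over time $T$ the $L^1$ norm of the deviation is multiplied by at most $Ae^{-\alpha T}e^{cR^2}+e^{-\beta R^2/4}$, which is $<1/2$ for suitable $R$ and then $T$ large. Iterating gives \eqref{f-f8} with $\nu=\log 2/T$, which depends only on $M$ once $t\ge t_0$.

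The Sobolev and pointwise bounds \eqref{ffSob} then follow by interpolation. Write $f-f_\infty$ at time $t$ as the evolution over a unit time of $f(t-1)-f_\infty$, and use Theorem \ref{thmregul}(iii)--(iv): high Sobolev and weighted norms are bounded uniformly by $C\,M_{\beta_0}(f_0)$. Interpolating these against the exponentially small $L^1$ norm via Propositions \ref{propinterp} and \ref{propsobemb} gives the rate $\nu'<\nu$.
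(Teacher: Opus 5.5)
The route you finally commit to has a genuine gap. The expression $Ae^{-\alpha T}e^{cR^2}+e^{-\beta R^2/4}$ is not a multiplicative contraction factor for $\delta(k)=\|f(k)-f_\infty\|_{L^1}$. All your a priori bounds on $d=f(k)-f_\infty$, namely the Gaussian pointwise bounds from Theorem~\ref{thmregul}(iv), are of size $M_{\beta_0}(f_0)$, not of size $\delta(k)$.

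For the truncated piece, the best available bound is $\|d\,1_{|v|\le R}/f_\infty\|_{L^2(\mu)}^2\leq Ce^{R^2/2}\|d\|_{L^\infty}\,\delta(k)$. This is a bound in $\delta(k)^{1/2}$, or in $\delta(k)^\theta$ with $\theta<1$ if more derivatives are used, as you noticed yourself. $L^1$ smallness cannot be converted linearly into $L^2$ smallness using bounded a priori norms. The tail is worse: $\|d\,1_{|v|>R}\|_{L^1}$ is only bounded by $\min\bigl(\delta(k),\,CM_{\beta_0}(f_0)e^{-\beta R^2/4}\bigr)$, because nothing prevents the deviation from sitting mostly at large velocities. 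What you actually get is
\[ \delta(k+T)\leq C e^{-\alpha T}e^{cR^2}M_{\beta_0}(f_0)^{1-\theta}\,\delta(k)^{\theta}+CM_{\beta_0}(f_0)\,e^{-\beta R^2/4}. \]
This is a sublinear recursion with an additive floor, and it does not drive $\delta$ to $0$ for fixed $R$ and $T$. Letting $R$ grow with $k$ does not repair it. To push the floor below $e^{-\nu(k+1)T}$ you need $R_k^2$ growing linearly in $k$, and then $e^{cR_k^2}$ outruns the fixed gain $e^{-\alpha T}$ per step. A genuine fixed-time contraction in an $L^1$-type norm would need a weighted Harris/Meyn--Tweedie argument (Lyapunov drift plus Proposition~\ref{proplocalpos}, cf.\ Remark~\ref{rkMTconv}), which is a different proof altogether.

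The route you set aside is precisely the paper's proof, and your objection to it does not stand. The paper splits once, at a time $t_\ast$ depending only on $\beta_0$, at which the exponent $\beta_\ast$ of Proposition~\ref{2sideexp} satisfies $\beta_\ast\geq\max(\beta_0,1/2)$. It bounds $\|f^{\le R}_{t_\ast}/f_\infty\|_{L^2(\mu)}\leq CM_{\beta_0}(f_0)e^{(1-\beta_\ast)R^2/4}$ and $\|f^{>R}_{t_\ast}\|_{L^1}\leq CM_{\beta_0}(f_0)e^{-\beta_\ast R^2/2}$. Then, for each target time $t$, it takes $R=R(t)$ with $\lambda t=(1+\beta_\ast)R^2/4$. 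This yields the rate $\frac{2\beta_\ast}{1+\beta_\ast}\lambda\geq\frac23\lambda$, so $\nu=\frac23\lambda$ depends only on $M$. The theorem never asks $\nu$ to equal the $L^2(\mu)$ rate, so the loss is harmless.

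In fact your own first step makes the truncation unnecessary. With $f\leq CM_{\beta_0}(f_0)e^{-\beta|v|^2/2}$ and $\beta\in(1/2,3/4)$, one has $\iint f^2/f_\infty\leq CM_{\beta_0}(f_0)^2\iint e^{-(\beta-1/2)|v|^2}<\infty$. So Theorem~\ref{thmhypocoL2} applies directly from time $t_0+1$, with the full rate $\alpha$. Corollary~\ref{corL1L2} does not forbid this: its Cauchy--Schwarz step requires $\int e^{(2\beta_t-1)|v|^2/2}\,dv<\infty$, i.e.\ $\beta_t<1/2$, not $\beta_t<1$. Already on $\T^n$ with $x$-independent data, the Ornstein--Uhlenbeck flow takes $e^{-\gamma|v|^2/2}$, with $\gamma$ small, into $L^2(\mu)$ in finite time.

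This direct route also handles \eqref{ffSob} cleanly. Indeed $\|f-f_\infty\|_{L^2_\kappa}\leq C\|h-1\|_{L^2(\mu)}$ decays at the full rate, and Proposition~\ref{propinterp} interpolates it against the uniform Sobolev bounds. Note that Proposition~\ref{propinterp} has no $L^1$ endpoint. So your last step, interpolating against the $L^1$ norm, needs an extra $L^1\to L^2$ ingredient; the paper invokes its Nash-type Theorem~\ref{propnashinterp} there.
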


\begin{Rk} The theorem applies exactly the same if $f_0$ is replaced by a probability measure $\mu_0$ such that
\[ M_{\beta_0}(\mu_0) = \iint_{\TM} e^{\beta_0 \frac{|v|^2}2}\,\mu_0(dx\,dv) < +\infty. \]
\end{Rk}

\begin{proof}[Proof of Theorem \ref{L1exp}]
The proof follows a natural scheme.
If $f$ is a probability distribution on $\TM$, then for any $R>0$ one may write
\[  f = f\, 1_{|v|\leq R} + f\,1_{|v|>R} = f^{\leq R} + f^{>R}. \]
Let now $f_t = f(t,\cdot)$ solve \eqref{eqf} and satisfy the square exponential moment condition of Theorem \ref{L1exp}. Without loss of generality, $\beta_0<1$. Let $t_\ast>0$. From the localisation estimates in Section \ref{seclocal}, $M_{\beta_{\ast}}(f_{t_\ast}) \leq C(\beta_0,\beta_\ast) M_{\beta_0}(f_0)$, where $\beta_\ast$ is the solution at time $t_\ast$ of the differential equation \eqref{dotbeta}. Moreover, from the regularisation estimates in  Section \ref{secreg},  $f_{t_0}\in L^\infty$, with bounds depending on $t_\ast$ and proportional to a power of a moment of $f_0$, hence controlled above by a multiple of $M_{\beta_0}(f_0)$. Thus
\begin{align*}
\iint_{\TM} f_{t_\ast}^{\leq R}(x,v)^2\, e^{\frac{|v|^2}2} \,dx\,dv & \leq \|f_{t_0}\|_{L^\infty} M_{\beta_0}(f_0)\, e^{(1-\beta_\ast)\frac{R^2}2} \\
& \leq C_{\beta_0,\beta_\ast}\, M_{\beta_0}(f_0)^2\, e^{(1-\beta_\ast)\frac{R^2}2},
\end{align*}
or equivalently, if $h_t = f_t^{\leq R}/f_\infty$,
\begeq\label{intfinf}
\|h_{t_0}\|_{L^2(\mu)} \leq C_{\beta_0,\beta_\ast}\, M_{\beta_0}(f_0)\, e^{(1-\beta_\ast)\frac{R^2}4},
\endeq
where as in the sequel, $C_{\beta_0}$ stands for any constant depending only on $\beta_0$ and $M$.
On the other hand,
\begeq\label{intfsup}
\| f_{t_0}^{>R}\|_{L^1} \leq M_{\beta_\ast}(f_0)\, e^{-\beta_\ast \frac{R^2}{2}}.
\endeq
In the sequel I shall abbreviate $M_{\beta_0}(f_0)$ into just $M_{\beta_0}$.
Let
\[ m_{t_\ast}(R) = \iint f_{t_\ast}^{\leq R} = 1 - O(M_{\beta_\ast}\,  e^{-\beta_\ast R^2/2}),\]
then for $t\geq t_0$,
\begin{align*}
\|f_t- f_\infty\|_{L^1} & 
\leq \Bigl\| e^{-(t-t_\ast){\cal L}} (f_{t_\ast}^{\leq R}) - m(R) f_\infty \Bigr\|_{L^1} 
+ \|e^{-(t-t_\ast){\cal L}} f^{>R}_{t_\ast}\|_{L^1} + (1-m(R))\\
& = \bigl\| e^{-(t-t_\ast)L} h_{t_\ast} - \<h_{t_\ast}\> \|_{L^1(\mu)} + C_{\beta_0,\beta_\ast}\, M_{\beta_0}\, e^{-\beta_\ast R^2/2}\\
& \leq \bigl\| e^{-(t-t_\ast)L} h_{t_\ast} - \<h_{t_\ast}\> \|_{L^2(\mu)} + C_{\beta_0,\beta_\ast}\, M_{\beta_0}\,  e^{-\beta_\ast R^2/2} \\
& \leq C\, e^{-\lambda (t-t_\ast)} \|h_{t_\ast}\|_{L^2(\mu)} + C_{\beta_\ast}\, M_{\beta_\ast}\,  e^{-\beta_\ast R^2/2}  \\
& \leq C_{\beta_0,\beta_\ast}\, e^{\lambda t_\ast}\, M_{\beta_0}(f_0)\, \bigl[ e^{(1-\beta_\ast)\frac{R^2}4} e^{-\lambda t} + e^{-\beta_\ast R^2/2} \bigr].
\end{align*}
Now choose $t_\ast\geq 1/2$ such that $\beta_\ast \geq \max (\beta_0,1/2)$. Time $t_\ast$ only depends on $\beta_0$, so all in all
\[ \|f_t- f_\infty\|_{L^1} \leq C_{\beta_0} \, M_{\beta_0} \bigl[ e^{(1-\beta_\ast)\frac{R^2}4} e^{-\lambda t} + e^{-\beta_\ast R^2/2} \bigr].\]
To optimise, choose
\[ \lambda t = (1+\beta_\ast)\frac{R^2}4; \]
then
\[ \|f_t- f_\infty\|_{L^1} \leq C_{\beta_0} \, M_{\beta_0} e^{ - 2\frac{\beta_\ast}{1+\beta_\ast} \lambda t}.\]
Since $\beta_\ast\geq 1/2$ that estimate yields
\[ \|f_t- f_\infty\|_{L^1} \leq C_{\beta_0} \, M_{\beta_0} e^{ - \frac23 \lambda t},\]
concluding the proof of the $L^1$ exponential equilibration. By interpolation with the square exponential bound, exponential convergence also holds in all weighted $L^1_\kappa$ spaces, with arbitrarily small degradation of the convergence rate if high enough moments are used. From the uniform regularity estimates of Theorem \ref{thmregul} and the Nash-type interpolation inequality of Theorem \ref{propnashinterp}, exponential convergence holds in all weighted Sobolev spaces, at the price of an arbitrarily small degradation of the convergence rate. Finally, from Sobolev inequality (Proposition \ref{propsobemb}) this also implies pointwise results. The proof is complete.
\end{proof}

\begin{Rk} With respect to \eqref{rkMTconv} the estimate relies on a more demanding square exponential estimate on the initial distribution, but may provide a better rate; it would be good to proceed to a precise comparison. In the next section another strategy will be explored, which also provides a decent exponential rate, without needing the square exponential estimate.
\end{Rk}

\bibnotes

In my initial work with Debbasch and Ollivier \cite{DOV:preprint} we only proved $O(t^{-\infty})$ convergence by using the nonlinear method from my memoir \cite{vill:hypoco}. The refinement to exponential convergence, Theorem \ref{L1exp}, was proven for these notes, making better use of our estimates.

\section{$L^1$ convergence through Fisher information hypocoercivity} \label{secfisher}

This section explores a more direct approach to the exponential convergence in $L^1$. It will not only improve the generality of the $L^1$ convergence, but also complete the picture in relation to other key issues in kinetic theory and diffusion processes, as well as fundamental thermodynamic concepts.

Working with equilibration for densities, a natural strategy is to resort to the entropy identity, or $H$-Theorem, or second law of thermodynamics, or decrease of the free energy, which for the kinetic Fokker--Planck model reads
\begeq\label{freedecay}
\frac{d}{dt} \left( \iint_{\TM} f \log f\,dv\,dx + \iint_{\TM} f \, \frac{|v|^2}2 \,dx\,dv\right) = - \iint_{\TM} |\nabla_V \log f + v|^2 \,f\,dx\,dv.
\endeq
(The free energy is the kinetic energy minus the entropy, the equilibrium temperature being set to unity.)
Writing $h= d(f\,dx\,dv)/d\mu$, so that $\pa_t h + Lh=0$, $L=\xi-\Delta_V^\mu=\xi-\Delta_V+v\cdot\nabla_V$, identity \eqref{freedecay} can be recast as
\begeq\label{ddthlogh}
\frac{d}{dt} \iint h\log h\,d\mu = - \iint \frac{|\nabla_Vh|^2}{h}\,d\mu
\endeq
which can be seen as a consequence of the classical identity
\[ L(h\log h) - (\log h +1) Lh = \frac{|\nabla_Vh|^2}{h}. \]
The functional appearing in the right-hand side of \eqref{ddthlogh} is only one fraction of the Fisher information \eqref{FI}, namely the one depending on vertical variations.

It is useful to note that, by chain rule,
\begeq\label{notefisher}
\frac{|\nabla_Vh|^2}{h} = 4 \bigl|\nabla_V\sqrt{h}\bigr|^2 = h |\nabla_V \log h|^2.
\endeq

Of course \eqref{freedecay} in itself does not lead to quantifiable decay because only the velocity (vertical) gradient appears on the right hand side. In the spirit of Sections~\ref{secreg} and~\ref{sechypoco}, let us define
\begeq\label{Hcal}
{\cal H} (h\mu) = \iint h\log h\,d\mu + a \iint \frac{|\nabla_Vh|^2}{h}\,d\mu + 2b \iint \frac{\<\nabla_V h,\nabla_H h\>}{h}\,d\mu 
+ c \iint \frac{|\nabla_H h|^2}{h}\,d\mu.
\endeq
where $a,b,c>0$ will be chosen later on. The goal is to show that this can be done so that
\[ \frac{d{\cal H}}{dt} \leq -2\kappa\,{\cal H}\]
for some $\kappa>0$ which can be interpreted as an infinitesimal rate of exponential convergence. (The nonessential factor~2 here is because ${\cal H}$, like Boltzmann's $H$ function, controls a squared distance to equilibrium, as shown by Pinsker's inequality \eqref{CKP}.) The functional \eqref{Hcal} is a (twisted) combination of Boltzmann and Fisher informations, which quantify respectively the rarity of the distribution function and the difficulty for an observer to evaluate it; so it does have some meaning from the point of view of statistical physics.

With respect to the $L^2$ computations of Section \ref{sechypoco}, there are two extra difficulties. The first one is the $h\log h$ nonlinearity, leading to much more subtle computation than the quadratic functionals. However there is a long tradition of handling such computations, going back at least to Henry P. McKean in the sixties, and famously used by Dominique Bakry and Michel \'Emery in the eighties for their ``$\Gamma_2$ calculus'' in the theory of logarithmic Sobolev inequalities. Inspired by those works, Lemmas \ref{lemlog1} and \ref{lemlog2} below will show that the terms arising in the time-derivative of \eqref{Hcal} along the kinetic Fokker--Planck equation are, algebraically speaking, {\em exactly the same} as those arising in the time-derivative of the quadratic functional \eqref{2parentheses}. More precisely, for any bilinear term $\int B(h,h)\,d\mu$ occurring there, there will be a corresponding term $\int B(\log h,\log h)\, h\,d\mu$ here. Arriving painlessly at that nontrivial conclusion will require to get to the core of the corresponding nonlinear calculus.

The second difficulty is that there is no logarithmic analogue of Proposition \ref{propgradmom}, which was crucially used in Section \ref{sechypoco} to ``close the estimates''. Even when there is just the velocity variable, there is a gap here, because of the failure of the following tentative inequality for positive functions $h$ on $\R^n$:
\begeq\label{ILS2}
\int_{\R^n} |v|^2 \, |\nabla_v\log h|^2 \, h\,d\mu 
\leq C \int_{\R^n} \bigl( |\nabla_v \log h|^2 + |\nabla_v^2 \log h|^2 \bigr)\, h\,d\mu.
\endeq
(To see that \eqref{ILS2} fails, choose $h(v) = \exp (a\cdot v)$; then the right hand side grows like $|a|^4$ as $|a|\to\infty$, and the left hand side like $|a|^2$.)

I don't know if \eqref{ILS2} can be ensured under a moment (or exponential moment) condition on $h$; even that would not directly solve the problem because there is also need to handle horizontal derivation. In any case, I will show that the second difficulty is solved as soon as the density of $f$ satisfies the Gaussian tail Poincar\'e inequality. The proof of this estimate, which is of interest in itself, is postponed to a complementary study. But already note that, by Proposition \ref{2sideexp} the Gaussian tail Poincar\'e inequality can be true only if $f_0$ satisfies a square-exponential bound, and that has to involve a regularisation argument as well.

To summarise, the same fundamental ingredients appear in both strategies, but woven differently. In the argument of Section \ref{secL1conv}, square exponential bounds were used together with regularisation in Sobolev spaces, to approximate the $L^1$ problem by the $L^2$ problem, and take advantage of the hypocoercivity in $L^2$ and $H^1$, which itself made strong use of the Gaussian tail Poincar\'e for $\mu$. In the scheme now considered, square exponential bounds lead to the Gaussian tail Poincar\'e estimate for the time-dependent density, and this in turn allows to prove hypocoercivity in the sense of entropy and Fisher information.

Now there is also a third approach which bypasses that Gaussian-type control and still does grasp the exponential rate of convergence for the entropy. The price to pay will be a kinetic taming of the gradients in the Fisher information:
\begin{multline}\label{tildeHcal}
\tilde{\cal H} (h) = \iint_{\TM} h\log h\,d\mu + a \iint_{\TM} \frac{|\nabla_Vh|^2}{h}\,\frac{d\mu}{(1+|v|^2)^{r/2}} + 2b \iint_{\TM} \frac{\<\nabla_V h,\nabla_H h\>}{h}\,
\frac{d\mu}{(1+|v|^2)^r} \\
+ c \iint_{\TM} \frac{|\nabla_H h|^2}{h}\,\frac{d\mu}{(1+|v|^2)^{3r/2}}.
\end{multline}
It turns out that when $r\geq 1$ one can prove a differential inequality of linear type for $\tilde{\cal H}$ (exponential convergence) even without the Gaussian tail Poincar\'e inequality. What makes this possible is, on the one hand, the possibility to use taming by negative powers of $|v|$ to cut the quadratic weights arising from the commutator $[\nabla_H,\xi]$; and on the other hand, the inequality of logarithmic Sobolev type, Proposition \ref{propcoertame}(iii), which allows to control the entropy even when horizontal gradients are tamed. Eventually the conclusion will be the exponential convergence of $h$ in entropy sense.
\sm

Let us go on with this program. Write
\[ \left.\frac{d}{dt} \right|_O J(h) = \left.\frac{d}{dt}\right|_{t=0} J(e^{tO}h). \]

\begin{Lem}\label{lemlog1}
If $D,D'$ are any two smooth derivation operators on $\TM$, then
\begeq\label{eqlemlog1}
-\left. \frac{d}{dt} \right|_{\xi} \iint  h \< D\log h, D'\log h\> \,d\mu = 
\iint h \< D\log h, [D',\xi]\log h \>\,d\mu + \iint h \< [D,\xi]\log h, D'\log h\>\,d\mu.
\endeq
More generally, if $a=a(|v|)$ then
\begeq\label{eqlemlog1a}
-\left. \frac{d}{dt} \right|_{\xi} \iint  h \< D\log h, D'\log h\> \,a\,d\mu = 
\iint h \< D\log h, [D',\xi]\log h \>\,a\,d\mu + \iint h \< [D,\xi]\log h, D'\log h\>\,a\,d\mu.
\endeq
\end{Lem}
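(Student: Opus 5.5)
The plan is to exploit two facts. First, $\xi$ is a derivation that commutes with composition by smooth functions. Second, $\xi$ is skew-symmetric against both $\mu$ and $a(|v|)\mu$; this follows from the Corollary on conservativity of the geodesic derivation, since $\xi a=0$ by Proposition~\ref{basicinv}(ii). The meaning of $\left.\frac{d}{dt}\right|_\xi$ is the derivative of the functional along the flow $\pa_t h = -\xi h$, which is the transport part of $\pa_t h + Lh = 0$. With the sign convention $e^{tO}$, the left-hand side of \eqref{eqlemlog1a} is the first variation of $J(h)=\iint h\<D\log h,D'\log h\>\,a\,d\mu$ in the direction $\dot h=-\xi h$, taken with a minus sign in front of $\frac{d}{dt}$. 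Since $\xi$ is real, I first compute the variation with $\dot h = \xi h$ and then keep track of the sign.

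Along $\dot h = \xi h$, the chain rule gives $\dot{(\log h)} = \xi h/h = \xi\log h$. Writing $u=\log h$, the variation of $J$ has three pieces:
\[
\iint (\xi h)\<Du,D'u\>\,a\,d\mu + \iint h\<D\xi u, D'u\>\,a\,d\mu + \iint h\<Du, D'\xi u\>\,a\,d\mu .
\]
In the last two terms I write $D\xi u = \xi Du + [D,\xi]u$, and similarly for $D'$. The pieces containing $\xi Du$ and $\xi D'u$ combine, by the derivation property of $\xi$ acting covariantly on the pairing (the metric is $\nabla_H$-parallel, so $\xi\<X,Y\>=\<\xi X,Y\>+\<X,\xi Y\>$), into $\iint h\,\xi\<Du,D'u\>\,a\,d\mu$. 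Together with the first term this gives $\iint \xi\bigl(h\<Du,D'u\>\bigr)\,a\,d\mu$. This integral vanishes: apply \eqref{iintdivH} with $\xi = v\cdot\nabla_H$, $\nabla_H\cdot v = 0$, and a radial weight $a(|v|)e^{-|v|^2/2}$, or equivalently apply the conservativity corollary to the single function $F = h\<Du,D'u\>$, using $\iint (\xi F)\,a\,d\mu=0$. What survives is exactly $\iint h\<Du,[D',\xi]u\>a\,d\mu+\iint h\<[D,\xi]u,D'u\>a\,d\mu$. Fixing the overall sign according to the convention $e^{tO}$ with $O=-\xi$ (the flow of the equation) then yields \eqref{eqlemlog1a}. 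Formula \eqref{eqlemlog1} is the special case $a\equiv 1$.

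The only delicate point is justifying the manipulations. One needs the identity $\xi\<X,Y\> = \<\xi X,Y\>+\<X,\xi Y\>$ for tensors of the type produced by $D,D'$, together with the vanishing of the boundary term. I expect the main obstacle to be this bookkeeping: checking that $\xi$ acts covariantly on the tensors $D\log h$, that the commutators $[D,\xi]$ are interpreted accordingly, and that the weights and integrability allow the use of Proposition~\ref{propdiv}. In practice I would assume $h$ smooth, positive, and rapidly decaying, and argue by density.
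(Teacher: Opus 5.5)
Your proof is correct and follows essentially the paper's route. The paper first rewrites $h\<D\log h,D'\log h\>=4\<D\sqrt h,D'\sqrt h\>$, so the functional is quadratic in $\sqrt h$ and no $(\xi h)$ term appears; you instead work with $\log h$ directly and absorb that term into $\xi\bigl(h\<D\log h,D'\log h\>\bigr)$. Both arguments then commute $D,D'$ past $\xi$, kill the total $\xi$-derivative by conservativity of $\xi$ against radial weights, and use $\xi a=0$ for the weighted version (the paper by replacing $D$ with $aD$).

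Your computation also makes the sign explicit: the derivative along $e^{t\xi}$ equals the right-hand side. So the displayed minus sign holds under your reading of the subscript as the transport flow $\pa_t h=-\xi h$, which is how the lemma is used in the proof of Theorem~\ref{thmfisherdecay}. With the convention $e^{tO}$, $O=\xi$, stated just before the lemma, the identity holds without the minus sign.
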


\begin{proof}[Proof of Lemma \ref{lemlog1}]
The proof of \eqref{eqlemlog1} uses only the derivation property for $\xi$, $D$ and $D'$. By chain rule,
\begeq\label{chainruleI}
\iint h \<D\log h, D'\log h\> \,d\mu = 4 \iint \<D\sqrt{h},D'\sqrt{h}\>\,d\mu
\endeq
and 
\begeq\label{evolsqrt} \pa_t h + \xi h =0 \quad \Longrightarrow \quad \pa_t\sqrt{h} + \xi\sqrt{h} =0. \endeq
So the time-derivative of \eqref{chainruleI} under \eqref{evolsqrt} is, with $\mu$ as reference measure,
\begin{multline*}
4 \iint \<D\xi\sqrt{h}, D'\sqrt{h} \>  + 4 \iint \<D\sqrt{h}, D'\xi\sqrt{h}\> 
= 4 \left( \iint \<[D,\xi]\sqrt{h}, D'\sqrt{h}\> + \iint \<D\sqrt{h},[D',\xi]\sqrt{h}\> \right) \\
+ 4 \left( \iint \<\xi D\sqrt{h}, D'\sqrt{h} \> + \iint \<D\sqrt{h},\xi D'\sqrt{h}\> \right),
\end{multline*}
which using again the derivation rule is the same as
\begin{multline*}
4 \iint \<D\xi\sqrt{h}, D'\sqrt{h} \>  + 4 \iint \<D\sqrt{h}, D'\xi\sqrt{h}\> 
= 4 \left( \iint \<[D,\xi]\sqrt{h}, D'\sqrt{h}\> + \iint \<D\sqrt{h},[D',\xi]\sqrt{h}\> \right) \\
+ 4 \iint \xi \<D\sqrt{h}, D'\sqrt{h} \>,
\end{multline*}
and the last integral vanishes by divergence formula. The remaining part is the same as the right hand side of \eqref{eqlemlog1} by chain-rule again.

To generalise to \eqref{eqlemlog2}, just replace $D$ by $aD$, which is still a derivation, and note that $[aD,\xi] = a[D,\xi]$ because $a\xi = \xi a$.
\end{proof}

\begin{Lem}\label{lemlog2}
If $D,D'$ are two smooth derivation operators on $\TM$, commuting with $\nabla_V$, then 
\begin{multline}\label{eqlemlog2}
\left.\frac{d}{dt} \right|_{\Delta_V^\mu} \iint h \<D\log h, D'\log h\>\,d\mu 
= - 2 \iint h \<\nabla_V D \log h, \nabla_V D \log h \>\, d\mu\\
- \left( \iint h \< [\Delta_V^\mu, D] \log h, D'\log h\> \, d\mu
+ \iint h \< D\log h, [\Delta_V^\mu, D']\log h\> \,d\mu\right).
\end{multline}
More generally, if $a=a(x,v)$ is a smooth function, then
\begin{multline}\label{eqlemlog2a}
\left.\frac{d}{dt} \right|_{\Delta_V^\mu} \iint h \<D\log h, D'\log h\>\,a\,d\mu 
= - 2 \iint h \<\nabla_V D \log h, \nabla_V D \log h \>\, a\, d\mu\\
- \left( \iint h \< [\Delta_V^\mu, D] \log h, D'\log h\> \, a\, d\mu
+ \iint h \< D\log h, [\Delta_V^\mu, D']\log h\> \,a\, d\mu\right)\\
+ \iint h \<D\log h, D'\log h\>\,(\Delta_V^\mu a)\,d\mu.
\end{multline}
\end{Lem}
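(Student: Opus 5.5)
We will follow the proof of Lemma \ref{lemlog1}: rewrite the functional through $\sqrt{h}$, let the operator act on $\sqrt{h}$, and convert back by the chain rule. The new ingredient compared with Lemma \ref{lemlog1} is that $\Delta_V^\mu$ is not a derivation. It obeys a $\Gamma$ formula instead, and we will track the extra term it produces.

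\textbf{Step 1: evolution of $u = \log h$ and of $\sqrt h$.} Along $\pa_t h = \Delta_V^\mu h$ we have
\[
\pa_t \log h = \Delta_V^\mu \log h + |\nabla_V\log h|^2 .
\]
Write $\Phi = \iint h\,\<D\log h, D'\log h\>\,a\,d\mu$. Then
\[
\frac{d\Phi}{dt} = \iint (\Delta_V^\mu h)\,\<Du,D'u\>\,a\,d\mu + \iint h\,\bigl(\<D\pa_t u, D'u\> + \<Du, D'\pa_t u\>\bigr)\,a\,d\mu .
\]

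\textbf{Step 2: integrate the first term by parts.} Since $\Delta_V^\mu$ is symmetric in $L^2(\mu)$ (Proposition \ref{propdiv}(v)), the first term equals $\iint h\,\Delta_V^\mu\bigl(a\<Du,D'u\>\bigr)\,d\mu$. We expand this with the Leibniz rule for the second-order operator $\Delta_V^\mu$:
\[
\Delta_V^\mu(aQ) = a\,\Delta_V^\mu Q + Q\,\Delta_V^\mu a + 2\<\nabla_V a, \nabla_V Q\>,
\qquad Q = \<Du, D'u\> .
\]
We expand $\Delta_V^\mu Q$ once more:
\[
\Delta_V^\mu Q = \<\Delta_V^\mu Du, D'u\> + \<Du, \Delta_V^\mu D'u\> + 2\<\nabla_V Du, \nabla_V D'u\> .
\]
Here we use that $\nabla_V$ is metric-compatible in the vertical direction: the metric depends only on $x$, so vertical derivatives of $g$ vanish.

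\textbf{Step 3: the second term.} In the second term we write $D\Delta_V^\mu u = \Delta_V^\mu Du - [\Delta_V^\mu, D]u$. For the quadratic piece we use $D|\nabla_V u|^2 = 2\<\nabla_V u, \nabla_V Du\>$, which holds because $D$ is a derivation commuting with $\nabla_V$.

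\textbf{Step 4: collect.} The terms $\iint h\,\<\Delta_V^\mu Du, D'u\>a$ and their symmetric counterparts appear with opposite signs and should cancel. What remains is the cross terms $2\iint h\,a\,\<\nabla_V u, \nabla_V Du\>$ (and the $D'$ analogue), together with the term $2\iint h\,\<\nabla_V a, \nabla_V Q\>$. We dispose of these using $h\nabla_V u = \nabla_V h$ and one more vertical integration by parts against $\mu$. The aim is to show they combine with the $\Gamma$ term to yield exactly
\[
-2\iint h\,\<\nabla_V Du, \nabla_V D'u\>\,a\,d\mu + \iint h\,Q\,\Delta_V^\mu a\,d\mu .
\]
We expect the $\nabla_V$ structure of the statement to carry $D$ and $D'$, which is the natural reading of the displayed $\<\nabla_V D\log h, \nabla_V D\log h\>$.

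\textbf{Main obstacle.} The delicate step is the sign and cancellation bookkeeping in Step 4. In particular, the $\nabla_V a$ cross terms must cancel, or be absorbed, rather than survive. The plan is to first carry out the case $a \equiv 1$, which reproduces the Bakry--\'Emery $\Gamma_2$-type identity, and check it against the quadratic computation \eqref{ddtmix}. The weight $a$ is then added as a perturbation, and the leftover terms are identified as $\iint hQ\,\Delta_V^\mu a$.

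An alternative route, useful as a cross-check, is to work entirely with $\sqrt{h}$ as in Lemma \ref{lemlog1}. There
\[
\pa_t\sqrt h = \Delta_V^\mu\sqrt h + \frac{|\nabla_V\sqrt h|^2}{\sqrt h},
\]
and the same structure appears.
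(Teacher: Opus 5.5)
The gap is in Step 4, which is where the whole content of the lemma lies. The cancellation you announce there does not occur. After Step 2, where you moved $\Delta_V^\mu$ from $h$ onto $aQ$ with $Q=\langle Du,D'u\rangle$, the term $\iint h\,a\,\langle \Delta_V^\mu Du, D'u\rangle\,d\mu$ appears with coefficient $+1$, coming from $a\,\Delta_V^\mu Q$. Step 3 produces it again with coefficient $+1$, coming from $D\Delta_V^\mu u=\Delta_V^\mu Du-[\Delta_V^\mu,D]u$. The same holds for the $D'$ counterpart, so these terms add up instead of cancelling. Step 2 also yields $+2\iint h\,a\,\langle\nabla_V Du,\nabla_V D'u\rangle\,d\mu$, which has the wrong sign. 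Both defects are repaired only by the cross term of Step 3, once you integrate it by parts:
\[
2\iint h\,a\,\langle\nabla_V u,\nabla_V Q\rangle\,d\mu = 2\iint a\,\langle\nabla_V h,\nabla_V Q\rangle\,d\mu = -2\iint h\,a\,\Delta_V^\mu Q\,d\mu-2\iint h\,\langle\nabla_V a,\nabla_V Q\rangle\,d\mu .
\]
Expand $\Delta_V^\mu Q$ once more. This cross term then does three things:
\begin{itemize}
\item it contributes $-2$ to the second-order terms, cancelling the $+1+1$;
\item it contributes $-4$ to the $\langle\nabla_V Du,\nabla_V D'u\rangle$ term, turning $+2$ into $-2$;
\item it cancels the $2\iint h\langle\nabla_V a,\nabla_V Q\rangle\,d\mu$ from your Leibniz expansion.
\end{itemize}
The term $\iint hQ\,\Delta_V^\mu a\,d\mu$ is inherited unchanged from Step 2, and the commutator terms come unchanged from Step 3. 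If you instead cancel the second-order terms as you propose and then process the cross terms, you are left with a spurious $-2\iint h\,a\,\bigl(\langle\Delta_V^\mu Du,D'u\rangle+\langle Du,\Delta_V^\mu D'u\rangle\bigr)\,d\mu$.

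The ``opposite signs'' cancellation you have in mind is what happens if you do not integrate by parts first, which is the paper's route. Pointwise, $(\Delta_V^\mu h)Q=\Delta_V^\mu(hQ)-h\,\Delta_V^\mu Q-2\langle\nabla_V h,\nabla_V Q\rangle$. The quadratic part of $\pa_t\log h$ contributes exactly $+2\langle\nabla_V h,\nabla_V Q\rangle$, so
\[
\pa_t(hQ)=\Delta_V^\mu(hQ)-2h\,\langle\nabla_V Du,\nabla_V D'u\rangle-h\bigl(\langle[\Delta_V^\mu,D]u,D'u\rangle+\langle Du,[\Delta_V^\mu,D']u\rangle\bigr).
\]
Integrate this against $a\,d\mu$ and use the symmetry of $\Delta_V^\mu$ only once, in $\iint\Delta_V^\mu(hQ)\,a\,d\mu=\iint hQ\,\Delta_V^\mu a\,d\mu$. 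This gives both identities at once, with no $\nabla_V a$ terms ever appearing, and the $\sqrt h$ reformulation is not needed. One further point on Step 3: the identity $D|\nabla_V u|^2=2\langle\nabla_V u,\nabla_V Du\rangle$ requires $D$ to preserve the metric, as $\nabla_H$ and $\nabla_V$ do, not merely to commute with $\nabla_V$.
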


\begin{Rks} \begin{itemize}
\item[(i)] The first term in the right hand side of \eqref{eqlemlog2} is, more explicitly,
\[ -2 \iint h\, g^{ij}g^{k\ell} (\nabla_V)_i D_k \log h (\nabla_V)_j D_\ell \log h \, d\mu.\]

\item[(ii)] This lemma will be applied with $D,D'$ being equal either to $(\nabla_V)_i$ or to $(\nabla_H)_i$, for any $i\in \{1,\ldots, n\}$. More general formulas, involving extra commutators, exist when $D,D'$ fail to commute with $\nabla_V$.

\item[(iii)] Later on, \eqref{eqlemlog2a} will be used when $a$ is a function of $|v|$, in which case (with a slight abuse of notation)
\[ \Delta_V^\mu a = a''(|v|) + \left(\frac{(n-1)}{|v|} - |v| \right) a'(|v|).\]
\end{itemize}
\end{Rks}

Before proving Lemma \ref{lemlog2} I shall recast the core of the computation in the simplified setting when there is only the velocity variable and instead of the derivations $C,C'$ there is the usual gradient. So $\nabla_V$ is just $\nabla_v$, $\Delta_V =\Delta_v$, and $\Delta_v^\mu = \Delta_v - v\cdot\nabla_v$. The goal is the following {\bf second-order entropic identity} (formula for the second time-derivative of the free energy along the Fokker--Planck equation):
\begeq\label{BEid}
\left. \frac{d}{dt}\right|_{\Delta_v^\mu} \int \frac{|\nabla_vh|^2}{h}\,d\mu 
= - 2 \left( \int h |\nabla_v^2 \log h|^2\,d\mu + \int h |\nabla_v\log h|^2\,d\mu \right).
\endeq
This specific identity has been proven explicitly or implicitly several times, including by McKean, Stam, Bakry--\'Emery, Toscani.
While there are several ways to arrive at \eqref{BEid}, the organisation of computations below is the one which, I think, will most fluently lead to the generalisation of Lemma \ref{lemlog2}.

\begin{proof}[Proof of identity \eqref{BEid}]
It is convenient to systematically commute nonlinearities and operators. For a start, let
\begeq\label{Gamma0} \Gamma_0(f,f) = \frac{f^2}{2} \endeq
\begin{align} \label{Gamma1} 
\Gamma_1(f,f) & = \frac12 \bigl( \Delta_v^\mu f^2 - 2 f \Delta_v^\mu f\bigr) \\
\nonumber & = |\nabla_v f|^2
\end{align}
(By design, the derivation part of $\Delta_v^\mu$ disappears in the commutation.)
\begin{align} \label{Gamma2}
\Gamma_2(f,f) &  = \frac12 \Bigl[ \Delta_v^\mu \Gamma_1(f,f) - 2 \Gamma_1 (f,\Delta_v^\mu f) \Bigr] \\
\nonumber & = |\nabla^2_v f|^2 + |\nabla_vf|^2.
\end{align}
To see \eqref{Gamma2}, first note that 
\[ \Delta \frac{|\nabla f|^2}{2} - \nabla f\cdot\nabla \Delta f = |\nabla^2 f|^2\]
(Bochner's identity in flat space),
and further, using $[\nabla_v,v\cdot\nabla_v] = \nabla_v$, that
\begin{align*}
- v\cdot\nabla_v \frac{|\nabla_v f|^2}{2} + \nabla_v f\cdot \nabla_v(v\cdot\nabla_v f) 
& = -v\cdot\nabla_v \frac{|\nabla_vf|^2}{2} + \nabla_v f \cdot\bigl(v\cdot\nabla_v (\nabla_vf)\bigr) + |\nabla_v f|^2 \\
& = |\nabla_v f|^2.
\end{align*}

Now turn to \eqref{BEid}. By analogy, define
\[ I(h) = \int \frac{|\nabla_v h|^2}{h}\, d\mu,\]
\[ \Gamma_{I,0} (h) = \frac{|\nabla_v h|^2}{h} = h |\nabla_v \log h|^2,\]
and
\begeq\label{GammaI1}
\Gamma_{I,1}(h) = \Delta_v^\mu (h |\nabla_v \log h|^2) - \Bigl( \Delta_v^\mu h |\nabla_v \log h|^2 + 2 h \nabla_v \log h \cdot \nabla_v \frac{\Delta_v^\mu h}{h} \Bigr),
\endeq
so that
\[ \int_{\R^n} \Gamma_{I,0} \,d\mu = I(h), \qquad \int_{\R^n} \Gamma_{I,1}\,d\mu = - \left.\frac{d}{dt}\right|_{\Delta_v^\mu} I(h), \]
and the goal is to evaluate $\Gamma_{I,1}$.
To rewrite the term involving $\Delta f/ f$ in terms of $\log f$, there are the identities
\begeq\label{usefulDff}
\nabla_v f = f \nabla_v \log f,\qquad \frac{\Delta_v^\mu h}{h} = \Delta_v^\mu \log h + |\nabla_v \log h|^2.
\endeq
Inserting this into \eqref{GammaI1},
\begin{align*} 
\Gamma_{I,1}(h) & = h \Delta_v^\mu |\nabla_v \log h|^2 + 2 h\cdot \nabla_v \log h \cdot \nabla_v |\nabla_v\log h|^2
- 2 h \nabla_v \log h\cdot\nabla_v (\Delta_v \log h + |\nabla_v\log h|^2) \\
& = 2 h \Bigl( \Delta_v \frac{|\nabla_v \log h|^2}2 - \nabla_v \log h \cdot\nabla_v \Delta_v\log f\Bigr) \\
& = 2 h \, \Gamma_2(\log h,\log h)\\
& = 2 h \bigl( |\nabla_v \log h|^2 + |\nabla_v^2 \log h|^2\bigr).
\end{align*}
Integration on $\R^n$ yields \eqref{BEid}.
\end{proof}

Now comes the adaptation to the more general situation of interest for this section.

\begin{proof}[Proof of Lemma \ref{lemlog2}]
Let 
\[ J(h) = \iint h \<D\log h, D'\log h\>\,d\mu,\]
\begeq \Gamma_{J,0} (h) = h \< D\log h, D'\log h\>, \endeq
\begin{multline}
\Gamma_{J,1}(h) = \Delta_V^\mu \bigl( h \<D\log h, D'\log h\> \bigr) \\ - \Bigl[ (\Delta_V^\mu h) \<D\log h, D'\log h\> 
+ h \left\< D \left( \frac{\Delta_V^\mu h}{h}\right), D'\log h\right\> 
+ h \left\< D\log h, D'\left(\frac{\Delta_V^\mu h}{h} \right) \right\> \Bigr],
\end{multline}
so that
\begeq\label{JJ} J(h) = \iint \Gamma_{J,0}(h)\,d\mu  \qquad \left.\frac{d}{dt} \right|_{\Delta_V^\mu} J(h) = - \iint \Gamma_{J,1}(h)\,d\mu.
\endeq
Using the basic rules of calculus,
\begin{align*}
\Gamma_{J,1} (h) 
& = 2 \nabla_V h\cdot \nabla_V \<D\log h, D'\log h\> + h \Delta_V^\mu \<D\log h, D'\log h\> \\
& \qquad - h \bigl\< D (\Delta_V^\mu \log h + |\nabla_V \log h|^2 ), D'\log h\bigr\>
- h \bigl\< D\log h, D'(\Delta_V^\mu \log h + |\nabla_V\log h|^2)\bigr\> \\
& = 2 h \biggl[ \bigl\< \nabla_V\log h, (\nabla_V D\log h) D'\log h\bigr\> + \bigl\< \nabla_V \log h, (\nabla_V D'\log h) D\log h\bigr\>  \\
& \qquad 
- \bigl\< (D\nabla_V\log h)\nabla_V\log h, D'\log h\bigr\> - \bigl\< (D'\nabla_V \log h) \nabla_V\log h, D\log h\bigr\> \biggr]\\
& \quad + h \Bigl[ \Delta_V^\mu \<D\log h, D'\log h\> - \< D\Delta_V^\mu \log h, D'\log h\> - \<D\log h, D'\Delta_V^\mu \log h\> \Bigr].
\end{align*}
Here the first four terms inside the square brackets cancel out by symmetry. For instance,
\begin{align*} & \bigl\< \nabla_V\log h, (\nabla_V D\log h) D'\log h\bigr\> - \bigl\< (D\nabla_V\log h)\nabla_V\log h, D'\log h\bigr\>\\
& \qquad = g^{ij} (\nabla_V)_i \log h, (\nabla_V)_i D_j \log h\, D'_j \log h - g^{ij} D_j(\nabla_V)_i \log h (\nabla_V)_i \log h\, D'_j (\log h)\\
& \qquad =0.
\end{align*}
It remains only
\begin{align*}
\Gamma_{J,1}(h) 
& = h \Bigl[  \Delta_V^\mu \<D\log h, D'\log h\> - \< D\Delta_V^\mu \log h, D'\log h\> - \<D\log h, D'\Delta_V^\mu \log h\> \Bigr]\\
& = h \Bigl[ \Delta_V^\mu \<D\log h, D'\log h\> - \<\Delta_V^\mu D\log h, D'\log h\> - \<D\log h, \Delta_V^\mu D'\log h\>\Bigr] \\
& \qquad + h \Bigl( \bigl\< [\Delta_V^\mu ,D] , D'\log h \bigr\> + \bigl\< D\log h, [\Delta_V^\mu, D'] \log h\bigr\> \Bigr) \\
& = 2 h\,\Gamma_1 (D\log h, D'\log h) + h \Bigl( \bigl\< [\Delta_V^\mu ,D] , D'\log h \bigr\> + \bigl\< D\log h, [\Delta_V^\mu, D'] \log h\bigr\> \Bigr).
\end{align*}
Equation \eqref{eqlemlog2} follows upon integration.

Now for the generalisation to \eqref{eqlemlog2a}: Define
\begeq\label{Ja} J_a(h) = \iint h \<D\log h, D'\log h\>\,a\,d\mu, \endeq
so that \eqref{JJ} becomes
\begeq\label{JJa} J_a(h) = \iint \Gamma_{J,0}(h)\,a\,d\mu, \endeq
\begin{align*}
\left.\frac{d}{dt} \right|_{\Delta_V^\mu} J_a(h) & = - \int \bigl( \Gamma_{J,1}(h) - \Delta_V^\mu \Gamma_{J,0} \bigr) \,a\,d\mu\\
& = - \iint\Gamma_{J,1}(h)\,a\,d\mu + \iint \Gamma_{J,0} (\Delta_V^\mu a)\,d\mu\\
& = - \iint\Gamma_{J,1}(h)\,a\,d\mu + J_{\Delta_V^\mu a}(h).
\end{align*}
Then the proof is completed by using the previously computed expression for $\Gamma_{J,1}$.
\end{proof}

Now come the main results of this section.

\begin{Thm}[Differential exponential decay in distorted Fisher information] \label{thmfisherdecay}
Let $M$ be a compact manifold and let $f=f(x,v)\,dx\,dv$ be a probability density on $\TM$.
\sm

(i) If $f$ satisfies the Gaussian tail Poincar\'e inequality of Definition~\ref{defGTP}, then there are constants $a,b,c,\kappa>0$, depending only on $n$, on the constant $K$ in Definition \ref{defGTP}, on an upper bound on the curvature tensor and on the logarithmic Sobolev inequality satisfied by $M$ (Definition \ref{defLSI}), such that the functional ${\cal H}$ defined by \eqref{Hcal} satisfies
\begeq\label{ddtkappaH}
\left.\frac{d{\cal H}(f)}{dt}\right|_{-{\cal L}} \leq -2\kappa {\cal H}(f).
\endeq
Such is the case in particular if $f$ satisfies the strict log concavity condition \eqref{condlogconc}.
\sm

(ii) Alternatively, for any $r\geq 1$, if $f$ is such that $\rho (x) = \int_{T_xM} f(x,v)\,dv$ is bounded above and below by positive constants, then there are constants $a,b,c,\kappa>0$, depending only on $n$, $r$, on $\inf \rho$, $\sup\rho$, on an upper bound on the curvature tensor and on the Poincar\'e inequality satisfied by $M$ (Definition \ref{defpoinc}), such that the functional $\tilde{\cal H}$ defined by \eqref{tildeHcal} satisfies
\begeq\label{ddtkappatildeH}
\left.\frac{d\tilde{\cal H}(f)}{dt}\right|_{-{\cal L}} \leq -2\kappa \tilde{\cal H}(f).
\endeq
Such is the case in particular if $f$ satisfies a local positivity estimate and is bounded in $L^\infty_s$ for some $s>n$.
\end{Thm}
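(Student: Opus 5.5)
Write $f=h\mu$, so the evolution is $\pa_t h=-\xi h+\Delta_V^\mu h$. We split the time derivative of ${\cal H}$ (resp.\ $\tilde{\cal H}$) into its $\xi$-part and its $\Delta_V^\mu$-part. Both parts are computed with Lemmas \ref{lemlog1} and \ref{lemlog2}, taking $D,D'\in\{(\nabla_V)_i,(\nabla_H)_i\}$; both operators commute with $\nabla_V$ by Proposition \ref{propHV}(ii). The commutators we need are $[\nabla_V,\xi]=\nabla_H$, $[\nabla_H,\xi]=\Sigma(v,v)\nabla_V$ (Proposition \ref{propcomH}, acting on functions), $[\Delta_V^\mu,\nabla_V]=\nabla_V$ and $[\Delta_V^\mu,\nabla_H]=0$. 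With these, the derivative of ${\cal H}$ becomes the logarithmic transcription of the quadratic computation in the proof of Theorem \ref{thmhypocoL2}. Set $u=\log h$ and $d\nu=h\,d\mu=f$. Then each term $\iint B(h,h)\,d\mu$ of \eqref{ddthh} is replaced by $\iint B(u,u)\,d\nu$, and we obtain
\[
\frac{d{\cal H}}{dt}=-\|\nabla_V u\|^2_\nu-2a\bigl(\|\nabla_V^2u\|_\nu^2+\|\nabla_Vu\|_\nu^2\bigr)-2c\|\nabla_V\nabla_H u\|_\nu^2-2b\|\nabla_H u\|_\nu^2+R,
\]
where $R$ collects the cross terms: $a\<\nabla_Vu,\nabla_Hu\>_\nu$, $c\<\nabla_H u,\Sigma(v,v)\nabla_Vu\>_\nu$, $b\<\nabla_V u,\Sigma(v,v)\nabla_V u\>_\nu$ and $b\<\nabla_V^2u,\nabla_V\nabla_Hu\>_\nu$.

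\emph{Part (i).} The cross terms without velocity weights are absorbed by Young's inequality, once $a,b,c$ are chosen as in Lemma \ref{ll} with $a^2\ll b$ and $b^2\ll ac$. The terms carrying $\Sigma(v,v)$, which is quadratic in $v$, are where the Gaussian tail Poincar\'e inequality comes in. Apply Definition \ref{defGTP} to $\nu=f$ and to the tensors $\nabla_Vu$ and $|v|\nabla_Vu$, using Remark (iv) after Definition \ref{defGTP}. This gives
\[
\iint|v|^4|\nabla_Vu|^2\,d\nu\le C\bigl(\|\nabla_V^2u\|_\nu^2+\|\nabla_Vu\|^2_\nu\bigr),
\]
up to a lower-order commutator $|\nabla_V(|v|)|\le1$. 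Similarly $\||v|\nabla_Hu\|_\nu\le C(\|\nabla_V\nabla_Hu\|_\nu+\|\nabla_Hu\|_\nu)$. This is exactly the substitute for Proposition \ref{propgradmom} that the quadratic proof relied on, and it closes the estimate as
\[
\frac{d{\cal H}}{dt}\le-K\Bigl(\iint\frac{|\nabla_Vh|^2+|\nabla_Hh|^2}{h}\,d\mu+a\iint h|\nabla_V^2\log h|^2+c\iint h|\nabla_V\nabla_H\log h|^2\Bigr).
\]
We then use the log Sobolev inequality on $\TM$ (Proposition \ref{logsob}) to dominate $\iint h\log h$. Since $b^2<ac$, the Fisher-type terms of ${\cal H}$ are comparable to $\iint(|\nabla_Vh|^2+|\nabla_Hh|^2)/h$. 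Together these give $\frac{d{\cal H}}{dt}\le-2\kappa{\cal H}$. The special case in which $f$ satisfies the log-concavity condition \eqref{condlogconc} follows from Proposition \ref{GTPulc}.

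\emph{Part (ii).} We use the weights $a_0=\<v\>^{-r}$, $\<v\>^{-2r}$, $\<v\>^{-3r}$ in Lemmas \ref{lemlog1}--\ref{lemlog2}. Both lemmas are stated with weights, which is legitimate because $\nabla_H$ and $\xi$ annihilate radial functions. The weight derivative terms $\iint\Gamma_{J,0}\,\Delta_V^\mu(\<v\>^{-kr})$ are bounded by $C\iint\Gamma_{J,0}\<v\>^{-kr}$, since $|\Delta_V^\mu\<v\>^{-s}|\le C\<v\>^{-s}$. The dissipation terms now also carry weights; for instance the $c$-term gives $-2c\iint h|\nabla_V\nabla_Hu|^2\<v\>^{-3r}$. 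Consider the dangerous term $c\iint h\<\nabla_Hu,\Sigma(v,v)\nabla_Vu\>\<v\>^{-3r}$. By Cauchy--Schwarz it is at most $c(\iint h|\nabla_Hu|^2\<v\>^{-2r})^{1/2}(\iint h|\nabla_Vu|^2\<v\>^{4-4r})^{1/2}$, and for $r\ge1$ the last weight is bounded by $\<v\>^{0}$. So it is absorbed by the $b$-dissipation and the unweighted $\|\nabla_Vu\|^2_\nu$ (which carries coefficient $1$) provided $c^2\ll b$. Likewise the $b$-term $\<\nabla_Vu,\Sigma\nabla_Vu\>\<v\>^{-2r}$ is bounded by $b\|\nabla_Vu\|^2_\nu$ because $2-2r\le0$. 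The remaining cross terms are absorbed by Young's inequality as before. We then close with Proposition \ref{propcoertame}(iii), taking $a(|v|)=\<v\>^{-2r}$; this needs $\rho$ bounded above and below, which is the stated hypothesis. This controls $\iint h\log h$ by $\iint|\nabla_Vh|^2/h+\iint|\nabla_Hh|^2\<v\>^{-2r}/h$. The last sufficient condition in the statement follows from Proposition \ref{proplocalpos} together with the $L^\infty_s$ bound, which give the bounds on $\rho$.

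\emph{Main obstacle.} The main difficulty is the verification that the nonlinear derivative really reproduces the quadratic structure term by term. In particular, the mixed term $2b\iint h\<\nabla_Vu,\nabla_Hu\>$ must produce $-2b\|\nabla_Hu\|_\nu^2$ from $[\nabla_V,\xi]=\nabla_H$. In part (ii), the dissipations come with weights, and the weighted absorptions may fail to match. There we would first check the exponent bookkeeping $r,2r,3r$ against the scaling $1,3$ of the vertical and horizontal derivative weights, adjusting the exponents if necessary.
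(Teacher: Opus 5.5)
Your proposal follows the paper's proof essentially step by step. You split the derivative into its $\xi$- and $\Delta_V^\mu$-parts via Lemmas~\ref{lemlog1}--\ref{lemlog2} and arrive at the same expression for $d{\cal H}/dt$. In part (i) you use the Gaussian tail Poincar\'e inequality as the substitute for Proposition~\ref{propgradmom}, then close with Proposition~\ref{logsob}. In part (ii) you use the taming weights $\<v\>^{-r},\<v\>^{-2r},\<v\>^{-3r}$ with exactly the paper's Cauchy--Schwarz splittings (for instance $\<v\>^{-3r}|v|^2\le\<v\>^{-r}\cdot\<v\>^{2-2r}$ when $r\ge 1$), and close with Proposition~\ref{propcoertame}(iii).

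One intermediate claim in part (i) is false, namely $\iint|v|^4|\nabla_Vu|^2\,d\nu\le C(\|\nabla_V^2u\|_\nu^2+\|\nabla_Vu\|_\nu^2)$.

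\emph{Why it fails.} Applying Definition~\ref{defGTP} to $X=|v|\nabla_Vu$ produces $\iint|\nabla_VX|^2\,d\nu$. This term contains $\iint|v|^2|\nabla_V^2u|^2\,d\nu$, and that weight is not a lower-order commutator. Controlling it would require a second application of the inequality, hence $\nabla_V^3u$, which is absent from the dissipation. The bound already fails for $\nu=\mu$ at the linearised level $h=1+\varepsilon\phi$. In Gaussian $L^2$ a weight $|v|^4$ costs two vertical derivatives, not one. Concretely, take $\nabla_V\phi$ to be a Hermite polynomial of degree $m$: the ratio of the two sides grows like $m$.

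\emph{The fix.} You never need the $|v|^4$ bound. Bound the $b$-term by $C\||v|\nabla_Vu\|_\nu^2$. Bound the $c$-term by $C\||v|\nabla_Hu\|_\nu\,\||v|\nabla_Vu\|_\nu$, putting one power of $|v|$ on each factor. Then apply the inequality once to each factor, which gives $C(\|\nabla_V^2u\|_\nu+\|\nabla_Vu\|_\nu)$ and $C(\|\nabla_V\nabla_Hu\|_\nu+\|\nabla_Hu\|_\nu)$ respectively. Young's inequality under $c\ll b\ll a\ll1$, $a^2\ll b$, $b^2\ll ac$ then closes the estimate, and this is precisely the paper's argument.
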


\begin{Cor}[Exponential decay in $L^1$, improved results] \label{corexpdecayL1}
Let $M$ be a compact manifold and $f=f(t,x,v)$ be a time-dependent probability density (or probability measure) solving \eqref{eqf} and starting from an initial datum $f_0$. If $f_0$ has finite moment of order $s= s(n)$ large enough, then $f(t,\cdot)$ converges exponentially fast to the equilibrium $f_\infty(x,v) = \vol(M)^{-1} (2\pi)^{-n/2}\, e^{-|v|^2/2}$ as $t\to\infty$, in the sense of entropy:
\[ \iint_{\TM} f \log \frac{f}{f_\infty}\,dx\,dv  \leq C\, e^{-2\kappa t} \qquad \text{for $t\geq 1$}, \]
for some $\kappa>0$ depending only on $M$, and $C$ depending only on $M$ and on $\|f_0\|_{L^1_s}$. If $f_0$ has moments of all orders, then the convergence is also exponential in all weighted Sobolev spaces $H^s_\kappa$; likewise, for all integers $k,\ell,r$,
\[ (1+|v|^r) \bigl|\nabla_V^k \nabla_H^\ell (f - f_\infty) \bigr| = O(e^{-\lambda t}) \]
for any $\lambda<\kappa$. 

If in addition there is $t_0\geq 0$ such that for $t\geq t_0$, $f$ satisfies \eqref{condlogconc} for some positive constants $\alpha,A>0$, uniformly as $t\to\infty$, then the convergence is also exponential in Fisher information:
\[\iint_{\TM} f \left|\nabla_V \log \frac{f}{f_\infty}\right|^2\,dx\,dv + \iint_{\TM} f \left|\nabla_H \log \frac{f}{f_\infty}\right|^2\,dx\,dv = O (e^{-2\kappa t}).\]
\end{Cor}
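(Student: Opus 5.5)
The plan is to derive Corollary \ref{corexpdecayL1} from Theorem \ref{thmfisherdecay}(ii), after a finite-time regularisation phase that places $f(t,\cdot)$ within the scope of that theorem. The steps are as follows.

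\textbf{Step 1: bring $f(t,\cdot)$ into the regime of Theorem \ref{thmfisherdecay}(ii).} Fix $t_1=1/2$. Theorem \ref{thmloc}(i) propagates the moment of order $s$ uniformly in time. Theorem \ref{thmregul}(iii) with $\ell=k=0$ then gives, for $t\geq t_1$, a uniform bound on $(1+|v|^{s'})f$ for some $s'>n$, provided $s=s(n)$ is chosen large enough. This bounds $\rho(t,x)=\int_{T_xM} f\,dv$ from above uniformly in $x$ and $t\geq t_1$.

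For the lower bound I use Theorem \ref{thmlowerbound}: since the moment of order $2$ is finite, $f(t,x,v)\geq Ke^{-\alpha(t)|v|^2/2}$ with $\alpha(t)$ bounded for $t\geq t_1$. Integrating in $v$ gives $\inf_x \rho(t,x)\geq c>0$ uniformly for $t\geq t_1$.

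The constants in the proof of Theorem \ref{thmfisherdecay}(ii) depend only on these bounds, on $n$, $r$, the curvature and the Poincar\'e constant. Hence \eqref{ddtkappatildeH} holds along the flow for all $t\geq t_1$ with fixed $a,b,c,\kappa$. Integrating the differential inequality gives
\[ \tilde{\cal H}(f(t))\leq e^{-2\kappa(t-t_1)}\,\tilde{\cal H}(f(t_1)). \]

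\textbf{Step 2: bound the initial value $\tilde{\cal H}(f(t_1))$ and conclude in entropy.} The quantity $\tilde{\cal H}(f(t_1))$ must be finite and controlled by $\|f_0\|_{L^1_s}$. The tamed Fisher terms involve $|\nabla\log h|^2 h$ with $h=f/f_\infty$. Since $\nabla_V\log f_\infty=-v$ and $\nabla_H\log f_\infty=0$, we have $|\nabla_V\log h|\leq|\nabla_V\log f|+|v|$ and $\nabla_H\log h=\nabla_H\log f$. The pointwise derivative bounds \eqref{schwartz} at $t_1$ (increasing $t_1$ if needed) control $|\nabla f|$ by a polynomial weight. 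The Gaussian lower bound from Step 1 then gives $|\nabla\log f|\leq C(1+|v|^{p})e^{\alpha|v|^2/2}$, but this growth is too crude on its own.

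This is the main obstacle: controlling $|\nabla f|^2/f$ without a matching upper/lower Gaussian bound. My first approach is the identity $|\nabla f|^2/f=4|\nabla\sqrt f|^2$, which turns the Fisher terms into $\int|\nabla\sqrt f|^2$ with polynomial weights. I would bound this through the regularity of $\sqrt f$. If that is not available, I would interpolate the Fisher terms between the weighted $H^1$-type bound on $f$ and the local positivity. The fallback is to use $|\nabla f|^2/f\leq 2\|\nabla^2 f\|_\infty$-type inequalities for nonnegative functions. In the weighted version, $|\nabla f|^2\leq C f\sup|\nabla^2 f|$ locally in $v$, and the weighted pointwise bounds \eqref{schwartz} on $\nabla^2 f$ then close the estimate.

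The entropy part of $\tilde{\cal H}$ is bounded by the $L^\infty$ and moment bounds. Once $\tilde{\cal H}(f(t_1))\leq C(\|f_0\|_{L^1_s})$ is established, the positivity $\tilde{\cal H}\geq c\int h\log h\,d\mu$ yields the entropy decay. That positivity follows by choosing $b^2<ac$ with appropriate weights, and then the entropy part dominates.

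\textbf{Step 3: upgrade to Sobolev, pointwise and Fisher decay.} Pinsker's inequality \eqref{CKP} gives $L^1$ decay at rate $e^{-\kappa t}$. Interpolating this decay with the uniform weighted Sobolev bounds of Theorem \ref{thmregul}(iii) (for $f$ and $f_\infty$) through Proposition \ref{propinterp}, or the Nash-type Theorem \ref{propnashinterp}, gives decay in every $H^s_\kappa$ at rate $e^{-\lambda t}$ for any $\lambda<\kappa$. Proposition \ref{propsobemb} then yields the weighted pointwise bounds, exactly as at the end of the proof of Theorem \ref{L1exp}.

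For the final claim, assume that \eqref{condlogconc} holds uniformly for $t\geq t_0$. Proposition \ref{GTPulc} then gives $\GTP(K)$ uniformly, and Theorem \ref{thmfisherdecay}(i) applies for $t\geq t_0$, giving ${\cal H}(f(t))\leq e^{-2\kappa(t-t_0)}{\cal H}(f(t_0))$. The initial value ${\cal H}(f(t_0))$ is finite by the same argument as in Step 2. Since $b^2<ac$, ${\cal H}$ dominates the full untamed Fisher information, and the stated $O(e^{-2\kappa t})$ bound follows.
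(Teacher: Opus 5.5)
\textbf{The gap: the rate $\kappa$ depends on $f_0$ in your argument.} The corollary asserts that $\kappa$ depends only on $M$, with the initial datum entering only through $C$. In your Step~1, the constants $a,b,c,\kappa$ given by Theorem~\ref{thmfisherdecay}(ii) depend on two quantities. The first is $\sup_{t\geq t_1}\sup_x\rho$, which you bound by $\|f_0\|_{L^1_\sigma}$ via Theorem~\ref{thmregul}(iii). The second is $\inf_{t\geq t_1}\inf_x\rho$, which you bound by the constant of Theorem~\ref{thmlowerbound}, and that constant depends on $f_0$ through the local positivity. Integrating \eqref{ddtkappatildeH} from $t_1$ therefore gives a data-dependent rate.

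The missing step is the restart argument that the paper uses. Once $L^1$ convergence holds at some data-dependent rate, the uniform weighted derivative bounds upgrade it to uniform convergence $\rho(t,\cdot)\to\vol(M)^{-1}$. For instance, by \eqref{intdh}, $\nabla_x\rho=\int_{T_xM}\nabla_H f\,dv$ is bounded uniformly in time, so $L^1(M)$ convergence of $\rho$ implies uniform convergence. Hence after some time $T$ (depending on $f_0$) one has $\frac12\vol(M)^{-1}\leq\rho\leq 2\vol(M)^{-1}$. Applying Theorem~\ref{thmfisherdecay}(ii) again from $T$, with these universal bounds, gives a rate $\kappa=\kappa(M)$. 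The datum then only affects the prefactor $e^{2\kappa T}\tilde{\cal H}(f(T))$. The rates $\lambda<\kappa$ in the Sobolev and pointwise statements have the same defect and the same fix.

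\textbf{The rest follows the paper's route, with two acceptable substitutions.}
\begin{itemize}
\item Using Theorem~\ref{thmlowerbound} instead of Proposition~\ref{proplocalpos} to bound $\inf\rho$ is fine.
\item For finiteness of the Fisher terms at $t_1$, the paper uses the Lions--Villani result that $\sqrt f\in H^1$ when $f\in H^2_s$. Your Glaeser-type fallback also works, but its local form must include a boundary term. For $u\geq 0$ on a ball $B=B(z,r)$ in a chart, the correct inequality is $|\nabla u(z)|^2\leq\max\bigl(2u(z)\sup_B|\nabla^2u|,\,4u(z)^2/r^2\bigr)$. This gives $|\nabla u|^2/u\leq 2\sup_B|\nabla^2u|+4u/r^2$, which is integrable given \eqref{schwartz} and the moment bounds.
\end{itemize}

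\textbf{Two smaller points.}
\begin{itemize}
\item Proposition~\ref{propinterp} alone cannot turn $L^1$ decay into Sobolev decay. The Nash-type interpolation of Theorem~\ref{propnashinterp}, used with high regularity indices, is what yields rates arbitrarily close to $\kappa$.
\item In the Fisher claim, start the differential inequality at $\max(t_0,t_1)$ rather than at $t_0$, since ${\cal H}(f(t_0))$ need not be finite when $t_0=0$.
\end{itemize}
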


\begin{proof}[Proof of Theorem \ref{thmfisherdecay}(i)]
By Cauchy--Schwarz inequality in $L^2(h\,d\mu)$,
\[ \left| \iint h\<\nabla_V \log h, \nabla_H \log h \>\,d\mu \right| 
\leq \sqrt{\iint h|\nabla_V\log h|^2\,d\mu} \sqrt{\iint h|\nabla_H\log h|^2\,d\mu}, \]
so that if $c<a$ and $b\ll \sqrt{ac}$, then 
\[ H_\mu (h\mu) + \frac{c}{2} I_\mu(h\mu) \leq {\cal H}(h) \leq H_\mu(h\mu) + 2a I_\mu(h\mu).\]
In other words, ${\cal H}$ is equivalent to the sum $H+I$, in the sense that the ratio between the two functionals is bounded from above and below.

Next, apply Lemmas \ref{lemlog1} and \ref{lemlog2}, with $C$ and $C'$ being either $(\nabla_H)_i$ or $(\nabla_V)_i$, to compute the time-derivative of \eqref{Hcal}. Using 
\[ [\Delta_V^\mu,\nabla_H] = 0,  \qquad [\Delta_V^\mu, \nabla_V] = \nabla_V, \]
one obtains, if $f$ evolves according to \eqref{eqf},
\begin{multline}
\frac{d{\cal H}}{dt} = - \iint h |\nabla_V\log h|^2\,d\mu\\ 
- 2 a \iint h \<\nabla_V\log h,\nabla_H\log h\> \,d\mu - 2 a \iint |\nabla_V^2 \log h|^2\,d\mu 
- 2 a \iint h |\nabla_V\log h|^2\,d\mu \\
- 2 b \iint h |\nabla_H \log h|^2\,d\mu + 2 b\iint h \bigl\< \nabla_V\log h, \Sigma(v,v) \nabla_V\log h \bigr\>\,d\mu \\
+ 4 b \iint h \<\nabla_V\nabla_H \log h, \nabla_V^2 \log h \>\,d\mu 
- 4 b \iint h \<\nabla_H\log h,\nabla_V\log h\>\,d\mu \\
+ 2 c \iint h \bigl\< \nabla_H \log h, \Sigma(v,v) \nabla_V \log h \bigr\> \,d\mu
-2 c \iint h |\nabla_V\nabla_H \log h|^2\,d\mu.
\end{multline}
At this stage apply again Cauchy--Schwarz inequality in $L^2(h\,d\mu)$, and the pointwise quadratic bound on $\Sigma(v,v)$, to systematically reduce to quadratic expressions:
\[ \left| \iint h \bigl\< \nabla_H \log h, \Sigma(v,v) \nabla_V\log h\bigr\> \, d\mu \right|
\leq C \sqrt{\iint h |v|^2 |\nabla_H \log h|^2\,d\mu} \sqrt{\iint h|v|^2 |\nabla_V\log h|^2\,d\mu}, \]
etc. Then apply inequality \eqref{GTPineq} to trade weights for vertical derivatives:
\[ \iint h |v|^2 |\nabla_H \log h|^2\,d\mu \leq C \left( \iint h |\nabla_V\nabla_H \log h|^2\,d\mu + \iint h |\nabla_H \log h|^2\,d\mu\right), \]
etc. Then use Young inequality and reason as in the proof of Theorem \ref{thmhypocoL2} to conclude that if
\[ c\ll b \ll a \ll 1, \qquad a\ll \sqrt{b},\qquad b \ll \sqrt{ac},\]
then
\begin{multline*} 
\frac{d{\cal H}(h)}{dt} \leq -K 
\left( \iint h |\nabla_V \log h|^2\,h\,d\mu + \iint h |\nabla_H\log h|^2\,d\mu \right. \\
\left. + \iint h |\nabla_V^2 \log h|^2\,d\mu + \iint h |\nabla_H\log h|^2\,d\mu \right).
\end{multline*}

From Proposition \ref{logsob}, the reference measure $\mu$ on $\TM$ satisfies a log Sobolev inequality; hence, reducing $K$ if necessary,
\begin{align*} \frac{d{\cal H}(h)}{dt} 
& \leq -K 
\left( \int h \log h\,d\mu+ \iint h |\nabla_V \log h|^2\,h\,d\mu + \iint h |\nabla_H\log h|^2\,d\mu \right. \\
& \qquad\qquad\qquad\qquad \left. + \iint h |\nabla_V^2 \log h|^2\,d\mu + \iint h |\nabla_H\log h|^2\,d\mu \right) \\
& \leq -K {\cal H},
\end{align*}
and the proof of \eqref{ddtkappaH} is complete.
\end{proof}

\begin{Rk} If $M$ has negative sectional curvature $\sigma<0$ then, recalling \eqref{Rvuuv},
\[ \iint h \bigl\<\nabla_V\log h, \Sigma(v,v) \nabla_V\log h\bigr\>\, d\mu =  \iint h \bigl| v\wedge \nabla_V\log h\bigr|^2\,\sigma_x\,d\mu <0 \]
goes in the right direction for the purpose of the theorem, which might help. If the curvature is positive then nothing is gained at that level, in contrast with the theory of logarithmic Sobolev inequalities and spectral gap inequalities, where positive curvature almost always helps. (Thanks to Fr\'ed\'eric Rousset and Georgios Moschidis for asking.)
\end{Rk}

\begin{proof}[Proof of Theorem \ref{thmfisherdecay}(ii)]
Now the relevant functional is $\tilde{\cal H}$. I shall use the notation $\<v\> = \sqrt{1+|v|^2}$, not to be confused with the average. Compute again the time-derivative using Lemmas \ref{lemlog1} and \ref{lemlog2}, taking into account the extra terms caused by the kinetic weights.
Note that
\begin{align*} \Delta_V^\mu (\<v\>^{-r}) & = - \frac{nr}{\<v\>^{r+2}} + \frac{r(r+2)|v|^2}{\<v\>^{r+4}} + \frac{r|v|^2}{\<v\>^{r+2}} \\
& = r \bigl( 1 + O (\<v\>^{-2}) \bigr) \<v\>^{-r}.
\end{align*}
Thus
\begin{multline}\label{dcalHdt}
\frac{d\tilde{\cal H}}{dt} = - \iint h |\nabla_V\log h|^2\,d\mu\\ 
- 2 a \iint h \<\nabla_V\log h,\nabla_H\log h\> \,\frac{d\mu}{\<v\>^r} - 2 a \iint |\nabla_V^2 \log h|^2\,\frac{d\mu}{\<v\>^r}
- 2 a \iint h |\nabla_V\log h|^2\,\frac{d\mu}{\<v\>^r} \\
+ 2 a r \iint h |\nabla_V \log h|^2 \, \bigl( 1 + O (\<v\>^{-2}) \bigr) \,\frac{d\mu}{\<v\>^r}\\
- 2 b \iint h |\nabla_H \log h|^2\,\frac{d\mu}{\<v\>^{2r}} + 2 b\iint h \bigl\< \nabla_V\log h, \Sigma(v,v) \nabla_V\log h \bigr\>\,\frac{d\mu}{\<v\>^{2r}} \\
+ 4 b \iint h \<\nabla_V\nabla_H \log h, \nabla_V^2 \log h \>\,\frac{d\mu}{\<v\>^{2r}}
- 4 b \iint h \<\nabla_H\log h,\nabla_V\log h\>\,\frac{d\mu}{\<v\>^{2r}} \\
+ 4b r \iint h \<\nabla_H\log h,\nabla_V\log h\>\,\bigl(1+O(\<v\>^{-2})\bigr)\,\frac{d\mu}{\<v\>^{2r}} \\
+ 2 c \iint h \bigl\< \nabla_H \log h, \Sigma(v,v) \nabla_V \log h \bigr\> \,\frac{d\mu}{\<v\>^{3r}}
-2 c \iint h |\nabla_V\nabla_H \log h|^2\,\frac{d\mu}{\<v\>^{3r}} \\
+ 2 c r \iint h |\nabla_H \log h|^2\, \bigl( 1+O(\<v\>^{-2})\bigr)\, \frac{d\mu}{\<v\>^{3r}}.
\end{multline}

There are four nice negative terms in there:
\begin{multline} \label{fournice}
\frac{d\tilde{\cal H}}{dt} = - \left( \iint h |\nabla_V\log h|^2\,d\mu 
 + 2 a \iint h |\nabla_V^2 \log h|^2\,\frac{d\mu}{\<v\>^r}
+ 2 b \iint h |\nabla_H \log h|^2\,\frac{d\mu}{\<v\>^{2r}} \right. \\ \left. + 
2 c \iint h |\nabla_V\nabla_H \log h|^2\,\frac{d\mu}{\<v\>^{3r}}\right) \quad + (R),
\end{multline}
where $(R)$ stands for all the other terms. Note that the third integral in the right-hand side of \eqref{dcalHdt} has the right sign but is dominated by the fourth one, as soon as $r>1$, so it will not be useful here. All other remaining terms making up $(R)$ will be controlled one by one:
\[
\left| \iint  h \<\nabla_V\log h,\nabla_H\log h\> \,\frac{d\mu}{\<v\>^r} \right| \\ \leq
\sqrt{\iint h |\nabla_V\log h|^2\,d\mu} \sqrt{\iint h |\nabla_H \log h|^2\,\frac{d\mu}{\<v\>^{2r}}};
\]
\[
\left| \iint h |\nabla_V \log h|^2 \, \bigl( 1 + O (\<v\>^{-2}) \bigr) \,\frac{d\mu}{\<v\>^r} \right| \leq
C \iint h |\nabla_V\log h|^2\,d\mu; 
\]
\[ \left|\iint h \bigl\< \nabla_V\log h, \Sigma(v,v) \nabla_V\log h \bigr\>\,\frac{d\mu}{\<v\>^{2r}} \right| 
\leq C \iint h |\nabla_V\log h|^2\,d\mu \qquad \text{if $r\geq 1$}; \]
\begin{multline*} \left|  \iint h \<\nabla_V\nabla_H \log h, \nabla_V^2 \log h \>\,\frac{d\mu}{\<v\>^{2r}} \right| \\
\leq \sqrt{\iint h |\nabla_V^2 \log h|^2\,\frac{d\mu}{\<v\>^r}} \sqrt{\iint h |\nabla_V\nabla_H \log h|^2\,\frac{d\mu}{\<v\>^{3r}}};
\end{multline*} 
\begin{multline*} \left|  \iint h \<\nabla_V \log h, \nabla_V \log h \>\,\bigl[ 1+ O(\<v\>^{-2}) \bigr] \frac{d\mu}{\<v\>^{2r}} \right| \\
\leq C \sqrt{\iint h |\nabla_V\log h|^2\,d\mu} \sqrt{\iint h |\nabla_H \log h|^2\,\frac{d\mu}{\<v\>^{2r}}};
\end{multline*}
\begin{multline*} \left| \iint h \bigl\< \nabla_H \log h, \Sigma(v,v) \nabla_V \log h \bigr\> \,\frac{d\mu}{\<v\>^{3r}}  \right| \\
\leq C \sqrt{\iint h |\nabla_V\log h|^2\,d\mu} \sqrt{\iint h |\nabla_H \log h|^2\,\frac{d\mu}{\<v\>^{2r}}} \qquad
\text{if $r\geq 1$};
\end{multline*}
\[  \iint h |\nabla_H \log h|^2\, \bigl( 1+O(\<v\>^{-2})\bigr)\, \frac{d\mu}{\<v\>^{3r}} 
\leq C \iint h |\nabla_H \log h|^2\,\frac{d\mu}{\<v\>^{2r}}.\]
Inserting all these bounds into \eqref{fournice} and playing systematically Young's inequality, shows that
\begin{multline} \label{fournice'}
\frac{d\tilde{\cal H}}{dt} \leq - \frac12 \left( \iint h |\nabla_V\log h|^2\,d\mu 
 + 2 a \iint h |\nabla_V^2 \log h|^2\,\frac{d\mu}{\<v\>^r}
+ 2 b \iint h |\nabla_H \log h|^2\,\frac{d\mu}{\<v\>^{2r}} \right. \\ \left. + 
2 c \iint h |\nabla_V\nabla_H \log h|^2\,\frac{d\mu}{\<v\>^{3r}}\right),
\end{multline}
as soon as
\[ c\ll b\ll a\ll 1, \qquad a \ll \sqrt{b},\qquad b \ll \sqrt{ac}.\]

When these conditions are imposed, it remains to control $\tilde{\cal H}$ by the negative of the right-hand side of \eqref{fournice'}. For the Fisher-type terms this is obvious, since
\begin{multline*} a \iint h |\nabla_V\log h|^2\,\frac{d\mu}{\<v\>^{r}} + c \iint h |\nabla_H \log h|^2\,\frac{d\mu}{\<v\>^{3r}} \ll 
\iint  h |\nabla_V\log h|^2\,d\mu  \\ + b \iint h |\nabla_H \log h|^2\,\frac{d\mu}{\<v\>^{2r}}.
\end{multline*}
For the entropic term $H_\mu(h\mu) = \int h\log h\, d\mu$ this comes from Proposition \ref{propcoertame}(iii), rewriting \eqref{qtroncL} in the form
\[
\iint_{\TM} h\log h\,d\mu \leq \frac1{\tilde{K}_L\bigl(\<v\>^{-{2r}}, h\bigr)} \left( \iint_{\TM} \frac{|\nabla_V h|^2}{h}\,d\mu + \iint_{\TM} \frac{|\nabla_H h|^2}{h}\,\frac{d\mu}{\<v\>^{2r}}\right).\]
This concludes the proof of \eqref{ddtkappatildeH}.
\end{proof}

\begin{proof}[Proof of Corollary \ref{corexpdecayL1}]
First, by regularisation and moments estimates (Theorems \ref{thmloc}(i) and \ref{thmregul}(iii)), $f$ has finite entropy for $t>0$. It also satisfies estimates in weighted Sobolev spaces $H^2_s$, where is $s$ is arbitrarily large, then by a result of P.-L.~Lions and myself, $\sqrt{f}$ is bounded in Sobolev space $H^1$, and thus the Fisher information $I (h\mu) = \int |\nabla f|^2/f$ is bounded too. From that and moment estimates, also $I_\mu(h\mu) = \int (|\nabla h|^2/h)\,d\mu$ is finite. Thus ${\cal H}(h)$ is finite for positive times, as well as $\tilde{\cal H}(h)$. 

Moreover, still by the regularisation theorem \ref{thmregul}(iii), $f(t,\cdot)$ is bounded in $L^\infty_s$ (the proof shows that this only involves moments of a certain order for the initial datum), thus by $v$-integration $\rho$ is bounded above, uniformly in time. Also, $f$ satisfies a uniform local lower bound for positive times, Theorem \ref{proplocalpos}. Then all conditions are fulfilled to apply Theorem \ref{thmfisherdecay}(ii). 

The differential inequality \eqref{ddtkappaH} implies exponential convergence of ${\cal H}$. From the choice of $a,b,c$, ${\cal H}(f)\geq H_\mu(f)$, so the relative Boltzmann information does converge exponentially fast to~0. By Pinsker's inequality, this also implies exponential $L^1$ convergence of $f$ like $O(e^{-\kappa t})$.

Again, the regularisation Theorem \ref{thmregul} implies that $f$ is uniformly smooth in weighted Sobolev spaces of arbitrarily large regularity indices and weights. By localisation estimates, arbitrarily large moments of $f$ are uniformly bounded, and thus by weighted moment interpolation, all moments of $f$ converge exponentially fast. Then by anisotropic Nash-type interpolation, all Sobolev norms converge exponentially fast. By Sobolev embedding, this is true also for all weighted $C^k$ norms. If the interpolation is done with weights and derivations of high enough order, the resulting rate of exponential decay will be as close as desired to $\kappa$. 

At this stage $\kappa$ may depend on $f$ through the estimates on $\sup \rho$ and $\inf \rho$. But the convergence in weighted Sobolev spaces implies that those quantities both converge to~1. So by restarting the differential inequality at some large enough time, the obtained rate will be independent of the regularity and positivity estimates. (The prefactor of the exponential will be affected, though.)

Finally if the Gaussian tail Poincar\'e inequality for the time-dependent density is satisfied, then one can apply Theorem \ref{thmfisherdecay}(i) to show that ${\cal H}$ converges exponentially fast to~0; by the choice of coefficients ${\cal H}(f) \geq H_\mu(f) + (c/2) I_\mu(f)$, so not only the relative Boltzmann functional, but also the Fisher information, converge exponentially fast to~0. This concludes the proof.
\end{proof}

\begin{Rks} 
\begin{itemize}
\item[(i)] It is not obvious to me that the exponential decay in Fisher information can be obtained from exponential convergence in entropy and Sobolev regularity bounds. If such is the case, the use of the Gaussian tail Poincar\'e can be dispended with.

\item[(ii)] At this stage the exponential convergence of Fisher information is conditional to the proof of the Gaussian tail Poincar\'e inequality, but the exponential convergence in entropy sense is fully proved, as well as in weighted Lebesgue and Sobolev spaces.
\end{itemize}
\end{Rks}

\bibnotes

Boltzmann's $H$ Theorem is one of the most fundamental results in mathematical physics; its proof and implications are discussed in a number of sources, including my general review articles on Boltzmann equation, like the older \cite{vill:handbook:02} or the more recent and concise \cite{MV:companion} with Mouhot; or the more focused proceedings of the 2004 International Congress of Mathematical Physics \cite{ICMP}.

As already mentioned, the Pinsker (or Csisz\'ar--Kullback--Pinsker) inequality is proven in many references, such as \cite{csi:inf:67,pinsker:book:64,vill:int}. As already mentioned, Fisher information was introduced by Ronald Fisher \cite{fisher:25} in 1925 for his theory of efficient statistics, then imported to kinetic theory by Henry P.~McKean \cite{mck:kac:65}, and recently benefited from a remarkable revival \cite{vill:fisher-Festum:25}.

The earliest proofs of \eqref{BEid} known to me are hiddden in Stam \cite{stam:59} (see Carlen's account \cite{carlen:logsob:91}) and McKean \cite{mck:kac:65} (see Toscani's rewriting \cite{tosc:fplcras:97,tosc:fpl:99}). The Bakry--\'Emery calculus is in \cite{bakry:lnm:94,bakem:hyperc:85}. Generalisations of those manipulations, in velocity space, appear in my course on Fisher information for Boltzmann equation \cite{vill:fisher-Festum:25}. Variants appear in \cite[Lemma~32]{vill:hypoco}, which extends Lemmas \ref{lemlog1} and \ref{lemlog2} to more general structure assumptions, albeit only in flat geometry and for $a\equiv 1$, and in a more painful way.

The marriage of Bakry--\'Emery calculus with hypocoercivity appeared in \cite{DOV:preprint} and was generalised by Baudoin \cite{baudoin:BE+V}. The strategy of hypocoercivity in Fisher information sense underlying the proof of Theorem \ref{thmfisherdecay} appeared in \cite[Theorem~28]{vill:hypoco}. In \cite{DOV:preprint} as well as in other attempts, we first stumbled on the problem of controlling weighted quadratic expressions of log derivatives.

The control of $\sqrt{f}$ in $H^1$ from the estimate of $f$ in $H^2_\kappa$ with $\kappa$ large enough is in Lions--Villani~\cite{PLLV:95} and Lions--Toscani~\cite[Lemma~1]{TV:slow:00}. Using charts it is easy to go from the flat estimate there to a curved one, as I did for the proof of Proposition~\ref{propsobemb}. It would also be interesting to have a direct intrinsic proof by working on $\TM$ with the Sasaki metric.

The failure of inequality \eqref{ILS2} was pointed to me by Ledoux.

The problem of the Gaussian tail Poincar\'e inequality for the time-dependent measure will be examined in a forthcoming work; it will involve pointwise regularisation methods, in contrast with the Hilbertian approach (via Sobolev embeddings) pursued here.

\section{Conclusions and final comments}

All along the way, I have shown that the kinetic Fokker--Planck equation accommodates quite well with Riemannian geometry, through the notions of covariant and tensor calculus, horizontal and vertical derivations. The fundamental properties of localisation, regularisation, positivity and equilibration can all be treated with functional methods similar in spirit to those in Euclidean space, based on divergence formulas, commutators, interpolation and coercivity inequalities. Moreover this can be done in a consistent way for the three main variants of the equation: $L^1$, $L^2$, and $L^2$ for vector fields. This does a lot to reinforce the coherence of the framework from my older memoir on Hypocoercivity.

In this geometric programme, the most notable departure from the flat case is due to the commutator between the horizontal derivation and geodesic flow, which yields a vertical derivation with quadratic coefficients:
\[ [\nabla_H, \xi] = O(|v|^2) \nabla_V. \]
This stretches all estimates, but still falls within the admissible range. Eventually, localisation, regularisation, positivity and equilibration results are formally similar to those in flat geometry, in spite of some notable additional difficulties. For the hypocoercivity problem in $L^2$, also the lower-regularity approach by Dolbeault, Mouhot and Schmeiser works out just fine. The worst complications are found in the $L^1$-equilibration problem, which have led to tweak the information functional, not only by twisting the Fisher information, but also by taming the gradients by inverse powers of $|v|$; and to establish appropriate variants of the log Sobolev and $\Gamma_2$ calculus in that context.

A condition which emerges consistently is a square exponential moment bound, or Gaussian tail moment condition,
\begeq\label{condexp0} \iint e^{\beta_0\frac{|v|^2}2}\, f(x,v)\,dx\,dv < +\infty, \endeq
which (a) leads to propagation and even improvement of the exponent $\beta_0$, (b) provides pointwise Gaussian bounds for positive times, (c) allows to approximate the $L^1$ convergence problem by the $L^2$ one, (d) suggests a neat linear differential inequality for a combination of entropy and twisted Fisher information. That programme can be fully completed if a first-order integral reinforcement of \eqref{condexp0} is proven for the density at positive times, namely the Gaussian tail Poincar\'e inequality (Definition \ref{defGTP}). At this point it is worth recalling that some of the most famous results in kinetic theory, like Lanford's derivation of the Boltzmann equation, crucially use a pointwise Gaussian bound on the density.

At the level of the functional toolbox, a key role was played by the anisotropic Nash inequality from Theorem \ref{propnashinterp}: an interpolation inequality involving $L^1$ moments and $L^2$ horizontal and vertical Sobolev norms, which was a cornerstone of the $L^1$ regularisation. Among functional tools, this is certainly where the treatment is most incomplete, with unnatural restrictions on the range of exponents, and a rather unsatisfactory extrinsic proof. Even though that version is sufficient for this study, a better treatment is natural to ask for.

Probably the other place in this memoir in which the treatment was not intrinsic enough, is the local positivity estimate from Section \ref{secpos}. This estimate also is also of tantamount importance, in itself and because of its link to $L^1$ equilibration.

As far as global hypoelliptic regularisation is concerned in $L^2(\mu)$, for either the scalar-valued or the vector-valued problem, a consistent picture was presented along the way, extending the pioneering work of Lebeau. Eventually there are four strongly related inequalities with optimal exponents, all proven with more elementary tools than before:

\bul the fractional regularisation in the stationary equation (Theorem \ref{thmalabouchut});

\bul the regularisation time scales in the evolution equation (Theorem \ref{thmregul});

\bul the skewed interpolation inequality for the Fokker--Planck operator (inequality \eqref{skewedinterp});

\bul the algebraic localisation of the spectrum (estimate \eqref{localspectrum}).

Applied to the problem of $L^1$ equilibration, Fisher information has proven once again its versatility, able to incorporate geometric information, with all nonlinear identities going through, fit for direct proof of exponential decay in natural information-theoretic functionals or their variants.

All throughout, the burden of large velocities has been very heavy, at times concentrating most of the technical difficulties. This reinforces the idea, already explored by various authors, that unit-speed diffusion models are quite more tractable. However, such models do not have natural nonlinear counterparts, unlike the original Fokker--Planck equation.

Some further questions left aside from this work are:

\bul Asymptotics of equilibration rates when the friction is either very low (and the rate should decay as the inverse of the friction) or very high (and then the rate should approach a fixed value determined by the heat equation in this geometry). Especially the high-friction case has been studied by Bailleul, Baudoin, Bismut, Helffer, H\'erau, Lebeau, Nier, Ren, Tao, Sang, Tardif, White under various assumptions; a recent work by Ren--Tao establishes neat results for the $L^2(\mu)$ problem with unit speed. It is natural to for a treatment of that problem along the lines of this memoir; also the $L^1$ theory has not been touched in those works as far as I know.

\bul Unbounded geometries with a confining potential, possibly with restrictions on the possible growth of the curvature tensors.

\bul A Riemannian adaptation of the popular coupling techniques which have already proven their efficiency for the study of equilibration of the kinetic Fokker--Planck equation in the Euclidean setting (works by Baudoin, Bolley, Guillin, Le Bris, Malrieu, Monmarch\'e).

\bul The nonlinear interaction problem, in which an interaction potential is added (Vlasov--Fokker--Planck equation): \eqref{eqf} should be replaced by
\begeq\label{NLV}
\derpar{f}{t} + \xi f - \nabla_x \left(\int_{\TM}  W(x,y) f(y,w)\,dx\,dy \right) \cdot\nabla_v f(x,v)  = \nabla_V\cdot (\nabla_V f + fv), 
\endeq
where $W:M\times M\to \R$ is the interaction potential (felt at $x$ from the action of a particle located at $y$). It is in this regime, possibly far from equilibrium, that the interest of information methods should be most apparent. While smooth potentials are certainly within range of the present techniques, for more realistic (singular) potentials, the adaptation of the flat theory might be a substantial work. (By the way, the Vlasov--Fokker--Planck equation was much less studied by mathematicians than the Vlasov equation.) As for the equilibration of \eqref{NLV}, it can be expected to be tricky even when the potential is smooth, as already the case of the torus (considered in my memoir on Hypocoercivity) has subtleties, with phase transition arising for large enough variations of $W$.

To this list, of course, one may add nonlinear diffusive and collisional relatives of the kinetic Fokker--Planck equation, connecting to the extraordinarily rich landscape of collisional kinetic theory.

\bibliographystyle{acm}

\bibliography{../../Biblio/biblio, ../../Biblio/mybiblio}

\def\cprime{$'$} \def\cprime{$'$} \def\cprime{$'$} \def\cprime{$'$}
\begin{thebibliography}{100}

\bibitem{AAMN:hypo:25}
{\sc Achleitner, F., Arnold, A., Mehrmann, V., and Nigsch, E.}
\newblock Hypocoercivity in {H}ilbert spaces.
\newblock {\em J. Funct. Anal. 288}, 2 (2025), 110691.

\bibitem{AAMN:KFP:24}
{\sc Albritton, D., Armstrong, S., Mourrat, J.-C., and Novack, M.}
\newblock Variational methods for the kinetic fokker-planck equation.
\newblock {\em Anal. PDE 17}, 6 (2024), 1953--2010.

\bibitem{AV:boltz:02}
{\sc Alexandre, R., and Villani, C.}
\newblock On the {B}oltzmann equation for long-range interactions.
\newblock {\em Comm. Pure Appl. Math. 55}, 1 (2002), 30--70.

\bibitem{andreasson:einsteinvlasov:11}
{\sc Andr{\'e}asson, H.}
\newblock The {E}instein--{V}lasov system / kinetic theory.
\newblock {\em Living Rev. Rel. 14}, 4 (2011).

\bibitem{ABT:brownian:15}
{\sc Angst, J., Bailleul, I., and Tardif, C.}
\newblock Kinetic {B}rownian motion on {R}iemannian manifolds.
\newblock {\em Electron. J. Probab. 20\/} (2015), 1--40.

\bibitem{anosov:geodesic:67}
{\sc Anosov, D.}
\newblock Geodesic flows on closed {R}iemannian manifolds with negative
  curvature.
\newblock {\em Proc. Steklov Inst. Mathematics 90\/} (1967), 3--210.

\bibitem{AMTU:FP:01}
{\sc Arnold, A., Markowich, P., Toscani, G., and Unterreiter, A.}
\newblock On logarithmic {S}obolev inequalities and the rate of convergence to
  equilibrium for {F}okker--{P}lanck type equations.
\newblock {\em Comm. Partial Differential Equations 26}, 1--2 (2001), 43--100.

\bibitem{arnoldtoshpulatov:25}
{\sc Arnold, A., and Toshpulatov, G.}
\newblock Trend to equilibrium and hypoelliptic regularity for the relativistic
  {F}okker--{P}lanck equation.
\newblock {\em SIAM J. Math. Anal. 57}, 3 (2025), 10.1137/24M1671244.

\bibitem{arnold:EDO:mir}
{\sc Arnol{\cprime}d, V.}
\newblock {\em \'{E}quations diff\'erentielles ordinaires}, fourth~ed.
\newblock Traduit du Russe: Math\'ematiques. [Translations of Russian Works:
  Mathematics]. ``Mir'', Moscow, 1988.
\newblock Translated from the Russian by Djilali Embarek.

\bibitem{aronson:fundamental:67}
{\sc Aronson, D.}
\newblock Bounds for the fundamental solution of a parabolic equation.
\newblock {\em Bull. Amer. Math. Soc. 73\/} (1967), 890--896.

\bibitem{bakry:lnm:94}
{\sc Bakry, D.}
\newblock L'hypercontractivit{\'e} et son utilisation en th{\'e}orie des
  semigroupes.
\newblock In {\em {\'E}cole d'{\'e}t{\'e} de Probabilit{\'e}s de
  {S}aint-{F}lour}, no.~1581 in Lecture Notes in Math. Springer, 1994.

\bibitem{bakem:hyperc:85}
{\sc Bakry, D., and {\'E}mery, M.}
\newblock Diffusions hypercontractives.
\newblock In {\em S{\'e}m. Proba. {XIX}}, no.~1123 in Lecture Notes in
  Mathematics. Springer, 1985, pp.~177--206.

\bibitem{BGL:book}
{\sc Bakry, D., Gentil, I., and Ledoux, M.}
\newblock {\em Analysis and Geometry of Markov Diffusion Operators}, vol.~348
  of {\em Grundlehren der mathematischen Wissenschaften}.
\newblock Springer-Verlag, New York, Berlin, 2014.

\bibitem{BDR:relatK:01}
{\sc Barbachoux, C., Debbasch, F., and Rivet, J.}
\newblock Covariant {K}olmogorov equation and entropy current for the
  relativistic {O}rnstein--{U}hlenbeck process.
\newblock {\em Eur. Phys. J. B 20\/} (2001), 487--496.

\bibitem{BDR:relat:01}
{\sc Barbachoux, C., Debbasch, F., and Rivet, J.}
\newblock The spatially one-dimensional relativistic {O}rnstein--{U}hlenbeck
  process in arbitrary inertial frame.
\newblock {\em Eur. Phys. J. B 19\/} (2001), 37--47.

\bibitem{bass:diffusionbook}
{\sc Bass, R.~F.}
\newblock {\em Dffusions and Elliptic Operators}.
\newblock Probability and its Applications. Springer-Verlag, 1998.

\bibitem{baudoin:wasshypo:16}
{\sc Baudoin, F.}
\newblock Wasserstein contraction properties for hypoelliptic diffusions.
\newblock ArXiV Preprint, 1602.0417, 2016.

\bibitem{baudoin:BE+V}
{\sc Baudoin, F.}
\newblock Bakry--{{\'E}}mery meet {V}illani.
\newblock {\em J. Funct. Anal. 273}, 7 (2017), 2275--2291.

\bibitem{BT:foliations:18}
{\sc Baudoin, F., and Tardif, C.}
\newblock Hypocoercive estimates on foliations and velocity spherical
  {B}rownian motion.
\newblock {\em Kinet. Relat. Models 11}, 1 (2018), 1--23.

\bibitem{berger:panoramic:book}
{\sc Berger, M.}
\newblock {\em A Panoramic View of {R}iemannian Geometry}.
\newblock Springer, 2003.

\bibitem{BFLS:hypo:22}
{\sc Bernard, {\'E}., Fathi, M., L{\'e}vitt, A., and Stoltz, B.}
\newblock Hypocoercivity with {S}chur complements.
\newblock {\em Annales Henri Lebesgue 5\/} (2022), 523--557.

\bibitem{bismut:surveyhypo:08}
{\sc Bismut, J.}
\newblock A survey of the hypoelliptic {L}aplacian.
\newblock {\em Ast{\'e}risque 322\/} (2008), 39--69.

\bibitem{bismut:hypoelliptic:05}
{\sc Bismut, J.-M.}
\newblock The hypoelliptic {L}aplacian on the cotangent bundle.
\newblock {\em J. Amer. Math. Soc. 18}, 2 (2005), 379--476.

\bibitem{bismut:hypoLie:08}
{\sc Bismut, J.-M.}
\newblock The hypoelliptic {L}aplacian on a compact {L}ie group.
\newblock {\em J. Funct. Anal. 255\/} (2008), 2190--2232.

\bibitem{bismutlebeau:hypo:book}
{\sc Bismut, J.-M., and Lebeau, G.}
\newblock {\em The Hypoelliptic {L}aplacian and {R}ay--{S}inger Metrics}.
\newblock Annals of Mathematics Studies. Princeton University Press, Princeton,
  2008.

\bibitem{bob:theory:88}
{\sc Bobylev, A.~V.}
\newblock The theory of the nonlinear, spatially uniform {B}oltzmann equation
  for {M}axwellian molecules.
\newblock {\em Sov. Sci. Rev. C. Math. Phys. 7\/} (1988), 111--233.

\bibitem{BGM:VFP:10}
{\sc Bolley, F., Guillin, A., and Malrieu, F.}
\newblock Trend to equilibrium and particle approximation for a weakly
  selfconsistent {V}lasov--{F}okker--{P}lanck equation.
\newblock {\em ESAIM Math. Model. Numer. Anal. 44}, 5 (2010), 867--884.

\bibitem{boltz:weitere:72}
{\sc Boltzmann, L.}
\newblock Weitere {S}tudien {\"{u}}ber das {W}{\"{a}}rmegleichgewicht unter
  {G}asmolek{\"u}len.
\newblock {\em Sitzungsberichte der Akademie der Wissenschaften 66\/} (1872),
  275--370.
\newblock {Translation : Further studies on the thermal equilibrium of gas
  molecules, in {\em Kinetic Theory 2}, 88--174, Ed. S.G. Brush, Pergamon,
  Oxford (1966).}

\bibitem{boltz:book}
{\sc Boltzmann, L.}
\newblock {\em Lectures on gas theory}.
\newblock University of California Press, Berkeley, 1964.
\newblock Translated by Stephen G. Brush. Reprint of the 1896--1898 Edition by
  Dover Publications, 1995.

\bibitem{bouchut:hypoell:02}
{\sc Bouchut, F.}
\newblock Hypoelliptic regularity in kinetic equations.
\newblock {\em J. Math. Pures Appl. (9) 81}, 11 (2002), 1135--1159.

\bibitem{BDL:frachypo:22}
{\sc Bouin, E., Dolbeault, J., and Lafl\`eche, L.}
\newblock Fractional hypocoercivity.
\newblock {\em Comm. Math. Phys. 390}, 3 (2022), 1369--1411.

\bibitem{BDLS:hyposub:21}
{\sc Bouin, E., Dolbeault, J., Lafl\`eche, L., and Schmeiser, C.}
\newblock Hypocoercivity and sub-exponential local equilibria.
\newblock {\em Monatsh. Math. 194}, 1 (2021), 41--65.

\bibitem{BDS:hypoweak:20}
{\sc Bouin, E., Dolbeault, J., and Schmeiser, C.}
\newblock Diffusion limit for kinetic {F}okker--{P}lanck equation with heavy
  tails equilibria: the critical case.
\newblock {\em Kinet. Relat. Models 13}, 2 (2020), 345--371.

\bibitem{BM:hypofluid:22}
{\sc Bouin, E., and Mouhot, C.}
\newblock Quantitative fluid approximation in transport theory: A unified
  approach.
\newblock {\em Probability and Mathematical Physics 3}, 3 (2022), 491--542.

\bibitem{bramanti:invitation:book}
{\sc Bramanti, M.}
\newblock {\em An invitation to hypoelliptic operators and H{\"o}rmander's
  vector fields}.
\newblock Springer, Cham, 2014.

\bibitem{bramantibrandolini:hormander:book}
{\sc Bramanti, M., and Brandolini, L.}
\newblock {\em H{\"o}rmander operators}.
\newblock World Scientific, Singapore, 2023.

\bibitem{brezis:AF}
{\sc Br{\'e}zis, H.}
\newblock {\em Analyse fonctionnelle. {T}h{\'e}orie et applications.}
\newblock Masson, Paris, 1983.

\bibitem{brezismironescu:GN:18}
{\sc Brezis, H., and Mironescu, P.}
\newblock {G}agliardo–-{N}irenberg inequalities and non-inequalities: The
  full story.
\newblock {\em Ann. Inst. H. Poincar{\'e} C Anal. non lin{\'e}aire 35}, 5
  (2018), 1355--1376.

\bibitem{mouhot:Festum:24}
{\sc Brigatti, G., and Mouhot, C.}
\newblock Introduction to quantitative {D}e {G}iorgi methods.
\newblock Proceedings of the 2024 Festum Pi festival, Lect. Notes in Math.,
  2025.

\bibitem{CCEY:harris:20}
{\sc Ca\~{n}izo, J., Cao, C., Evans, J., and Yolda\c{s}, H.}
\newblock Hypocoercivity of linear kinetic equations via {H}arris's theorem.
\newblock {\em Kinet. Relat. Models 13}, 1 (2020), 97--128.

\bibitem{canizomischler:harris:23}
{\sc Ca\~{n}izo, J., and Mischler, S.}
\newblock Harris-type results on geometric and subgeometric convergence to
  equilibrium for stochastic semigroups.
\newblock {\em J. Funct. Anal. 284}, 7 (2023), 109830.

\bibitem{cao:kFP:21}
{\sc Cao, C.}
\newblock The kinetic {F}okker--{P}lanck equation with general force.
\newblock {\em J. Evol. Equ. 21}, 2 (2021), 2293--2337.

\bibitem{carleman}
{\sc Carleman, T.}
\newblock Sur la solution de l'\'equation int\'egrodiff\'erentielle de
  {B}oltzmann.
\newblock {\em Acta. Math. 60}, 3 (1933), 91--146.

\bibitem{carlen:logsob:91}
{\sc Carlen, E.}
\newblock Superadditivity of {F}isher's information and logarithmic {S}obolev
  inequalities.
\newblock {\em J. Funct. Anal. 101}, 1 (1991), 194--211.

\bibitem{CNP:heavycrit:19}
{\sc Cattiaux, P., Nasreddine, E., and Puel, M.}
\newblock Diffusion limit for kinetic {F}okker--{P}lanck equation with heavy
  tails equilibria: the critical case.
\newblock {\em Kinet. Relat. Models 12}, 4 (2019), 727--748.

\bibitem{chandr:43}
{\sc Chandrasekhar, S.}
\newblock Stochastic problems in physics and astronomy.
\newblock {\em Rev. Modern Phys. 15}, 1 (1943), 1--89.

\bibitem{CD:relat:08}
{\sc Chevalier, C., and Debbasch, F.}
\newblock Relativistic diffusions: A unifying approach.
\newblock {\em J. Math. Phys. 49}, 4 (2008), 043303.

\bibitem{CD:relatH:08}
{\sc Chevalier, C., and Debbasch, F.}
\newblock A unifying approach to relativistic diffusions and {$H$}-theorems.
\newblock {\em Mod. Phys. Lett. B 22}, 5 (2008), 383--392.

\bibitem{csi:inf:67}
{\sc Csisz{\'a}r, I.}
\newblock Information-type measures of difference of probability distributions
  and indirect observations.
\newblock {\em Stud. Sci. Math. Hung. 2\/} (1967), 299--318.

\bibitem{davison:book}
{\sc Davison, B.}
\newblock {\em Neutron Transport Theory}.
\newblock Clarendon Press, 1958.

\bibitem{debbasch:diffcurved:04}
{\sc Debbasch, F.}
\newblock A diffusion process in curved space-time.
\newblock {\em J. Math. Phys. 45}, 7 (2004), 2744--2760.

\bibitem{debbaschchevalier:relativistic:07}
{\sc Debbasch, F., and Chevalier, C.}
\newblock Relativistic stochastic processes: A review.
\newblock In {\em Medyfinol 2006: {XV} Conference on Nonequilibrium Statistical
  Mechanics and Nonlinear Physics\/} (2007), O.~Descalzi, O.~Rosso, and
  H.~Larrondo, Eds., vol.~913 of {\em AIP Conference Proceedings}, pp.~42--48.

\bibitem{DMR:relativistic:97}
{\sc Debbasch, F., Mallick, K., and Rivet, J.}
\newblock Relativistic {O}rnstein--{U}hlenbeck process.
\newblock {\em J. Stat. Phys. 88\/} (1997), 945--966.

\bibitem{DOV:preprint}
{\sc Debbasch, F., Ollivier, Y., and Villani, C.}
\newblock On the kinetic {F}okker-–{P}lanck equation in {R}iemannian
  geometry.
\newblock Preprint, 2008.

\bibitem{DR:relat:98}
{\sc Debbasch, F., and Rivet, J.}
\newblock A diffusion equation from the relativistic {O}rnstein--{U}hlenbeck
  process.
\newblock {\em J. Stat. Phys. 90\/} (1998), 1179--1199.

\bibitem{delcbers:book:94}
{\sc Delcroix, J.-L., and Bers, A.}
\newblock {\em Physique des Plasmas}.
\newblock InterEditions/ CNRS Editions, 1994.

\bibitem{DV:landau:1}
{\sc Desvillettes, L., and Villani, C.}
\newblock On the spatially homogeneous {L}andau equation for hard potentials.
  {I}. {E}xistence, uniqueness and smoothness.
\newblock {\em Comm. Partial Differential Equations 25}, 1-2 (2000), 179--259.

\bibitem{DV:FP:01}
{\sc Desvillettes, L., and Villani, C.}
\newblock On the trend to global equilibrium in spatially inhomogeneous
  entropy-dissipating systems: the linear {F}okker--{P}lanck equation.
\newblock {\em Comm. Pure Appl. Math. 54}, 1 (2001), 1--42.

\bibitem{DV:boltz:05}
{\sc Desvillettes, L., and Villani, C.}
\newblock On the trend to global equilibrium for spatially inhomogeneous
  kinetic systems: the {B}oltzmann equation.
\newblock {\em Invent. Math. 159}, 2 (2005), 245--316.

\bibitem{DPLM:average:91}
{\sc DiPerna, R.~J., Lions, P.-L., and Meyer, Y.}
\newblock {$L^p$} regularity of velocity averages.
\newblock {\em Ann. IHP 8}, 3--4 (1991), 271--287.

\bibitem{docarmo:Riemann:book}
{\sc do~Carmo, M.~P.}
\newblock {\em Riemannian geometry}.
\newblock Mathematics: Theory \& Applications. Birkh\"auser Boston Inc.,
  Boston, MA, 1992.
\newblock Translated from the second Portuguese edition by Francis Flaherty.

\bibitem{DMS:hypo:15}
{\sc Dolbeault, J., Mouhot, C., and Schmeiser, C.}
\newblock Hypocoercivity for linear kinetic equations conserving mass.
\newblock {\em Trans. Amer. Math. Soc. 367}, 6 (2015), 3807--3828.

\bibitem{dunkelhanggi:review:09}
{\sc Dunkel, J., and H{\"a}nggi, P.}
\newblock Relativistic {B}rownian motion.
\newblock {\em Phys. Rep. 471}, 1 (2009), 1--73.

\bibitem{eckmannhairer:uniqueness:01}
{\sc Eckmann, J.-P., and Hairer, M.}
\newblock Uniqueness of the invariant measure for a stochastic {PDE} driven by
  degenerate noise.
\newblock {\em Comm. Math. Phys. 219}, 3 (2001), 523--565.

\bibitem{eckmannhairer:hypo:03}
{\sc Eckmann, J.-P., and Hairer, M.}
\newblock Spectral properties of hypoelliptic operators.
\newblock {\em Comm. Math. Phys. 235}, 2 (2003), 233--253.

\bibitem{fabesstroock:86}
{\sc Fabes, E., and Stroock, D.}
\newblock A new proof of {M}oser's parabolic harnack inequality using the old
  ideas of {N}ash.
\newblock {\em Arch. Rational Mech. Anal. 96\/} (1986), 327--338.

\bibitem{fisher:25}
{\sc Fisher, R.~A.}
\newblock Theory of statistical estimation.
\newblock {\em Math. Proc. Cambridge Philos. Soc. 22\/} (1925), 700--725.

\bibitem{folland:75}
{\sc Folland, G.}
\newblock Subelliptic estimates and function spaces on nilpotent {L}ie groups.
\newblock {\em Ark. Mat. 13}, 2 (1975), 161--207.

\bibitem{FJW:book}
{\sc Frazier, M., Jawerth, B., and Weiss, G.}
\newblock {\em {L}ittlewood--{P}aley Theory and the Study of Function Spaces},
  vol.~79 of {\em CBMS}.
\newblock American Mathematical Society, 1991.

\bibitem{GHL:Riemann:book}
{\sc Gallot, S., Hulin, D., and Lafontaine, J.}
\newblock {\em Riemannian geometry}, second~ed.
\newblock Universitext. Springer-Verlag, Berlin, 1990.

\bibitem{GT:elliptic:book}
{\sc Gilbarg, D., and Trudinger, N.~S.}
\newblock {\em Elliptic partial differential equations of second order}.
\newblock Classics in Mathematics. Springer-Verlag, Berlin, 2001.
\newblock Reprint of the 1998 edition.

\bibitem{gross:log:75}
{\sc Gross, L.}
\newblock Logarithmic {S}obolev inequalities.
\newblock {\em Amer. J. Math. 97\/} (1975), 1061--1083.

\bibitem{GMM:nonsym:17}
{\sc Gualdani, M., Mischler, S., and Mouhot, C.}
\newblock {\em Factorization of non-symmetric operators and exponential
  {$H$}-Theorem}, vol.~153 of {\em M\'emoires de la SMF}.
\newblock Soci\'et\'e Math\'ematique de France, 2017.

\bibitem{GLBM:VFP:22}
{\sc Guillin, A., Le~Bris, P., and Monmarch{\'e}, P.}
\newblock Convergence rates for the {V}lasov--{F}okker--{P}lanck equation and
  uniform in time propagation of chaos in nonconvex cases.
\newblock {\em Electron. J. Probab. 27\/} (2022), 1--44.

\bibitem{GLWZ:kFP:21}
{\sc Guillin, A., Liu, W., Wu, L., and Zhang, C.}
\newblock The kinetic {F}okker--{P}lanck equation with mean field interaction.
\newblock {\em J. Math. Pures Appl. 150\/} (2021), 1--23.

\bibitem{guo:landau:02}
{\sc Guo, Y.}
\newblock The {L}andau equation in a periodic box.
\newblock {\em Comm. Math. Phys. 231}, 3 (2002), 391--434.

\bibitem{haase:sectorial:06}
{\sc Haase, M.}
\newblock {\em The {F}unctional {C}alculus for {S}ectorial {O}perators}.
\newblock No.~169 in Operator Theory: Advances and Applications.
  Birkh{\"a}user, Basel, 2006.

\bibitem{HM:SGW:08}
{\sc Hairer, M., and Mattingly, J.}
\newblock Spectral gaps in {W}asserstein distances and the {2D} stochastic
  {N}avier--{S}tokes equations.
\newblock {\em Ann. Probab. 36}, 6 (2008), 2050--2091.

\bibitem{HM:yetanother:11}
{\sc Hairer, M., and Mattingly, J.}
\newblock Yet another look at {H}arris' ergodic theorem for {M}arkov chains.
\newblock In {\em Seminar on {S}tochastic {A}nalysis, Random Fields and
  Applications {VI}\/} (2011), R.~Dalang, M.~Dozzi, and F.~Russo, Eds.,
  Springer, Basel, pp.~109--117.

\bibitem{hebey:manifolds:96}
{\sc Hebey, E.}
\newblock {\em {S}obolev spaces on manifolds}.
\newblock No.~1635 in Lecture Notes in Mathematics. Springer, 1996.

\bibitem{helfnier:witten:05}
{\sc Helffer, B., and Nier, F.}
\newblock {\em Hypoellipticity and spectral theory for {F}okker--{P}lanck
  operators and {W}itten {L}aplacians}, vol.~1862 of {\em Lecture Notes in
  Math.}
\newblock Springer, Berlin, 2005.

\bibitem{herau:linboltz:06}
{\sc H{\'e}rau, F.}
\newblock Hypocoercivity and exponential time decay for the linear
  inhomogeneous relaxation {B}oltzmann.
\newblock {\em Asympt. Anal. 46}, 3-4 (2006), 349--359.

\bibitem{herau:FP:07}
{\sc H{\'e}rau, F.}
\newblock Short and long time behavior of the {F}okker-{P}lanck equation in a
  confining potential and applications.
\newblock {\em J. Funct. Anal. 244}, 1 (2007), 95--118.

\bibitem{heraunier:FP:04}
{\sc H{\'e}rau, F., and Nier, F.}
\newblock Isotropic hypoellipticity and trend to equilibrium for the
  {F}okker--{P}lanck equation with a high-degree potential.
\newblock {\em Arch. Ration. Mech. Anal. 171}, 2 (2004), 151--218.

\bibitem{horm:hypo:67}
{\sc H{\"o}rmander, L.}
\newblock Hypoelliptic second order differential equations.
\newblock {\em Acta Math. 119\/} (1967), 147--171.

\bibitem{hormander:LPDO:3}
{\sc H{\"o}rmander, L.}
\newblock {\em The {A}nalysis of {L}inear {P}artial {D}ifferential {O}perators.
  {V}ol. {III}: {P}seudodifferential {O}perators}.
\newblock Grundlehren der Mathematischen Wissenschaften. Springer-Verlag,
  Berlin, 1985.

\bibitem{iktru:56}
{\sc Ikenberry, E., and Truesdell, C.}
\newblock On the pressures and the flux of energy in a gas according to
  {M}axwell's kinetic theory. {I}.
\newblock {\em J. Rational Mech. Anal. 5\/} (1956), 1--54.

\bibitem{IMS:boltzmann:20}
{\sc Imbert, C., Mouhot, C., and Silvestre, L.}
\newblock Decay estimates for large velocities in the {B}oltzmann equation
  without cut-of.
\newblock {\em J. {\'E}c. Polytech. Math. 7\/} (2020), 143--184.

\bibitem{jerison:poincarehormander:86}
{\sc Jerison, D.}
\newblock The {P}oincar{\'e} inequality for vector fields satisfying
  {H}{\"o}rmander’s condition.
\newblock {\em Duke Math. J. 53}, 2 (1986), 503--523.

\bibitem{KMN:FP:21}
{\sc Kavian, O., Mischler, S., and Ndao, M.}
\newblock The {F}okker--{P}lanck equation with subcritical confinement force.
\newblock {\em J. Math. Pures Appl. 151\/} (2021), 171--211.

\bibitem{kohn:hypo:69}
{\sc Kohn, J.}
\newblock Pseudo-differential operators and hypoellipticity.
\newblock In {\em Proc. Symp. Pure Math.\/} (1969), vol.~23, AMS Providence,
  RI, pp.~61--69.

\bibitem{landau:Coulomb:36}
{\sc Landau, L.}
\newblock Die kinetische {G}leichung f{\"u}r den {F}all {C}oulombscher
  {W}echselwirkung.
\newblock {\em Phys. Z. Sowjet. 10\/} (1936), 154.
\newblock Translation : The transport equation in the case of {C}oulomb
  interactions, in D. ter Haar, ed., {\em Collected papers of L.D. Landau},
  pp.~163--170. Pergamon Press, Oxford, 1981.

\bibitem{lebeau:FP1:05}
{\sc Lebeau, G.}
\newblock Geometric {F}okker-{P}lanck equations.
\newblock {\em Port. Math. (N.S.) 62}, 4 (2005), 469--530.

\bibitem{lebeau:FP2:07}
{\sc Lebeau, G.}
\newblock \'{E}quations de {F}okker-{P}lanck g\'eom\'etriques. {II}.
  {E}stimations hypoelliptiques maximales.
\newblock {\em Ann. Inst. Fourier (Grenoble) 57}, 4 (2007), 1285--1314.

\bibitem{LP:heavy:19}
{\sc Lebeau, G., and Puel, M.}
\newblock Diffusion approximation for {F}okker--{P}lanck with heavy tail
  equilibria: a spectral method in dimension 1.
\newblock {\em Commun. Math. Phys. 366}, 2 (2019), 709--735.

\bibitem{lions:landau:94}
{\sc Lions, P.-L.}
\newblock On {Boltzmann} and {Landau} equations.
\newblock {\em Philos. Trans. Roy. Soc. London Ser. A A}, 346 (1994), 191--204.

\bibitem{PLLV:95}
{\sc Lions, P.-L., and Villani, C.}
\newblock R\'egularit\'e optimale de racines carr\'ees.
\newblock {\em C. R. Acad. Sci. Paris S\'er. I Math. 321}, 12 (1995),
  1537--1541.

\bibitem{lunardi:interpolation}
{\sc Lunardi, A.}
\newblock {\em Interpolation theory}, third ed.~ed.
\newblock Ed. Scuola Normale Superiore, 2018.

\bibitem{maxw:67}
{\sc Maxwell, J.~C.}
\newblock On the dynamical theory of gases.
\newblock {\em Philos. Trans. Roy. Soc. London Ser. A 157\/} (1867), 49--88.

\bibitem{mck:kac:65}
{\sc McKean, H.~J.}
\newblock Speed of approach to equilibrium for {K}ac's caricature of a
  {M}axwellian gas.
\newblock {\em Arch. Rational Mech. Anal. 21\/} (1966), 343--367.

\bibitem{menegaki:anharmonic:20}
{\sc Menegaki, A.}
\newblock Quantitative rates of convergence to non-equilibrium steady state for
  a weakly anharmonic chain of oscillators.
\newblock {\em J. Stat. Phys. 181\/} (2020), 53--94.

\bibitem{meyntweedie:stability:92}
{\sc Meyn, S.~P., and Tweedie, R.~L.}
\newblock Stability of {M}arkovian processes. {I}. {C}riteria for discrete-time
  chains.
\newblock {\em Adv. in Appl. Probab. 24}, 3 (1992), 542--574.

\bibitem{meyntweedie:stability:93}
{\sc Meyn, S.~P., and Tweedie, R.~L.}
\newblock Stability of {M}arkovian processes. {II}. {C}ontinuous-time processes
  and sampled chains.
\newblock {\em Adv. in Appl. Probab. 25}, 3 (1993), 487--517.

\bibitem{meyntweedie:geometric:94}
{\sc Meyn, S.~P., and Tweedie, R.~L.}
\newblock Computable bounds for geometric convergence rates of {M}arkov
  {C}hains.
\newblock {\em Annal in Appl. Probab. 4}, 4 (1994), 981--1011.

\bibitem{meyntweedie:MCbook}
{\sc Meyn, S.~P., and Tweedie, R.~L.}
\newblock {\em Markov chains and stochastic stability}.
\newblock Cambridge University Press, Cambridge, 2010.

\bibitem{MM:expslow:16}
{\sc Mischler, S., and Mouhot, C.}
\newblock Exponential stability of slowly decaying solutions to the kinetic
  {F}okker--{P}lanck equation.
\newblock {\em Arch. Ration. Mech. Anal. 221}, 2 (2016), 677--723.

\bibitem{MQT:NN:16}
{\sc Mischler, S., Qui\~{n}inao, C., and Touboul, G.}
\newblock On a kinetic {F}itzhugh--{N}agumo model of neuronal network.
\newblock {\em Comm. Math. Phys. 342}, 3 (2016), 1001--1042.

\bibitem{MT:FP:17}
{\sc Mischler, S., and Tristani, I.}
\newblock Uniform semigroup spectral analysis of the discrete, fractional and
  classical {F}okker--{P}lanck equations.
\newblock {\em J. Ec. Polytech. Math. 4\/} (2017), 389--433.

\bibitem{moser:harnack:64}
{\sc Moser, J.}
\newblock A {H}arnack inequality for parabolic differential equations.
\newblock {\em Comm. Pure Appl. Math. 17\/} (1964), 101--134.
\newblock Erratum in {\em Comm. Pure Appl. Math. 20} (1967), 231--236.

\bibitem{mouhotneumann:06}
{\sc Mouhot, C., and Neumann, L.}
\newblock Quantitative perturbative study of convergence to equilibrium for
  collisional kinetic models in the torus.
\newblock {\em Nonlinearity 19}, 4 (2006), 969--998.

\bibitem{MV:landau}
{\sc Mouhot, C., and Villani, C.}
\newblock On landau damping.
\newblock {\em Acta Math. 207}, 1 (2011), 29--201.

\bibitem{MV:companion}
{\sc Mouhot, C., and Villani, C.}
\newblock Kinetic theory.
\newblock In {\em Princeton Companion for Applied Mathematics}, N.~Higham, Ed.
  Princeton University Press, 2015, pp.~428--446.

\bibitem{nash:58}
{\sc Nash, J.}
\newblock Continuity of solutions of parabolic and elliptic equations.
\newblock {\em Amer. J. Math. 80\/} (1958), 931--954.

\bibitem{NP:heavy:15}
{\sc Nasreddine, E., and Puel, M.}
\newblock Diffusion limit of {F}okker--{P}lanck equation with heavy tail
  equilibria.
\newblock {\em ESAIM Math. Model. Numer. Anal. 49}, 1 (2015), 1--17.

\bibitem{nier:pekin:06}
{\sc Nier, F.}
\newblock Hypoellipticity for {F}okker--{P}lanck operators and {W}itten
  {L}aplacians.
\newblock Preprint IRMAR 06-34, University of Rennes, Lecture Notes from a
  course in Morningside Center, Beijing, 2006.

\bibitem{nier:boundary:18}
{\sc Nier, F.}
\newblock Boundary conditions and subelliptic estimates for geometric
  {K}ramers--{F}okker--{P}lanck opeartors on manifolds with boundaries.
\newblock {\em Mem. Amer. Math. Soc. 1200\/} (2018).

\bibitem{NSW:25}
{\sc Nier, F., Sang, X., and White, F.}
\newblock Global subelliptic estimates for geometric
  {K}ramers--{F}okker--{P}lanck operators on closed manifolds.
\newblock arXiv Preprint 2402.07511v2, 2025.

\bibitem{NSW:grushin:25}
{\sc Nier, F., Sang, X., and White, F.}
\newblock A {G}rushin problem for {B}ismut's hypoelliptic {L}aplacian.
\newblock HAL Preprint, 04572421, 2025.

\bibitem{niki:book}
{\sc Nikiforov, A., and Ouvarov, V.}
\newblock {\em Éléments de la théorie des fonctions spéciales}.
\newblock MIR, 1976.

\bibitem{oleinikradkevic:book}
{\sc Ole\u{i}nik, O., and Radkevi\v{c}, E.}
\newblock {\em Second order equations with nonnegative characteristic forms}.
\newblock Plenum Press, New York-London, New York -- London, 1973.
\newblock Translated from the 1969 Russian original.

\bibitem{pinsker:book:64}
{\sc Pinsker, M.~S.}
\newblock {\em Information and Information Stability of Random Variables and
  Processes}.
\newblock Holden-Day, San Francisco, 1964.

\bibitem{pulviwenn:CMPlower:97}
{\sc Pulvirenti, A., and Wennberg, B.}
\newblock A {M}axwellian lower bound for solutions to the {B}oltzmann equation.
\newblock {\em Comm. Math. Phys. 183\/} (1997), 145--160.

\bibitem{reedsimon:MMMP}
{\sc Reed, M., and Simon, B.}
\newblock {\em Methods of Modern Mathematical Physics, {I--IV}}.
\newblock Academic Press, 1980.

\bibitem{RT:spectral:23}
{\sc Ren, Q., and Tao, Z.}
\newblock Spectral asymptotics for kinetic {B}rownian motion on {R}iemannian
  manifolds.
\newblock ArXiV Preprint, 2212.053994, 2023.

\bibitem{reybelletthomas:anharmonic:00}
{\sc Rey-Bellet, L., and Thomas, L.~E.}
\newblock Asymptotic behavior of thermal nonequilibrium steady states for a
  driven chain of anharmonic oscillators.
\newblock {\em Comm. Math. Phys. 215}, 1 (2000), 1--24.

\bibitem{reybelletthomas:exp:02}
{\sc Rey-Bellet, L., and Thomas, L.~E.}
\newblock Exponential convergence to non-equilibrium stationary states in
  classical statistical mechanics.
\newblock {\em Comm. Math. Phys. 225}, 2 (2002), 305--329.

\bibitem{risken:book}
{\sc Risken, H.}
\newblock {\em The {F}okker--{P}lanck equation}, second~ed., vol.~18 of {\em
  Springer Series in Synergetics}.
\newblock Springer-Verlag, Berlin, 1989.
\newblock Methods of solution and applications.

\bibitem{rothaus:massgap:81}
{\sc Rothaus, O.~S.}
\newblock Diffusion on compact {R}iemannian manifolds and logarithmic {S}obolev
  inequalities.
\newblock {\em J. Funct. Anal. 42\/} (1981), 102--109.

\bibitem{rothschildstein:nilp:76}
{\sc Rothschild, L.~P., and Stein, E.~M.}
\newblock Hypoelliptic differential operators and nilpotent groups.
\newblock {\em Acta Math. 137}, 3-4 (1976), 247--320.

\bibitem{sogge:laplacian:book}
{\sc Sogge, C.}
\newblock {\em Hangzhou Lectures on Eigenfunctions of the {L}aplacian},
  vol.~188 of {\em Annals of Mathematics Studies}.
\newblock Princeton University Press, Princeton, Oxford, 2014.

\bibitem{stam:59}
{\sc Stam, A.}
\newblock Some inequalities satisfied by the quantities of information of
  {F}isher and {S}hannon.
\newblock {\em Inform. Control 2\/} (1959), 101--112.

\bibitem{stroock:markovbook}
{\sc Stroock, D.}
\newblock {\em An Introduction to {M}arkov Processes}, vol.~230 of {\em
  Graduate Texts in Mathematics}.
\newblock Springer-Verlag, 2005.

\bibitem{stroock:path:book}
{\sc Stroock, D.~W.}
\newblock {\em An introduction to the analysis of paths on a {R}iemannian
  manifold}, vol.~74 of {\em Mathematical Surveys and Monographs}.
\newblock American Mathematical Society, Providence, RI, 2000.

\bibitem{talay:stochHamilt:02}
{\sc Talay, D.}
\newblock Stochastic {H}amiltonian systems: exponential convergence to the
  invariant measure, and discretization by the implicit {E}uler scheme.
\newblock {\em Markov Process. Related Fields 8}, 2 (2002), 163--198.
\newblock Inhomogeneous random systems (Cergy-Pontoise, 2001).

\bibitem{tosc:fplcras:97}
{\sc Toscani, G.}
\newblock Sur l'in{\'e}galit{\'e} logarithmique de {S}obolev.
\newblock {\em C.R. Acad. Sci. Paris, S{\'e}rie I, 324\/} (1997), 689--694.

\bibitem{tosc:fpl:99}
{\sc Toscani, G.}
\newblock Entropy production and the rate of convergence to equilibrium for the
  {F}okker--{P}lanck equation.
\newblock {\em Quart. Appl. Math. 57}, 3 (1999), 521--541.

\bibitem{TV:slow:00}
{\sc Toscani, G., and Villani, C.}
\newblock On the trend to equilibrium for some dissipative systems with slowly
  increasing a priori bounds.
\newblock {\em J. Statist. Phys. 98}, 5-6 (2000), 1279--1309.

\bibitem{tristani:fractional:15}
{\sc Tristani, I.}
\newblock Fractional {F}okker--{P}lanck equation.
\newblock {\em Commun. Math. Sci 13}, 5 (2015), 1243--1260.

\bibitem{varopoulos:heatk:89}
{\sc Varopoulos, N.}
\newblock Small time gaussian estimates of heat diffusion kernels.
\newblock {\em Bull. Sci. Math. 113\/} (1989), 253--277.

\bibitem{vill:int}
{\sc Villani, C.}
\newblock Int{\'e}gration et analyse de {F}ourier (in french).
\newblock Incomplete lecture notes, available upon request.

\bibitem{vill:handbook:02}
{\sc Villani, C.}
\newblock A review of mathematical topics in collisional kinetic theory.
\newblock In {\em Handbook of mathematical fluid dynamics, Vol. I}.
  North-Holland, Amsterdam, 2002, pp.~71--305.

\bibitem{ICMP}
{\sc Villani, C.}
\newblock Entropy production and convergence to equilibrium for the {B}oltzmann
  equation.
\newblock In {\em XIVth International Congress on Mathematical Physics}. World
  Sci. Publ., Hackensack, NJ, 2005, pp.~130--144.

\bibitem{vill:icm}
{\sc Villani, C.}
\newblock Hypocoercive diffusion operators.
\newblock In {\em Proceedings of the {\em International Congress of
  Mathematicians} (Madrid, 2006)\/} (2006), vol.~III, European Mathematical
  Society, pp.~473--498.

\bibitem{vill:hypoco}
{\sc Villani, C.}
\newblock {\em Hypocoercivity}.
\newblock Memoirs of the American Mathematical Society. AMS, 2009.

\bibitem{vill:oldnew}
{\sc Villani, C.}
\newblock {\em Optimal transport, old and new}, vol.~338 of {\em Grundlehren
  des mathematischen Wissenschaften}.
\newblock Springer-Verlag, Berlin, New York, 2009.

\bibitem{vill:fisher-Festum:25}
{\sc Villani, C.}
\newblock Fisher information in kinetic theory.
\newblock Preprint, ArXiv 2501.00925 . To appear in the Proceedings of the 2024
  Festum Pi festival, Lect. Notes in Math., 2025.

\bibitem{welsh:manifoldparallel:86}
{\sc Welsh, D.}
\newblock Manifolds that admit parallel vector fields.
\newblock {\em Illinois J. Math. 30}, 1 (1986), 9--18.

\bibitem{yagi:sectorial:10}
{\sc Yagi, A.}
\newblock Sectorial {O}perators.
\newblock In {\em Abstract Parabolic Evolution Equations and Their
  Applications}. Springer, Berlin, 2010, pp.~55--116.

\bibitem{zelditch:laplacian:book}
{\sc Zelditch, S.}
\newblock {\em Eigenfunctions of the {L}aplacian on a {R}iemannian {M}anifold},
  vol.~125 of {\em CBMS}.
\newblock American mathematical Society, Providence, 2017.

\end{thebibliography}


\bigskip

\signcv

\end{document}